\newtheorem{thm}{Theorem}[section]
\newtheorem{cor}[thm]{Corollary}
\newtheorem{lemma}[thm]{Lemma}
\newtheorem{prop}[thm]{Proposition}
\theoremstyle{definition}
\newtheorem{example}[thm]{Example}
\newtheorem{definition}[thm]{Definition}
\newtheorem{remark}[thm]{Remark}
\newtheorem{question}[thm]{Question}
\newtheorem{notations}[thm]{Notations}
\newtheorem{assumption}[thm]{Assumption}
\def\iddots{\mathinner{\mkern1mu\raise\p@
\vbox{\kern7\p@\hbox{.}}\mkern2mu
\raise4\p@\hbox{.}\mkern2mu\raise7\p@\hbox{.}\mkern1mu}}
\newcommand{\cA}{\mathcal{A}}
\newcommand{\cB}{\mathcal{B}}
\newcommand{\cC}{\mathcal{C}}
\newcommand{\cD}{\mathcal{D}}
\newcommand{\cF}{\mathcal{F}}
\newcommand{\cH}{\mathcal{H}}
\newcommand{\cI}{\mathcal{I}}
\newcommand{\cJ}{\mathcal{J}}
\newcommand{\cK}{\mathcal{K}}
\newcommand{\cM}{\mathcal{M}}
\newcommand{\cN}{\mathcal{N}}
\newcommand{\cO}{\mathcal{O}}
\newcommand{\cP}{\mathcal{P}}
\newcommand{\cQ}{\mathcal{Q}}
\newcommand{\cS}{\mathcal{S}}
\newcommand{\cT}{\mathcal{T}}
\newcommand{\cU}{\mathcal{U}}
\newcommand{\cV}{\mathcal{V}}
\newcommand{\cZ}{\mathcal{Z}}
\newcommand{\OneCat}{1\text{-}\mathcal{C}\mathrm{at}}
\newcommand{\bA}{\mathbb{A}}
\newcommand{\bC}{\mathbb{C}}
\newcommand{\bP}{\mathbb{P}}
\newcommand{\bfP}{\mathbf{P}}
\newcommand{\bQ}{\mathbb{Q}}
\newcommand{\bR}{\mathbb{R}}
\newcommand{\bZ}{\mathbb{Z}}
\newcommand{\bD}{\mathbf{D}}
\newcommand{\ft}{\mathfrak{t}}
\newcommand{\fC}{\mathfrak{C}}
\newcommand{\cX}{\mathcal{X}}
\newcommand{\Mod}{\mathrm{Mod}}
\newcommand{\Hom}{\mathrm{Hom}}
\newcommand{\Res}{\mathrm{Res}}
\newcommand{\Ind}{\mathrm{Ind}}
\newcommand{\End}{\mathrm{End}}
\newcommand{\Spec}{\mathrm{Spec}}
\newcommand{\pr}{\mathrm{pr}}
\newcommand{\supp}{\mathrm{supp}}
\newcommand{\sfh}{\mathsf{h}}
\newcommand{\sfF}{\mathsf{F}}
\newcommand{\sfb}{\mathsf{b}}
\newcommand{\sing}{\mathrm{sing}}
\numberwithin{equation}{subsection}
\newcommand{\Loc}{\mathrm{Loc}}
\newcommand{\bk}{\mathbf{k}}
\newcommand{\Shv}{\mathrm{Shv}}
\newcommand{\Cone}{\mathrm{Cone}}
\newcommand{\proj}{\mathrm{proj}}
\newcommand{\reg}{\mathrm{reg}}
\newcommand{\frj}{\mathfrak{j}}
\newcommand{\bv}{\mathbf{v}}
\newcommand{\cY}{\mathcal{Y}}
\newcommand{\cW}{\mathcal{W}}
\newcommand{\std}{\mathrm{std}}
\newcommand{\cL}{\mathcal{L}}
\newcommand{\Ad}{\mathrm{Ad}}
\newcommand{\fg}{\mathfrak{g}}
\newcommand{\fb}{\mathfrak{b}}
\newcommand{\fn}{\mathfrak{n}}
\newcommand{\fU}{\mathfrak{U}}
\newcommand{\fc}{\mathfrak{c}}
\newcommand{\fp}{\mathfrak{p}}
\newcommand{\fl}{\mathfrak{l}}
\newcommand{\fsl}{\mathfrak{sl}}
\newcommand{\fz}{\mathfrak{z}}
\newcommand{\fM}{\mathfrak{M}}
\newcommand{\Core}{\mathrm{Core}}
\newcommand{\Coh}{\mathrm{Coh}}
\newcommand{\der}{\mathrm{der}}
\newcommand{\ad}{\mathrm{ad}}
\newcommand{\lng}{\langle}
\newcommand{\rng}{\rangle}
\newcommand{\fund}{\mathsf{fund}}
\newcommand{\sfLambda}{\mathsf{\Lambda}}
\newcommand{\fD}{\mathfrak{D}}
\newcommand{\norm}{\mathsf{N}}
\newcommand{\fq}{\mathsf{q}}
\newcommand{\cpt}{\mathsf{cpt}}
\newcommand{\fS}{\mathfrak{S}}
\newcommand{\fF}{\mathfrak{F}}
\newcommand{\fT}{\mathfrak{T}}
\newcommand{\fK}{\mathfrak{K}}
\newcommand{\fQ}{\mathfrak{Q}}
\newcommand{\Perf}{\mathrm{Perf}}
\newcommand{\sfq}{\mathsf{q}}
\newcommand{\scrZ}{\mathscr{Z}}
\newcommand{\dagg}{\dagger}
\newcommand{\ff}{\mathfrak{f}}
\newcommand{\po}{\ar@{}[dr]|{\text{\pigpenfont R}}}
\newcommand{\pb}{\ar@{}[dr]|{\text{\pigpenfont J}}}
\newcommand{\sfB}{\mathsf{B}}
\newcommand{\sfN}{\mathsf{N}}
\newcommand{\scrO}{\mathscr{O}}
\newcommand{\DMStk}{\mathrm{DMStk}}
\tikzset{cross/.style={cross out, draw=black, minimum size=2*(#1-\pgflinewidth), inner sep=0pt, outer sep=0pt},
cross/.default={2pt}}
\tikzset{
  saveuse path/.code 2 args={
    \pgfkeysalso{#1/.style={insert path={#2}}}
    \global\expandafter\let\csname pgfk@\pgfkeyscurrentpath/.@cmd\expandafter\endcsname
                                \csname pgfk@\pgfkeyscurrentpath/.@cmd\endcsname
    \pgfkeysalso{#1}},
  /pgf/math set seed/.code=\pgfmathsetseed{#1}}
\newcommand{\pnrelbar}{
  \linethickness{\dimen2}
  \sbox\z@{$\m@th\prec$}
  \dimen@=1.1\ht\z@
  \begin{picture}(\dimen@,.4ex)
  \roundcap
  \put(0,.2ex){\line(1,0){\dimen@}}
  \put(\dimexpr 0.5\dimen@-.2ex\relax,0){\line(1,1){.4ex}}
  \end{picture}
}
\newcommand{\precneq}{\mathrel{\vcenter{\hbox{\text{\prec@neq}}}}}
\newcommand{\prec@neq}{
  \dimen2=\f@size\dimexpr.04pt\relax
  \oalign{
    \noalign{\kern\dimexpr.2ex-.5\dimen2\relax}
    $\m@th\prec$\cr
    \noalign{\kern-.5\dimen2}
    \hidewidth\pnrelbar\hidewidth\cr
  }
}
\begin{document}

\title[HMS for the universal centralizers]{Homological Mirror Symmetry for the universal centralizers }
\author{Xin Jin}
\address{Math Department, Boston College, Chestnut Hill, MA 02467.}
\email{xin.jin@bc.edu}

\maketitle

\begin{abstract}
We prove homological mirror symmetry for the universal centralizer $J_G$ (a.k.a the Toda space), associated to any complex reductive Lie group $G$. The A-side is a partially wrapped Fukaya
category on $J_G$, and the B-side is the category of coherent sheaves
on the categorical quotient of a dual maximal torus by the Weyl group
action (with some modification if the center of $G$ is not connected). 
\end{abstract}

\tableofcontents

\section{Introduction}\label{sec: introduction}

 \subsection{Background and main results}

For a (connected) complex reductive Lie group $G$, one can define a holomorphic symplectic variety $J_G$, called the \emph{universal centralizer} or the \emph{Toda space} (cf. \cite{Lusztig}\footnote{It was first introduced  in \cite{Lusztig} as $\cN_G$ (in the last paragraph) in the group-group setting, i.e. one considers centralizing pairs both coming from the group $G$ (or with one of the elements from a different group in the same isogeny class).}, \cite{Kostant2}, \cite{BFM}, \cite{Ginzburg}), which has the structure of a (holomorphic) complete integrable system over $\fc=\ft^*\sslash W$, where $\ft$ is a Cartan subalgebra of the Lie algebra $\fg$ of $G$, and $W$ is the Weyl group associated to the root system. Roughly speaking, one can build $J_G$ from an affine blowup of $T^*T$, where $T$ is a maximal torus, along the diagonal walls associated to the root data, and then take the orbit space of $W$.

There are many remarkable features of $J_G$, and here we list a couple of them. First, one has a canonical map 
\begin{align*}
\chi: J_G\rightarrow \fc=\ft^*\sslash W
\end{align*}
that exhibits $J_G$ as an abelian group scheme over $\fc$, and also a (holomorphic) complete integrable system. The fiber over any point in $\fc$, represented by a regular element $\xi$ in the Kostant slice $\cS\subset \fg^*$, is isomorphic to the centralizer of $\xi$ in $G$. In particular, the generic fiber is isomorphic to a maximal torus in $G$. Second, the ring of functions on $J_G$ (which defines $J_G$ as an affine variety) is identified with the $G^\vee(\cO)$-equivariant homology of the affine Grassmannian $Gr_{G^\vee}=G^\vee(\cK)/G^\vee(\cO)$ of the Langlands dual group $G^\vee$, where $\cK=\bC((z)), \cO=\bC[[z]]$. This is one of the main results in \cite{BFM} and it has led to interesting connections to various aspects of the geometric Langlands program. 

The integrable system structure on $J_G$ can be viewed as a non-abelian version of the familiar integrable system $T^*T\rightarrow\ft^*$, which is the most basic example of homological mirror symmetry (abbreviated as HMS below). Recall the HMS statement for $T^*T$ as the following. Let $T^\vee$ be the dual torus. Let $\cW(T^*T)$ denote for the partially wrapped Fukaya category of $T^*T$ (after taking twisted complexes), and let $\Coh(T^\vee)$ be the category of coherent sheaves on $T^\vee$. 
\begin{thm}[Well known]
There is an equivalence of categories
\begin{align*}
\cW(T^*T)\simeq \Coh(T^\vee).
\end{align*}
\end{thm}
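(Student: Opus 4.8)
The plan is to reduce the statement to the standard description of the wrapped Fukaya category of a cotangent bundle via the based loop space of the zero section, and then to run that computation for a torus. First I would pin down the symplectic input: $T^*T$ is taken with its exact cotangent Liouville structure, and the partial wrapping relevant here — that of $J_T=T^*T$ — has empty stop. The polar decomposition $\bC^\ast\simeq S^1\times\bR_{>0}$ identifies the underlying real manifold of $T$ with $T_c\times V$, where $T_c\subset T$ is the maximal compact subtorus and $V$ is a real vector space; hence, as a Liouville manifold, $T^*T\simeq T^*T_c\times T^*V$. Applying the K\"unneth theorem for wrapped Fukaya categories together with $\cW(T^*V)\simeq\Perf(\bk)$ reduces the problem to proving $\cW(T^*T_c)\simeq\Coh(T^\vee)$ for the compact torus $T_c$.

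For the latter I would invoke Abouzaid's theorem: a cotangent fiber $F=T^*_qT_c$ generates $\cW(T^*T_c)$ — valid because $T_c$ is closed and parallelizable — with wrapped endomorphism $A_\infty$-algebra quasi-isomorphic to the chains $C_{-\ast}(\Omega_qT_c;\bk)$ on the based loop space; since $T_c$ is aspherical this is equivalently the Nadler--Zaslow / Ganatra--Pardon--Shende identification with perfect complexes of local systems. Now $T_c$ is a $K(\pi_1(T_c),1)$ with $\pi_1(T_c)=X_*(T)$, so $\Omega_qT_c$ is homotopy equivalent to the discrete group $X_*(T)$ and $C_{-\ast}(\Omega_qT_c;\bk)\simeq\bk[X_*(T)]$ as an $A_\infty$-algebra, concentrated in degree $0$ with vanishing higher products. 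Therefore $\cW(T^*T_c)\simeq\Perf(\bk[X_*(T)])$. Finally, $X_*(T)=X^\ast(T^\vee)$ is the character lattice of the dual torus, so $\bk[X_*(T)]=\cO(T^\vee)$, and since $T^\vee$ is smooth affine, $\Perf(\cO(T^\vee))\simeq\Perf(T^\vee)\simeq\Coh(T^\vee)$. Composing the equivalences gives $\cW(T^*T)\simeq\Coh(T^\vee)$.

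I expect the only real work to be convention-matching, not anything deep, and it matters here because this base case fixes the Lagrangian-to-coherent-sheaf dictionary reused for general $J_G$: one must check that the grading and background (spin, $b$-field) data defining $\cW(T^*T)$ are normalized so that $CW^\ast(F,F)$ is the \emph{untwisted} group algebra $\bk[X_*(T)]$ rather than a twisted form (on a torus all the relevant obstruction classes vanish, but the normalization is still a choice), that the partial wrapping attached to $J_T=T^*T$ genuinely has empty stop so that the fully wrapped statements apply verbatim, and that $\Coh(T^\vee)$ is read in the sense — bounded coherent complexes, equivalently perfect complexes since $T^\vee$ is smooth — that matches the idempotent-completed ``twisted complexes'' $A$-side. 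The one computational point I would watch is the sign and orientation conventions entering the loop-space identification; the remaining steps are formal.
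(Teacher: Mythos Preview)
The paper does not prove this theorem; it is stated in the introduction with the label ``[Well known]'' and no proof, serving only as the motivating base case for the main results on $J_G$. Your argument --- K\"unneth reduction to the compact torus, Abouzaid's generation by a cotangent fiber, and the identification $C_{-\ast}(\Omega_q T_c)\simeq \bk[X_*(T)]=\cO(T^\vee)$ --- is the standard correct proof.

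One small point of convention-matching: your remark that the relevant wrapping ``has empty stop'' is not quite the paper's setup. Here $T$ is a \emph{complex} torus, hence noncompact, and the paper defines $\cW(T^*T)$ as the partially wrapped category of the Liouville sector $T^*\overline{T}$ for a compactification $\overline{T}$ with boundary (so there is a finite boundary, equivalently a stop at the conormal of $\partial\overline{T}$). Your K\"unneth splitting $T^*T\simeq T^*T_c\times T^*V$ already handles this correctly, since $\cW(T^*\overline{V})\simeq\Perf(\bk)$ for a closed ball $\overline{V}$; so the conclusion is unaffected, but the phrasing should be adjusted.
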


We remark on the definition of $\cW(T^*T)$. Since $T$ is a non-compact manifold, one needs to specify the allowed wrapping Hamiltonians in the definition of the (partially) wrapped Fukaya category. Here we follow the recent work of \cite{GPS1}, \cite{GPS2} that gives a precise definition of (partially) wrapped Fukaya categories on Liouville sectors (also see \emph{loc. cit.} for previous work in this line). Roughly speaking, a Liouville sector is a class of Liouville manifolds $M$ with boundaries, that is in addition to the contact-type $\infty$-boundary $\partial^\infty M$ that a usual Liouville manifold has, it has a ``finite" non-contact-type boundary $\partial M$. The Lagrangian objects in the wrapped Fukaya category should have ends contained in $\partial^\infty M$. Any wrapping should take place on $\partial^\infty M$ as usual, but stops near $\partial M$ (the ``finite" boundary). In particular, for any non-compact manifold $X$, take a compactification $\overline{X}$ with smooth boundary (of codimension 1), then $T^*\overline{X}$ is a Liouville sector with finite boundary given by the union of cotangent fibers over $\partial X$. 

To simplify notations, we usually denote a Liouville sector by its interior, when the compactification has been introduced. So $\cW(T^*T)$ means the wrapped Fukaya category for the Liouville sector $T^*\overline{T}$, for a standard compactification of $T$, i.e. a maximal compact subtorus times a compact ball.

One of our results is that $J_G$ (together with a canonical Liouville 1-form) can be naturally partially compactified to be a Liouville sector, so that one has a well defined $\cW(J_G)$ as introduced above. 

\begin{prop}[cf. Proposition \ref{prop: partial compactify} and Remark \ref{remark: positive Ham} (iii)]
There are natural partial compactifications $\overline{J}_G$ of $J_G$ as Liouville sectors, all yielding canonically equivalent wrapped Fukaya categories. Moreover, there is an abundance of such compactifications making $\overline{J}_G$  a Weinstein sector. 
\end{prop}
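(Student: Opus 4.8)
The plan is to equip $J_G$ with its canonical exact symplectic structure, cut it off in the directions where its integrable-system fibers are noncompact, and then identify which of these cutoffs are Weinstein. \textbf{Step 1 (the canonical Liouville form).} Recall from \cite{BFM} that $J_G$ is obtained from $T^*T\cong T\times\ft^*$ by the affine blowup along the diagonal root walls, followed by passage to the $W$-orbit space. I would first check that the tautological $1$-form $\lambda_{T^*T}$ extends to a $1$-form $\lambda$ across the exceptional loci: this is a computation in the standard affine-blowup charts, in which one also sees that the Liouville vector field $Z$ (fiberwise dilation of $\ft^*$) extends and stays complete. Since $\lambda_{T^*T}$ and $Z$ are $W$-equivariant they descend to $J_G$, giving a Liouville structure. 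I would then read off the behaviour of $(J_G,\lambda)$ near infinity: away from the walls it is the cosphere bundle of $T$ with $Z$ cofinal and skeleton the proper transform of the zero section, while near the walls the same charts supply an explicit contact model. The upshot is that the only noncompact ``finite'' directions of $J_G$ are the unbounded directions of the real manifold underlying $T$, namely the $\log|z_j|$ directions (a copy of the real cocharacter space of $T$ with its $W$-action), whereas the $\ft^*$/dilation directions are genuinely contact.

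\textbf{Step 2 (the family of compactifications).} Over the complement of a fiberwise-compact set, $J_G$ is a product of a neighbourhood of infinity in the real manifold underlying $T$ with a Liouville manifold transverse to the $\log|z_j|$ directions. Fix a proper convex function $I$ of the $\log|z_j|$ built from a $W$-invariant norm on the real cocharacter space, smoothed and adapted to the affine-blowup charts, and set $\overline{J}_G:=\{I\le R\}$, rounding the resulting corners as in \cite{GPS1}. Verifying the Liouville-sector axioms of \cite{GPS1} reduces to checking that near the new boundary $J_G$ has the product form $F\times\bC_{\mathrm{Re}\ge 0}$ with $Z$ tangent to the boundary; this is immediate away from the walls from the model $T^*\overline{T}$, and follows near the walls from the explicit charts. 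Varying $I$ and $R$ over a cofinal family yields the asserted collection of partial compactifications.

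\textbf{Step 3 (independence of $\cW$).} Given two such compactifications I would enlarge both to a common one by pushing the cutoff outward; the resulting inclusions are trivial inclusions of Liouville sectors, equivalently deformations through Liouville sectors, in the sense of \cite{GPS1,GPS2}: the enlargement sweeps no skeleton and is Liouville-isotopic to attaching a collar. Hence each inclusion induces an equivalence of wrapped Fukaya categories, and these equivalences are canonical, being the inclusion functors of \cite{GPS1,GPS2}. Transitivity over a common refinement then shows that all the $\cW(\overline{J}_G)$ are canonically identified.

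\textbf{Step 4 (Weinstein compactifications) and the main obstacle.} To make $\overline{J}_G$ Weinstein I would produce a Morse function $\phi$ that is gradient-like for $Z$: on $T^*T$ take $\phi=\tfrac12\sum(u_j^2+v_j^2)$ plus a Morse function on $\overline{T}$, extend it across the exceptional loci using the affine-blowup charts, average over $W$, and add a large multiple of $I$; any sufficiently large regular value of the result may be taken as $R$, and perturbing $\phi$ yields an abundance of Weinstein sectors $\overline{J}_G$. The crux of the whole argument is the local analysis at the diagonal root walls: that the affine blowup — as opposed to an ordinary or real-oriented blowup — keeps $\lambda$ regular and $Z$ complete and cofinal, is compatible with the reflections of $W$ that fix those walls, and admits both a sectorial cutoff and a $W$-invariant gradient-like Morse function. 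Everything away from the walls is the standard $T^*\overline{T}$ picture; it is the interplay of the root-system combinatorics with the affine-blowup charts that requires the real work, and I expect this to be where most of the effort goes.
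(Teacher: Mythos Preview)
There is a gap already in Step~1: the Liouville vector field on $J_G$ is \emph{not} fiberwise dilation of $\ft^*$ on the open cell $\cB_{w_0}\cong T^*T$. The canonical Liouville structure comes from the $\bC^\times$-action \eqref{eq: C star action}, which by Lemma~\ref{lemma: action factor} (take $S=\emptyset$) acts on $\cB_{w_0}$ as $s\cdot(z,t)=(z\cdot s^{-2\sfh_0},\,s^2t)$, combining scaling of $\ft^*$ with translation in $T$ along the cocharacter $-2\sfh_0$; correspondingly the primitive is $\vartheta|_{\cB_{w_0}}=-\langle t,z^{-1}dz\rangle-d\langle t,\sfh_0\rangle$ (cf.\ \eqref{eq: hat vartheta_S} and the footnote in Section~\ref{sec: introduction} that flags exactly this point). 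Whether the tautological $\lambda_{T^*T}$ extends across the BFM affine blowup with $Z$ complete and admitting a sector cutoff is neither claimed nor proved in the paper, and your ``computation in standard affine-blowup charts'' does not establish it.

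The paper's route is entirely different and does not touch the BFM charts. It works in the Whittaker/Bruhat picture (Proposition~\ref{prop: B_w0w}) and uses the matrix-coefficient functions $|b_{\lambda_{\beta^\vee}}|$ to build $\pi_r:J_G\to\bR^\Pi_{\geq 0}$, which vanishes exactly on the Kostant sections $\cB_1$. An inductively constructed smoothed hypersurface $H^{sm}\subset\bR^\Pi_{\geq 0}$ (Section~\ref{subsubsec: H, sm}) is engineered so that $\pi_r^{-1}(H^{sm})$ has trivializable characteristic foliation, giving $J_G\setminus\cB_1\cong\fF\times T^{*,>0}\bR$ for an explicit Liouville---and, with the choices of models (A) or (B), Weinstein---hypersurface $\fF$ (Proposition~\ref{prop: hypersurface F}). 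One then sets $\overline{J}_G:=J_G\cup_{J_G\setminus\cB_1}(\fF\times\bC_{\Re z\leq0})$, which is visibly obtained by attaching $|\cZ(G)|$ critical Weinstein handles to $\fF\times\bC_{\Re z\leq0}$ and hence a sector (Proposition~\ref{prop: partial compactify}). Your Step~2 assertion that the product form $F\times\bC_{\Re\geq0}$ is ``immediate away from the walls'' and ``follows near the walls from the explicit charts'' is precisely the content of Proposition~\ref{prop: hypersurface F}, whose several-page proof hinges on the splitting of $Z$ over each $\fU_S$ and the gluing of the local hypersurfaces $\fF_S$ via Lemma~\ref{lemma: Z_H, sm}; this is not a step one can absorb into ``explicit charts''. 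Finally, independence of $\cW(\overline{J}_G)$ is argued via positive linear Hamiltonians on an exhaustion by subsectors (Definition~\ref{def: positive Ham}, Remark~\ref{remark: positive Ham}), not via common enlargements as you propose.
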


The first main result of the paper is the following HMS statement for $J_G$, when $G$ is of adjoint type.

\begin{thm}[cf. Theorem \ref{thm: sec G adjoint}]\label{thm: J_G, adjoint}
For any complex semisimple Lie group $G$ of adjoint type (i.e. the center of $G$ is trivial), we have an equivalence of (pre-triangulated dg) categories
\begin{align}\label{eq: thm J_G, adjoint}
\cW(J_G) \simeq \Coh(T^\vee\sslash W). 
\end{align} 
\end{thm}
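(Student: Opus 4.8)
The plan is to reduce everything to the known equivalence $\cW(T^*T)\simeq\Coh(T^\vee)$, using the description of $J_G$ as an affine blowup of $T^*T$ along the root walls followed by the quotient by $W$. First I would equip the affine blowup $\widetilde{J}_G$ (the space obtained \emph{before} passing to $W$-orbits) with a $W$-equivariant Liouville/Weinstein sectorial structure for which $J_G=\widetilde{J}_G/W$ as Liouville sectors; the affine blowup is arranged precisely so that $W$ then acts freely, which is what makes $J_G$ smooth, and, as will be seen, is also what forces the \emph{coarse} quotient $T^\vee\sslash W$ to appear on the $B$-side rather than the stacky quotient $[T^\vee/W]$ that a naive $W$-descent from $T^*T$ would produce. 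Granting this, descent for free quotients of Weinstein sectors (in the framework of Ganatra--Pardon--Shende) yields $\cW(J_G)\simeq\cW(\widetilde{J}_G)^{hW}$, the category of $W$-equivariant objects.

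\textbf{Local models.} The crux is then to compute $\cW(\widetilde{J}_G)$ and to track the $W$-action through the equivalence. I would do this by a Mayer--Vietoris argument along a Weinstein-sectorial cover of $\widetilde{J}_G$ adapted to the stratification of $\ft^*$ by vanishing loci of roots. Over the regular semisimple locus, $\widetilde{J}_G$ restricts to an open piece of $T^*T$, whose mirror is $\Coh$ of the corresponding locus of $T^\vee$ with its standard $W$-action, by the quoted theorem. Over the deeper strata the computation reduces, one root at a time, to a rank-one ($\fsl_2$) model: near a wall the affine blowup turns a local $\mathbb{G}_m$-family into a $\mathbb{G}_a$-degeneration, and the resulting Liouville sector is an explicit elementary piece — up to Weinstein homotopy, a product of the cotangent bundle of a ball with a once-stopped cylinder — whose wrapped Fukaya category is $\Coh$ of the affine blowup of $T^\vee$ along $\{z_\alpha=1\}$. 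Gluing the charts via descent identifies $\cW(\widetilde{J}_G)\simeq\Coh(\widetilde{Y})$, where $\widetilde{Y}$ is the $W$-equivariant affine blowup of $T^\vee$ along the root walls, carrying a \emph{free} $W$-action with $\widetilde{Y}/W\cong T^\vee\sslash W$.

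\textbf{Assembling.} Combining the two steps,
\[
\cW(J_G)\ \simeq\ \cW(\widetilde{J}_G)^{hW}\ \simeq\ \Coh(\widetilde{Y})^{hW}\ \simeq\ \Coh(\widetilde{Y}/W)\ \simeq\ \Coh(T^\vee\sslash W),
\]
the penultimate equivalence being descent of equivariant coherent sheaves along a free quotient. It is worth packaging the outcome concretely: since $G$ is of adjoint type, $G^\vee$ is simply connected and Steinberg's theorem gives $T^\vee\sslash W\cong\bA^r$ with $r=\rk G$, so the assertion is that $\cW(J_G)$ is generated by a single Lagrangian — naturally the identity section $e(\fc)$ of the group scheme $\chi\colon J_G\to\fc$, which plays the role that the cotangent fiber plays for $T^*T$ — with wrapped endomorphism algebra the polynomial ring $\bC[\fc]=\bC[\ft^*]^W$ concentrated in degree $0$. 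Such an algebra is automatically formal, whence $\cW(J_G)\simeq\Perf(\bC[\fc])=\Coh(T^\vee\sslash W)$; when $G$ is not of adjoint type the same scheme runs but $T^\vee\sslash W$ is no longer affine space, explaining the modification alluded to in the abstract.

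\textbf{The main obstacle.} The delicate point is the rank-one local computation and its gluing: one must show that the affine blowup — a genuine sectorial modification of $T^*T$, not merely a localization — is mirror to the algebro-geometric affine blowup of $T^\vee$ along the root walls, and in particular that its net effect is to replace the stacky quotient $[T^\vee/W]$ by the coarse quotient $T^\vee\sslash W$. All of the content of the theorem lives in this discrepancy, so the symplectic geometry of the blowup near each wall has to be pinned down exactly, and the compatibility of the $W$-action with every identification must be tracked throughout. Secondary, but still substantial, is the sectorial bookkeeping: verifying that the cover is Weinstein-sectorial, that all equivalences respect wrapping, that the descent package applies to the (free) $W$-quotient, and that the various partial compactifications of Proposition~\ref{prop: partial compactify} all yield the same $\cW(J_G)$.
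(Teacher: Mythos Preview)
Your proposal rests on an unjustified assumption: the existence of a Weinstein sector $\widetilde{J}_G$ with a \emph{free} $W$-action and quotient $J_G$. The paper's description of $J_G$ as ``an affine blowup of $T^*T$ followed by the $W$-orbit space'' is only heuristic. Concretely, in rank one the natural $W$-equivariant affine blowup of $T^*\bC^\times$ (e.g.\ adjoining $\tfrac{h-1}{\xi}$) still has a one-dimensional $W$-fixed locus over $\xi=0$, so $W$ does not act freely; the same happens for the candidate $J_G\times_{\fc}\ft^*$. Without freeness, $\cW(\widetilde{J}_G/W)$ is not $\cW(\widetilde{J}_G)^{hW}$, and your descent to the \emph{coarse} quotient $T^\vee\sslash W$ breaks down. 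The paper stresses (just after Proposition~\ref{prop: intro skyscraper}) that $W$ does \emph{not} act on $J_G$, and that obtaining $T^\vee\sslash W$ rather than the stack $[T^\vee/W]$ is exactly the subtle content. A secondary slip: you write the endomorphism ring of the identity section as $\bC[\fc]=\bC[\ft^*]^W$, but the correct answer is $\bC[T^\vee]^W$ --- wrapping sees the lattice $X_*(T)$, not the linear structure on $\ft^*$.

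The paper's route is entirely different. There is no $W$-equivariant cover; instead one uses the sector \emph{inclusion} $\cB_{w_0}^\dagg\simeq T^*\overline{T}\hookrightarrow\overline{J}_G$ coming from the open Bruhat cell. A Weinstein handle decomposition (Propositions~\ref{prop: hypersurface F}--\ref{prop: split generation}) shows the Kostant section $\Sigma_I$ generates, reducing the theorem to identifying $\cA_G=\End(\Sigma_I)^{op}$. The co-restriction/restriction adjunction for the sector inclusion is shown to be pushforward/pullback along a finite map $\mathsf{f}\colon T^\vee\to\Spec\cA_G$ (Proposition~\ref{prop: A_G commutative}), and the $W$-invariance of $\mathsf{f}$ is established by a Floer-theoretic argument (Propositions~\ref{prop: L_0, part 2} and \ref{prop: L_0, W}): the conormal branes $(L_0,\check\rho)$, mirror to skyscrapers on $T^\vee$, become isomorphic to their $W$-translates \emph{after} co-restriction to $J_G$. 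This is the geometric heart of the paper, occupying Sections~\ref{subsec: analysis cB_w0}--\ref{subsec: proof prop}; Pittie--Steinberg then forces $\Spec\cA_G\cong T^\vee\sslash W$.
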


There is a more general statement for any complex reductive group $G$, but to state that we need to introduce some notations. For any such $G$, let $G^\der=[G,G]$ be the derived group of $G$, and let $\cZ(G^\der)^*$ be the Pontryagin dual of $\cZ(G^\der)$, the center of $G^\der$. 
Then $\cZ(G^\der)^*$ naturally acts on $(T/\cZ(G^\der))^\vee$ and on $(T/\cZ(G^\der))^\vee\sslash W$. 

If $G$ is semisimple, let $G^\vee_{sc}$ (resp. $G_{\ad}$) denote for the simply connected (resp. adjoint) form of $G^\vee$ (resp. $G$), i.e. the universal cover of $G^\vee$ (resp. $G/\cZ(G)$). Let $T^\vee_{sc}$ (resp. $T_{\ad}$) denote for a maximal torus of $G^\vee_{sc}$ (resp. $G_{ad}$). Then there is a canonical isomorphism $\cZ(G)^*\cong \pi_1(G^\vee)$ that acts naturally on $T^\vee_{sc}$ and on $T^\vee_{sc}\sslash W$. 

We have the following HMS result for a general reductive $G$. 

\begin{thm}[cf. Theorem \ref{thm: HMS for reductive}]\label{thm: J_G, general}
For any complex reductive group $G$, we have an equivalence of categories
\begin{align}\label{thm: eq mirror J_G}
\cW(J_G) \simeq \Coh((T/\cZ(G^\der))^\vee\sslash W)^{\cZ(G^\der)^*},
\end{align} 
where the category on the right-hand-side is the category of $\cZ(G^\der)^*$-equivariant coherent sheaves on $(T/\cZ(G^\der))^\vee\sslash W$. If $\cZ(G)$ is connected, then 
\begin{align*}
\cW(J_G) \simeq \Coh(T^\vee\sslash W). 
\end{align*}
\end{thm}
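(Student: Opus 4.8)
The plan is to deduce Theorem~\ref{thm: J_G, general} from Theorem~\ref{thm: J_G, adjoint} by a descent/equivariance argument, reducing the general reductive case to the adjoint semisimple case via two independent reductions: one that strips off the central torus, and one that interpolates between the adjoint form and an arbitrary isogeny type. First I would reduce to the semisimple case. Writing $G$ up to isogeny as an almost-direct product of $G^\der$ and the connected central torus $Z(G)^\circ$, the integrable system $J_G$ should split (or fiber) accordingly as a product of $J_{G^\der}$ with a copy of $T^*(Z(G)^\circ)$-type factor; on the A-side this gives $\cW(J_G)\simeq \cW(J_{G^\der})\otimes \cW(T^*(\text{torus}))$ via a Künneth formula for wrapped Fukaya categories of products of Liouville sectors (as in \cite{GPS2}), and on the B-side the extra torus factor contributes $\Coh$ of the dual torus, matching the statement. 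I would carry out this bookkeeping carefully so that the $\cZ(G^\der)$ appearing is exactly the center of the derived group and nothing from the central torus interferes.

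Next, for $G$ semisimple I would set up a comparison between $J_G$ for arbitrary isogeny type and $J_{G_{\ad}}$. The key geometric input is that $\cZ(G)$ acts on $J_{G_{\ad}}$ — or rather, the finite group $\pi_1(G_{\ad}/ \text{image}) \cong \cZ(G)^*$ or its dual acts — by symplectomorphisms covering the identity on the base $\fc$, and the quotient (or a suitable cover) recovers $J_G$; concretely this reflects the fact that centralizers of regular elements in different isogeny forms differ by a controlled finite abelian group, and that the affine blowup description of $J_G$ from $T^*T$ is compatible with the isogeny $T \to T_{\ad}$. Granting this, the wrapped Fukaya category is functorial for finite covering spaces / quotients by finite symplectomorphism groups: $\cW$ of a finite quotient is the category of modules over the group algebra in $\cW$ of the cover, and dually $\cW$ of a cover is obtained by a semiorthogonal/equivariant construction. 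This produces $\cW(J_G)$ as (equivariant objects in, or modules over a group action on) $\cW(J_{G_{\ad}})$.

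Then I would transport this group action across the equivalence of Theorem~\ref{thm: J_G, adjoint}. Under $\cW(J_{G_{\ad}})\simeq \Coh(T^\vee_{sc}\sslash W)$ — here for adjoint $G$ the relevant dual torus is $T^\vee_{sc}$, the maximal torus of the simply connected dual, since weights of $G_{\ad}$ are the root lattice — the finite symmetry $\cZ(G)^* \cong \pi_1(G^\vee)$ acting on the A-side must go to the natural translation action of $\pi_1(G^\vee) = T^\vee_{sc}/T^\vee_{(G)}$ on $T^\vee_{sc}\sslash W$ (it acts through the covering $T^\vee_{sc}\to T^\vee_{(G)}$, compatibly with $W$). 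The main work here is to \emph{identify} the induced autoequivalences of $\Coh$ with this geometric translation action — i.e. to check the abstract symmetry really is the expected one — which one does by testing on generators (structure sheaves of points, line bundles) and matching with the A-side computation of how $\cZ(G)^*$ moves the relevant Lagrangian skeleton. Once the actions are matched, taking equivariant objects / modules on both sides gives
\[
\cW(J_G)\ \simeq\ \Coh(T^\vee_{sc}\sslash W)^{\pi_1(G^\vee)}\ \simeq\ \Coh\bigl((T/\cZ(G^\der))^\vee\sslash W\bigr)^{\cZ(G^\der)^*},
\]
where the last step is the general $G$ version obtained by combining with the central-torus reduction; when $\cZ(G)$ is connected the group $\cZ(G^\der)^*$ is trivial and the statement collapses to $\cW(J_G)\simeq\Coh(T^\vee\sslash W)$.

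The main obstacle I expect is the precise geometric setup of the $\cZ(G)$-action on $J_{G_{\ad}}$ (or the dual covering structure among the various $J_G$) together with the compatibility of the wrapped Fukaya category with this finite symmetry at the level of Liouville sectors — one needs the partial compactification $\overline{J}_{G_{\ad}}$ to be chosen $\cZ(G)$-equivariantly, and one needs a clean statement that $\cW$ of the quotient Liouville sector is the module category over the resulting group action (a form of equivariant localization / Galois descent for wrapped categories, presumably available from \cite{GPS1}, \cite{GPS2} or from general categorical descent). A secondary difficulty, but a routine one, is the Künneth step: making sure the partial compactification of $J_G$ is compatible with the product decomposition up to the equivalences of the Proposition on partial compactifications, so that the Fukaya-categorical Künneth formula applies.
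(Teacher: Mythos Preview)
Your overall strategy --- reduce to the adjoint case via a finite abelian covering and then apply categorical descent --- matches the paper's. But there are two concrete errors and one methodological gap.

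First, the proposed K\"unneth splitting $\cW(J_G)\simeq \cW(J_{G^\der})\otimes\cW(T^*Z(G)^\circ)$ fails as stated: $J_G=(J_{G^\der}\times T^*\cZ(G)_0)/\cZ(G^\der)_0$ is a \emph{twisted} product by the finite group $\cZ(G^\der)_0$, not a direct one. The paper sidesteps this by working with the regular covering $\fq:J_G\to J_{G_\flat}$, where $G_\flat=G/\cZ(G^\der)=G_\ad\times(\cZ(G)/\cZ(G^\der))$ genuinely is a product; K\"unneth then applies downstairs to $\cW(J_{G_\flat})$, and $\cW(J_G)$ is recovered upstairs by descent along $\fq$. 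So the central-torus and isogeny reductions are handled simultaneously, not sequentially.

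Second, your claim that ``when $\cZ(G)$ is connected the group $\cZ(G^\der)^*$ is trivial'' is false (take $G=GL_n$, where $\cZ(G^\der)=\mu_n$). The correct statement, proved in the paper, is that connectedness of $\cZ(G)$ makes $\cZ(G^\der)^*$ act \emph{freely} on $(T/\cZ(G^\der))^\vee\sslash W$, so the equivariant category reduces to $\Coh$ of the quotient $T^\vee\sslash W$.

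On the descent step itself: rather than invoking an off-the-shelf ``$\cW$ of a finite cover equals equivariant objects in $\cW$ of the base'' (not available in the cited literature in the form you need), the paper constructs the adjoint pair $(F^L,F_R)$ between $\cW(J_G)$ and $\cW(J_{G_\flat})$ by hand at the Floer-chain level, shows it is biadjoint, checks compatibility with the (co-)restriction functors to the open cell $\cB_{w_0}$, and then applies Barr--Beck--Lurie monadicity. This bypasses the problem of matching a group action across the mirror equivalence: one only needs to identify the resulting monad on $\Ind\Coh(T_\flat^\vee)$ with the pull--push monad for the projection $T_\flat^\vee\to (T_\flat^\vee\sslash W)/\cZ(G^\der)^*$, which follows from the adjoint case already established plus the commutativity lemma for the square of functors.
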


If $G$ is semisimple, then the theorem says 
\begin{align*}
\cW(J_G) \simeq \Coh(T_{sc}^\vee\sslash W)^{\pi_1(G^\vee)}. 
\end{align*}
In this case, the functor from the $A$-side $\cW(J_G)$ to the $B$-side $\Coh(T^\vee_{sc}\sslash W)^{\pi_1(G^\vee)}$ in (\ref{thm: eq mirror J_G}) on the object level can be described quite explicitly. The integrable system $J_G\rightarrow \fc$ has a collection of sections, called the Kostant sections, indexed by the center elements of $G$. These turn out to be a set of generators of the wrapped Fukaya category. On the other hand, the $\pi_1(G^\vee)$-equivariant coherent sheaves on $T^\vee_{sc}\sslash W$ (which can be identified with the affine space of dimension $n=\text{rank}(G)$) is generated by a collection of equivariant sheaves which come from putting different equivariant structures, indexed by $ \pi_1(G^\vee)^*$, on the structure sheaf $\cO_{T^\vee_{sc}\sslash W}$. The mirror functor matches these two collections of generators through the canonical isomorphism $\cZ(G)\cong \pi_1(G^\vee)^*$.

\subsection{Example of $G=SL_2(\bC)$ and idea of proof}

In this section, we illustrate some of the key geometric features of $J_G$ through the example of $G=SL_2(\bC)$, and we will give some sketch of the proof for Theorem \ref{thm: J_G, adjoint} in the adjoint type case. The general case Theorem \ref{thm: J_G, general} can be deduced from Theorem \ref{thm: J_G, adjoint} by the monadicity of a natural functor 
\begin{align*}
\cW(J_G)\rightarrow \cW(J_{G_\ad}\times T^*\big(\cZ(G)/\cZ(G^\der)\big))\simeq  \cW(J_{G_\ad})\otimes \cW(T^*\big(\cZ(G)/\cZ(G^\der)\big)), 
\end{align*}
where the latter equivalence is from the Kunneth formula in \cite{GPS2}, that is mirror to the pullback (i.e. forgetful) functor 
\begin{align*}
\Coh((T/\cZ(G^\der))^\vee\sslash W)^{\cZ(G^\der)^*}\longrightarrow \Coh((T/\cZ(G^\der))^\vee\sslash W).
\end{align*} 

\subsubsection{Example of $G=SL_2(\bC)$}

For $G=SL_2(\bC)$, the base of the integrable system $\fc=\ft^*\sslash W$ is identified with $\bA^1$, coming from taking the determinant of any traceless $2\times 2$-matrix. For any generic point $a\in \bA^1\backslash\{0\}$, we can represent it by the diagonal matrix $\mathrm{diag}[a, -a]$ (or any element in its conjugacy class), and the fiber over $a$ can be identified with its centralizer, the standard maximal torus $T$ (diagonal $2\times 2$-matrices with determinant 1). For the point $0\in \bA^1$, it should be represented by the (conjugacy class of) nilpotent matrix $\begin{bmatrix}0&0\\
1&0
\end{bmatrix}$, and the fiber over it can be identified with its centralizer in $G$, consisting of matrices of the form 
\begin{align*}
\begin{bmatrix}1&0\\
*&1
\end{bmatrix}, \begin{bmatrix}-1&0\\
*&-1
\end{bmatrix}, \text{ where }*\text{ can be any complex number}.
\end{align*}
In particular, the central fiber is a disjoint union of two affine lines. There is a canonical $\bC^\times$-action on $J_G$, whose flow lines are indicated in Figure \ref{figure: J_SL_2}. The corresponding $\bR_+$-action (after taking square root) is the flow of a Liouville vector field. 

\begin{figure}[htbp]
\centering
\includegraphics[width=3in]{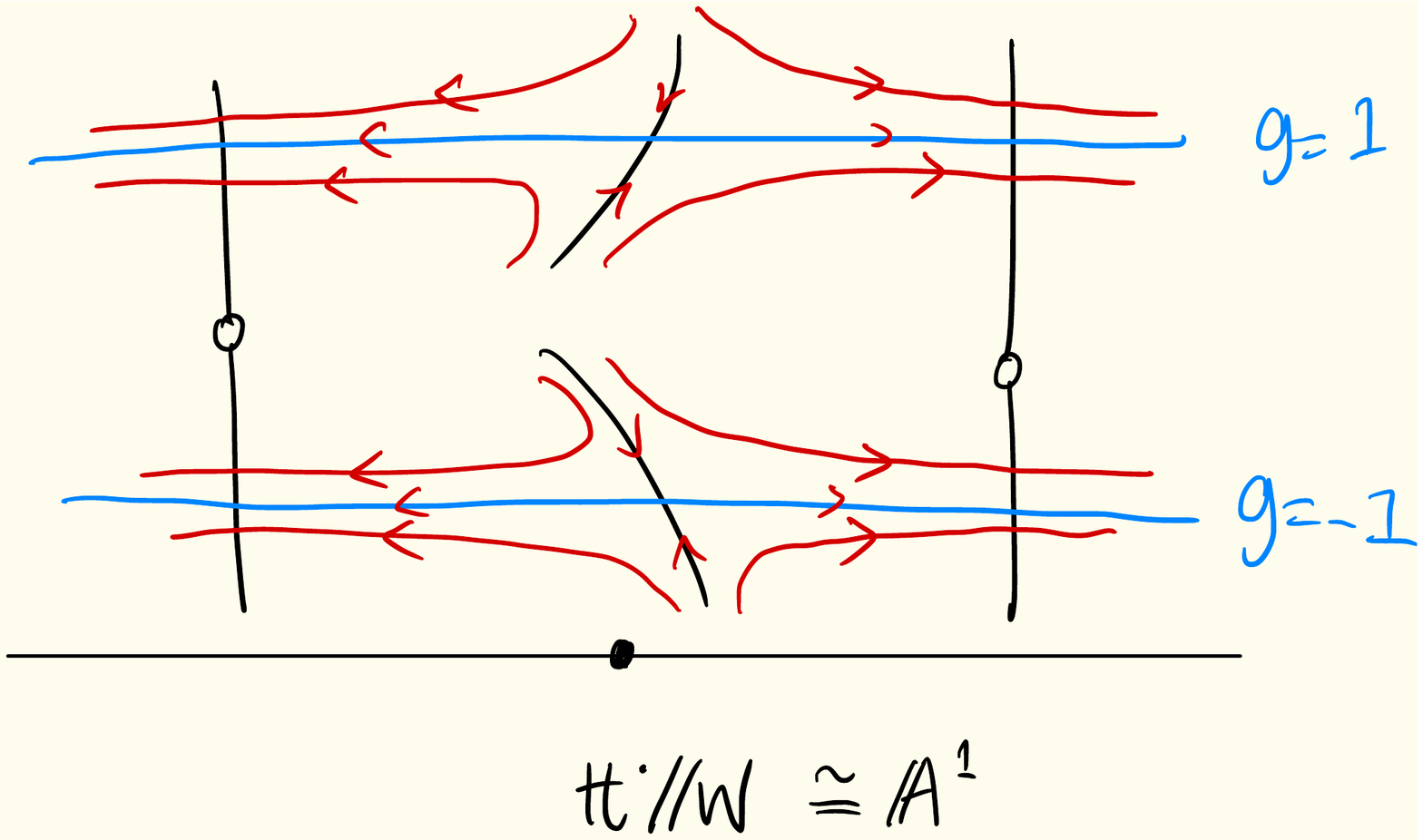} 
\caption{A picture of $J_{SL_2(\bC)}\rightarrow \fc\cong \bA^1$}\label{figure: J_SL_2}
\end{figure}

There are two horizontal sections of $\chi: J_{SL_2}\rightarrow \fc$, corresponding to the union of $g=\pm I$ in each fiber (recall each fiber is a centralizer and in particular a group). These are the Kostant sections. Away from the Kostant sections, there is an interesting symplectic identification 
\begin{align*}
J_{SL_2}- \{g=\pm I\}\cong T^*T,
\end{align*} 
which is \emph{not} obvious from the above picture (Figure \ref{figure: J_SL_2}). 
Using this, one can build $J_{SL_2}$ from a handle attachment by attaching two critical handles (a handle is called \emph{critical} if the core has the dimension of a Lagrangian), each has core a connected component of the central fiber, to $T^*T$\footnote{Here $T^*T$ is equipped with a different Liouville 1-form than the standard one. In particular, $J_{SL_2}$ as a Liouville sector is \emph{not} from attaching handles to the sector $T^*S^1\times T^*[0,1]$. In fact, the latter is replaced by $T^*S^1\times T^*(0,1]\cong T^*S^1\times \bC_{\Re z\leq 0}$.}. Then the Konstant sections become the ``linking discs" (i.e. normal slices to the cores). Furthermore, one can endow $J_{SL_2}$ with a Weinstein sector structure (in the sense of \cite{GPS1}), and obtains an arborealized Lagrangian skeleton in the sense of \cite{Nadler},  as follows (Figure \ref{figure: core}). Here we have two Lagrangian caps attached to a semi-infinite annulus $S^1\times [1, \infty)$ along two circles intersecting in an interesting way\footnote{Regarding microlocal sheaves on the Lagrangian skeleton, they should vanish near $S^1\times \{1\}$.}.  It is also easy to get the skeleton of $J_{PGL_2}$ by quotienting out the natural $\bZ/2\bZ$ symmetry in the picture, resulting in one Lagrangian cap attached to an annulus along an immersed circle wrapping around the puncture twice with one self-crossing. \\

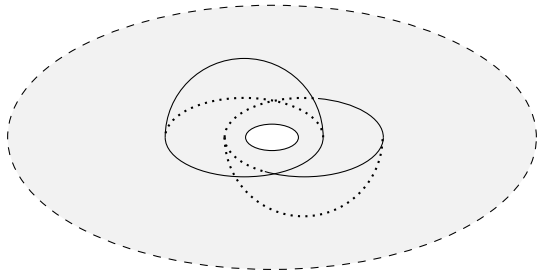
\begin{figure}[h]
\centering
 \begin{tikzpicture}
\path [draw=none,fill=gray, fill opacity = 0.1,even odd rule] (4.37,1.5) ellipse (10pt and 5pt) (4.37, 1.5) ellipse (100pt and 50pt);
   \draw[thick, dotted] (5.05,1.5)  arc[x radius = 10.5mm, y radius=5.25mm, start angle= 0, end angle= 180];
   \draw (5.05,1.5)  arc[x radius = 10.5mm, y radius=5.25mm, start angle= 360, end angle= 180];
   \draw (5.05,1.5)  arc[radius = 10.5mm, start angle= 0, end angle= 180];
    \draw[thick, dotted] (3.74,1.5)  arc[x radius = 10.5mm, y radius = 5.25mm, start angle= 180, end angle= 80];  
     \draw[thick, dotted] (3.74,1.5)  arc[x radius = 10.5mm, y radius = 5.25mm, start angle= 180, end angle= 240]; 
      \draw(5.85,1.5)  arc[x radius = 10.5mm, y radius = 5.25mm, start angle= 0, end angle= 80]; 
      \draw(5.85,1.5)  arc[x radius = 10.5mm, y radius = 5.25mm, start angle= 360, end angle= 240]; 
       \draw[thick, dotted] (3.74,1.5)  arc[radius = 10.5mm, start angle= 180, end angle= 360];  
      \draw (4.37,1.5) ellipse (10pt and 5pt);
      \draw[dashed]  (4.37, 1.5) ellipse (100pt and 50pt);
           \end{tikzpicture}
           \caption{Picture of an arborealized Lagrangian skeleton for $J_{SL_2(\bC)}$.}\label{figure: core}
\end{figure}

\subsubsection{Idea of proof of Theorem \ref{thm: J_G, adjoint}}\label{subsubsec: idea of proof}

First, for any semisimple Lie group $G$,  we prove that $J_G$ admits a Bruhat decomposition\footnote{During the preparation of the paper, the author learned that similar features have been observed in \cite{Teleman}.} indexed by subsets $S\subset \Pi$ of the set of simple roots $\Pi$ of $G$ (associated to a fixed principal $\fsl_2$-triple), based on an equivalent definition of $J_G$ as a Whittaker type Hamiltonian reduction. This roughly induces a Weinstein handle decomposition. For $G=SL_2(\bC)$, $\Pi$ has exactly one element, and we have $S=\Pi$ corresponding to the Kostant sections $\{g=\pm I\}$, and $S=\emptyset$ corresponding to the complement, which is isomorphic to $T^*\bC^\times$. For a general $G$, $S=\Pi$ always gives the Kostant section(s) and $S=\emptyset$ always gives $T^*T$ (but the Liouville form is somewhat different from the standard one). 

Second, we give natural partial compactifications of $J_G$ as Liouville sectors, among which there are (generalized) Weinstein sectors. With appropriate choices of such Weinstein sectors, we obtain a skeleton of $J_G$ as for the case $G=SL_2(\bC)$, with each Bruhat ``cell" contributing one component of the skeleton.  We further show that the cocores to some of the critical handles, which are the Kostant sections, generate the partially wrapped Fukaya category of $J_G$ (using general results from \cite{GPS1, GPS2, CDGG}).

Third, assuming $G$ is of adjoint type, the only Kostant section $\Sigma_I:=\{g=I\}$ generates $\cW(J_G)$. So to prove the HMS result (\ref{eq: thm J_G, adjoint}), we just need to compute $\End(\Sigma_I)$. The first step is to define appropriate wrapping Hamiltonians on $J_G$, so that $\End(\Sigma_I)$ matches with $\bC[T^\vee\sslash W]$ as vector spaces. The second step, which is the main step, is to use the funtoriality of inclusions of Weinstein sectors (plus other geometric information) to show the two rings are isomorphic. This step is somewhat indirect. The rough idea is that the Bruhat ``cell" corresponding to $S=\emptyset$, denoted by $\cB_{w_0}$, gives a sector inclusion $\cB_{w_0}^\dagg\cong T^*\overline{T}\hookrightarrow \overline{J}_G$ for a subsector $\cB_{w_0}^\dagg\subset \cB_{w_0}$ (see Subsection \ref{subsubsec: conic} for the precise formulation)\footnote{We remark that there is another adjoint pair for stop/handle removal, which is trivial because $\cW(\cB_{w_0})\simeq 0$.  
}, which induces an adjoint pair of functors between the ind-completion of $\cW(T^*T)$ and the ind-completion of $\cW(J_G)$ (cf. \cite{GPS1}; in the current case the adjoint pair is actually well defined between the wrapped Fukaya categories). For example, for the Lagrangian skeleton Figure \ref{figure: core}, the adjoint functors correspond to restriction and co-restriction between (wrapped) microlocal sheaves on the whole skeleton and local systems on the outer annulus which is disjoint from the attaching caps. Under mirror symmetry, this corresponds to the pushforward and pullback functors between $\Coh(T^\vee)$ and $\Coh(T^\vee\sslash W)$ along the projection $T^\vee\rightarrow T^\vee\sslash W$. Noting that the skyscraper sheaves on $T^\vee$ are mirror to conormal bundles $L_0$ of the maximal compact subtorus $T_{\cpt}\subset T$, equipped with a rank $1$ local system $\check{\rho}\in \Hom(\pi_1(T), \bC^\times)\cong T^\vee$, our approach is based on Floer calculations involving these conormal bundles and the Kostant section $\Sigma_I$. One of the key facts that we establish can be summarized as follows:

\begin{prop}[cf. Proposition \ref{prop: L_0, part 2} and \ref{prop: L_0, W} for the precise statement]
Under the natural functor $co\text{-}res: \cW(T^*T)\rightarrow \cW(J_G)$, the objects $(L_0, \check{\rho})$ are sent to ``skyscraper objects", i.e. their morphism spaces with $\Sigma_I$ are of rank $1$. Moreover, their images are $W$-invariant in the sense that $co\text{-}res(L_0, \check{\rho})\cong co\text{-}res(L_0, w(\check{\rho}))$ for all $w\in W$. 
\end{prop}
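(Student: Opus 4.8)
The plan is to establish the two claims — "skyscraper" (rank-one morphism spaces with $\Sigma_I$) and $W$-invariance — by combining the functoriality of the Weinstein sector inclusion $\cB_{w_0}^\dagg \cong T^*\overline{T}\hookrightarrow \overline{J}_G$ with explicit Floer-theoretic computations in $T^*T$ and the geometry of the Bruhat decomposition. First I would recall that under the mirror equivalence $\cW(T^*T)\simeq \Coh(T^\vee)$, the object $(L_0,\check\rho)$ — the conormal of the compact subtorus $T_\cpt$ twisted by the rank-one local system $\check\rho\in\Hom(\pi_1(T),\bC^\times)\cong T^\vee$ — corresponds to the skyscraper sheaf $\cO_{\check\rho}$ at the point $\check\rho\in T^\vee$. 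The functor $co\text{-}res\colon \cW(T^*T)\to\cW(J_G)$ is the one induced on wrapped categories by the subsector inclusion, and (assuming, as stated in the excerpt, the general results of \cite{GPS1,GPS2,CDGG}) it is mirror to the pushforward $\Coh(T^\vee)\to\Coh(T^\vee\sslash W)$ along $q\colon T^\vee\to T^\vee\sslash W$. So on formal grounds one expects $co\text{-}res(L_0,\check\rho)$ to be mirror to $q_*\cO_{\check\rho}$, which is the skyscraper $\cO_{[\check\rho]}$ at the image point — a sheaf with $\Hom(\cO_{T^\vee\sslash W},\cO_{[\check\rho]})\cong\bC$, whence rank one against the generator $\Sigma_I$; and $q_*\cO_{\check\rho}\cong q_*\cO_{w(\check\rho)}$ since $w(\check\rho)$ and $\check\rho$ lie in the same $W$-orbit. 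However, the point of the proposition is that we are \emph{not} allowed to assume the mirror equivalence for $J_G$ — that is precisely what is being proved — so the argument must be made intrinsically on the $A$-side.

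The core of the proof, therefore, is to compute $\Hom_{\cW(J_G)}(\Sigma_I, co\text{-}res(L_0,\check\rho))$ directly. I would do this in two steps. Step one: use the adjunction. The inclusion $\cB_{w_0}^\dagg\hookrightarrow \overline{J}_G$ yields an adjoint pair, and $co\text{-}res$ is left adjoint to the restriction functor $res\colon\cW(J_G)\to\cW(T^*T)$ (restriction of a Lagrangian in $\overline J_G$ to the subsector). Hence
\begin{align*}
\Hom_{\cW(J_G)}\bigl(co\text{-}res(L_0,\check\rho),\,\Sigma_I\bigr)\;\simeq\;\Hom_{\cW(T^*T)}\bigl((L_0,\check\rho),\,res(\Sigma_I)\bigr).
\end{align*}
So everything reduces to identifying $res(\Sigma_I)$ as an object of $\cW(T^*T)\simeq\Coh(T^\vee)$, and then computing $\Hom$ of a skyscraper against it. The expected answer is that $res(\Sigma_I)$ is (a twist/shift of) the structure sheaf $\cO_{T^\vee}$, or more precisely the restriction of the Kostant section to the open Bruhat cell $T^*T$ — geometrically, $\Sigma_I\cap \cB_{w_0}^\dagg$ is a section of $T^*T\to T$, i.e. the graph of an exact one-form, hence Hamiltonian-isotopic in the sector to the zero section, which is mirror to $\cO_{T^\vee}$. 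Granting that, $\Hom((L_0,\check\rho),\cO_{T^\vee})\cong\Hom_{\Coh(T^\vee)}(\cO_{\check\rho},\cO_{T^\vee})$ — but one must be slightly careful: $\cO_{\check\rho}$ is the structure sheaf of a point, and $R\Hom(\cO_{\mathrm{pt}},\cO_{T^\vee})$ in a smooth $n$-dimensional variety is $\bC$ placed in degree $n$ (by local duality / the Koszul resolution). Either way it is rank one, which is the claim. I should double-check the direction of the adjunction and handle the grading shift, but the rank-one conclusion is robust.

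Step two, $W$-invariance: here the geometry of the Bruhat decomposition enters. The point is that the open cell $\cB_{w_0}\subset J_G$ carries a residual $W$-action (coming from $J_G = (\text{affine blowup of } T^*T)/W$, so that the open stratum before quotienting is $W$ copies of a $T^*T$-like piece, or equivalently $T^*T$ with its $W$-action by the Weyl group acting on the torus), and the inclusion $\cB_{w_0}^\dagg\hookrightarrow\overline J_G$ is $W$-equivariant in a suitable sense — composing $co\text{-}res$ with the automorphism of $\cW(T^*T)$ induced by $w\in W$ acting on $T$ changes the source by $w$ but leaves the target map into $\cW(J_G)$ unchanged up to natural isomorphism, because in $J_G$ the $W$-translates of the open cell have all been glued/quotiented together. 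Since the $W$-action on $T$ sends the local system $\check\rho$ to $w(\check\rho)$ (while preserving $L_0 = $ conormal of $T_\cpt$, as $W$ acts on $T_\cpt$), we get $co\text{-}res(L_0,\check\rho)\cong co\text{-}res(L_0,w(\check\rho))$ as objects of $\cW(J_G)$. The hard part, and the main obstacle, is making this last equivariance statement precise and rigorous: one must track how the sector inclusion interacts with the $W$-quotient defining $J_G$, verify that the relevant Hamiltonian isotopies can be chosen $W$-equivariantly (or that the ambiguity is absorbed into the identifications), and confirm that $res(\Sigma_I)$ is genuinely $\cO_{T^\vee}$ on the nose rather than some nontrivial line bundle — the latter would still give rank-one $\Hom$'s but could a priori interfere with the clean $W$-invariance. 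I would isolate the precise compatibility as a lemma about the $W$-equivariant structure of the open Bruhat stratum and the subsector $\cB_{w_0}^\dagg$, proved using the Hamiltonian-reduction description of $J_G$, and then the proposition follows formally.
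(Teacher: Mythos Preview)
Your proposal has two genuine gaps, one circular and one based on a false geometric premise.

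\textbf{The rank-one claim.} You plan to use the adjunction $\Hom_{\cW(J_G)}(co\text{-}res(L_0,\check\rho),\Sigma_I)\simeq\Hom_{\cW(T^*T)}((L_0,\check\rho),res(\Sigma_I))$ and then identify $res(\Sigma_I)$ with the zero section by claiming that $\Sigma_I\cap\cB_{w_0}^\dagg$ is the graph of an exact one-form. But $\Sigma_I$ lies entirely in the Bruhat cell $\cB_1$ (for $S=\Pi$), which is \emph{disjoint} from $\cB_{w_0}$; the intersection $\Sigma_I\cap\cB_{w_0}$ is empty (see Example~\ref{example: B_w0}). The functor $res$ is the categorical right adjoint, not naive geometric intersection, and a priori $res(\Sigma_I)$ is only an ind-object. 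The paper does eventually prove $res(\Sigma_I)\cong\cO_{T^\vee}$ (Proposition~\ref{prop: A_G commutative}(ii)), but that is \emph{deduced from} Proposition~\ref{prop: L_0, part 2} via an abstract module-theoretic argument (Lemma~\ref{lemma: M, countable, lb}), not an input to it. Your route is circular.

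\textbf{The $W$-invariance.} You posit a residual $W$-action on $\cB_{w_0}$ making the sector inclusion $W$-equivariant. But the paper states explicitly (remark following Proposition~\ref{prop: intro skyscraper}) that $W$ does \emph{not} act on $J_G$. The $W$-action on $T^*T$ is broken by the embedding $\cB_{w_0}\hookrightarrow J_G$, and there is no equivariance to invoke. The paper's proof of Proposition~\ref{prop: L_0, W} is instead a direct Floer computation of $\Hom_{\cW(J_G)}((L_0,\check\rho),(L_0,w_1(\check\rho)))$: for regular $\check\rho$, a clean-intersection spectral sequence collapses to $H^*(T;\bC)$, and the one-dimensional $H^0$ forces the two simple $\cA_G$-modules to be isomorphic.

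The paper's actual method for both claims is a hands-on Floer calculation inside $J_G$: one constructs a specific positive linear Hamiltonian $H$ pulled back from $\fc$ (built from a $W$-invariant smoothing of a polytope in $\ft_\bR$) and analyzes $\varphi_H^s(\Sigma_I)\cap L_0$ region by region. The delicate part---handled by the analysis of \S\ref{subsec: analysis cB_w0}--\ref{subsec: walls}, especially Proposition~\ref{prop: g_S, natural, w}---is to show that near the walls of $\fc_\bR^\sing$ the wrapped $\Sigma_I$ never meets $L_0$, so all intersections are pushed into the regular region where the picture reduces to explicit calculations in the integrable-system model $\chi^{-1}(\bD^\circ/W)\cong T\times(\bD^\circ\cap(\ft_\bR^+\times D_{c,\delta}))$.
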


We also prove a non-exact version (though not logically needed for the proof of the main theorem) which is more intuitive from SYZ mirror symmetry perspective, and whose proof is relatively easier. For this, we consider generic \emph{shifted} conormal bundles of $T_{\cpt}$ and we work over the Novikov field $\sfLambda$. 

\begin{prop}[cf. Proposition \ref{prop: L_xi, S_e, part 1} for the precise statement]\label{prop: intro skyscraper}
Under the natural functor $\cW(T^*T;\sfLambda)\rightarrow \cW(J_G;\sfLambda)$, the (generic) shifted conormal bundles of $T_{\cpt}$ give ``skyscraper objects", i.e. their morphism spaces with $\Sigma_I$ are of rank 1. Moreover, their images are $W$-invariant under the natural $W$-action on $T^*T$. 
\end{prop}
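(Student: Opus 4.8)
The plan is to realize $\cores(L_\xi)$ as an explicit, well-controlled Lagrangian in $J_G$ and then to compute its Floer complex with the Kostant section $\Sigma_I$ essentially by a transversality count, working over $\sfLambda$ so that the only remaining subtlety is bookkeeping of Novikov valuations. First I would track the image of a generic shifted conormal $L_\xi$ of $T_{\cpt}$ under the explicit subsector inclusion $\cB_{w_0}^{\dagg}\cong T^*\overline{T}\hookrightarrow\overline{J}_G$ attached to the $S=\emptyset$ Bruhat cell. Since $L_\xi$ is a small perturbation of $N^*_{T_{\cpt}}$, its core — the compact torus $T_{\cpt}$ — stays in the bulk, while its non-compact (fiber-direction) ends lie in the region modeled, as in the $G=SL_2$ case, on a product of $T^*T_{\cpt}$ with half-plane Liouville sectors; under the partial compactification $\overline{J}_G$ these ends can then be put in standard position relative to the critical handle whose cocore is $\Sigma_I$. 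In SYZ language this says: $L_\xi$ is a fiber of the natural Lagrangian torus fibration on $T^*T$ with holonomy recording $\xi\in T^\vee(\sfLambda)$, $\Sigma_I$ is a Lagrangian section of the integrable system $\chi\colon J_G\to\fc$, and $\cores(L_\xi)$ is, up to a compactly supported Hamiltonian isotopy in $J_G$, a well-controlled Lagrangian whose position in the base $\fc$ is exactly the image of $\xi$ under $T^\vee\to T^\vee\sslash W$.

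Next I would wrap $\cores(L_\xi)$ against $\Sigma_I$ and verify that there is a single transverse intersection point. The geometric input, from the Weinstein structure of $\overline{J}_G$, is that $\Sigma_I$ is the linking disk of a critical handle whose core is met exactly once by the Lagrangian modeling $\cores(L_\xi)$; so the generators of the wrapped complex are this one point, together with whatever extra intersection points are created by positive wrapping near $\partial^\infty\overline{J}_G$. Working over $\sfLambda$ with $\xi$ generic, the single geometric intersection is guaranteed, and the extra wrapping generators either do not appear for the wrapping Hamiltonian already introduced in the computation of $\End(\Sigma_I)$, or are killed in cohomology by an action/valuation argument. This is exactly the point at which the Novikov setting is easier than the exact one: one is free to perturb into general position and reduce to an intersection count instead of arguing inside exact Floer theory. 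One concludes that the (wrapped) Floer cohomology of $\cores(L_\xi)$ with $\Sigma_I$ has total rank one (concentrated in a single degree).

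For the $W$-invariance I would produce a compactly supported Hamiltonian isotopy in $J_G$ carrying $\cores(L_\xi)$ to $\cores(L_{w\xi})$, first for a simple reflection $w=s_\alpha$ and then composing over a reduced word. Although the natural $W$-action on $T^*T$ does not extend to $J_G$, the space $J_G$ is obtained from an affine blowup of $T^*T$ along the root walls followed by the $W$-quotient; the extra room created by the blowup along the wall of $\alpha$ lets one slide the holonomy of the conormal torus across that wall, which is precisely the action of $s_\alpha$ on $\xi$. Equivalently one can argue functorially: the identification $\cB_{w_0}^{\dagg}\cong T^*\overline{T}$ is canonical only up to the $W$-action, and the two resulting sector inclusions into $\overline{J}_G$ are isotopic, so $\cores\circ w\simeq\cores$ as functors; applied to $L_\xi$, together with $w(L_\xi)\simeq L_{w\xi}$, this yields $\cores(L_\xi)\cong\cores(L_{w\xi})$. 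Genericity of $\xi$ is used throughout to keep the isotopies away from the walls and from the other critical handles.

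The main obstacle is the second step above — nailing down the model of $\cores(L_\xi)$ near the handle carrying $\Sigma_I$ and excluding spurious contributions to the wrapped differential after positive wrapping. Once it is established that the Lagrangian modeling $\cores(L_\xi)$ meets the relevant handle core transversally in a single point, the rank-one statement over $\sfLambda$ is essentially formal; the real work is in setting up that model, for which the detailed Weinstein/Bruhat description of $\overline{J}_G$ and the wrapping Hamiltonians from the earlier sections are precisely what is needed. The $W$-invariance, though geometrically transparent, likewise requires some care to make the ambient isotopies honest and compactly supported.
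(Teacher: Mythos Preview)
There are genuine gaps in both halves of your plan, and the paper's route is quite different from what you sketch. For the rank-one statement, your picture of ``$\cores(L_\xi)$ meets the handle core in one point'' does not hold and in any case does not compute what you want. The core here is $\chi^{-1}([0])$, while $\Sigma_I$ is its cocore; a single transverse intersection with the core would, via the wrapping exact triangle, exhibit $\cores(L_\xi)$ as a shifted copy of $\Sigma_I$ (hence with $\Hom(\Sigma_I,-)\cong\cA_G$, infinite rank over $\sfLambda$), not as a skyscraper. Moreover, for a generic $\zeta\in\ft_c^{\reg}$ and $R\gg 1$ the shifted conormal actually misses $\chi^{-1}([0])$ entirely (already visible in $PGL_2$: on $\cB_{w_0}\cong T^*\bC^\times$ one has $\chi^{-1}([0])=\{h=-1/t^2\}$, and $|h|=R^2$ with $t\in\zeta+\bR$ has no solution once $R>1/|\zeta|$), and $\Sigma_I\subset\cB_1$ is disjoint from $\cB_{w_0}$ so there is no ``single geometric intersection'' before wrapping either. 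The paper instead uses the Lagrangian correspondence $J_G\leftarrow J_G\times_\fc\ft^*\rightarrow T^*T$, which intertwines the wrapping flow of $H_R$ on $J_G$ with that of $\widetilde{H}_R$ on $T^*T$: one constructs an explicit Hamiltonian deformation $\cL_\zeta$ of $L_0^\zeta$ so that $\widehat{\cL}_\zeta\subset T^*T$ has a prescribed product form near $\ft_c$ (Lemma~\ref{lemma: L_xi, R, clean}), designs a wrapping Hamiltonian that splits over this region, and then reads off that $\widehat{\cL}_\zeta\cap\varphi^{\pm s}_{\widetilde H_1}(\widehat{\Sigma}_I)$ consists of a single $W$-orbit of transverse points for $|s|\gg 1$ (Lemma~\ref{lemma: step 1,2,3}); transforming back to $J_G$ collapses this orbit to one point.

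For $W$-invariance, your functorial argument (b) does not work: the identification $\cB_{w_0}\cong T^*T$ is canonical (for the fixed Borel), and the $W$-action on $T^*T$ genuinely does \emph{not} extend to $J_G$ (the paper stresses this), so there is no isotopy of sector inclusions producing ``$\cores\circ w\simeq\cores$'' for free. The ``slide across the wall'' idea (a) is in the right spirit but is not how the argument is carried out. The paper does not build an ambient Hamiltonian isotopy from $\cL_\zeta$ to $\cL_{w(\zeta)}$; rather it computes $\Hom_{\cW(J_G;\sfLambda)}((\cL_\zeta,\check\rho),(\cL_{w(\zeta)},w(\check\rho)))$ by the same correspondence trick---in $T^*T$ the intersection is clean along a $W$-orbit of compact tori, which becomes a single $T_{\cpt}$-orbit in $\chi^{-1}([\zeta])$, and the local systems match precisely when $\check\rho_1=\check\rho_2$, giving $H^*(T;\sfLambda)$. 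Since part (ii) already shows both objects are simple (rank one over $\sfLambda$), the degree-zero class in $H^0\cong\sfLambda$ is then an isomorphism. The missing ingredient throughout your proposal is this systematic use of the correspondence $J_G\times_\fc\ft^*$ to reduce Floer computations in $J_G$ to computations in $T^*T$, together with the explicit construction of $\cL_\zeta$ (Subsections~\ref{subsubsec: conic}--\ref{subsubsec: def Lambda_R}) tailored so that $\widehat{\cL}_\zeta$ is tractable there.
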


We give a heuristic explanation why Proposition \ref{prop: intro skyscraper} holds. The integrable system $\chi: J_G\rightarrow \fc$ suggests that the ``skyscraper objects" in $\cW(J_G)$ are the fibers\footnote{We note that these fibers are not well defined objects in $\cW(J_G)$, because their boundaries are inside the ``finite" boundary of $J_G$.}, which follows from basic principles in SYZ mirror symmetry. The shifted conormal bundles of $T_{\cpt}$ can be thought as modeled on the generic torus fibers of $\chi$, with each $W$-orbit of shifted conormal bundles modeled on the same fiber. This reflects some intriguing geometric relations between a generic torus fiber of the integrable system and the base manifold $T$ in $\cB_{w_0}\cong T^*T$: while the generic shifted conormal bundles of $T_{\cpt}$ in a  $W$-orbit do \emph{not} talk to each other in $\cB_{w_0}$, they become ``close to'' Hamiltonian isotopic in $J_G$ and the bridge is given by the common torus fiber that they are modeled on (note that $W$ does \emph{not} act on $J_G$).

We make a couple of more remarks. First, there is a clear restriction and induction pattern among standard Levi subgroups (as in a related way expected in \cite{Teleman}) in terms of restriction and co-restriction functors between wrapped Fukaya categories for inclusions of the corresponding subsectors (and equivalently on microlocal sheaf categories). We use this in the proof of the main theorem and elaborate it more in Subsection \ref{subsec: induction}. Second, it is tempting to try to prove the HMS result by replacing $\cW(J_G)$ with $\mu\Shv^w(J_G)$, the wrapped microlocal sheaf category (cf. \cite{Nadler2, NaSh}) for the Lagrangian skeleton of $J_G$. However, due to the complicatedness of the singularities of the Lagrangian skeleton, the author does not know an effective way to directly compute the sheaf category in high dimensions. 

\subsection{Related works and future directions}

The main theorem (Theorem \ref{thm: J_G, adjoint}) can be viewed as an ``analytic" version of a theorem of Lonergan \cite{Lonergan} and Ginzburg \cite{Ginzburg} on the description of the category of bi-Whittaker $D$-modules (see \emph{loc. cit.} for the precise statement)
\begin{align}\label{eq: D-module}
D\text{-}mod(N\overset{\psi}{\backslash}G\overset{\psi}{/}N)\simeq \text{QCoh}(``\ft^*\sslash W_{\text{aff}}"),
\end{align}
where the generic Lie algebra character $\psi:\fn\rightarrow \bC$ of the maximal unipotent subgroup $N$ is the same as the $f$ in Subsection \ref{subsec: def of J_G} that realizes $J_G$ as a bi-Whittaker Hamiltonian reduction of $T^*G$, and $``\ft^*\sslash W_{\text{aff}}"$ is some coarse quotient $``(\ft^*/\Lambda)\sslash W"$ with $\Lambda$ the weight lattice of $T$ which is identified with the coweight lattice of $T^\vee$ (see also \cite{BZG}). Heuristically, if we replace the left-hand-side of (\ref{eq: D-module}) by the partially wrapped Fukaya category of $J_G$, and think of $``(\ft^*/\Lambda)\sslash W"$ analytically as $T^\vee\sslash W$ (and replace $\text{QCoh}$ by $\text{Coh}$), then this is exactly the equivalence of categories in the main theorem. However, there is no direct link between these two versions. 

As explained in \cite{BZG}, the result (\ref{eq: D-module}) is important for understanding module categories over the finite Hecke category $\widehat{\cH}_G$ of bimonodromic sheaves on $N\backslash G/N$, which is of particular interest in geometric representation theory. For example, in Betti Geometric Langlands program of Ben-Zvi and Nadler \cite{BZN2}, one studies sheaves with nilpotent singular support on the moduli of $G$-bundles on a curve $X$ with $N$-reductions on a finite set $S\subset X$. At each $s\in S$, there is an affine Hecke action and in particular an $\widehat{\cH}_G$-action. The $\widehat{\cH}_G$ module categories form the character field theory developed in \cite{BZN1,BZGN} that  assigns to a point a family of 3d topological field theories over $``\ft^*\sslash W_{\text{aff}}"$, thanks to the Ng$\hat{\text{o}}$-action of the bi-Whittaker category (cf. \cite{BZG}). In the Betti version, the natural action of $\Coh(T^\vee\sslash W)$ on the family of theories should correspond to the convolution action of $\cW(J_G)$. For example, using our theorem, the skyscraper sheaves on $T^\vee\sslash W$ in the B-model would give certain objects in the category of character sheaves (the assignment of the field theory to $S^1$) that act on it by convolution. The de Rham version of this has been studied in \cite{Chen}. We would like to investigate this aspect and its various applications in future work, e.g. along the line of the conjectural picture \cite[Remark 2.7]{BZG} and \cite{Teleman}.

As the symmetic monoidal structure on $\cW(J_G)$ (a consequence of the main theorem) plays an essential role in the above approach to categorical representation theory, we note that it is also expected to come naturally from the (abelian) group scheme structure on $J_G$ (cf. \cite{Pascaleff} for some developments in this direction). Roughly speaking, one can represent the functor for the monoidal structure $\cW(J_G)\otimes \cW(J_G)\rightarrow \cW(J_G)$ as a (smooth) Lagrangian correspondence $L_{\text{mon}}$ in $J_G^a\times J_G^a\times J_G$ (where the superscript $a$ means taking the opposite symplectic form). The main technical difficulty is caused by the ``finite" boundary of $J_G$. Namely, $L_{\text{mon}}$ will touch the ``finite" boundary of the product sector making it \emph{not} a well defined object in the wrapped Fukaya category. Alternatively, one can use microlocal sheaf theory on the Lagrangian skeleton, but we don't know how to realize this by a ``geometric" correspondence without appealing to the main theorem. 
We defer the study for a future work. Further desired results along this line would be to show that the restriction functors for sector inclusions are naturally symmetric monoidal, and there are natural compatibilities between compositions of restrictions as symmetric monoidal functors.  

Lastly, we would like to point out that the universal centralizers $J_G$ constitute an important class of the Coulomb branches mathematically defined in \cite{BFN}. It would be interesting to extend the present work to some other Coulomb branches whose HMS is currently unknown.

\subsection{Organization}
The organization of the paper goes as follows. In Section \ref{sec: Bruhat}, we review the definition(s) of $J_G$, and prove the Bruhat decomposition result. We give explicit descriptions of all the Bruhat ``cells" and some important symplectic subvarieties (associated to standard Levi subgroups) built from them. In Section \ref{sec: skeleton, sector}, we give the construction of partial compactifications of $J_G$ that are naturally Liouville sectors (with canonically equivalent wrapped Fukaya categories), and we present (easy) choices that make some of them Weinstein sectors. We describe the skeleton of a resulting Weinstein sector, and show that the Kostant sections generate $\cW(J_G)$. The discussions in Subsection \ref{subsubsec: H, sm} are quite technical. For this reason we would suggest the reader to skim through it and return to it later.
In Section \ref{sec: wrapping Ham}, we define certain positive linear Hamiltonians on $J_G$, so we have a convenient calculation of $\End(\Sigma_I)$ (and morphisms between different Kostant sections for a semisimple $G$), as a (graded) vector space. The upshot is that all intersection points are concentrated in degree 0, so $\End(\Sigma_I)$ is an ordinary algebra. In Section \ref{sec: HMS adjoint}, we first state the main theorem for $G$   of adjoint form and the key propositions that lead to its proof, then we develop some analysis in Subsection \ref{subsec: analysis cB_w0}-\ref{subsec: construct L_zeta} that are crucial for the proof of the key propositions. These subsections contain important geometric features of $J_G$, which in particular explain the intriguing picture behind Proposition \ref{prop: intro skyscraper}. We give the proof of the key propositions in Section \ref{subsec: proof prop}. Lastly, in Section \ref{sec: HMS reductive}, we prove the HMS result in the reductive case, and give a precise formulation of restriction and induction functors for sector inclusions associated with inclusions of Levi subgroups.

\subsection{Acknowledgement}
I would like to thank Harrison Chen, Sam Gunningham, Justin Hilburn, Oleg Lazarev, George Lusztig, David Nadler, John Pardon, Paul Seidel, Changjian Su, Dima Tamarkin and Zhiwei Yun for stimulating conversations at various stages of this project. I am grateful to David Nadler for valuable feedback on this work, and to Dima Tamarkin for help with proof of Lemma \ref{lemma: countable, 0}. I am also grateful to the anonymous referee for very helpful comments and suggestions. The author was partially supported by an NSF grant DMS-1854232.

\section{Definition(s) of $J_G$ and the Bruhat decomposition}\label{sec: Bruhat}

\subsection{Definition(s) of $J_G$ and a Lagrangian correspondence}\label{subsec: def of J_G}

In this subsection, we review some equivalent definitions of $J_G$ and a canonical Lagrangian correspondence, which will be used in later sections. The exposition is roughly following \cite[Section 2]{Ginzburg}, and we refer the reader to \emph{loc. cit.} for further details. We will focus on the semisimple case, since for a reductive group $G$, we have $J_G=J_{G^\der}\times_{\cZ(G^\der)}T^*\cZ(G)$.  

Let $G$ (resp. $\fg$) be any complex semisimple Lie group (resp. its Lie algebra). Let $\fg^{\reg}$ (resp. $\fg^{*,\reg}$) be the (Zariski open dense) subset of regular elements in $\fg$ (resp. $\fg^*$), i.e. the elements whose stabilizer with respect to the adjoint (resp. coadjoint) action by $G$ has dimension equal to $n:=\text{rank} G$ (which is the minimal possible dimension). To simplify notations, we often identify $\fg^*$ with $\fg$ using the Killing form unless otherwise specified, hence their regular elements. Let $\fc:=\fg\sslash G$ be the adjoint quotient of $\fg$. Fix any principal $\fsl_2$-triple $(e,f,h)$, and let $\cS:=f+\ker \ad_e\subset \fg^{\reg}$ be the Kostant slice. The Kostant slice gives a section of the adjoint quotient map $\fg\longrightarrow \fc$ (and its restriction to $\fg^{\reg}$), by a theorem of Kostant \cite{Kostant}. 

Let $T^{*,\reg}G\subset T^*G\cong G\times \fg$ (identified using left translations) be the regular part of the cotangent bundle of $G$, consisting of pairs $(g, \xi)\in G\times \fg^{\reg}$. Consider the locus in $T^{*,\reg}G$ 
defined by 
\begin{align}\label{eq: scrZ}
&\scrZ_G:=\{(g,\xi)\in T^{*,\reg}(G): \Ad_g\xi=\xi\},
\end{align}
which is acted by $G$ through the adjoint action on both factors. The obvious projection $\scrZ_G\longrightarrow \fg^{\reg}$ represents $\scrZ_G$ as a $G$-equivariant  abelian group scheme over $\fg^{\reg}$. The categorical quotient $\scrZ_G\sslash G$ can be identified with the affine variety
\begin{align}\label{eq: centralizer cS}
\{(g,\xi)\in G\times \cS: \Ad_g\xi=\xi\},
\end{align}
i.e. the centralizers of the elements in the Kostant slice $\cS$. 

\begin{definition}[First definition of $J_G$]
The \emph{universal centralizer} of $G$, denoted by $J_G$, is defined to be $\scrZ_G\sslash G$, which is isomorphic to (\ref{eq: centralizer cS}). 
\end{definition}

The virtue of this definition is that it explains the name ``universal centralizer", and it exhibits $J_G$ as an abelian group scheme over $\fc$:
\begin{align*}
\chi: J_G\longrightarrow \fc,
\end{align*}
which is actually a holomorphic integrable system. See Figure \ref{figure: J_SL_2} for the case when $G=SL_2(\bC)$.

Next, we give a second definition of $J_G$, which is given by a bi-Whittaker Hamiltonian reduction of $T^*G$. To define this, we fix a Borel subgroup $B\subset G$ and a maximal torus $T\subset B$, and let $N\subset B$ be the unipotent radical. Let $\fb, \ft, \fn$ be the respective Lie algebras.  Let $\Delta\subset \ft^*$ (resp. $\Delta^+$, $\Delta^-$) be the set of roots (resp. positive roots defined by $\fb$, negative roots). Let $\Pi$ be the set of simple roots in $\Delta^+$, and let $W$ be the Weyl group associated to the root system. 

Fix a \emph{regular} element $f\in \bigoplus\limits_{\alpha\in \Pi}\fg_{-\alpha}$, and an $\fsl_2$-triple $(e, f, \sfh_0:=h)$ as above. Note that $\sfh_0=\sum\limits_{\alpha\in \Delta^+}\alpha^\vee$, where $\alpha^\vee$ is the coroot corresponding to $\alpha$. Consider the $N\times N$-Hamiltonian action on $T^*G$, induced from the left and right $N$-action on $G$.  The moment map of the Hamiltonian action is given by 
\begin{align*}
\mu: T^*G&\longrightarrow \fn^*\oplus\fn^*\cong \fn^-\oplus \fn^-\\
(g,\xi)&\mapsto (\xi\text{ mod } \fb, \Ad_g\xi\text{ mod } \fb).
\end{align*}
Since $(f,f)\in \fn^-\oplus \fn^-$ is a regular character of $N\times N$, we have 
\begin{align*}
\mu^{-1}(f, f)=\{(g,\xi): \xi\in f+\fb, \Ad_g\xi\in f+\fb\}
\end{align*}
an $N\times N$-stable coisotropic subvariety in $T^*G$. The action turns out to be free (cf. \cite{Ginzburg} for more details), and we have an identification
\begin{align}\label{eq: Ham N reduction}
\mu^{-1}(f,f)/N\times N\cong \{(g,\xi)\in G\times \cS: \Ad_g\xi=\xi\},
\end{align}
which is exactly isomorphic to $J_G$. This uses the isomorphism
\begin{align*}
N\times\cS&\overset{\sim}{\longrightarrow} f+\fb\\
(u, \xi)&\mapsto \Ad_u\xi,
\end{align*}
which is an important feature of the Kostant slice that we will frequently use without referring to it explicit.

Hence we have a second definition/characterization of $J_G$ as follows.
\begin{definition}[Second definition of $J_G$]\label{def: second J_G}
The \emph{universal centralizer} $J_G$ is defined to be the Hamiltonian reduction (\ref{eq: Ham N reduction}), which is a smooth holomorphic symplectic variety. 
\end{definition}

We remark that there are several other equivalent definitions/characterizations of $J_G$, showing different features of it, as well as its prominent role in representation theory and mathematical physics. For example, it is calculated in \cite{BFM} that the ring of functions on $J_G$, as an affine variety, is isomorphic to the equivariant homology ring $H_{\bullet}^{G^\vee(\cO)}(Gr_{G^\vee})$ of the affine Grassmannian (with the convolution product structure). In particular, it belongs to the list of Coulomb branches defined in \cite{BFN}. On the other hand, $J_G$ is also identified with the moduli space of solutions of the Nahm equations, so it has a hyperKahler structure (cf. \cite{Bielawski}). Since we will not use these features, we will not provide any further details.

We now describe a canonical $\bC^\times$-action on $J_G$, which will define a Liouville vector field as follows. 
Let 
$\gamma: \bC^\times\rightarrow T$ denote the cocharacter corresponding to $\sfh_0$.
Then the canonical $\bC^\times$-action on $J_G$ is given by 
\begin{align}\label{eq: C star action}
&s\cdot (g, \xi)=(\Ad_{\gamma(s)}g, s^2\cdot\Ad_{\gamma(s)}\xi). 
\end{align}
Note that the $\bC^\times$-action scales the symplectic form $\omega=d(\lng\xi, g^{-1}dg\rng)$ by weight $2$, and it does not depend on the choice of representatives $(g,\xi)\in \mu^{-1}(f, f)$. 
Taking the square root of the restricted $\bR_+\subset \bC^\times$-action on $J_G$, we get a Liouville flow.  Let $Z$ denote for the corresponding Liouville vector field. Note that if $G$ is adjoint, then we can turn (\ref{eq: C star action}) into a weight $1$ action by using the cocharacter $\frac{1}{2}\sfh_0$ and changing the scaling $s^2$ on the second factor by $s$. Then the action gives the holomorphic Liouville flow on $J_G$.

Lastly, we recall the Lagrangian correspondence (cf. \cite[Section 2.3]{Ginzburg}, \cite{Teleman})
\begin{align}\label{eq: Lag corresp}
J_G\overset{\pi_{J_G}}{\longleftarrow} J_G\underset{\fc}{\times}\ft^*\overset{\pi_\chi}{\longrightarrow} T^*T,
\end{align}
in which the left map is the obvious projection, the middle term can be identified with
\begin{align}\label{eq: J_G, fc, ft}
J_G\underset{\fc}{\times}\ft^*&\cong\{(g,\xi, B_1)\in G\times \cS\times G/B: \Ad_{g}\xi=\xi, \xi\in \fb_1=\text{Lie} B_1, g\in B_1\}\\
\nonumber&\cong \{(g,\xi, B_1)\in \mathscr{Z}_G\times G/B: \Ad_{g}\xi=\xi, \xi\in \fb_1=\text{Lie} B_1, g\in B_1\}\sslash G
\end{align}
and the right map $\pi_\chi$ is given by 
\begin{align}\label{eq: pi_chi, B_1}
\pi_\chi: (g,\xi, B_1)\mapsto (g \text{ mod }[B_1, B_1], \xi\text{ mod }[\fb_1, \fb_1])\in T\times \ft^*. 
\end{align}
When we refer to this Lagrangian correspondence, we read the correspondence from left to right, i.e. we view $J_G\underset{\fc}{\times}\ft^*$ as a smooth Lagrangian submanifold in $J_G^a\times T^*T$, where $J_G^a$ is the same as $J_G$ but equipped with the opposite symplectic structure. We will refer to the opposite one that is read from right to left, as the \emph{opposite} correspondence.

We comment on some good and bad features of the correspondence (\ref{eq: Lag corresp}). Some useful features include: (1) the map $\pi_{\chi}$ is $W$-equivariant  with respect to the $W$-action on  $J_G\underset{\fc}{\times}\ft^*$ induced from the $W$-action on the $\ft^*$-factor and the natural $W$-action on $T^*T$; (2) the correspondence respects the canonical $\bC^\times$-action on $J_G$ and the square of the fiber dilating $\bC^\times$-action on $T^*T$; (3) it transforms the Kostant sections to cotangent fibers in $T^*T$; (4) it transforms a generic torus fiber of $\chi$ to $|W|$ copies of torus fibers (constant sections) in $T^*T$, inducing isomorphisms from the former to each component of the latter, and it respects the group scheme structure on $J_G$ and $T^*T$. 

An essential bad feature of the correspondence is that $\pi_\chi$ is neither proper nor open. For example, it transforms the central fiber $\chi^{-1}([0])$ to the discrete set $\cZ(G)\times \{0\}$ in $T^*T$, while the whole zero-section of $T^*T$, except for $\cZ(G)\times \{0\}$, is disjoint from the image of $\pi_\chi$. For this reason, it is hard to calculate the associated functors\footnote{Even the definition of the functors (as categorical bimodules) requires technical treatments, for the Lagrangian correspondence as a smooth Lagrangian submanifold in $J_G^a\times T^*T$ (and similarly for the inverse correspondence) will have ends intersect the ``finite" boundary of the product sector, so one needs to perturb the ends in a careful way.} between wrapped Fukaya categories by geometric compositions. However, we use the correspondence (not as a functor though) in our calculations of Floer cochains in Section \ref{sec: wrapping Ham} and \ref{subsec: proof 5.6, 5.7}.

\subsection{The Bruhat decomposition}\label{section: Bruhat}
Using the second definition of $J_G$ (Definition \ref{def: second J_G}) in Subsection \ref{subsec: def of J_G} and under the same setup, we will show a Bruhat decomposition for $J_G$.   The Bruhat decomposition is induced from the projection to the double coset $N\backslash G/N$
\begin{equation*}
J_G\rightarrow N\backslash G/N. 
\end{equation*}
For each element $w\in W$, we use $\cB_w$ to denote for the corresponding Bruhat ``cell"\footnote{Although we call $\cB_w$ a Bruhat cell, it does not mean that $\cB_w$ is contractible, and this is usually not the case (cf. Proposition \ref{prop: B_w0w}).} in $J_G$.

\begin{prop}\label{prop: B_w0w}
\begin{itemize}
\item[(a)]
For any semisimple Lie group $G$, the Bruhat decomposition of the group scheme $J_G$ is indexed by 
subsets $S$ of simple roots. The stratum indexed by $S$ is $\cB_{w_0w_S}$, 
where $w_0$ is the longest element in $W$ and $w_S$ is the longest element in the Weyl group of the standard parabolic subgroup $P_S$ determined by $S$. 
\item[(b)] Let $\cZ(L_S)$ be the center of the standard Levi factor $L_S$ of $P_S$, and let $L^{\der}_S=[L_S, L_S]$ be the derived group of $L_S$. Then 
\begin{align}\label{eq: prop B_w0w}
&\cB_{w_0w_S}
\cong T^*\cZ(L_S)\times (\fl^{\der}_S\sslash L_S^{\der})
\end{align}
and it is $\bC^\times$-invariant. 
\end{itemize}
\end{prop}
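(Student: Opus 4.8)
The plan is to work with the bi-Whittaker description of Definition \ref{def: second J_G} and stratify by the $N\times N$-orbit type on $G$. For $w\in W$, the cell $\cB_w$ is the preimage of the Bruhat stratum $NwN/N$ under $J_G\to N\backslash G/N$, so $\cB_w$ is identified with the set of triples coming from pairs $(g,\xi)$ with $g\in NwN$, $\xi\in f+\fb$, $\Ad_g\xi\in f+\fb$, modulo $N\times N$. First I would record that $\Ad_g\xi\in f+\fb$ together with $\xi\in f+\fb$ is a strong constraint: writing $g=n_1 \dot w n_2$ and using the Kostant slice normalization to arrange $\xi\in\cS$ (so $n_1$ is pinned down once $\xi$ is fixed), the condition $\Ad_{\dot w n_2}\xi\in f+\fb$ forces $\Ad_{\dot w}$ of the ``leading term'' $f$ of $\xi$ to still land in $f+\fb$ after correcting by $n_2$. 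Since $f=\sum_{\alpha\in\Pi}f_\alpha$ with $f_\alpha\in\fg_{-\alpha}$ regular, $\Ad_{\dot w}f$ has components in $\fg_{-w(\alpha)}$; for this to be conjugate back into $f+\fb$ by an element of $N$, one needs $w(\alpha)\in\Delta^-$ for every simple $\alpha$ not in some subset $S$, i.e. $w$ must be of the form $w_0 w_S$ for $S\subset\Pi$, where $w_S$ is the longest element of $W_S$. This is the combinatorial heart of part (a) and I expect it to be the main obstacle: making precise, via a weight/filtration argument on $\fb$, exactly which $w$ admit solutions, and checking nonemptiness for those that do.

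Granting the index set $\{w_0 w_S : S\subset\Pi\}$, for part (b) I would analyze the cell $\cB_{w_0 w_S}$ directly. The idea is that once $g\in N w_0 w_S N$, one can use the $N\times N$ freedom to move $g$ into the standard Levi $L_S$ (indeed into $N_{L_S} w_{0,L_S} N_{L_S}$, the big cell of $L_S$), so that $\cB_{w_0 w_S}$ reduces to the analogous bi-Whittaker construction internal to $L_S$ but taken over its big Bruhat cell only. Concretely I would exhibit a map from $\cB_{w_0 w_S}$ to pairs $(\ell,\eta)$ with $\ell\in L_S$, $\eta$ in the Kostant slice of $\fl_S$ relative to the principal $\fsl_2$ in $\fl_S^\der$, subject to $\Ad_\ell\eta=\eta$ and $\ell$ lying in the open Bruhat stratum of $L_S$. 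Decomposing $L_S=\cZ(L_S)^\circ\cdot L_S^\der$ (up to finite overlap) and $\fl_S=\fz(\fl_S)\oplus\fl_S^\der$, the centralizer condition splits: on the central factor it is vacuous, contributing a $T^*\cZ(L_S)$; on the derived factor, the open-Bruhat-cell locus of the centralizer family over $\fl_S^\der\sslash L_S^\der$ is, by the $S=\Pi$ (top) and $S=\emptyset$ cases applied to $L_S^\der$, exactly the ``Kostant section'' stratum, which is a section and hence maps isomorphically to $\fl_S^\der\sslash L_S^\der$. Assembling gives the claimed $\cB_{w_0 w_S}\cong T^*\cZ(L_S)\times(\fl_S^\der\sslash L_S^\der)$.

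For the $\bC^\times$-invariance I would simply observe that the decomposition by $N\backslash G/N$ is preserved by the canonical $\bC^\times$-action \eqref{eq: C star action}: the action conjugates $g$ by $\gamma(s)\in T\subset B$, which normalizes $N$ and hence preserves each double coset $NwN$, and it scales $\xi$ within $f+\fb$ in a way compatible with the slice; therefore each $\cB_w$ is $\bC^\times$-stable. (Under the identification in (b), this $\bC^\times$ acts as the fiber-scaling on $T^*\cZ(L_S)$ combined with the contracting action on the affine space $\fl_S^\der\sslash L_S^\der$, though one does not need this refinement for the stated invariance.)

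The cleanest way to organize all of this is probably to first prove the $L_S$-reduction lemma (that $\cB_{w_0 w_S}$ for $J_G$ equals the ``big-cell part'' of $J_{L_S}$, suitably interpreted for the possibly non-semisimple group $L_S$ via $J_{L_S}=J_{L_S^\der}\times_{\cZ(L_S^\der)}T^*\cZ(L_S)$), and then to quote the two extreme cases $S=\Pi$ and $S=\emptyset$ — which I would establish first as standalone facts, since they are the base cases and are also used elsewhere in the paper — to pin down the big-cell part of $J_{L_S^\der}$ as a section over $\fl_S^\der\sslash L_S^\der$. The main risk is bookkeeping with finite subgroups ($\cZ(L_S)\cap L_S^\der$, disconnectedness of $\cZ(L_S)$) and with the precise meaning of ``open Bruhat stratum'' inside each fiber of the centralizer family; I would handle this by working over the regular locus $\fg^{\reg}$ where the Kostant-slice normal form is available and everything is smooth, and only passing to the quotient by $G$ at the end.
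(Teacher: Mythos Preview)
Your argument for part~(a) is essentially the paper's: both pin down which $w$ occur by tracking where $\Ad_{\dot w}$ sends the simple root vectors $f_\alpha$, concluding that $w$ must carry $\Pi$ into $(-\Pi)\cup\Delta^+$, hence $w=w_S$. The $\bC^\times$-invariance argument is also fine.

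For part~(b), however, there is a genuine confusion about \emph{which} Bruhat cell of $J_{L_S}$ the stratum $\cB_{w_0 w_S}$ corresponds to. You claim that after the $L_S$-reduction the element $\ell$ lies in the \emph{open} cell $N_{L_S} w_S N_{L_S}$ of $L_S$, and then that the open-cell locus of $J_{L_S^\der}$ equals the Kostant-section stratum. Both assertions are false. First, $w_0 w_S\notin W_S$ (unless $S=\Pi$), so no amount of $N\times N$-action moves $g\in N\,\overline{w_0 w_S}\,N$ into $L_S$; what one actually does is factor off the fixed element $\phi_S=\overline{w}_0^{-1}\overline{w}_S$ and write $g=\phi_S\cdot z g_S$ with $z\in\cZ(L_S)$, $g_S\in L_S^\der$. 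Second, under this factoring, $g_S$ lies in the \emph{identity} cell of $L_S^\der$, not the open one: indeed the paper's Proposition~\ref{prop: U_S} shows that the open cell $\cB_{w_S,L_S^\der}$ of $J_{L_S^\der}$ maps onto $\cB_{w_0}$ in $J_G$, while it is $\cB_{1,L_S^\der}$ (the union of Kostant sections) that maps onto $\cB_{w_0 w_S}$. A dimension count confirms this: $\dim\cB_{w_0w_S}=2n-|S|$ and $\dim T^*\cZ(L_S)=2(n-|S|)$, so the $L_S^\der$ factor must contribute $|S|=\dim\fc_S$ (the Kostant-section cell) and not $2|S|=\dim T^*T_S$ (the open cell). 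Your equation ``open-cell locus $=$ Kostant-section stratum'' is internally inconsistent: by the very Bruhat decomposition you are establishing, applied to $L_S^\der$, these are distinct strata indexed by $S'=\emptyset$ and $S'=S$, respectively.

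The paper avoids this by a direct computation: it writes out the constraint $\Ad_{\overline{w}_0^{-1}\overline{w}_S h}(f+t+\xi)\in f+\fb$ explicitly, solves for $h$ (a $\cZ(L_S)$-torsor) and $\xi$, then identifies the residual $N\times N$-action as acting through $N_{L_S^\der}$, obtaining $\cZ(L_S)\times\fz_S\times (f_S+\fn_{\fl_S^\der}^\perp)/N_{L_S^\der}\cong T^*\cZ(L_S)\times(\fl_S^\der\sslash L_S^\der)$. Your structural approach can be salvaged by replacing ``open'' with ``identity'' cell throughout, at which point it becomes essentially the content of Proposition~\ref{prop: U_S}(b)---which, note, the paper proves \emph{after} Proposition~\ref{prop: B_w0w}, using the latter as input.
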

\begin{proof}
For any $w\in W$, let $\overline{w}$ be a representative of $w$ in the normalizer of $T$.  
For any $w_0w\in W$, the Bruhat cell $\cB_{w_0w}$ of $J_G$ consists of pairs $((\overline{w}_0)^{-1}\overline{w}h, f+t+\xi)$, $h\in T,\ t\in \ft,\ \xi\in\bigoplus\limits_{\alpha\in \Delta^+}\fg_\alpha$ (modulo the equivalences induced by the $N\times N$-action), such that 
\begin{align}\label{eq: Ad f Delta}
&\Ad_{(\overline{w}_0)^{-1}\overline{w}h} (f+t+\xi)\in f+\ft+\bigoplus\limits_{\alpha\in \Delta^+}\fg_\alpha.
\end{align}
Note that (\ref{eq: Ad f Delta}) implies that $w$ must send $-\Pi$ into $\Pi\cup \Delta^-$, equivalently, $w$ sends $\Pi$ into $(-\Pi)\cup \Delta^+$. Let $S=(-w(\Pi))\cap \Pi$ and let $\Gamma(S)$ be the set of positive roots that can be written as sums of elements in $S$. Let $\fp_S=\fb\oplus \sum\limits_{\alpha\in \Gamma(S)}\fg_{-\alpha}$ be the standard parabolic subalgebra determined by $S$, then $w=w_S$, the longest element in the Weyl group of the standard parabolic subalgebra $\fp_S$. 

Now fix a subset $S\subset \Pi$, and write 
\begin{align*}
&f=\sum\limits_{\alpha\in S}f_{\alpha}+\sum\limits_{\alpha\in \Pi\backslash S}f_{\alpha}\\
&\xi=\sum\limits_{\beta\in \Gamma(S)}\xi_\beta+\sum\limits_{\beta\in \Delta^+\backslash \Gamma(S)}\xi_\beta,
\end{align*}
then (\ref{eq: Ad f Delta}) is equivalent to the data of 
\begin{align}
\nonumber&t\in \ft,\ \Ad_{\overline{w}_Sh}(f+\xi)\in \Ad_{\overline{w}_0}f+\bigoplus\limits_{\alpha\in \Delta^-}\fg_\alpha\\
\Leftrightarrow&
\label{eq: condition w_0wh}\begin{cases}
&\Ad_{\overline{w}_Sh}\sum\limits_{\alpha\in S}f_{\alpha}=\Ad_{\overline{w}_0}\sum\limits_{\alpha\in -w_0(S)}f_{\alpha},\\
&\Ad_{\overline{w}_Sh}(\sum\limits_{\alpha\in \Delta^+\backslash \Gamma(S)}\xi_\alpha)=\Ad_{\overline{w}_0}(\sum\limits_{\alpha\in \Pi\backslash w_0(-S)}f_\alpha)
\end{cases}\\
\nonumber\Leftrightarrow
&\begin{cases}&h\in T \text{ satisfying } \Ad_{\overline{w}_Sh}\sum\limits_{\alpha\in S}f_{\alpha}=\Ad_{\overline{w}_0}\sum\limits_{\alpha\in -w_0(S)}f_{\alpha}\\
&\text{ which is a torsor over }\cZ(L_S),\\
& \xi\in\Ad_{(\overline{w}_Sh)^{-1}\overline{w}_0}(\sum\limits_{\alpha\in \Pi\backslash w_0(-S)}f_{\alpha})+\bigoplus\limits_{\alpha\in \Gamma(S)} \fg_\alpha\\
&t\in \ft
\end{cases}
\end{align}

Let $\phi_{S,h}=(\overline{w}_0)^{-1}\overline{w}_Sh$ and we identify the equivalence classes of solutions in $(\ref{eq: condition w_0wh})$ under the $N\times N$-action. We have $(\phi_{S,h}, f+t+\xi)$ identified with $(\phi_{S,h'}, f+t'+\xi')$ if and only if $h=h'$ and there exists $u\in N$ such that $\tilde{u}=\Ad_{\phi_{S,h}^{-1}}u^{-1}\in N$ and $f+t'+\xi'=\Ad_{\tilde{u}^{-1}}(f+t+\xi)$. 

Let $L_S^{\der}=[L_S, L_S]$ be the derived group of $L_S$.  For any $u=\exp(n)\in N$, $\Ad_{\phi_{S,h}^{-1}} u^{-1}=\exp(-\Ad_{\phi_{S,h}^{-1}} n)\in N$ if and only if $\Ad_{\phi_{S,h}^{-1}} n\in \fn$, and this happens if and only if $n\in \bigoplus\limits_{\alpha\in -w_0(\Gamma(S))}\fg_\alpha$ which is equivalent to $\tilde{u}=\Ad_{\phi_{S,h}^{-1}}u^{-1}\in N_{L^\der_S}$. Let $\fz_S$ be the subspace of $\ft$ defined by the equations $\alpha(\bullet)=0, \alpha\in S$, which is identified with the (dual of the) Lie algebra of $\cZ(L_S)$.  Since $\Ad_{\tilde{u}^{-1}}$ acts trivially on $\fz_S$, $\bigoplus\limits_{\alpha\in-(\Pi\backslash S)}\fg_\alpha$ and $\bigoplus\limits_{\alpha\in w_S^{-1}(\Pi\backslash S)}\fg_\alpha$,
 we have the following identification
\begin{align}\label{eq: proof B_w0w}
&\cB_{w_0w_S}\cong \cZ(L_S)\times \fz_S\times (\sum\limits_{\alpha\in S}f_{\alpha}+\fn_{\fl^{\der}_S}^\perp)/N_{L^{\der}_S}\\
\nonumber\cong&\cZ(L_S)\times \fz_S\times (\fl^{\der}_S\sslash L_S^{\der}),\\
\nonumber\cong&T^*\cZ(L_S)\times (\fl^{\der}_S\sslash L_S^{\der}). 
\end{align}
Note that the space of isomorphisms (\ref{eq: proof B_w0w}) is a torsor over $\cZ(L_S)$. The $\bC^\times$-invariance of $\cB_{w_0w_S}$ is obvious. 
\end{proof}

\begin{example}\label{example: B_w0}
If $S=\emptyset$, then $w_S=1$ and 
\begin{align*}
\cB_{w_0}\cong\{(\overline{w}_0^{-1}h, f+t+\Ad_{(\overline{w}_0^{-1}h)^{-1}}f): h\in T, t\in \ft\}\cong T^*T.
\end{align*}
\end{example}

\begin{remark}\label{remark: choices of w}

\begin{itemize}
\item[(a)] 
In the following, we will fix $\overline{w}_0$ and for each $S\subsetneq \Pi$, we will choose $\overline{w}_S\in N_{L_S^\der}(T\cap L_S^\der)$ (i.e. the normalizer of the maximal torus) satisfying
\begin{align}\label{eq: remark Ad, f}
f_{\alpha}=\Ad_{\overline{w}_S^{-1}\overline{w}_0}f_{w_0w_S(\alpha)},\ \forall\alpha\in S. 
\end{align}
Then for $S\subset S'$, we have 
\begin{align*}
&\Ad_{\overline{w}_{S'}^{-1}\overline{w}_{S}}f_{\alpha}=\Ad_{(\overline{w}_0^{-1}\overline{w}_{S'})^{-1}}(\Ad_{\overline{w}_0^{-1}\overline{w}_S}f_{\alpha})\\
=&\Ad_{(\overline{w}_0^{-1}\overline{w}_{S'})^{-1}}(f_{w_0w_{S}(\alpha)})=f_{w_{S'}w_{S}(\alpha)}, \forall\ \alpha\in S.
\end{align*}
Note that the last step uses $w_{S'}w_S(\alpha)\in S', \forall \alpha\in S$. 
Under such an assumption, the set of $h\in T$ in the second equivalent characterization in (\ref{eq: condition w_0wh}) is canonically identified with $\cZ(L_S)$. 

\item[(b)] Let $\ft_S$ denote for the Cartan subalgebra of $\fl_S^\der$. The condition of (\ref{eq: remark Ad, f}) gives an identification of the subrepresentation of $\Res_{L_{-w_0(S)}^\der}^G(V_{\lambda})$ generated by a highest weight vector $v_\lambda$, for any $\lambda\in X^*(T)^+$, with $V_{\pi_{\ft^*}^S(w_Sw_0(\lambda))}$ of $L_S^\der$, where $\pi_{\ft^*}^S: \ft^*\rightarrow \ft_S^*$ is the natural projection.

\end{itemize}
\end{remark}

For any $L_S$, we have $\cZ(L_S^{\der})$ acts on both $\cZ(L_S)$ and $J_{L_S^{\der}}$, and the twisted product $T^*\cZ(L_S)\underset{\cZ(L^\der_S)}\times J_{L^{\der}_S}$ is canonically a holomorphic symplectic variety. In the following, we use $N_S$ to denote for $N_{L_S^\der}$, and $f_S$ for $\sum\limits_{\alpha\in S}f_{\alpha}$. 

\begin{prop}\label{prop: U_S}
\begin{itemize}
\item[(a)] For any standard Levi $L_S$, we have 
\begin{align}\label{eq: prop fU_S splitting}
\fU_S=T^*\cZ(L_S)\underset{\cZ(L^\der_S)}\times J_{L^{\der}_S}
\end{align}
naturally embeds as an open (holomorphic) symplectic subvariety in $J_G$. 

\item[(b)] The Bruhat cell $\cB_{w_0w_S}$ is contained in $\fU_S$ as a coisotropic subvariety. More explicitly, using (\ref{eq: prop B_w0w}), we have 
\begin{align*}
\cB_{w_0w_S}\cong T^*\cZ(L_S)\underset{\cZ(L_S^\der)}{\times}\cB_{1, L_S^{\der}}\subset T^*\cZ(L_S)\underset{\cZ(L^\der_S)}\times J_{L^{\der}_S}.
\end{align*}
\end{itemize}
\end{prop}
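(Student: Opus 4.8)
The plan is to work with the bi-Whittaker description $J_G\cong\mu^{-1}(f,f)/(N\times N)$ of Definition \ref{def: second J_G}, where $\mu\colon T^*G\to\fn^*\oplus\fn^*$ is the $N\times N$-moment map, and to carry out the Hamiltonian reduction in two stages adapted to the parabolic $P_S$. Write $N=U\rtimes N_S$ with $U$ the unipotent radical of $P_S$ (normal in $N$, since $N\subseteq P_S$) and $N_S=N\cap L_S=N_{L_S^\der}$, and split $f=f^S+f_S$ with $f_S=\sum_{\alpha\in S}f_\alpha$ and $f^S=\sum_{\alpha\in\Pi\setminus S}f_\alpha$; note that $f^S$ annihilates $[\Lie(U),\Lie(U)]$, so it defines a character of $U$.

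For (a), the distinguished open subset is the big double coset $P_S\overline{w}_0P_S\subseteq G$, which is Zariski-open, dense, and $N\times N$-stable. Let $V_S\subseteq\mu^{-1}(f,f)$ be its preimage under $(g,\xi)\mapsto g$; this is a saturated open, and since $\mu^{-1}(f,f)\to J_G$ realizes $J_G$ as the (geometric) quotient by the free $N\times N$-action (cf. Subsection \ref{subsec: def of J_G}), with open quotient map, the image $\fU_S$ of $V_S$ is Zariski-open in $J_G$. I would then reduce $V_S$ in stages: first by $U\times U$ at the character $(f^S,f^S)$, identifying this reduction — via suitable coordinates on the big cell $P_S\overline{w}_0P_S$ adapted to the Levi decomposition $\fg=\fl_S\oplus\Lie(U)\oplus\Lie(U^-)$, after a $\overline{w}_0$-twist on the right $P_S$-factor — with $T^*L_S$ carrying its standard symplectic form, $N_S\times N_S$-equivariantly, so that the residual moment map becomes $(g,\xi)\mapsto(\xi\bmod\fb\cap\fl_S,\ \Ad_g\xi\bmod\fb\cap\fl_S)$ and the residual characters become $(f_S,f_S)$. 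The second stage is then the bi-Whittaker reduction of $T^*L_S$ by $N_S\times N_S$ at $(f_S,f_S)$, which by Definition \ref{def: second J_G} applied to the reductive group $L_S$ — together with the identity $J_{L_S}=J_{L_S^\der}\times_{\cZ(L_S^\der)}T^*\cZ(L_S)$ recalled in Subsection \ref{subsec: def of J_G} — is precisely $T^*\cZ(L_S)\times_{\cZ(L_S^\der)}J_{L_S^\der}=\fU_S$. Since each stage is a Hamiltonian reduction, the symplectic forms match throughout, giving the asserted symplectic open embedding.

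For (b), the proof of Proposition \ref{prop: B_w0w} shows that $\cB_{w_0w_S}$ consists of $N\times N$-classes of pairs $(\overline{w}_0^{-1}\overline{w}_Sh,\ f+t+\xi)$ with $h\in T$ and $t\in\ft$, so its $G$-coordinate lies in $N\overline{w}_0^{-1}L_SN\subseteq P_S\overline{w}_0P_S$ (using $w_0^{-1}=w_0$); hence $\cB_{w_0w_S}\subseteq\fU_S$. Tracing this cell through the two stages, it is carried into the preimage in $T^*L_S$ of the stratum $T^*\cZ(L_S)\times_{\cZ(L_S^\der)}\cB_{1,L_S^\der}$ of $T^*\cZ(L_S)\times_{\cZ(L_S^\der)}J_{L_S^\der}$, where $\cB_{1,L_S^\der}$ is the stratum of $J_{L_S^\der}$ given by Proposition \ref{prop: B_w0w} for the semisimple group $L_S^\der$ and the subset equal to all of $S$ — namely $\cB_{1,L_S^\der}\cong\cZ(L_S^\der)\times(\fl_S^\der\sslash L_S^\der)$, the disjoint union of the Kostant sections of $J_{L_S^\der}$. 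Comparing with formula (\ref{eq: prop B_w0w}), which gives $\cB_{w_0w_S}\cong T^*\cZ(L_S)\times(\fl_S^\der\sslash L_S^\der)$, confirms that the two descriptions agree and are compatible with the inclusion into $\fU_S$. Finally, each Kostant section is Lagrangian in $J_{L_S^\der}$ (the canonical Liouville form $\langle\xi,g^{-1}dg\rangle$ vanishes there, $g$ being locally constant), so $T^*\cZ(L_S)\times_{\cZ(L_S^\der)}\cB_{1,L_S^\der}$ is locally a product of an open symplectic set with a Lagrangian, hence coisotropic in $\fU_S$; as coisotropy is local and $\fU_S$ is open in $J_G$, $\cB_{w_0w_S}$ is coisotropic in $J_G$.

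The step I expect to be the main obstacle is the intermediate identification in (a): showing that reducing $T^*G$ over the big $(P_S,P_S)$-cell by $U\times U$ at $(f^S,f^S)$ produces exactly $T^*L_S$ — with no line-bundle twist, and with the quotient being all of $T^*L_S$ rather than a proper open subset — and doing so $N_S\times N_S$-equivariantly and compatibly with moment maps and symplectic forms. The subtlety is that $f^S$ is a degenerate character of $U$ (supported only on the simple-root layers of $\Lie(U)/[\Lie(U),\Lie(U)]$), so the familiar principle that a Whittaker reduction kills the unipotent radical of a parabolic cannot be invoked verbatim; I expect the cleanest route is via explicit Darboux-type coordinates on $T^*G$ after the $\overline{w}_0$-twist, the role of the big cell being precisely to guarantee freeness of the $U\times U$-action on $V_S$ and surjectivity onto $T^*L_S$. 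Once this is established, the remaining bookkeeping — the $\cZ(L_S)$-twist appearing in the second stage, and the comparison with the explicit Bruhat charts of Proposition \ref{prop: B_w0w} — is routine.
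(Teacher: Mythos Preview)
There is a concrete error in your identification of the open set. Take $G=\mathrm{SL}_3$ and $S=\{\alpha_1\}$. Then $W_S=\{1,s_1\}$ and $W_Sw_0W_S=\{w_0,\,s_1s_2,\,s_2s_1,\,s_2\}$, so the big double coset $P_S\overline{w}_0P_S=\bigcup_{w\in W_Sw_0W_S}BwB$ meets $J_G$ in $\cB_{w_0}\sqcup\cB_{s_1s_2}\sqcup\cB_{s_2s_1}$ --- three strata. But $\fU_{\{\alpha_1\}}=\cB_{w_0}\sqcup\cB_{w_0s_1}=\cB_{w_0}\sqcup\cB_{s_1s_2}$ (this is forced already by the formula $\fU_S\cong T^*\cZ(L_S)\times_{\cZ(L_S^\der)}J_{L_S^\der}$, since $J_{L_S^\der}$ here has rank~$1$ and hence exactly two Bruhat strata; cf.\ also Proposition~\ref{prop: fU_S', sqcup}). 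The extra cell $\cB_{s_2s_1}$ corresponds to $S''=\{\alpha_2\}\not\subset S$. So the image of your $V_S$ in $J_G$ is strictly larger than $\fU_S$, and consequently the first-stage $U\times U$-reduction over $P_S\overline{w}_0P_S$ \emph{cannot} be $T^*L_S$ --- if it were, the second stage would produce $J_{L_S}$ with two strata, not three. The open set that actually cuts out $\fU_S$ is the one-sided $N\overline{w}_0P_S=Bw_0P_S$, which is $N\times N$-stable but not $P_S\times P_S$-stable; over it the symmetric ``kill $U$ on both sides'' picture does not organize itself as cleanly as you propose.

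The paper avoids this by writing the map down directly rather than via reduction in stages. It defines an $N_S\times N_S$-equivariant map
\[
\iota_S\colon (z,t;g_S,\xi_S)\longmapsto\bigl(\phi_Szg_S,\ \xi_S+t+\Ad_{(\phi_Szg_S)^{-1}}(f-f_{-w_0(S)})+(f-f_S)\bigr),
\qquad \phi_S=\overline{w}_0^{-1}\overline{w}_S,
\]
from the pre-quotient of $\fU_S$ into $\mu^{-1}_{N\times N}(f,f)$; the two correction terms are precisely what is needed to hit the level $(f,f)$, and a short computation shows $\iota_S^*\lambda_{T^*G}$ equals the Liouville form on $\fU_S$. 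Injectivity of the induced $\tilde\iota_S$ is then checked by noting that $\fU_S$ contains a Zariski-dense open --- the open Bruhat cell of $J_{L_S^\der}$ twisted by $T^*\cZ(L_S)$ --- on which $\tilde\iota_S$ is visibly an isomorphism onto $\cB_{w_0}\subset J_G$. Part~(b) then drops out of the formula: $\cB_{w_0w_S}$ is the locus $\{g_S\in\cZ(L_S^\der)\}$ inside $\fU_S$.
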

\begin{proof}
We first prove (a). We continue to use the notations from the proof of Proposition \ref{prop: B_w0w}. 
We make the identification
\begin{align}\label{eq: J_L_S, f_S}
&J_{L_S^\der}\cong \{(g_S,\xi_S)|\xi_S, \Ad_{g_S}\xi_S\in f_S+\fn_{S}^\perp\subset\fl_S^\der\}/(N_S\times N_S)\\
\nonumber\cong&\mu_{N_S\times N_S}^{-1}(f_S,f_S)/(N_S\times N_S),
\end{align}
and let $\phi_{S}=(\overline{w}_0)^{-1}\overline{w}_S$ for the choices of $\overline{w}_0$ and $\overline{w}_S$ as in Remark \ref{remark: choices of w} (a). We have the following $N_S\times N_S$-equivariant embedding 
\begin{align}
\label{eq: prop U_S}&\iota_S: (\cZ(L_S)\times\fz_S)\underset{\cZ(L_S^\der)}{\times}\mu_{N_S\times N_S}^{-1}(f_S,f_S)\rightarrow G\times \fg\cong T^*G\\
\nonumber&(z, t; g_S, \xi_S)\mapsto (\phi_{S}zg_S, \xi_S+t+\Ad_{(\phi_{S}zg_S)^{-1}}(f-f_{-w_0(S)})+(f-f_S))\\
\nonumber& \ \ \ \ \ \ \ \ \ \ \ \ \ \ \ \ \ \ \ =:(\phi_{S}zg_S, \Xi_S),
\end{align}
whose image is in $\mu^{-1}_{N\times N}(f,f)$ and $N_S\times N_S$ acts on $G\times \fg$ through 
\begin{align*}
N_S\times N_S\overset{(\Ad_{\phi_{S}}, id)}\longhookrightarrow N\times N.
\end{align*} 
The validity of (\ref{eq: prop U_S}) follows from the simple fact that
\begin{align*}
\Ad_{(\phi_{S}zg_S)^{-1}}(f-f_{-w_0(S)})\in \bigoplus\limits_{\alpha\in \Delta^+\backslash \Gamma(S)}\fg_\alpha=\fn_{\fp_S}
\end{align*}
and
\begin{equation}
\Ad_{\phi_{S}zg_S}(f-f_S)\in  \bigoplus\limits_{\alpha\in \Delta^+\backslash(\Gamma(-w_0(S)))}\fg_\alpha.
\end{equation}
It is clear from (\ref{eq: condition w_0wh}) that the image of $\iota_S$ is independent of the choice of $\overline{w}_0, \overline{w}_S$.

Now we show that $\iota_S$ in (\ref{eq: prop U_S}) satisfies that $\iota_S^*\omega_{T^*G}=p_S^*\omega_{\fU_S},$ where 
\begin{align*}
p_S: (\cZ(L_S)\times\fz_S)\underset{\cZ(L_S^\der)}{\times}\mu_{N_S\times N_S}^{-1}(f_S,f_S)\rightarrow \fU_S,
\end{align*}
is the quotient map. 
Recall that $\omega_{T^*G}=-d(\langle \xi, g^{-1}dg\rangle)$, where $(g,\xi)\in G\times \fg$ and $g^{-1}dg$ denotes for the Maurer-Cartan form. In the following, let $\lambda_{T^*G}=-\langle \xi, g^{-1}dg\rangle$ and $\lambda_{\fU_S}=-(\langle t, z^{-1}dz\rangle+\langle \xi_S, g_S^{-1}dg_S \rangle)$ denote for the primitive of the symplectic forms on $T^*G$ and $\fU_S$ respectively.  We have 
\begin{align}\label{eq: i*lambda}
&-\iota_S^*\lambda_{T^*G}=\langle(\xi_S+t+\Ad_{(\phi_{S}zg_S)^{-1}}(f-f_{-w_0(S)})+(f-f_S)), (\phi_{S}zg_S)^{-1}d(\phi_{S}zg_S)\rangle\\
\nonumber=&\langle \xi_S+t+\Ad_{(\phi_{S}zg_S)^{-1}}(f-f_{-w_0(S)})+(f-f_S)), z^{-1}dz+g_S^{-1}dg_S\rangle\\
\nonumber=&\langle t, z^{-1}dz\rangle+\langle \xi_S, g_S^{-1}dg_S \rangle=-p_S^*\lambda_{\fU_S}.
\end{align}
Here the vanishing of $\langle \xi_S, z^{-1}dz\rangle$ and $\langle t, g_S^{-1}dg_S\rangle$ is clear, and the vanishing of $\langle \Ad_{(\phi_Szg_S)^{-1}}(f-f_{-w_0(S)})+(f-f_S), z^{-1}dz+g_S^{-1}dg_S\rangle$ comes from that 
$\Ad_{(\phi_{S}zg_S)^{-1}}(f-f_{-w_0(S)})+(f-f_S)\in \bigoplus\limits_{\alpha\in (\Delta^+\backslash \Gamma(S))\cup (-(\Pi\backslash S))}\fg_\alpha$.

Next, we show that $\iota_S$ induces a holomorphic symplectic embedding $\tilde{\iota}_S: \fU_S\hookrightarrow J_G$. By (\ref{eq: i*lambda}) and the fact that $\dim \fU_S=\dim J_G$,  
the image of $\iota_S$ is everywhere transverse to the $N\times N$-orbits in $\mu_{N\times N}^{-1}(f,f)$. So $\tilde{\iota}_S: J_{L_S^{\der}}\rightarrow J_G$ is a local holomorphic symplectic diffeomorphism. Now we observe that $\fU_S$ contains a Zariski open (dense) subset $T^*\cZ(L_S)\times_{\cZ(L_S^{\der})} \cB_{w_S, L_S^{\der}}$, where $\cB_{w_S, L_S^{\der}}$ is the open Bruhat cell in $J_{L_S^\der}$ and the restriction of $\tilde{\iota}_S$ to that open set is an embedding onto $\cB_{w_0}$. So we can conclude that $\tilde{\iota}_S$ is an embedding as well. 

Part (b) immediately follows, since $\cB_{w_0w_S}=\{g_S\in \cZ(L_S^\der)\}\subset \fU_S$. 
\end{proof}

\begin{prop}\label{prop: fU_S', sqcup}
For any $S\subset S'\subset \Pi$, we have a natural embedding $\tilde{\iota}_S^{S'}: \fU_S\hookrightarrow \fU_{S'}$. These form a compatible system of embeddings in the sense that for any $S\subset S'\subset S''$, we have $\tilde{\iota}_{S'}^{S''}\circ \tilde{\iota}_{S}^{S'}=\tilde{\iota}_{S}^{S''}$. Moreover,
\begin{equation}\label{eq: prop U_S decomp}
\fU_{S'}=\bigsqcup\limits_{S\subset S'} \cB_{w_0w_{S}},
\end{equation}
where $w_{S}$ as before is the longest element in the Weyl group of $L^{\der}_{S}$. 
\end{prop}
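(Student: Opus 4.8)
The plan is to bootstrap everything from Propositions \ref{prop: B_w0w} and \ref{prop: U_S}, applied \emph{internally} to the semisimple groups $L_{S'}^\der$. \textbf{Constructing the embeddings.} Fix $S\subset S'\subset\Pi$, regard $S$ as a set of simple roots of $L_{S'}^\der$, and let $M:=(L_{S'}^\der)_S$ be the corresponding standard Levi of $L_{S'}^\der$, so that $M^\der=L_S^\der$. Applying Proposition \ref{prop: U_S}(a) to the semisimple group $L_{S'}^\der$ produces an open holomorphic symplectic embedding $T^*\cZ(M)\times_{\cZ(L_S^\der)}J_{L_S^\der}\hookrightarrow J_{L_{S'}^\der}$. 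Taking the balanced product with $T^*\cZ(L_{S'})$ over $\cZ(L_{S'}^\der)$ — legal since $\cZ(L_{S'}^\der)\subset\cZ(L_{S'})$ and $\cZ(L_{S'}^\der)\subset\cZ(M)$ — and using associativity of balanced products together with the exact sequence of tori $1\to\cZ(L_{S'}^\der)\to\cZ(L_{S'})\times\cZ(M)\to\cZ(L_S)\to 1$ (with $\cZ(L_{S'}^\der)$ embedded anti-diagonally), which gives $T^*\cZ(L_{S'})\times_{\cZ(L_{S'}^\der)}T^*\cZ(M)\cong T^*\cZ(L_S)$, I would rewrite $T^*\cZ(L_{S'})\times_{\cZ(L_{S'}^\der)}\bigl(T^*\cZ(M)\times_{\cZ(L_S^\der)}J_{L_S^\der}\bigr)\cong\fU_S$. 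Since $\fU_{S'}=T^*\cZ(L_{S'})\times_{\cZ(L_{S'}^\der)}J_{L_{S'}^\der}$, this yields the desired open symplectic embedding $\tilde\iota_S^{S'}\colon\fU_S\hookrightarrow\fU_{S'}$. (Equivalently, one can check directly that the explicit maps $\iota_S$ of the proof of Proposition \ref{prop: U_S} nest, factoring $\phi_S=\overline w_0^{-1}\overline w_S$ as $\phi_{S'}\cdot(\overline w_{S'}^{-1}\overline w_S)$ with $\overline w_{S'}^{-1}\overline w_S\in N_{L_{S'}^\der}(T\cap L_{S'}^\der)$ by Remark \ref{remark: choices of w}(a).)

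\textbf{Compatibility.} For $S\subset S'\subset S''$ I would establish $\tilde\iota_{S'}^{S''}\circ\tilde\iota_S^{S'}=\tilde\iota_S^{S''}$, with the convention $\tilde\iota_S^\Pi=\tilde\iota_S$ after the identification $\fU_\Pi=J_G$. The quickest argument is soft: by construction each of these maps restricts, on the dense open big Bruhat cell $\cB_{w_0}\subset\fU_S$ (Example \ref{example: B_w0}), to the canonical inclusion into $J_G$ — this is precisely how $\tilde\iota_S$ was pinned down in the proof of Proposition \ref{prop: U_S}, and the internal construction inherits it. Two morphisms to the separated variety $J_G$ that agree on a dense subvariety coincide, which gives the cocycle relations and in particular $\tilde\iota_S(\fU_S)\subset\tilde\iota_{S'}(\fU_{S'})$ for $S\subset S'$.

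\textbf{The stratification.} Applying Proposition \ref{prop: B_w0w}(a) to $L_{S'}^\der$ (simple roots $S'$, longest Weyl element $w_{S'}$) gives $J_{L_{S'}^\der}=\bigsqcup_{S\subset S'}\cB_{w_{S'}w_S,\,L_{S'}^\der}$, hence $\fU_{S'}=\bigsqcup_{S\subset S'}\bigl(T^*\cZ(L_{S'})\times_{\cZ(L_{S'}^\der)}\cB_{w_{S'}w_S,\,L_{S'}^\der}\bigr)$. I then identify the $S$-summand, via $\tilde\iota_{S'}$, with the Bruhat cell $\cB_{w_0w_S}\subset J_G$. Since $\iota_{S'}$ is defined by left multiplication by the lift $\phi_{S'}$ of $w_0w_{S'}$, a representative lying over the double coset of $w_{S'}w_S$ in $L_{S'}^\der$ is carried into $B(w_0w_{S'})B\cdot B(w_{S'}w_S)B$, and the length identity $\ell(w_0w_{S'})+\ell(w_{S'}w_S)=\ell(w_0)-\ell(w_S)=\ell(w_0w_S)$ (together with $w_0w_{S'}\cdot w_{S'}w_S=w_0w_S$, using $w_{S'}^2=1$) shows this product is $B(w_0w_S)B$; thus the $S$-summand maps into $\cB_{w_0w_S}$. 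Conversely, Proposition \ref{prop: U_S}(b) gives $\cB_{w_0w_S}\subset\tilde\iota_S(\fU_S)$, and by the compatibility $\tilde\iota_S(\fU_S)\subset\tilde\iota_{S'}(\fU_{S'})$; since the summands land in pairwise distinct Bruhat cells of $J_G$ and partition $\tilde\iota_{S'}(\fU_{S'})$, the $S$-summand is exactly $\tilde\iota_{S'}(\fU_{S'})\cap\cB_{w_0w_S}=\cB_{w_0w_S}$. This proves $\fU_{S'}=\bigsqcup_{S\subset S'}\cB_{w_0w_S}$, and the compatibility of the $\tilde\iota_S^{S'}$ was built in along the way; taking $S'=\Pi$ recovers $\fU_\Pi=J_G$ consistently with Proposition \ref{prop: B_w0w}(a).

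\textbf{Expected main obstacle.} None of this is deep; the effort is entirely bookkeeping. The step most deserving of care is the cell-matching above: one must verify that $\phi_{S'}$-translation carries $\cB_{w_{S'}w_S,\,L_{S'}^\der}$ onto \emph{all} of $\cB_{w_0w_S}$ rather than merely a dense open subset — handled by feeding in Proposition \ref{prop: U_S}(b) plus the compatibility, as above — and one must keep straight the tower of center inclusions $\cZ(L_{S'}^\der)\subset\cZ(L_{S'})\subset\cZ(L_S)$ and $\cZ(L_{S'}^\der),\cZ(L_S^\der)\subset\cZ(M)\subset\cZ(L_S)$ underlying the torus identification used to build $\tilde\iota_S^{S'}$.
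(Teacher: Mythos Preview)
Your proposal is correct and follows essentially the same route as the paper: apply Proposition~\ref{prop: U_S} internally to $L_{S'}^\der$ and extend by the factor $T^*\cZ(L_{S'})$, which is exactly what the paper's explicit formula (\ref{eq: prop U_S->U_S'}) encodes (and you note this equivalence yourself in the parenthetical). The paper is terser---it verifies compatibility directly from the formula and dismisses the stratification (\ref{eq: prop U_S decomp}) with ``the proposition follows''---whereas you spell out the Bruhat-cell matching via the length identity $\ell(w_0w_{S'})+\ell(w_{S'}w_S)=\ell(w_0w_S)$; both are fine.
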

\begin{proof}
Since $L_S\subset L_{S'}$ and $\cZ(L_{S'})\subset \cZ(L_S)$, under the identification $L_{S'}\cong \cZ(L_{S'})\underset{\cZ(L_{S'}^{\der})}{\times} L_{S'}^\der$, we have $\cZ(L_S)=\cZ(L_{S'})\underset{Z(L_{S'}^{\der})}{\times}\cZ(L_S^{S'})$, where $L_{S}^{S'}=L_S\cap L_{S'}^{\der}$. This induces a splitting $\fz_S=\fz_{S'}\oplus \fz_{S}^{S'}$. Let $T_{S'}=T\cap L_{S'}^{\der}$ denote for the maximal torus in $L_{S'}^{\der}$, and choose representatives $\overline{w}_{S'}, \overline{w}_S\in N_{L_{S'}^\der}(T_{S'})$ as in Remark \ref{remark: choices of w}. Let $\phi^{S,S'}$ denote for $\overline{w}_{S'}^{-1}\overline{w}_S$, then we have $\Ad_{\phi^{S,S'}}f_{\alpha}=f_{w_{S'}w_S(\alpha)}$
 for all $\alpha\in S$.

Now we embed $\fU_S$ into $\fU_{S'}$ in a similar manner as of (\ref{eq: prop U_S}).
First, we have an $N_S\times N_S$-equivariant embedding
\begin{align}
\label{eq: prop U_S->U_S'}&\iota_{S}^{S'}: (\cZ(L_{S}^{S'})\times\fz_S^{S'})\underset{\cZ(L_S^\der)}{\times}\mu_{N_S\times N_S}^{-1}(f_S,f_S)\rightarrow\mu_{N_{S'}\times N_{S'}}^{-1}(f_{S'},f_{S'})\\
\nonumber&(z, t; g_S, \xi_S)\mapsto (\phi^{S,S'}zg_S, \xi_S+t+\Ad_{(\phi^{S,S'}zg_S)^{-1}}(f_{S'}-f_{-w_{S'}(S)})+(f_{S'}-f_S)).
\end{align}
By Proposition \ref{prop: U_S} (a), $\iota_S^{S'}$ descends to an embedding 
\begin{equation}\label{eq: iota, S, S'}
T^*\cZ(L_S^{S'})\underset{\cZ(L_S^{\der})}{\times} J_{L_S^{\der}}\hookrightarrow J_{L_{S'}^\der},
\end{equation}
which naturally extends to an embedding
\begin{equation}\label{eq: tilde i S, S'}
\tilde{\iota}_S^{S'}: \fU_S=T^*\cZ(L_S)\underset{\cZ(L_S^{\der})}{\times} J_{L_S^{\der}}\hookrightarrow T^*\cZ(L_{S'})\underset{\cZ(L_{S'}^{\der})}{\times} J_{L_{S'}^\der}=\fU_{S'}. 
\end{equation}
It is clear from (\ref{eq: prop U_S->U_S'}), that for any $S\subset S'\subset S''$, we have $\tilde{\iota}_{S'}^{S''}\circ \tilde{\iota}_{S}^{S'}=\tilde{\iota}_{S}^{S''}$, and the proposition follows. 
\end{proof}

\section{Weinstein Sector structures on $J_G$}\label{sec: skeleton, sector}
We will give natural partial compactifications of $J_G$ as Liouville sectors and present some of them with Weinstein sector structures. This allows us to define a partially wrapped Fukaya category $\cW(J_G)$ (independent of the choice of the partial compactifications) on it following \cite{GPS1}. 
We give the Lagrangian core and skeleton of $J_G$ (for a choice of partially compactified Weinstein sector), from which we can determine a set of generators of the partially wrapped Fukaya category. Throughout this section, we assume $G$ is semisimple. The reductive case follows easily from the semisimple case, which will be spelled out more explicitly in Section \ref{sec: HMS reductive}. 

\subsection{Some algebraic set-up}\label{subsec: algebraic setup}
Recall that the algebraic functions on $G/N$, denoted by $\mathbb{\bC}[G/N]$, as a $G$-representation has a decomposition into irreducibles using the right $T$-action
\begin{align}\label{eq: O, G/N}
\bC[G/N]\cong \bigoplus\limits_{\lambda\in X^*(T)^+} V_{-w_0(\lambda)},
\end{align}
where $X^*(T)^+$ is the set of dominant weights of $T$. Any highest weight vector in each $V_{-w_0(\lambda)}$ corresponds to a left $N$-invariant function. Let $G_{sc}$ be the simply connected form of $G$, and let $T_{sc}\subset G_{sc}$ be the maximal torus (from taking the inverse image of $T$). Then for each fundamental (dominant) weight $\lambda\in X^*(T_{sc})^+_{\fund}$, choose  
\begin{align*}
v_{\lambda}\in V_\lambda,\ v_{-w_0(\lambda)}\in V_{-w_0(\lambda)}\cong V_\lambda^*
\end{align*}
satisfying 
\begin{align*}
\lng\overline{w}_0^{-1}v_{\lambda}, v_{-w_0(\lambda)}\rng=1. 
\end{align*}
and let 
\begin{align}\label{eq: b_lambda}
b_\lambda(gN)=\lng gv_{\lambda}, v_{-w_0(\lambda)}\rng.
\end{align}
Then $b_\lambda$ is a highest weight vector in the factor $V_{-w_0(\lambda)}$ of (\ref{eq: O, G/N}). 

Since $b_{\lambda}(gzN)=\lambda(z)b_{\lambda}(gN)$ for any $z\in \cZ(G_{sc})$, the real function $|b_{\lambda}|$ descends to a left $N$-invariant function on $G/N$. In the following, unless otherwise specified, we will view $b_\lambda$ (resp. $|b_\lambda|$) as a function on $J_{G_{sc}}$ (resp. $J_G$) through the left $N$-equivariant map $\mu^{-1}(f, f)/N\rightarrow G_{sc}/N$ (resp. $\mu^{-1}(f, f)/N\rightarrow G/N$). Let $ac_{\gamma(s)}$ denote for the action of $s\in \bC^\times$ on $J_G$ defined in (\ref{eq: C star action}). It is easy to see that on $J_{G_{sc}}$, we have 
\begin{align}\label{eq: scale b_lambda}
ac_{\gamma(s)}^*b_\lambda=s^{(w_0(\lambda)-\lambda)(\sfh_0)}b_\lambda=s^{-2\lambda(\sfh_0)}b_\lambda.
\end{align}

 In the following lemma, we give a description of the canonical 
 $\bC^\times$-action on the factors in $T^*\cZ(L_S)$ and $J_{L_S^\der}$ under the symplectic embedding (\ref{eq: prop U_S}). For $S\subset \Pi$, let $\sfh_0=\sfh_{0,S}+\sfh'_{0,S^\perp}$ be the decomposition with respect to orthogonal decomposition $\ft\cong\ft_S\oplus \langle \alpha\in S\rangle^\perp$, where 
 \begin{align}\label{eq: h_0, S, S perp}
 \sfh_{0,S}=\sum\limits_{\delta\in \Gamma(S)}\delta^\vee,\ \sfh_{0, S^\perp}'=\sum\limits_{\beta\in \Delta^+\backslash\Gamma(S)}\beta^\vee
 \end{align}
 Let 
$\gamma_S: \bC^\times\rightarrow T_S=T_{L_S^\der}$ be the  map determined by the cocharacter $\sfh_{0,S}$. 

 \begin{lemma}\label{lemma: action factor}
The canonical $\bC^\times$-action on $J_G$ restricted to the open locus $T^*\cZ(L_S)\underset{\cZ(L_S^\der)}{\times}J_{L_S^\der}$ has its action on each factor as
\begin{align*}
&s\cdot (z, t)=(z\Ad_{\overline{w}_S^{-1}\overline{w}_0}\gamma(s)\gamma(s)^{-1}, s^2t), \ (z,t)\in T^*\cZ(L_S);\\
&s\cdot (g_S, \xi_S)=(\Ad_{\gamma_S(s)}g_S, s^2\Ad_{\gamma_S(s)}\xi_S),\ (g_S,\xi_S)\in J_{L_S^\der}, s\in \bC^\times.
\end{align*}
Here $\Ad_{\overline{w}_S^{-1}\overline{w}_0}\gamma(s)\gamma(s)^{-1}$ regarded as a one parameter subgroup in $\cZ(L_S)\subset T$ is given by the cocharacter
\begin{align}\label{eq: lemma: action factor }
w_S^{-1}w_0(\sfh_0)-\sfh_0=-2\sfh_{0, S^\perp}'.
\end{align}
\end{lemma}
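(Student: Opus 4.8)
The plan is to compute the canonical $\bC^\times$-action directly in the coordinates furnished by the embedding $\iota_S$ of (\ref{eq: prop U_S}), using the explicit formula (\ref{eq: C star action}) for the action on $T^*G\cong G\times\fg$. First I would recall that a point of $T^*\cZ(L_S)\underset{\cZ(L_S^\der)}{\times}J_{L_S^\der}$ is represented by $(z,t;g_S,\xi_S)$, sitting inside $\mu^{-1}_{N\times N}(f,f)$ as $(\phi_S z g_S,\Xi_S)$ with $\Xi_S=\xi_S+t+\Ad_{(\phi_S z g_S)^{-1}}(f-f_{-w_0(S)})+(f-f_S)$. Applying (\ref{eq: C star action}) gives a new point $(\Ad_{\gamma(s)}\phi_S z g_S,\, s^2\Ad_{\gamma(s)}\Xi_S)$, and I need to rewrite this in the form $\iota_S$ of a new tuple $(z',t';g_S',\xi_S')$ modulo the $N_S\times N_S$-action. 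The key algebraic inputs are: $\gamma(s)$ is central in $L_S$ hence commutes with $g_S\in L_S^\der$ and with $z\in\cZ(L_S)$ up to rewriting; $\Ad_{\gamma(s)}$ preserves each root-space decomposition; and the identity $\Ad_{\gamma(s)}\phi_S=\phi_S\cdot(\phi_S^{-1}\Ad_{\gamma(s)}\phi_S)=\phi_S\cdot\Ad_{\overline{w}_S^{-1}\overline{w}_0}\gamma(s)$, which produces the one-parameter subgroup $\Ad_{\overline{w}_S^{-1}\overline{w}_0}\gamma(s)$ in $T$.

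Next I would separate that one-parameter subgroup into its $\cZ(L_S)$-component and its $T_S$-component. The cocharacter of $\Ad_{\overline{w}_S^{-1}\overline{w}_0}\gamma(s)$ is $w_S^{-1}w_0(\sfh_0)$; subtracting $\sfh_0$ (which is the cocharacter that records the $s^2$-normalization already built into how $\xi_S$, $t$, $g_S$ sit inside $\Xi_S$, since the Liouville structure was defined using $\gamma$ and weight $2$) isolates the genuine twist $w_S^{-1}w_0(\sfh_0)-\sfh_0$. I would then verify the identity $w_S^{-1}w_0(\sfh_0)-\sfh_0=-2\sfh_{0,S^\perp}'$ by a short root-system computation: $w_0$ sends $\Delta^+$ to $\Delta^-$, $w_S$ sends $\Gamma(S)$ to $-\Gamma(S)$ and fixes the complement setwise up to the known combinatorics, and $\sfh_0=\sum_{\beta\in\Delta^+}\beta^\vee=\sfh_{0,S}+\sfh'_{0,S^\perp}$; tracking signs on the two pieces gives $w_S^{-1}w_0$ acting as $-1$ on the $\sfh'_{0,S^\perp}$-part and trivially (after the composition) on the $\sfh_{0,S}$-part, which yields exactly $-2\sfh'_{0,S^\perp}$. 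Note $\sfh'_{0,S^\perp}$ annihilates all $\alpha\in S$, so this cocharacter indeed lands in $\cZ(L_S)$, consistent with the claimed formula.

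With that in hand, the $T^*\cZ(L_S)$-factor transforms as $(z,t)\mapsto(z\cdot\Ad_{\overline{w}_S^{-1}\overline{w}_0}\gamma(s)\gamma(s)^{-1},\,s^2 t)$: the $\cZ(L_S)$-shift comes from the twist just computed, and the $s^2$ on $t\in\fz_S$ is immediate from $s^2\Ad_{\gamma(s)}$ acting on the $t$-summand of $\Xi_S$ (on which $\Ad_{\gamma(s)}$ is trivial since $t\in\ft$). For the $J_{L_S^\der}$-factor I would check that $\Ad_{\gamma(s)}g_S=\Ad_{\gamma_S(s)}g_S$ and $s^2\Ad_{\gamma(s)}\xi_S=s^2\Ad_{\gamma_S(s)}\xi_S$ on the relevant subspace $\fl_S^\der$ — i.e. that $\gamma(s)$ and $\gamma_S(s)$ differ by the central element $\gamma_{S^\perp}(s)$ whose adjoint action on $\fl_S^\der$ and $L_S^\der$ is trivial — while also confirming that the mixed terms $\Ad_{(\phi_S z g_S)^{-1}}(f-f_{-w_0(S)})$ and $f-f_S$ get absorbed by an appropriate element of $N_S\times N_S$ exactly as in the proof of Proposition \ref{prop: U_S}, so that the rewritten tuple is genuinely of the form $\iota_S(z',t';g_S',\xi_S')$. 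I expect the main obstacle to be precisely this bookkeeping: keeping straight which pieces of $\Xi_S$ are fixed by $\Ad_{\gamma(s)}$, which are rescaled, and which must be reabsorbed by the $N\times N$-quotient, together with the sign computation $w_S^{-1}w_0(\sfh_0)-\sfh_0=-2\sfh'_{0,S^\perp}$; the rest is a direct unwinding of definitions.
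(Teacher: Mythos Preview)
Your approach is essentially the same as the paper's: apply the action formula (\ref{eq: C star action}) to $\iota_S(z,t;g_S,\xi_S)=(\phi_S z g_S,\Xi_S)$ and rewrite the result back in the form $\iota_S(z',t';g_S',\xi_S')$. Two small corrections: $\gamma(s)$ is \emph{not} central in $L_S$ (only its $\sfh'_{0,S^\perp}$-part is), which is why $\Ad_{\gamma_S(s)}$ genuinely appears on $g_S$ as you note later; and in the paper's computation no $N_S\times N_S$-absorption is needed --- the terms $s^2\Ad_{\gamma(s)}(f-f_S)=f-f_S$ and the $\Ad_{(\phi_S z g_S)^{-1}}(f-f_{-w_0(S)})$-term already land exactly in the required form for the new coordinates, so the rewriting is purely in $T$.
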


\begin{proof}

\begin{align*}
s\cdot (z, t; g_S, \xi_S)\mapsto {\scriptstyle (\Ad_{\gamma(s)}(\phi_{S}zg_S), s^2\Ad_{\gamma(s)}(\xi_S+t+\Ad_{(\phi_{S}zg_S)^{-1}}(f-f_{-w_0(S)})+(f-f_S)))}
\end{align*}
\begin{align*}
&(\Ad_{\gamma(s)}(\phi_{S}zg_S), s^2\Ad_{\gamma(s)}(\xi_S+t+\Ad_{(\phi_{S}zg_S)^{-1}}(f-f_{-w_0(S)})+(f-f_S)))\\
=&(\phi_{S}z\Ad_{(\overline{w}_0^{-1}\overline{w}_S)^{-1}}\gamma(s)\gamma(s)^{-1}\Ad_{\gamma_S(s)}g_S, s^2\Ad_{\gamma_S(s)}(\xi_S+t))\\
&+s^2\Ad_{\Ad_{\gamma_S(s)}g_S^{-1}z^{-1}\gamma(s)\phi_{S}^{-1}}(f-f_{-w_0(S)})+(f-f_S)\\
=&(\phi_{S}z\Ad_{(\overline{w}_0^{-1}\overline{w}_S)^{-1}}\gamma(s)\gamma(s)^{-1}\Ad_{\gamma_S(s)}g_S, s^2\Ad_{\gamma_S(s)}(\xi_S+t))\\
&+\Ad_{\Ad_{\gamma_S(s)}g_S^{-1}z^{-1}\gamma(s)\Ad_{(\overline{w}_0^{-1}\overline{w}_S)^{-1}}\gamma(s)^{-1}\phi_{S}^{-1}}(f-f_{-w_0(S)})+(f-f_S)
\end{align*}

The cocharacter formula (\ref{eq: lemma: action factor }) is direct to check. 
\end{proof}

 The following lemma is easy to check. 
\begin{lemma}
For any $\lambda\in X^*(T_{sc})^+_\fund$, we have $|b_\lambda|\neq 0$ on $N\overline{w}TN$ if and if $ww_0\in W_\lambda=\{w\in W: w(\lambda)=w_0(\lambda)\}$. In particular, $|b_\lambda|\neq 0$ on the Bruhat cell $\cB_{w_0w_S}$ if and only if $w_S\in W_\lambda\Leftrightarrow \lambda\in \lng\alpha\in S\rng^\perp$. 
\end{lemma}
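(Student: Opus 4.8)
The plan is a direct matrix‑coefficient computation. Fix $\lambda\in X^*(T_{sc})^+_\fund$ and recall $b_\lambda(gN)=\lng gv_\lambda,v_{-w_0(\lambda)}\rng$. The key structural input I would record first is that, since $v_{-w_0(\lambda)}$ is a highest weight vector of $V_{-w_0(\lambda)}\cong V_\lambda^*$, viewed as a linear functional on $V_\lambda$ it annihilates every weight space except the lowest one: the weight space of weight $w_0(\lambda)$, which is one‑dimensional and spanned by $\overline{w}_0^{-1}v_\lambda$. By the normalization $\lng\overline{w}_0^{-1}v_\lambda,v_{-w_0(\lambda)}\rng=1$, pairing any vector of $V_\lambda$ against $v_{-w_0(\lambda)}$ simply reads off the coefficient of its lowest‑weight component.

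Next I would handle the general statement. For $g=n_1\overline{w}tn_2\in N\overline{w}TN$, using that $N$ fixes $v_\lambda$ and $t$ scales it by $\lambda(t)$, one gets $b_\lambda(gN)=\lambda(t)\,\lng n_1\overline{w}v_\lambda,v_{-w_0(\lambda)}\rng$. Here $\overline{w}v_\lambda$ lies in the weight space of weight $w(\lambda)$, and since $\mathrm{Lie}\,N$ is spanned by positive root vectors, acting by $n_1\in N$ only adds components of weight strictly larger than $w(\lambda)$ in the dominance order. As $w_0(\lambda)$ is the minimal weight of $V_\lambda$, the weight‑$w_0(\lambda)$ component of $n_1\overline{w}v_\lambda$ therefore coincides with that of $\overline{w}v_\lambda$ itself, which is nonzero iff $w(\lambda)=w_0(\lambda)$ (in which case $\overline{w}v_\lambda$ is a nonzero multiple of $\overline{w}_0^{-1}v_\lambda$, already attaining a nonzero pairing with $n_1=1$). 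Since $\lambda(t)$ never vanishes on $T$, this shows $|b_\lambda|$ is not identically zero on $N\overline{w}TN$ precisely when $w(\lambda)=w_0(\lambda)$, which is the condition in the statement.

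For the ``in particular'' clause I would invoke Proposition \ref{prop: B_w0w}, which realizes $\cB_{w_0w_S}$ by pairs whose group component has the form $\overline{w}_0^{-1}\overline{w}_Sh$ with $h\in T$, so that $\cB_{w_0w_S}$ maps into the stratum of $N\backslash G/N$ indexed by $w_0w_S$. Since $b_\lambda$ is left‑ and right‑$N$‑invariant, it (and hence $|b_\lambda|$) descends to $J_G$, and its restriction to $\cB_{w_0w_S}$ equals, up to the nowhere‑zero factor $\lambda(h)$, the pairing $\lng\overline{w}_0^{-1}\overline{w}_Sv_\lambda,v_{-w_0(\lambda)}\rng$. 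By the previous paragraph applied with $w=w_0w_S$, this is nonzero iff $(w_0w_S)(\lambda)=w_0(\lambda)$, i.e. $w_S(\lambda)=\lambda$, i.e. $w_S\in W_\lambda$. It remains to check $w_S\in W_\lambda\Leftrightarrow\lambda\in\lng\alpha\in S\rng^\perp$: the implication $\Leftarrow$ is immediate since $w_S$ is a product of reflections $s_\alpha$, $\alpha\in S$, which fix $\lng S\rng^\perp$ pointwise; conversely, writing $\lambda=\lambda_S+\lambda'$ with $\lambda_S\in\lng S\rng$ and $\lambda'\in\lng S\rng^\perp$, dominance of $\lambda$ forces $\lng\lambda_S,\alpha^\vee\rng=\lng\lambda,\alpha^\vee\rng\ge 0$ for all $\alpha\in S$, so $\lambda_S$ is a dominant weight for the root subsystem spanned by $S$; since $w_S$ is the longest element of $W_S$ it carries the $S$‑dominant cone to its negative, so $w_S\lambda_S=\lambda_S$ forces $\lambda_S$ to be simultaneously $S$‑dominant and $S$‑antidominant, hence $\lambda_S=0$.

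I do not expect any serious obstacle: the whole argument is elementary highest‑weight theory. The only points that require a little care are the observation that pairing with $v_{-w_0(\lambda)}$ detects exactly the lowest weight line while $N$ acts weight‑increasingly (which together pin down the vanishing/nonvanishing), and keeping straight — via Proposition \ref{prop: B_w0w} — which Weyl group element indexes a given Bruhat stratum, so that the general statement is applied to the correct $w=w_0w_S$.
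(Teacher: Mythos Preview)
The paper does not supply a proof here; it merely says the lemma ``is easy to check.'' Your matrix-coefficient computation is exactly the natural argument and is correct: pairing with $v_{-w_0(\lambda)}$ reads off the $w_0(\lambda)$-weight component, $N$ only raises weights, and $\overline{w}v_\lambda$ has weight $w(\lambda)$, so $b_\lambda$ is nonvanishing on $N\overline{w}TN$ precisely when $w(\lambda)=w_0(\lambda)$.

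One bookkeeping point deserves care. What you actually establish is $w\in W_\lambda$ (with the displayed definition $W_\lambda=\{w':w'(\lambda)=w_0(\lambda)\}$), not $ww_0\in W_\lambda$ as printed; a quick check in $SL_2$ with the fundamental weight shows the printed condition is off by a factor of $w_0$ and is presumably a misprint. Likewise, in the ``in particular'' clause you correctly deduce $w_S(\lambda)=\lambda$ and then write ``i.e.\ $w_S\in W_\lambda$'': with the paper's definition that would mean $w_S(\lambda)=w_0(\lambda)$, which is not what you have shown. What you mean is that $w_S$ lies in the \emph{stabilizer} of $\lambda$, and it is that condition which your final paragraph correctly shows to be equivalent to $\lambda\in\langle\alpha\in S\rangle^\perp$. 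The latter equivalence is what is actually used downstream, so nothing is affected, but you should flag the discrepancy rather than asserting a match with the statement as written.
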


For any $\beta\in \Pi$, let $\beta^\vee$ be the the corresponding coroot, and let $\lambda_{\beta^\vee}$ (resp. $\lambda_\beta^\vee$) denote for the fundamental weight (resp. coweight) that is dual to $\beta^\vee$ (resp. $\beta$). 

In the following, for any $S$, let $\pi_{\ft^*}^S: \ft^*\rightarrow \ft_S^*$ be the natural projection map. 

\begin{lemma}\label{lemma: weights, b_lambda}
For any $G=G_{sc}$ and $S\subset \Pi$, under the embedding 
\begin{align*}
\iota_S: \fU_S=T^*\cZ(L_S)\underset{\cZ(L_S^\der)}{\times}J_{L_S^\der}\hookrightarrow J_G
\end{align*}
for a fixed choice of $\overline{w}_0, \overline{w}_S$ as in Remark \ref{remark: choices of w}, we have 
\begin{align}
\label{eq: lemma b,lambda,S}&\iota_S^*b_{\lambda_{\beta^\vee}}(g_S,\xi_S; z,t)=
\begin{cases}
&\lambda_{\beta^\vee}(z), \text{ if }\beta\not\in S,\\
&\lambda_{\beta^\vee}(z)b^S_{\lambda_{\beta^\vee}}(g_S),\text{ if }\beta\in S,
\end{cases}
\end{align}
where $b_{\lambda_{\beta^\vee}}^S\in \bC[L_S^\der]^{N_S\times N_S}$ corresponds to the fundamental weight\footnote{Note that in general, $b^S_{\lambda_{\beta^\vee}}(g_S)\neq b_{\lambda_{\beta^\vee}}(g_S)$.} $\pi_{\ft^*}^S(\lambda_{\beta^\vee})\in \ft_S^*$ that is dual to $\beta\in S$. 
\end{lemma}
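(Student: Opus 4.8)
The plan is to unwind both sides to a single matrix coefficient of the irreducible $G_{sc}$-representation $V_{\lambda_{\beta^\vee}}$. Write $\lambda=\lambda_{\beta^\vee}$ and $\phi_S=\overline{w}_0^{-1}\overline{w}_S$. By the definition (\ref{eq: b_lambda}) of $b_\lambda$ and the formula (\ref{eq: prop U_S}) for $\iota_S$ — and using that $b_\lambda$, being a function on $G/N$, depends only on the $G/N$-coordinate, so the $\fg$-component $\Xi_S$ of $\iota_S(z,t;g_S,\xi_S)$ (and hence the dependence on $t,\xi_S$) plays no role — one gets
\begin{align*}
\iota_S^* b_{\lambda}(g_S,\xi_S;z,t)=\lng \phi_S z g_S v_\lambda,\ v_{-w_0(\lambda)}\rng .
\end{align*}

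Next I would exploit centrality of $z$. Since $z\in\cZ(L_S)$ is central in $L_S$ it commutes with $g_S,\overline{w}_S\in L_S$, and it acts by the scalar $\lambda(z)=\lambda_{\beta^\vee}(z)$ on the $L_S^\der$-submodule $M\subseteq V_\lambda$ generated by the highest weight vector $v_\lambda$ (as $\Ad_z$ is trivial on $\fl_S^\der$). Hence
\begin{align*}
\phi_S z g_S v_\lambda=\overline{w}_0^{-1}z\,\overline{w}_S g_S v_\lambda=\lambda_{\beta^\vee}(z)\cdot\overline{w}_0^{-1}\overline{w}_S g_S v_\lambda ,
\end{align*}
and it remains to evaluate $\lng \overline{w}_0^{-1}\overline{w}_S g_S v_\lambda,\ v_{-w_0(\lambda)}\rng$. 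Here a weight count combined with the normalization $\lng \overline{w}_0^{-1}v_\lambda, v_{-w_0(\lambda)}\rng=1$ shows that $x\mapsto \lng \overline{w}_0^{-1}x, v_{-w_0(\lambda)}\rng$ is exactly the linear functional that reads off the $v_\lambda$-component in the weight decomposition of $V_\lambda$; so this quantity equals the $v_\lambda$-coefficient of $\overline{w}_S g_S v_\lambda\in M$.

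Then I would split on whether $\beta\in S$. If $\beta\notin S$, then $\pi^S_{\ft^*}(\lambda_{\beta^\vee})$ is trivial, so $M$ is the trivial $L_S^\der$-module, $\overline{w}_S g_S v_\lambda=v_\lambda$, the coefficient is $1$, and we land on the first case of (\ref{eq: lemma b,lambda,S}). If $\beta\in S$, then $M$ is the irreducible $L_S^\der$-module of highest weight $\pi^S_{\ft^*}(\lambda_{\beta^\vee})$, the fundamental weight of $L_S^\der$ dual to $\beta$ — this is just the $T_S$-weight of $v_\lambda$ (cf. Remark \ref{remark: choices of w}(b)). Identifying $M$ with $V^{L_S^\der}_{\pi^S_{\ft^*}(\lambda_{\beta^\vee})}$ so that $v_\lambda$ corresponds to the distinguished highest weight vector, the same normalization bookkeeping carried out now \emph{inside} $L_S^\der$ — using that the longest element $w_S$ of $W_{L_S}$ is an involution and that the lowest-weight functional $v^S_{-w_S(\pi^S\lambda)}$ is pinned down by $\lng\overline{w}_S^{-1}v^S_{\pi^S\lambda},v^S_{-w_S(\pi^S\lambda)}\rng=1$ — identifies the $v_\lambda$-coefficient of $\overline{w}_S g_S v_\lambda$ with the matrix coefficient $b^S_{\lambda_{\beta^\vee}}(g_S)$, giving the second case.

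The routine inputs are the two weight computations (that $\pi^S_{\ft^*}(\lambda_{\beta^\vee})$ is $0$ for $\beta\notin S$ and the $\beta$-fundamental weight of $L_S^\der$ for $\beta\in S$) and the bookkeeping of the Weyl twists $w_0,w_S,w_0w_S$. The one step that needs genuine care — and where the choices of Remark \ref{remark: choices of w} are used — is matching the \emph{normalization constants}: one must check that under the identification $M\cong V^{L_S^\der}_{\pi^S(\lambda_{\beta^\vee})}$ the chosen $\overline{w}_0,\overline{w}_S$ and highest weight vectors $v_\lambda,v_{-w_0(\lambda)}$ correspond compatibly to the analogous data $v^S_\bullet,\overline{w}_S$ intrinsic to $L_S^\der$, so that no scalar is lost. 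This is also exactly what the footnote warns about: $b^S_{\lambda_{\beta^\vee}}$ must be formed inside $L_S^\der$ using its own longest Weyl element, and is not the restriction of $b_{\lambda_{\beta^\vee}}$ to $L_S^\der$.
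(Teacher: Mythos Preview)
Your argument is correct and follows essentially the same route as the paper. The paper's version is only slightly slicker in bookkeeping: instead of interpreting $x\mapsto\lng\overline{w}_0^{-1}x,v_{-w_0(\lambda)}\rng$ as ``read off the $v_\lambda$-coefficient'' and then redoing the normalization inside $L_S^\der$, it moves the Weyl elements to the other slot to get $\lng g_S v_\lambda,\ \overline{w}_S^{-1}\overline{w}_0 v_{-w_0(\lambda)}\rng$ and observes in one line that $\overline{w}_S^{-1}\overline{w}_0 v_{-w_0(\lambda)}$ is the correctly normalized dual highest weight vector for $L_S^\der$, via the single identity $(\overline{w}_S^{-1}v_\lambda,\ \overline{w}_S^{-1}\overline{w}_0 v_{-w_0(\lambda)})=1$.
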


\begin{proof}
Using the definition (\ref{eq: b_lambda}) of $b_\lambda$ and the formula (\ref{eq: prop U_S}), we have 
\begin{align*}
&\iota_S^*b_\lambda(z,t;g_S,\xi_S)=b_\lambda((\overline{w}_0)^{-1}\overline{w}_S zg_S)=\lambda(z)b_\lambda((\overline{w}_0)^{-1}\overline{w}_S g_S)\\
=&\lambda(z)(\overline{w}_0^{-1}\overline{w}_S g_S(v_\lambda), v_{-w_0(\lambda)})=\lambda(z)(g_S(v_\lambda), \overline{w}_S^{-1}\overline{w}_0v_{-w_0(\lambda)})\\
=&\lambda(z)b_{\pi_{\ft^*}^S(\lambda)}(g_S).
\end{align*}
The last line above uses
\begin{align*}
(\overline{w}_S^{-1}v_{\lambda}, \overline{w}_S^{-1}\overline{w}_0v_{-w_0(\lambda)})=1. 
\end{align*}

Since 
\begin{align*}
\pi_{\ft^*}^S(\lambda_{\beta^\vee})=\begin{cases}&0, \text{ if }\beta\not\in S\\
&\lambda_{\beta^\vee},\text{ if }\beta\in S
\end{cases},
\end{align*}
(\ref{eq: lemma b,lambda,S}) follows. 
\end{proof}

Recall that we use $\Gamma(S)$ to denote the set of positive roots that can be written as sums of elements in $S$, i.e. the set of positive roots of the standard Levi subalgebra generated by $S$. A direct corollary of Lemma \ref{lemma: weights, b_lambda} is the following.

\begin{cor}\label{cor: b_lambda, regular}
Assume $G=G_{sc}$. 
  For any $S\subsetneq \Pi$, the holomorphic map
\begin{align*}
b_{S^\perp}:=(\iota_S^*b_{\lambda_{\beta^\vee}})_{\beta\not\in S}: &\fU_S\longrightarrow (\bC^\times)^{\Pi\backslash S} 
\end{align*}
is submersive everywhere. 
\end{cor}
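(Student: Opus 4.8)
The plan is to reduce the statement to the explicit computation in Lemma~\ref{lemma: weights, b_lambda}. By that lemma, for $\beta\notin S$ and a point $(z,t;g_S,\xi_S)\in\fU_S=T^*\cZ(L_S)\underset{\cZ(L_S^\der)}{\times}J_{L_S^\der}$, the component $\iota_S^*b_{\lambda_{\beta^\vee}}$ equals $\lambda_{\beta^\vee}(z)$, i.e.\ it depends only on the $\cZ(L_S)$-coordinate $z$ and is the value of the character $\lambda_{\beta^\vee}\colon\cZ(L_S)\to\bC^\times$ (extended over the $\cZ(L_S^\der)$-twist). So $b_{S^\perp}$ factors through the projection $\fU_S\to\cZ(L_S)$ (well defined on the twisted product because each $\lambda_{\beta^\vee}$ with $\beta\notin S$ restricts trivially to $\cZ(L_S^\der)$, as $\pi_{\ft^*}^S(\lambda_{\beta^\vee})=0$), followed by the homomorphism $\cZ(L_S)\to(\bC^\times)^{\Pi\setminus S}$ given by $z\mapsto(\lambda_{\beta^\vee}(z))_{\beta\notin S}$. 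Since the projection $\fU_S\to\cZ(L_S)$ is a submersion, it suffices to show the homomorphism of tori $\cZ(L_S)\to(\bC^\times)^{\Pi\setminus S}$ is itself submersive (a local diffeomorphism, in fact étale), equivalently that its differential at the identity, the linear map $\mathrm{Lie}(\cZ(L_S))\to\bC^{\Pi\setminus S}$, $u\mapsto(d\lambda_{\beta^\vee}(u))_{\beta\notin S}$, is an isomorphism (or at least surjective).

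First I would identify $\mathrm{Lie}(\cZ(L_S))$ with $\fz_S\subset\ft$, the subspace cut out by $\{\alpha=0:\alpha\in S\}$, as already used in the proof of Proposition~\ref{prop: B_w0w}. Then the claim is exactly that the restrictions $\{\lambda_{\beta^\vee}|_{\fz_S}:\beta\in\Pi\setminus S\}\subset\fz_S^*$ form a basis (or spanning set) of $\fz_S^*$. Since $G=G_{sc}$, the fundamental weights $\{\lambda_{\beta^\vee}:\beta\in\Pi\}$ form a basis of $\ft^*$ dual to the basis of coroots $\{\beta^\vee:\beta\in\Pi\}$ of $\ft$. Now $\fz_S^*$ is naturally the quotient $\ft^*/\langle\alpha:\alpha\in S\rangle$, but it also has a complementary description: the annihilator of $\langle S\rangle$ in $\ft^*$ — wait, more precisely, $\fz_S=\{u:\alpha(u)=0,\alpha\in S\}$ is spanned by the coweights $\{\lambda_\beta^\vee:\beta\in\Pi\setminus S\}$ dual to $\{\beta:\beta\in\Pi\}$ (again using the simply connected hypothesis, so that these coweights lie in $\ft$ and span the relevant subspace). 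Pairing the two dual bases shows $\langle\lambda_{\beta^\vee},\lambda_{\gamma}^\vee\rang=\delta_{\beta\gamma}$ for $\beta,\gamma\in\Pi$, hence $\{\lambda_{\beta^\vee}|_{\fz_S}:\beta\in\Pi\setminus S\}$ is precisely the dual basis to $\{\lambda_\beta^\vee:\beta\in\Pi\setminus S\}$ of $\fz_S$, and in particular a basis of $\fz_S^*$. This gives that the differential is an isomorphism, so $b_{S^\perp}$ is submersive everywhere on $\fU_S$.

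\textbf{Main obstacle.}
The only place requiring care is the bookkeeping around the simply connected hypothesis and the twisted product $\underset{\cZ(L_S^\der)}{\times}$: one must check that $b_{S^\perp}$ is genuinely well defined on $\fU_S$ (not merely on $T^*\cZ(L_S)\times J_{L_S^\der}$) and compute the effect on $\mathrm{Lie}(\cZ(L_S))$ correctly after the twist. This is harmless because for $\beta\notin S$ the character $\lambda_{\beta^\vee}$ is trivial on $\cZ(L_S^\der)$ — indeed $\pi_{\ft^*}^S(\lambda_{\beta^\vee})=0$ and $\cZ(L_S^\der)$ is generated inside $T$ by elements pairing via roots in $S$ — so it descends to a character of $\cZ(L_S)$ in the twisted product, and the differential computation above is unaffected by the twist. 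Everything else is the linear algebra of dual bases of root/coroot data for $G_{sc}$, which is routine; there is no genuine geometric input beyond Lemma~\ref{lemma: weights, b_lambda} and the submersivity of the coordinate projection $\fU_S\to\cZ(L_S)$.
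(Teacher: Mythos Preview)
Your strategy is correct and is exactly what the paper intends: the corollary is stated without proof immediately after Lemma~\ref{lemma: weights, b_lambda}, and factoring $b_{S^\perp}$ through the projection $\fU_S\to\cZ(L_S)$ followed by the character map $z\mapsto(\lambda_{\beta^\vee}(z))_{\beta\notin S}$ is the intended reading.

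There is, however, a slip in the linear-algebra step. The fundamental weights $\{\lambda_{\beta^\vee}\}$ and the fundamental coweights $\{\lambda_\gamma^\vee\}$ are \emph{not} dual bases: the former are dual to the simple coroots $\{\gamma^\vee\}$ and the latter to the simple roots $\{\gamma\}$. In general $\langle\lambda_{\beta^\vee},\lambda_\gamma^\vee\rangle$ is the $(\beta,\gamma)$-entry of the inverse Cartan matrix, not $\delta_{\beta\gamma}$ (for instance it equals $1/3$ off the diagonal in type $A_2$). So the sentence ``Pairing the two dual bases shows $\langle\lambda_{\beta^\vee},\lambda_\gamma^\vee\rangle=\delta_{\beta\gamma}$'' is false as written, and the argument as stated does not close.

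The fix is small. For $\beta\notin S$ the weight $\lambda_{\beta^\vee}$ annihilates $\ft_S=\langle\alpha^\vee:\alpha\in S\rangle$, hence for any $\gamma\notin S$ one has
\[
\lambda_{\beta^\vee}\bigl(\pi_{\fz_S}(\gamma^\vee)\bigr)=\lambda_{\beta^\vee}(\gamma^\vee)=\delta_{\beta\gamma}.
\]
Since $\{\gamma^\vee_{S^\perp}:=\pi_{\fz_S}(\gamma^\vee)\}_{\gamma\notin S}$ is a basis of $\fz_S$ (the simple coroots are a basis of $\ft$ and the kernel of $\pi_{\fz_S}$ is exactly $\ft_S=\langle\alpha^\vee:\alpha\in S\rangle$), this exhibits $\{\lambda_{\beta^\vee}|_{\fz_S}\}_{\beta\notin S}$ as the dual basis, and your argument goes through. (This is the same basis the paper uses for the Darboux coordinates in~\eqref{eq: dual coordinate S}.) As a side remark, the simply-connected hypothesis is needed so that the $\lambda_{\beta^\vee}$ are genuine characters of $T$ and hence $b_{\lambda_{\beta^\vee}}$ are well-defined holomorphic functions; it plays no role in whether the coweights $\lambda_\beta^\vee$ lie in $\ft$, since that is a Lie-algebra statement.
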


The following lemma is needed for proving Proposition \ref{prop: proper b map} below. 
Assume $G=G_{sc}$. Consider the holomorphic map 
\begin{align}\label{eq: pi_b, def}
\pi_b:=(b_{\lambda_{\beta^\vee}})_{\beta\in \Pi}: J_G\longrightarrow \bC^{\Pi}. 
\end{align}
Let 
\begin{align}\label{eq: pi_b, first, def}
\pi_{|b|}:=\sum\limits_{\beta\in \Pi}|b_{\lambda_{\beta^\vee}}|^{\frac{1}{\lambda_{\beta^\vee}(\sfh_0)}}: J_{G_\ad}\longrightarrow \bR_{\geq 0}. 
\end{align}
Note that the \emph{inverse} canonical $\bC^\times$-action scales each $b_{\lambda_{\beta^\vee}}$ with  weight $2\lambda_{\beta^\vee}(\sfh_0)>0$, making $\pi_{|b|}$ homogeneous of weight $2$.

\begin{lemma}\label{lemma: pi_|b|, epsilon}
\item[(i)] For any compact neighborhood of $\fK\subset \fc$ of $[0]$, there exists $\epsilon>0$ such that $\pi_{|b|}^{-1}([0,\epsilon])\cap \chi^{-1}(\fK)$ is compact. 

\item[(ii)] The restriction $\pi_b|_{\chi^{-1}([0])}: \chi^{-1}([0])\rightarrow \bC^{\Pi}$ is proper. 
\end{lemma}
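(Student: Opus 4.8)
The plan is to prove both parts by reducing the problem to explicit coordinates on $J_{G_{sc}}$ and controlling the behavior of the maps $\pi_b$, $\pi_{|b|}$ near the central fiber $\chi^{-1}([0])$ and near infinity. For part (i), the first step is to observe that the functions $b_{\lambda_{\beta^\vee}}$, together with the functions $\chi^*(\text{coordinates on }\fc)$, nearly separate points of $J_G$: concretely, on the open locus $\fU_\emptyset \cong T^*T$ one has, by Example \ref{example: B_w0} and Lemma \ref{lemma: weights, b_lambda}, that the tuple $(b_{\lambda_{\beta^\vee}})_{\beta\in\Pi}$ restricts to the ``$T$-coordinate'' $h\in T$ composed with the fundamental weights, so it is a finite map there; more generally, on each stratum $\cB_{w_0w_S}$ the splitting (\ref{eq: prop B_w0w}) together with Corollary \ref{cor: b_lambda, regular} shows that $(b_{\lambda_{\beta^\vee}})_{\beta\notin S}$ is a coordinate on the $T^*\cZ(L_S)$-factor. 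The key point is then that the only directions in which $\pi_{|b|}$ can fail to be proper (after restricting $\chi$ to a compact $\fK$) are the ``vertical'' cotangent-fiber directions in the $T^*\cZ(L_S)$-factors and the nilpotent/unipotent directions in $\fl_S^\der\sslash L_S^\der$ — and along all of these $\pi_{|b|}$, being built from homogeneous (weight $2$, by the remark after (\ref{eq: pi_b, first, def})) positive functions, is bounded below away from zero once we are away from the zero-section/basepoint. So I would argue: choosing $\fK$ and using the $\bC^\times$-homogeneity, a point with $\pi_{|b|}\le\epsilon$ and $\chi$ in $\fK$ must lie in a bounded region of each chart; a covering-by-charts compactness argument (using that $J_G=\bigsqcup_S \cB_{w_0w_S}$ and the $\fU_S$ cover $J_G$ by Proposition \ref{prop: fU_S', sqcup}) then finishes. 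Technically one wants to run this on $J_{G_{sc}}$ where the $b_\lambda$ are genuine functions and then descend via the finite map $J_{G_{sc}}\to J_{G_\ad}$, which is harmless for properness.

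For part (ii), restrict everything to $\chi^{-1}([0])$. By the first definition of $J_G$, the fiber $\chi^{-1}([0])$ is the centralizer in $G$ of the principal nilpotent $f$ (more precisely $e$ in the Kostant slice over $[0]$), which is a commutative unipotent group times $\cZ(G)$ — an affine space times a finite group after passing to $G_{sc}$. Concretely, using Proposition \ref{prop: B_w0w}(b) with the observation that $\fl_S^\der\sslash L_S^\der$ maps to $[0]\in\fc$ precisely at its origin, $\chi^{-1}([0])\cap\cB_{w_0w_S}$ is the fiber of $T^*\cZ(L_S)\to$ (cotangent directions) — i.e. a single cotangent-fiber-type affine piece — so $\chi^{-1}([0])$ is stratified with pieces on which, by Lemma \ref{lemma: weights, b_lambda}, $\pi_b$ is a linear-coordinate embedding up to the finite cover. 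Properness of $\pi_b|_{\chi^{-1}([0])}$ then amounts to: a sequence going to infinity in $\chi^{-1}([0])$ either goes to infinity in one of the ``$b$-visible'' coordinate directions (then $|\pi_b|\to\infty$, done) or escapes through a lower stratum — but the lower strata are the boundary strata $\cB_{w_0w_{S'}}$ with $S'\subsetneq S$ only in a sense that is compatible with $\pi_b$ extending continuously, so no escape is possible. Equivalently and more cleanly: on $J_{G_{sc}}$ the map $(\pi_b,\chi):J_{G_{sc}}\to\bC^\Pi\times\fc$ is finite (this is essentially the statement that $b_{\lambda_{\beta^\vee}}$ for $\beta\in\Pi$ together with $\chi$ generate, via the $\mathrm{BFM}$ presentation, a subring over which $\bC[J_{G_{sc}}]$ is module-finite — or one checks finiteness directly stratum by stratum from Lemma \ref{lemma: weights, b_lambda}), hence proper; restricting a proper map to a closed subset ($\chi^{-1}([0])$) keeps it proper, and then projecting off the (now constant) $\fc$-factor over the single point $[0]$ gives properness of $\pi_b|_{\chi^{-1}([0])}$.

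The main obstacle, as I see it, is part (i): specifically, ruling out escape of mass along the \emph{non-compact cotangent-fiber directions} of the $T^*\cZ(L_S)$-factors while $\chi$ stays in $\fK$ but $\pi_{|b|}$ stays small. The subtlety is that $\pi_{|b|}$ only sees the \emph{base} coordinates $z\in\cZ(L_S)$ (through $|b_{\lambda_{\beta^\vee}}|$, $\beta\notin S$) and not the fiber coordinates $t$, so smallness of $\pi_{|b|}$ does not a priori bound $t$; one must instead use that $\chi$ \emph{does} control the combination of $t$ with the nilpotent part, and that $\fK$ compact forces $t$ (hence the fiber direction) to stay bounded once we are also on a stratum-compact set of base points. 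Making the interaction of $\chi$ and $\pi_{|b|}$ precise — i.e. showing their joint properness onto $\fK\times[0,\epsilon]$ rather than each separately — is the crux, and I would handle it by working in the explicit coordinates $(z,t;g_S,\xi_S)$ of (\ref{eq: prop U_S}), expressing $\chi$ in terms of the invariant polynomials of $f+t+\xi+\Ad(\cdots)$ and checking that the top-degree invariants dominate $|t|^{\deg}$ on the region where $\pi_{|b|}$ is small. The remaining steps (the chart-covering argument, the descent from $G_{sc}$, and part (ii)) are then routine.
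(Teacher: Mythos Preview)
Your proposal has a genuine gap in part (i) and takes an unnecessarily hard (and unjustified) route in part (ii).

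For part (i), you correctly isolate the crux: on the chart $\fU_S$, smallness of $\pi_{|b|}$ controls the base coordinate $z\in\cZ(L_S)$ but says nothing about the fiber coordinate $t\in\fz_S^*$, so you must extract a bound on $t$ from $\chi\in\fK$. Your proposed fix, ``top-degree invariants of $f+t+\xi+\Ad(\cdots)$ dominate $|t|^{\deg}$ on the region where $\pi_{|b|}$ is small,'' is exactly what fails to be obvious: when $\pi_{|b|}$ is small, the nilpotent correction $\Ad_{(\overline{w}_0^{-1}h)^{-1}}f$ is \emph{large}, and the invariant polynomials mix $t$ with this large nilpotent piece in a way that does not transparently bound $|t|$. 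The paper does not attempt this estimate. Instead it passes to the log compactification $\overline{J}^{\log}_{G_\ad}$ of \cite{Balibanu}, where compactness is automatic, and runs a contradiction argument: a hypothetical escaping sequence would have $g_j^{-1}B$ converging to a boundary point $\overline{w}_0u_S\overline{w}_SB$ of the Peterson variety $\overline{\chi^{-1}([0])}$ for some $S\subsetneq\Pi$, and a careful comparison of the Bruhat coordinates on both sides forces incompatible asymptotics on $\lambda_{\alpha^\vee}(\widetilde{z}_j)$ for $\alpha\notin S$. The compactification and the Schubert decomposition of the Peterson variety are the missing external inputs; your chart-by-chart argument does not supply a substitute.

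For part (ii), your claim that $(\pi_b,\chi):J_{G_{sc}}\to\bC^\Pi\times\fc$ is finite is at least as strong as Proposition~\ref{prop: proper b map}, which is the main result this lemma is \emph{used to prove} (via an elaborate induction). The suggested ``stratum-by-stratum'' check from Lemma~\ref{lemma: weights, b_lambda} does not work for the same reason as above: that lemma identifies the $z$- and $g_S$-dependence of $b_{\lambda_{\beta^\vee}}$ but is silent about the fiber direction $t$, so finiteness over a point of $\bC^\Pi\times\fc$ still requires the missing control of $t$ by $\chi$. The paper's argument for (ii) is one line: it follows from (i) because $\pi_{|b|}|_{\chi^{-1}([0])}$ is homogeneous of positive weight under the inverse $\bR_+$-action, so properness of $\pi_b$ on the $\bR_+$-invariant set $\chi^{-1}([0])$ reduces to compactness of the sublevel set $\pi_{|b|}^{-1}([0,\epsilon])\cap\chi^{-1}([0])$, which is a special case of (i).
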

\begin{proof}
(i) By the homogeneity of $\pi_{|b|}$ under the contracting $\bC^\times$-action, it suffices to show that there exists a compact neighborhood of $\fK\subset \fc$ of $[0]$ and $\epsilon>0$ such that $\pi_{|b|}^{-1}([0,\epsilon])\cap \chi^{-1}(\fK)$ is compact. 
Recall the log partial compactification for the adjoint group $\overline{J}^{\log}_{G_\ad}$ defined in \cite{Balibanu},  
\begin{align}
\label{eq: J, log, def}&\overline{J}^{\log}_{G_\ad}=\{(g^{-1}B,\xi\in \cS): \Ad_{g}\xi\in \bigoplus\limits_{\alpha\in \Pi}\fg_{-\alpha}\oplus \fb\}\subset G/B\times \cS,\\
\nonumber&G_{\ad}\times \cS\supset J_{G_\ad}\hookrightarrow \overline{J}^{\log}_{G_\ad}\overset{\overline{\chi}}{\longrightarrow}\cS\\
\nonumber&(g,\xi)\mapsto (g^{-1}B, \xi).
\end{align}
Here we need the presentation of $J_{G_\ad}$ in (\ref{eq: centralizer cS}) to make the embedding well defined. The \emph{inverse} canonical $\bC^\times$-action extends to $\overline{J}^{\log}_{G_\ad}$, given by 
\begin{align*}
s\cdot (g^{-1}B,\xi)=(s^{-\sfh_0}g^{-1}B, s^{-2}\Ad_{s^{-\sfh_0}}\xi).
\end{align*}
By Theorem 4.11 in \emph{loc. cit.}, the $\bC^\times$-fixed points of the \emph{contracting}  $\bC^\times$-flow are the $T$-fixed points of the Peterson variety identified with $\overline{\chi^{-1}([0])}\subset G/B$, and these are indexed by $\overline{w}^{-1}_S\overline{w}_0B, S\subset\Pi$\footnote{Here we use slightly different conventions from \cite{Balibanu} to be compatible with previous sections; the difference is essentially given by an additional factor of $\overline{w}_0$.}, and the dimension of the ascending manifold of $\overline{w}_S^{-1}\overline{w}_0B\in \overline{\chi^{-1}([0])}$ is $2|\Pi|-|S|=2n-|S|$. Note that the intersection of the ascending manifold with $J_{G_\ad}$ is exactly $\cB_{w_0w_S}$, which is an open dense part.

Suppose the contrary, there exists a sequence $(g_j,\xi_j)\in G_{\ad}\times \cS$ such that 
\begin{align*}
&\xi_j\rightarrow f,\ g_j^{-1}B\rightarrow g_\infty^{-1}B\in \overline{\chi^{-1}([0])}-\chi^{-1}([0])=\bigsqcup\limits_{S\subsetneq \Pi} \overline{w}_0A_S\overline{w}_S B,\\
&\pi_{|b|}(g_j)\rightarrow 0. 
\end{align*} 
where 
\begin{align}\label{eq: A_S}
A_S=N_{S}\cap C_G(\Ad_{\overline{w}_0^{-1}}f_{-w_0(S)})
\end{align}
 (cf. \cite[Proposition 6.3]{Balibanu1} and references cited therein for the Schubert decomposition of $\overline{\chi^{-1}([0])}$). As always, we fix a collection of representatives $\overline{w}_S, S\subset \Pi$ for $w_S\in W$. 

There exists a unique $S\subsetneq \Pi$ and a unique element $u_S\in A_S$, such that $g_{\infty}^{-1}B=\overline{w}_0u_S\overline{w}_S B$.
Since $\cB_{w_0}$ is open dense in $J_{G_\ad}$, after perturbing the sequence $(g_j, \xi_j)$ a little bit if necessary, we may assume that $(g_j,\xi_j)\in \cB_{w_0}$ for all $j$. 
Then we can write 
\begin{align}
\label{eq: g_j, xi_j, u_j}
(g_j,\xi_j)=&(\Ad_{u_j}(\widetilde{u}_j\overline{w}_0^{-1}z_j), \Ad_{u_j}(f+t_j+\Ad_{(\overline{w}_0^{-1}z_j)^{-1}}f)),\text{ for some }z_j\in T, t_j\in \ft, 
\end{align}
where $\widetilde{u}_j\in N$ is the unique element that makes the pair on the right-hand-side (without applying $\Ad_{u_j}$) a commuting pair, and $u_j\in N$ is the unique unipotent element whose adjoint action on the Lie algebra element in the presentation (\ref{eq: prop U_S}) is in the Kostant slice $\cS$. Then $\pi_{|b|}(g_j)\rightarrow 0$ is equivalent to $\lambda_{\beta^\vee}(z_j)\rightarrow 0$ for all $\beta\in \Pi$. 
We have the following implications
\begin{itemize}
\item[(i)]

\begin{align}
\nonumber&g_j^{-1}B=u_j\overline{w}_0B\rightarrow g_{\infty}^{-1}B=\overline{w}_0 u_S\overline{w}_SB,\\
\label{eq: y_jb_j, N-}\Rightarrow&N\ni u_j=\overline{w}_0u_S\overline{w}_S(y_j^-b_j)\overline{w}_0^{-1}\text{ for some }b_j\in B, N^-\ni y_j^-\rightarrow I\\
\nonumber\Rightarrow& u_S\overline{w}_S(y_j^-b_j)\in N^-\\
\nonumber\Rightarrow&y_j^-b_j\in \overline{w}_S^{-1}N_S\cdot N^- \cap N^-\cdot B \subset N^-\cdot h_1T_{S}N_{S}\\
\nonumber&\ \ \ \ \ \text{ for some fixed }h_1\in T\text{ such that }\overline{w}_S^{-1}\in h_1\cdot L_S^{\der} \\
\nonumber\Rightarrow& b_j\in h_1N_{S}T_S, N^-\ni y_j\rightarrow I
\end{align}
We will write $b_j=h_1n^{(j)}z^{(j)}$ with respect to the splitting above. \\
 
 \item[(ii)] Write $\xi_j=f+\eta_j$ for $\eta_j\in \fb$ (or more precisely in $\ker \ad_e$ which is not essential) with $|\eta_j|\rightarrow 0$. Recall $u_S\in A_S$ from (\ref{eq: A_S}). 
 \begin{align}
\nonumber\Ad_{\overline{w}_S^{-1}u_S^{-1}\overline{w}_0^{-1}}(f+\eta_j)=&{\scriptstyle \Ad_{\overline{w}_S^{-1}\overline{w}_0^{-1}}f_{-w_0(S)}+\Ad_{\overline{w}_S^{-1}u_S^{-1}\overline{w}_0^{-1}}(f-f_{-w_0(S)})+\Ad_{\overline{w}_S^{-1}u_S^{-1}\overline{w}_0^{-1}}\eta_j}\\
\label{eq: Ad, f, eta_j}=&\Ad_{h_2}f_{S}+(\text{a fixed term in }\fn_{\fp_S})+({\substack{\text{a term in }\fn_{\fp_S}^-\oplus \fl_S\\
 \text{ that is approaching to 0}}}),
 \end{align}
 where $h_2\in T$ is some fixed element.

 \begin{align}
 \nonumber&\Ad_{b_j\overline{w}_0^{-1}}(t_j+\Ad_{(\overline{w}_0z_j)^{-1}}f+f)\\
 \label{eq: Ad_b, 1}=&\Ad_{b_j\overline{w}_0^{-1}}(t_j)+\Ad_{h_1z^{(j)}}(\Ad_{\widetilde{z}_j^{-1}}(f-f_S))+\Ad_{b_j}(\Ad_{\widetilde{z}_j^{-1}}f_S)+\Ad_{b_j}(\Ad_{\overline{w}_0^{-1}}f)
 \end{align}
where 
\begin{align}
\nonumber&\widetilde{z}_j^{-1}=h_3\cdot w_0(z_j^{-1})=\overline{w}_0^{-1}z_j^{-1}\overline{w}_0^{-1}\text{ for some fixed }h_3\in T\\
\label{eq: beta tilde z_j}&\lambda_{\beta^\vee}(\widetilde{z}_j^{-1})=\lambda_{\beta^\vee}(h_3)\cdot \lambda_{\beta^\vee}(w_0(z_j^{-1}))\rightarrow 0, \ \beta\in \Pi.
\end{align} 
 \end{itemize}

Equation (\ref{eq: g_j, xi_j, u_j}), the relation (\ref{eq: y_jb_j, N-}) and $y_j^-\rightarrow I$ implies that the difference between (\ref{eq: Ad_b, 1}) and (\ref{eq: Ad, f, eta_j}) is approaching to $0$. In particular, with respect to the decomposition $\ft\oplus\fn^-\oplus \fn_{\fp_S}\oplus \fn_S$, we have 
\begin{align}
&w_0(t_j)+\proj_{\ft_S}\Ad_{b_j\widetilde{z}_j^{-1}}f_S\rightarrow 0\\
\label{eq: factor fn_-}&\Ad_{h_1z^{(j)}\widetilde{z}_j^{-1}}f\rightarrow \Ad_{h_2}f_S \\
\label{eq: factor fn_fp_S}&\Ad_{b_j\overline{w}_0^{-1}}(f-f_{-w_0(S)}))\rightarrow \Ad_{\overline{w}_S^{-1}u_S^{-1}\overline{w}_0^{-1}}(f-f_{-w_0(S)}),
\end{align}
where we omit the relation on the component $\fn_S$. (\ref{eq: factor fn_-}) implies that 
\begin{align*}
&\beta(z^{(j)}\widetilde{z}_j^{-1})\rightarrow\begin{cases}&\beta(h_1^{-1}h_2),\text{ if } \beta\in S\\
&\infty,\text{ if } \beta\in \Pi\backslash S
\end{cases}.
\end{align*}
However, since $z^{(j)}\in T_S$, for $\alpha\in \Pi\backslash S\neq \emptyset$, 
\begin{align*}
\lambda_{\alpha^\vee}(\widetilde{z}_j)=\lambda_{\alpha^\vee}(z^{(j)})\lambda_{\alpha^\vee}(z^{(j)}\widetilde{z}_j^{-1})^{-1}=\lambda_{\alpha^\vee}(z^{(j)}\widetilde{z}_j^{-1})^{-1}\rightarrow 0
\end{align*}
because $\lambda_{\alpha^\vee}$ as a \emph{nonnegative} linear combination of $\beta\in \Pi$ has a strictly positive component in $\alpha\in \Pi\backslash S$. This gives a contradiction to (\ref{eq: beta tilde z_j}), so the lemma is established.

(ii) follows from (i) since $\pi_{|b|}|_{\chi^{-1}([0])}$ is homogeneous with respect to the inverse $\bR_+$-action on the domain and the weight $2$ $\bR_+$-action on the codomain. 
\end{proof}

\begin{prop}\label{prop: proper b map}
For any compact region $\fK\subset \fc$, the restriction of $\pi_b$ from (\ref{eq: pi_b, def})
\begin{align*}
\pi_b|_{\chi^{-1}(\fK)}: \chi^{-1}(\fK)\longrightarrow \bC^{\Pi}
\end{align*}
is proper. 
\end{prop}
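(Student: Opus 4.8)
The plan is to reduce the statement to Lemma~\ref{lemma: pi_|b|, epsilon} by a compactness argument carried out inside Balibanu's log partial compactification $\overline{J}^{\log}_{G_\ad}$ of (\ref{eq: J, log, def}). Two preliminary reductions come first. Since the quotient $q\colon J_{G_{sc}}\to J_{G_\ad}$ is finite, intertwines the two maps $\chi$, and each $|b_{\lambda_{\beta^\vee}}|$ descends along it, and since $\pi_b|_{\chi^{-1}(\fK)}$ is proper exactly when $(|b_{\lambda_{\beta^\vee}}|)_{\beta\in\Pi}|_{\chi^{-1}(\fK)}$ is, whose sublevel sets are cofinal with those of $\pi_{|b|}$, it is enough to prove $\pi_{|b|}|_{\chi^{-1}(\fK)}$ is proper on $J_{G_\ad}$ for every compact $\fK\subset\fc$. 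Next, by the $\bC^\times$-equivariance of $\chi$ and the weight-$2$ homogeneity of $\pi_{|b|}$ (see (\ref{eq: C star action}), (\ref{eq: scale b_lambda}) and the remark after (\ref{eq: pi_b, first, def})), every compact $\fK$ lies in a $\bC^\times$-translate of a fixed closed ball $\fK_0$ around $[0]$, and the $\bC^\times$-action rescales sublevel sets of $\pi_{|b|}$, so it suffices to prove $\pi_{|b|}^{-1}([0,C])\cap\chi^{-1}(\fK_0)$ is compact for all $C>0$. Writing this set as $\bigl(\pi_{|b|}^{-1}([0,\epsilon])\cap\chi^{-1}(\fK_0)\bigr)\cup\bigl(\pi_{|b|}^{-1}([\epsilon,C])\cap\chi^{-1}(\fK_0)\bigr)$, the first piece is compact by Lemma~\ref{lemma: pi_|b|, epsilon}(i) for $\epsilon$ small, so only $A:=\pi_{|b|}^{-1}([\epsilon,C])\cap\chi^{-1}(\fK_0)$ remains.

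Since $\overline{J}^{\log}_{G_\ad}$ is closed in $G/B\times\cS$, the map $\overline\chi$ is proper and $\overline\chi^{-1}(\fK_0)$ is compact. Given a sequence $x_j=(g_j,\xi_j)$ in $A$, after a subsequence the images $(g_j^{-1}B,\xi_j)$ converge in $\overline\chi^{-1}(\fK_0)$ to some $y_\infty$, with $\chi(x_j)\to c_\infty$ and $\pi_{|b|}(x_j)\to\ell\in[\epsilon,C]$. If $y_\infty\in J_{G_\ad}$, lying in the stratum $\cB_{w_0 w_S}$ of Proposition~\ref{prop: B_w0w}, then either $S=\Pi$, where $y_\infty$ sits on the Kostant section whose centralizers lie in the opposite Borel, forcing $g_j\to g_\infty$ in $G_\ad$ directly, or $S\subsetneq\Pi$, where $x_j$ eventually enters the open subset $\fU_S\cong T^*\cZ(L_S)\underset{\cZ(L_S^\der)}{\times}J_{L_S^\der}$ of Proposition~\ref{prop: U_S}; then by Lemma~\ref{lemma: weights, b_lambda} and Corollary~\ref{cor: b_lambda, regular}, boundedness of $\pi_b$ bounds the $\cZ(L_S)$-coordinate (away from $0$, since $y_\infty\in\fU_S$) together with the $\pi_b$-tuple of the $J_{L_S^\der}$-factor, boundedness of $\chi$ bounds the remaining cotangent and base coordinates, and an induction on $\mathrm{rank}(G)$ gives a convergent subsequence of $x_j$ in $\fU_S$. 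In either case $x_j$ converges in $J_{G_\ad}$, so $A$ is compact provided one knows $y_\infty\notin\overline{J}^{\log}_{G_\ad}\setminus J_{G_\ad}$.

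This last point — equivalently, that $\pi_{|b|}(x_j)\to\infty$ whenever a sequence in $J_{G_\ad}$ approaches a boundary point of $\overline{J}^{\log}_{G_\ad}$ lying over a neighbourhood of $[0]$ — is the crux and, I expect, the main obstacle. The contracting $\bC^\times$-flow preserves the boundary and has its fixed points inside $\overline{\chi^{-1}([0])}$ (\cite{Balibanu}; cf. the proof of Lemma~\ref{lemma: pi_|b|, epsilon}), so every boundary point flows to a fixed point of $\overline{\chi^{-1}([0])}\setminus\chi^{-1}([0])$, near which $\pi_{|b|}$ is forced to be large by Lemma~\ref{lemma: pi_|b|, epsilon}(ii); but since no single one-parameter subgroup can drive $\chi$ toward $[0]$ while keeping $\pi_{|b|}$ bounded, homogeneity alone does not close the argument. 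Instead one re-runs the limiting analysis in the proof of Lemma~\ref{lemma: pi_|b|, epsilon}(i) — writing $g_j$ in the form (\ref{eq: g_j, xi_j, u_j}) relative to whichever Schubert stratum $\overline{w}_0 A_S\overline{w}_S B$ of $\overline{\chi^{-1}([0])}\subset G/B$ the sequence approaches, and tracking the coordinates $\lambda_{\beta^\vee}(z_j)$ — now permitting $\xi_j\to\xi_\infty$ and $\pi_{|b|}(x_j)\to\ell$ to be arbitrary rather than $\xi_j\to f$, $\pi_{|b|}(x_j)\to 0$; the same incompatibility of weights between the $\ft$- and $\fn^-$-components of the Lie-algebra identities there forces $\lambda_{\beta^\vee}(z_j)\to\infty$ for some $\beta\notin S$, hence $\pi_{|b|}(x_j)\to\infty$, the required contradiction. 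Carrying this out for all boundary Schubert strata at once, and over a neighbourhood of $[0]$ rather than just the cone point, is the part needing real care; the rest is bookkeeping.
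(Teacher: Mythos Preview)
Your reductions are fine, and you have correctly identified that after Lemma~\ref{lemma: pi_|b|, epsilon}(i) the remaining content is ruling out sequences with $\chi$ bounded and $\pi_{|b|}$ bounded that escape to the boundary of $\overline{J}^{\log}_{G_\ad}$. One minor point: the case $y_\infty\in J_{G_\ad}$ needs no induction at all, since $J_{G_\ad}\hookrightarrow\overline{J}^{\log}_{G_\ad}$ is an open embedding (the complement is a divisor), so convergence in the compactification to an interior point is already convergence in $J_{G_\ad}$.

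The real gap is in your sketch for the ``crux''. The Schubert strata $\overline{w}_0 A_S\overline{w}_S B$ you invoke are strata of the \emph{Peterson variety} $\overline{\chi^{-1}([0])}$, and the coordinate calculation in Lemma~\ref{lemma: pi_|b|, epsilon}(i) is carried out for a sequence with $\xi_j\to f$, i.e.\ over the cone point. In your situation $\xi_\infty$ is an arbitrary point of $\cS$ over $\fK_0$, so $y_\infty$ need not lie in the Peterson variety at all, and the form (\ref{eq: g_j, xi_j, u_j}) together with the identities (\ref{eq: Ad, f, eta_j})--(\ref{eq: factor fn_fp_S}) are not available as written: the term ``approaching $0$'' in (\ref{eq: Ad, f, eta_j}) uses $\eta_j\to 0$, and the conclusion $\lambda_{\beta^\vee}(z_j)\to 0$ for all $\beta$ uses $\pi_{|b|}\to 0$. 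With $\eta_j$ merely bounded and some $|\lambda_{\beta^\vee}(z_j)|$ bounded away from zero, the weight-count contradiction you appeal to dissolves. You acknowledge that flowing $y_\infty$ to the cone point by $\bC^\times$ does not close the argument either, so what remains is genuinely the hard part of the proposition, not a bookkeeping step.

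The paper's proof avoids this by not working sequentially in the compactification at all. It runs a second induction, on $|S|$, over the open pieces $\fU_S$: for each $S\subsetneq\Pi$ it carves out regions $\fU^{\geqslant\epsilon_1}_{S,<\epsilon_2}$ (where the $|b_{\lambda_{\beta^\vee}}|$, $\beta\notin S$, are bounded below and the $|b^S_{\lambda_{\beta^\vee}}|$, $\beta\in S$, are bounded above), and proves inductively that the two pieces of ``horizontal boundary'' of a suitable exhaustion --- one from $\chi_S^{-1}(\partial\fK_S)$, the other from $\partial F_{(\dot z,t)}$ --- have $\chi$-image outside any given compact in $\fc$. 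The contracting $\bR_{\geq 1}$-flow is then used to propagate this outward. This is the substitute for the boundary analysis you are missing; it is what actually makes the induction on rank close.
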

\begin{proof}

We prove by induction on two things.
(1) Suppose we have proved by induction on the rank of the group the proposition for all $J_{L_S^\der}$ with $S\subsetneq \Pi$. The base case $S=\emptyset$ is trivial. (2) Assume $S=\emptyset$ for the base case. For any compact $\fK'\subset \fc$ (here and after, always assuming containing a neighborhood of $[0]$) and any $\epsilon>0$, there exists a compact $\fK'_{\emptyset, \epsilon}\subset \ft$ such that for all $h\in T$ with $|b_{\lambda_{\beta^\vee}}(\overline{w}_0^{-1}h)|=|\lambda_{\beta^\vee}(h)|\geq \epsilon, \beta\in \Pi$, 
\begin{align*}
\chi_\fg(f+t+\Ad_{(\overline{w}_0^{-1}h)^{-1}}f)\in \fK'\Rightarrow t\in \fK'_{\emptyset, \epsilon}.
\end{align*}
The upshot is that $\Ad_{(\overline{w}_0^{-1}h)^{-1}}f$ is bounded under the assumption, so $\fK'_{\emptyset, \epsilon}$ does not depend on $h$. Note that for the same reason, the inverse implication is also true, i.e. for any compact $\fK_\emptyset\subset\ft$ and $h\in T$ as above, we have 
\begin{align*}
\chi(\iota_\emptyset (\{h: |\lambda_{\beta^\vee}(h)|\geq \epsilon\})\times \fK_\emptyset)\text{ is pre-compact}. 
\end{align*}
Now suppose we have proved for all $S'$ with $|S'|<k$ such that for any $\epsilon_1, \epsilon_2>0$ and compact $\fK'\subset \fc$ as above, there exists a compact $\fK_{S',\epsilon_1, \epsilon_2}'\subset \cS_{\fl_{S'}^\der}\times \fz_{S'}\cong \ft/W_{S'}$ such that for any $(g_{S'}, \xi_{S'}; z,t)\in \fU_{S'}$ satisfying 
\begin{align}\label{eq: region, b, S'}
&|b_{\lambda_{\beta^\vee}}(\overline{w}_0^{-1}\overline{w}_{S'}g_{S'}z)|\begin{cases}&\geq \epsilon_1, \beta\not\in S'\\
&< \epsilon_2 |\lambda_{\beta^\vee}(z)|, \beta\in S'
\end{cases} \\
\nonumber&\Leftrightarrow \begin{cases}&|\lambda_{\beta^\vee}(z)|\geq \epsilon_1, \beta\not\in S'\\
&|b^{S'}_{\lambda_{\beta^\vee}}(g_{S'})|< \epsilon_2, \beta\in S'
\end{cases}, 
\end{align}
we have 
\begin{align*}
\chi(\iota_{S'}(g_{S'}, \xi_{S'}; z, t))\in\fK'\Rightarrow \xi_{S'}+t\in \fK_{S',\epsilon_1, \epsilon_2}'. 
\end{align*}
Let $\fU_{S',<\epsilon_2}^{\geqslant\epsilon_1}$ be the region defined by (\ref{eq: region, b, S'}). 
We note that it is important that we assume $\xi_{S'}\in \cS_{\fl_{S'}^\der}$ in the presentation. 
On the other hand, the inverse implication also holds under the same assumption. Namely, if $\xi_{S'}+t\in\fK_{S'}$ for some compact $\fK_{S'}\subset \cS_{\fl_{S'}^\der}\times \fz_{S'}$ and $|b_{\lambda_{\beta^\vee}}^{S'}(g_S')|<\epsilon_2, \beta\in S'$, then by induction, $g_{S'}\in C_{L_{S'}^\der}(\xi_{S'})$ is uniformly bounded in $L_{S'}^\der$. This together with $|\lambda_{\beta^\vee}(z)|\geq \epsilon_2, \beta\not\in S'$ implies that $\Xi_{S'}$ from (\ref{eq: prop U_S}) has a uniformly bounded component in $\fn_{\fp_{S'}}$. Hence 
 \begin{align*}
 \chi(\iota_{S'}\{(g_{S'}, \xi_{S'}; z, t)\in \fU_{S',<\epsilon_2}^{\geqslant \epsilon_1}: \xi_{S'}+t\in \fK_{S'}\}) \text{ is pre-compact.}
 \end{align*}
The induction steps also include the above claim for all lower rank groups, in particular for $J_{L_S^\der}, S\subsetneq \Pi$, we have the claim holds for all $S'\subsetneq S$ and any $\epsilon_1, \epsilon_2>0$.

Now we look at any $S$ with $|S|=k<n=|\Pi|$. For any $\epsilon_1,\epsilon_2$, let $\cZ(L_S)_{\geq \epsilon_1}:=\{|\lambda_{\beta^\vee}(z)|\geq \epsilon_1, \beta\not\in S\}$. 
Then there exists $\epsilon'_1, \epsilon_2'>0$ such that 
\begin{align}\label{eq: fU_S: geq epsilon}
\fU_{S, \geqslant \epsilon_2}^{\geqslant \epsilon_1}:=(J_{L_S^\der}\underset{\cZ(L_S^\der)}{\times}T^*\cZ(L_S)_{\geq \epsilon_1})-\fU_{S,<\epsilon_2}^{\geqslant \epsilon_1}\subset \bigcup\limits_{S'\subsetneq S}\tilde{\iota}_{S'}^S(\fU^{\geqslant \epsilon'_1}_{S', <\epsilon'_2}).
\end{align}
We have the fibration $\pi_{S,<\epsilon_2}: \fU_{S,<\epsilon_2}^{\geqslant \epsilon_1}\rightarrow T^*\cZ(L_S)_{\geq \epsilon_1}/\cZ(L_S^\der)$ with fiber at any point $(\dot{z}, t)\in T^*\cZ(L_S)_{\geq \epsilon_1}/\cZ(L_S^\der)$ the open subset 
\begin{align}\label{eq: F_dot z, t}
F_{(\dot z, t)}\cong \{(g_S, \xi_S)\in (L_S^\der\times \cS_{\fl_S^\der})\cap \mathscr{Z}_{L_S^\der}: |b_{\lambda_{\beta^\vee}}(g_S)|< \epsilon_2, \beta\in S\}\subset J_{L_S^\der}.
\end{align}
By induction, for any given compact $\fK_S\subset \fc_S$ and $(\dot{z},t)$ as above, $\chi_S^{-1}(\fK_S)\cap F_{(\dot{z},t)}$ is compact.  Let 
\begin{align}\label{eq: fU_S, geq epsilon}
\fU_{S,\fK_S, \epsilon_2}^{\geqslant \epsilon_1}:=\fU_{S,\geqslant \epsilon_2}^{\geqslant\epsilon_1}\cup \bigcup\limits_{\dot{z}\in \cZ(L_S)_{\geq \epsilon_1}/\cZ(L_S^\der)}(F_{(\dot{z},t)}\cap \chi_S^{-1}(\fK_S)). 
\end{align}
Now the fiber of $\pi_{S, \fK_S}: \fU_{S,\fK_S,\epsilon_2}^{\geqslant \epsilon_1}\rightarrow T^*(\cZ(L_S)_{\geq \epsilon_1}/\cZ(L_S^\der))$ at $(\dot{z}, t)$ has two parts of (finite) boundaries: (1) $\chi_S^{-1}(\partial\fK_S)\cap \overline{F}_{(\dot{z}, t)}$ and (2) $\partial F_{(\dot{z}, t)}\cap \chi_{S}^{-1}(\fc_S-\fK_S)$, whose union over all $T^*(\cZ(L_S)_{\geq \epsilon_1}/\cZ(L_S^\der))$ gives the ``horizontal" boundary of $\fU_{S,\fK_S,\epsilon_2}^{\geqslant \epsilon_1}$. We denote these two parts of boundaries by $\sfB_{\partial \fK_S, \epsilon_1,\epsilon_2}$ and $\sfB^{\not\fK_S}_{\partial F, \epsilon_1,\epsilon_2}$, respectively.

We show that
\begin{align}\label{eq: claim fK_S}
&\text{for sufficiently large }\fK'_S,\ \chi(\sfB_{\partial \fK'_S,\epsilon_1,\epsilon_2})\text{ and }\chi(\sfB^{\not\fK'_S}_{\partial F,\epsilon_1,\epsilon_2})\text{ are outside}\\
\nonumber& \text{any given compact }\fK'\subset \fc,
\end{align}
which is exactly the induction step for $S$ in the second part. 
We set up some notations. For any interval $J\subset \bR_{>0}$, we set $\cZ(L_S)_{J}=\{z\in \cZ(L_S): |\lambda_{\beta^\vee}(z)|\in J, \beta\not\in S\}$. 
Denote for the preimage of $T^*(\cZ(L_S)_{J}/\cZ(L_S^\der))$ through $\pi_{S,<\epsilon_2}$ (resp. $\pi_{S, \fK_S}$) in $\fU_{S,<\epsilon_2}^{\geqslant \epsilon_1}$ (resp. $\fU_{S, \fK_S,\epsilon_2}^{\geqslant \epsilon_1}$) as $\fU_{S, <\epsilon_2}^{J}$ (resp. $\fU_{S, \fK_S,\epsilon_2}^{J}$). 

First, choose any $R_1>2\epsilon_1$, and consider $\fU_{S, <\epsilon_2}^{[\epsilon_1, R_1]}\subset \fU_{S,<\epsilon_2}^{\geqslant\epsilon_1}$.  Since $\pi_{|b|}$ is bounded on this region, by Lemma \ref{lemma: pi_|b|, epsilon} (ii), $\chi^{-1}([0])$ intersects $\overline{\fU}_{S, <\epsilon_2}^{[\epsilon_1, R_1]}$ in a compact region, so there exists a compact $\fK^{(1)}_S\subset \fc_S$ such that 
\begin{align}\label{eq: chi, [0], fU, R_1}
\chi^{-1}([0])\cap \fU_{S,<2\epsilon_2}^{[\epsilon_1, R_1]}\subset (\chi_S^{-1}(\fK^{(1)}_S)\underset{\cZ(L_S^\der)}{\times} T^*\cZ(L_S))\cap \fU_{S,<2\epsilon_2}^{[\epsilon_1, R_1]}.
\end{align}

Choose $\epsilon_1''>0$ such that $\fU_{S',<\epsilon_2'}^{\geqslant \epsilon_1'}\subset (\fU_{S'}^S)_{<\epsilon_2'}^{\geqslant\epsilon_1''}\underset{\cZ(L_S^\der)}{\times} T^*\cZ(L_S)_{\geq \epsilon_1'}$ (cf. (\ref{eq: fU_S: geq epsilon}) for $\epsilon_j'$), for all $S'\subsetneq S$, where $\fU_{S'}^S$ denotes for the left-hand-side of (\ref{eq: iota, S, S'}) with the containment relation between $S'$ and $S$ swapped, and $(\fU_{S'}^S)_{<\epsilon_2'}^{\geqslant \epsilon_1''}$ is defined similarly using (\ref{eq: region, b, S'}) for the group $L_S^\der$. 
By induction, for any compact $\fK'\subset \fc$ containing a neighborhood of $[0]$ and any $S'\subsetneq S$, there exists $\fK'_{S', \epsilon_1', \epsilon_2'}\subset \cS_{\fl_{S'}^\der}\times\fz_{S'}$ such that for any $(g_{S'},\xi_{S'}; z, t)\in \fU_{S',<\epsilon_2'}^{\geqslant \epsilon_1'}$ with $\xi_{S'}+t\not\in \fK'_{S', \epsilon_1',\epsilon_2'}$, we have $\chi(\iota_{S'}(g_{S'}, \xi_{S'}; z, t))\not\in \fK'$. 
Also by induction, there exists $\fK^{(2)}_S\subset \fc_S$ such that for any $S'\subsetneq S$, 
\begin{align*}
&\chi_S(\iota_{S'}^S(g_{S'}, \xi_{S'}; z^{(S)}, t^{(S)}))\not\in \fK^{(2)}_S, (g_{S'}, \xi_{S'}; z^{(S)}, t^{(S)})\in (\fU^S_{S'})_{<\epsilon_2'}^{\geqslant \epsilon_1''}\\
&\Rightarrow \xi_{S'}+t^{(S)}\not\in \proj_{\cS_{\fl_{S'}^\der}\times \fz_{S'}^S}\fK'_{S', \epsilon_1', \epsilon_2'},
\end{align*}
with respect to the splitting $\fz_S=\fz_{S'}\oplus \fz_{S'}^S$ as in the proof of Proposition \ref{prop: fU_S', sqcup}. 
Fix any $\fK_S$ containing an open neighborhood of $\fK_S^{(1)}\cup \fK_S^{(2)}$. By induction, for any $(g_S, \xi_S;z, t)\in \sfB_{\partial \fK_S, \epsilon_1,\epsilon_2}$, we have $g_S\in C_{L_S^\der}(\xi_S)$ and $\xi_S$ are bounded and $|\beta(z^{-1})|, \beta\not\in S$ are bounded from above,
so by (\ref{eq: prop U_S}) 
\begin{align}\label{eq: sfB, |t|}
 \chi(\sfB_{\partial \fK_S,\epsilon_1,\epsilon_2}\cap \{|t|\gg 1\})\cap \fK'=\emptyset.
 \end{align}

Combining the above observations (and the compatibility of the open embeddings in Proposition \ref{prop: fU_S', sqcup}) and using the relation (\ref{eq: fU_S: geq epsilon}), we have 
\begin{itemize}
\item[(a)] There exists a compact neighborhood $\fK_1$ of $[0]$ in $\fc$ such that 
\begin{align*}
\chi(\sfB_{\partial \fK_S, \epsilon_1,\epsilon_2}\cap \overline{\fU}_{S,<\epsilon_2}^{[\epsilon_1,R_1]})\cap \fK_1=\emptyset
\end{align*}

\item[(b)] 
$\chi(\sfB^{\not\fK_S}_{\partial F,\epsilon_1,\epsilon_2})\cap \fK'=\emptyset.$ 
\end{itemize}

Second, we claim that there exists $R_2\gg R_1$ and a compact neighborhood $\fK_2$ of $[0]$ in $\fc$ such that 
\begin{align*}
\chi(\sfB_{\partial \fK_S,\epsilon_1,\epsilon_2}\cap \overline{\fU}_{S, <\epsilon_2}^{\geqslant R_2})\cap \fK_2=\emptyset.
\end{align*}
Indeed, recall $\Xi_S$ is from (\ref{eq: prop U_S}), by the same consideration as above from induction, 
\begin{align*}
&(g_S, \xi_S;z, t)\in \sfB_{\partial \fK_S,\epsilon_1,\epsilon_2}\cap \overline{\fU}_{S, <\epsilon_2}^{\geqslant R_2}\Rightarrow \Xi_S\overset{\substack{\text{uniformly}\\ \text{close to}}}{\sim} f+\xi_S+t, \chi_{\fl_S^\der}(\xi_S)\in \partial \fK_S\\
\Rightarrow& \chi_\fg(\Xi_S)\overset{\substack{\text{uniformly}\\ \text{close to}}}{\sim}\chi_\fg(f+\xi_S+t)=\chi_\fg(\xi_S+t). 
\end{align*}
By assumption on $\xi_S$, it is clear that $\chi_\fg(\xi_S+t)$ is outside a fixed compact neighborhood of $[0]\in \fc$. 

Third, for $\sfB_{\partial \fK_S,\epsilon_1,\epsilon_2}\cap \overline{\fU}_{S, <\epsilon_2}^{[R_1, R_2]}$, by (\ref{eq: chi, [0], fU, R_1}) and the invariance of $\chi^{-1}([0])$ under the inverse $\bC^\times$-action, we have 
\begin{align}\label{eq: chi, [0], fU, R_1, R_2}
\chi^{-1}([0])\cap \fU_{S,<2\epsilon_2}^{[R_1, R_2]}\subset (\chi_S^{-1}(\fK_S)\underset{\cZ(L_S^\der)}{\times}T^*\cZ(L_S))\cap \fU_{S,<2\epsilon_2}^{[R_1, R_2]}.
\end{align}
Combining with (\ref{eq: sfB, |t|}), we see that there exists a compact neighborhood $\fK_3\subset \fc$ of $[0]$ such that 
\begin{align*}
\chi(\sfB_{\partial \fK_S, \epsilon_1,\epsilon_2}\cap \overline{\fU}_{S, <\epsilon_2}^{[R_1, R_2]})\cap \fK_3=\emptyset. 
\end{align*}

In summary, we have found a $\fK_S$ so that the following hold:
\begin{itemize}
\item[(a')] There exists a compact neighborhood $\widetilde{\fK}\subset \fc$ of $[0]$ such that\\
 $\chi(\sfB_{\partial \fK_S,\epsilon_1,\epsilon_2})\cap \widetilde{\fK}=\emptyset.$

\item[(b)] $\chi(\sfB_{\partial F,\epsilon_1,\epsilon_2}^{\not\fK_S})\cap \fK'=\emptyset$. Note that if we enlarge $\fK_S$ to be sufficiently large, then the corresponding $\chi(\sfB^{\not\fK_S}_{\partial F,\epsilon_1,\epsilon_2})$ is disjoint from any given compact $\fK''\subset \fc$. 
\end{itemize}

Now we use the (inverse, i.e. contracting) $\bR_{\geq 1}$-action (as a multiplicative monoid) to find a $\fK'_S$ so that claim (\ref{eq: claim fK_S}) holds. Without loss of generality, we may assume that $\fc-\fK'$ is invariant under the $\bR_{\leq 1}$-action. 
Let $\tau\gg 1$ such that $\tau\cdot \fK'\subset\widetilde{\fK}^\circ$. Choose $\fK_{S}'\supset \fK_S$ such that $\tau_1\cdot \sfB_{\partial \fK'_S, \epsilon_1,\epsilon_2}\cap \sfB_{\partial \fK_S, \epsilon_1,\epsilon_2}=\emptyset$ for all $1\leq \tau_1\leq \tau$. This is achievable because the $\bR_{\geq 1}$-action on $\fU_{S}^{\geqslant \epsilon_1}:=\fU_{S,<\epsilon_2}^{\geqslant \epsilon_1}\cup \fU_{S, \geqslant \epsilon_2}^{\geqslant \epsilon_1}$ is the ``product" $\bR_{\geq 1}$-action on the fiber (canonically identified with  $J_{L_S^\der}$ up to $\cZ(L_S^\der)$) and on the base $T^*(\cZ(L_S)_{\geq \epsilon}/\cZ(L_S^\der))$. So the condition on $\fK_S'$ can be checked for the open subset in (\ref{eq: F_dot z, t}) quotient out by $\cZ(L_S^\der)$. It is not hard to see that $\fK'_S$ makes the claim (\ref{eq: claim fK_S}) valid. Indeed, for any $(g_S, \xi_S;z, t)\in \fU_{S}^{\geqslant\epsilon_1}-\fU_{S, \fK'_S}^{\geqslant\epsilon_1}$, we look at the flow $\tau_1\cdot (g_S, \xi_S;z, t), \tau_1\in \bR_{\geq 1}$, which will intersect $\sfB_{\partial \fK_S,\epsilon_1,\epsilon_2}\cup \sfB_{\partial F, \epsilon_1,\epsilon_2}^{\not\fK_S}$ at a finite time. There are two cases
\begin{itemize}
\item[Case 1.] the flow line first intersects $\sfB_{\partial F, \epsilon_1,\epsilon_2}^{\not\fK_S}$, then by (b) above and that $\fc-\fK'$ is invariant under the $\bR_{\leq 1}$-action, $\chi(\iota_S(g_S, \xi_S;z, t))\cap \fK'=\emptyset$.  

\item[Case 2.] the flow line first intersects $\sfB_{\partial \fK_S, \epsilon_1,\epsilon_2}$ at $\widetilde{\tau}_1\cdot (g_S, \xi_S;z, t)$ for some $\widetilde{\tau}_1\in \bR_{\geq 1}$. By assumption and (a') above, $\widetilde{\tau}_1>\tau$, therefore, 
\begin{align*}
\chi(\iota_S(g_S, \xi_S;z, t))=\widetilde{\tau}_1^{-1}\cdot \chi(\iota_S(\widetilde{\tau}_1\cdot (g_S, \xi_S;z, t)))\subset \widetilde{\tau}_1^{-1}(\fc-\widetilde{\fK}).
\end{align*}
Since $\widetilde{\tau}_1^{-1}(\fc-\widetilde{\fK})\cap \fK'=\emptyset$, the claim follows in this case. 
\end{itemize} 
Thus, we have proved claim (\ref{eq: claim fK_S}). 

Lastly, we finish the proof of the proposition. Using Lemma \ref{lemma: pi_|b|, epsilon} (i), we fix an $\epsilon>0$ and a compact neighborhood $\fK\subset \fc$ of $[0]$, so that $\pi_{|b|}^{-1}([0,\epsilon])\cap \chi^{-1}(\fK)$ is compact. We have $\pi_{|b|}^{-1}[\epsilon, \infty)\subset \bigcup\limits_{S\subsetneq \Pi} \fU_{S, <\epsilon_2'}^{\geqslant \epsilon_1'}$ for some $\epsilon_1', \epsilon_2'>0$. Fix any finite interval $[0, K]\subset\bR_{\geq 0}$. By the induction steps above, for any $S\subsetneq\Pi$, $\fU_{S, <\epsilon_2'}^{\geqslant \epsilon_1'}\cap \chi^{-1}(\fK)\cap \pi_{|b|}^{-1}([0,K])$ is pre-compact in $\fU_{S}$. Therefore $\chi^{-1}(\fK)\cap \pi_{|b|}^{-1}([0,K])$ is a finite union of compact subsets, so it is compact. The proof is complete. 
\end{proof}

\begin{remark}\label{remark: handle}
Implicit in the proof above is an inductive process of handle attachments to get $J_G$. 
Namely, the step of getting from (\ref{eq: fU_S: geq epsilon}) to (\ref{eq: fU_S, geq epsilon}), for a fixed $\fK_S$ (assuming it is a closed ball in $\fc_S$ containing $[0]$ in the interior) and sufficiently small $\epsilon_2>0$,  should be viewed as joining a (Morse-Bott) index $(n+|S|)$-handle. 
\end{remark}

We fix some standard (local) coordinates for the open cell $\cB_{w_0}\cong T^*T$. First, the functions $b_{\lambda_{\beta^\vee}},\beta\in \Pi$ give local coordinates on $w_0T\subset G$ (if $G=G_{sc}$ these are also global coordinates). Let $\widetilde{p}_{\beta^\vee}\in \ft, \beta\in \Pi$ be the dual coordinate on $\ft^*$, which are the same as pairing with the simple coroots $\beta^\vee$. Let
\begin{align}\label{eq: q, p}
&q_{\lambda_{\beta^\vee}}=\log |b_{\lambda_{\beta^\vee}}|^{1/\lambda_{\beta^\vee}(\sfh_0)}, \ \theta_{\lambda_{\beta^\vee}}=\Im \log b_{\lambda_{\beta^\vee}} (\text{this is multivalued})\\
\nonumber&p_{\beta^\vee}=\lambda_{\beta^\vee}(\sfh_0)\Re \widetilde{p}_{\beta^\vee}-i\Im \widetilde{p}_{\beta^\vee}. 
\end{align}
The symplectic form on $\cB_{w_0}\cong T^*T$ in such coordinates is given by 
\begin{align*}
\omega&=-\Re(d\sum\limits_{\beta\in \Pi}\widetilde{p}_{\beta^\vee} b_{\lambda_{\beta^\vee}}^{-1}db_{\lambda_{\beta^\vee}})=-\sum\limits_{\beta\in \Pi}d \Re p_{\beta^\vee}\wedge dq_{\lambda_{\beta^\vee}}+d\Im p_{\beta^\vee}\wedge d\theta_{\lambda_{\beta^\vee}}. 
\end{align*}

Similarly, for any $S\subset \Pi$, we can define (local) symplectic dual coordinates 
\begin{align}\label{eq: dual coordinate S}
(q_{\lambda_{\beta^\vee}}, \theta_{\lambda_{\beta^\vee}}; \Re p_{\beta^\vee_{S^\perp}}, \Im p_{\beta^\vee_{S^\perp}}), \beta\not\in S
\end{align}
for the factor $T^*\cZ(L_S)$ in $\fU_S$, where $\beta^\vee_{S^\perp}=\pi_{\fz_S}(\beta^\vee)$ denote for the orthogonal projection of $\beta^\vee$ onto $\fz_S$ with respect to the Killing form. This amounts to replacing $\widetilde{p}_{\beta^\vee}$ in  (\ref{eq: q, p}) with the linear function on $\fz_S^*$ given by $\beta^\vee_{S^\perp}$, for $\beta\not\in S$. Then
\begin{align*}
\omega_{\fU_S}=-\sum\limits_{\beta\not\in S}(d \Re p_{\beta_{S^\perp}^\vee}\wedge dq_{\lambda_{\beta^\vee}}+d\Im p_{\beta_{S^\perp}^\vee}\wedge d\theta_{\lambda_{\beta^\vee}})+\omega_{J_{L_S^\der}}. 
\end{align*}
Note that for $S_1\subsetneq S_2$, the function $\Re p_{\beta^\vee_{S_1^\perp}}$ and $\Re p_{\beta^\vee_{S_2^\perp}}, \beta\not\in S_2$,  are usually different: 
\begin{align}\label{eq: relation Rp_S1, S2}
\Re p_{\beta^\vee_{S_2^\perp}}=\Re p_{\beta^\vee_{S_1^\perp}}+\sum\limits_{\gamma\in S_2\backslash S_1}a_\gamma\Re p_{\gamma^\vee_{S_1^\perp}}
\end{align}
for some constants $a_\gamma$, on $\fU_{S_1}$.

\subsection{Partial compactifications of $J_G$ as Liouville/Weinstein sectors}\label{subsec: partial compact}
In this section, we introduce partial compactifications of $J_G$ as  Liouville/Weinstein sectors, depending certain choices of data. Recall that we have assumed that $G$ is semisimple. 
The key idea is to first partially compactify $J_G-\cB_1$ as a Liouville sector of the form $\fF\times \bC_{\Re z\leq 0}$ where $\fF$ is a Liouville manifold. Then $\overline{J}_G$ is obtained from attaching $|\cZ(G)|$ many critical handles (corresponding to the connected components of $\chi^{-1}([0])$) to $\fF\times \bC_{\Re z\leq 0}$. The main results are Proposition \ref{prop: hypersurface F}, \ref{prop: partial compactify} and \ref{prop: split generation}.

\subsubsection{A smooth hypersurface $H^{sm}$ in $\bR^{\Pi}_{\geq 0}$}\label{subsubsec: H, sm}

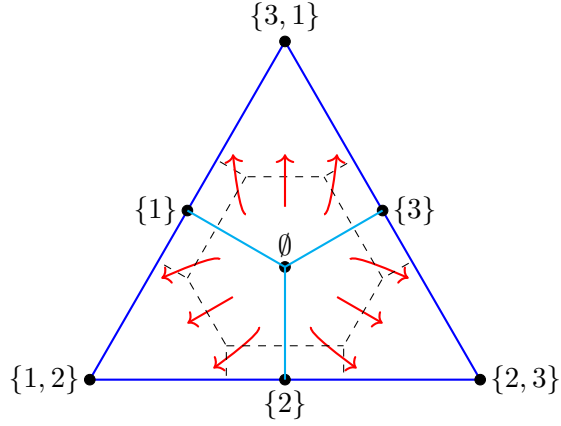
\begin{figure}[h]
\begin{tikzpicture}
\draw[thick, blue] (1.73*1.5,-1.5) to (0,3);
\draw[thick, blue] (-1.73*1.5, -1.5) to (0,3);
\draw[thick, blue] (-1.73*1.5,-1.5) to  (1.73*1.5, -1.5);
\draw[dashed] (-1.73*0.5, 1.4)--(-1.73*0.3, 1.2)-- (1.73*0.3, 1.2)--(1.73*0.5, 1.4);
\draw[red, thick, ->] (1.73*0.3, 0.7) parabola (1.73*0.4, 1.5); 
\draw[red, thick, ->] (-1.73*0.3, 0.7) parabola (-1.73*0.4, 1.5); 
\draw[red, thick, ->,rotate=120] (1.73*0.3, 0.7) parabola (1.73*0.4, 1.5); 
\draw[red, thick, ->, rotate=120] (-1.73*0.3, 0.7) parabola (-1.73*0.4, 1.5); 
\draw[red, thick, ->,rotate=240] (1.73*0.3, 0.7) parabola (1.73*0.4, 1.5); 
\draw[red, thick, ->, rotate=240] (-1.73*0.3, 0.7) parabola (-1.73*0.4, 1.5); 
\draw[red, thick, ->] (0, 0.8)--(0, 1.5);
\draw[red, thick, ->, rotate=120] (0, 0.8)--(0, 1.5);
\draw[red, thick, ->, rotate=240] (0, 0.8)--(0, 1.5);
\draw[dashed, rotate=120] (-1.73*0.5, 1.4)--(-1.73*0.3, 1.2)-- (1.73*0.3, 1.2)--(1.73*0.5, 1.4);
\draw[dashed, rotate=240] (-1.73*0.5, 1.4)--(-1.73*0.3, 1.2)-- (1.73*0.3, 1.2)--(1.73*0.5, 1.4);
\draw[dashed] (-1.73*0.3, 1.2)--(-1.73*0.75, 1.2-1.35);
\draw[dashed,  rotate=120] (-1.73*0.3, 1.2)--(-1.73*0.75, 1.2-1.35);
\draw[dashed, rotate=240] (-1.73*0.3, 1.2)--(-1.73*0.75, 1.2-1.35);
\filldraw (1.73*1.5,-1.5) circle (2pt) node[right]{$\{2,3\}$};
\filldraw (0,3) circle (2pt) node[above]{$\{3,1\}$};
\filldraw (-1.73*1.5,-1.5) circle (2pt) node[left]{$\{1,2\}$};
\filldraw (1.73*0.75, 0.75) circle (2pt) node[right]{$\{3\}$};
\filldraw (-1.73*0.75, 0.75) circle (2pt) node[left]{$\{1\}$};
\filldraw (0, -1.5) circle (2pt) node[below]{$\{2\}$};
\filldraw (0,0) circle (2pt) node[above]{$\emptyset$};
\draw[thick, cyan] (0,0)--(-1.73*0.75, 0.75);
\draw[thick, cyan] (0,0)--(1.73*0.75, 0.75);
\draw[thick, cyan] (0,0)--(0, -1.5);
\end{tikzpicture}
\caption{A picture for $\fC^2$ with $\Pi=\{1,2,3\}$: the barycenters are indexed by $S\subsetneq \Pi$;  the center of $\fC^2$, the cyan segments, and the three open region in the complement give the radial projection of the stratification $\{\fS_S^\circ\}_{S\subsetneq \Pi}$ in $\mathring{\fC}^{2}$; some enlargements of the open regions enclosed by the dashed lines give the collection of $U_S$ (\ref{eq: U_S, D_S}); the red flow lines indicate the flow of $Z_{H^{sm}}$ for model (A) (under the radial projection) with prescribed features.}\label{figure: Morse}
\end{figure}

Fixing a labeling of the set of simple roots $\Pi$, we identify $\bR_{\geq 0}^{\Pi}$ with $\bR_{\geq 0}^n$, and we denote the coordinates by $(r_\beta)_{\beta\in \Pi}$. 
Define the following functions on $J_G$: 
\begin{equation}\label{eq: pi_b, main}
\begin{tikzcd}[column sep=4.6em]
&\pi_{r}: J_G\ar[r,"|b_{\lambda_{\beta^\vee}}|^{1/\lambda_{\beta^\vee}(\sfh_0)}"]&(\bR_{\geq 0})^{\Pi}
\end{tikzcd}
\end{equation}  
Note that by (\ref{eq: scale b_lambda}), the map $\pi_r$ is equivariant with respect to the canonical $\bR_+$-action by restriction from the canonical $\bC^\times$-action (resp. Liouville flow) and the weight $-2$ (resp. $-1$) radial $\bR^+$-action on $(\bR_{\geq 0})^{\Pi}$.

Let $\fC^{n-1}$ be the $(n-1)$-simplex 
\begin{align*}
\{\sum_{\beta\in \Pi} r_\beta=1\}\subset \bR^n_{\geq 0},
\end{align*}
depicted in Figure \ref{figure: Morse}.  
The cells in $\fC^{n-1}$, indexed by $S\subsetneq \Pi$, are given by 
\begin{align}\label{eq: C_S}
C_S=\{r_\beta=0\Leftrightarrow\beta\in S\}. 
\end{align}
We mark the barycenter of $C_S$ by $S$ (cf. Figure \ref{figure: Morse}). For each $\alpha\in \Pi$, let $\Pi_{\alpha}=\Pi\backslash\{\alpha\}$.  Let $\widehat{C}_S$ be the coordinate plane in $\bR^n_{\geq 0}$ defined by $r_\beta=0, \beta\in S$. 

We are going to ``bend" $\fC^{n-1}$ inside $\bR^n_{\geq 0}$ in the following steps. The point is that the function $(\sum_\beta r_\beta)\circ \pi_r$ is \emph{not} smooth on $J_G-\cB_1$, and our goal is to define a smooth hypersurface $H^{sm}\subset \bR^n_{\geq 0}$ which will be the level hypersurface of a homogeneous function replacing $\Sigma_{\beta\in \Pi} r_\beta$ that will become smooth on $J_G-\cB_1$ under the composition with $\pi_r$ (\ref{eq: pi_b, main}). Moreover, we will have a slightly detailed construction of $H^{sm}$ so that we obtain an explicit description of $\pi_{r}^{-1}(H^{sm})$ as the product of a Liouville hypersurface $\fF$ and $\bR$. By appropriate choices of $H^{sm}$, we obtain (generalized) Weinstein structures on $\fF$ (cf. Proposition \ref{prop: hypersurface F}). Due to the technical nature of the discussions, we suggest the reader to skim through the rest of this subsection and return to it later. 

First, for every $\Pi_{\alpha}$, viewed as a vertex in $\fC^{n-1}$ (up to scaling by $1/2$), take the hyperplane 
\begin{align}\label{eq: H_alpha}
H_\alpha=\{r_\alpha=1/2\}. 
\end{align}
The hyperplanes $H_\alpha, \alpha\in\Pi$, together cut out the cubic region $Q^n=\{r_\alpha\in [0,1/2]\}$. Let $\fC^{n-1}_Q=\overline{\partial Q^n\cap \bR_{>0}^n}$. Then $\fC^{n-1}_Q$ is naturally (minimally) stratified, and the collection of strata whose closure does not contain the origin projects to a stratification on $\fC^{n-1}$ along the radial rays, depicted in Figure \ref{figure: Morse}. The strata in the interior of $\fC^{n-1}_Q$ are indexed by $S\subsetneq \Pi$, given by $(\bigcap\limits_{\alpha\not\in S}H_\alpha\cap Q^n)^\circ\subset \partial Q^n$. We denote the strata in $\fC_Q^{n-1}\cap \bR^n_{>0}$  by $\fS^\circ_S, S\subsetneq \Pi$. Also introduce 
\begin{align}\label{eq: fS_S, def}
\fS_S=\overline{\fS^\circ_S}\backslash\bigcup\limits_{S'\subsetneq S}\overline{\fS^\circ_{S'}}\subset \fC_Q^{n-1}. 
\end{align}
The collection $\{\fS_S: S\subsetneq \Pi\}$ gives a stratification of $\fC_Q^{n-1}$ as a singular  space \emph{with boundary}, where we do \emph{not} separately stratify the boundary.

Second, we perform a smoothing of $\fC^{n-1}_Q$ using induction on the dimension of strata $\dim \fS_S=|S|$. For $|S|=n-1$ (which is the base case), we delete a tubular neighborhood of the lower dimensional strata. Suppose we have defined the smoothing of $\fC^{n-1}_Q$ away from a tubular neighborhood of the union of strata of dimension $\leq \ell$, such that along each stratum $\fS_{S'}$ with $|S'|>\ell$, the smoothing is locally defined by an equation of the form 
\begin{align}
\label{eq: f_S'}&f_{S'}(r_\beta;\beta\not\in S')=0,\\
\label{eq: f_S', partial}&\sum_{\beta\not\in S}\frac{\partial f_{S'}}{\partial r_\beta}r_\beta<0,
\end{align}
i.e. it is a product of a smooth star-shaped (meaning the radial vector field is transverse to it everywhere) hypersurface in the coordinate plane $\widehat{C}_{S'}$ with an open neighborhood of $0$ in $\bR^{S'}_{\geq 0}$ (again as a manifold with boundary). 
Here we take 
\begin{align*}
f_{\Pi_\alpha}(r_\alpha)=-r_\alpha+\frac{1}{2}, \forall \alpha\in \Pi. 
\end{align*}
For nice geometric properties, we can assume that all functions belong to a fixed analytic geometric category. 
For any $S$ with $|S|=\ell$,
we look at the intersection of $\widehat{C}_S$ with the existing partial smoothing, which can be extended to a smoothing of $\widehat{C}_S\cap\fC_Q^{n-1}$ satisfying (\ref{eq: f_S', partial}) with $S'$ replaced by $S$. Take the product of the smoothing with an open neighborhood $D_{S}$ of $0$ in $\bR^{S}_{\geq 0}$. 
Note that by Lemma \ref{lemma: weights, b_lambda}, for a fixed point $(r_\beta)_{\beta\not\in S}$, $D_S$ is parametrizing the values of the functions $(|b^S_{\lambda_{\beta^\vee}}|^{1/\lambda_{\beta^\vee}(\sfh_{0})})_{\beta\in S}$ on the factor $J_{L_S^\der}$ (and also of  $(|b^S_{\lambda_{\beta^\vee}}|^{1/\lambda_{\beta^\vee}(\sfh_{0;S})})_{\beta\in S}$), near the cone point $0\in D_S$. 
Now the smoothing is obtained in the complement of a tubular neighborhood of the strata of dimension $<\ell$, and (\ref{eq: f_S'}) and (\ref{eq: f_S', partial}) are satisfied for all $|S|\geq \ell$. Repeat the step until no stratum is left. 

Take a collection of functions $\{(f_S(r_\beta;\beta\not\in S): S\subsetneq \Pi\}$ as above, which defines a global smoothing of $\fC_Q^{n-1}$, denoted by $H^{sm}$. We can define a stratification $\fS$ on $H^{sm}$ with similar features as on $\fC_Q^{n-1}$ (since from now on we will forget $\fC_Q^{n-1}$ and only work with $H^{sm}$, this abuse of notation should not cause any confusion): in a neighborhood of $H^{sm}\cap \widehat{C}_S$, $\fS_S$ is the product of a point in $H^{sm}\cap \widehat{C}_S$ and $D_S$ (with respect to the product structure described above). 
Now for each $S\subsetneq \Pi$, let $\sfN_S\subset \widehat{C}_S$ be an open neighborhood of $\fS_S\cap \widehat{C}_S$, and let 
\begin{align}\label{eq: U_S, D_S}
U_S=(\sfN_S\cap H^{sm})\times D_S.
\end{align}
The collection $\{U_S: S\subsetneq \Pi\}$ (for appropriate choices of $\sfN_S$) defines an open cover of $H^{sm}$, depicted as the domains enclosed by the dashed lines in Figure \ref{figure: Morse} (after some enlargement for each of them, and under the radial projection to $\fC^{n-1}$). Without loss of generality, we may assume that $U_S$ is contained in the $(n-|S|)$-th step of smoothing of $\fC^{n-1}_Q$ above.

Let $Z_{r}$ denote the standard negative radial vector field on $\bR^n_{\geq 0}$, which is the same as the pushforward of the Liouville vector field $Z$ along the projection $\pi_r$. 

For each $S$, let $Z=Z_{S^\perp}+Z_{S}$ be the splitting of $Z$ on $\fU_S$ as in Lemma \ref{lemma: action factor}. The projection of $Z_S$ along $\pi_r$ gives a well defined vector field on $U_S$ (\ref{eq: U_S, D_S}), denoted by $Z_{r;S}$, which is the direct sum of a vector field on $D_S$ and the zero vector field on $\sfN_S\cap H^{sm}$. The flow of $Z_{S}$ scales each $|b^S_{\lambda_{\gamma^\vee}}(g_S)|^{1/\lambda_{\gamma^\vee}(\sfh_{0;S})}, \gamma\in S$, by weight $-1$, and consequently $Z_{r; S}$ scales each $r_\gamma, \gamma\in S$, by weight $-\frac{\lambda_{\gamma^\vee}(\sfh_{0;S})}{\lambda_{\gamma^\vee}(\sfh_0)}$.

Consider the following function on an open neighborhood of $\pi_r^{-1}(\overline{U}_S)$ in $\fU_S$: 
\begin{align}
F_S=\sum\limits_{\gamma\in S}|b^S_{\lambda_{\gamma^\vee}}(g_S)|^{2}=\sum\limits_{\gamma\in S}\frac{(|b_{\lambda_{\gamma^\vee}}|^{1/\lambda_{\gamma^\vee}(\sfh_0)})^{2\lambda_{\gamma^\vee}(\sfh_{0})}}{|\lambda_{\gamma^\vee}(z)|^2}. 
\end{align} 
(cf. Lemma \ref{lemma: weights, b_lambda}). Here $F_\emptyset=0$. 
If we write $\gamma^\vee=\sum_{\beta\not\in S} m_{\gamma}^{\beta}\cdot \beta^\vee$ with $m_{\gamma}^{\beta}\in \bZ$, then $\lambda_{\gamma^\vee}(z)=\prod_{\beta\not\in S}\lambda_{\beta^\vee}(z)^{m_\gamma^\beta}$. Let 
\begin{align*}
&F_{r; S, \gamma}=\frac{r_\gamma}{(\prod_{\beta\not\in S}r_\beta(z)^{m_\gamma^\beta})^{\frac{1}{\lambda_{\gamma^\vee(\sfh_0)}}}},\\
&F_{r;S}=\sum_{\gamma\in S}F^{2\lambda_{\gamma^\vee}(\sfh_0)}_{r; S, \gamma}.
\end{align*}
Then $F_{r;S,\gamma}, \gamma\in S$,  are  well defined on $\sfN_S\times \bR_{\geq 0}^{S}$, and $F_S=\pi_r^*F_{r;S}$. 
Now  $(r_\beta, \beta\not\in S; F_{r;S,\gamma}, \gamma\in S)$ naturally extends to be a coordinate system on $\sfN_S\times \bR^{S}\subset \bR^n$. In particular, the restriction of the function $(F_{r; S,\gamma})_{\gamma\in S}$ on $(\sfN_S\cap H^{sm})\times \bR^S$ with values in $\bR^S$ is everywhere regular. 
Fix $\delta_S>0$ such that $F_{r;S}^{-1}[0,2\delta_S]\subset U_S$.  
For every fixed value of $(r_\beta, \beta\not\in S)$ in $\sfN_S\cap H^{sm}$, any level hypersurface $\{F_{r;S}=\eta\}, 0<\eta<2\delta_S$, cuts out a contractible portion of a sphere in $D_S$. 
 The vector field $Z_{r;S}$ is transverse to all level hypersurfaces and points from higher levels to lower ones.  

The projection of $Z_{S^\perp}$ on the coordinate plane $\widehat{C}_S=\{r_\gamma=0, \gamma\in S\}$ is just the negative standard radial vector field. Let $Z'_{H^{sm};S^\perp}$ be the orthogonal projection of the negative standard radial vector field to $\sfN_S\cap H^{sm}$. The vector field uniquely lifts to a smooth vector field on $U_S$, denoted by $Z_{H^{sm};S^\perp}$, through the projection $U_S\rightarrow \sfN_S\cap H^{sm}$ satisfying the condition that $Z_{H^{sm};S^\perp}(F_{r;S, \gamma})=0$ for all $\gamma\in S$. 

Let 
\begin{align}\label{eq: def U_S'}
U_S'=U_S\cap F_{r;S}^{-1}[0,\delta_S).
 \end{align}
Let $\partial(U_S')_v$ be the vertical boundary of $U_S'$ given by $(\partial (\sfN_S\cap H^{sm})\times D_S)\cap \overline{U'_S}$, and let $\partial(U_S')_h$ be the horizontal boundary of $U_S'$ given by $F_{r;S}^{-1}(\delta_S)\cap \overline{U'_S}$. For any $P\subsetneq S$, let $U'_{S;P}$ be the portion of the boundary of $U_S'$ given by $\overline{U_S'}\cap \widehat{C}_P$. We similarly define $\partial (U'_{S;P})_v$ (resp. $\partial (U'_{S;P})_h$) by the intersection of $U'_{S;P}$ with the hypersurfaces $(\partial (\sfN_S\cap H^{sm})\times D_S)$ (resp. $F_{r;S}^{-1}(\delta_S)$). Note that $\partial (U'_{S;\emptyset})_v=\partial(U_S')_v$ and $\partial (U'_{S;\emptyset})_h=\partial(U_S')_h$.

In order to define a Liouville/Weinstein hypersurface in $\pi_r^{-1}(H^{sm})$, we make the following assumptions on the choice of 
\begin{align}\label{eq: N_S, f_S, choice}
(\sfN_S, f_S(r_\beta;\beta\not\in S)), 
\end{align}
in the previous induction steps:
\begin{itemize}

\item the restriction of the distance squared function $r_{S^\perp}^2:=\sum_{\beta\not\in S}r_\beta^2$ 
on $(\widehat{C}_S\cap H^{sm})^\circ$ (which is obtained in the $(n-|S|)$-th step of smoothing of $\fC_Q^{n-1}$) is Morse.
\end{itemize}
 If $S=\emptyset$, then we also denote  $r_{\emptyset^\perp}^2$ by $r^2$.

We will focus on the following two types of models:
\begin{itemize}
\item[(A)] Let $c_S\in \sfN_S\cap H^{sm}$ be the point that radially projects to the barycenter of the cell $C_S$ in $\fC^{n-1}$. We assume that $r_{S^\perp}^2$ on $(\widehat{C}_S\cap H^{sm})^\circ$ has a unique critical point at $c_S$ which is a local maximum.

\item[(B)] Consider the following stratification of $\fC^{n-1}$ as in Figure \ref{figure: Morse, old}, whose codimension $k$ strata are indexed by strictly increasing $(k+1)$-chains $(S_j)_j=S_0\subsetneq S_1\subsetneq\cdots\subsetneq S_k\subsetneq \Pi$ (as before we don't stratify the boundary separately; equivalently, the strata are in one-to-one correspondence with the strata in $\mathring{\fC}^{n-1}$). We assume the top dimensional stratum for each $S$ has closure contained in $U_S'$. For each stratum indexed by a $(k+1)$-chain $(S_j)_j$, let $c'_{(S_j)_j}$ be the barycenter of the intersection of the stratum with the cell $C_{S_0}$. Let $\widetilde{\fS}_{(S_j)_j}$ (resp. $c_{(S_j)_j}$) denote the corresponding stratification (resp. point of $c'_{(S_j)_j}$) on $H^{sm}$ under the radial projection. We assume that for any $S\subsetneq \Pi$, the restriction of $r_{S^\perp}^2$ on $(\widehat{C}_S\cap H^{sm})^\circ$ has a critical point at $c_{(S_j)_j}$, for any chain $(S_j)_j$ starting with $S_0=S$, whose unstable manifold is a small perturbation of $\widetilde{\fS}_{(S_j)_j}$, and the function has no other critical point. In particular, if $(S_j)_j=(S_0)$ is just a chain of length one, then the distance squared function $r^2$ on $H^{sm}$ has a nondegenerate local minimum at $c_{(S_0)}$.

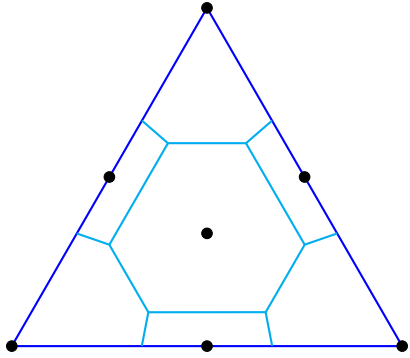
\begin{figure}[h]
\begin{tikzpicture}
\draw[thick, blue] (1.73*1.5,-1.5) to (0,3);
\draw[thick, blue] (-1.73*1.5, -1.5) to (0,3);
\draw[thick, blue] (-1.73*1.5,-1.5) to  (1.73*1.5, -1.5);
\draw[thick, cyan] (-1.73*0.5, 1.5)--(-1.73*0.3, 1.2)-- (1.73*0.3, 1.2)--(1.73*0.5, 1.5);
\draw[thick, cyan, rotate=120] (-1.73*0.5, 1.5)--(-1.73*0.3, 1.2)-- (1.73*0.3, 1.2)--(1.73*0.5, 1.5);
\draw[thick, cyan, rotate=240] (-1.73*0.5, 1.5)--(-1.73*0.3, 1.2)-- (1.73*0.3, 1.2)--(1.73*0.5, 1.5);
\draw[thick, cyan] (-1.73*0.3, 1.2)--(-1.73*0.75, 1.2-1.35);
\draw[thick, cyan, rotate=120] (-1.73*0.3, 1.2)--(-1.73*0.75, 1.2-1.35);
\draw[thick, cyan, rotate=240] (-1.73*0.3, 1.2)--(-1.73*0.75, 1.2-1.35);
\filldraw (1.73*1.5,-1.5) circle (2pt);
\filldraw (0,3) circle (2pt);
\filldraw (-1.73*1.5,-1.5) circle (2pt);
\filldraw (1.73*0.75, 0.75) circle (2pt);
\filldraw (-1.73*0.75, 0.75) circle (2pt);
\filldraw (0, -1.5) circle (2pt);
\filldraw (0,0) circle (2pt);
\end{tikzpicture}
\caption{The stratification of $\fC^{n-1}$ for model (B).}\label{figure: Morse, old}
\end{figure}

 \end{itemize}

The above two models can be easily achieved along the inductive steps of defining $H^{sm}$ as follows. Suppose we have defined $U_{>\ell}:=\bigcup_{|S'|>\ell}U_{S'}$, then for any $S$ with $|S|=\ell$, the gradient vector field of $r^2_{S^\perp}|_{U_{S'}\cap \widehat{C}_S}$ is pointing outward everywhere along $(\partial U_{S';S})_h$ if $S'\supset S$, and so we can pick a level hypersurface $H_{>\ell, S^\perp}$ with some value $a$ of $r^2_{S^\perp}|_{U_{>\ell}\cap \widehat{C}_S}$ near its boundary inside $\widehat{C}^\circ$ (this can be achieved since we can make the smoothing $C^0$-close to a round sphere, so that the deviation of the values of $r^2_{S^\perp}|_{U_{>\ell}\cap \widehat{C}_S}$ is sufficiently small). Then to complete $U_{>\ell}\cap \widehat{C}_S$ to a star-shaped hypersurface in $ \widehat{C}_S$ with the prescribed feature of the restriction of $r^2_{S^\perp}$ on it, it amounts to viewing $U_{>\ell}\cap \widehat{C}_S$ as a  graph of a smooth function from an open subset of $S^{n-1}\cap \widehat{C}_S$ to $\bR_{>0}$, and then extending it to be a complete graph across a neighborhood of the level hypersurface $H_{>\ell, S^\perp}$ with prescribed Morse singularities at the points in $S^{n-1}\cap \widehat{C}_S$
corresponding to the barycenters ($c_S$ in (A) and $c_{(S_j)_j}, S_0=S$ in (B), respectively) under the radial projection. 

Moreover, define a vector field $Z_{H^{sm}}$ on $H^{sm}$ as follows (cf. Figure \ref{figure: Morse} for model (A)). Choose a partition of unity $\{\varphi_{U_S'}\}_{S\subsetneq \Pi}$ for the open covering $\{U_S'\}_{S\subsetneq \Pi}$ of $H^{sm}$. Choose a smooth function $\epsilon_{S}\geq 0$ on each $U_S'$, which is the pullback of a function $\overline{\epsilon}_{S}$ defined on $U_S'\cap \widehat{C}_S$. Then let 
\begin{align}\label{eq: Z_Hsm, inductive}
Z_{H^{sm}}=\sum_{S\subsetneq \Pi}\varphi_{U_{S}'}\cdot (\epsilon_{S}\cdot Z_{H^{sm}, S^\perp}+Z_{r;S}). 
\end{align}
It is clear from construction that if we choose $\epsilon_S>0$ for all $S$, then $Z_{H^{sm}}\cdot \mathrm{grad}(-r^2)>0$ everywhere on $H^{sm}$ except in a sufficiently small neighborhood of the critical points of $r^2$ (i.e. those barycenters) if we choose $\epsilon_S$ sufficiently small. In that neighborhood, the critical points are the only zeros of $Z_{H^{sm}}$.

We have the following lemma. 

\begin{lemma}\label{lemma: Z_H, sm}
\item[(a1)] At any point in $U_{S}^{\flat}:=U_{S}'\backslash \bigcup_{S^\dagg\subsetneq S}\supp(\varphi_{U_{S^\dagg}'})$, the difference $Z_r-Z_{H^{sm}}$ satisfies 
\begin{align}\label{eq: Z_r-Z_Hsm}
(Z_r-Z_{H^{sm}})(F_{r;S, \gamma})=0, \gamma\in S, 
\end{align}
In particular, with respect to the splitting of $\fU_S$ (\ref{eq: prop fU_S splitting}) and the Darboux coordinates on the factor $T^*\cZ(L_S)$ in (\ref{eq: dual coordinate S}), there is a unique lifting of $Z_r-Z_{H^{sm}}$ to $TJ_G|_{\pi_r^{-1}(U_S^\flat)}$ of the form
\begin{align}\label{eq: lemma Z_b-Z_Hsm}
Z-\widetilde{Z}_{H^{sm}}=\sum\limits_{\beta\not\in S}\pi_r^*(a_{S;\beta})\partial_{q_{\lambda_{\beta^\vee}}}=\sum\limits_{\beta\not\in S}\pi_r^*(a_{S;\beta}) \cdot |b_{\lambda_{\beta^\vee}}|^{1/\lambda_{\beta^\vee}(\sfh_0)}\partial_{|b_{\lambda_{\beta^\vee}}|^{1/\lambda_{\beta^\vee}(\sfh_0)}}. 
\end{align}
where $a_{S;\beta}$ is a real function on $U_S^\flat$ satisfying 
\begin{align}\label{eq: partial f, a_S,beta}
\sum_{\beta\not\in S}\frac{\partial f_S}{\partial r_\beta}a_{S;\beta}r_\beta>0. 
\end{align}

\item[(a2)] For any $S_2\subset S_1$, the above lifting of $Z_{r}-Z_{H^{sm}}$ on $\pi_{r}^{-1}(U_{S_1}^\flat)$ and $\pi_{r}^{-1}(U_{S_2}^\flat)$ coincide on their intersection. Hence there is a canonical lifting $Z-\widetilde{Z}_{H^{sm}}$ of $Z_r-Z_{H^{sm}}$ in $TJ_G|_{\pi_{r}^{-1}(H^{sm})}$. 

\item[(b)]Assume we are in the case of model (A) and $\epsilon_{S}>0$ for $|\Pi\backslash S|\geq 2$. Then for every $S\subsetneq \Pi$, the vector field $Z_{H^{sm}}$ has exactly one zero on $U_S'$ at  $c_S$. Moreover, $Z_{H^{sm}}$ is pointing inward to $U_S'$ along $\partial(U_{S;P}')_h$ and pointing outward along $\partial(U_{S;P}')_v$, for all $P\subsetneq S$. 
\end{lemma}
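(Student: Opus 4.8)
The plan is to handle the three parts separately; (a1) and (a2) are local and formal, valid for any admissible choice of the smoothing data $(\sfN_S,f_S)$ and of the cutoffs $\epsilon_S\ge 0$, while only (b) uses the model-(A) hypotheses. For (a1), I would first pin down which summands of (\ref{eq: Z_Hsm, inductive}) are active on $U_S^\flat$. Since the cover $\{U_{S'}'\}$ is the ``barycentric'' one ($U_{S_1}'\cap U_{S_2}'\ne\emptyset$ forces $S_1,S_2$ to be nested) and $U_S^\flat$ deletes $\supp(\varphi_{U_{S^\dagg}'})$ for every $S^\dagg\subsetneq S$, only indices $S'\supseteq S$ survive there, so using $\sum_{S'}\varphi_{U_{S'}'}=1$ one has on $U_S^\flat$
\[
Z_r-Z_{H^{sm}}=\sum_{S'\supseteq S}\varphi_{U_{S'}'}\bigl(Z_{r;S'^\perp}-\epsilon_{S'}Z_{H^{sm};S'^\perp}\bigr),\qquad Z_{r;S'^\perp}:=Z_r-Z_{r;S'}.
\]
The crux is the identity $(Z_r-Z_{H^{sm}})(F_{r;S,\gamma})=0$ for $\gamma\in S$: by Lemma \ref{lemma: weights, b_lambda} and the definition of $F_{r;S,\gamma}$, its $\pi_r$-pullback is $|b^S_{\lambda_{\gamma^\vee}}(g_S)|^{1/\lambda_{\gamma^\vee}(\sfh_0)}$, a function of the $J_{L_S^\der}$-factor of $\fU_S$; under $\tilde{\iota}_S^{S'}\colon\fU_S\hookrightarrow\fU_{S'}$ (Proposition \ref{prop: fU_S', sqcup}) this factor maps into the $J_{L_{S'}^\der}$-factor, whereas $Z_{r;S'^\perp}=(\pi_r)_*Z_{S'^\perp}$ (splitting $Z$ as in Lemma \ref{lemma: action factor}) and the canonical lift of $Z_{H^{sm};S'^\perp}$ are tangent to the $T^*\cZ(L_{S'})$-factor, hence kill every function pulled back from $J_{L_{S'}^\der}$. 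Granting this, $Z_r-Z_{H^{sm}}$ is annihilated by all the $F_{r;S,\gamma}$, so it lifts uniquely to a field in $\mathrm{span}\{\partial_{q_{\lambda_{\beta^\vee}}}:\beta\notin S\}$ (existence because those directions together with the fibre directions of $(F_{r;S,\gamma})_{\gamma\in S}$ span; uniqueness because $d\pi_r$ is injective on that span, the characters $\lambda_{\beta^\vee}|_{\cZ(L_S)}$ being independent), giving the form in (\ref{eq: lemma Z_b-Z_Hsm}) and defining $\widetilde{Z}_{H^{sm}}$. Positivity (\ref{eq: partial f, a_S,beta}) is then immediate: $Z_{H^{sm}}$ is a vector field \emph{on} $H^{sm}=\{f_S=0\}$ near $\fS_S$, so $Z_{H^{sm}}(\pi_r^*f_S)=0$, and since $f_S$ involves only the $r_\beta$, $\beta\notin S$, we get $\sum_{\beta\notin S}\tfrac{\partial f_S}{\partial r_\beta}a_{S;\beta}r_\beta=(Z_r-Z_{H^{sm}})(f_S)=Z_r(f_S)=-\sum_{\beta\notin S}\tfrac{\partial f_S}{\partial r_\beta}r_\beta>0$ by (\ref{eq: f_S', partial}).

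For (a2) I would argue by uniqueness. On $\pi_r^{-1}(U_{S_1}^\flat\cap U_{S_2}^\flat)$ one is inside $\fU_{S_2}$ (as $S_2\subset S_1$ gives $\fU_{S_2}\subseteq\fU_{S_1}$), both liftings from (a1) are $\pi_r$-related to $Z_r-Z_{H^{sm}}$, and --- by compatibility of the embeddings $\tilde{\iota}_S^{S'}$ --- the $S_1$-lift, lying in $\mathrm{span}\{\partial_{q_{\lambda_{\beta^\vee}}}:\beta\notin S_1\}$, also lies in $\mathrm{span}\{\partial_{q_{\lambda_{\beta^\vee}}}:\beta\notin S_2\}$; the uniqueness from (a1) then forces equality. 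Since every point of $H^{sm}$ lies in $U_S^\flat$ for $S$ minimal with the point in $U_S'$, the $U_S^\flat$ cover $H^{sm}$ and the local lifts glue to the global $Z-\widetilde{Z}_{H^{sm}}$ on $TJ_G|_{\pi_r^{-1}(H^{sm})}$.

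For (b) I would use that, with the $\epsilon_{S'}$ positive and small, the only zeros of $Z_{H^{sm}}$ are the critical points of $r^2|_{H^{sm}}$, which in model (A) are the barycenters $c_{S'}$, $S'\subsetneq\Pi$ (for $|\Pi\setminus S'|=1$ the plane $\widehat{C}_{S'}\cap H^{sm}$ is a point, so $Z_{H^{sm};S'^\perp}$ vanishes there regardless and $\epsilon_{S'}>0$ is only needed when $|\Pi\setminus S'|\ge2$). Then I would check that $c_S$ is the only such point in $U_S'$: for $S'$ incomparable with $S$ one has $U_S'\cap U_{S'}'=\emptyset$; for $S'\supsetneq S$, $c_{S'}$ projects radially to the barycenter of $C_{S'}\ne C_S$ and so lies outside $\sfN_S$; for $S'\subsetneq S$, $r_\gamma(c_{S'})$ is bounded below for $\gamma\in S\setminus S'$, whence $F_{r;S}(c_{S'})>\delta_S$. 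One has $c_S\in U_S'$ (it lies in $\sfN_S\cap H^{sm}$ with $r_\gamma(c_S)=0$ for all $\gamma\in S$), and $Z_{H^{sm}}(c_S)=0$ because near $c_S$ only $\varphi_{U_S'}$ is nonzero, $Z_{r;S}$ vanishes where all $F_{r;S,\gamma}=0$, and $Z_{H^{sm};S^\perp}$ projects to $-\tfrac12\,\mathrm{grad}_{H^{sm}\cap\widehat{C}_S}(r_{S^\perp}^2)$, which vanishes at the critical point $c_S$. For the boundary statements, along $\partial(U_{S;P}')_h=\{F_{r;S}=\delta_S\}$ only the terms $S'\supseteq S$ are active on $\overline{U_S'}$, each such $Z_{H^{sm};S'^\perp}$ annihilates $F_{r;S}$ (as in (a1), $F_{r;S}$ being pulled back from the $J_{L_S^\der}$-factor $\subseteq J_{L_{S'}^\der}$) while $Z_{r;S'}$ strictly decreases $F_{r;S}$ (a negative-weight scaling of the $F_{r;S',\gamma}$, $\gamma\in S'\supseteq S$, of which $F_{r;S}$ is a positive combination of powers), so $Z_{H^{sm}}$ points inward; along $\partial(U_{S;P}')_v\subset\partial(\sfN_S\cap H^{sm})\times D_S$ the $Z_{r;S'}$ lie in the $D$-directions and are tangent to this hypersurface, while $Z_{H^{sm};S^\perp}$ projects to $-\tfrac12\,\mathrm{grad}(r_{S^\perp}^2)$, which in model (A) points away from its maximum $c_S$, hence outward.

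The step I expect to be the main obstacle is the boundary bookkeeping in (b): near $\partial(U_{S;P}')_h$ and $\partial(U_{S;P}')_v$ one must account for \emph{every} active summand $\varphi_{U_{S'}'}(\epsilon_{S'}Z_{H^{sm};S'^\perp}+Z_{r;S'})$ with $S'\supseteq S$ and verify each contributes with the correct sign, which forces re-expressing $F_{r;S}$ and the defining function of $\sfN_S$ in the $U_{S'}$-coordinates for all $S'\supsetneq S$ and invoking the model-(A) Morse conditions on the $r_{S'^\perp}^2$ simultaneously; making precise the ``barycentric'' nesting of the cover and the localization of the partition of unity that this rests on is the technical heart, and is also what underlies the reductions in (a1) and (a2).
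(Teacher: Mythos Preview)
Your proposal is correct and follows essentially the same approach as the paper. For (a1)--(a2) both arguments isolate the summands with $S'\supseteq S$ that survive on $U_S^\flat$, verify annihilation of the $F_{r;S,\gamma}$ (the paper phrases this as ``$F_{r;S_2,\gamma}$ are entirely determined by $F_{r;S_1,\gamma'}$ for $S_1\supsetneq S_2$'', which is equivalent to your factor--tangency argument), deduce the lift form and the positivity $\sum_{\beta\notin S}\tfrac{\partial f_S}{\partial r_\beta}a_{S;\beta}r_\beta=Z_r(f_S)>0$, and then glue via uniqueness. For (b) the paper's proof is the single word ``straightforward'', so your more detailed treatment --- and your correct flagging of the $S'\supsetneq S$ boundary bookkeeping (where $Z_{r;S'}$ is \emph{not} purely in the $D_S$-direction) as the part needing the most care --- goes beyond what the paper records.
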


\begin{proof}
(a1) If $S_1\supsetneq S_2$, then on $U_{S_1}'\cap U_{S_2}'$, any vector field $X$ satisfying $X(F_{r;S_1,\gamma})=0, \forall \gamma\in S_1$, implies that $X(F_{r;S_2,\gamma})=0, \forall \gamma\in S_2$, since $F_{r;S_2,\gamma}, \gamma\in S_2$ are entirely determined by $F_{r;S_1,\gamma'}, \gamma'\in S_1$. 
Hence by the definition of $Z_{H^{sm}}$, we have (\ref{eq: Z_r-Z_Hsm}) hold on $U_S^\flat$ for all $S\subsetneq \Pi$. It then follows that $Z_r-Z_{H^{sm}}$ can be lifted in the form of (\ref{eq: lemma Z_b-Z_Hsm}). The uniqueness of the lifting is clear. The functions $a_{S;\beta}, \beta\not\in S$, satisfy (\ref{eq: partial f, a_S,beta}) because $Z_r-Z_{H^{sm}}|_{U_S^\flat}$ is pointing inward along $H^{sm}$ (towards the origin) and is of the form 
\begin{align*}
\sum_{\beta\not\in S}a_{S;\beta}r_\beta\partial_{r_\beta}+\sum_{\alpha\in S}c_{S;\alpha}r_\alpha\partial_{r_\alpha},
\end{align*}
so its pairing with the inward conormal vector $\sum_{\beta\not\in S}\frac{\partial f_S}{\partial r_\beta}d r_\beta$ is strictly positive.

(a2) By the relation 
\begin{align*}
&T^*\cZ(L_{S_2})\underset{\cZ(L_{S_2}^\der)}{\times}J_{L_{S_2}^\der}\cong (T^*\cZ(L_{S_1})\underset{\cZ(L_{S_1}^\der)}{\times}T^*(\cZ(L_{S_2})\cap L_{S_1}^\der))\underset{\cZ(L_{S_2}^\der)}{\times}J_{L_{S_2}^\der}\\
&\hookrightarrow T^*\cZ(L_{S_1})\underset{\cZ(L_{S_1}^\der)}{\times}J_{L_{S_1}^\der}
\end{align*}
and the definition of the coordinates in (\ref{eq: q, p}), it is clear that the unique lifting of $Z_{r}-Z_{H^{sm}}$ in the chart $\pi_{r}^{-1}(U_{S_1}^\flat)$ satisfies that its restriction to $\pi_{r}^{-1}(U_{S_1}^\flat\cap U_{S_2}^\flat)$ is of the form (\ref{eq: lemma Z_b-Z_Hsm}) with respect to the chart $\pi_{r}^{-1}(U_{S_2}^\flat)$. The claim then follows from the uniqueness property. 

(b) is straightforward. 
 \end{proof}

There is an analogous statement of Lemma \ref{lemma: Z_H, sm} (b) for model (B), where one chooses a covering of $H^{sm}$ by open neighborhoods of the critical points of $(-r^2)|_{H^{sm}}$ (indexed by chains $(S_j)_j$), on each of which one sees $Z_{H^{sm}}$ behaving similarly as the gradient vector field near the Morse critical point. We omit the details.

 \subsubsection{Some structural results on $J_G- \cB_1$}\label{subsec: tilde N, I}

Let $\norm(r_\beta;\beta\in \Pi)$ be the function on $\bR^n_{\geq 0}$, homogeneous with respect to the Liouville flow with \emph{weight $-\frac{1}{2}$},  whose value on $H^{sm}$ (defined in Subsection \ref{subsubsec: H, sm}) is constantly $1$. We use $\widetilde{\norm}$ to denote for its pullback to $J_G-\cB_1$ along the projection
\begin{align*}
\pi_{r}|_{J_G-\cB_1}: J_G- \cB_1\longrightarrow \bR^n_{\geq 0}. 
\end{align*}
The upshot is that $\widetilde{\norm}$ is everywhere differentiable and regular (i.e. submersive), which follows from the fact that $|b_{\lambda_{\beta^\vee}}|^{1/\lambda_{\beta^\vee}(\sfh_0)}$ is bounded below by a positive number on $U'_S$ for any $\beta\not\in S$ and from Corollary \ref{cor: b_lambda, regular}. The Hamiltonian vector field $X_{\widetilde{\norm}}$ on $\pi_{r}^{-1}(H^{sm})=\{\widetilde{\norm}=1\}\subset J_G$ generates the characteristic foliation on the hypersurface. 

\begin{prop}\label{prop: hypersurface F}
\item[(a)]
For both model (A) and (B), there exists a Liouville hypersurface $\fF$ in $\widetilde{\norm}^{-1}(1)$ and a diffeomorphism 
\begin{align}\label{eq: lemma, hypersurface F}
\widetilde{\norm}^{-1}(1)\cong \bR\times \fF
\end{align}
such that each leaf of the characteristic foliation on the left-hand-side is sent to $\bR\times \{y\}$ for some $y\in \fF$. 

\item[(b)] For both model (A)  and (B), with appropriate choices of the functions $\{\epsilon_S\}_{S\subsetneq\Pi}$, the Liouville structure on $\fF$ admits a presentation as a (generalized) Weinstein handle decomposition\footnote{By a \emph{generalized} Weinstein structure, we mean the function $\phi$ in the Weinstein manifold structure in \cite[Section 11.4, Definition 11.10]{CE} is Morse-Bott (rather than Morse).}, and hence up to  deformations\footnote{Here a deformation of a Liouville structure means adding the Liouville 1-form by $df$ of a compactly supported function $f$.}, they are Weinstein manifolds. 
The (generalized) critical Weinstein handles of model (B) (resp. model (A)) are indexed by  $(\sigma, S)$ with $S\subsetneq \Pi$ and $\sigma\in \pi_0(\cZ(L_S))$ (resp. $(\sigma, S)$ with $|S|=n-1$). 
\end{prop}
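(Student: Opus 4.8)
Set $Y:=\widetilde{\norm}^{-1}(1)=\pi_r^{-1}(H^{sm})$ (cf.\ \eqref{eq: pi_b, main}). The plan for (a) is to trivialize the characteristic foliation of $Y$ by hand, producing $\fF$ as a global transversal, and then to put a Liouville structure on it using the vector field $\widetilde{Z}_{H^{sm}}$ of Lemma~\ref{lemma: Z_H, sm}. Since $\widetilde{\norm}$ is regular on $J_G-\cB_1$, the hypersurface $Y$ is smooth and its characteristic line field $\ker(\omega|_Y)$ is spanned by the Hamiltonian vector field $X_{\widetilde{\norm}}$, which is nowhere zero. On a small open neighbourhood of $\pi_r^{-1}(U_S)$ in $\fU_S$ the function $\widetilde{\norm}$ depends only on $(q_{\lambda_{\beta^\vee}})_{\beta\notin S}$ — because over $U_S$ the defining equation of $H^{sm}$ does not involve the $r_\gamma$ with $\gamma\in S$, by the construction in Subsection~\ref{subsubsec: H, sm} — so, in the Darboux coordinates \eqref{eq: dual coordinate S} of the factor $T^*\cZ(L_S)$ in \eqref{eq: prop fU_S splitting}, $X_{\widetilde{\norm}}=\sum_{\beta\notin S}(\partial_{q_{\lambda_{\beta^\vee}}}\widetilde{\norm})\,\partial_{\Re p_{\beta^\vee_{S^\perp}}}$. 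Reading off from Lemma~\ref{lemma: action factor} the weight $m_{S,\beta}=\langle\lambda_{\beta^\vee},\sfh'_{0,S^\perp}\rangle/\lambda_{\beta^\vee}(\sfh_0)>0$ with which the Liouville flow acts on $q_{\lambda_{\beta^\vee}}$ on $\fU_S$, the Euler relation $Z(\widetilde{\norm})=-\tfrac12\widetilde{\norm}$ forces $\sum_{\beta\notin S}m_{S,\beta}\,\partial_{q_{\lambda_{\beta^\vee}}}\widetilde{\norm}=\tfrac12$ on $Y$, so the local function $\tau_S:=2\sum_{\beta\notin S}m_{S,\beta}\,\Re p_{\beta^\vee_{S^\perp}}$ satisfies $X_{\widetilde{\norm}}(\tau_S)\equiv 1$ there. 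Because the partition of unity $\{\varphi_{U_S'}\}$ in \eqref{eq: Z_Hsm, inductive} lives on $H^{sm}$ while $X_{\widetilde{\norm}}$ lies in $\ker d\pi_r$, the glued function $\tau:=\sum_{S\subsetneq\Pi}(\pi_r^*\varphi_{U_S'})\,\tau_S$ still satisfies $X_{\widetilde{\norm}}(\tau)\equiv 1$; along each leaf $\pi_r$ is constant and only the $\Re p_{\beta^\vee_{S^\perp}}$ move, affinely and at unit $\tau$-speed, so the $X_{\widetilde{\norm}}$-flow is complete and $\tau$ is a proper bijection from $\bR$ onto each leaf. Thus $\fF:=\tau^{-1}(0)\cap Y$ is a smooth hypersurface in $Y$ meeting every leaf once, and the flow of $X_{\widetilde{\norm}}$ gives the required diffeomorphism $Y\cong\bR\times\fF$ carrying leaves to $\bR\times\{y\}$.

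For the Liouville structure on $\fF$: the kernel of $\omega|_Y$ is the $\bR$-direction, transverse to $\fF$, so $\omega|_\fF$ is symplectic, and the Liouville primitive is produced from $\widetilde{Z}_{H^{sm}}$. Writing $Z=\widetilde{Z}_{H^{sm}}+(Z-\widetilde{Z}_{H^{sm}})$ with $\iota_Z\omega=\lambda$ and restricting to $\fF$, and using that $\widetilde{Z}_{H^{sm}}$ is tangent to $Y$ (so its $X_{\widetilde{\norm}}$-component drops out when paired with $T\fF$) while $Z-\widetilde{Z}_{H^{sm}}$ has the explicit form \eqref{eq: lemma Z_b-Z_Hsm}, one identifies the $\fF$-tangent part $W^\fF$ of $\widetilde{Z}_{H^{sm}}$ as — up to adding the Hamiltonian vector field of an explicit function coming from \eqref{eq: lemma Z_b-Z_Hsm} — a Liouville vector field for $\omega|_\fF$. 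That this exact symplectic manifold is a genuine Liouville manifold (complete, with the Liouville field outward-pointing at infinity) follows from combining the positivity \eqref{eq: partial f, a_S,beta} with Lemma~\ref{lemma: Z_H, sm}: away from neighbourhoods of the barycenters, $Z_{H^{sm}}$ — hence $W^\fF$ — is gradient-like for $-r^2$ on $H^{sm}$ and strictly contracts the $\Re p_{\beta^\vee_{S^\perp}}$, and the ``finite'' boundary of $H^{sm}$ (which becomes the boundary wall of the Liouville sector $\overline{J}_G$) is pushed outward. This produces the Liouville hypersurface $\fF\subset Y$ of part~(a).

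For part (b), under $Y\cong\bR\times\fF$ the Weinstein question reduces to making the Liouville field of $\fF$ gradient-like for a Morse (or Morse--Bott) function $\phi_\fF$, for which I would take the descent to $\fF$ of $-\pi_r^*\!\bigl(\sum_\beta r_\beta^2\bigr)$ (which is $X_{\widetilde{\norm}}$-invariant, hence descends). By Lemma~\ref{lemma: Z_H, sm}(b) and its model-(B) analogue, one chooses $\{\epsilon_S\}$ small (with $\epsilon_S>0$ exactly in the range relevant to the model) so that $Z_{H^{sm}}$ has zeros only at the prescribed barycenters — $c_S$ for model~(A), $c_{(S_j)_j}$ for the refined stratification of Figure~\ref{figure: Morse, old} in model~(B) — and is elsewhere gradient-like for $-r^2|_{H^{sm}}$; combining with the fibrewise behaviour of $Z_{r;S}$, which contracts the $T^*\cZ(L_S)$- and $J_{L_S^\der}$-directions in \eqref{eq: prop fU_S splitting} with the weights of Lemma~\ref{lemma: action factor}, shows $W^\fF$ is gradient-like for $\phi_\fF$. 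The critical set of $\phi_\fF$ lies over the barycenters; over the one indexed by $S$ the transverse model is, through \eqref{eq: prop fU_S splitting}, the product of the $T^*\cZ(L_S)$-factor with the skeleton of $J_{L_S^\der}$ near its central fibre, and the handle it carries is critical (Lagrangian core in $\fF$) precisely when the index $n+|S|$ of the associated handle of $J_G$ from Remark~\ref{remark: handle} equals half $\dim\fF$, i.e.\ $|S|=n-1$ in model~(A); in model~(B) the refined stratification redistributes these into one critical handle for each $(\sigma,S)$ with $\sigma\in\pi_0(\cZ(L_S))$, the components of $\cZ(L_S)$ being the components of the Morse--Bott critical submanifold over $S$. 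Since that critical set is in general Morse--Bott (along the $\cZ(L_S)$- and $J_{L_S^\der}$-directions) this is a generalized Weinstein presentation, and a compactly supported deformation of the Liouville form as in \cite[Section~11]{CE} replaces it by an honest Weinstein one.

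The main obstacle I anticipate is the \emph{global} control in (a)--(b) across the stratified, non-compact hypersurface $H^{sm}$: showing that the glued time function $\tau$ and especially the glued Liouville field $W^\fF$ on $\fF$ are complete with proper sublevel sets and have no spurious zeros off the barycenter fibres, despite the ``finite'' boundary of the sector and the fact that the local $T^*\cZ(L_S)$-models on adjacent charts disagree by the nontrivial coordinate changes \eqref{eq: relation Rp_S1, S2}. This is exactly why the choices \eqref{eq: N_S, f_S, choice} of $(\sfN_S,f_S)$, the Morse hypotheses on $r_{S^\perp}^2$, and the magnitudes of the $\epsilon_S$ must be made inductively on $|S|$ as in Subsection~\ref{subsubsec: H, sm}; verifying that these propagate correctly through the gluing — and that the resulting handle attachments really match the asserted index sets — is where the bulk of the technical work lies.
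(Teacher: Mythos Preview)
Your construction of the transversal $\fF$ via a glued time function $\tau=\sum_S(\pi_r^*\varphi_{U_S'})\tau_S$ is valid and a bit different from the paper's. The paper instead cuts out $\fF$ locally by the linear-in-fiber equation $F_{U_S^\flat}=\sum_{\beta\notin S}\pi_r^*(a_{S;\beta})\Re p_{\beta^\vee_{S^\perp}}=0$, where $a_{S;\beta}$ are the coefficients appearing in $Z-\widetilde Z_{H^{sm}}$ from Lemma~\ref{lemma: Z_H, sm}~(a1); the point is that these local pieces glue \emph{automatically} (no partition of unity) because they are all intrinsically described as the zero set of $\omega(Z-\widetilde Z_{H^{sm}},-)$ restricted to each cotangent fiber. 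The payoff of this choice is a clean local product splitting $\fF_S\cong \cU_S^{\der}\times_{\cZ(L_S^\der)_0}T^*\cZ(L_S)_{0,\cpt}\times T^*H_{S^\perp}$ together with an explicit formula for the Liouville vector field on each factor (cf.~\eqref{eq: Z'HSperp}); this splitting is what the paper actually uses to analyze completeness and handles. Your $\fF$ is deformation equivalent (any two transversals differ by flowing along $X_{\widetilde\norm}$, which changes $\vartheta|_\fF$ only by an exact form), but it does not carry this product structure, so the subsequent analysis would be harder to run.

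The genuine gap is the Liouville/Weinstein part. Your proposed Lyapunov function $\phi_\fF=$ (descent of) $-\pi_r^*\sum_\beta r_\beta^2$ is bounded on $\fF$, since $\pi_r(\fF)=H^{sm}$ sits in a compact shell of $\bR^n_{\geq 0}$; hence $\phi_\fF$ is \emph{not exhausting} and cannot serve as a Weinstein function. (Concretely, the fibers of $\pi_r|_\fF$ are noncompact --- they contain factors of the form $T^*\cZ(L_S)_{0,\cpt}$ and open pieces of $J_{L_S^\der}$ --- and $\phi_\fF$ is constant along them.) What must be supplied is exactly what the paper does with most of its effort: using the product splitting above and Proposition~\ref{prop: proper b map}, it \emph{constructs} by induction on $|S|$ an explicit Liouville domain $\fD\subset\fF$ fibered over $H^{sm}$, with the Liouville field outward along $\partial\fD$, and shows its completion is $\fF$. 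Only after this does the handle analysis in (b) become meaningful: the paper computes the index of each (Morse--Bott) zero locus of $Z_\fF$ over the relevant barycenter as $n+|S|$ in model~(A) (resp.~$2n-1-k$ over a codimension-$k$ stratum $(S_j)_j$ in model~(B)), verifies each factor of the product is a genuine Weinstein handle for small $\epsilon_S$, and reads off the critical handles. Your index bookkeeping agrees with this, but without the domain $\fD$ there is no handle decomposition to attach the bookkeeping to. You correctly diagnose this as the ``main obstacle''; it is, and the inductive construction of $\fD$ is where the proof actually lives.
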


\begin{proof}
(a) In the proof, we assume that we are in model (A). All the discussions below can be carried out without much change to model (B)---the only difference is that model (B) has more critical points which results in a slightly longer description. 

First, by the construction of $H^{sm}$, on $\pi_{r}^{-1}(U'_S)$ we have 
\begin{align*}
X_{\widetilde{\norm}}=\sum\limits_{\beta\not\in S}\pi_r^*(\frac{\partial f_S}{\partial r_\beta})|b_{\lambda_{\beta^\vee}}|^{1/\lambda_{\beta^\vee}(\sfh_0)}\partial_{\Re p_{\beta_{S^\perp}^\vee}}
\end{align*}
with respect to the splitting $T^*\cZ(L_{S})\underset{\cZ(L_{S}^\der)}{\times} J_{L_{S}^\der}$ over $U_S'$ (cf. (\ref{eq: q, p}) for the notations on dual symplectic coordinates). 

Second, recall the vector field $Z_{H^{sm}}$ constructed above, which depends on a choice of $U_S', S\subsetneq \Pi$ with a partition of unity, and the functions $\epsilon_S\geq 0$ on $U_S'$, for $|\Pi\backslash S|\geq 2$.  Let $a_{S;\beta}, \beta\not\in S$ be the function on $U_S^\flat$ as in (\ref{eq: lemma Z_b-Z_Hsm}).  Let $\fF_S\subset \pi_{r}^{-1}(U_S^\flat)$ be the symplectic hypersurface cut out by  the equation
\begin{align}\label{eq: F_(S_i)_i}
F_{U^\flat_S}(|b_{\lambda_{\beta^\vee}}|^{-1/\lambda_{\beta^\vee}(\sfh_0)}, \Re p_{\beta^\vee_{S^\perp}}; \beta\not\in S):=\sum\limits_{\beta\not\in S}\pi_r^*(a_{S;\beta})\cdot \Re p_{\beta_{S^\perp}^\vee}=0. 
\end{align}
Since by construction $X_{\widetilde{\norm}}(F_{U^\flat_S})>0$ everywhere on $\pi_{r}^{-1}(U^\flat_S)$ (cf. Lemma \ref{lemma: Z_H, sm} (a1)), 
$\fF_{S}$ gives a section of the principal $\bR$-bundle $\pi_{r}^{-1}(U^\flat_S)\rightarrow \pi_{r}^{-1}(U^\flat_S)/\bR$, generated by the Hamiltonian flow of $\widetilde{\norm}$. 

Third, we claim that over any intersection $U^\flat_{S_1}\cap U^\flat_{S_2}$ with $S_2\subset S_1$, $\fF_{S_1}$ and $\fF_{S_2}$ coincide, so $\{\fF_S: S\subsetneq \Pi\}$ glue to be a global symplectic hypersurface. Let $\cZ(L_S)_0\subset \cZ(L_S)$ be the identity component, $\cZ(L_S^\der)_0:=\cZ(L_S^\der)\cap \cZ(L_S)_0$, and let $\pi_r^S: J_{L_S^\der}/\cZ(L_S^\der)_0\rightarrow \bR^S_{\geq 0}$ denote the projection (\ref{eq: pi_b, main}) for the group $L_S^\der/\cZ(L_S^\der)_0$. Since both $\fF_{S_1}$ and $\fF_{S_2}$ are cut out by the \emph{linear equation in each cotangent fiber} of the factor $T^*\cZ(L_{S_2})_0$  given by $\omega(Z-\widetilde{Z}_{H^{sm}},-)=0$, where $Z-\widetilde{Z}_{H^{sm}}$ (cf. Lemma \ref{lemma: Z_H, sm} (a2)) is a vector field tangent to the base with respect to the natural splitting $T^*\cZ(L_{S_2})_0\cong \cZ(L_{S_2})_0\times \fz_{\fl_{S_2}}^*$, we are done.

 Let $H_{S^\perp}\subset \cZ(L_S)_{0,\bR}$ be the (non-closed) smooth hypersurface that projects to $\sfN_S\cap H^{sm}$ isomorphically, under taking the components in $\pi_r$ for $r_\beta,\beta\not\in S$. Then $\fF_{S}$ (up to shrinking a little bit) can be canonically symplectically identified with
 \begin{align}\label{eq: fF_S, splits}
 \cU^\der_S\times_{\cZ(L_S^\der)_0} T^*\cZ(L_S)_{0,\cpt}\times T^*H_{S^\perp}
 \end{align}
 for some open subset $ \cU^\der_S\subset J_{L_S^\der}$. 
The Liouville vector field $Z_{\fF_S}$ also splits as 
\begin{align*}
Z_{\fF_S}=Z_{\cU_S^\der}+Z_{T^*\cZ(L_S)_{0,\cpt}}+Z'_{T^*H_{S^\perp}}, 
\end{align*}
where $Z_{\cU_S^\der}$ and $Z_{T^*\cZ(L_S)_{0,\cpt}}$ are the standard Liouville vector fields. It is clear that the projection of $Z_{\fF}$ to $H^{sm}$ is exactly $Z_{H^{sm}}$. Note the (original) Liouville 1-form on $\fU_S$ in the factor $T^*\cZ(L_S)_{0,\bR}$ is of the form
\begin{align}\label{eq: theta, S, bR}
\vartheta_{S^\perp, \bR}=&-\lng t_\bR, \rho^{-1}d\rho\rng-d\lng t_\bR, \sfh'_{0,S^\perp}\rng,\ (\rho, t_\bR)\in \cZ(L_S)_{0,\bR}\times \fz^*_\bR\\
\nonumber=&-\sum_{\beta\not\in S} \Re p_{\beta_{S^\perp}^\vee} dq_{\lambda_{\beta^\vee}}-d\sum_{\beta\not\in S}\Re p_{\beta_{S^\perp}^\vee},
\end{align}
where the second line uses the coordinates from (\ref{eq: dual coordinate S}). 
Let $U'_{>\ell}=\bigcup_{|S|>\ell}U'_S$. To figure out the Liouville vector field $Z'_{T^*H_S}$, we first note that if $\epsilon_{S}=0$, then in $\pi_r^{-1}(U^\flat_S\backslash \overline{U'_{>|S|}})$, the vector field $Z-\widetilde{Z}_{H^{sm}}$, which is tangent to the factor $T^*\cZ(L_S)_{0,\bR}$, gives exactly the difference between the Liouville vector field for $\vartheta_{S^\perp,\bR}$ and the Euler vector field. Equivalently, we have 
\begin{align*}
\omega_{T^*\cZ(L_S)_{0,\bR}}(Z-\widetilde{Z}_{H^{sm}},-)=-d\sum_{\beta\not\in S}\Re p_{\beta_{S^\perp}^\vee}.
\end{align*}
It follows that the part
\begin{align*}
\fF'_S:=\fF_S\cap \pi_r^{-1}(U^\flat_S\backslash \overline{U'_{>|S|}})
\end{align*}
is cut out by $\lng t_\bR, \sfh_{0, S^\perp}'\rng=\sum_{\beta\not\in S}\Re p_{\beta_{S^\perp}^\vee}=0$ inside $\pi_r^{-1}(U^\flat_S\backslash \overline{U'_{>|S|}})$, so $Z'_{H^{sm}}$ is equal to the Euler vector field on $T^*H_{S^\perp}$, assuming $\epsilon_S=0$. 
For the general case, choose a coordinate system $(x_1,\cdots, x_{n-|S|})$ on $\cZ(L_S)_{0,\bR}$ in a neighborhood of $H_{S^\perp}$, so that $H_{S^\perp}$ is defined by $x_{n-|S|}=0$ and $c_S$ has coordinate $0$, and let 
\begin{align*}
(x_1,\cdots, x_{n-|S|}; y_1,\cdots, y_{n-|S|})
 \end{align*}
 be the Darboux coordinates on the cotangent bundle of that neighborhood. The vector field $Z_{S^\perp,H^{sm}}$ lifts to a (unique) vector field on $H_{S^\perp}$, which we write as 
 \begin{align*}
 \sum_{j=1}^{n-|S|-1}v_j(x_1,\cdots, x_{n-|S|-1})\partial_{x_j}. 
 \end{align*}
Similarly $\epsilon_S\cdot Z_{S^\perp,H^{sm}}$ lifts to $\overline{\epsilon}_S(x_1,\cdots, x_{n-|S|-1})\cdot \sum_{j=1}^{n-|S|-1}v_j\partial_{x_j}$. Then the symplectic hypersurface $T^*H_{S^\perp}$ inside $T^*\cZ(L_S)_{0,\bR}|_{H_{S^\perp}}$ is cut out by the equation
\begin{align*}
-\sum_{\beta\not\in S}\Re p_{\beta_{S^\perp}^\vee}-\overline{\epsilon}_S(x_1,\cdots, x_{n-|S|-1})\cdot \sum_{j=1}^{n-|S|-1}v_j(x_1,\cdots, x_{n-|S|-1})y_j=0. 
\end{align*}
Now we have 
\begin{align*}
\vartheta_{S^\perp, \bR}|_{T^*H_{S^\perp}}=-\sum_{j=1}^{n-|S|-1}y_jdx_j+d(\overline{\epsilon}_S\cdot \sum_{j=1}^{n-|S|-1}v_jy_j).
\end{align*}
One can directly calculate the Liouville vector field to be
\begin{align}\label{eq: Z'HSperp}
Z'_{T^*H_{S^\perp}}=\sum_{j=1}^{n-|S|-1}y_j\partial_{y_j}+ \overline{\epsilon}_S\cdot \sum_{j=1}^{n-|S|-1}v_j\partial_{x_j}-\sum_{1\leq i,j\leq n-|S|-1} y_j\frac{\partial{(\overline{\epsilon}_Sv_j)}}{\partial x_i}\partial_{y_i}
\end{align}

Now assume $\epsilon_S>0$ everywhere with $\max_{U_S'}(|D\epsilon_S|+\epsilon_S)$ sufficiently small. Then 
\begin{itemize}
\item we can make the last component in (\ref{eq: Z'HSperp}) have norm squared
\begin{align*}
\sum_{i,j}|y_j|^2|\frac{\partial{(\overline{\epsilon}_Sv_j)}}{\partial x_i}|^2\ll \sum_j|y_j|^2=|\sum_{j=1}^{n-|S|-1}y_j\partial_{y_j}|^2, 
\end{align*}
so $Z'_{T^*H_{S^\perp}}$ is a small perturbation of the sum of the Euler vector field and the lifting of $\overline{\epsilon}_S\cdot Z_{S^\perp, H^{sm}}$ on $H_{S^\perp}$;

\item for any convex open pre-compact region $B_S\subset \fl_S^{\der}$ and $\{|t|< R\}\subset \fz_S^*$, we can choose such $\epsilon_S$, so that for the pre-compact region (due to Proposition \ref{prop: proper b map}):
\begin{align*}
\fD'_{B_S, R}:=\fF_S'\cap (\chi_S^{-1}(B_S)\times_{\cZ(L_S^\der)_0}\big((\cZ(L_S)_0\times \{|t|< R\})\big)),
\end{align*}
the Liouville vector field is pointing \emph{outward} along the boundaries $\{|t|=R\}$, (the preimage of) $\chi_S^{-1}(\partial B_S)$ and (the preimage of) $\partial_v(U_S^\flat\backslash U'_{>|S|})$ (defined similarly as for $\partial_v(U_S')$), while it is pointing \emph{inward} along the boundary (the preimage of) $\partial_h(U_S^\flat\backslash U'_{>|S|})$;

\item The only zero locus of the Liouville vector field is along $\{(g_S, \xi_S;z,t): g_S=I, \xi_S=0, t=0, \pi_r(g_S, \xi_S;z,t)=c_S\}$, which is $|\pi_0(\cZ(L_S))|$-many orbit(s) of $Z(L_S)_{0, \cpt}$ over $c_S\in H^{sm}$. 

\end{itemize}

In the following, choose $0<\widetilde{\delta}_S<\delta_S$, and let $\widetilde{U}_S'=U_S\cap F_{r;S}^{-1}([0, \widetilde{\delta}_S))$, defined similarly as $U_S'$ in (\ref{eq: def U_S'}), be a smaller open subset satisfying $\widetilde{U}_S'\cap \big(\bigcup_{S^\dagg\subsetneq S} \supp(\varphi_{U'_{S^\dagg}})\big)=\emptyset$.  By easily achieved appropriate choices, we may assume that $\{\widetilde{U}'_S\}_{S\subsetneq\Pi}$ forms an open covering of $H^{sm}$ and  Lemma \ref{lemma: Z_H, sm} (b) applies to $\widetilde{U}_S'$.
Now we can define a Liouville domain $\fD\subset \fF$, fibered over $H^{sm}$, whose completion is $\fF$, inductively on $|S|$ starting with $|S|=n-1$. First, define  
\begin{align*}
\widetilde{\fD}'_{B_{S}, R_S}:=\fD'_{B_S, R_S}\cap \pi_r^{-1}(\widetilde{U}_S'),
\end{align*}
for sufficiently large $B_{S}$ and $R_S$. 
For $S=\Pi_\alpha$, let 
\begin{align*}
\pi_{r, >n-2}:\fD_{>n-2}:=\bigcup_{|S|=n-1}\widetilde{D}'_{B_S, R_S}\longrightarrow \widetilde{U}_{>n-2}:=\bigcup_{|S|=n-1}\widetilde{U}_S'
\end{align*}
be the projection from restriction of $\pi_r$. Note that the projection extends by taking the closure of both the source and target, and denote the resulting projection by $\overline{\pi}_{r, >n-2}$. 

Suppose we have defined $\fD_{>\ell}$, which is a manifold with corners so that $\partial \fD_{>\ell}$ is stratified with respect to the corner structure, equipped with the projection $\overline{\pi}_{r, >\ell}: \overline{\fD}_{>\ell}\rightarrow \overline{\widetilde{U}}_{>\ell}\subset \bigcup_{|S'|>\ell} U_{S'}^\flat$ restricted from $\pi_r$, for some open $\widetilde{U}_{>\ell}\supset \bigcup_{|S'|>\ell} \widetilde{U}_{S'}'$, so that $\overline{\pi}_{r,>\ell}^{-1}(\partial \widetilde{U}_{>\ell})$ is a union of strata on which the Liouville vector field is pointing inward, and on its complement (just the open strata) the Liouville vector field is pointing outward. We denote the former by $(\partial \fD_{>\ell})_{in}$ and the latter by $(\partial \fD_{>\ell})_{out}$. Now take any $S$ with $|S|=\ell$, and choose $B_S$ and $R_S$ sufficiently large, so that $\fD_{B_S, R_S}'$ satisfies that 
\begin{itemize}
\item $\partial \fD_{B_S, R_S}'\cap \pi_r^{-1}\big(\partial_v(U_S^\flat\backslash U_{>|S|}')\big)$ contains all the points in $\fF\cap \pi_r^{-1}\big(\partial_v(U_S^\flat\backslash U_{>|S|}')\big)$  that can be flowed into $\overline{\fD}_{>\ell}$ under the (positive) Liouville flow in $\fF$. This is possible because by construction (cf. Lemma \ref{lemma: Z_H, sm} (b)), the  flow of the projection of the Liouville vector field, which is $Z_{H^{sm}}$, will take all points in $\partial_v(U_S^\flat\backslash U_{>|S|}')$ into $\widetilde{U}_{>\ell}$ in a uniformly bounded time. 
\end{itemize}

Let $B_S'\supsetneq \overline{B_S}, R_S'>R_S$, and let $\fT_{B_S', R_S'}'$ be the union of flow lines of the Liouville flow starting from $\partial \fD_{B_S', R_S'}'\cap \pi_r^{-1}\big(\partial_v(U_S^\flat\backslash U_{>\ell}')\big)$ and ending at $\fF\cap \pi_r^{-1}(\partial\widetilde{U}_{>\ell})$, i.e. an embedded suspension of $\partial \fD_{B_S', R_S'}'\cap \pi_r^{-1}\big(\partial_v(U_S^\flat\backslash U_{>\ell}')\big)$ under the flow.  After some (obvious) renormalization of the Liouville vector field,  we have the flow time from any initial point on the former to the corresponding ending point on the latter is always $1$. This defines a smooth function $\tau: \fT_{B_S', R_S'}'\rightarrow [0,1]$ that records the flow time. 
Choose any smooth function $\psi_S: B_S'\times \{|t|<R_S'\}\longrightarrow [0,1]$, such that $\psi_S|_{B_S\times \{|t|<R_S\}}=1$, and $\psi_S=0$ on a neighborhood of $\partial (B_S'\times \{|t|<R_S'\})$. With some easily achieved appropriate choices, we can make sure that 
$\pi_r(\fT_{B_S', R_S'}')$ contains the region bounded by $\{F_{r;S}=\delta_S'\}$, $\partial_v(U_S^\flat\backslash U_{>\ell}')$ and $\partial\widetilde{U}_{>\ell}$, for some $\delta_S'\in (\widetilde{\delta}_S,\delta_S)$. Then  
\begin{align*}
\fT_{B_S', R_S'}:=\fT_{B_S', R_S'}'\cap \{\tau< \psi_S(\chi_S(g_S, \xi_S), t)\}\cap\pi_r^{-1}(\{F_{r;S}<\delta_S'\})
\end{align*}
has $Z_{\fF}$, the Liouville field on $\fF$, pointing inward along the portion of the boundary cut out by $\pi_r^{-1}(\{F_{r;S}=\delta_S'\})$ and $\pi_r^{-1}\big(\partial_v(U_S^\flat\backslash U_{>\ell}')\big)$, denoted by $(\partial \fT_{B_S', R_S'})_{in}^{h}$ and $(\partial \fT_{B_S', R_S'})_{in}^{v}$ respectively, and pointing outward along that cut out by $\{\tau= \psi_S(\chi_S(g_S, \xi_S), t)\}$, denoted by $(\partial \fT_{B_S', R_S'})_{out}$. 

Lastly, let 
\begin{align*}
\fD_{>\ell-1}=\big(\overline{\fD}_{>\ell}\cup \bigcup_{|S|=\ell}(\widetilde{\fD}'_{B'_{S}, R'_S}\cup \fT_{B_S', R_S'})\big)^\circ.
\end{align*}
Let $\widetilde{U}_{>\ell-1}=\pi_r(\fD_{>\ell-1})$, which is open in $H^{sm}$. Then 
\begin{itemize}
\item $\widetilde{U}_{>\ell-1}\supset  \bigcup_{|S'|>\ell-1} \widetilde{U}_{S'}'$ and $\overline{\widetilde{U}}_{>\ell-1}\subset \bigcup_{|S'|>\ell-1} U_{S'}^\flat$\\

\item Since each $\fT_{B_S', R_S'}, |S|=\ell$, is attached to $\widetilde{\fD}'_{B'_{S}, R'_S}$ and $\fD_{>\ell}$, with its boundary portion $(\partial \fT_{B_S', R_S'})_{out}$ completely covering $(\partial \fD_{>\ell})_{in}\cap \pi_r^{-1}(\{(r_\beta)_{\beta\in\Pi}\in U_S': F_{r;S}(r_\beta)<\delta_S'\})$, and its boundary portion $(\partial \fT_{B_S', R_S'})_{in}^{v}\cap \pi_r^{-1}(\partial(\widetilde{U}_S'\backslash \overline{U'_{>\ell}}))$ completely covered by $\partial(\widetilde{\fD}_{B'_S, R'_S})_{out}$, we see that $\overline{\pi}_{r,>\ell-1}^{-1}(\partial \widetilde{U}_{>\ell-1})$ is a union of strata on which the Liouville vector field is pointing inward, and on its complement the Liouville vector field is pointing outward. 
\end{itemize}
Therefore, $\pi_{r, >\ell-1}: \fD_{>\ell-1}\rightarrow \widetilde{U}_{>\ell-1}$ completes the inductive step. See Figure \ref{figure: fD_ell-1} below for an illustration of the constructions. 
Let $\fD=\fD_{>-1}$, then $\widetilde{U}_{>-1}=H^{sm}$, and $\pi_r|_{\fD}: \fD\rightarrow H^{sm}$ gives the desired Liouville domain (since $Z_{\fF}$ is pointing outward along its boundary) fibered over $H^{sm}$, whose completion is $\fF$. This is because the Liouville flow on $\fF$ is certainly complete, and there is no zeros of $Z_\fF$ outside $\fD$. 

\begin{figure}[htbp]
\centering
\includegraphics[width=5in]{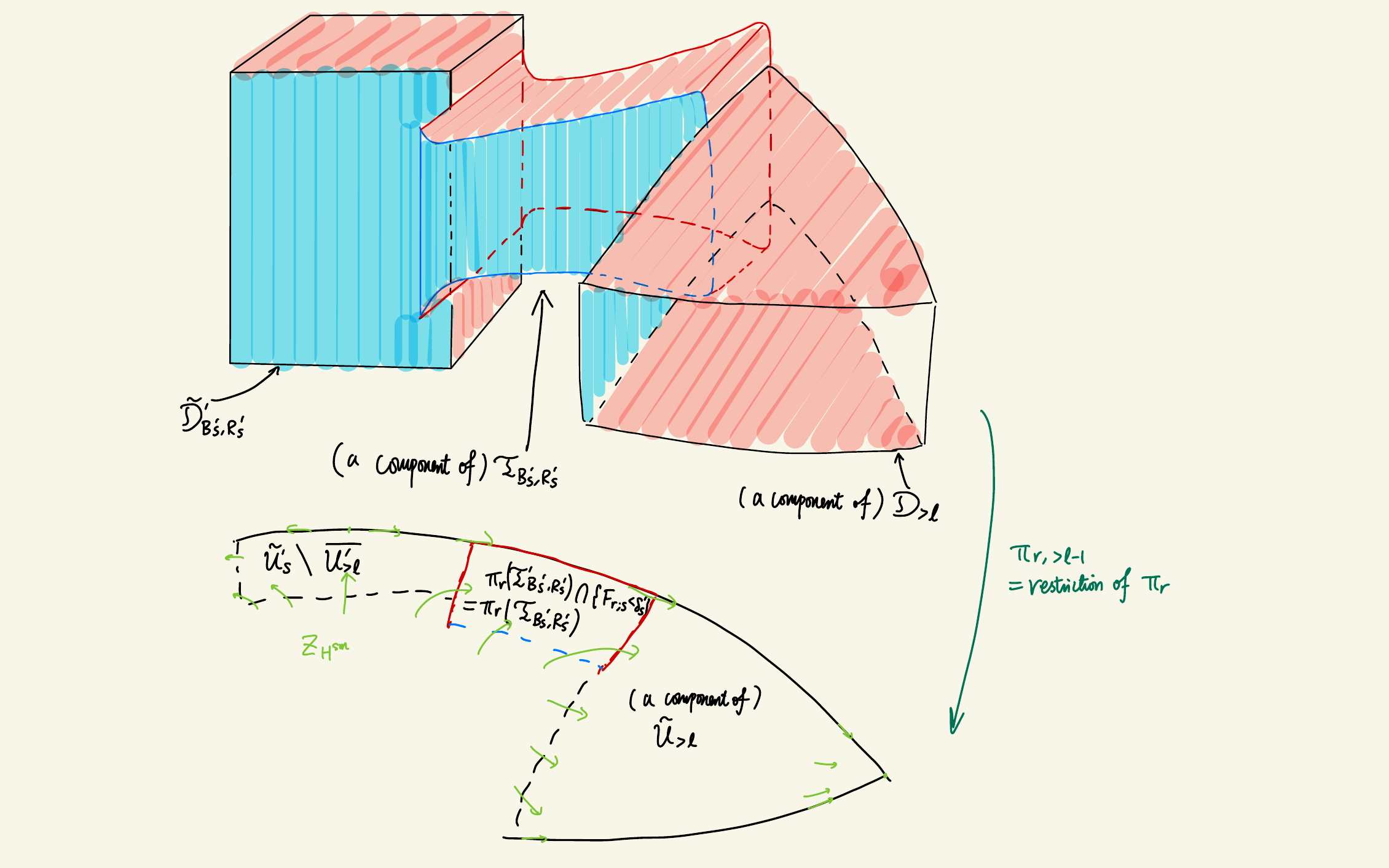} 
\caption{An illustration of the construction of $\pi_{r,>\ell-1}:\fD_{>\ell-1}\rightarrow \widetilde{U}_{>\ell-1}$. The picture shows how (a component of) $\fT_{B_S', R_S'}$ is attached to $\widetilde{\fD}'_{B_S',R_S'}$ and $\fD_{>\ell}$ so that the resulting (portion of) $\fD_{>\ell-1}$ has $(\partial \fD_{>\ell-1})_{in}$ (colored by blue) exactly over $\partial \widetilde{U}_{>\ell-1}$, and $(\partial \fD_{>\ell-1})_{out}$ (colored by red) is its complement in $\partial\fD_{>\ell-1}$. Note that $\pi_r^{-1}(\partial H^{sm})$ is \emph{not} part of the boundary of $\fF$ or $\fD_{>\ell-1}$ (although in the picture it seems so). That is why it is not included in $\partial\widetilde{U}_{>\ell-1}$, and its preimage under $\pi_{r,>\ell-1}$ is not colored. }\label{figure: fD_ell-1}
\end{figure}

(b) It follows directly from part (a) that there is a Morse-Bott type handle decomposition of the Liouville hypersurface $\fF$ for model (A). Namely, over each $U_S^\flat$, there are $|\pi_0(\cZ(L_S))|$ many handles of index $n-|S|+2|S|=n+|S|$ associated with the zero loci of the Liouville vector field, whose core (i.e. ascending manifold of $Z_{\fF}$) is isomorphic to $(\cZ(L_S)_{0,\cpt})\times D^{2|S|}$. Now we show that with appropriate choices of the functions $\epsilon_S$, the handles are (generalized) Weinstein handles. We will examine each of the factors in (\ref{eq: fF_S, splits}). 

Recall the formula of $Z'_{T^*H_{S^\perp}}$ from (\ref{eq: Z'HSperp}). We will choose $\epsilon_S$ so that it is gradient-like for a Morse function in a neighborhood of the fixed point in $T^*H_{S^\perp}$ as follows. By the assumption that $-r^2_{S^\perp}$ has a Morse singularity at $c_S$ which is a local minimum, we can choose the local coordinates $(x_1,\cdots, x_{n-|S|})$ near $c_S$, so that the (lifting of the) vector field $Z_{S^\perp, H^{sm}}$ is of the form $\sum_{j=1}^{n-|S|-1}x_j\partial_{x_j}+\widetilde{v}$, where $|\widetilde{v}|\leq O(|x|^2)$. Let $\epsilon_S=\epsilon$, for a sufficiently small constant $\epsilon>0$, in a small neighborhood of $c_S$. Then 
\begin{align*}
Z'_{T^*H_{S^\perp}}=(1-\epsilon)\sum_{j=1}^{n-|S|-1}y_j\partial_{y_j}+\epsilon\cdot \sum_{j=1}^{n-|S|-1}x_j\partial_{x_j}+Z'', \text{ with }|Z''|\leq O(|x|^2+|y|^2).
\end{align*}
Then $Z'_{T^*H_{S^\perp}}$ is clearly gradient-like for the Morse function $\phi(x,y)=|x|^2+|y|^2$, and so we get a Weinstein $0$-handle. 

For the factor $T^*\cZ(L_S)_{0,\cpt}$, it is a standard generalized Weinstein handle. For the factor $\cU_{S}^\der$, the zeros of $Z_{\cU_{S}^\der}$ are the centers of the Kostant sections of $J_{L_S^\der}$. In a small neighborhood of each of these zeros, we see the attachment of a Weinstein $2|S|$-handle. 
Hence on the product (after the finite quotient by $\cZ(L_S^\der)_0$) we see the attached handles are (generalized) Weinstein handles.  
Therefore, up to deformations, $\fD$ is a Weinstein domain and $\fF$ as its completion is a Weinstein manifold. 

In model (B), for each stratum $\widetilde{\fS}_{(S_j)_j}$ of codimension $k$, we can associate $|\pi_0(\cZ(L_{S_0}))|$ many index $(2n-1-k)$ (generalized) Weinstein handle(s), whose core is given by $\cZ(L_{S_0})_{0,\cpt}\times D^{n-1-k-|S_0|}\times D^{2|S_0|}$. The construction is completely similar to model (A), and we leave the details to the interested reader. 
\end{proof}

\begin{remark}\label{rmk: isotopy fF}
\begin{itemize}
\item[(i)] It is not hard to see that the (generalized) Weinstein structure on $\fF$ from model (A) is (generalized) Weinstein isotopic to that from model (B). The isotopy is essentially governed by a 1-parameter family of $H_t^{sm}$, with $H_0^{sm}$ (resp. $H_1^{sm}$) from model (A) (resp. model (B)), where the function $r^2$ on $H_t^{sm}$ is Morse for all $t$ except for some finite collection $t_1,\cdots, t_k\in (0,1)$, where $r^2$ has birth-death type critical points (cf. \cite[Section 9.1]{CE}). Since in the definition of a Weinstein domain $(X, \phi)$ from \emph{loc. cit.}, one allows $\phi: X\rightarrow\bR$ to have birth-death type of critical points, we get the desired isotopy, which becomes a Weinstein isotopy after a generic deformation.  

\item[(ii)] Fixing $H^{sm}$, for different choices of $(\epsilon_S)$, the Liouville hypersurfaces $\fF$ associated with them are canonically symplectomorphic, hence their Liouville structures are all isotopic (cf. \cite[Section 1]{El}). Also an isotopy of the eligible $H^{sm}$ and $(\epsilon_S)$, gives an isotopy of Liouville hypersurfaces $\fF$. 
\end{itemize}
\end{remark}

\subsubsection{Partial compatifications of $J_G$ and its Lagrangian skeleton}\label{subsec: compactify, J_G}

Fix a Liouville hypersurface $\fF\subset \widetilde{\sfN}^{-1}(1)$ as in Proposition \ref{prop: hypersurface F} above, which is Weinstein up to deformations using appropriate choices in the construction. Let $\tilde{I}: \widetilde{\sfN}^{-1}(1)\rightarrow \bR$ be a function such that 
\begin{align*}
&\tilde{I}|_{\fF}=0,\ X_{\frac{1}{\widetilde{\norm}^2}}(\tilde{I})=1.
\end{align*}
Recall that $\widetilde{\sfN}$ is homogeneous of weight $-\frac{1}{2}$ with respect to the Liouville flow. Then the flow of $X_{\frac{1}{\widetilde{\norm}^2}}$ gives an identification $\widetilde{\sfN}^{-1}(1)\cong \fF\times \bR$ from which we see that $\widetilde{\sfN}^{-1}(1)$ is a contact manifold with contact form $d\tilde{I}+\vartheta_\fF$. 
Furthermore, we have an isomorphism of exact symplectic manifolds (which in particular identifies the respective Liouville flows)
\begin{align}
&(J_G-\cB_1, \vartheta|_{J_G-\cB_1}=-\frac{1}{\widetilde{\sfN}^2}d\tilde{I}+\vartheta_{\fF})\overset{\sim}\longrightarrow (\fF\times T^{*,>0}\bR, -\tau dt+\vartheta_{\fF}),
\end{align}
 where $T^{*, >0}\bR=\{(t, \tau\cdot dt): t\in \bR, \tau>0\}$. Using the exact embedding
 \begin{align*}
 T^{*,>0}\bR&\longhookrightarrow \bC_{\Re z\leq 0}\\
 (t, \tau)&\mapsto (-\tau^{\frac{1}{2}}, -2\tau^{\frac{1}{2}}t), 
 \end{align*}
we can embed $J_G-\cB_1$ into $\fF\times \bC_{\Re z\leq 0}$, which gives
the partial compactification of $J_G$
\begin{align*}
\overline{J}_G:=J_G\underset{J_G-\cB_1}{\coprod}(\fF\times \bC_{\Re z\leq 0}). 
\end{align*}
Moreover, We define the completion of $J_G$ as 
\begin{align*}
\widehat{J}_G:=J_G\underset{J_G-\cB_1}{\coprod}(\fF\times \bC_{z}).
\end{align*}

For later reference, we define the function 
\begin{align*}
&I: J_G-\cB_1\longrightarrow \bR
\end{align*}
determined by the properties that $I|_{\widetilde{\sfN}^{-1}(1)}=-2\tilde{I}$, and it is homogeneous with \emph{weight $\frac{1}{2}$} with respect to the Liouville flow. Then under the embedding of $J_G-\cB_1$ into $\fF\times \bC_{\Re z\leq 0}$, the $\bC_{\Re z\leq 0}$ has coordinate $q=\Re z=I$ and $p=\Im z=-\frac{1}{\widetilde{\sfN}}$.

For example, on a conic (with respect to the Liouville flow) open subset in $\cB_{w_0}\cong T^*T$ whose projection to $\bR^n_{\geq 0}$ is disjoint from an open neighborhood of the codimension $<n$ and $\geq 1$ faces, we can take
\begin{align}
\label{eq: tildeN, B_w0}&\widetilde{\norm}=\pi_r^*(\sum_{\beta\in \Pi}r_\beta)^{\frac{1}{2}}\\
\label{eq: I, B_w0}&I=2\pi_r^*(\sum_{\beta\in \Pi}r_\beta)^{\frac{1}{2}}\sum\limits_{\beta\in \Pi}\Re p_{\beta^\vee}.
\end{align}

Similarly, for any $\emptyset\neq S\subsetneq \Pi$, on a conic open subset in $\fU_S$ whose projection to $\bR^n_{\geq 0}$ is disjoint from an open neighborhood of the faces that have vanishing $r_\beta$ for some $\beta\not\in S$, we can set 
\begin{align}
\label{eq: tildeN, B_w0wS}&\widetilde{\norm}_S=\pi_r^*(\sum_{\beta\not\in S}r_\beta)^{\frac{1}{2}}\\
\label{eq: I, B_w0wS}&I_S=2\pi_r^*(\sum_{\beta\not\in S}r_\beta)^{\frac{1}{2}}\sum\limits_{\beta\not\in S}\Re p_{\beta^\vee_{S^\perp}}.
\end{align}
using (\ref{eq: dual coordinate S}).

Note that for the above choice of $(\widetilde{\norm}, I)$ (resp. $(\widetilde{\norm}_S, I_S)$), the projection of $Z_{\fF}$ to $\fC^{n-1}$ is completely zero in a neighborhood of the center (resp. $Z_{r;S}$  in a neighborhood of the barycenter $c_S$). These come from choosing $\epsilon_S=0$ on some open $\Omega_S$ with $\overline{\Omega}_S\subsetneq U_S'$, for any $S\subsetneq \Pi$.

\begin{prop}\label{prop: partial compactify}
The partial compactification  $\overline{J}_G$ from the given choice of $\fF$ and $\widetilde{\sfN}$ is a Liouville sector, and is a (generalized) Weinstein sector that is obtained from attaching  $|\cZ(G)|$ many critical handles to $\fF\times \bC_{\Re z\leq 0}$ if $\fF$ is a (generalized) Weinstein hypersurface. The completion $\widehat{J}_G$ is a Liouville completion of $\overline{J}_G$, and it is a (generalized) Weinstein manifold that is obtained from attaching  $|\cZ(G)|$ many critical handles to $\fF\times \bC_{z}$ if $\fF$ is (generalized) Weinstein. 
\end{prop}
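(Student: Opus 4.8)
The plan is to assemble the statement from the construction immediately preceding it together with Propositions \ref{prop: hypersurface F} and \ref{prop: proper b map} and the handle picture of Remark \ref{remark: handle}; nothing new needs to be constructed, and the work is in checking that the pieces fit the definitions of \cite{GPS1}.

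First I would verify that $\overline{J}_G$ is a Liouville sector. The construction produced an exact symplectomorphism $(J_G-\cB_1,\vartheta)\cong(\fF\times T^{*,>0}\bR,\vartheta_\fF-\tau\,dt)$ which, post-composed with the exact embedding $T^{*,>0}\bR\hookrightarrow\bC_{\Re z\leq 0}$, realizes $J_G-\cB_1$ as the interior $\fF\times\{\Re z<0\}$ of $\fF\times\bC_{\Re z\leq 0}$. Hence by definition $\overline{J}_G=J_G\cup_{J_G-\cB_1}(\fF\times\bC_{\Re z\leq 0})$ is an exact symplectic manifold-with-boundary $\partial\overline{J}_G=\fF\times i\bR$, a neighborhood of which is exact symplectomorphic to $\fF\times\bC_{\Re z\leq 0}$ with the product Liouville structure; this is precisely the local model for the finite boundary of a Liouville sector, with defining function $\Re z=I$ (extended over $J_G-\cB_1$ with its weight-$\tfrac12$ homogeneity) whose Hamiltonian flow sweeps out the characteristic foliation of $\partial\overline{J}_G$. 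It then remains to check the global conditions: that $\overline{J}_G$ is of finite type (cylindrical at infinity away from $\partial$) and that $I|_{\partial\overline{J}_G}$ is proper. The former follows since $\fF$ has a compact skeleton by Proposition \ref{prop: hypersurface F}(b) (and in any case is of finite type), while the rest of $J_G$ is controlled, over each compact subset of $\fc$, by the properness of $\pi_b$ in Proposition \ref{prop: proper b map}, together with the homogeneity of $\pi_r$, $\widetilde{\norm}$ and $I$ under the Liouville flow; the latter follows from the same properness together with Lemma \ref{lemma: pi_|b|, epsilon}.

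Next, assume $\fF$ is a (generalized) Weinstein hypersurface. Then $\fF\times\bC_{\Re z\leq 0}$ is a (generalized) Weinstein sector, being the product of a (generalized) Weinstein manifold with the elementary Weinstein sector $\bC_{\Re z\leq 0}$ (with Liouville form $\tfrac12(\Im z\,d\Re z-\Re z\,d\Im z)$, Weinstein function $|z|^2$). The passage from $\fF\times\bC_{\Re z\leq 0}$ to $\overline{J}_G$ consists in gluing back $\cB_1$, which by Proposition \ref{prop: B_w0w} (the $S=\Pi$ stratum, with $L_\Pi=G$ and $\cZ(L_\Pi)=\cZ(G)$ finite) is $T^*\cZ(G)\times(\fg\sslash G)\cong\cZ(G)\times\fc$, i.e.\ $|\cZ(G)|$ disjoint copies of $\fc\cong\bA^n$. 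By Remark \ref{remark: handle} applied with $S=\Pi$, this is realized by attaching $|\pi_0(\cZ(G))|=|\cZ(G)|$ handles of index $n+|\Pi|=2n$, the top (critical) index for the $4n$-dimensional $\overline{J}_G$; the core of each is a closed $2n$-disk whose interior is a connected component of the central fiber $\chi^{-1}([0])\cong\cZ(G)\times\bC^n$, and its cocore (linking disk) is a Kostant section. That these are genuine critical Weinstein handles — the Liouville vector field being gradient-like near the $|\cZ(G)|$ fixed points of the contracting $\bC^\times$-flow on $\cB_1$ (namely $\cZ(G)\times\{0\}$), with attaching loci Legendrian spheres in $\partial_\infty(\fF\times\bC_{\Re z\leq 0})$ — follows from the same local computation as in the proof of Proposition \ref{prop: hypersurface F}(b) for the factor $\cU^\der_S$, now with $S=\Pi$; since $\cZ(G)$ is finite these handles are honest (not merely generalized) critical handles, although the total structure is Weinstein only after the deformations of Remark \ref{rmk: isotopy fF}. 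For the completion, $\widehat{J}_G=J_G\cup_{J_G-\cB_1}(\fF\times\bC_z)$ contains $\fF\times\bC_{\Re z\leq 0}\subset\fF\times\bC_z$ and agrees with $\overline{J}_G$ off the extension $\fF\times\{\Re z>0\}$; since $\bC_z$ is the Liouville completion of the sector $\bC_{\Re z\leq 0}$ and the Liouville structure is the product one on the added region (so $Z$ has no zeros and is complete there), $\widehat{J}_G$ is the Liouville completion of $\overline{J}_G$, and attaching the same $|\cZ(G)|$ critical handles to $\fF\times\bC_z$ (a (generalized) Weinstein manifold when $\fF$ is) exhibits it as a (generalized) Weinstein manifold.

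The main obstacle, and the only genuinely technical point, is the two compatibility checks that I have summarized above but which require care: (i) that the finite boundary is honestly ``linear at infinity'' and $I|_{\partial\overline{J}_G}$ is proper — this rides entirely on the quantitative control of $\chi$ afforded by Proposition \ref{prop: proper b map} (and Lemma \ref{lemma: pi_|b|, epsilon}), which is why those results were proved with such effort; and (ii) that the inductive, Morse-theoretic handle-attachment picture extracted from the proof of Proposition \ref{prop: proper b map} (Remark \ref{remark: handle}), combined with the $\fsl_2$-grading giving the contracting $\bC^\times$-action, matches the standard Weinstein-handle formalism of \cite{CE}, in particular that near $\cB_1$ the Liouville structure agrees with the standard critical-handle model with core $\chi^{-1}([0])$ and cocore the Kostant sections. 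Once these are in hand, the remaining steps (the sector axioms in the local model, the product structure on $\fF\times\bC$, and the passage to the completion) are formal.
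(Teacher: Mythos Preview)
Your proposal is correct and takes the same approach as the paper: both rest on the fact that the construction immediately preceding the proposition already exhibits $\overline{J}_G$ (resp.\ $\widehat{J}_G$) as obtained from $\fF\times\bC_{\Re z\leq 0}$ (resp.\ $\fF\times\bC_z$) by attaching $|\cZ(G)|$ critical Weinstein handles, from which the Liouville/Weinstein sector (resp.\ manifold) property is immediate. The paper's proof is a single sentence to this effect; you have unpacked the sector axioms and the handle model in considerably more detail, which is not wrong but is largely already implicit in the construction and in the standard fact that critical handle attachment to a sector yields a sector.
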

\begin{proof}
This is obvious from the fact that $\overline{J}_G$ (resp. $\widehat{J}_G$) is obtained from attaching $|\cZ(G)|$-many critical Weinstein handles to $\fF\times \bC_{\Re z\leq 0}$ (resp. $\fF\times \bC_z$). 
\end{proof}

\begin{prop}\label{prop: split generation}
\item[(i)] Using the Weinstein structure on $\fF$ from Proposition \ref{prop: hypersurface F} (b) model (B), 
the Lagrangian skeleton of $\overline{J}_G$ inside $\widehat{J}_G$ has $|\pi_0(\cZ(L_S))|$ many  Lagrangian component(s) for each $S\subset \Pi$. 

\item[(ii)]
The Kostant sections $\{g=z\}, z\in \cZ(G)$ generate the partially wrapped Fukaya category of the Weinstein sector $\overline{J}_G$. 
\end{prop}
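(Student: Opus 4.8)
The plan is to deduce both statements from the Weinstein handle decomposition of $\overline{J}_G$ established in Proposition \ref{prop: hypersurface F} and Proposition \ref{prop: partial compactify}, combined with the generation criterion for wrapped Fukaya categories of Weinstein sectors from \cite{GPS1, GPS2, CDGG}. First I would recall that $\overline{J}_G$ is obtained from $\fF\times\bC_{\Re z\le 0}$ by attaching $|\cZ(G)|$ critical handles, one for each connected component of the central fiber $\chi^{-1}([0])$, whose cocores are precisely the Kostant sections $\Sigma_\sigma=\{g=\sigma\}$, $\sigma\in\cZ(G)$. For the skeleton statement (i), the skeleton of a Weinstein sector is the union of the cores of all its handles; the skeleton of $\fF\times\bC_{\Re z\le 0}$ (relative to the sectorial boundary) is $\sfL_\fF\times\{0\}$ where $\sfL_\fF$ is the skeleton of $\fF$, together with the core discs of the $|\cZ(G)|$ top handles attached along $\partial\fF$. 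By Proposition \ref{prop: hypersurface F}(b) model (B), the Weinstein hypersurface $\fF$ has handles indexed by pairs $(\sigma,S)$ with $S\subsetneq\Pi$ and $\sigma\in\pi_0(\cZ(L_S))$, each contributing one Lagrangian component to $\sfL_\fF$; adding the $|\cZ(G)|=|\pi_0(\cZ(L_\Pi))|$ top handles (the case $S=\Pi$, where $L_\Pi=G$) gives exactly one component for each $S\subseteq\Pi$ and each $\sigma\in\pi_0(\cZ(L_S))$, which is the asserted count. I would spell out that the core of the handle indexed by $(\sigma,S)$ inside $\widehat J_G$ is, via the product description in Proposition \ref{prop: hypersurface F}(b), a Lagrangian of the form (skeleton piece of $J_{L_S^\der}$) $\times$ (compact torus factor) $\times$ (disc), glued in along $\bC_{\Re z\le 0}$.

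For (ii) I would invoke the stop-removal / generation result of \cite{GPS1, GPS2}: for a Weinstein sector, the cocores of the critical (index $=n$) handles split-generate the wrapped Fukaya category, and moreover \cite{CDGG} (or the generation-by-cocores theorem in \cite{GPS2}) upgrades split-generation to generation in the pretriangulated setting once one accounts for the lower-index handles. Concretely, attaching a subcritical handle does not change the wrapped Fukaya category up to equivalence, so $\cW(\overline J_G)\simeq\cW\big((\fF\times\bC_{\Re z\le 0})\,\cup\,\{|\cZ(G)|\text{ critical handles}\}\big)$, and since $\cW(\fF\times\bC_{\Re z\le 0})\simeq\cW(\fF\times T^*[0,\infty))\simeq 0$ (a sector of the form $M\times T^*[0,\infty)$ has trivial wrapped category, as its skeleton can be displaced off itself — this is the statement used in the paper's footnote that $\cW(\cB_{w_0})\simeq 0$ appears in the $S=\emptyset$ stop-removal discussion), the entire category is generated by the cocores of the $|\cZ(G)|$ attached critical handles, namely the Kostant sections. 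I would cite \cite[Theorem 1.10]{GPS2} (generation by cocores) together with the handle-attachment formula for wrapped categories to make this precise.

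The main obstacle is the passage from split-generation to honest generation and the verification that the $|\cZ(G)|$ critical cocores are the \emph{only} critical cocores — i.e. that all the other handles coming from $\fF$ are genuinely subcritical (index $<n$) after the deformation to a strict Weinstein structure. Here one must be careful: Proposition \ref{prop: hypersurface F}(b) produces a \emph{generalized} (Morse–Bott) Weinstein structure whose handles indexed by $(\sigma,S)$ with $S\neq\Pi$ have core $\cZ(L_S)_{0,\cpt}\times D^{\,n-1-k-|S_0|}\times D^{2|S_0|}$ of total dimension $2n-1-k$ in $\fF$, hence after crossing with $\bC_{\Re z\le 0}$ and performing a generic Morse perturbation of the Morse–Bott families, the resulting handles in $\overline J_G$ have index $\le n-1$ except when $S=\Pi$ (where the compact torus factor is trivial and one gets genuine index-$n$ handles whose cocores are the Kostant sections). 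I would carry out this index bookkeeping carefully, then invoke the Morse-perturbation stability of $\cW$ under generalized-to-strict Weinstein deformation (Remark \ref{rmk: isotopy fF} and \cite[Section 11.4]{CE} combined with invariance of wrapped categories under Weinstein homotopy, \cite{GPS1}), and finally apply the cocore generation theorem. A secondary point to check is that the wrapped category of the partially-compactified sector is independent of the auxiliary data $(\fF,\widetilde\norm,\epsilon_S)$, which follows from Proposition \ref{prop: partial compactify} together with Remark \ref{rmk: isotopy fF}, so that the generating set is canonically the Kostant sections regardless of the choices made in Subsection \ref{subsubsec: H, sm}.
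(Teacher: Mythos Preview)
Your treatment of part (i) is correct and matches the paper's approach.

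For part (ii), there is a genuine gap. The claim that $\cW(\fF\times\bC_{\Re z\le 0})\simeq 0$ is false: $\bC_{\Re z\le 0}$ is the sector corresponding to the pair $(\bC,\{\pm i\infty\})$, equivalently $T^*[0,1]$, so by K\"unneth $\cW(\fF\times\bC_{\Re z\le 0})\simeq\cW(\fF)$, which is nontrivial since Proposition \ref{prop: hypersurface F}(b) produces critical handles in $\fF$ for every $S\subsetneq\Pi$. The footnote you cite about $\cW(\cB_{w_0})\simeq 0$ concerns the full open cell $\cB_{w_0}$ with the nonstandard Liouville structure inherited from $J_G$, on which the Liouville flow has no zeros; it does not apply to $\fF\times\bC_{\Re z\le 0}$. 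Your index bookkeeping fails for the same reason: in model (B) the length-one chains $(S)$ give critical (index $2n-1$) handles of $\fF$, and after coning over the stop these yield genuine Lagrangian components of the relative skeleton of the sector $\overline{J}_G$. Their linking discs are part of the generating set produced by \cite{GPS2}, and a Morse perturbation of the Morse--Bott families does not make them subcritical.

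The paper's proof takes the opposite route: it accepts that the full generating set from \cite{GPS2} includes, for each $S\subsetneq\Pi$ and each $\sigma\in\pi_0(\cZ(L_S))$, a cocore of the form
\[
T^* H_{S^\perp}|_{\{c'_{S}\}}\times T^*\cZ(L_S)_{\cpt}|_{\{z^c_S\}}\times (\Sigma_I\subset J_{L_S^\der})\times \{p=1\}\subset\fF_S\times \bC_z,
\]
and then shows each such cocore is already generated by the Kostant sections. The key geometric step is a Hamiltonian isotopy on the $\bC_z$-factor that pushes $\{p=1\}$ off the ray $\bR_{\ge 0}$ in the skeleton, after which the isotoped cocore meets the skeleton only along $\chi^{-1}([0])$; a wrapping exact triangle in the stabilization $\overline{J}_G\times T^*[0,1]$ then exhibits it as built from the Kostant sections. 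This displacement argument is precisely what your proposal is missing.
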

\begin{proof}
(i) The Lagrangian component(s) for each $S\subset \Pi$ is given by: 
\begin{itemize}
\item If $S=\Pi$, then $\chi^{-1}([0])$ gives $|\cZ(G)|$ many Lagrangian components in the skeleton, and the Kostant sections give their cocores;\\
\item If $S\subsetneq \Pi$, then it gives $|\pi_0(\cZ(L_S))|$ many Lagrangian components in $\Core(\fF)$, the product of which with $\bR_{\geq 0}$ gives the same amounts of Lagrangian components in the skeleton of $\overline{J}_G$. Let $H_{S^\perp}\subset \cZ(L_S)_{0,\bR}$ be the (non-closed) smooth hypersurface that projects to $\sfN_S\cap H^{sm}$ isomorphically, under taking the components in $\pi_r$ for $r_\beta,\beta\not\in S$ (defined in the same way as for model (A)), and let $c'_S$ be the unique fixed point of the Liouville flow. Then 
the Lagrangian
\begin{align*}
T^* H_{S^\perp}|_{\{c'_{S}\}}\times T^*\cZ(L_S)_{\cpt}|_{\{z^c_S\}}\times (\{g=I\}\subset J_{L_S^\der})\times \{p=1\}\subset\fF_S\times \bC_z,
\end{align*}
for any fixed $z^c_S$ in each component of $\cZ(L_S)_{\cpt}$ gives a cocore\footnote{To be more precise, we need to replace $\{p=1\}$ by a cylindrical Lagrangian in $\bC_z$, which is similar to that on the right-hand-side of  Figure \ref{figure: proj L_xi, R}.} of the corresponding Lagrangian component. 
\end{itemize}

(ii) By \cite{GPS2}, the above Lagrangian cocores genearate $\cW(\overline{J}_G)$. 
On the other hand, for any Lagrangian cocore corresponding to $S\subsetneq \Pi$, we can perform a Hamiltonian isotopy on the $\bC_z$ factor which pushes $\{p=1\}$ away from $\bR_{\geq 0}$, and so moves the cocore away from $\Core(\fF)\times \bR_{\geq 0}$ and makes it intersect $\chi^{-1}([0])$ only.  Therefore, such cocores are generated by the Kostant sections, using wrapping exact triangles in the stabilization $\overline{J}_G\times T^*[0,1]$ from \cite{GPS2} (see also the proof of Proposition \ref{prop: A_G commutative} (i) in Subsection \ref{sec: proof_prop5.2.3} for more details in a similar situation). 
\end{proof}

\begin{remark}
\item[(i)] Following Remark \ref{rmk: isotopy fF}, any Liouville/Weinstein isotopy of $\fF$ will induce in an obvious way Liouville/Weinstein isotopy of $\overline{J}_G$ as sectors. 

\item[(ii)]We remark on some obvious relations between $\overline{J}_G$ and the log-compactification $\overline{J}^{\log}_G$ (\ref{eq: J, log, def}) for $G$ of adjoint type. The smooth function $1/\widetilde{\sfN}$ extends to $\overline{J}^{\log}_G$ by $0$, and defines a decreasing sequence of tubular neighborhoods $\{1/\widetilde{\sfN}<\frac{1}{j}\}, j>1$ (with smooth boundary) of the log-boundary divisor $\partial\overline{J}^{\log}_G$, which are related by the contracting $\bR_+$-flow and whose intersection is equal to the boundary divisor. An alternative way to see the normal crossing divisor $\partial\overline{J}^{\log}_G$ is as follows. Let $\cC_S$ be the ascending manifold of $\overline{w}_0\overline{w}_SB\in \overline{\chi^{-1}([0])}$ in $\overline{J}^{\log}_G$ with respect to the contracting $\bC^\times$-flow, whose union over all $S\subset \Pi$ gives the Bialynicki-Birula decomposition of $\overline{J}^{\log}_G$ (cf. \cite{Balibanu}). On $\cB_{w_0w_S}\cong \Sigma_{I;S}\times T^*\cZ(L_S)$, where $\Sigma_{I;S}\subset J_{L_S^\der}$ is the Kostant section associated to $g_S=I$, define 
\begin{align*}
\sfF_{S,\beta}(g_S=I, \xi_S;z, t):=\beta(z^{-1}),\beta\in \Pi\backslash S,
\end{align*}
which extend to be affine coordinates (completed by a choice of affine coordinates on $\cS_{\fl_S^\der}$ and $\fz_S^*$) on the affine space $\cC_S$. 
The zero locus of $\prod\limits_{\beta\in \Pi\backslash S}\sfF_{S,\beta}$ gives the normal crossing divisor inside $\cC_S$. 
\end{remark}

\section{The Wrapping Hamiltonians and one calculation of wrapped Floer cochains}\label{sec: wrapping Ham}

In this section, we calculate the wrapped Floer complexes for the Kostant sections. Throughout this section, we assume $G$ is semisimple. The main results are 
Proposition \ref{prop: A_G, vector} and \ref{prop: A_G, center}, which show that the Floer complexes are all concentrated in degree zero, and the generators are indexed by the \emph{dominant} coweights of $T$ for $G$ of adjoint form. 

We mention a few basic set-ups for the wrapped Fukaya category of $J_G$, and give some references on the foundations of Fukaya categories instead of going into any detail of them. To set up gradings for Lagrangians in $\cW(J_G)$, we need to choose a compatible almost complex structure $\cJ$ and trivialize the square of the canonical bundle $\kappa^{\otimes 2}$. For this, we use that $J_G$ is hyperKahler and let $\cJ$ be the complex structure that is compatible with the real part of the present holomorphic symplectic form on it. Since $(J_G, \cJ)$ is again holomorphic symplectic using the hyperKahler rotated holomorphic symplectic form, $c_1(TJ_G)=0$ and we can trivialize $\kappa$ (hence $\kappa^{\otimes 2}$) by the top exterior power of this holomorphic symplectic form. Using this, holomorphic Lagrangians all have constant integer gradings (cf. \cite[Proposition 5.1]{Jin1}). We remark that since the choice of a grading for a (smooth) Lagrangian is completely topological, we usually don't stick to a single $\cJ$ or trivialization of $\kappa^{\otimes 2}$. 

For a friendly introduction of Fukaya categories, we refer the reader to \cite{Auroux}. For the foundations of Fukaya categories, we refer the reader to \cite{Seidel1}. For the more recent development of partially wrapped Fukaya categories on Liouville/Weinstein sectors, we refer the reader to \cite{GPS1, GPS2, Sylvan}.

\subsection{Choices of wrapping Hamiltonians}\label{subsec: Hamiltonians}

The Killing form on $\fg$ induces a $W$-invariant Hermitian inner product on $\ft^*\cong \ft$, namely 
\begin{align*}
\lng \xi, \eta\rng_{\text{Herm}}:=\lng\xi,\overline{\eta}\rng, 
\end{align*}
and let $\|\xi\|^2$ (or $|\xi|^2$) be $\lng \xi,\overline{\xi}\rng_{\text{Herm}}$. 
For any $R>0$, let $y_R: [-1,\infty)\rightarrow \bR$ be any smooth function such that 
\begin{align}\label{eq: condition on y}
y_R(x)=\begin{cases}&\frac{1}{2}x^2,\ x\leq R, \\
&\frac{1}{2}R x, x>2R. 
\end{cases}
\end{align} 
Let $\pi_\ft: \ft\cong \ft^*\longrightarrow \fc$ denote the quotient map.

Let $(\sigma_1,\cdots, \sigma_n)$ be a set of \emph{homogeneous} complex affine coordinates on $\fc$ with respect to the induced $\bC^\times$-action from the weight $1$ dilating action on $\ft$.  Let $u_1, \cdots, u_n$ be the respective weights of the affine coordinates, which are all positive integers. Let $\tilde{u}:=\max\{u_1,\cdots, u_n\}+1$.

Assume $f(\xi)$ is any $W$-invariant homogeneous smooth (real-valued) function with weight $2$ on $\ft$ such that $f|_{\ft-\{0\}}>0$ and $f(\xi)$ descends to a $C^2$-function on $\fc-\{[0]\}$. 
For any $\delta>0$ small, let 
\begin{align}\label{eq: p_tilde u, delta}
p_{\tilde{u},\delta}: \bR_{\geq 0}\rightarrow \bR_{\geq 0}
\end{align}
be a smooth function such that (1) $0<p'_{\tilde{u},\delta}(s)\leq 2$, for $s>0$; (2) $p_{\tilde{u},\delta}(s)=s, s\in [3\delta,\infty)$, and $p_{\tilde{u},\delta}(s)=s^{\tilde{u}}, s\in [0, \delta)$. Then $p_{\tilde{u},\delta}\circ f$ is a $W$-invariant $C^2$-function on $\ft$ that descends to a $C^2$-function\footnote{If $f$ descends to a $C^k$-function on $\fc-\{0\}$, then by sufficiently increasing $\tilde{u}$, we can make $\tilde{f}_{\tilde{u},\delta}$ a $C^k$-function as well.} on $\fc$, denoted by $\tilde{f}_{\tilde{u}, \delta}: \fc\rightarrow \bR_{\geq 0}$. Note that $[0]\in \fc$ is the only critical point (which is a global minimum) of $\tilde{f}_{\tilde{u}, \delta}$. We can always perturb $\tilde{f}_{\tilde{u}, \delta}$ a little bit near $[0]$ so that $[0]$ is a non-degenerate global minimum, without introducing new critical points. 
Moreover, we have 
\begin{align}
\|D(p_{\tilde{u},\delta}\circ f)(\xi)\|\leq 2\|D f(\xi)\|, \text{ on }\{\xi\in \ft: f(\xi)\leq 3\delta\}. 
\end{align}

Now we describe the induction steps to define a smooth $W$-invariant function $\widetilde{F}$ that descends to a $C^2$-function $F$ on $\fc$, and which will serve (after some modifications) as a collection of  desired positive wrapping Hamiltonian functions on $J_G$. Let $\fS_{\ft}$ be the standard stratification on $\ft$ indexed by $S\subset \Pi$, with each stratum $\fz_S^\circ$ consisting of points whose stabilizer under the $W$-action is equal to $W_S=N_{L_S}(T)/T$. For any $S\subset \Pi$, let 
\begin{align}\label{eq: U_S, epsilon}
U_{S,\epsilon}:=\{\xi\in \ft: \|\xi-\proj_{\fz_S}\xi\|<\epsilon\cdot \|\proj_{\fz_S}\xi\|, \proj_{\fz_S}\xi\in \fz_S^\circ\}
\end{align}
be a $\bC^\times$-invariant tubular neighborhood of $\fz_S^\circ$. In each of the following steps, we will choose some $\epsilon_j>0, j=1,\cdots, n$, sufficiently small, such that 
\begin{align}\label{eq: mathring U_S, epsilon}
\mathring{U}_{S, \epsilon_{|S|}}:=U_{S, \epsilon_{|S|}}-\bigcup\limits_{S'\supsetneq S}U_{S',\frac{1}{2}\epsilon_{|S'|}}
\end{align}
 are all disjoint for any pair of $S$ without any containment relation. 

\emph{Step 1. } The base case $F_{\leq 0}$ on $\ft^{\reg}$.

We start with the function $F_{\leq 0}(\xi):=\|\xi\|^2$ on $\ft^\reg$. It is clear that the function $F_{\leq 0}$ descends to a smooth function on $\fc^{\reg}=\ft^{\reg}/W$. 

\emph{Step 2.} Assumptions on the $j$-th step function $F_{\leq j}$. 

Suppose we have defined $F_{\leq j}$ on $\ft_{\leq j}:=\ft-\bigcup\limits_{|S|> j}U_{S, \frac{1}{2}\epsilon^{(j)}_{|S|}}$, for some choice of $(\epsilon^{(j)}_k)_{k=1,\cdots, n}$ as above, such that $F_{\leq j}$ is a $W$-invariant homogeneous $C^2$-function with weight $2$ and the following hold: 
\begin{itemize}
\item[(i)] For any $S$ with $|S|\leq j$,  on $\mathring{U}_{S,\epsilon^{(j)}_{|S|}}$ we have 
\begin{align}\label{eq: F_leq j, sum}
F_{\leq j}(\xi)=\|\proj_{\fz_S}\xi\|^2(1+f_S(\frac{\xi-\proj_{\fz_S}\xi}{\|\proj_{\fz_S}\xi\|})),
\end{align}
for some smooth function\footnote{Here we only need $f_S$ in the $\epsilon^{(j)}_{|S|}$-neighborhood of $0\in \ft_S$.} $f_S: \ft_S\rightarrow \bR_{\geq 0}$ that descends to a smooth function on $\fc_{S}:=\ft_S\sslash W_S$. In particular, $F_{\leq j}$ descends to a $C^2$-function on $\fc_{\leq j}:=\ft_{\leq j}/W$ (the image of $\ft_{\leq j}$ under $\pi_\ft: \ft\rightarrow \fc$). 

\item[(ii)] The function $f_S: \ft_S\rightarrow \bR_{\geq 0}$ satisfies $f_S(0)=0$ and $f_S>0$ on $\ft_S-\{0\}$. Let $Z_S$ be the radial vector field on $\ft_S$, i.e. the vector field generating the weight $1$ $\bR_+$-action. Then $\iota_{Z_S}df_S>0$ on $\ft_S-\{0\}$. 
In particular, this implies that the origin is the only critical point (global minimum) of $f_S$. For the induced function $\tilde{f}_S: \fc_S\rightarrow \bR_{\geq 0}$, we require that $[0]$ is a non-degenerate critical point. 

\item[(iii)]  
\begin{align}\label{eq: norm Df_S}
\|Df_S\|\leq 4^j\|D(\|\xi_S\|^2)\|.
\end{align} 
\end{itemize}

\emph{Step 3. }Modifying and extending $F_{\leq j}$ to $F_{\leq j+1}$.

For any $S$ with $|S|=j+1<n$, 
consider the following intersection:
\begin{align*}
\cT_{S, \epsilon_{j+1}^{(j)}}:=U_{S,\epsilon^{(j)}_{j+1}}\cap \{\xi\in \ft:\|\proj_{\fz_S}\xi\|=1\}\cap  \bigcup\limits_{S_1\subsetneq S}\mathring{U}_{S_1, \epsilon^{(j)}_{|S_1|}}.
\end{align*}
By the requirement on $F_{\leq j}$ (\ref{eq: F_leq j, sum}), we have for any $S_1\subsetneq S$
\begin{align}
\nonumber&F_{\leq j}(\xi)|_{\cT_{S, \epsilon^{(j)}_{j+1}}\cap \mathring{U}_{S_1,\epsilon^{(j)}_{|S_1|}}}\\
\label{eq: F_leq j, cT}&=(1+\|\proj_{\fz_{S_1}}\xi-\proj_{\fz_{S}}\xi\|^2) (1+f_{S_1}(\frac{\xi-\proj_{\fz_{S_1}}\xi}{\sqrt{1+\|\proj_{\fz_{S_1}}\xi-\proj_{\fz_{S}}\xi\|^2}})). 
\end{align}
In particular, $F_{S}:=F_{\leq j}|_{\cT_{S, \epsilon^{(j)}_{j+1}}}-1$, which only depends on $\xi-\proj_{\fz_S}\xi$ but not on $\proj_{\fz_S}(\xi)$ (here we use that $\xi-\proj_{\fz_{S}}\xi$ is equal to $\xi-\proj_{\fz_{S_1}}\xi$ plus $\proj_{\fz_{S_1}}(\xi-\proj_{\fz_{S}}\xi)=\proj_{\fz_{S_1}}\xi-\proj_{\fz_{S}}\xi$), descends to a smooth positive function defined on an open ``annulus" $\{\epsilon^{(j)}_{j+1}/2<\|\xi_S\|<\epsilon^{(j)}_{j+1}\}$ around the origin in $\ft_{S}$, satisfying $\iota_{Z_S}dF_S>0$. 

Now modify $F_{S}$ inside $\{\epsilon^{(j)}_{j+1}/2<\|\xi_S\|<(2/3)\epsilon^{(j)}_{j+1}\}$, extend it to be homogeneous with weight $2$ (or better modify its induced function on a portion of $\fc_S$) on a small neighborhood of the origin in $\ft_S$, then compose it with $p_{\tilde{u}, \delta}$ (\ref{eq: p_tilde u, delta}) for appropriate $\tilde{u}$ and $\delta>0$.  The resulting function is denoted by $f_S$, and it is clear that, with some careful choices,  $f_S$ satisfies all the conditions in \emph{Step 2}. Note that we can always perturb $\tilde{f}_S: \fc_S\rightarrow \bR_{\geq 0}$ a little bit near $[0]$ so that it becomes $C^2$-smooth and $[0]$ is a non-degenerate minimum, without creating new critical points. 

Lastly, define $F_{\leq {j+1}}(\xi)$ on $\mathring{U}_{S, \epsilon_{|S|}^{(j)}}, |S|=j+1$ by the formula in (\ref{eq: F_leq j, sum}). Since it matches with $F_{\leq j}$ near the boundary of $\cT_{S,\epsilon_{j+1}^{(j)}}$, it extends $F_{\leq j}$ (restricted to a smaller domain) to a desired function on $\ft_{\leq j+1}$, for some new choices of $(\epsilon^{(j+1)}_{k})_{k=1,\cdots, n}$.

For the case when $j+1=n$, the modification is simpler: we directly modify $F_{\leq n-1}$ inside $\{\epsilon^{(n-1)}_{n}/2<\|\xi\|<(2/3)\epsilon^{(n-1)}_{n}\}$ to get $F_{\leq n}$ as above. 
In the end, we set $\widetilde{F}:=F_{\leq n}$ on $\ft$, and this finishes the induction step. Let $F$ be the induced function on $\fc$.

Define 
\begin{align}
\label{eq: tilde H_R def}&\widetilde{H}_R:=y_R\circ \sqrt{\widetilde{F}}: \ft\longrightarrow \bR_{\geq 0}\\
\label{eq: H_R def}&H_R:=y_R\circ \sqrt{F}: \fc\longrightarrow \bR_{\geq 0}
\end{align}
It is clear that both $\widetilde{H}_R$ is smooth and $H_R$ is $C^2$-smooth on their respective defining domains. By some abuse of notations, we will denote their respective pullback functions on $T^*T$ and $J_G$ by the same notations. Since $J_G\rightarrow\fc$ is a complete integrable system, the Hamiltonian flows of $H_R$ on $J_G$ are complete. 

\begin{definition}\label{def: positive Ham}
Assume a Liouville sector $\overline{X}$ has an increasing sequence of Liouville subsectors $\overline{X}_k\subset X, k\geq 1$ such that $\bigcup\limits_{k}\overline{X}_k=X$ (the interior of $\overline{X}$). We say a Hamiltonian function $H:X\rightarrow\bR$, whose Hamiltonian flows are complete, is (\emph{nonnegative}/\emph{positive}) \emph{linear} if each $H|_{\overline{X}_k}, k\geq 1$ is (nonnegative/strictly positive) homogeneous of weight $1$ with respect to the Liouville flow outside a compact region in $\overline{X}_k$. 
\end{definition}

\begin{remark}\label{remark: positive Ham}
\item[(i)] Strictly speaking, by the definition of a linear Hamiltonian on a Liouville sector $\overline{X}$ in \cite{GPS1}, one needs the Hamiltonian and its differential to vanish along $\partial\overline{X}$. 
In the setting of Definition \ref{def: positive Ham}, 
we can extend $H|_{\overline{X}_k}$ to be $H_k: \overline{X}\rightarrow \bR$ which vanishes in a neighborhood of $\partial\overline{X}$. Given any cylindrical $L\subset X$, for any $t\in \bR$, define $\varphi^t_{X_H}(L):=\varphi^t_{X_{H_k}}(L)$ for $k\gg 1$, which is well defined and obviously stabilizes by the completeness of the Hamiltonian flows of $H$. In particular, the argument in \cite[Lemma 3.28]{GPS1} still works with $\text{Ham}(\overline{X})$ replaced by $\text{Ham}(X)$ consisting of linear Hamiltonian functions with complete Hamiltonian flows in the sense of the above definition. \\

\item[(ii)] The Liouville sectors $\overline{J}_G$ and $T^*\overline{M}$ for a smooth compact manifold $\overline{M}$ with boundary both satisfy the conditions in Definition \ref{def: positive Ham}. The latter is easy to see. For $\overline{J}_G$, this follows from Proposition \ref{prop: proper b map} and the handle attachment description in Proposition \ref{prop: partial compactify}. By the notations from Subsection \ref{subsubsec: subsector} below, we can form $(\overline{J}_G)_k=J_G-\fF\overset{\triangle}{\times}\mathring{\cP}_k$, for a decreasing sequence $\cP_k$ such that $\bigcap\limits_k\cP_k=\emptyset$. Then it is clear that $H_R$ is a positive linear Hamiltonian on $J_G$. \\

\item[(iii)]  From (i) and (ii), we see that $\cW(\overline{J}_G)$ is independent of the choice of compactifications presented in Section \ref{subsec: partial compact}, i.e. for any two compactifications using different choices of $(\widetilde{\sfN}, I)$ from Subsection \ref{subsec: tilde N, I}, the wrapped Fukaya categories are canonically equivalent, since all the calculations of the Fukaya categories are performed within $J_G$ in the exactly same way.

\end{remark}

\subsection{One calculation of wrapped Floer cochains}\label{subsec: Floer cochains}

Let $G$ be an adjoint group. Let $\Sigma_I$ denote for the (only) Kostant section. In this subsection, we calculate $\Hom(\Sigma_I, \Sigma_I)$ using the Hamiltonians defined in (\ref{eq: H_R def}). The idea is to use the Lagrangian correspondence (\ref{eq: Lag corresp}) to transform the wrapping process in $J_G$ to a wrapping process in $T^*T$, with the latter easier to understand. Indeed, since the Hamiltonian function $-\proj_1^*H_R+\proj_2^*(\widetilde{H}_{R})$ on $J_G^a\times T^*T$ vanishes on the Lagrangian subvariety $J_G\underset{\fc}{\times}\ft$, we have the Lagrangian correspondence equivariant with respect to the Hamiltonian flow $\varphi_{H_R}^s$ on $J_G$ and $\varphi_{\widetilde{H}_R}^s$ on $T^*T$.  For any Lagrangian $L\subset J_G$, let $\widehat{L}$ be the transformation under  (\ref{eq: Lag corresp}), e.g. $\widehat{\Sigma}_I=T_{\{I\}}^*T$. 
Then we have 
\begin{align}\label{eq: equivariant flow H_R}
\widehat{\varphi_{H_R}^1(L)}=\varphi_{\widetilde{H}_{R}}^1(\widehat{L}),
\end{align}
for any $L\subset J_G$. 

Implicitly in the definitions (\ref{eq: tilde H_R def}), (\ref{eq: H_R def}) are the choices of $(\epsilon_k^{j})_{k=1,\cdots, n}$. In the following, we assume $\widetilde{H}_R, R\geq 0$ (resp. $H_R$) as $R$ increases satisfies that the choices of $(\epsilon_k^{j})_{k=1,\cdots, n}$ depending on $R$ have limit values $0$.

\begin{lemma}
For any cylindrical Lagrangian $L$, the Lagrangians $\{\varphi_{H_{R}}^1(L)\}_{R\geq 0}$ is cofinal in the wrapping category $(L\rightarrow -)^+$ (in the sense of \cite[Section 3.4]{GPS1}).
\end{lemma}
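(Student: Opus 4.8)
The plan is to verify the cofinality condition from \cite[Section 3.4]{GPS1} directly: one must show that for any cylindrical Lagrangian $L'$ and any positive isotopy taking $L$ to $L'$ (i.e. any object of the wrapping category $(L\to -)^+$), there is some $R\geq 0$ together with a morphism in $(L\to -)^+$ from $\varphi_{H_R}^1(L)$ to $L'$. Since the wrapping category is filtered, it suffices to produce, for each compact subset $K$ of the ideal boundary $\partial^\infty J_G$ (equivalently, for each ``wrapping stopping datum''), some $R$ so that $\varphi_{H_R}^1(L)$ has already wrapped past $K$; concretely, the cofinality reduces to showing that the flow $\varphi_{H_R}^1$ is \emph{positive} (its generating Hamiltonian is positive linear at infinity, which is Remark \ref{remark: positive Ham}) and that the total wrapping it achieves is \emph{unbounded} as $R\to\infty$, exhausting all of $\partial^\infty J_G$ away from the finite boundary.

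First I would record that $H_R$ is a nonnegative (in fact, away from $\chi^{-1}([0])$, positive) linear Hamiltonian on $J_G$ in the sense of Definition \ref{def: positive Ham}, using the exhaustion $(\overline{J}_G)_k$ described in Remark \ref{remark: positive Ham}(ii) and the fact that $\widetilde F$ is homogeneous of weight $2$, so $\sqrt{F}$ is weight $1$ and $y_R\circ\sqrt F$ is eventually linear of slope $R/2$ in the Liouville radial coordinate. Thus each $\varphi_{H_R}^1$ defines a genuine positive wrapping, so $\varphi_{H_R}^1(L)$ is indeed an object of $(L\to -)^+$ and the continuation maps $\varphi_{H_{R_1}}^1(L)\to \varphi_{H_{R_2}}^1(L)$ for $R_1\leq R_2$ are morphisms there (using that $H_{R_1}\leq H_{R_2}$ pointwise, after arranging the $y_R$ to be monotone in $R$). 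Next I would analyze the asymptotics: on the contact boundary the flow is, up to reparametrization, the time-$(R/2)$ Reeb-type flow of the function $\sqrt F$ restricted to a level set, and because $F$ descends from the $W$-invariant proper function $\widetilde F$ on $\ft$ under the completely integrable system $\chi\colon J_G\to \fc$, its Hamiltonian flow is the linear flow on the torus fibers of $\chi$ (and on the degenerate fibers it is the analogous flow on the centralizer subgroups). The key point is that as $R\to\infty$ the slope $R/2$ of the linear flow on each fiber grows without bound, so any prescribed amount of wrapping is eventually surpassed; this is exactly the content of cofinality once one checks it fiber by fiber over $\fc$, using properness of $\widetilde F$ (Proposition \ref{prop: proper b map} controls the behavior near the singular fibers, guaranteeing that the escape to infinity in $\partial^\infty J_G$ is uniform over compact sets in $\fc$).

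The technical heart, and the step I expect to be the main obstacle, is handling the degenerate fibers of $\chi$ and the behavior of the flow near the Bruhat strata $\cB_{w_0 w_S}$ with $S\neq\emptyset$ — i.e. checking that the wrapping produced by $H_R$ is cofinal not merely along the generic torus directions but also along the ``new'' directions that appear in the handle attachments of Proposition \ref{prop: partial compactify}. Here I would use the stratified structure from Proposition \ref{prop: B_w0w} and Lemma \ref{lemma: action factor}, together with the explicit form of $\widetilde F$ built inductively in Step 1--Step 3 of Subsection \ref{subsec: Hamiltonians} (the functions $f_S$ and the tubular neighborhoods $U_{S,\epsilon}$), to see that near each stratum the Hamiltonian flow of $\sqrt F$ restricts to a positive flow transverse to the relevant part of $\partial^\infty J_G$; the hypothesis that the parameters $(\epsilon_k^{(j)})$ tend to $0$ as $R\to\infty$ ensures that the region where $\sqrt F$ fails to be linear shrinks, so no part of the ideal boundary is left un-wrapped in the limit. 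Finally, I would invoke \cite[Lemma 3.28]{GPS1} (in the form valid for $\mathrm{Ham}(J_G)$ with complete flows, as in Remark \ref{remark: positive Ham}(i)) to conclude that the countable cofinal family $\{\varphi_{H_R}^1(L)\}_{R\in\mathbb Z_{\geq 0}}$ computes the wrapped Floer complex, completing the argument.
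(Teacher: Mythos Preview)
Your proposal is correct in its core mechanism and ultimately lands on the same argument as the paper, but it is vastly over-elaborated. The paper's proof is two sentences: one observes that on $\partial^\infty J_G$ the time-$1$ flow of $H_R$ coincides with the time-$R$ map of the positive contact flow generated by the fixed linear Hamiltonian $\tfrac{1}{2}\sqrt{F}$ (this is immediate from the definition $H_R = y_R\circ\sqrt{F}$ with $y_R(x)=\tfrac{R}{2}x$ for $x$ large and the weight-$1$ homogeneity of $\sqrt{F}$), and then one invokes the argument of \cite[Lemma~3.28]{GPS1}, in the form adapted to complete flows on $J_G$ in Remark~\ref{remark: positive Ham}(i). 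That is the entire proof.

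You already have this observation in your second paragraph, so you are done at that point. The rest of what you propose is unnecessary: the cofinality criterion of \cite[Lemma~3.28]{GPS1} is abstract and requires only that the induced contact isotopy be positive and that the flow time be unbounded as $R\to\infty$. There is no need to check anything ``fiber by fiber over $\fc$,'' and your third paragraph---the ``technical heart'' about degenerate fibers, Bruhat strata $\cB_{w_0 w_S}$, the inductive construction of the $f_S$, and the shrinking of the parameters $(\epsilon_k^{(j)})$---plays no role whatsoever in this lemma. Those structural features of $\widetilde F$ matter later, in the explicit computation of Floer cochains in Propositions~\ref{prop: A_G, vector} and~\ref{prop: A_G, center} (where one needs the intersections to stabilize and be indexed by dominant coweights), but they are irrelevant to the cofinality statement itself. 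Similarly, Proposition~\ref{prop: proper b map} enters only indirectly, via Remark~\ref{remark: positive Ham}(ii), to ensure that the exhaustion $(\overline{J}_G)_k$ exists so that Definition~\ref{def: positive Ham} applies; it is not used to control any ``escape to infinity'' in the way you suggest.
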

\begin{proof}
Note that $\varphi_{H_R}^1$ on $\partial^\infty J_G$ is the same as the time $R$ map of the positive contact flow induced by the linear Hamiltonian $\frac{1}{2}\sqrt{F}$ on its symplectization. So the lemma follows from the argument in \cite[Lemma 3.28]{GPS1}.  
\end{proof}

\begin{prop}\label{prop: A_G, vector}
Assume $G$ is of adjoint type. For a sequence of $R_n\rightarrow \infty$, the intersections of $\varphi_{H_{R_n}}^1(\Sigma_I)$ and $\Sigma_I$ are all transverse and are in degree $0$. Morover, as $R\rightarrow \infty$, the intersection points are naturally indexed by the dominant coweights of $T$. 
\end{prop}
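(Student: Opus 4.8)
The plan is to transport the entire computation of $\Hom(\Sigma_I,\Sigma_I)$ through the Lagrangian correspondence (\ref{eq: Lag corresp}) to a computation in $T^*T$, using the equivariance (\ref{eq: equivariant flow H_R}). Since $\widehat{\Sigma}_I = T^*_{\{I\}}T$ is a cotangent fiber, and the wrapping of a cotangent fiber by $\varphi^1_{\widetilde{H}_R}$ on $T^*T$ (for $\widetilde{H}_R = y_R\circ\sqrt{\widetilde F}$) is essentially controlled by the Hamiltonian flow of a norm-square-type function on $\ft$, the points of $\varphi^1_{\widetilde{H}_R}(\widehat{\Sigma}_I)\cap \widehat{\Sigma}_I$ should be in bijection with lattice points $\check\mu\in X_*(T)$ lying in a region that grows linearly with $R$; concretely, the time-$1$ flow of $H_R$ wraps the zero-section direction of $T^*T$ by a vector proportional to $\nabla \widetilde{H}_R$, so an intersection with the fiber over $I$ occurs exactly when the base point of $T$ returns to $I$, i.e. when the relevant displacement lies in $2\pi\,\Lambda^\vee$ for the coweight lattice $\Lambda^\vee = X_*(T)$. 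First I would make this bijection precise on the $T^*T$ side: identify $\varphi^1_{\widetilde H_R}(\widehat\Sigma_I)\cap\widehat\Sigma_I$ with the set $\{\check\mu\in X_*(T) : \|\check\mu\|\lesssim c\,R \text{ in the appropriate sense}\}$, using the explicit shape of $y_R$ (quadratic then linear) and the fact that $\widetilde F$ is a $W$-invariant, weight-$2$, positive function that near the origin is a higher power (so $[0]$ is the unique, nondegenerate minimum of the induced function on $\fc$). As $R\to\infty$ the constraint region exhausts all of $X_*(T)$, but crucially $\widehat\Sigma_I$ lives in $J_G$, not in $T^*T$, and the correspondence $\pi_\chi$ is $W$-equivariant and generically $|W|$-to-one; so a $W$-orbit of coweights in $X_*(T)$ corresponds to a single point in $J_G$. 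This is exactly how the $W$-orbits collapse to produce \emph{dominant} coweights in the final count.

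Next I would verify transversality and the degree-$0$ claim. Transversality for a cofinal sequence $R_n\to\infty$ is generic: the intersection points occur at regular values, and one can perturb $\widetilde H_R$ (equivalently, the choices of $\epsilon_k^{(j)}$ and the function $f$) to make all intersections transverse, so one simply chooses $R_n$ avoiding the discrete bad set. For the grading: all the relevant Lagrangians here — $\Sigma_I$ and its images of torus fibers under the correspondence, as well as $\widehat\Sigma_I$ and its wrappings — are exact and closely related to holomorphic/cotangent-fiber Lagrangians, and as noted in the paragraph on gradings (using that $J_G$ is hyperKähler, so $c_1 = 0$ and holomorphic Lagrangians have constant integer grading), the Maslov grading of each intersection point is determined topologically. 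The key point is that the wrapping Hamiltonian $H_R$ comes from a function $F$ on the base $\fc$ of the integrable system, pulled back to $J_G$; its Hamiltonian flow preserves the fibers and acts by (fiberwise) translation on the abelian group scheme, so the linearized return map at each intersection point is unipotent (a shear), forcing the Conley–Zehnder / Maslov index to vanish. I would spell this out by writing the flow in the $(q,\theta;p)$-type coordinates of (\ref{eq: q, p}) on $\cB_{w_0}\cong T^*T$ and checking the path of Lagrangian planes has trivial Maslov index, then arguing the contributions from the central fiber and lower Bruhat cells are also concentrated in degree $0$ by the same fiberwise-translation mechanism (these are precisely the handle cores, and the wrapping pushes $\Sigma_I$ across them transversally in degree $0$).

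The main obstacle I anticipate is the careful bookkeeping near the non-generic loci — the walls $\fz_S^\circ$ of the stratification $\fS_\ft$, where the $W$-action has nontrivial stabilizer $W_S$ — to be sure that no intersection points are lost or doubly counted as $R\to\infty$, and that the intersections genuinely land in $J_G$ rather than running off into the ``finite'' boundary. The function $\widetilde F$ was built in the elaborate inductive construction of Subsection \ref{subsec: Hamiltonians} precisely so that it descends to a $C^2$ function on $\fc$ and behaves well along these walls (the higher power $p_{\tilde u,\delta}$ near each stratum); I would use that construction to control the Hamiltonian flow on $\mathring U_{S,\epsilon_{|S|}}$ and check that wrapped images of $\Sigma_I$ meet $\Sigma_I$ only over points where all the coweight constraints are satisfied, with multiplicity one after the $W$-quotient. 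A secondary technical point is justifying that $\{\varphi^1_{H_R}(\Sigma_I)\}$ is cofinal (already established in the preceding lemma) and that the correspondence, while not inducing a functor, does give the claimed equivariance of intersection data — but since we only use it to \emph{count and locate} intersection points (not to compute the $A_\infty$-structure), this is a direct check rather than a foundational issue. I expect the count to come out as: intersection points $\leftrightarrow$ $W$-orbits in $X_*(T)$ meeting the growing region $\leftrightarrow$ dominant coweights $\check\mu\in X_*(T)^+$ with $\|\check\mu\|$ bounded in terms of $R$, exhausting $X_*(T)^+$ as $R\to\infty$.
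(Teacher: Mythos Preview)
Your overall strategy matches the paper's exactly: transport to $T^*T$ via the correspondence and (\ref{eq: equivariant flow H_R}), identify $\varphi^1_{\widetilde H_R}(\widehat\Sigma_I)\cap\widehat\Sigma_I$ with lattice points in $X_*(T)$ lying in the stabilizing regions $\mathring U_{S,\epsilon_{|S|}}\cap\{\|\xi\|\le M\}$, then pass back to $J_G$ where $W$-orbits collapse to dominant coweights.

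There is one imprecision in your degree-$0$ argument. You write that the linearized return map is a unipotent shear and that this ``forces the Conley--Zehnder/Maslov index to vanish.'' The fiberwise-translation structure of the flow of a base-pulled-back Hamiltonian only tells you the relevant intersection index equals the \emph{Morse index} of the generating function on the base at the corresponding point; it does not force that index to be zero. What actually pins the degree to $0$ is the specific construction in Subsection~\ref{subsec: Hamiltonians}: the local piece $\widetilde f_S:\fc_S\to\bR_{\ge0}$ was arranged to have $[0]$ as a \emph{non-degenerate minimum} (condition (ii) in Step~2 and the explicit perturbation after $p_{\tilde u,\delta}$). This is precisely the input the paper invokes (``using the non-degeneracy of the minimum of $\widetilde f_S$''), and it is what gives both transversality and degree $0$ at the wall lattice points after passing to $J_G$. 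In the interior of a Weyl chamber your shear argument is fine because there $\widetilde H_R\approx\tfrac12\|\xi\|^2$ has positive-definite Hessian, but for coweights on walls you need to cite the engineered Morse behavior of $\widetilde f_S$ on $\fc_S$, not just unipotence.
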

\begin{proof}
Using (\ref{eq: equivariant flow H_R}), we just need to examine the intersection points $\varphi_{\widetilde{H}_{R}}^1(\widehat{\Sigma}_I)\cap \widehat{\Sigma}_I$ and understand their corresponding intersection points in $J_G$. 

By construction, given any $(\epsilon_j)_{j=1,\cdots, n}$ and $M\gg 1$, for any $S\subset \Pi$,  over $\mathring{U}_{S,\epsilon_{|S|}}\cap \{\|\xi\|\leq M\}\subset \ft$ (\ref{eq: mathring U_S, epsilon}), we have 
the intersections $\varphi_{\widetilde{H}_R}^1(\widehat{\Sigma}_I)\cap \widehat{\Sigma}_I$ stabilize for $R\rightarrow\infty$. Using the form of $\widetilde{H}_R$ in (\ref{eq: norm Df_S}) and the assumptions on $f_S$, we can conclude that the intersection points there are naturally indexed by $\mathring{U}_{S,\epsilon_{|S|}}\cap \{\|\xi\|\leq M\}\cap X_*(T)$. Now transforming these intersection points to $J_G$ using the opposite Lagrangian correspondence  (\ref{eq: Lag corresp}), and using the non-degeneracy of the minimum of $\widetilde{f}_S: \fc_S\rightarrow\bR_{\geq 0}$, we can conclude that all intersections are transverse and have degree $0$, and they are naturally indexed by the dominant coweights $X_*(T)^+$. The proposition thus follows. 
\end{proof}

We can do a similar calculation for any semisimple $G$ with center $\cZ(G)$. For any $z\in \cZ(G)$, let $\mu^\vee(z)$ be any coweight representative of $z$ under the canonical isomorphism $X_*(T_\ad)/X_*(T)\cong \cZ(G)$. 
\begin{prop}\label{prop: A_G, center}
Assume $G$ is semisimple. Let $z_1, z_2\in \cZ(G)$. For a sequence of $R_n\rightarrow \infty$, the intersections of $\varphi_{H_{R_n}}^1(\Sigma_{z_1})$ and $\Sigma_{z_2}$ are all transverse and are in degree $0$. Morover, as $R\rightarrow \infty$, the intersection points are naturally indexed by $(\mu^\vee(z_1)-\mu^\vee(z_2)+X_*(T))\cap X_*^+(T_\ad)$. 

\end{prop}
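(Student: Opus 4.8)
The plan is to reduce Proposition~\ref{prop: A_G, center} to the computation already carried out for the adjoint case in Proposition~\ref{prop: A_G, vector}, by transporting the wrapping of the Kostant sections through the Lagrangian correspondence~(\ref{eq: Lag corresp}) exactly as in the proof there. First I would recall that under the opposite correspondence, the Kostant section $\Sigma_{z_i}$ is sent to a cotangent fibre of $T^*T$; more precisely, the fibre over the point of $T$ determined by the image of $z_i$ under the inclusion $\cZ(G)\hookrightarrow T_{\ad}$ and the identification of $T^*T$ with the $|W|$-fold cover datum in feature~(3) of the correspondence. Since the Hamiltonian $-\pr_1^*H_R+\pr_2^*\widetilde{H}_R$ vanishes on $J_G\times_{\fc}\ft$, we again have the equivariance $\widehat{\varphi^1_{H_R}(\Sigma_{z_i})}=\varphi^1_{\widetilde H_R}(\widehat\Sigma_{z_i})$, so it suffices to analyze the intersections $\varphi^1_{\widetilde H_R}(\widehat\Sigma_{z_1})\cap\widehat\Sigma_{z_2}$ inside $T^*T$ and match them with transverse intersection points of the wrapped $\Sigma_{z_1}$ with $\Sigma_{z_2}$ in $J_G$.

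Next I would carry out the stratum-by-stratum count in $T^*T$ just as in Proposition~\ref{prop: A_G, vector}. The only change is that $\widehat\Sigma_{z_1}$ and $\widehat\Sigma_{z_2}$ are now cotangent fibres over two (generally distinct) points $t_1,t_2\in T$ rather than over the identity. Over each tubular neighbourhood $\mathring U_{S,\epsilon_{|S|}}$ of a stratum $\fz_S^\circ$, the wrapping by $\widetilde H_R$ is governed by the same homogeneous functions $f_S$, so for $R\to\infty$ the intersection points of $\varphi^1_{\widetilde H_R}(\widehat\Sigma_{z_1})$ with $\widehat\Sigma_{z_2}$ over that region are indexed by the lattice points of the coset $\mu^\vee(z_1)-\mu^\vee(z_2)+X_*(T)$ that lie in $\mathring U_{S,\epsilon_{|S|}}$; here $\mu^\vee(z_i)$ records the position $t_i$ of the cotangent fibre inside $T_{\ad}$ modulo $X_*(T)$ (the wrapping flow of $\widetilde H_R$ on $T^*T$ translates the zero section, and the periodicity lattice is $X_*(T)$). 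Transporting these back to $J_G$ via the opposite correspondence and invoking, as before, the non-degeneracy of the minimum of $\widetilde f_S:\fc_S\to\bR_{\geq0}$, all intersections become transverse and of degree $0$, and the surviving ones are exactly those in the dominant chamber, i.e.\ indexed by $(\mu^\vee(z_1)-\mu^\vee(z_2)+X_*(T))\cap X_*^+(T_{\ad})$.

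The one point that needs genuine care, and which I expect to be the main obstacle, is the identification of the translation class: I must check that the cotangent fibre $\widehat\Sigma_{z_i}$ really sits over the point of $T$ labelled by $\mu^\vee(z_i)\bmod X_*(T)$ under feature~(3) of the correspondence, and that the independence of the chosen coweight representative $\mu^\vee(z_i)$ is genuine (different representatives differ by $X_*(T)$, which is precisely the ambiguity absorbed into the coset, hence the indexing set is well defined). This is essentially a bookkeeping check with the isomorphism $X_*(T_{\ad})/X_*(T)\cong\cZ(G)$ and the description~(\ref{eq: pi_chi, B_1}) of $\pi_\chi$, but it must be done correctly for the final answer to come out as stated; once it is in place, the rest of the argument is word-for-word the proof of Proposition~\ref{prop: A_G, vector} with the zero section replaced by a translate. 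I would also remark that, as in the adjoint case, one first passes to a cofinal sequence $R_n\to\infty$ so that the relevant choices $(\epsilon^{(j)}_k)$ tend to $0$ and all intersections over any fixed compact region in $\ft$ stabilize, which is what makes the count well posed.
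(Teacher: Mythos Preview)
Your proposal is correct and follows essentially the same approach as the paper: transport the wrapping through the Lagrangian correspondence~(\ref{eq: Lag corresp}) to $T^*T$, observe that $\widehat{\Sigma}_{z_i}=T^*_{z_i}T$ so the intersections $\varphi^1_{\widetilde H_R}(\widehat{\Sigma}_{z_1})\cap\widehat{\Sigma}_{z_2}$ are indexed by the coset $\mu^\vee(z_1)-\mu^\vee(z_2)+X_*(T)$, and then transform back to $J_G$ where only the points in the dominant chamber $X_*^+(T_\ad)$ survive. The paper's proof is terser than yours but the logic is identical; your emphasis on verifying the translation class via $X_*(T_\ad)/X_*(T)\cong\cZ(G)$ is exactly the one bookkeeping point that distinguishes this from Proposition~\ref{prop: A_G, vector}.
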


\begin{proof}
The proof is very similar to that of Proposition \ref{prop: A_G, vector}. Here we first look at $\varphi_{\widetilde{H}_{R}^1(\widehat{\Sigma}_{z_1})}\cap \widehat{\Sigma}_{z_2}$, and then transform back to $\varphi_{H_{R_n}}^1(\Sigma_{z_1})\cap \Sigma_{z_2}$. The intersection points  $\varphi_{\widetilde{H}_{R}^1(\widehat{\Sigma}_{z_1})}\cap \widehat{\Sigma}_{z_2}$ as $R\rightarrow \infty$ are exactly indexed by $\mu^\vee(z_1)-\mu^\vee(z_2)+X_*(T)$. Transforming the intersection points to $J_G$ gives $(\mu^\vee(z_1)-\mu^\vee(z_2)+X_*(T))\cap X_*^+(T_\ad)$.  
\end{proof}

\section{Homological mirror symmetry for adjoint type $G$}\label{sec: HMS adjoint}

For $z\in \cZ(G)$, let $\Sigma_z$ denote for the Kostant section $\{g=z\}$ (which makes sense for all reductive $G$). In particular, $\Sigma_I$ is the Kostant section $\{g=I\}$.  For $G$ of adjoint type, let
\begin{align*}
\cA_G:=End(\Sigma_I)^{op}. 
\end{align*}
From now on, we will work with ground field $\bC$. The calculation in Proposition \ref{prop: A_G, vector} says that $\cA_G$ is isomorphic to $\bC[T^\vee\sslash W]$ as a \emph{vector space}. 
In this section, we prove the main theorem for $G$ of adjoint type:
\begin{thm}\label{thm: sec G adjoint}
Assume $G$ is of adjoint type. There is an algebra isomorphism $\cA_G\cong \bC[T^\vee\sslash W]$ yielding the HMS result:
\begin{align*}
\cW(J_G)\simeq \Coh(T^\vee\sslash W). 
\end{align*} 
\end{thm}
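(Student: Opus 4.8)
The plan is to establish the ring isomorphism $\cA_G \cong \bC[T^\vee\sslash W]$ by a combination of (i) the vector-space identification already in hand from Proposition \ref{prop: A_G, vector}, and (ii) a careful analysis of the restriction/corestriction functors attached to the open Bruhat cell $\cB_{w_0}\cong T^*T$. Concretely, the first step is to fix the identification $\End(\Sigma_I)\cong \bC[T^\vee\sslash W]$ as graded vector spaces, noting that everything is concentrated in degree $0$, so $\cA_G$ is an honest associative $\bC$-algebra with a basis indexed by dominant coweights $\lambda\in X_*(T)^+$. The point is then to show that the product of the basis elements matches the product in $\bC[T^\vee\sslash W]\cong \bC[T^\vee]^W$; since $\bC[T^\vee]$ has basis the characters $e^\mu$, $\mu\in X_*(T)$, and $\bC[T^\vee]^W$ is spanned by the $W$-orbit sums $m_\lambda=\sum_{\mu\in W\lambda}e^\mu$, it suffices to identify the structure constants of $\cA_G$ in the dominant-coweight basis with those of the monomial symmetric functions.

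The key geometric input, which I would carry out next, is the analysis of the sector inclusion $\cB_{w_0}^\dagg \cong T^*\overline{T}\hookrightarrow \overline{J}_G$ coming from the $S=\emptyset$ Bruhat stratum (Subsection \ref{subsubsec: conic} referenced in the excerpt), together with the resulting adjoint pair $(\text{res}, \text{co-res})$ between $\cW(T^*T)$ and $\cW(J_G)$. On the B-side this is mirror to pullback/pushforward along $\pi\colon T^\vee\to T^\vee\sslash W$, and the objects $(L_0,\check\rho)$ — conormal bundles of $T_\cpt$ with a rank-one local system $\check\rho\in T^\vee$ — are mirror to skyscrapers $\cO_{\check\rho}$ on $T^\vee$. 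The crucial facts, Proposition \ref{prop: L_0, part 2}/\ref{prop: L_0, W} (quoted in the excerpt), say that $\text{co-res}(L_0,\check\rho)$ is a ``skyscraper object'' in $\cW(J_G)$, i.e. $\Hom(\Sigma_I,\text{co-res}(L_0,\check\rho))$ is rank one, and that $\text{co-res}(L_0,\check\rho)\cong \text{co-res}(L_0,w\check\rho)$ for all $w\in W$. Granting these, the composite module structure is forced: the action of $\cA_G=\End(\Sigma_I)^\op$ on the rank-one space $\Hom(\Sigma_I,\text{co-res}(L_0,\check\rho))$ defines a character $\chi_{\check\rho}\colon\cA_G\to\bC$, and $W$-invariance says $\chi_{\check\rho}=\chi_{w\check\rho}$, so these characters factor through $T^\vee\sslash W$. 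I would then show, by tracking the explicit wrapping Hamiltonians of Section \ref{sec: wrapping Ham} and the Lagrangian correspondence \eqref{eq: Lag corresp}, that the value of $\chi_{\check\rho}$ on the basis element indexed by $\lambda\in X_*(T)^+$ is $\sum_{\mu\in W\lambda}\check\rho^{\,\mu}$, i.e. the monomial symmetric function $m_\lambda$ evaluated at $\check\rho$; this pins down the map $\cA_G\to\bC[T^\vee\sslash W]$ on points as $\lambda\mapsto m_\lambda$.

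To upgrade this family of characters to an algebra isomorphism, I would argue as follows. The assignment $\check\rho\mapsto\chi_{\check\rho}$ gives a $\bC$-algebra map $\Phi\colon \cA_G\to \text{Fun}(T^\vee\sslash W)$; by the computation above its image is the subalgebra generated by $\{m_\lambda\}$, which is exactly $\bC[T^\vee\sslash W]$, and $\Phi$ is surjective onto it. Injectivity follows from the vector-space count in Proposition \ref{prop: A_G, vector}: both sides have the same (graded) dimension, namely one basis element per dominant coweight, so a surjection is an isomorphism. One subtlety here is that generic $\check\rho$ gives a genuinely well-defined object $(L_0,\check\rho)$ of $\cW(J_G)$, but to get enough characters to separate points of $\cA_G$ one needs a Zariski-dense family; this is where the non-exact/Novikov variant (Proposition \ref{prop: intro skyscraper}) and the analysis in Subsections \ref{subsec: analysis cB_w0}–\ref{subsec: construct L_zeta} of shifted conormal bundles become relevant, and I would use a continuity/rigidity argument (structure constants are locally constant integers) to pass between the exact and non-exact pictures. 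Finally, once $\cA_G\cong \bC[T^\vee\sslash W]$ is established, the HMS statement $\cW(J_G)\simeq \Coh(T^\vee\sslash W)$ follows formally: Proposition \ref{prop: split generation}(ii) says $\Sigma_I$ split-generates $\cW(J_G)$, so $\cW(J_G)$ is equivalent to the category of perfect modules over $\cA_G=\End(\Sigma_I)^\op$; since $T^\vee\sslash W$ is (an affine space, hence) smooth and affine, $\Coh(T^\vee\sslash W)=\Perf(\bC[T^\vee\sslash W])$, and we conclude.

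\textbf{Main obstacle.} I expect the hard part to be the second step: proving that $\text{co-res}(L_0,\check\rho)$ is a rank-one ``skyscraper'' object and is $W$-invariant, and then computing the resulting character $\chi_{\check\rho}$ explicitly as $\lambda\mapsto m_\lambda(\check\rho)$. The difficulty is twofold. First, the Lagrangian correspondence \eqref{eq: Lag corresp} is neither proper nor open — as the excerpt stresses, $\pi_\chi$ collapses the central fiber to a discrete set — so $\text{co-res}$ cannot be computed by naive geometric composition; one must work with the actual sector-inclusion functoriality of \cite{GPS1} and control the wrapping near the ``finite'' boundary, perturbing the non-proper ends carefully. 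Second, the $W$-invariance $\text{co-res}(L_0,\check\rho)\cong\text{co-res}(L_0,w\check\rho)$ is genuinely surprising from the $T^*T$ side (where $W$ does not even act on $J_G$), and proving it seems to require the detailed geometric picture of how a generic torus fiber of $\chi$ interpolates between the $W$-translates of $L_0$ inside $J_G$ — precisely the content flagged in Subsections \ref{subsec: analysis cB_w0}–\ref{subsec: construct L_zeta}. All the real Floer-theoretic work is concentrated in verifying these two propositions; the passage from them to the theorem is essentially formal homological algebra plus the dimension count.
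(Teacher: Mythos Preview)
Your passage from Propositions~\ref{prop: L_0, part 2}/\ref{prop: L_0, W} to the theorem has a genuine gap, and your ``formal'' endgame is not the one the paper uses. Those two propositions tell you that each $(L_0,\check\rho)$ defines a one-dimensional $\cA_G$-module and that the resulting character $\chi_{\check\rho}$ is $W$-invariant, so you obtain an algebra map $\Phi:\cA_G\to \mathrm{Fun}(T^\vee)^W$. What they do \emph{not} give you is the explicit value $\chi_{\check\rho}(\text{basis element }\lambda)=m_\lambda(\check\rho)$. That identity is a statement about the Floer $\mu^2$-product
\[
\Hom(\Sigma_I,\Sigma_I)\otimes \Hom(\Sigma_I,(L_0,\check\rho))\longrightarrow \Hom(\Sigma_I,(L_0,\check\rho)),
\]
i.e.\ a holomorphic-triangle count in $J_G$, and it is not implied by the rank-one and $W$-invariance facts alone. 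You cannot transport the easy $T^*T$ computation (where the coweight-$\mu$ generator acts by $\check\rho^\mu$) through the correspondence \eqref{eq: Lag corresp}, for the same reason the paper warns about: the correspondence is not a functor. Equivalently, via adjunction one sees $\chi_{\check\rho}$ is the composite $\cA_G\hookrightarrow \bC[T^\vee]\xrightarrow{\mathrm{ev}_{\check\rho}}\bC$, so computing it amounts to knowing the embedding $\cA_G\hookrightarrow \bC[T^\vee]$ on the nose --- which is exactly the unknown. Separately, your injectivity-by-``dimension count'' is not valid: both sides are countably-infinite-dimensional, and Proposition~\ref{prop: A_G, vector} gives only a bijection of index sets, not a filtration that $\Phi$ is a priori compatible with.

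The paper's route avoids both issues by working with the co-restriction bimodule $\cM$ rather than with pointwise characters. Proposition~\ref{prop: A_G commutative} shows $\cM\cong \cA_G^{\oplus |W|}$ as a left $\cA_G$-module and $\cM\cong \bC[T^\vee]$ as a right $\bC[T^\vee]$-module; the second of these already produces an \emph{injective} algebra map $\cA_G\hookrightarrow \End_{\bC[T^\vee]}(\bC[T^\vee])=\bC[T^\vee]$, so commutativity and injectivity come for free, with no triangle count and no dimension argument. Proposition~\ref{prop: A_G, W-inv} then interprets the adjoint pair as $(\mathsf f_*,\mathsf f^{!})$ for a morphism $\mathsf f:T^\vee\to \Spec\cA_G$ and shows $\mathsf f$ is $W$-invariant, so it factors through $\hat{\mathsf f}:T^\vee\sslash W\to \Spec\cA_G$. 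The finish is Pittie--Steinberg: since $\bC[T^\vee]$ is free of rank $|W|$ over $\bC[T^\vee\sslash W]$, the equality $\mathsf f_*\cO_{T^\vee}\cong \cA_G^{\oplus |W|}$ forces $\hat{\mathsf f}_*\cO_{T^\vee\sslash W}$ to be a line bundle on $\Spec\cA_G$, hence $\hat{\mathsf f}$ is an isomorphism. Note in particular that the paper never identifies which element of $\bC[T^\vee]^W$ a given Floer generator maps to; the global rank constraint from $\cM\cong \cA_G^{\oplus |W|}$ replaces your pointwise computation.
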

Recall that $\cW(J_G)$ is generated by $\Sigma_I$ (cf. Proposition \ref{prop: split generation}), so the only remaining nontrivial part of Theorem \ref{thm: sec G adjoint} is the isomorphism $\cA_G\cong \bC[T^\vee\sslash W]$. The proof of this isomorphism occupies the last two sections. It uses the functorialities of wrapped Fukaya categories under inclusions of Liouville sectors, developed in \cite{GPS1, GPS2}.

\subsection{Statement of main propositions}\label{subsec: key propositions}

From the Weinstein handle attachment description of $J_G$ in Section \ref{sec: skeleton, sector}, we see that the inclusion $\cB_{w_0}\cong T^*T\hookrightarrow J_G$ 
restricted to a Liouville subsector $\cB_{w_0}^\dagg\simeq T^*\overline{T}$ (with isotopic sector structures), 
gives an inclusion of Liouville sectors (see Subsection \ref{subsubsec: conic} for the precise formulation). Thus we have the restriction (right adjoint) and co-restriction (left adjoint) functors as adjoint pairs on the (large) dg-categories 
\begin{equation}\label{eq: res, co-res, large}
\begin{tikzcd}[arrow style=tikz,>=stealth,row sep=4em]
\cA_G-\Mod \arrow[rr, shift left=.4ex, "res"]
  &&\bC[T^\vee]-\Mod\ar[ll, shift left=.4ex, "co\text{-}res"],
\end{tikzcd}
\end{equation}
where $co\text{-}res$ preserves compact objects (i.e. perfect modules).

\begin{prop}\label{prop: A_G commutative}
For $G$ of adjoint type,  we have the following. 
\begin{itemize}
\item[(i)] The co-restriction functor is given by an $\cA_G-\bC[T^\vee]$-bimodule $\cM$ that is isomorphic to $\cA_G^{\oplus |W|}$ (resp. $\bC[T^\vee]$) as a left $\cA_G$-module (resp. right $\bC[T^\vee]$-module).

\item[(ii)]  The restriction functor sends $\cA_G$ to $\bC[T^\vee]$. In particular, we have  
\begin{equation}\label{eq: res, co-res}
\begin{tikzcd}[arrow style=tikz,>=stealth,row sep=4em]
\cA_G-\Perf \arrow[rr, shift left=.4ex, "res"]
  &&\bC[T^\vee]-\Perf\simeq \cW(\cB^\dagg_{w_0}).\ar[ll, shift left=.4ex, "co\text{-}res"]
\end{tikzcd}
\end{equation}

\item[(iii)] The algebra $\cA_G$ is embedded as a subalgebra of $\bC[T^\vee]$, hence commutative. 

\item[(iv)] The (commutative) algebra $\cA_G$ is finitely generated. 
\end{itemize}
\end{prop}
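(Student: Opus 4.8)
\textbf{Proof strategy for Proposition \ref{prop: A_G commutative}.}

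The plan is to exploit the inclusion of Liouville sectors $\cB_{w_0}^\dagg \simeq T^*\overline{T}\hookrightarrow \overline{J}_G$ and the adjoint pair $(co\text{-}res, res)$ it produces, together with the explicit generation statement (Proposition \ref{prop: split generation}) that $\cW(J_G)$ is generated by $\Sigma_I$ and $\cW(\cB_{w_0}^\dagg)\simeq \Coh(T^\vee)$ by the cotangent fiber $L_0 = \widehat{\Sigma}_I$. For part (i), I would use the general structure of co-restriction: on the distinguished generator $\mathcal{O}_{T^\vee}\in\Coh(T^\vee)$, the object $co\text{-}res(\mathcal{O}_{T^\vee})$ has morphism space with $\Sigma_I$ computable by a wrapping calculation. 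The key geometric input is that the pushforward of the cotangent fiber $L_0$ into $J_G$, after wrapping by the Hamiltonian $H_R$ of Section \ref{sec: wrapping Ham}, meets $\Sigma_I$ in a set of intersection points that decomposes into $|W|$ copies of the base locus (via the Lagrangian correspondence \eqref{eq: Lag corresp}, which carries a generic torus fiber to $|W|$ copies of constant sections in $T^*T$). This gives $\cM := co\text{-}res$ as an $\cA_G$-$\bC[T^\vee]$-bimodule, which is $\cA_G^{\oplus|W|}$ as a left module (one copy per Weyl chamber / per sheet of $T^\vee\to T^\vee\sslash W$) and $\bC[T^\vee]$ as a right module (since $co\text{-}res$ is $\bC[T^\vee]$-linear and sends the generator to a single object). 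The latter also reflects that $co\text{-}res$ is fully faithful, i.e. $res\circ co\text{-}res\simeq \mathrm{id}$, a consequence of the fact that $\cB_{w_0}^\dagg$ is a subsector whose complement contributes no stops interacting with $L_0$ — equivalently, the projection $T^\vee\to T^\vee\sslash W$ is affine so pullback after pushforward is the identity on $\Coh$.

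For part (ii), I would argue that $res(\cA_G) = res(\Sigma_I)$ is, as a right $\bC[T^\vee]$-module, $\mathrm{Hom}_{\cW(J_G)}(\Sigma_I, co\text{-}res(-))$, hence by adjunction equal to $\mathrm{Hom}_{\cW(\cB_{w_0}^\dagg)}(res(\Sigma_I), -)$; combined with the rank-one computation from Proposition \ref{prop: A_G, vector} and the bimodule structure of $\cM$, one identifies $res(\Sigma_I)$ with $\bC[T^\vee]$ itself (the structure sheaf $\mathcal{O}_{T^\vee}$), not merely with some module — this is the content of the statement that $res$ lands in perfect modules and restricts to the functor in \eqref{eq: res, co-res}. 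The precise identification uses that $res(\Sigma_I)$ is a rank-one projective $\bC[T^\vee]$-module (from the Floer complex being concentrated in degree $0$ with the right dimension count) and that $\mathrm{Pic}(T^\vee)$-triviality forces it to be $\mathcal{O}_{T^\vee}$ up to the $W$-action, pinned down by compatibility with the generator.

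Parts (iii) and (iv) then follow formally. For (iii): the unit of the adjunction gives an algebra map $\cA_G = \mathrm{End}(\Sigma_I)^{op}\to \mathrm{End}(res(\Sigma_I))^{op} = \mathrm{End}_{\bC[T^\vee]}(\mathcal{O}_{T^\vee})^{op} = \bC[T^\vee]$, and this map is injective because $res$ is fully faithful on the subcategory generated by $\Sigma_I$ — indeed $co\text{-}res$ being fully faithful makes $res$ faithful on its essential image, and $\Sigma_I$ lies in that image by part (ii). Since $\bC[T^\vee]$ is commutative, so is its subalgebra $\cA_G$, which is a purely ring-theoretic consequence. For (iv): $\cA_G\hookrightarrow\bC[T^\vee]$ exhibits $\cA_G$ as a subalgebra of a finitely generated algebra over $\bC$; moreover by the degree-zero generation result (Proposition \ref{prop: A_G, vector}) $\bC[T^\vee]$ is a finitely generated $\cA_G$-module — the generators being the $|W|$ translates of the identity coweight, matching the $|W|$-to-one cover — so by the Artin–Tate lemma $\cA_G$ is a finitely generated $\bC$-algebra. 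The main obstacle I anticipate is part (i), specifically making rigorous the claim that $co\text{-}res(\mathcal{O}_{T^\vee})$ has morphism space of rank $1$ with $\Sigma_I$ and that its left $\cA_G$-module structure is precisely $\cA_G^{\oplus|W|}$: this requires the delicate Floer-theoretic analysis of how the cotangent fiber of $T$, wrapped inside $J_G$, interacts with the handle structure and the finite boundary — exactly the content deferred to Propositions \ref{prop: L_0, part 2} and \ref{prop: L_0, W} and the analysis of Subsections \ref{subsec: analysis cB_w0}–\ref{subsec: construct L_zeta}. Everything else is formal categorical bookkeeping once that geometric input is in hand.
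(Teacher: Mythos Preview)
The central error is your claim that $co\text{-}res$ is fully faithful. It is not: Proposition \ref{prop: L_0, W} shows that for regular $\check\rho$ the objects $(L_0,\check\rho)$ and $(L_0,w(\check\rho))$ become isomorphic in $\cW(J_G)$ while remaining non-isomorphic in $\cW(\cB_{w_0}^\dagg)$, so $co\text{-}res$ is not even injective on isomorphism classes. Your heuristic ``$T^\vee\to T^\vee\sslash W$ is affine, so pullback after pushforward is the identity'' is also false --- for a finite degree-$|W|$ cover $\pi$ one has $\pi^*\pi_*\cO\cong\cO^{\oplus|W|}$, not $\cO$; and invoking the B-side description here would in any case be circular. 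This breaks your argument that $\cM\cong\bC[T^\vee]$ as a right module in (i), and your injectivity argument in (iii), which rests on the same false premise (``$\Sigma_I$ lies in the essential image of $co\text{-}res$'' is neither proven nor needed).

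The paper fills these gaps differently. For the right-module structure of $\cM$ it proves a standalone algebraic result (Lemma \ref{lemma: M, countable, lb}): a complex of $\bC[T^\vee]$-modules concentrated in nonpositive degrees, with countably generated cohomology, rank-one fiber at every closed point, and a finite set of degree-zero elements generating every fiber, must be free of rank one. The rank-one fiber hypothesis is precisely Proposition \ref{prop: L_0, part 2} (note that the paper's $L_0$ is the \emph{conormal} of the compact torus, not the cotangent fiber you denote by the same symbol), and the finite generating set comes from the already-established left-module isomorphism $\cM\cong\cA_G^{\oplus|W|}$. That left-module isomorphism is obtained by intersecting a \emph{generic} cotangent fiber $F_h$ with the core $\chi^{-1}([0])$ in $|W|$ transverse degree-zero points and running the wrapping exact triangles of \cite{GPS2}. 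For (iii), injectivity of $\cA_G\to\End_{\bC[T^\vee]}(\cM)\cong\bC[T^\vee]$ is then immediate: $\cM\cong\cA_G^{\oplus|W|}$ as a left $\cA_G$-module is visibly faithful. Your Artin--Tate argument for (iv) is a correct alternative to the paper's direct extraction of generators from the matrix embedding $\bC[T^\vee]\hookrightarrow\End_{\cA_G}(\cA_G^{\oplus|W|})$.
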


\begin{prop}\label{prop: A_G, W-inv}
\item[(i)] The restriction and co-restriction functors in (\ref{eq: res, co-res}) can be identified as the $!$-pullback and pushfoward functors respectively on the (bounded) dg-category of coherent sheaves for a map of affine varieties
\begin{align}\label{eq: f, schemes}
\mathsf{f}: T^\vee\longrightarrow \Spec \cA_G.
\end{align}

\item[(ii)] The map $\mathsf{f}$ (\ref{eq: f, schemes}) is $W$-invariant.
\end{prop}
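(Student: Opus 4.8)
\textbf{Plan of proof for Proposition \ref{prop: A_G, W-inv}.}

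The plan is to package the adjoint pair $(res, co\text{-}res)$ from Proposition \ref{prop: A_G commutative} into a morphism of affine schemes and then upgrade the earlier computations (Proposition \ref{prop: A_G, vector} and the $W$-equivariance of the Lagrangian correspondence (\ref{eq: Lag corresp})) into the $W$-invariance of that morphism. For part (i), I would argue as follows. By Proposition \ref{prop: A_G commutative}(iii)--(iv), $\cA_G$ is a finitely generated commutative $\bC$-algebra, so $\Spec\cA_G$ is an affine variety and the inclusion of algebras $\cA_G\hookrightarrow\bC[T^\vee]$ defines a morphism $\mathsf{f}\colon T^\vee\to\Spec\cA_G$; this is exactly (\ref{eq: f, schemes}). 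The content of part (i) is then the identification of the categorical functors with sheaf-theoretic functors along $\mathsf{f}$. The restriction functor $res$ was shown in Proposition \ref{prop: A_G commutative}(ii) to send $\cA_G$ to $\bC[T^\vee]$ as the pullback of modules along $\cA_G\hookrightarrow\bC[T^\vee]$, i.e. $-\otimes_{\cA_G}\bC[T^\vee]$ on perfect modules; since $\mathsf{f}$ is a morphism of affine schemes this is precisely $\mathsf{f}^*$ on $\Coh$, and because $\bC[T^\vee]$ is smooth (and in fact flat over $\cA_G$ once we know $W$-invariance, but flatness is not needed here), $\mathsf{f}^* = \mathsf{f}^!$ up to a shift/twist by the relative dualizing complex. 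The co-restriction functor is given by the bimodule $\cM\cong\bC[T^\vee]$ as a right $\bC[T^\vee]$-module (Proposition \ref{prop: A_G commutative}(i)), which is the $\cA_G$-module $\bC[T^\vee]$ viewed via $\mathsf{f}$; on the level of sheaves this is $\mathsf{f}_*$. The left-adjoint/right-adjoint compatibility in (\ref{eq: res, co-res}) matches the adjunction $(\mathsf{f}_*,\mathsf{f}^!)$ on bounded coherent categories. I would spell out the comparison by checking it on generators ($\cA_G$ on one side, $\bC[T^\vee]$ on the other) and using that both sides are $\bC$-linear exact functors between $\Perf$-categories, so they are determined by their value on the generator together with the bimodule structure, which is exactly the data recorded in Proposition \ref{prop: A_G commutative}(i)--(ii).

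For part (ii), the key point is that $co\text{-}res$ is insensitive to the $W$-action on $T^*T$ at the level of the relevant objects. Concretely, by Proposition \ref{prop: intro skyscraper} (and its precise form in the body) the objects $(L_0,\check\rho)$ satisfy $co\text{-}res(L_0,\check\rho)\cong co\text{-}res(L_0,w(\check\rho))$ for all $w\in W$; under mirror symmetry for $T^*T$ the objects $(L_0,\check\rho)$ are the skyscrapers $\cO_{\check\rho}$ at the points $\check\rho\in T^\vee$, and $w$ acts on these points by the natural $W$-action. Translating through the identification of part (i): $\mathsf{f}_*\cO_{\check\rho}\cong\mathsf{f}_*\cO_{w(\check\rho)}$ for all $\check\rho$ and all $w$. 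A morphism of affine schemes $\mathsf{f}\colon T^\vee\to Y$ (with $Y=\Spec\cA_G$) satisfies $\mathsf{f}(\check\rho)=\mathsf{f}(w(\check\rho))$ for all closed points iff the composite $T^\vee\xrightarrow{w}T^\vee\xrightarrow{\mathsf{f}}Y$ agrees with $\mathsf{f}$ on closed points, and since $T^\vee$ is a reduced affine variety over $\bC$ this forces $\mathsf{f}\circ w=\mathsf{f}$ as morphisms, i.e. $w^*$ acts trivially on the image $\cA_G\subset\bC[T^\vee]$. Hence $\cA_G\subset\bC[T^\vee]^W=\bC[T^\vee\sslash W]$ and $\mathsf{f}$ is $W$-invariant, factoring through $T^\vee\to T^\vee\sslash W$. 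I would phrase the argument so that it only uses the separated-points statement ($\mathsf{f}_*\cO_{\check\rho}\cong\mathsf{f}_*\cO_{w(\check\rho)}$, which gives equality of supports hence equality of image points) plus reducedness of $T^\vee$, avoiding any need to track the bimodule structure more finely than in part (i). Alternatively — and this is cleaner — I would observe directly from Proposition \ref{prop: A_G, vector} together with the $W$-equivariance of $\pi_\chi$ in (\ref{eq: Lag corresp}) that the generators of $\cA_G$ one reads off from the intersection points $\varphi^1_{H_R}(\Sigma_I)\cap\Sigma_I$ (indexed by dominant coweights, i.e. $W$-orbits of coweights) are pulled back from $\bC[T^\vee]^W$, but the skyscraper-object argument is the more robust one since it does not require controlling the product structure by hand.

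The main obstacle I expect is part (i): carefully matching the $A_\infty$/dg bimodule $\cM$ with the coherent-sheaf pushforward $\mathsf{f}_*$, and the co-restriction functor's left-adjoint property with $\mathsf{f}^!$ being right adjoint to $\mathsf{f}_*$, requires being careful about which functor is which adjoint and about grading/shift conventions (the relative dualizing complex of $\mathsf{f}$, which is why the statement says ``$!$-pullback'' rather than ``pullback''). Since $\mathsf{f}$ will turn out to be the finite flat quotient map $T^\vee\to T^\vee\sslash W$ once part (ii) is in hand, $\mathsf{f}$ is finite, so $\mathsf{f}_*$ is exact on $\Coh$ and $\mathsf{f}^!=\mathsf{f}^*\otimes\omega_{\mathsf{f}}$ with $\omega_{\mathsf{f}}$ a line bundle (in fact, since the quotient is by a finite group acting on a smooth variety, one can be explicit), which keeps the homological algebra elementary. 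The one genuine subtlety is that a priori, \emph{before} knowing $W$-invariance, $\mathsf{f}$ is just some morphism from a smooth affine variety to a finitely generated affine scheme; I would therefore prove (i) first in that generality (where $\mathsf{f}^!$ is defined via the exceptional pullback for the quasi-projective morphism $\mathsf{f}$, using that $\bC[T^\vee]$ has finite Tor-dimension over $\cA_G$ — which follows once one knows $\cA_G$ is a polynomial ring, but can be argued more cheaply from perfectness of $co\text{-}res(\cO_{T^\vee})$ established in Proposition \ref{prop: A_G commutative}(i)), and then deduce (ii), rather than trying to establish both simultaneously.
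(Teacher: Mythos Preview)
Your proposal is essentially correct and follows the same route as the paper: part (i) comes directly from the bimodule description in Proposition~\ref{prop: A_G commutative} (with the paper invoking smoothness of $\cA_G$ from general results on partially wrapped Fukaya categories to make $\mathsf{f}^!$ well-behaved), and part (ii) is deduced from the skyscraper argument. Two small corrections: the precise input for (ii) is Proposition~\ref{prop: L_0, W}, not Proposition~\ref{prop: intro skyscraper} (the latter is the non-exact version over the Novikov field), and that proposition only gives $co\text{-}res(L_0,\check\rho)\cong co\text{-}res(L_0,w(\check\rho))$ for \emph{regular} $\check\rho\in (T^\vee)^{\reg}$---but since $(T^\vee)^{\reg}$ is Zariski dense and $T^\vee$ is reduced, this suffices to conclude $\mathsf{f}\circ w=\mathsf{f}$, exactly as you argue.
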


Assuming Proposition \ref{prop: A_G commutative} and Proposition \ref{prop: A_G, W-inv}, we can give a direct proof of Theorem \ref{thm: sec G adjoint}. 

\begin{proof}[Proof of Theorem \ref{thm: sec G adjoint}]
Since $\mathsf{f}$ from (\ref{eq: f, schemes}) is $W$-invariant, it factors as
\begin{align*}
\mathsf{f}:T^\vee\longrightarrow T^\vee\sslash W\overset{\hat{\mathsf{f}}}{\longrightarrow} \Spec \cA_G.
\end{align*}
By Proposition \ref{prop: A_G commutative} and the Pittie--Steinberg Theorem (cf. \cite{Steinberg}, \cite [Theorem 6.1.2]{CG}), we have isomorphisms
\begin{align*}
\mathsf{f}_*\cO_{T^\vee}\cong (\hat{\mathsf{f}}_*\cO_{T^\vee\sslash W})^{\oplus |W|}\cong \cO_{\Spec\cA_G}^{\oplus |W|}.
\end{align*}
So $\hat{\mathsf{f}}_*\cO_{T^\vee\sslash W}$ is a line bundle on $\Spec \cA_G$, which on the other hand must be trivial, i.e. 
$\hat{\mathsf{f}}_*\cO_{T^\vee\sslash W}\cong \cO_{\Spec\cA_G}$. Hence, $\hat{\mathsf{f}}$ is an isomorphism, and the theorem follows. 

\end{proof}

We will give the proof of Proposition \ref{prop: A_G commutative} and \ref{prop: A_G, W-inv} in Section \ref{subsec: proof prop}. The key technical results for the proof are Proposition \ref{prop: L_0, part 2} and   \ref{prop: L_0, W} below, whose proof will be provided in the same section. Since the motivation for the latter results comes from a relatively easier calculation for certain non-exact Lagrangians, with coefficients in the Novikov field, we will first state the non-exact version in Proposition \ref{prop: L_xi, S_e, part 1}. Although it is not logically necessary for the proof of the main theorem, it gives the geometric intuition, and the techniques in its proof in Subsection \ref{subsec: proof non-exact} will be used for the proof of the exact version.

Let $\sfLambda=\{\sum_{j=0}^\infty a_j\sfq^{\gamma_j}: a_j\in \bC, \gamma_j\in \bR, \gamma_j\rightarrow\infty\}$ be the Novikov field over $\bC$. Let $\cW(J_G;\sfLambda)$ be the wrapped Fukaya category linear over $\sfLambda$ consisting of tautologically unobstructed, tame and asymptotically cylindrical Lagrangian branes (equipped with local systems\footnote{In general, one allows $\sfLambda$-local systems with unitary monodromy. Here we restrict to a simpler situation.} induced from finite rank local systems over $\bC$). 
When writing the morphism space between two Lagrangian objects, if a Lagrangian (brane) does not come with a local system, we mean the underlying local system is the trivial rank 1 local system.  In the following, we fix the grading on $\Sigma_I$ to be the constant $n=\dim_\bC T$ (cf. \cite{Jin1} for the constant property of gradings on a holomorphic Lagrangian). Since $\Sigma_I$ is contractible, the Pin structure is uniquely assigned.

\begin{prop}\label{prop: L_xi, S_e, part 1}
Assume $G$ is of adjoint type. For any $\zeta\in \ft^\reg_c\cong i\ft_\bR^\reg$, there exists a non-exact Lagrangian brane $\cL_{\zeta}\in \cW(\cB_{w_0}^\dagg;\sfLambda)$, with the projection $\pi_\zeta: \cL_\zeta\rightarrow T$ a homotopy equivalence and $(\pi_\zeta^*)^{-1}[\alpha_{J_G}|_{\cL_\zeta}]=\zeta\in H^1(T, \bC)\cong \ft^*$, 
such that 
\begin{itemize}
\item[(i)] The object $(\cL_\zeta, \check{\rho})\in \Perf_\Lambda(\cW(\cB_{w_0}^\dagg;\sfLambda))\simeq \Perf_\sfLambda(\bC[T^\vee]\underset{\bC}{\otimes}\sfLambda)$ corresponds to the simple module $\bC[T^\vee]\underset{\bC}{\otimes}\sfLambda/(x^{\lambda_\alpha^\vee}-\lambda_\alpha^\vee(\check{\rho})\cdot \sfq^{i\lambda_\alpha^\vee(\zeta)}: \alpha\in \Pi)$, up to some renormalization $\sfq\mapsto \sfq^c$, for some fixed constant $c\in \bR^\times$. 

\item[(ii)] Viewing $(\cL_{\zeta},\check{\rho})$ as an object in $\cW(J_G;\sfLambda)$, we have 
\begin{align}
\label{eq: prop skyscraper 1}&\Hom_{\cW(J_G; \sfLambda)}((\cL_{\zeta},\check{\rho}), \Sigma_I)\cong \sfLambda [-n]\\
\label{eq: prop skyscraper 2}&\Hom_{\cW(J_G;\sfLambda)}(\Sigma_I, (\cL_{\zeta},\check{\rho}))\cong \sfLambda.
\end{align} 

\item[(iii)] For any two objects $(\cL_{\zeta},\check{\rho}_1)$ and $(\cL_{w(\zeta)},w(\check{\rho}_2))$ in $\cW(J_G;\sfLambda)$, we have 
\begin{align*}
\Hom_{\cW(J_G;\sfLambda)}((\cL_{\zeta},\check{\rho}_1), (\cL_{w(\zeta)},w(\check{\rho}_2)))\cong\begin{cases}&H^*(T, \sfLambda), \text{ if }\check{\rho}_1=\check{\rho}_2,\\
&0, \text{ otherwise}.
\end{cases}
\end{align*}
In particular, the objects $(\cL_\zeta, \check{\rho})$ and $(\cL_{w(\zeta)}, w(\check{\rho}))$ in $\cW(J_G;\sfLambda)$ are isomorphic, for all $\zeta\in \ft_c^\reg$ and $w\in W$.  

\end{itemize}
\end{prop}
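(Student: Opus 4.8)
\textbf{Proof plan for Proposition \ref{prop: L_xi, S_e, part 1}.}

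The plan is to construct $\cL_\zeta$ explicitly inside $\cB_{w_0}^\dagg \cong T^*\overline{T}$ as a \emph{graph Lagrangian} over the zero section $T_{\cpt}$, then to analyze its Floer theory with $\Sigma_I$ by pushing everything through the Lagrangian correspondence (\ref{eq: Lag corresp}). First I would produce the Lagrangian: using the coordinates $(q_{\lambda_{\beta^\vee}},\theta_{\lambda_{\beta^\vee}};\Re p_{\beta^\vee},\Im p_{\beta^\vee})$ of (\ref{eq: q, p}), set $\cL_\zeta$ to be the image of a section of $T^*T \to T$ whose $\theta$-component is the constant rank-$1$ closed form representing $\zeta\in H^1(T;\bC)\cong \ft^*$ (the nonexactness is exactly this cohomology class), and whose $q$-component is a small Morse function chosen so that $\cL_\zeta$ sits in the conic subsector $\cB_{w_0}^\dagg$ and is tame and asymptotically cylindrical near $\partial^\infty$; pairing with a rank-$1$ local system $\check\rho\in \Hom(\pi_1 T,\bC^\times)\cong T^\vee$ shifts the $\theta$-component by $\arg\check\rho$. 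Part (i) is then a direct computation: under the HMS equivalence $\cW(\cB_{w_0}^\dagg;\sfLambda)\simeq \Perf_\sfLambda(\bC[T^\vee]\otimes\sfLambda)$ of the well-known torus case, a graph Lagrangian of a multivalued linear form with local system is mirror to the structure sheaf of the point $\check\rho\cdot \sfq^{i\zeta}$, and matching the monomials $x^{\lambda_\alpha^\vee}$ ($\alpha\in\Pi$) against the energies of the generating intersection strips gives the stated presentation $\bC[T^\vee]\otimes\sfLambda/(x^{\lambda_\alpha^\vee}-\lambda_\alpha^\vee(\check\rho)\,\sfq^{i\lambda_\alpha^\vee(\zeta)})$, up to the renormalization $\sfq\mapsto \sfq^c$ that absorbs the normalization constants $\lambda_{\beta^\vee}(\sfh_0)$ in (\ref{eq: q, p}).

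For part (ii), the key point is that the Lagrangian correspondence $J_G \xleftarrow{} J_G\times_\fc \ft^* \xrightarrow{\pi_\chi} T^*T$ transforms $\Sigma_I$ to a cotangent fiber $T^*_{\{I\}}T$ and is $W$-equivariant and compatible with the wrapping Hamiltonians of Section \ref{sec: wrapping Ham}. Using the equivariance (\ref{eq: equivariant flow H_R}), the Floer complex $\Hom_{\cW(J_G;\sfLambda)}((\cL_\zeta,\check\rho),\Sigma_I)$ is computed by intersecting (a wrapped pushforward of) $\cL_\zeta$ with $\Sigma_I$. Since $\cL_\zeta$ is modeled on a generic torus fiber of $\chi:J_G\to\fc$ — the heuristic spelled out after Proposition \ref{prop: intro skyscraper} — and a generic fiber meets the Kostant section $\Sigma_I = \{g=I\}$ in exactly one point (the identity of the centralizer group), the intersection is a single transverse point; the grading computation is pinned down by the constant grading $n$ on the holomorphic Lagrangian $\Sigma_I$ (\cite{Jin1}) and a Maslov-index bookkeeping along the correspondence, giving the shift $[-n]$ in (\ref{eq: prop skyscraper 1}) and the unshifted answer in (\ref{eq: prop skyscraper 2}). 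One must check there are no higher differentials, which follows because the unique intersection point forces the complex to be one-dimensional over $\sfLambda$; the energy/action estimates guaranteeing no extra intersection points appear after wrapping are where Proposition \ref{prop: proper b map} (properness of $\pi_b$) is invoked to control escape to the finite boundary.

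Part (iii) is the heart of the matter and is where I expect the main obstacle. Inside $\cB_{w_0}^\dagg\cong T^*T$ the Lagrangians $\cL_\zeta$ and $\cL_{w(\zeta)}$ (for $w\neq 1$) lie over disjoint translates and have empty Floer complex, reflecting that distinct points of $T^\vee$ are orthogonal skyscrapers; the claim is that after including into $J_G$ they become \emph{isomorphic} when $\check\rho_1=\check\rho_2$, the bridge being the common torus fiber of $\chi$ that both are modeled on. The strategy is to exhibit an explicit closed Floer cocycle realizing the isomorphism: construct a single Lagrangian $\cL^{\mathrm{fib}}$ in $J_G$ (a perturbed, cylindrically-capped generic fiber of $\chi$) together with morphisms $\cL_\zeta \to \cL^{\mathrm{fib}} \to \cL_{w(\zeta)}$ in $\cW(J_G;\sfLambda)$ whose composition is the identity, using that all three project to nearby fibers of $\chi$ and that the $W$-action permutes the $|W|$ components of $\pi_\chi^{-1}(\text{fiber})$ transitively. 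Concretely, one runs a Hamiltonian isotopy supported near $\chi^{-1}$ of a path in $\fc$ connecting the relevant fibers, tracks $\cL_\zeta$ and $\cL_{w(\zeta)}$ through it, and shows the resulting Floer continuation element is invertible by a degree/energy count; the dependence on the local system survives (so $\check\rho_1\neq\check\rho_2$ gives $0$) because the monodromy class is a homotopy invariant of $\pi_\chi$. The delicate technical issues are (a) the fiber of $\chi$ is \emph{not} a legitimate object of $\cW(J_G)$ since its ends hit the finite boundary, so $\cL^{\mathrm{fib}}$ must be constructed by a careful perturbation keeping the ends in $\partial^\infty$ — exactly the ``close to Hamiltonian isotopic'' phenomenon referred to in the introduction — and (b) ruling out unwanted holomorphic discs with boundary drifting toward $\partial\overline{J}_G$ during the isotopy, which again rests on the properness statements of Lemma \ref{lemma: pi_|b|, epsilon} and Proposition \ref{prop: proper b map} together with a monotonicity/energy argument over $\sfLambda$. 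The remaining assertions of (iii) — the value $H^*(T,\sfLambda)$ and the final ``isomorphic'' conclusion — then follow formally once the invertible degree-$0$ morphism is in hand.
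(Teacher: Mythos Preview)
Your construction of $\cL_\zeta$ as a graph Lagrangian over $T$ is not what the paper does, and the difference matters downstream. The paper's $\cL_\zeta$ (Subsection~\ref{subsubsec: def Lambda_R}) is a compactly supported Hamiltonian deformation of the \emph{shifted conormal} $\zeta+L_0$ of a compact torus orbit $T_{\cpt,R}\subset T$, where the deformation $\varphi_s$ comes from Proposition~\ref{lemma: empty, Ham isotopy}: it interpolates between the ``standard'' integrable system $T^*T\to\ft$ and the restriction of $\chi:J_G\to\fc$ on $\cB_{w_0}$. The payoff is Lemma~\ref{lemma: L_xi, R, clean}: under the correspondence (\ref{eq: Lag corresp}), the transformed $\widehat{\cL}_\zeta\subset T^*T$ has, in the region $\{\|\xi_\bR\|\leq\delta_0\}$, a clean product form $\coprod_{w\in W}(T_\cpt\times\{w(\zeta)\})\times w(\Gamma)$. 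This product structure is the engine that drives both (ii) and (iii), and it is not available for a generic graph Lagrangian. Your outline for (ii) --- ``a generic fiber meets $\Sigma_I$ in one point'' --- is heuristically right but skips the actual mechanism: one designs a wrapping Hamiltonian $H_1$ that splits as $\widetilde H_{1,c}+\widetilde H_{1,\bR}$ near the $W$-orbit of $\zeta$, and then the product form of $\widehat{\cL}_\zeta$ reduces the intersection count to a factor-by-factor computation (Figure~\ref{figure: wrapping}).

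The more serious gap is your plan for (iii). You propose to build an intermediate object $\cL^{\mathrm{fib}}$ modeled on a $\chi$-fiber and exhibit invertible morphisms $\cL_\zeta\to\cL^{\mathrm{fib}}\to\cL_{w(\zeta)}$. You correctly flag that fibers of $\chi$ are not objects of $\cW(J_G)$ because their ends run into the finite boundary, but you do not resolve this, and it is genuinely hard to make such an $\cL^{\mathrm{fib}}$ cylindrical while preserving the isotopy class needed for your argument. The paper avoids this entirely: it computes $\Hom((\cL_\zeta,\check\rho_1),(\cL_{w(\zeta)},w(\check\rho_2)))$ \emph{directly}. Using the same $\widetilde H_1$ as above and the product structure of $\widehat{\cL}_\zeta$ and $\widehat{\cL}_{w(\zeta)}$, one finds that after negative wrapping the transformed Lagrangians meet cleanly along the $W$-orbit of $T_\cpt\times\{\zeta\}\times\{Q\}$; transferring back via the correspondence collapses this to a \emph{single} clean $T_\cpt$-orbit in $\chi^{-1}([\zeta])$, on which the restricted local systems are $\check\rho_1$ and $\check\rho_2$ respectively (this last identification uses the homotopy equivalence of Lemma~\ref{lemma: L_t_gamma}(ii)). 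The Floer cohomology is then $H^*(T_\cpt;\check\rho_1^{-1}\otimes\check\rho_2)$, and the isomorphism in the case $\check\rho_1=\check\rho_2$ follows from the degree-$0$ class together with the fact (from part (ii)) that both objects are simple $\cA_G$-modules. No intermediate fiber object is needed.
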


\begin{remark}
In Proposition \ref{prop: L_xi, S_e, part 1}, the objects $(\cL_\zeta, \check{\rho})$ and $(\cL_{w(\zeta)},w(\check{\rho}))$ are geometrically modeled on the complex torus fiber $\chi^{-1}([\zeta])$ (which is not a well defined object in $\cW(J_G;\sfLambda)$). 
More explicitly, it will follow from the construction in Subsection \ref{subsec: construct L_zeta} that $\cL_\zeta\cap \chi^{-1}([\zeta])$ is a compact torus homotopy equivalent to $\chi^{-1}([\zeta])$ (more precisely a $(\chi^{-1}([\zeta]))_{\cpt}$-orbit), and $\cL_{w(\zeta)}\cap \chi^{-1}([\zeta])$ can be thought as (though not identical to) $w(\cL_\zeta\cap \chi^{-1}([\zeta]))$. Then $w(\check{\rho})$ on ${w(\cL_\zeta\cap \chi^{-1}([\zeta]))}$ is the pullback local system of $\check{\rho}$ on $\cL_\zeta\cap \chi^{-1}([\zeta])$ under $w^{-1}$. In particular, they define the same local system on $\chi^{-1}([\zeta])$. This morally explains why they are isomorphic in $\cW(J_G;\sfLambda)$. 
 \end{remark}

Now we state the key propositions in the exact setting. 
Let $L_0\subset \cB_{w_0}\cong T^*T$ be a ``cylindricalization" of the conormal bundle of an orbit of the maximal compact subtorus in $T$ (cf. Subsection \ref{subsubsec: conic} for an explicit construction).

\begin{prop}\label{prop: L_0, part 2}
We have in $\cW(J_G)$, 
\begin{align}
\label{eq: skyscraper 1 no q}&\Hom_{\cW(J_G)}((L_0,\check{\rho}), \Sigma_I)\cong \bC[-n]\\
\label{eq: skyscraper 2 no q}&\Hom_{\cW(J_G)}(\Sigma_I, (L_0,\check{\rho}))\cong \bC.
\end{align} 
\end{prop}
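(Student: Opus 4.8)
\textbf{Proposal for the proof of Proposition \ref{prop: L_0, part 2}.}

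The plan is to deduce the exact statement from the non-exact one (Proposition \ref{prop: L_xi, S_e, part 1}) by a degeneration/limit argument, together with a direct geometric count of intersection points once the relevant Lagrangians have been put in good position inside $J_G$. First I would recall the explicit model for $L_0$ from Subsection \ref{subsubsec: conic}: it is a cylindricalized conormal $\{q_{\lambda_{\beta^\vee}} = \text{const}\}$ of a $T_{\cpt}$-orbit in $\cB_{w_0} \cong T^*T$, so in the coordinates $(q_{\lambda_{\beta^\vee}}, \theta_{\lambda_{\beta^\vee}}; \Re p_{\beta^\vee}, \Im p_{\beta^\vee})$ of \eqref{eq: q, p} it projects homeomorphically to $T$, carries the rank-one unitary local system $\check\rho \in \Hom(\pi_1(T), \bC^\times) \cong T^\vee$, and its Liouville primitive is cohomologous to zero (it is an exact cylindrical brane, unlike $\cL_\zeta$). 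The mirror identification $\cW(\cB^\dagg_{w_0}) \simeq \bC[T^\vee]\text{-}\Perf$ sends $(L_0,\check\rho)$ to the skyscraper $\cO_{\check\rho}$ at the point $\check\rho \in T^\vee$; this is the $\zeta \to 0$ specialization of the family of simple modules appearing in Proposition \ref{prop: L_xi, S_e, part 1}(i) after the renormalization $\sfq \mapsto \sfq^c$. So the statement we want is the ``$\sfq = 1$, exact'' shadow of \eqref{eq: prop skyscraper 1}--\eqref{eq: prop skyscraper 2}.

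The key steps, in order, would be: (1) Place $(L_0,\check\rho)$ and $\Sigma_I$ in generic position in $\overline{J}_G$ using the positive linear wrapping Hamiltonian $H_R$ of Section \ref{sec: wrapping Ham}; since $\Sigma_I = \{g=I\}$ is the Kostant section and $L_0$ lives in $\cB_{w_0}$, which is only part of $J_G$, the relevant wrapping is governed by the sector inclusion $\cB^\dagg_{w_0} \hookrightarrow \overline{J}_G$ of Subsection \ref{subsubsec: conic} and hence by the co-restriction functor. (2) Transport the problem through the Lagrangian correspondence \eqref{eq: Lag corresp}: as in the proof of Proposition \ref{prop: A_G, vector}, the equivariance \eqref{eq: equivariant flow H_R} lets one read off the intersection points $\varphi^1_{H_R}((L_0,\check\rho)) \cap \Sigma_I$ from the corresponding intersection points $\varphi^1_{\widetilde H_R}(\widehat{L_0,\check\rho}) \cap \widehat{\Sigma}_I = \varphi^1_{\widetilde H_R}(\widehat{L_0,\check\rho}) \cap T^*_{\{I\}}T$ in $T^*T$, which is a cotangent-bundle computation. (3) Count: in $T^*T$ the wrapped $L_0$ (a conormal to a torus orbit, which is fiberwise a point over the zero section) meets a cotangent fiber in exactly one point, all of the same index, because $T_{\cpt} \subset T$ has codimension $n$ complementary to the fiber direction and the local system contributes a rank-one space; transporting back via the correspondence, and using the non-degeneracy of the minimum of $\widetilde f_S$ exactly as in Proposition \ref{prop: A_G, vector}, forces the single intersection point in $J_G$ to be transverse and to sit in degree $n$ (matching the constant grading $n$ on the holomorphic Lagrangian $\Sigma_I$), whence \eqref{eq: skyscraper 1 no q}; the dual statement \eqref{eq: skyscraper 2 no q} follows either by the same count for $\Hom(\Sigma_I, (L_0,\check\rho))$ with the opposite wrapping, or by Poincar\'e duality in the wrapped category (the object $(L_0,\check\rho)$ being a compact-core-type brane whose image under co-restriction is a perfect module dual to a skyscraper). (4) Independence of the choice of cylindricalization and of the representative torus orbit follows since these choices are related by Hamiltonian isotopy inside $\cB_{w_0}$.

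The main obstacle I anticipate is Step (3): making rigorous the passage \emph{back} through the correspondence \eqref{eq: Lag corresp}. As the excerpt stresses, $\pi_\chi$ is neither proper nor open, so one cannot naively compose Lagrangian correspondences to get functors; instead the count must be done by hand on Floer cochains, tracking where the (finitely many, by Proposition \ref{prop: proper b map}) intersection points of the wrapped Lagrangians in $J_G$ go, and checking there are no ``extra'' intersections created near $\chi^{-1}([0])$ or near the finite boundary $\partial \overline J_G$ where $L_0$ and $\Sigma_I$ are only asymptotically cylindrical. The non-exact argument of Subsection \ref{subsec: proof non-exact} (used to prove Proposition \ref{prop: L_xi, S_e, part 1}) should supply the needed control: one runs it with $\zeta$ a small generic element of $\ft^\reg_c$, obtains the rank-one answer over $\sfLambda$ with the exponents of $\sfq$ recording the areas, and then degenerates $\zeta \to 0$ to recover the exact brane $L_0$ while verifying that no intersection point escapes to infinity or collides — the areas all tend to finite limits and the single generator persists. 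The grading bookkeeping (that the unique generator sits in degree exactly $n$) is then a Maslov-index computation in the cotangent model, which is routine but must be done carefully to fix the shift $[-n]$ in \eqref{eq: skyscraper 1 no q}.
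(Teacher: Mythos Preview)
Your proposal has a genuine gap, and the paper's proof takes a substantially different route precisely to avoid it.

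The degeneration $\zeta\to 0$ does not work. The proof of Proposition~\ref{prop: L_xi, S_e, part 1} (Lemma~\ref{lemma: step 1,2,3}) relies on $\zeta\in\ft_c^{\reg}$ in an essential way: the transform $\widehat{\cL}_\zeta$ splits into $|W|$ well-separated sheets, one over each $w(\cU_\zeta)$, and the wrapping Hamiltonian is modified near each such region so that the count reduces to a product picture (Figure~\ref{figure: wrapping}). As $\zeta\to 0$ the orbit $W\cdot\zeta$ collapses, the regions $w(\cU_\zeta)$ coalesce onto the singular locus $\ft^{\sing}$, and the Lagrangian correspondence \eqref{eq: Lag corresp} degenerates there --- exactly the bad behavior flagged at the end of Subsection~\ref{subsec: def of J_G}. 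You have no mechanism to exclude extra intersection points appearing near $\chi^{-1}(\fc^{\sing})$ in the limit; ``the single generator persists'' is an assertion, not an argument. Also, in Step~(3) you conflate $L_0\subset\cB_{w_0}\cong T^*T$ with its transform $\widehat{L_0}$: the object you must intersect with $\widehat{\Sigma}_I=T^*_IT$ is $\widehat{L_0}$, for which there is no clean description near the walls.

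The paper instead confronts the walls directly. It builds a \emph{new} wrapping Hamiltonian $H$ (not $H_R$) from a $W$-invariant smoothed polytope $\partial\bfP_{sm}\subset\ft_\bR$, designed so that near each wall $w(\mathring{\fz}_{S,\bR}^+)$ the flow $\varphi_H^s(\Sigma_I)$ stays inside $\cZ(L_S)_0$-orbits of $\Sigma_I$ (via the isomorphism \eqref{eq: chi_S, mathscr Z}). The crucial new input is Proposition~\ref{prop: g_S, natural, w} from Subsection~\ref{subsec: walls}: for $R\gg 1$ the portion of $L_0$ over $T\times(\bD'-\bD^\circ)$ lies outside any fixed compact region in the $J_{L_S^\der}$-factor, hence outside these $\cZ(L_S)_0$-orbits, so $\varphi_H^s(\Sigma_I)\cap L_0$ is \emph{empty} near the walls for all $s$. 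Over the regular part $\bD^\circ$, the integrable-system identification \eqref{eq: bD, circ, T_cpt} and Proposition~\ref{lemma: empty, Ham isotopy} give the splitting \eqref{eq: L_w'} of $L_0$ into $|W|$ graphs $\Gamma_w$; the count there (Figure~\ref{figure: bD_R, ft_R,+}) produces exactly one transverse point, in $\Gamma_{w_0}$ for $s\gg 1$ and in $\Gamma_1$ for $s\ll -1$. The correspondence \eqref{eq: Lag corresp} is never used globally; only its restriction to $\fc^{\reg}$ enters, where it is unproblematic.
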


\begin{prop}\label{prop: L_0, W}
For all regular $\check{\rho}\in \Hom(\pi_1(T), \bC^\times)\cong T^\vee$, i.e. $\check{\rho}\in (T^\vee)^\reg$, we have 
\begin{align}\label{eq: prop L_0, W}
\Hom_{\cW(J_G)}((L_0, \check{\rho}), (L_0, w_1(\check{\rho})))\cong H^*(T,\bC), w_1\in W. 
\end{align}
In particular, in such cases, the objects $(L_0, \check{\rho})$ and $(L_0, w_1(\check{\rho}))$ viewed as objects in $\cW(J_G)$ are isomorphic. 
\end{prop}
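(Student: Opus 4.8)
The plan is to deduce Proposition \ref{prop: L_0, W} from the "skyscraper" structure of the objects $(L_0,\check\rho)$ established via Proposition \ref{prop: L_0, part 2}, combined with the functoriality of the co-restriction functor $co\text{-}res\colon \cW(\cB_{w_0}^\dagg)\to \cW(J_G)$ from the sector inclusion $\cB_{w_0}^\dagg\cong T^*\overline{T}\hookrightarrow \overline{J}_G$. First I would recall that, as objects of $\cW(\cB_{w_0}^\dagg)\simeq \Perf(\bC[T^\vee])$, the branes $(L_0,\check\rho)$ are the skyscraper sheaves $\cO_{\check\rho}$ at the points $\check\rho\in T^\vee$ (this is the standard HMS for $T^*T$, together with the identification of conormals of compact-torus orbits with skyscrapers); in particular $\Hom_{\cW(\cB_{w_0}^\dagg)}((L_0,\check\rho),(L_0,w_1\check\rho))=0$ unless $\check\rho=w_1\check\rho$. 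So the whole content is that passing to $\cW(J_G)$ \emph{creates} the isomorphism $(L_0,\check\rho)\cong(L_0,w_1\check\rho)$.

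The key steps, in order, would be: (1) Show that $(L_0,\check\rho)\cong co\text{-}res(\cO_{\check\rho})$ as objects of $\cW(J_G)$, using that $L_0$ is a cylindricalization of a conormal living in the subsector $\cB_{w_0}^\dagg$ and that $co\text{-}res$ is computed geometrically by (the stabilized version of) inclusion of the subsector, so it sends the brane $L_0\subset \cB_{w_0}^\dagg$ to the "same" brane viewed in $J_G$. (2) Compute $\Hom_{\cW(J_G)}(co\text{-}res(\cO_{\check\rho}), co\text{-}res(\cO_{w_1\check\rho}))$ via the adjunction $(co\text{-}res, res)$: this equals $\Hom_{\cW(\cB_{w_0}^\dagg)}(\cO_{\check\rho}, res\circ co\text{-}res(\cO_{w_1\check\rho}))$. (3) Identify $res\circ co\text{-}res$ with the monad $\mathsf{f}^!\mathsf{f}_*$ — or rather, here the cleaner route is to use the picture from Proposition \ref{prop: A_G commutative}/\ref{prop: A_G, W-inv}: under mirror symmetry for the subsector, $co\text{-}res$ is pushforward along $\mathsf{f}\colon T^\vee\to \Spec\cA_G$, which is $W$-invariant, so $\mathsf{f}_*\cO_{\check\rho}=\mathsf{f}_*\cO_{w_1\check\rho}$ as $\cA_G$-modules (both are the skyscraper at the common image point $\mathsf{f}(\check\rho)=\mathsf{f}(w_1\check\rho)$). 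Hence $co\text{-}res(\cO_{\check\rho})\cong co\text{-}res(\cO_{w_1\check\rho})$ in $\cW(J_G)$ already, and then $\Hom((L_0,\check\rho),(L_0,w_1\check\rho))\cong \Hom(co\text{-}res\,\cO_{\check\rho}, co\text{-}res\,\cO_{\check\rho})$. (4) Finally compute this self-Hom: by adjunction it is $\Hom_{\bC[T^\vee]}(\cO_{\check\rho}, \mathsf{f}^!\mathsf{f}_*\cO_{\check\rho})$; since for regular $\check\rho$ the fiber $\mathsf{f}^{-1}(\mathsf{f}(\check\rho))$ is the free $W$-orbit $W\cdot\check\rho$ and $\mathsf{f}$ is étale there (the quotient $T^\vee\to T^\vee\sslash W$ is étale over the regular locus), $\mathsf{f}^!\mathsf{f}_*\cO_{\check\rho}$ is the direct sum $\bigoplus_{w\in W}\cO_{w\check\rho}$ shifted appropriately, and pairing with $\cO_{\check\rho}$ picks out the single summand $\cO_{\check\rho}$, giving $\Hom_{\bC[T^\vee]}(\cO_{\check\rho},\cO_{\check\rho})=\mathrm{Ext}^*_{\bC[T^\vee]}(\cO_{\check\rho},\cO_{\check\rho})=\wedge^*(\ft^*)\cong H^*(T,\bC)$, as a ring. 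This establishes \eqref{eq: prop L_0, W}; the isomorphism $(L_0,\check\rho)\cong (L_0,w_1\check\rho)$ then follows from step (3) directly, but can also be read off from the fact that the rank-one degree-zero part of $\Hom$ contains a unit.

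There is a circularity worry to address carefully: Theorem \ref{thm: sec G adjoint} and the $W$-invariance of $\mathsf{f}$ (Proposition \ref{prop: A_G, W-inv}(ii)) are proved \emph{using} the key propositions, so I must make sure the argument for Proposition \ref{prop: L_0, W} is self-contained and does not secretly invoke $W$-invariance of $\mathsf{f}$. The way around this is to prove the isomorphism $co\text{-}res(\cO_{\check\rho})\cong co\text{-}res(\cO_{w_1\check\rho})$ \emph{geometrically} inside $J_G$, rather than via the mirror description of $\mathsf{f}$: that is, I would exhibit an explicit Hamiltonian isotopy (or a chain of such, supported in a neighborhood of a common torus fiber $\chi^{-1}([\zeta])$) carrying a cylindricalized conormal $L_0$ with local system $\check\rho$ to one with local system $w_1\check\rho$, using precisely the geometric mechanism flagged after Proposition \ref{prop: intro skyscraper} — the generic torus fiber "bridges" the $W$-orbit of conormals once one is inside $J_G$. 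This is exactly the exact-setting analogue of Proposition \ref{prop: L_xi, S_e, part 1}(iii), and I expect the hard part to be exactly this: controlling the wrapping/Floer theory of the non-exact-turned-exact conormals near the degenerate central fiber and along the affine-blowup locus, so that the isotopy is through \emph{admissible cylindrical} Lagrangians and the intersection/holomorphic-disc count behaves as in the torus-fiber model. The analysis needed is developed in Subsections \ref{subsec: analysis cB_w0}–\ref{subsec: construct L_zeta}, and I would reduce Proposition \ref{prop: L_0, W} to the computations performed there (in \ref{subsec: proof prop}), with the main obstacle being the Floer-theoretic transversality and energy estimates that confirm the morphism complex collapses to $H^*(T,\bC)$ with no higher differentials when $\check\rho$ is regular.
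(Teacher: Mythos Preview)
Your proposal has a real gap. You correctly identify the circularity: steps (2)--(4) rest on the identification of $co\text{-}res$ with $\mathsf{f}_*$ and, crucially, on the $W$-invariance of $\mathsf{f}$ (Proposition~\ref{prop: A_G, W-inv}(ii)), both of which are deduced \emph{from} Proposition~\ref{prop: L_0, W}. Your workaround --- exhibiting a Hamiltonian isotopy carrying $(L_0,\check\rho)$ to $(L_0,w_1\check\rho)$ --- does not make sense as stated: these are the \emph{same} underlying cylindrical Lagrangian $L_0$ equipped with different rank-one local systems, so a Hamiltonian isotopy of $L_0$ to itself can only relate them if it induces a nontrivial automorphism of $\pi_1(L_0)\cong\pi_1(T)$, which a compactly supported isotopy inside $\cB_{w_0}^\dagg$ cannot do. (The non-exact analogue Proposition~\ref{prop: L_xi, S_e, part 1}(iii) compares genuinely different Lagrangians $\cL_\zeta$ and $\cL_{w(\zeta)}$, which is a different situation.)

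The paper's proof is not via isotopy or via the mirror functor at all; it is a direct Floer computation. One takes two cylindricalizations $L_0^{(1)},L_0^{(2)}$ with parameters $R^{(2)}\gg R^{(1)}$, uses the same wrapping Hamiltonian $H$ constructed in the proof of Proposition~\ref{prop: L_0, part 2}, and shows (via the analysis of Sections~\ref{subsec: analysis cB_w0} and~\ref{subsec: walls}) that for $s\gg1$ the intersection $\varphi_H^s(L_0^{(1)})\cap L_0^{(2)}$ is clean along $|W|$ disjoint copies of $T_{\cpt}$, indexed by $w\in W$. The clean-intersection spectral sequence has $E_1$-page $H^*(C_w;\,w^{-1}(\check\rho^{-1})\otimes w_0w_1(\check\rho))$; for regular $\check\rho$ exactly one component (the one with $w=w_1^{-1}w_0$) contributes, giving $H^*(T,\bC)$ up to a shift, and a simple grading bound forces the shift to be zero. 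The isomorphism $(L_0,\check\rho)\cong(L_0,w_1\check\rho)$ then follows because both are simple $\cA_G$-modules (by Proposition~\ref{prop: L_0, part 2}) and $H^0\Hom\cong\bC$. This argument is logically prior to, and independent of, any statement about $\mathsf{f}$.
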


In the remaining parts of this section, we develop some analysis in Subsection \ref{subsec: analysis cB_w0} and \ref{subsec: walls} that are crucial for the proof of the key propositions. Strictly speaking, the analysis in Subsection \ref{subsec: analysis fU_S} about $\fU_S, \emptyset\neq S\subsetneq \Pi$ is not logically needed for the proofs, but it is a natural generalization of the analysis done in Subsection \ref{subsubsec: B_w_0} about $\cB_{w_0}$. We include this for the sake of completeness and for recording some interesting geometric aspects about $\fU_S$ that may be of independent interest (see Question \ref{question: mu_D, K, rho} for the main points addressed). In Subsection \ref{subsec: construct L_zeta}, we give the explicit construction of $L_0$ and $\cL_{\zeta}$ that appeared in the above key propositions.

\subsection{Some analysis inside $\cB_{w_0}$ and $\fU_S, S\subsetneq \Pi$}\label{subsec: analysis cB_w0}

This subsection is motivated by the following simple observation, and it is crucial for the proof of the main theorem in Section \ref{sec: HMS adjoint}. Recall the identification $\cB_{w_0}\cong T^*T$ in Example \ref{example: B_w0}. We observe that for a fixed $t\in \ft$ and $h\in T$, as we multiply $h$ by $\epsilon^{-\sfh_0}$ for $|\epsilon|\rightarrow 0$, the characteristic map  
\begin{align*}
\chi|_{\cB_{w_0}}: &\cB_{w_0}\longrightarrow \fc\\
&(\overline{w}_0^{-1}h, f+t+\Ad_{(\overline{w}_0^{-1}h)^{-1}}f)\mapsto \chi(f+t+\Ad_{(\overline{w}_0^{-1}h)^{-1}}f)
\end{align*}  
is getting closer and closer to $\chi(f+t)$, which is the same as the composition of projecting to $t\in \ft$ and the quotient map $\ft\rightarrow\fc$. Geometrically, this suggests that for any $[\xi]\in \fc^{\reg}$, $\chi^{-1}([\xi])\cap \cB_{w_0}$ will split into $|W|$ many disjoint sections over a region in $T$ of the form $\bigcup\limits_{|\epsilon|<\eta_0}\epsilon^{-\sfh_0}\cdot \cV$, for any pre-compact domain $\cV\subset T$ and for sufficiently small $\eta_0>0$. In the following, we make these into rigorous statements. In particular, we establish a link between the standard integrable system structure $T^*T\rightarrow \ft$ and that inherited from the embedding into $\chi: J_G\rightarrow \fc$ (the latter is certainly incomplete, i.e. having incomplete torus orbits) through an interpolating family of ``integrable systems" on certain pre-compact regions in $T^*T$. We also have the general discussions for $\fU_S$  (\ref{eq: prop fU_S splitting}) where the torus with Hamiltonian action(s) is replaced by $\cZ(L_S)$.

For any $S\subsetneq \Pi$, it would be more convenient to use the identity component of $\cZ(L_S)$, denoted by $\cZ(L_S)_0$, instead of $\cZ(L_S)$ for discussions of Hamiltonian actions. We state the following lemma about the relation between $\cZ(L_S^\der)$ and $\pi_0(\cZ(L_S))$ for concreteness. 
\begin{lemma}\label{lemma: pi_0, L_S}
For any semisimple Lie group $G$, we have canonical identifications
\begin{align}\label{eq: pi_0, L_S}
\pi_0(\cZ(L_S))\cong X_*(T_{S,\ad})/\pi_{\ft_S}(X_*(T))
\end{align}
\begin{align}\label{eq: Z, L_S, der}
\cZ(L_S^\der)\cong X_*(T_{S,\ad})/(X_*(T)\cap \ft_S),
\end{align}
where $T_{S,\ad}$ is a maximal torus of $L_{S,\ad}$. 
In particular, we have a short exact sequence 
\begin{align*}
1\rightarrow \pi_{\ft_S}(X_*(T))/(X_*(T)\cap \ft_S)\rightarrow \cZ(L_S^\der)\rightarrow \pi_0(\cZ(L_S))\rightarrow 1,
\end{align*}
which gives an identification 
\begin{align}\label{eq: cZ_S, der0}
\cZ(L_S^\der)_0:=\cZ(L_S^\der)\cap \cZ(L_S)_0\cong\pi_{\ft_S}(X_*(T))/(X_*(T)\cap \ft_S).
\end{align} 
\end{lemma}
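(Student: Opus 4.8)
\textbf{Proof proposal for Lemma \ref{lemma: pi_0, L_S}.}

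The plan is to work entirely on the level of cocharacter lattices of the various maximal tori, where each torus is recovered as $X_*(\bullet)\otimes_{\bZ}\bR/X_*(\bullet)$ and the component group of a torus quotient becomes the quotient of the ambient lattice by the sublattice generated by the subtorus. First I would fix notation: let $\Lambda:=X_*(T)$ be the cocharacter lattice of the chosen maximal torus $T\subset G$, sitting inside $\ft_\bR=\Lambda\otimes\bR$, and let $\ft_{S,\bR}\subset\ft_\bR$ be the span of the coroots $\{\alpha^\vee:\alpha\in S\}$, so that $\ft_{S,\bR}$ is the $\bR$-span of $X_*(T_{S})$ for $T_S:=T\cap L_S^\der$. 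The maximal torus $T_{S,\ad}$ of the adjoint group $L_{S,\ad}$ has cocharacter lattice equal to the coweight lattice $P^\vee_S$ of the root subsystem generated by $S$ (this is the standard fact that for an adjoint group the cocharacter lattice is the coweight lattice), and the coroot lattice $Q^\vee_S=\bigoplus_{\alpha\in S}\bZ\alpha^\vee$ is the cocharacter lattice of $T_{S}\cap L_S^{\der,sc}$.

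The two identifications then come from the two standard short exact sequences of tori. For \eqref{eq: pi_0, L_S}, I would use that $\cZ(L_S)$ is the kernel of the map $T\to T_{S,\ad}$ induced by $\ft_\bR\to\ft_{S,\bR}$ (projection along the Killing-orthogonal complement, composed with the quotient by $Q^\vee_S$); dually $\cZ(L_S)$ has character lattice $X^*(T)/\langle S\rangle$ and one computes $\pi_0(\cZ(L_S))$ as $\operatorname{Hom}$ of the torsion of $X^*(T)/\langle S\rangle$ with the torsion subgroup, which by elementary lattice duality is $P^\vee_S/\pi_{\ft_S}(\Lambda)$, where $\pi_{\ft_S}:\ft_\bR\to\ft_{S,\bR}$ is the orthogonal projection; this is exactly the right-hand side of \eqref{eq: pi_0, L_S} since $X_*(T_{S,\ad})=P^\vee_S$. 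For \eqref{eq: Z, L_S, der}, the group $\cZ(L_S^\der)$ is the center of the semisimple group $L_S^\der$ whose maximal torus $T_S$ has cocharacter lattice $\Lambda\cap\ft_{S,\bR}$, so $\cZ(L_S^\der)=(\Lambda\cap\ft_{S,\bR})^*_{\mathrm{tors}}$-dually $=P^\vee_S/(\Lambda\cap\ft_{S,\bR})=X_*(T_{S,\ad})/(X_*(T)\cap\ft_S)$, which is \eqref{eq: Z, L_S, der}. The short exact sequence is then obtained by comparing the two quotients of $P^\vee_S$: there is an obvious surjection $P^\vee_S/(\Lambda\cap\ft_S)\twoheadrightarrow P^\vee_S/\pi_{\ft_S}(\Lambda)$ (since $\Lambda\cap\ft_S\subseteq\pi_{\ft_S}(\Lambda)$), whose kernel is $\pi_{\ft_S}(\Lambda)/(\Lambda\cap\ft_S)$; this gives
\begin{align*}
1\rightarrow \pi_{\ft_S}(X_*(T))/(X_*(T)\cap \ft_S)\rightarrow \cZ(L_S^\der)\rightarrow \pi_0(\cZ(L_S))\rightarrow 1,
\end{align*}
and the last assertion \eqref{eq: cZ_S, der0} amounts to identifying the kernel subgroup with $\cZ(L_S^\der)_0=\cZ(L_S^\der)\cap\cZ(L_S)_0$: an element of $\cZ(L_S^\der)$ lies in the identity component of $\cZ(L_S)$ exactly when its image in $\pi_0(\cZ(L_S))$ is trivial, i.e. exactly when it lies in the kernel of the surjection above, which is $\pi_{\ft_S}(\Lambda)/(\Lambda\cap\ft_S)$.

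The only step requiring genuine care — rather than routine lattice bookkeeping — is the compatibility of the three maps $T\to T_{S,\ad}$, $T_S\hookrightarrow T$, and $\cZ(L_S)\hookrightarrow T$ at the level of $\bR$-vector spaces: one must check that the orthogonal projection $\pi_{\ft_S}$ (defined via the Killing form, as used throughout Section \ref{sec: Bruhat}) is the correct map to use, i.e. that $\ft_{S,\bR}$ and its Killing-orthogonal complement $\fz_{S,\bR}$ give an internal direct sum decomposition of $\ft_\bR$ with $\fz_{S,\bR}$ the Lie algebra of $\cZ(L_S)_0$ and $\ft_{S,\bR}$ that of $T_S$; this is standard for a Levi in a semisimple group but should be stated explicitly since the identifications above depend on $\pi_{\ft_S}$ being a projection onto a rational subspace so that $\pi_{\ft_S}(\Lambda)$ is again a lattice. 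I expect this to be the main (and only) obstacle; everything else is a diagram chase among free abelian groups, and the final sentence "The following lemma is easy to check" in the excerpt is accurate once this compatibility is in place.
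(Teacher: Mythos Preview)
Your proposal is correct and arrives at the same identifications, but the paper takes a more direct route. Rather than passing through character lattices and Pontryagin duality of torsion subgroups, the paper simply pulls back along the exponential map $\ft\to T$: the preimage of $\cZ(L_S)=\{h\in T:\alpha(h)=1,\ \alpha\in S\}$ in $\ft$ is $\{t\in\ft:(\alpha,t)\in i\bZ,\ \alpha\in S\}=iX_*(T_{S,\ad})+\fz_S$, and then $\pi_0(\cZ(L_S))$ is obtained by quotienting this by $iX_*(T)+\fz_S$, which immediately gives $X_*(T_{S,\ad})/\pi_{\ft_S}(X_*(T))$; the computation for $\cZ(L_S^\der)$ is the same with $\fz_S$ removed. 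Your approach via $X^*(\cZ(L_S))=X^*(T)/Q_S$ and the duality $(X^*(T)/Q_S)_{\mathrm{tors}}^*\cong P^\vee_S/\pi_{\ft_S}(\Lambda)$ is valid, but it requires you to actually verify that lattice duality (which you leave implicit), whereas the paper's covering-space argument needs no such step. The ``genuine care'' point you flag about $\pi_{\ft_S}$ landing in a lattice is handled automatically in the paper's approach, since $iX_*(T)+\fz_S$ modulo $\fz_S$ is manifestly $i\pi_{\ft_S}(X_*(T))$.
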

\begin{proof}
First, we have the preimage of $\cZ(L_S)$ in the universal cover $\ft$ of $T$ given by $\{t\in \ft_S: (\alpha,t)\in i\bZ, \forall\alpha\in S\}+\fz_S$. So 
\begin{align*}
\pi_0(\cZ(L_S))&\cong (\{t\in \ft_S: (\alpha,t)\in  i\bZ, \alpha\in S\}+\fz_S)/(iX_*(T)+\fz_S)\\
&\cong X_*(T_{S,\ad})/\pi_{\ft_S}(X_*(T)). 
\end{align*}
Similarly, we have the preimage of $\cZ(L_S^\der)$ in the universal cover $\ft$ given by $iX_*(T_{S,\ad})\subset \ft_S$ modulo $iX_*(T)$, and so (\ref{eq: Z, L_S, der}) follows. 
\end{proof}

It follows from Lemma \ref{lemma: pi_0, L_S} that for $G$ of adjoint type, $\cZ(L_S)_0=\cZ(L_S)$ and $\cZ(L^\der_S)_0=\cZ(L^\der_S)$. Although we assume $G$ of adjoint type for the rest of this section, we use $\cZ(L_S)_0$ and $\cZ(L^\der_S)_0$ in the following, since most of the results work directly for a semisimple $G$.

Let $D_S\subset \ft_S$ be any $W_S$-invariant pre-compact open neighborhood of $0\in \ft_S$. Let $\cK_{S^\perp}\subset \fz_S^\circ$ be any connected pre-compact open region such that 
\begin{align}\label{eq: cQ_D,cK}
\cQ_{D, \cK}:=D_S+\cK_{S^\perp}\subset \ft
\end{align} 
(cf. Figure \ref{figure: cK}) satisfies 
\begin{align}\label{eq: condition K_S, perp}
w(\overline{\cQ_{D, \cK}})\cap \overline{\cQ_{D, \cK}}=\emptyset, \forall w\not\in W_S. 
\end{align}
Let $\widetilde{\pr}_{\cK'_{S^\perp}}: \cQ_{D,\cK}/W_S\rightarrow \cK_{S^\perp}$ be the natural (analytic) projection. 
Let 
\begin{align}\label{eq: frX_0, D, K}
\fU_{S, D, \cK}:=\chi_S^{-1}(D_S/W_S)\underset{\cZ(L_S^\der)_0}{\times} (\cZ(L_S)_0\times \cK_{S^\perp})=\chi_S^{-1}(D_S/W_S)\underset{\cZ(L_S^\der)}{\times} (\cZ(L_S)\times \cK_{S^\perp}),
\end{align}
where $\chi_S: J_{L_S^\der}\rightarrow \fc_S$ is the characteristic map. For any $\cZ(L_S^\der)_0$-invariant pre-compact open region 
\begin{align}\label{eq: cY_S, cV}
\cY_S\subset \chi_S^{-1}(D_S/W_S)\text{ and }\cV_{S^\perp}\subset \cZ(L_S)_0, 
\end{align}
let 
\begin{align}\label{eq: cW_cY_S}
\cW_{\cY_S, \cV, \cK}:=\cY_S\underset{\cZ(L_S^\der)_0}{\times} (\cV_{S^\perp}\times \cK_{S^\perp})
\end{align}

Define for any $\rho\in \cZ(L_S)_0$
\begin{align}\label{eq: frj_S;rho}
\frj_{S;\rho}: \fU_S&\longrightarrow \fU_S\\
\nonumber(g_S,\xi_S;z, t)&\mapsto (g_S,\xi_S;z\rho, t),
\end{align}
which preserves the canonical holomorphic symplectic and Liouville 1-form on $\fU_S$ given explicitly by 
\begin{align*}
\omega|_{\fU_S}=-(d\lng \xi_S, g_S^{-1}dg_S\rng+d\lng t, z^{-1}dz\rng)
\end{align*} 
\begin{align}\label{eq: hat vartheta_S}
\vartheta|_{\fU_S}=-(\lng \xi_S, g_S^{-1}dg_S\rng+\lng t, z^{-1}dz\rng -\frac{1}{2}d\lng \xi_S, \Ad_{g_S^{-1}}\sfh_{0,S}-\sfh_{0,S}\rng+d\lng t, \sfh_{0, S^\perp}'\rng)
\end{align}

Let 
\begin{align}
\gamma_{-\Pi\backslash S}:=(-\beta\in-\Pi\backslash S): \cZ(L_S)_0\longrightarrow (\bC^\times)^{\Pi\backslash S}\hookrightarrow \bC^{\Pi\backslash S}. 
\end{align}
For $\rho\in \cZ(L_S)_0$ satisfying $|\gamma_{-\Pi\backslash S}(\rho)|\ll 1$, and some slightly larger open neighborhood $\cK'_{S^\perp}$ of $\overline{\cK}_{S^\perp}$, the map 
\begin{align}\label{eq: mu_D,cK,rho}
\mu_{D,\cK', \rho}:\pr_{\cK'_{S^\perp}}\circ \chi\circ \frj_{S;\rho}: \cW_{\cY_S, \cV, \cK}\longrightarrow \cK'_{S^\perp}
\end{align}
is well defined, and it fits into an $(n-|S|)$-dimensional family of deformations of $\pr_{\cK_{S^\perp}}$ through $\rho\mapsto (c_\beta)_\beta=\gamma_{-\Pi\backslash S}(\rho)$ (after inserting $\Ad_{\rho}$ between $\chi$ and $\frj_{S;\rho}$ in (\ref{eq: mu_D,cK,rho}) which has \emph{no} effect on (\ref{eq: mu_D,cK,rho}))\footnote{Here to simplify notations, we have suppressed the dependence of $\mu_{D,\cK, \rho}$ on the domain $\cW_{\cY_S, \cV, \cK}$. Note also that the family of maps $\mu_{D,\cK,\rho}$ does not necessarily embed into the universal family $\widetilde{\mu}_{D,\cK,(c_\beta)_{\beta\in \Pi\backslash S}}$, because $\gamma_{-\Pi\backslash S}$ is not always injective.}, given by 
\begin{align}\label{eq: mu_D,K, canonical}
&\widetilde{\mu}_{D,\cK',(c_\beta)_{\beta\in \Pi\backslash S}}:=\widetilde{\pr}_{\cK'_{S^\perp}}\circ \chi(\sum\limits_{\beta\in \Pi\backslash S}c_\beta\cdot f_\beta+\xi_S+t+\Ad_{g_S^{-1}}\psi), \\
\nonumber&\text{ for }(c_\beta)\in \bC^{\Pi\backslash S}, |(c_\beta)|\ll 1, \text{ and }\psi=\Ad_{z^{-1}\overline{w}_S^{-1}\overline{w}_0}(f-f_{-w_0(S)}),
\end{align}
from the same domain. 
Note that since $G$ has trivial center, there is a one-to-one correspondence between $\psi$ and $z\in \cZ(L_S)_0$. 
We will refer to (\ref{eq: mu_D,K, canonical}) as the \emph{universal $(\Pi\backslash S)$-deformations} of $\widetilde{\mu}_{D,\cK, 0}:=\pr_{\cK_{S^\perp}}$. One can view $\widetilde{\mu}_{D,\cK, 0}$ (originally defined on $\fU_{S,D,\cK}$) as the moment map for the obvious Hamiltonian $\cZ(L_S)_0$-action on the right-hand-side of (\ref{eq: frX_0, D, K}), and the Hamiltonian reduction is isomorphic to $\chi_S^{-1}(D_S/W_S)/\cZ(L_S^\der)_0\subset J_{L_{S}^\der/\cZ(L_S^\der)_0}$.  
Proposition \ref{eq: tilde mu, embed} below shows that for every element in the family, functions on $\cK_{S^\perp}'$ induce Poisson commuting Hamiltonian functions on $\cW_{\cY_S, \cV, \cK}$ through pullback, and it is part of an integrable system with complete $\cZ(L_S)_0$-orbits.

\begin{prop}\label{eq: tilde mu, embed}
For any $(c_\beta)_\beta$ with $|(c_\beta)_\beta|\ll 1$,  the image of 
\begin{align*}
\widetilde{\mu}_{D, \cK', (c_\beta)_\beta}^*: C^\infty(\cK'_{S^\perp};\bR)\longrightarrow C^\infty(\cW_{\cY_S, \cV, \cK};\bR)
\end{align*}
are Poisson commuting Hamiltonian functions on $\cW_{\cY_S, \cV, \cK}$, with respect the real symplectic structure. The same holds for pullback of holomorphic functions with respect the holomorphic symplectic structure. In fact, letting $S'=S\cup\{\beta\in \Pi\backslash S: c_\beta\neq 0\}$, we have a natural commutative diagram 
\begin{align}\label{lemma: integrable S}
\xymatrixcolsep{5pc}\xymatrix{\cW_{\cY_S, \cV, \cK}\ar@/_3pc/[dd]_{\widetilde{\mu}_{D,\cK',(c_\beta)_{\beta}}}\ar[d]\ar@{^{(}->}[r]^{\tilde{\iota}_S^{S'}\circ \frj_{S, \rho_0}}&J_{L_{S'}}\ar[d]^{\widetilde{\chi}_{S'}}\\
\cQ_{D,\cK'}/W_{S}\ar[d]\ar@{^{(}->}[r]^{\jmath}&\ft\sslash W_{S'}\cong\fc_{S'}\times \fz_{S'}\ar[d]\\
\cK'_{S^\perp}\ar[r]&\fz_{S'}
},
\end{align}
for some $\rho_0\in \cZ(L_S)_0$, that embeds $\widetilde{\mu}_{D, \cK', (c_\beta)_\beta}$ holomorphically symplectically into the integrable system 
\begin{align}\label{eq: lemma tilde chi_S'}
\widetilde{\chi}_{S', \cK'}: \widetilde{\chi}_{S'}^{-1}(\jmath(\cQ_{D,\cK'}/W_{S}))\overset{\widetilde{\chi}_{S'}}{\longrightarrow} \jmath(\cQ_{D,\cK'}/W_{S})\overset{\pr_{\cK_{S^\perp}'}\circ \jmath^{-1}}{\longrightarrow}  \cK'_{S^\perp}
\end{align}
with complete $\cZ(L_S)_0$-orbits. 

\end{prop}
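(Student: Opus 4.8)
The plan is to reduce the statement to a single explicit geometric identification, namely the embedding $\tilde{\iota}_S^{S'}\circ\frj_{S,\rho_0}$ of the product region $\cW_{\cY_S,\cV,\cK}$ into $J_{L_{S'}}$, and then read off the Poisson-commutativity and integrability from the already-established properties of the characteristic map $\chi_{S'}$. First I would fix $S'=S\cup\{\beta\in\Pi\backslash S: c_\beta\neq 0\}$ and choose $\rho_0\in\cZ(L_S)_0$ with $\gamma_{-\Pi\backslash S}(\rho_0)$ matching the prescribed constants $(c_\beta)_\beta$ in the $S'\backslash S$ directions (possible since those coordinates of $\gamma_{-\Pi\backslash S}$ are independent), so that post-composing $\frj_{S;\rho_0}$ realizes the deformation $\widetilde{\mu}_{D,\cK',(c_\beta)_\beta}$ as the restriction of $\widetilde{\mu}_{D,\cK',0}$ twisted into the $L_{S'}$-picture. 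The content is that, after this twist, the ``extra'' directions $f_\beta$, $\beta\in S'\backslash S$, get absorbed into the Kostant slice datum for $L_{S'}^\der$: by the formula (\ref{eq: prop U_S->U_S'}) for $\iota_S^{S'}$ and the splitting $\fz_S=\fz_{S'}\oplus\fz_S^{S'}$ from Proposition \ref{prop: fU_S', sqcup}, the image lands inside $\fU_{S'}\subset J_G$, and I would check directly that $\widetilde{\mu}_{D,\cK',(c_\beta)_\beta}$ equals $\pr_{\cK'_{S^\perp}}$ composed with $\widetilde{\chi}_{S'}$ under this embedding — this is essentially the computation in (\ref{eq: i*lambda}) run one level down, tracking which root spaces the various correction terms live in.

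Once the commutative diagram (\ref{lemma: integrable S}) is in place, the Poisson-commutativity is immediate: $\widetilde{\chi}_{S'}: J_{L_{S'}}\to\fc_{S'}$ is a holomorphic integrable system (it is the universal centralizer characteristic map for $L_{S'}$, factored through $\ft\sslash W_{S'}\cong\fc_{S'}\times\fz_{S'}$), so pullbacks of functions on the base Poisson-commute both holomorphically and, taking real parts, with respect to the real symplectic form; restricting further along $\pr_{\cK'_{S^\perp}}$ only shrinks the algebra of functions, so commutativity persists on $\cW_{\cY_S,\cV,\cK}$. The embedding $\tilde{\iota}_S^{S'}$ is holomorphic symplectic by Proposition \ref{prop: U_S}(a) and Proposition \ref{prop: fU_S', sqcup}, and $\frj_{S;\rho_0}$ preserves $\vartheta|_{\fU_S}$ (stated just before (\ref{eq: hat vartheta_S})), so the whole map $\tilde{\iota}_S^{S'}\circ\frj_{S,\rho_0}$ is a holomorphic symplectic embedding, which is what ``embeds holomorphically symplectically into the integrable system'' means. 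Completeness of the $\cZ(L_S)_0$-orbits is the last point: in $J_{L_{S'}}$, the fibers of $\widetilde{\chi}_{S'}$ over points of $\jmath(\cQ_{D,\cK'}/W_S)$ are centralizers of regular elements and in particular contain $\cZ(L_S)_0$ as a closed subgroup acting freely (this is where I use that the $\cZ(L_S)_0$-action on $J_{L_{S'}}$ is the restriction of the abelian group-scheme structure, hence by translations on fibers), so the orbits of the Hamiltonian $\cZ(L_S)_0$-action — which by construction are the pullbacks of its action on the $(\cZ(L_S)_0\times\cK_{S^\perp})$ factor — are complete.

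The main obstacle I expect is the bookkeeping in verifying the commutative square, specifically that the deformation term $\sum_{\beta\in\Pi\backslash S}c_\beta f_\beta$ appearing in $\widetilde{\mu}_{D,\cK',(c_\beta)_\beta}$ (\ref{eq: mu_D,K, canonical}) is precisely what the $\frj_{S;\rho_0}$-twist produces after pushing through $\tilde{\iota}_S^{S'}$ — one must show the remaining terms $\xi_S+t+\Ad_{g_S^{-1}}\psi$, when transported to $\fl_{S'}^\der$, recombine into an element of the Kostant slice $\cS_{\fl_{S'}^\der}$ plus the $\fz_{S'}$-part $t$, with no spillover into root spaces outside $\Delta_{S'}$. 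This is a matter of comparing the two parametrizations $N\times\cS\cong f+\fb$ at the $G$-level and at the $L_{S'}$-level, and using Remark \ref{remark: choices of w}(a) to match the representatives $\overline{w}_S,\overline{w}_{S'}$ so that $\Ad_{\phi^{S,S'}}f_\alpha=f_{w_{S'}w_S(\alpha)}$; once those identifications are aligned, the correction terms land in $\fn_{\fp_{S'}}$ exactly as in the proof of Proposition \ref{prop: U_S}, and the diagram commutes. I would relegate this verification to a short computation mirroring (\ref{eq: prop U_S}) and (\ref{eq: prop U_S->U_S'}) rather than writing it out in full, since structurally it is the same argument one level of rank lower.
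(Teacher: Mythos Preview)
Your proposal is correct and follows essentially the same approach as the paper's proof. The paper is terser: it chooses $\rho_0$ so that $\Ad_{\rho_0^{-1}}(\sum_{\beta\in S'\backslash S}c_\beta f_\beta)=f_{S'\backslash S}$, then compares the argument of $\chi$ in (\ref{eq: mu_D,K, canonical}) (after applying the harmless $\Ad_{\rho_0^{-1}}$) with the Lie-algebra component of $\tilde{\iota}_S^{S'}\circ\frj_{S,\rho_0}(g_S,\xi_S;z,t)$ and observes that their difference lies in $\fn_{\fp_{S'}}$, invoking the compatibility $\tilde{\iota}_{S'}^{\Pi}\circ\tilde{\iota}_S^{S'}=\tilde{\iota}_S^{\Pi}$ from Proposition~\ref{prop: fU_S', sqcup} rather than redoing the root-space bookkeeping by hand --- this is exactly the verification you flag as the main obstacle, and your plan to mirror (\ref{eq: prop U_S}) and (\ref{eq: prop U_S->U_S'}) amounts to reproving that compatibility. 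One small wording fix: it is not that there is ``no spillover into root spaces outside $\Delta_{S'}$'' --- the spillover into $\fn_{\fp_{S'}}$ is precisely what is there, and the point is that it does not affect the $W_{S'}$-invariant characteristic value (you say this correctly later in the same paragraph).
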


\begin{proof}
Fix any $(c_\beta)_\beta$ and let $S'$ be as above. Choose $\rho_0\in \cZ(L_S)_0$ such that $\Ad_{\rho_0^{-1}}(\sum\limits_{\beta\in \Pi\backslash S}c_\beta\cdot f_\beta)=f_{S'\backslash S}$. We do the following embedding using $\tilde{\iota}_S^{S'}$ from (\ref{eq: tilde i S, S'})
\begin{align*}
\tilde{\iota}_S^{S'}\circ \frj_{S, \rho_0}: &\cW_{\cY_S, \cV, \cK}\longhookrightarrow J_{L_{S'}}=J_{L_{S'}^\der}\underset{\cZ(L_{S'}^\der)}{\times}T^*\cZ(L_{S'})
\end{align*}
Then comparing $\Ad_{\rho_0^{-1}}(\sum\limits_{\beta\in S'\backslash S}c_\beta\cdot f_\beta+\xi_S+t+\Ad_{g_S^{-1}}\psi)$ with the second component of  $\tilde{\iota}_S^{S'}\circ \frj_{S, \rho_0}(g_S,\xi_S;z, t)$, we see that their difference 
is contained in $\fn_{\fp_{S'}}$ (the nilpotent radical of the standard parabolic subalgebra for $S'$). This can be directly seen from the equality $\tilde{\iota}^\Pi_{S'}\circ\tilde{\iota}_S^{S'}=\tilde{\iota}_S^\Pi$ established in Proposition \ref{prop: fU_S', sqcup}. Hence, we have the commutative diagram (\ref{lemma: integrable S}), and the embedding of $\widetilde{\mu}_{D, \cK', (c_\beta)_\beta}$ into the integrable system with complete $\cZ(L_S)_0$-orbits.  
\end{proof}

\begin{remark}\label{remark: cK in fz_S}
We remark that it is important to view (i.e. fix an embedding of) $\cK'_{S^\perp}$ inside $\fz_S^\circ$ to specify a Hamiltonian $\cZ(L_S)_0$-action on\\
 $\widetilde{\chi}_{S'}^{-1}(\jmath(\cQ_{D,\cK'}/W_{S}))$ in Proposition \ref{eq: tilde mu, embed}. In particular, in the following whenever we are talking about integrable systems over $\cK'_{S^\perp}$ with $\cZ(L_S)_0$-actions, it only makes sense after fixing such an embedding. Changing $\cK_{S^\perp}$ by $w\in N_{W_{S'}}(W_S)$ induces the following commutative diagram, where the left $\cZ(L_S)_0$ and right $\cZ(L_S)_0$ actions on $\chi_{\fl_{S'}}(\cK'_{S^\perp})$ at the top are respectively induced from identifying $\chi_{\fl_{S'}}(\cK'_{S^\perp})$ with $\cK'_{S^\perp}$ and $w(\cK'_{S^\perp})$. They are related by the automorphism $w$ on $\cZ(L_S)_0$. 
\begin{figure}[htbp]
\begin{tikzpicture}
\node (A0) at  (-4,0) {$\cK'_{S^\perp}$};
\node (B0) at (0,0)  {$\chi_{\fl_{S'}}(\cK'_{S^\perp})$};
\node (C0) at  (4,0) {$w(\cK'_{S^\perp})$};
\draw[->] (A0) -- (B0) node[pos=0.5, above]{$\sim$};
\draw[->] (C0) -- (B0) node[pos=0.5, above]{$\sim$};
\draw[->] (A0) to[bend right] (C0) node[pos=0.5, yshift=-0.4in]{$w$};
\node (A1) at  (-4,1.5) {$\cW_{\cY_S, \cV, \cK}$};
\node (B1) at (0,1.5)  {$\widetilde{\chi}_{S'}^{-1}(\jmath(\cQ_{D,\cK'}/W_{S}))$};
\node (C1) at  (4,1.5) {$\cW_{\cY_S, \cV, w(\cK)}$};
\draw[right hook->] (A1) -- (B1);
\draw[left hook->] (C1)-- (B1);
\draw[->] (A1) -- (A0) node[pos=0.5, left]{$\widetilde{\mu}_{D,\cK',(c_\beta)_{\beta}}$};
\draw[->] (B1) -- (B0) node[pos=0.5, left]{$\widetilde{\chi}_{S'}$};
\draw[->] (C1) -- (C0) node[pos=0.5, right]{$\widetilde{\mu}_{D,w(\cK'),(c_\beta)_{\beta}}$};
\node (A2) at  (-1.5,3) {$\cZ(L_S)_0$};
\node (B2) at (1.5,3)  {$\cZ(L_S)_0$};
\draw[->] (A2) -- (B2) node[pos=0.5, above]{$w$};
\draw [->] (-1.5, 1.9) arc(260:-80:0.4);
\draw [->] (1.5, 1.9) arc(260:-80:0.4);
\end{tikzpicture}
\end{figure}
\end{remark}

Let 
\begin{align}\label{eq: chi_fg}
\chi_\fg: \fg\longrightarrow \fc\ (\text{resp. }\chi_\ft: \ft\longrightarrow \fc)
\end{align}
be the adjoint quotient map, and let 
\begin{align}
\chi_{\cK_{S^\perp}}:=\pr_{\cK_{S^\perp}}\circ \chi: \chi^{-1}(\chi_\fg(\cQ_{D,\cK}))\longrightarrow \cQ_{D,\cK}/W_S\longrightarrow \cK_{S^\perp}
\end{align} 
be the moment map for the Hamiltonian $\cZ(L_S)_0$-action on $ \chi^{-1}(\chi_\fg(\cQ_{D,\cK}))$. 
For some slight enlargement $D'_S\supset \overline{D}_S$ contained in $\ft_S$, we have the commutative diagram 
\begin{align}\label{diagram: mu, S, rho}
\xymatrixcolsep{5pc}\xymatrix{\cW_{\cY_S, \cV, \cK}\ar[d]_{\widetilde{\mu}_{D,\cK', (\gamma_{-\Pi\backslash S}(\rho))}=\mu_{D,\cK', \rho}}\ar@{^{(}->}[r]^{\frj_{S,\rho}\ \ \ \ \ }&\chi^{-1}(\chi_\fg(\cQ_{D',\cK'}))\ar[dl]^{\chi_{\cK'_{S^\perp}}}\\
\cK'_{S^\perp}&
}
\end{align}
 for $\rho\in \cZ(L_S)_0$ satisfying $|\gamma_{-\Pi\backslash S}(\rho)|\ll 1$. 
 By Lemma \ref{lemma: xi, t, regular} below, there is an isomorphism 
\begin{align}\label{eq: chi_S, mathscr Z}
\chi_S^{-1}(D'_S/W_S)\underset{\cZ(L_S^\der)_0}{\times}(\cZ(L_S)_0\times \cK'_{S^\perp})&\longrightarrow \chi^{-1}(\chi_\fg(\cQ_{D',\cK'}))\\
\nonumber(((g_S, \xi_S)\in \mathscr{Z}_{L_S^\der}\sslash L_S^\der), (z, t))&\mapsto ((g_Sz, \xi_S+t)\in \mathscr{Z}_{G}\sslash G),
\end{align}
where the second line of the presentation (with the elements understood from the respective sublocus) emphasizes that the elements $(g_S, \xi_S)$ are from the centralizer presentation of $J_{L_S^\der}$ (\ref{eq: scrZ}), rather than the Whittaker Hamiltonian reduction perspective (in particular, $\xi_S+t$ is \emph{not} in $f+\fb$ unless $S=\Pi$) Then the Hamiltonian reduction of $\chi_{\cK'_{S^\perp}}$ at any point in $\cK'_{S^\perp}$ is then canonically isomorphic to $\chi_S^{-1}(D'_S/W_S)/\cZ(L_S^\der)_0$. 

\begin{lemma}\label{lemma: xi, t, regular}
Let $D_S, \cK_{S^\perp}$ satisfy the condition (\ref{eq: condition K_S, perp}). Then for any $\xi_S^\natural\in (f_S+\fb_S)\cap \chi_{\fl_S^\der}^{-1}(D_S/W_S)$ and $t^\natural\in \cK_{S^\perp}$, we have $\xi_S^\natural+t^\natural$ is regular in $\fg$. 
\end{lemma}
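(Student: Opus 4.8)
\textbf{Proposal for the proof of Lemma \ref{lemma: xi, t, regular}.}

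The plan is to reduce the regularity statement in $\fg$ to a statement about the regularity of $\xi_S^\natural$ in $\fl_S := \fl_S^\der \oplus \fz_S$ together with the fact that $t^\natural$ is a regular element of $\fz_S^\circ$. Recall that an element $X \in \fg$ is regular if and only if $\dim \fg_X = n = \mathrm{rank}\, G$, where $\fg_X$ is the centralizer. The key structural observation is that $\xi_S^\natural \in f_S + \fb_S$ lies in the standard Levi $\fl_S$, while $t^\natural \in \fz_S$ is central in $\fl_S$; hence $X := \xi_S^\natural + t^\natural$ lies in $\fl_S$ and its centralizer in $\fl_S$ is $\fz_{\fl_S}(X) = \fz_{\fl_S^\der}(\xi_S^\natural) \oplus \fz_S$. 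Since $\xi_S^\natural$ is regular in $\fl_S^\der$ (it belongs to the Kostant-slice locus $f_S + \fn_{\fl_S^\der}^\perp$ inside $\fl_S^\der$, cf. the presentation in (\ref{eq: J_L_S, f_S}) and Kostant's theorem), we get $\dim \fz_{\fl_S}(X) = |S| + (n - |S|) = n$, i.e. $X$ is regular \emph{in $\fl_S$}.

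The main step is then to upgrade regularity in $\fl_S$ to regularity in $\fg$. This is where the regularity hypothesis on $t^\natural$ (encoded in the condition $t^\natural \in \cK_{S^\perp} \subset \fz_S^\circ$, which via (\ref{eq: condition K_S, perp}) forces $\alpha(t^\natural) \neq 0$ for all $\alpha \in \Delta^+ \setminus \Gamma(S)$) is essential. Decompose $\fg = \fl_S \oplus \bigoplus_{\alpha \in \Delta \setminus \Gamma(S)} \fg_\alpha$. The centralizer $\fg_X$ decomposes compatibly: its $\fl_S$-part is $\fz_{\fl_S}(X)$, which has dimension $n$ by the previous paragraph; and for the off-diagonal root spaces, I would argue that $\mathrm{ad}_X$ acts invertibly on $\bigoplus_{\alpha \notin \Gamma(S)} \fg_\alpha$. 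Indeed, write $X = X_{\mathrm{ss}} + X_{\mathrm{nil}}$ for the additive Jordan decomposition of $X$ inside $\fl_S$; the semisimple part $X_{\mathrm{ss}}$ is $\mathrm{ad}$-conjugate into $\ft$ and, projecting to $\fz_S$, its $\fz_S$-component is exactly $t^\natural$ (since $\xi_S^\natural \in \fl_S^\der$ contributes nothing to $\fz_S$ and $t^\natural \in \fz_S$ is already semisimple and central). Hence for $\alpha \notin \Gamma(S)$, the eigenvalue of $\mathrm{ad}_{X_{\mathrm{ss}}}$ on $\fg_\alpha$ has the nonzero scalar $\alpha(t^\natural)$ as its $\fz_S$-contribution — and because $\fz_S \subset \ft$ and $\fl_S^\der$-weights on $\fg_\alpha$ are the $\pi_{\ft_S^*}(\alpha)$-weight spaces, one checks $\mathrm{ad}_{X_{\mathrm{ss}}}$ is invertible on $\fg_\alpha$; since $\mathrm{ad}_{X_{\mathrm{nil}}}$ commutes with $\mathrm{ad}_{X_{\mathrm{ss}}}$ and is nilpotent, $\mathrm{ad}_X = \mathrm{ad}_{X_{\mathrm{ss}}} + \mathrm{ad}_{X_{\mathrm{nil}}}$ is invertible on $\bigoplus_{\alpha \notin \Gamma(S)}\fg_\alpha$ as well. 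Therefore $\fg_X = \fz_{\fl_S}(X)$ has dimension exactly $n$, so $X$ is regular in $\fg$.

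I expect the main obstacle to be the bookkeeping in the off-diagonal step: one must be careful that $\xi_S^\natural$, though regular in $\fl_S^\der$, is \emph{not} a semisimple element, so the Jordan decomposition of $X$ genuinely mixes $\xi_S^\natural$ and $t^\natural$, and one needs to verify cleanly that the $\fz_S$-component of $X_{\mathrm{ss}}$ really is $t^\natural$ and that $\alpha(t^\natural) \neq 0$ for $\alpha \notin \Gamma(S)$ is the precise numerical consequence of (\ref{eq: condition K_S, perp}). A cleaner alternative that avoids Jordan decomposition entirely: conjugate $\xi_S^\natural$ by an element of $L_S^\der$ so that its semisimple part lies in $\ft_S$ (possible since $\chi_{\fl_S^\der}(\xi_S^\natural) \in D_S/W_S$ with $D_S$ a neighborhood of $0$, so generically $\xi_S^\natural$ is regular semisimple in $\fl_S^\der$ and one handles the nilpotent-degeneration locus by semicontinuity of $\dim\fg_X$), reducing to the case $X = h_S + t^\natural$ with $h_S \in \ft_S$ regular in $\fl_S^\der$ and $t^\natural \in \fz_S^\circ$, for which $X$ lies in $\ft$ and $\fg_X = \ft \oplus \bigoplus\{\fg_\alpha : \alpha(h_S + t^\natural) = 0\}$; regularity of $h_S$ in $\fl_S^\der$ kills the $\Gamma(S)$-roots and $\alpha(t^\natural)\neq 0$ kills the rest, giving $\fg_X = \ft$. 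I would write up this semicontinuity-plus-generic-case version as the primary argument, as it keeps the linear algebra transparent.
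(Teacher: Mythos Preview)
Your Jordan-decomposition argument has a real gap: after conjugating $X_{\mathrm{ss}}$ (by some $g\in L_S^\der$) to $h_S + t^\natural \in \ft$ with $h_S \in \ft_S$, the eigenvalues of $\mathrm{ad}_{X_{\mathrm{ss}}}$ on $\fn_{\fp_S}$ are the numbers $\alpha(h_S) + \alpha(t^\natural)$ for $\alpha\in\Delta^+\setminus\Gamma(S)$, and you need each such \emph{sum} to be nonzero --- not merely the $\fz_S$-contribution $\alpha(t^\natural)$. This is precisely where the full hypothesis (\ref{eq: condition K_S, perp}) on $\cQ_{D,\cK}=D_S+\cK_{S^\perp}$ (and not just $t^\natural\in\fz_S^\circ$) enters: since $\chi_{\fl_S^\der}(\xi_S^\natural)\in D_S/W_S$ forces $h_S\in D_S$, we have $h_S+t^\natural\in\cQ_{D,\cK}$, and if $\alpha(h_S+t^\natural)=0$ for some $\alpha\notin\pm\Gamma(S)$ then $s_\alpha\notin W_S$ fixes a point of $\overline{\cQ_{D,\cK}}$, contradicting (\ref{eq: condition K_S, perp}). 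So the route is salvageable, but not for the reason you state. Your semicontinuity alternative, on the other hand, is not: $\dim\fg_X$ is \emph{upper} semicontinuous in $X$, so knowing the generic value is $n$ gives no upper bound at the nilpotent-degeneration locus.

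The paper's proof is more direct and avoids Jordan decomposition. It first conjugates by $N_S$ (not $L_S^\der$) to put $\xi_S^\natural$ itself in the form $f_S+\tau$ with $\tau\in D_S\subset\ft_S$; this is possible because the $N_S$-orbits in $f_S+\fb_S$ are exactly the fibers of $\chi_{\fl_S^\der}$, and $f_S+\ft_S$ already surjects onto $\ft_S/W_S$. Then for nonzero $\eta=\sum c_\alpha e_\alpha\in\fn_{\fp_S}$ with $\alpha_0$ maximal among roots with $c_{\alpha_0}\neq 0$, the $\fg_{\alpha_0}$-component of $[f_S+\tau+t^\natural,\eta]$ is $c_{\alpha_0}\alpha_0(\tau+t^\natural)e_{\alpha_0}$ (since $[f_S,e_{\alpha_0}]$ lands in strictly lower root spaces), and $\alpha_0(\tau+t^\natural)\neq 0$ by (\ref{eq: condition K_S, perp}) exactly as above. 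The same works for $\fn_{\fp_S}^-$, so $\fg_X\subset\fl_S$ and regularity in $\fl_S$ finishes it. This triangular argument is shorter and uses the correct nonvanishing $\alpha(\tau+t^\natural)\neq 0$ from the outset.
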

\begin{proof}
Up to adjoint action by $N_S$, we may assume that $\xi_S^\natural=f_S+\tau\in \fb_S^-$ for some $\tau\in D_S\subset \ft_S$. We claim that for any $\eta=\sum\limits_{\alpha\in \Delta^+\backslash \Gamma(S)}c_\alpha e_\alpha\in \fn_{\fp_S}$, $[\xi_S^\natural+t^\natural, \eta]=0\Rightarrow \eta=0$. Suppose $\eta\neq 0$, let $\alpha_0$ be a maximal root (under the standard partial order) such that $c_{\alpha}\neq 0$. Then the root component of $[\xi_S^\natural+t^\natural, \eta]$ in $\fg_{\alpha_0}$ is equal to $[\tau+t^\natural, c_{\alpha_0}e_{\alpha_0}]=c_{\alpha_0}\alpha_0(\tau+t^\natural)e_{\alpha_0}$. By assumption on $D_S+\cK_{S^\perp}$, $\alpha_0(\tau+t^\natural)\neq 0$, for the annihilators in $\Delta^+$ of any element in $D_S+\cK_{S^\perp}$ is contained in $\Gamma(S)$. So the claim follows. Similarly, we have 
for any $\eta=\sum\limits_{\alpha\in \Delta^+\backslash \Gamma(S)}c_\alpha f_\alpha\in \fn^-_{\fp_S}$, $[\xi_S^\natural+t^\natural, \eta]=0\Rightarrow \eta=0$. Thus the Lie algebra centralizer of $\xi_S^\natural+t^\natural$ is contained in $\fl_S$. Since $\xi_S^\natural+t^\natural$ is regular in $\fl_S$, the lemma follows. 
\end{proof}

In the following, fix any  $D_S^\dagger, \cK_{S^\perp}^\dagger$ with the same property as $D_S, \cK_{S^\perp}$, respectively, satisfying
\begin{align}\label{eq: D, K, dagger}
\overline{D_S^\dagger}\subset D_S,\ \overline{\cK^\dagger}_{S^\perp}\subset \cK_{S^\perp}. 
\end{align}
and we consider 
\begin{align}\label{eq: cY_S^dagg}
\cY^\dagger_S\subset \chi_S^{-1}(D_S^\dagg/W_S), \overline{\cY_S^\dagg}\subset \cY_S
\end{align}
satisfying the similar property as for $\cY_S$ (\ref{eq: cY_S, cV}). 

Here is the main question that we will answer in this section. 
\begin{question}\label{question: mu_D, K, rho}
Since $\mu_{D,\cK',\rho}$ fits into the universal $(\Pi\backslash S)$-deformation of $\widetilde{\mu}_{D,\cK', (c_\beta)_{\beta\in \Pi\backslash S}}$, in particular for $|\gamma_{-\Pi\backslash S}(\rho)|\rightarrow 0$, it converges to $\widetilde{\mu}_{D,\cK,0}=\pr_{\cK_{S^\perp}}$, it can be viewed as an interpolating family of incomplete Hamiltonian $\cZ(L_S)_0$-systems between the complete systems $\chi_{\cK'_{S^\perp}}$ and $\widetilde{\mu}_{D,\cK,0}$ (the latter viewed on $\fU_{S, D,\cK}$). Can we understand the relations between these two complete Hamiltonian $\cZ(L_S)_0$-systems through the interpolating family? More concretely, we want to investigate the following two aspects of their relations:
\begin{itemize}
\item[(i)] The relation(s) between their $\cZ(L_S)_0$-orbits: for this (and (ii) below) we take $\cV_{S^\perp}\subset \cZ(L_S)_0$ to be $\cZ(L_S)_{0,\cpt}\times \exp(B_R(0))$ for some standard ball $B_R(0)$ centered at $0$ inside $\fz_{S, \bR}$, and we will relate $\frj_{S,\rho}(\{(g_S,\xi_S)\}\times \cV_{S^\perp}\times \{\kappa\})$ with a $\cZ(L_S)_0$-orbit inside $\chi_{\cK'_{S^\perp}}^{-1}(\kappa)$, for any $\kappa\in \cK_{S^\perp}^\dagger$.

\item[(ii)] The relation(s) between the Hamiltonian reductions through $\frj_{S,\rho}$: for the universal $(\Pi\backslash S)$-deformations $\widetilde{\mu}_{D,\cK',(c_\beta)_{\beta\in \Pi\backslash S}}$ (\ref{eq: mu_D,K, canonical}) with $|(c_\beta)_\beta|$ sufficiently small, we have the characteristic foliations in $\widetilde{\mu}_{D,\cK',(c_\beta)_\beta}^{-1}(\kappa)$ arbitrarily close to the ``standard" foliations 
\begin{align*}
\{\{(g_S,\xi_S)\}\times \cV_{S^\perp}\times \{\kappa\}: (g_S,\xi_S)\in \cY^\dagg_S, \kappa\in \cK_{S^\perp}\}.
\end{align*}
In particular, fixing the $|\cZ(L_S^\der)_0|$-to-$1$ multi-section of the standard foliation given by $\cY^\dagg_S\times \{u_0\}\times \cK_{S^\perp}$ for some $u_0\in \cV_{S^\perp}$, it is transverse to the characteristic foliations in $\widetilde{\mu}_{D,\cK',(c_\beta)_\beta}^{-1}(\kappa)$ for all $|(c_\beta)_\beta|$ small. After a modification of 
\begin{align*}
(\cY^\dagg_S\times \{u_0\}\times\cK_{S^\perp})\cap \widetilde{\mu}_{D,\cK',(c_\beta)_\beta}^{-1}(\kappa) 
\end{align*}
to be a $\cZ(L_S^\der)_0$-equivariant multi-section of the symplectic quotient; see the definition of $\cY^\dagg_{S, \kappa, (c_\beta)_\beta}$ in Remark \ref{remark: modify multi-section}. 
We get an embedding 
\begin{align*}
\cY^\dagg_{S, \kappa, (c_\beta)_\beta}/\cZ(L_S^\der)_0\hookrightarrow\widetilde{\mu}_{D,\cK',(c_\beta)_\beta}^{-1}(\kappa)/(\text{characteristic leaves}) 
\end{align*}
where the latter has the quotient symplectic structure\footnote{The latter quotient space might not have a good structure near $\partial \cY_S\underset{\cZ(L_S^\der)_0}{\times} (\cV_{S^\perp}\times \cK_{S^\perp})$. The embedding from $\cY^\dagg_{S, \kappa, (c_\beta)_\beta}/\cZ(L_S^\der)_0$ does not touch such ``bad" places.}, for all $(c_\beta)_\beta$ near $0$ and $\kappa\in \cK^\dagg_{S^\perp}$. Now for $\rho\in \cZ(L_S)_0$ with $|\gamma_{-\Pi\backslash S}(\rho)|$ sufficiently small, $\frj_{S,\rho}$ induces a map (which is a \emph{local} symplectic isomorphism) on the ``Hamiltonian reductions", 
\begin{align}\label{eq: j_S,rho, reduction}
\overline{\frj}_{S,\rho;\kappa}:&\cY^\dagg_{S,\kappa, (c_\beta)_\beta}/\cZ(L_S^\der)_0\hookrightarrow \mu_{D,\cK',\rho}^{-1}(\kappa)/(\text{characteristic leaves})\\
\nonumber&\longrightarrow \chi_S^{-1}(D'_S/W_S)/\cZ(L_S^\der)_0.
\end{align}

We would like to understand this map. More specifically, we will show that as we enlarge $\cY_S$ (then so is $\cY^\dagg_{S, \kappa, (c_\beta)_\beta}$) and letting $|\gamma_{-\Pi\backslash S}(\rho)|\rightarrow 0$, the map (\ref{eq: j_S,rho, reduction}) covers any fixed compact region inside $\chi_S^{-1}(D^\dagg_S/W_S)/\cZ(L_S^\der)_0$ in the codomain and it is one-to-one (see Proposition \ref{prop: Ham reduction emb} below). 
\end{itemize}
\end{question}

We remark that Question \ref{question: mu_D, K, rho} (i), (ii) are nontrivial and are quite useful for understanding the geometry of $J_G$. The reason is that it is a highly nonlinear question to deduce explicit formulas for the torus fibers $\chi^{-1}([\xi])\cong C_G(\xi)$ for general $\xi\in \cS$ and similarly $\cZ(L_S)_0$-orbits in $\chi_{\cK'_{S^\perp}}^{-1}(\kappa)$, especially (the portion) inside $\cB_{w_0}$ or $\fU_S$. The following two subsections analyze the asymptotic behaviors in certain directions, i.e. $|\gamma_{-\Pi\backslash S}(\rho)|\ll 1$, making the questions approachable, while giving geometric information about the orbits that is sufficient for many purposes.

\subsubsection{Analysis inside $\cB_{w_0}$}\label{subsubsec: B_w_0}
For $S=\emptyset$, many of the discussions as well as notations can be simplified. We will omit the null inputs $D_{\emptyset}, \cY_{\emptyset}, \cZ(L_{\emptyset}^\der)_0$ in all notations, and we will denote $\cK_{\emptyset^\perp}$ (resp. $\cV_{\emptyset^\perp}$) simply by $\cK$ (resp. $\cV$), for which we make the analytic identification $\chi_\ft|_{\cK}: \cK\cong \chi_\ft(\cK)\subset \fc^{\reg}$ (cf. Figure \ref{figure: cK}). Note that $\chi_{\cK}=\chi$, and diagram (\ref{diagram: mu, S, rho}) specializes to be the commutative diagram 
\begin{align}\label{diagram: mu, rho}
\xymatrix{\cV\times \ft\ar[d]_{\mu_{\rho}}&\ar@{_{(}->}[l]\cV\times \cK= \cW_{\cV, \cK}\ar[d]_{\widetilde{\mu}_{\cK', \gamma_{-\Pi}(\rho)}=\mu_{\cK', \rho}}\ar@{^{(}->}[r]^{\ \ \ \ \frj_{\rho} }&\chi^{-1}(\chi_\ft(\cK'))\ar[dl]^{\chi}\\
\fc&\ar@{_{(}->}[l]\cK'\overset{\chi_\ft}{\cong} \chi_\ft(\cK')&
},
\end{align}
where we add a left column on the deformed $\mu_\rho$ well defined on $\cV\times\ft$. We emphasize again that if we want to talk about $T$-action on $\chi^{-1}(\chi_\ft(\cK'))$, we need to specify an embedding of $\cK'$ into $\ft$. This is by default through the definition of $\cK'$ as a subset of $\ft$. 

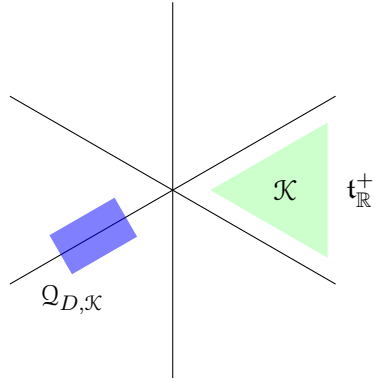
\begin{figure}
\begin{tikzpicture}
\draw (0,2.5)--(0,-2.5);
\draw ({2.5*cos(30)},{2.5*sin(30)})--({-2.5*cos(30)},{-2.5*sin(30)});
\draw ({2.5*cos(150)},{2.5*sin(150)})--({-2.5*cos(150)},{-2.5*sin(150)});
\fill[fill=green!20] ({1.8*cos(30)+0.5},{1.8*sin(30)})--(0.5,0)--({-1.8*cos(150)+0.5},{-1.8*sin(150)})--({1.8*cos(30)+0.5},{1.8*sin(30)});
\draw (1.5, 0) node {$\cK$};
\draw (2.5, 0) node {$\ft_\bR^+$};
\fill[fill=blue, opacity=0.5] ({0.6*cos(240)-0.2*cos(30)},{0.6*sin(240)-0.2*sin(30)})--({-0.6-0.2*cos(30)},{-0.2*sin(30)})--({-0.6-1.2*cos(30)}, {-1.2*sin(30)})--({0.6*cos(240)-1.2*cos(30)},{0.6*sin(240)-1.2*sin(30)})--({0.6*cos(240)-0.2*cos(30)},{0.6*sin(240)-0.2*sin(30)});
\draw ({0.6*cos(240)-1.2*cos(30)},{0.6*sin(240)-1.2*sin(30)}) node[below] {$\cQ_{D,\cK}$};
\end{tikzpicture}
\caption{A real picture of $\cK$ (green triangular region; here we make it inside the closed cone $\ft_\bR^+$) and $\cQ_{D,\cK}=D_S+\cK_{S^\perp}$ (blue rectangular region) inside $\ft$. }\label{figure: cK}
\end{figure}

For $S=\emptyset$, part (ii) of Question \ref{question: mu_D, K, rho} is trivial. For Question \ref{question: mu_D, K, rho} (i), our main result not only gives relations between the individual (incomplete) $T$-orbits, but also establishes an ``equivalence" between the integrable systems, restricted to certain pre-compact regions.

Since any $\kappa \in \cK$ is a regular value of $\chi$ and $\widetilde{\mu}_{\cK', 0}^{-1}(\kappa)=T\times \{\kappa\}$, for any pre-compact open region $\cV\subset T$ as described in Question \ref{question: mu_D, K, rho} (i) (the inclusion is in particular a homotopy equivalence), there exist $r_{\cV}>0$ such that for any $(c_\beta)_{\beta\in \Pi}$ satisfying $|(c_\beta)_\beta|<r_{\cV}$, we have $\kappa$ a regular value of $\widetilde{\mu}_{\cK', (c_\beta)}$ and $\widetilde{\mu}_{\cK', (c_\beta)}^{-1}(\kappa)\cap \cW_{\cV, \cK'}$ is a smooth Lagrangian section over $\cV$. 

For a general $[\xi]\in \fc$, we have the following: 

\begin{lemma}\label{lemma: hat chi_epsilon, gamma}
\item[(i)] For any pre-compact open $\cV\subset T$ as above, any compact region $\fK\subset \fc$ and $\delta>0$, there exists $r_{\cV,\fK, \delta}>0$ such that for all $[\xi]\in \fK$ and $\rho\in T$ satisfying $|\gamma_{-\Pi}(\rho)|\leq r_{\cV,\fK, \delta}$, we have
\begin{align}\label{eq: chi, xi, general}
\chi^{-1}([\xi])\cap (\frj_\rho(\cV)\times \ft)\subset \frj_\rho(\cV)\times \bigcup\limits_{\xi'\in \ft: \chi_\ft(\xi')=[\xi]}\{|t-\xi'|<\delta\}. 
\end{align}

\item[(ii)]
Let $\fK'\subset \fc^{\reg}$ be a compact subset. Then for any small $\delta>0$, there exists $r>0$ such that 
\begin{align}\label{lemma eq: chi_epsilon in delta_U}
\chi^{-1}([\xi])\cap (\bigcup\limits_{|\gamma_{-\Pi}(\rho)|\leq r}\frj_\rho(\cV)\times \ft)\subset (\bigcup\limits_{|\gamma_{-\Pi}(\rho)|\leq r} \frj_\rho (\cV))\times (\bigcup\limits_{\xi'\in \ft: \chi_\ft(\xi')=[\xi]}\{|t-\xi'|<\delta\}),
\end{align}
for all $[\xi]\in \fK'$, and the intersection has $|W|$ many connected components with each projecting to $\bigcup\limits_{|\gamma_{-\Pi}(\rho)|\leq r} \frj_\rho (\cV)$ isomorphically. 

\end{lemma}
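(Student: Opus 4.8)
\textbf{Proof plan for Lemma \ref{lemma: hat chi_epsilon, gamma}.}

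The whole statement is an asymptotic (``near $\cB_1$'') analysis of the characteristic map $\chi$ restricted to the open cell $\cB_{w_0}\cong T^*T$, in the coordinates of Example \ref{example: B_w0}, where a point is written $(\overline{w}_0^{-1}h, f+t+\Ad_{(\overline{w}_0^{-1}h)^{-1}}f)$ with $h\in T$, $t\in\ft$, and $\frj_\rho$ multiplies $h$ by $\rho$. The basic input is the elementary observation recorded at the start of Subsection \ref{subsec: analysis cB_w0}: when we apply $\frj_\rho$ with $h\in\cV$ (pre-compact) and let $|\gamma_{-\Pi}(\rho)|\to 0$ --- equivalently $\rho\to\infty$ along the cocharacter directions dual to the simple roots, so $\Ad_{\rho^{-1}}$ contracts $\fn^-$ --- the term $\Ad_{(\overline{w}_0^{-1}\rho h)^{-1}}f$, which lies in $\fn^-$ up to the $\overline{w}_0$ twist, tends to $0$ uniformly for $h\in\cV$. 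Hence $\chi\bigl(f+t+\Ad_{(\overline{w}_0^{-1}\rho h)^{-1}}f\bigr)$ converges, uniformly on $\cV$ and uniformly for $t$ in compact sets, to $\chi(f+t)=\chi_\ft(t)$ (the last equality is the Kostant-slice fact that $\chi$ of $f+t$ equals the image of $t$ under $\chi_\ft:\ft\to\fc$). So the plan for (i) is: fix $\cV$, $\fK$, $\delta$; first reduce to $t$ bounded, using that $|t|\to\infty$ forces $\chi_\ft(t)$ (hence $\chi$ of the perturbed element, by the uniform convergence on the relevant bounded-$t$ region together with properness considerations) to leave $\fK$ --- more precisely, the set of $t$ with $\chi_\ft(t)\in\fK$-shrunk is compact, and on an enlargement of it the convergence is uniform; then on that compact $t$-range invoke uniform convergence $\chi\circ\Ad_{\rho^{-1}}(\cdots)\to\chi_\ft$ to choose $r_{\cV,\fK,\delta}$ so small that $\bigl|\chi\bigl(f+t+\Ad_{(\overline{w}_0^{-1}\rho h)^{-1}}f\bigr)-\chi_\ft(t)\bigr|$ is tiny; finally, since $\chi_\ft$ is a finite branched cover and $\fK$ is compact, the set $\{t:\chi_\ft(t)=[\xi]\}$ is exactly the $W$-orbit of any one preimage, and a point with $\chi_\ft(t)$ within a small enough neighborhood of $[\xi]$ must lie in the $\delta$-neighborhood of that orbit (uniformity in $[\xi]\in\fK$ comes from the local-homeomorphism property of $\chi_\ft$ away from branch points plus a compactness/continuity argument near branch points). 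This gives \eqref{eq: chi, xi, general}.

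For (ii) one upgrades the per-$\rho$ statement to a statement uniform over the whole conic family $\{|\gamma_{-\Pi}(\rho)|\le r\}$ and, crucially, extracts the connected-component structure. The containment \eqref{lemma eq: chi_epsilon in delta_U} is immediate from (i): just take $\fK=\fK'$, shrink $\delta$ if needed so that the $\delta$-balls around distinct points of a $W$-orbit stay disjoint (possible since $\fK'\subset\fc^{\reg}$, so preimage orbits are free $W$-orbits and stay uniformly separated), and set $r=r_{\cV,\fK',\delta}$. For the component count: on $\cW_{\cV,\cK'}$ the map $\widetilde\mu_{\cK',\gamma_{-\Pi}(\rho)}$ is, for $r$ small, a small deformation of $\pr_{\cK'}$, whose fiber over $\kappa$ is $\cV\times\{\kappa\}$, a smooth Lagrangian section over $\cV$; by the implicit function theorem / Ehresmann over the pre-compact $\cV$, for $r$ small $\widetilde\mu_{\cK',(c_\beta)}^{-1}(\kappa)\cap\cW_{\cV,\cK'}$ remains a single smooth section over $\cV$, so after applying $\frj_\rho$ each fixed $\rho$ contributes exactly one connected sheet over $\frj_\rho(\cV)$. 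Running $\rho$ over the connected set $\{|\gamma_{-\Pi}(\rho)|\le r\}$ (and noting $\frj_\rho(\cV)$ depends continuously on $\rho$) glues these into a connected piece over $\bigcup_{|\gamma_{-\Pi}(\rho)|\le r}\frj_\rho(\cV)$; and the $W$-many preimages $\xi'$ of $[\xi]$ give $W$ disjoint such pieces because their $\delta$-neighborhoods are disjoint by the choice of $\delta$. That each piece projects isomorphically onto $\bigcup_{|\gamma_{-\Pi}(\rho)|\le r}\frj_\rho(\cV)$ follows from the section property sheet-by-sheet together with the fact that $\frj_\rho$ is a diffeomorphism and the union $\bigcup_\rho\frj_\rho(\cV)$ is swept out without overlap (for $\cV$ chosen of the stated form $\epsilon^{-\sfh_0}$-swept, which one may arrange).

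I expect the main obstacle to be making the convergence $\chi\bigl(f+t+\Ad_{(\overline{w}_0^{-1}\rho h)^{-1}}f\bigr)\to\chi_\ft(t)$ genuinely \emph{uniform} in the three parameters at once ($h\in\cV$, $t$ in a compact set that must itself be chosen in terms of $\fK$, and $[\xi]\in\fK$), together with handling the behavior near the branch locus of $\chi_\ft$: away from branch points everything is a clean application of the implicit function theorem, but a $W$-orbit preimage of a point in $\fc^{\reg}$ near the discriminant (still regular, but with small separation) requires a uniform lower bound on the separation of sheets of $\chi_\ft$ over the compact set $\fK\subset\fc$ --- for part (i) $\fK$ is an arbitrary compact in $\fc$, so it can meet the discriminant, and one must phrase the $\delta$-neighborhood conclusion so that it degrades gracefully there (which it does, since the statement only asks for containment in a union of $\delta$-balls, not for a count). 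For part (ii) the restriction $\fK'\subset\fc^{\reg}$ removes this subtlety and makes the component count clean. A secondary technical point is verifying that the deformed map $\widetilde\mu_{\cK',(c_\beta)}$ stays a submersion with connected fibers over $\cV$ for all small $(c_\beta)$ simultaneously; this is routine given pre-compactness of $\cV$ but should be stated carefully, and it is exactly the content that Proposition \ref{eq: tilde mu, embed} is designed to support.
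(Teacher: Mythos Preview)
Your overall strategy matches the paper's: pull back via $\frj_\rho^{-1}$ to study $\mu_\rho$ as a perturbation of $\chi_\ft\circ\proj_\ft$, and for (ii) use that each fiber of the deformed map is a Lagrangian section over $\cV$. There is, however, a genuine gap in part (i), precisely at the step you flag as the main obstacle: bounding $t$ uniformly in $\rho$ once $\mu_\rho(h,t)\in\fK$. Your proposed argument --- take a compact enlargement $K_t^+\supset\chi_\ft^{-1}(\fK)$ and invoke uniform convergence there --- does not close: for $t\notin K_t^+$ you still need $\mu_\rho(h,t)\notin\fK$, and the convergence is not uniform in that regime. The perturbation $\eta=\Ad_{(\overline{w}_0^{-1}\rho h)^{-1}}f$ is uniformly small in $\fn$, but $\chi$ is polynomial, so $|\chi(f+t+\eta)-\chi_\ft(t)|$ can grow with $|t|$; properness of $\chi_\ft$ alone says nothing about the perturbed map at large $|t|$.

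The paper fills this gap with the $\bC^\times$-homogeneity relation
\[
\mu_\rho(h,t)=\epsilon^{-2}\,\mu_{\rho\cdot\epsilon^{-\sfh_0}}(h,\epsilon^2 t).
\]
If a sequence $(h_n,t_n)$ with $|t_n|\to\infty$ satisfied $\mu_{\rho_n}(h_n,t_n)\in\fK$ with $|\gamma_{-\Pi}(\rho_n)|$ small, rescale by $\epsilon_n=|t_n|^{-1/2}$ to get $|t_n'|=1$ while $|\gamma_{-\Pi}(\rho_n\cdot\epsilon_n^{-\sfh_0})|$ stays small and $\mu_{\rho_n\cdot\epsilon_n^{-\sfh_0}}(h_n,t_n')=\epsilon_n^{2}\mu_{\rho_n}(h_n,t_n)\to[0]$. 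Uniform convergence on the compact set $\cV\times\{|t'|=1\}$ then forces $\chi_\ft(t_n')\to[0]$, contradicting $\chi_\ft^{-1}([0])=\{0\}$. This scaling trick is the missing ingredient; with it in place, your remaining steps (and your treatment of (ii)) go through and are essentially the paper's argument.
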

\begin{proof}
First, by applying $\frj_\rho^{-1}$ on both sides, (\ref{eq: chi, xi, general}) is equivalent to 
\begin{align}\label{eq: proof chi, xi, general}
\mu_{\rho}^{-1}([\xi])\cap (\cV\times \ft)\subset  \cV\times \bigcup\limits_{\xi'\in \ft: \chi(\xi')=[\xi]}\{|t-\xi'|<\delta\}. 
\end{align}
Second, we have the homogeneity relation for $(h,t)\in T^*T\cong \cB_{w_0}$
\begin{align*}
&\epsilon^2\cdot \mu_{\rho}(h, t)=\mu_\rho(h\cdot \epsilon^{-\sfh_0}, \epsilon^2\cdot t)=\mu_{\rho\cdot \epsilon^{-\sfh_0}}(h, \epsilon^2\cdot t)\\
\Leftrightarrow &\mu_{\rho}(h, t)=\frac{1}{\epsilon^2} \mu_{\rho\cdot \epsilon^{-\sfh_0}}(h, \epsilon^2\cdot t)
\end{align*}
This implies that $\proj_\ft(\mu_{\rho}^{-1}([\xi])\cap (\cV\times \ft))$ is contained in a compact region  for all $\rho$ with $|\gamma_{-\Pi}(\rho)|$ small. Then (\ref{eq: proof chi, xi, general}) follows from that $\mu_\rho$ is a small deformation of $\chi_\ft\circ \proj_\ft$. 

Assuming $[\xi]\in\fc^{\reg}$, there exists $\cK\subset \ft^{\reg}$ as above, such that $\chi_{\ft}^{-1}([\xi])=\{w(\xi'): w\in W\}\subset \coprod\limits_{w\in W}w(\cK)$ for some $\xi'\in \cK$. Then for $|\gamma_{-\Pi}(\rho)|$ sufficiently small, $\mu_{w(\cK'), \rho}^{-1}(w(\xi'))$ is a Lagrangian section in $\cW_{\cV, w(\cK')}$ over $\cV$. This directly implies the second part of the lemma. 
\end{proof}

In particular, Lemma \ref{lemma: hat chi_epsilon, gamma} implies that for any $[\xi]\in \fc^{\reg}$, inside the preimage of $\pi_{r}$ over  
\begin{align}\label{eq: cone cap bigger than R}
\Cone(\cV_{\log})\cap \{\sum_{\beta\in \Pi}r_\beta>R\}\subset \bR^\Pi_{\geq 0}, 
\end{align}
for a pre-compact open subset $\cV_{\log}$ in the interior of $\fC^{n-1}$ and $R\gg 1$, $\chi^{-1}([\xi])$ splits into $|W|$ disjoint sections over that region. Moreover, the sections are getting closer and closer to the constant sections indexed by $\{\xi'\in \ft: \chi_\ft(\xi')=[\xi]\}$ as $R\rightarrow\infty$. 
Here we are using the notations in Subsection \ref{subsubsec: H, sm}. Near the end of this subsection, we will give a more precise description of these $|W|$ disjoint sections inside $\chi^{-1}([\xi])$.

Now we work specifically with the setting written before Lemma \ref{lemma: hat chi_epsilon, gamma}. 
\begin{lemma}\label{lemma: L_t_gamma}
Under the above settings, for each $\kappa\in \cK^\dagg$, the Lagrangian 
\begin{align}\label{eq: lemma L_t, epsilon}
\cS_{\kappa, \rho}:=\mu_{\cK', \rho}^{-1}(\kappa)\subset \cW_{\cV, \cK},\text{ where } |\gamma_{-\Pi}(\rho)|<r_\cV
\end{align}
(resp. 
\begin{align*}
\cS_{\kappa, (c_\beta)}:=\widetilde{\mu}_{\cK', (c_\beta)_\beta}^{-1}(\kappa)\subset \cW_{\cV, \cK},\text{ where } |(c_\beta)_\beta|<r_\cV)
\end{align*}
satisfies
\begin{itemize}

\item[(i)] $\cS_{\kappa, \rho}$ (resp. $\cS_{\kappa, (c_\beta)}$) is a smooth Lagrangian section over $\cV$ that is Hamiltonian isotopic to $\cV\times \{\kappa\}$ inside $\cW_{\cV, \cK'}$. The same holds for $\cS_{\kappa, (c_\beta)}$.

\item[(ii)] The natural inclusion $\cS_{\kappa, \rho}\overset{\frj_\rho}\longhookrightarrow \chi^{-1}([\kappa])$ is a homotopy equivalence. Moreover, if we use the canonical identification with respect to $\xi=\kappa$ and $B_1=B$ in (\ref{eq: J_G, fc, ft}), (\ref{eq: pi_chi, B_1}), $\nu_\kappa: \chi^{-1}([\kappa])\overset{\sim}{\rightarrow} C_G(\kappa)\cong T$, then the sequence of maps 
\begin{align}\label{eq: lemma T, j, nu}
T\overset{h.e.}{\hookleftarrow}\cV\cong \cS_{\kappa, \rho}\overset{\frj_\rho}\longhookrightarrow \chi^{-1}([\kappa])\overset{\sim}{\rightarrow} C_G(\kappa)\cong T
\end{align}
induces a homotopy equivalence from $T$ (identified with $B/[B,B]$) to itself that is isotopic to the identity map. 
\end{itemize}
\end{lemma}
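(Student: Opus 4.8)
\textbf{Proof plan for Lemma \ref{lemma: L_t_gamma}.}

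The plan is to reduce everything to the case $\rho=e$ and $(c_\beta)=0$ by a deformation argument, then read off the topology from the explicit product structure of $\cB_{w_0}\cong T^*T$. For part (i), I would first note that $\kappa$ is a regular value of $\widetilde{\mu}_{\cK',0}=\pr_{\cK}$ and $\widetilde{\mu}_{\cK',0}^{-1}(\kappa)\cap\cW_{\cV,\cK'}=\cV\times\{\kappa\}$ is visibly a Lagrangian section. Since $\widetilde{\mu}_{\cK',(c_\beta)}$ is a $C^1$-small deformation of $\pr_{\cK}$ on the pre-compact region $\cW_{\cV,\cK'}$ when $|(c_\beta)|<r_\cV$ (this is exactly how $r_\cV$ was chosen, right before Lemma \ref{lemma: hat chi_epsilon, gamma}), the preimage $\cS_{\kappa,(c_\beta)}$ remains a smooth graph over $\cV$, hence a Lagrangian section. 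Connecting $(c_\beta)$ to $0$ through a path $(c_\beta^s)_{s\in[0,1]}$ with $|(c_\beta^s)|<r_\cV$ throughout, the family $\cS_{\kappa,(c_\beta^s)}$ is a smooth family of Lagrangian sections over $\cV$; by a standard Moser/fibrewise argument (or simply because a family of graphs of closed $1$-forms $\alpha_s$ on $\cV$ with $\alpha_0=0$ is generated by the Hamiltonian $\int$ of a primitive of $\tfrac{d}{ds}\alpha_s$), this is a Hamiltonian isotopy inside $\cW_{\cV,\cK'}$. The statement for $\cS_{\kappa,\rho}$ follows by setting $(c_\beta)=\gamma_{-\Pi}(\rho)$, using that $\mu_{\cK',\rho}=\widetilde{\mu}_{\cK',\gamma_{-\Pi}(\rho)}$ by diagram (\ref{diagram: mu, rho}).

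For part (ii), the homotopy equivalence $\cS_{\kappa,\rho}\hookrightarrow\chi^{-1}([\kappa])$ is where the content lies. I would argue as follows. The inclusion $\frj_\rho(\cS_{\kappa,\rho})\hookrightarrow\chi^{-1}([\kappa])$ sits inside $\cB_{w_0}\cong T^*T$, and by Lemma \ref{lemma: hat chi_epsilon, gamma}(i), for $|\gamma_{-\Pi}(\rho)|$ small the set $\chi^{-1}([\kappa])\cap(\frj_\rho(\cV)\times\ft)$ is contained in $\frj_\rho(\cV)\times\bigsqcup_{\chi_\ft(\xi')=[\kappa]}\{|t-\xi'|<\delta\}$, so the component of $\chi^{-1}([\kappa])$ meeting $\frj_\rho(\cV)\times\ft$ with $t$ near a fixed $\xi'$ is a section over $\frj_\rho(\cV)$ — and this component \emph{is} $\frj_\rho(\cS_{\kappa,\rho})$ by construction (it is the graph of $\mu_{\cK',\rho}^{-1}(\kappa)$). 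Now $\chi^{-1}([\kappa])\cong C_G(\kappa)\cong T$ is a single compact torus, and the projection $\chi^{-1}([\kappa])\to T$ (first factor of $T^*T$) restricted to $\frj_\rho(\cS_{\kappa,\rho})$ is a diffeomorphism onto $\frj_\rho(\cV)=\rho^{-\sfh_0}\!\cdot\!\cV\subset T$; the key point is then that as $\cV$ exhausts $T$ this image exhausts $T$, i.e. the inclusion $\cS_{\kappa,\rho}\cong\cV\hookrightarrow\chi^{-1}([\kappa])\cong T$ is, up to the homotopy equivalence $\cV\hookrightarrow T$, a self-map of $T$. To pin down that this self-map is isotopic to the identity, I would track the composite in (\ref{eq: lemma T, j, nu}) on $\pi_1$: the identification $\nu_\kappa:\chi^{-1}([\kappa])\xrightarrow{\sim}C_G(\kappa)$ is the map $(g,\xi)\mapsto g$ via (\ref{eq: J_G, fc, ft})–(\ref{eq: pi_chi, B_1}) with $B_1=B$, and composed with $\frj_\rho$ and the $T^*T$-coordinates from Example \ref{example: B_w0}, an element of $\cS_{\kappa,\rho}$ over $h\in\cV$ maps to $\overline{w}_0^{-1}h\rho$, i.e. to $h$ up to translation by the fixed element $\rho$ and the fixed representative $\overline{w}_0$ — translations and the (path-connected-to-identity, up to the $W$-action which does not enter here since we stay in one chamber) ambiguity are all isotopic to the identity on $T$. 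Hence the composite induces $\pm\mathrm{id}$ on $H_1(T;\bZ)$; checking it is $+\mathrm{id}$ (orientation/framing) gives a homotopy self-equivalence of $T$ inducing the identity on $\pi_1$, which for a torus is isotopic to $\mathrm{id}_T$.

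The main obstacle I anticipate is the last bookkeeping step: verifying that the induced self-equivalence of $T$ in (\ref{eq: lemma T, j, nu}) is isotopic to the identity rather than to a nontrivial automorphism. This requires carefully unwinding three identifications — the coordinates on $\cB_{w_0}\cong T^*T$ from Example \ref{example: B_w0} (which involve the fixed representative $\overline{w}_0$ and the map $h\mapsto\overline{w}_0^{-1}h$), the canonical identification $\nu_\kappa:\chi^{-1}([\kappa])\cong C_G(\kappa)$ built from the presentation (\ref{eq: J_G, fc, ft}) with the \emph{specific} choice $B_1=B$, and the homotopy equivalence $\cV\hookrightarrow T$ — and confirming they compose to a map homotopic to a translation (hence to $\mathrm{id}_T$, as $T$ is a group and translations are isotopic to the identity). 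A secondary, more routine point is quantifying $r_\cV$, $r_{\cV,\fK,\delta}$ uniformly as $\cV$ grows so that the ``component over $\frj_\rho(\cV)$'' description is valid on all of the torus fiber simultaneously; this is handled by the homogeneity relation $\mu_\rho(h,t)=\epsilon^{-2}\mu_{\rho\epsilon^{-\sfh_0}}(h,\epsilon^2 t)$ used in the proof of Lemma \ref{lemma: hat chi_epsilon, gamma}, which I would invoke rather than re-prove.
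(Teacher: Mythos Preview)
Your approach to part (i) has a genuine gap. The claim that ``a family of graphs of closed $1$-forms $\alpha_s$ on $\cV$ with $\alpha_0=0$ is generated by the Hamiltonian given by a primitive of $\tfrac{d}{ds}\alpha_s$'' presupposes that $\tfrac{d}{ds}\alpha_s$ is \emph{exact}, which is not automatic: $\cV$ is homotopy equivalent to the compact torus $T_{\cpt}$, so $H^1(\cV;\bR)\neq 0$. A smooth family of Lagrangian sections of $T^*\cV$ is in general only a Lagrangian isotopy; it is Hamiltonian precisely when the periods $\int_{\Gamma_i}\alpha_s$ are independent of $s$. The paper supplies exactly this missing ingredient. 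Because $\frj_\rho$ preserves the holomorphic Liouville form (this is (\ref{eq: hat vartheta_S}) for $S=\emptyset$), one has
\[
\int_{\Gamma_i}\vartheta|_{\cS_{\kappa,\rho}}=\int_{\widetilde{\frj}(\Gamma_i)}\vartheta|_{\chi^{-1}([\kappa])},
\]
and the right-hand side is manifestly independent of $\rho$. Taking the limit $|\gamma_{-\Pi}(\rho)|\to 0$ identifies the common value as $\langle\kappa,\Gamma_i\rangle$, so the periods are constant along the family and the isotopy is Hamiltonian.

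For part (ii), your plan to track (\ref{eq: lemma T, j, nu}) explicitly through coordinates is in principle viable but considerably harder than you indicate. The identification $\nu_\kappa$ is not ``$(g,\xi)\mapsto g$'' in the Whittaker coordinates of Example \ref{example: B_w0}: it involves conjugating the pair by an element of $G$ carrying $\xi$ to $\kappa$, and that element depends nonlinearly on $(h,\rho,t)$, so your formula ``$h\mapsto\overline{w}_0^{-1}h\rho$'' is not justified as written. The paper bypasses this bookkeeping entirely by reusing the period computation from (i): via the exact symplectomorphism $\chi^{-1}(\chi_\ft(\cK))\cong T\times\cK$ induced by the Lagrangian correspondence (\ref{eq: Lag corresp}), the right-hand side above equals $\langle\kappa,\nu_\kappa(\widetilde{\frj}(\Gamma_i))\rangle$, hence $\langle\kappa,\nu_\kappa(\widetilde{\frj}(\Gamma_i))\rangle=\langle\kappa,\Gamma_i\rangle$. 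Choosing $\kappa$ generic so that $\langle\kappa,-\rangle:H_1(\cV;\bQ)\to\bC$ is injective forces $\nu_\kappa\circ\widetilde{\frj}=\mathrm{id}$ on $H_1$, which for a torus is exactly the statement that the composite is isotopic to the identity. Thus the single period argument handles (i) and (ii) at once; your deformation picture gives the Lagrangian isotopy but not the two conclusions that actually require control of $H_1$.
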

\begin{proof}

(i) Fix a basis for $H_1(\cV,\bZ)\cong H_1(\cS_{\kappa,\rho},\bZ)$ (the isomorphism is the canonical one induced from the projection $\cS_{\kappa,\rho}\overset{\sim}{\rightarrow} \cV$) and denote each 1-cycle by $\Gamma_i$. First, the family of embeddings 
\begin{align}\label{eq: cV, cS, j_rho}
\cV\cong \cS_{\kappa, \rho}\overset{\frj_\rho}{\longhookrightarrow} \chi^{-1}([\kappa])
\end{align}
induces the same map on homology $\widetilde{\frj}: H_1(\cV,\bZ)\longrightarrow H_1( \chi^{-1}([\kappa]);\bZ)$. Since $\frj_\rho$ preserves holomorphic Liouville forms (\ref{eq: hat vartheta_S}) in the case when $S=\emptyset$, we have for any $\Gamma_i$, 
\begin{align}\label{eq: lemma L_t_gamma}
\int_{\Gamma_i}\vartheta|_{\cS_{\kappa, \rho}}=\int_{\widetilde{\frj}(\Gamma_i)}\vartheta|_{\chi^{-1}([\kappa])},
\end{align}
where the right-hand-side does \emph{not} depend on $\rho$. 
On the other hand, we have  
\begin{align}\label{eq: limit Gamma_i}
\lim\limits_{|\gamma_{-\Pi}(\rho)|\rightarrow 0}\int_{\Gamma_i}\vartheta|_{\cS_{\kappa, \rho}}=\int_{\Gamma_i}\vartheta|_{\cV\times\{\kappa\}}=\lng \kappa, \Gamma_i\rng.
\end{align}
So we have 
\begin{align*}
\int_{\Gamma_i}\vartheta|_{\cS_{\kappa, \rho}}=\lng \kappa, \Gamma_i\rng, \forall i.
\end{align*}
The same holds for $\cS_{\kappa, (c_\beta)}$ because every $(c_\beta)_\beta$ is in the closure of $\gamma_{-\Pi}(T)$.  These imply (i).

(ii) 
Since (\ref{eq: cV, cS, j_rho}) gives an isotopy class of embeddings over $\kappa\in \cK$, it suffices to prove (ii) for generic $\kappa\in \cK^\dagg$. For generic choices of $\kappa$, we may assume that $\lng\kappa, -\rng$ on an integral basis of $H_1(\cV;\bZ)$ is a set of linearly independent complex numbers over $\bQ$, equivalently the map $\lng \kappa,-\rng: H_1(\cV,\bQ)\rightarrow \bC$ is an embedding of vector spaces over $\bQ$. Note that the right-hand-side of (\ref{eq: lemma L_t_gamma}) is equal to $\lng \kappa, \nu_\kappa(\widetilde{\frj}(\Gamma_i)\rng$ with respect to the canonical identification $\nu_\kappa: \chi^{-1}([\kappa])\overset{\sim}{\longrightarrow} C_G(\kappa)\cong T$. This can be directly seen from the Lagrangian correspondence (\ref{eq: Lag corresp}) that induces an exact symplectomorphism $\chi^{-1}(\chi_\ft(\cK))\cong T\times \cK$. Now from the equality between (\ref{eq: lemma L_t_gamma}) and (\ref{eq: limit Gamma_i}), we see that $\Gamma_i, i=1,\cdots, n$, contained in $\cS_{\kappa, \rho}$, gives a basis of $H_1(\chi^{-1}([\kappa]);\bZ)$, and this shows that $\cS_{\kappa, \rho}\hookrightarrow \chi^{-1}([\kappa])$ is a homotopy equivalence. Moreover, by the same consideration, the sequence of maps (\ref{eq: lemma T, j, nu}) induces the identity map on $H_1(T;\bZ)$, hence it induces a homotopy equivalence that is isotopic to the identity. 
\end{proof}

\begin{prop}\label{lemma: empty, Ham isotopy}
Under the same setting as for Lemma \ref{lemma: L_t_gamma}, for any pre-compact open $\cV^\dagg\subsetneq \cV$ (defined similarly as for $\cV$) and any smooth curve $(c_\beta(s))_\beta, s\in (-\epsilon', \epsilon')$ with $(c_\beta(0))_\beta=0$ in $\bC^{\Pi}$, there exists a compactly supported Hamiltonian isotopy $\varphi_s, 0\leq s\leq \epsilon$ (with $\varphi_0=id$) on $\cW_{\cV, \cK}=\cV\times \cK$, for some sufficiently small $\epsilon>0$, such that 
\begin{align*}
\varphi_s(\cV^\dagg\times \{\xi\})\subset \widetilde{\mu}_{\cK', (c_\beta(s))_\beta}^{-1}([\xi]), 
\end{align*}
for every $\xi\in \cK^\dagg\subset \cK$ and $s\in [0,\epsilon]$. 
\end{prop}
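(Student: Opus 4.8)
The plan is to construct the Hamiltonian isotopy $\varphi_s$ as the time-$s$ flow of a time-dependent Hamiltonian obtained by an explicit family of primitives, using the fact that the level sets $\widetilde{\mu}_{\cK',(c_\beta(s))_\beta}^{-1}([\xi])$ form a smooth family of Lagrangian sections over $\cV$ that all agree (up to isotopy through sections) with $\cV\times\{\xi\}$ by Lemma \ref{lemma: L_t_gamma} (i). First I would fix a smooth family of trivializations: for $s$ small and $\xi\in\cK$, the restriction of $\widetilde{\mu}_{\cK',(c_\beta(s))_\beta}$ to $\cW_{\cV,\cK'}$ is a submersion with Lagrangian fibers, so the fiber $\widetilde{\mu}_{\cK',(c_\beta(s))_\beta}^{-1}(\xi)\cap\cW_{\cV,\cK'}$ is the graph of an exact $1$-form $dG_{s,\xi}$ on $\cV$ (in the canonical coordinates on $T^*T\cong\cB_{w_0}$ from Example \ref{example: B_w0}), where $G_{s,\xi}$ depends smoothly on $(s,\xi)$ and $G_{0,\xi}$ is the linear function pairing with $\xi$. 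The key point is that these graphs foliate a neighborhood of $\cV^\dagg\times\cK^\dagg$ as $\xi$ varies, so one can assemble the $G_{s,\xi}$ into a single generating function.

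The main steps, in order: (1) Verify that for $|s|,|(c_\beta(s))_\beta|$ small, the map $(\xi,y)\mapsto(\widetilde{\mu}_{\cK',(c_\beta(s))_\beta}(\text{graph point over }y),y)$ restricted to a neighborhood of $\overline{\cV^\dagg}$ is a diffeomorphism onto a neighborhood of $\cK^\dagg\times\overline{\cV^\dagg}$; this is a standard implicit-function argument using that at $s=0$ the map is $(\xi,y)\mapsto(\xi,y)$ after the analytic identification $\chi_\ft:\cK\cong\chi_\ft(\cK)$. (2) Use this to define, for each $s$, a symplectomorphism $\Psi_s$ from a neighborhood of $\cV^\dagg\times\cK^\dagg$ to itself that sends $\cV^\dagg\times\{\xi\}$ into $\widetilde{\mu}_{\cK',(c_\beta(s))_\beta}^{-1}([\xi])$, built fiberwise from the generating functions $G_{s,\xi}$ (this is where one checks the fibered symplectomorphism is exact, using that both foliations are by Lagrangian sections and applying the standard fact that a family of Lagrangian sections over a base is symplectomorphic to the trivial family by a fiber-preserving exact symplectomorphism). (3) Differentiate the family $\Psi_s$ in $s$ to get a time-dependent vector field, check it is Hamiltonian (automatic since $\Psi_s$ are symplectic and $\Psi_0=\mathrm{id}$ on a simply-connected-in-the-relevant-sense region, or by writing down the Hamiltonian directly as $\frac{\partial}{\partial s}$ of the family of generating functions), and cut off the resulting Hamiltonian by a bump function supported in a neighborhood of $\overline{\cV^\dagg}\times\overline{\cK^\dagg}$ contained in $\cV\times\cK$ to obtain the compactly supported $\varphi_s$. (4) Shrink $\epsilon$ so that the cutoff does not interfere with the required containment on $\cV^\dagg\times\cK^\dagg$ for $s\in[0,\epsilon]$.

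I expect the main obstacle to be step (2)--(3): producing the fiber-preserving exact symplectomorphism that simultaneously straightens all the fibers $\widetilde{\mu}_{\cK',(c_\beta(s))_\beta}^{-1}([\xi])$ to $\cV\times\{\xi\}$ and tracking that it can be taken to depend smoothly on $s$ with $\Psi_0=\mathrm{id}$, so that the $s$-derivative genuinely is a (time-dependent) Hamiltonian flow. The cleanest route is to set up coordinates $(q_\beta,\theta_\beta;\Re p_\beta,\Im p_\beta)$ on $\cB_{w_0}\cong T^*T$ as in \eqref{eq: q, p} and work with the generating-function description directly: the fiber over $\xi$ is $\{(\,y,\ dG_{s,\xi}(y)\,):y\in\cV\}$, and the sought symplectomorphism in these coordinates is $(y,p)\mapsto(y,\,p+dG_{s,\Xi(s,y,p)}(y)-d(\text{pairing with }\Xi(s,y,p))(y)\,)$ where $\Xi$ is the inverse from step (1); one then verifies exactness and computes the generating Hamiltonian as an explicit expression in $G_{s,\xi}$ and its $s$-derivative. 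This is a routine but slightly delicate bookkeeping exercise; everything else (the implicit function theorem input, the cutoff, shrinking $\epsilon$) is standard. Note also that it suffices to treat the curve $(c_\beta(s))_\beta$ rather than the two-variable family since the statement only asks for an isotopy along a given curve, which keeps the generating-function family one-parameter and avoids any integrability condition.
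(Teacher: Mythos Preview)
Your overall strategy---build a one-parameter family of symplectomorphisms $\Psi_s$ straightening the deformed Lagrangian fibrations to the standard one, then differentiate---is sound and close in spirit to the paper, but the execution in step~(2) has a gap and the paper takes a cleaner route that you should know about.

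\textbf{The gap.} The explicit formula you write for $\Psi_s$, namely $(y,p)\mapsto(y,\,p+dG_{s,\Xi(s,y,p)}(y)-\cdots)$, is not symplectic in general: if you compute $\Psi_s^*\omega$ for the naive fiberwise shift $(y,p)\mapsto(y,p+d_yG_{s,p}(y))$ you pick up cross terms $\partial_{p_j}\partial_{y_i}G_{s,p}\,dp_j\wedge dy_i$ that do not vanish. The ``standard fact'' you invoke---that a family of Lagrangian sections is fiber-preservingly symplectomorphic to the trivial family---is the Arnold--Liouville normal form, and it only holds \emph{after} one has an affine structure on the fibers (equivalently, the Hamiltonian torus action). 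A Weinstein-neighborhood argument alone only gives the symplectomorphism in a tube around the chosen section $\{u_0\}\times\cK$, which need not cover all of $\cV^\dagg$. Also note that $\cV\simeq T_\cpt\times(\text{ball})$ is \emph{not} simply connected, so ``symplectic $\Rightarrow$ Hamiltonian'' in step~(3) is not automatic; you must check periods.

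\textbf{What the paper does instead.} The paper invokes Proposition~\ref{eq: tilde mu, embed}, which embeds $\widetilde{\mu}_{\cK',(c_\beta(s))_\beta}$ (for each fixed $s$) into a \emph{complete} integrable system $\widetilde{\chi}_{S'}$ with a genuine Hamiltonian $T$-action. Fixing the Lagrangian section $\{u_0\}\times\cK$, this furnishes canonical action--angle coordinates $(p^j_{c,s},p^j_{\bR,s};q^j_{c,s},q^j_{\bR,s})$ for every $s$, and $\varphi_s$ is defined as the identity in these coordinates. This map is symplectic \emph{by construction}, avoiding the bookkeeping you flag. Exactness of $\varphi_s^*\vartheta-\vartheta$ (hence $\varphi_s$ Hamiltonian, not merely symplectic) is then checked by showing its periods over the $1$-cycles $\Gamma_i$ of $\cV$ vanish, which is exactly Lemma~\ref{lemma: L_t_gamma}\,(i). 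Finally one extends to a compactly supported Hamiltonian isotopy on $\cV\times\cK$.

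\textbf{Comparison.} Your generating-function route can be salvaged (use a genuine generating function $S_s(y,P)$ rather than a vertical shift, or run a Moser argument solving $\{h_s,\widetilde{\mu}_{s,j}\}=-\partial_s\widetilde{\mu}_{s,j}$ fiberwise), but the affine/torus structure on the fibers---i.e., Proposition~\ref{eq: tilde mu, embed}---is precisely what makes either version work globally on $\cV^\dagg$. The paper's approach buys you symplecticity for free and isolates the only nontrivial check (the period computation) cleanly.
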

\begin{proof}
Fix a reference point $u_0\in \cV$. The Lagrangian section $\{u_0\}\times \cK$ of the Lagrangian fibration $\cV\times \cK\rightarrow \cK$ gives a Lagrangian section for 
\begin{align}\label{eq: mu, cK', c_beta, s}
\widetilde{\mu}_{\cK', (c_\beta(s))_\beta}: \widetilde{\mu}_{\cK', (c_\beta(s))_\beta}^{-1}(\cK^\dagg)\longrightarrow \cK^\dagg, |s|\text{ sufficiently small}. 
\end{align} 
Without loss of generality, we may assume that $\epsilon'$ is sufficiently small, so that the above holds for all $s\in (-\epsilon', \epsilon')$. 
By Proposition \ref{eq: tilde mu, embed}, for every $s$, (\ref{eq: mu, cK', c_beta, s}) is part of a complete integrable system with each fiber a complete $T$-orbit. 
So with respect to some fixed real linear coordinates $(p_{c}^j; p_\bR^j), 1\leq j\leq n$ on $\ft^*\cong \ft\cong \ft_c\oplus \ft_\bR$ (e.g. those introduced in (\ref{eq: q, p})), there are canonical (locally defined) affine coordinates on the fibers $(q_{c,s}^j; q_{\bR,s}^j)$, with base points defined by the Lagrangian section $\{u_0\}\times \cK$, such that the real symplectic form $\omega$ is of the form $-\sum\limits_j dp_{c,s}^j\wedge dq_{c,s}^j+dp_{\bR,s}^j\wedge dq_{\bR,s}^j$. 
Here 
\begin{align*}
p_{c,s}^j(u_0, \xi)=p_c^j(\widetilde{\mu}_{\cK', (c_\beta(s))_\beta}(u_0, \xi)),\  p_{\bR,s}^j(u_0, \xi)=p_\bR^j(\widetilde{\mu}_{\cK', (c_\beta(s))_\beta}(u_0, \xi))
\end{align*} 
on $\{u_0\}\times \cK$. 
We will use $(q_c^j, q_{\bR}^j; p_c^j, p_\bR^j)$ to denote for $(q_{c, 0}^j, q_{\bR,0}^j; p_{c,0}^j, p_{\bR,0}^j)$. 

For any $0<s\leq \epsilon'$, using Proposition \ref{eq: tilde mu, embed}, we can define a $T$-equivariant symplectomorphism
\begin{align}\label{eq: tilde varphi_s}
\widetilde{\varphi}_{s,\rho_0}: T\times \cK^\dagg\longrightarrow \widetilde{\chi}_{S'}^{-1}(\jmath(\cK^\dagg))
\end{align}
over $\cK^\dagg$, that respects the restriction of the chosen Lagrangian sections $\{u_0\}\times \cK$ and $\widetilde{\iota}_{\emptyset}^{S'}\circ \frj_{\rho_0}(\{u_0\}\times \cK)$, where $S'$ and $\rho_0$ depend on $(c_{\beta}(s))_\beta$. 
Now the restriction of $\widetilde{\varphi}_{s,\rho_0}$ gives  
\begin{align*}
\varphi_s: &\cV^\dagg\times \cK^\dagg\longrightarrow  \widetilde{\mu}_{\cK', (c_\beta(s))_\beta}^{-1}(\cK^\dagg)\hookrightarrow \cW_{\cV,\cK}\\
&(q_c^j, q_\bR^j; p_c^j, p_\bR^j)\mapsto (q_{c,s}^j=q_c^j, q_{\bR,s}^j=q_c^j; p_{c,s}^j=p_c^j, p_{\bR,s}^j=p_{\bR}^j),
\end{align*}
with respect to the coordinates defined above, which is \emph{independent} of the choice of $\rho_0$. Since the coordinates $(q_{c,s}^j, q_{\bR,s}^j; p_{c,s}^j, p_{\bR,s}^j)$ change smoothly with respect to $s$, for sufficiently small $s>0$, $\varphi_s$ is well defined and smoothly depending on $s$.  Note that this actually gives an alternative proof of Lemma \ref{lemma: L_t_gamma} (ii).

Lastly, by Lemma \ref{lemma: L_t_gamma} (i), we see that $\varphi_s^*\vartheta_{\std}-\vartheta_{\std}$ is an exact 1-form (which is bounded because all the constructions can be extended to the larger neighborhood $\cV\times \cK$), using the restriction of the standard real Liouville form $\vartheta_{\std}$ on $T^*T$. Equivalently, one can use $\vartheta|_{\cB_{w_0}}$ instead of $\vartheta_{\std}$. Hence $\varphi_s$ can be extended to be a compactly supported Hamiltonian isotopy on $\cV\times \cK$. This completes the proof of the proposition. 
\end{proof}

\begin{notations}\label{notations}
For inclusion of open cones $C_{\lhd}\subset C_{\lhd}'\subset \ft_{\bR}^+- \{0\}$ (recall $\ft_\bR^+$ is closed), we use the notation $C_{\lhd}\dot{\subset}C_{\lhd}'\dot{\subset} \ft_{\bR}^+- \{0\}$ to indicate the condition that $\overline{C}_{\lhd}-\{0\}\subset C_{\lhd}'$ and $\overline{C'}_{\lhd}-\{0\}\subset\mathring\ft_{\bR}^+$. 
\end{notations}

\begin{lemma}\label{lemma: proj X_eta, c}
Assume the same setting as for Lemma \ref{lemma: L_t_gamma}. Fix any open cones $C_{\lhd}\dot{\subset} C_{\lhd}'\dot{\subset} \ft_{\bR}^+- \{0\}$.  
Then there exists $\epsilon_{C_\lhd}>0$ and $M>0$, such that for all $|(c_\beta)_{\beta\in \Pi}|<\epsilon_{C_\lhd}$ and all $\eta\in C_{\lhd}\subset C^\infty(\cK;\bR)$ (or equivalently viewed as a holomorphic function in the holomorphic setting), the Hamiltonian vector field $X_{\eta;(c_\beta)}$ of the pullback function $\widetilde{\mu}_{\cK', (c_\beta)_\beta}^*(\eta)$ satisfies the following:

for any $(u,\xi)\in \widetilde{\mu}_{\cK', (c_\beta)_\beta}^{-1}(\cK^\dagg)\subset \cV\times\cK$, the projection of $X_{\eta;(c_\beta)}(u,\xi)$ in $T_u\cV\cong \ft$ is contained in $C_{\lhd}'+\ft_c$ and 
\begin{align*}
|X_{\eta;(c_\beta)}(u,\xi)-(\frj_u)_*\eta(u,\xi)|\leq M\cdot |(c_\beta)_\beta|\cdot |\eta|.
\end{align*} 
\end{lemma}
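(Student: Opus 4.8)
The plan is to analyze the Hamiltonian vector field $X_{\eta;(c_\beta)}$ as a small perturbation of the vector field associated to the ``undeformed'' moment map $\widetilde{\mu}_{\cK',0}=\pr_{\cK_{S^\perp}}$ (with $S=\emptyset$, so this is just $\chi_\ft\circ\proj_\ft$ composed with the identification $\cK\cong\chi_\ft(\cK)$). First I would observe that for $(c_\beta)=0$, the pullback $\widetilde{\mu}_{\cK',0}^*(\eta)$ is a function of the base coordinate $t\in\cK\subset\ft$ alone, so its Hamiltonian vector field on $\cV\times\cK\subset T^*T$ is purely ``horizontal'': with respect to the Darboux splitting from (\ref{eq: q, p}), $X_{\eta;0}(u,\xi)$ projects in $T_u\cV\cong\ft$ to precisely $d\eta(\xi)$ viewed as an element of $\ft$ via the identification $\ft^*\cong\ft$ — this is exactly $(\frj_u)_*\eta(u,\xi)$ when we read off the ``base direction'' correctly. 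The point of the hypothesis $\eta\in C_{\lhd}$ (a cone strictly inside $\ft^+_\bR-\{0\}$, in the sense of Notation \ref{notations}) is that its differential, being constant, lands in the same cone $C_\lhd$; since $C_\lhd\dot\subset C_\lhd'$, the projected vector lies in $C_\lhd'+\ft_c$ with room to spare, uniformly over $\xi\in\cK^\dagg$ and $u\in\cV$.

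Next I would make the perturbation estimate precise. The family $\widetilde{\mu}_{\cK',(c_\beta)_\beta}$ is by construction (see (\ref{eq: mu_D,K, canonical}), specialized to $S=\emptyset$) a smooth family in $(c_\beta)_\beta$ with $\widetilde{\mu}_{\cK',0}$ the value at the origin, and all the relevant data (the map, its derivatives, the symplectic form) are smooth on the pre-compact closure $\overline{\cW_{\cV,\cK}}$. Hence $d(\widetilde{\mu}_{\cK',(c_\beta)}^*\eta)-d(\widetilde{\mu}_{\cK',0}^*\eta)$ has sup-norm bounded by $C\cdot|(c_\beta)_\beta|\cdot|\eta|$ on $\widetilde\mu_{\cK',(c_\beta)}^{-1}(\cK^\dagger)$ for a constant $C$ depending only on $\cV,\cV^\dagger,\cK,\cK^\dagger$ and $C_\lhd$ (linearity in $\eta$ is immediate since the construction is linear in the function pulled back, and linearity in $(c_\beta)$ to first order gives the factor $|(c_\beta)|$ via Taylor's theorem with the smooth remainder absorbed into $C$ after shrinking). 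Since the symplectic form $\omega$ is also a $C^1$-small perturbation-controlled quantity on this compact region (in fact it is $\rho$-independent here, being the standard form on $T^*T$ restricted, so no perturbation at all on the $\omega$ side), inverting $\omega$ to pass from $d(\cdots)$ to the Hamiltonian vector field preserves the estimate with an adjusted constant $M$. This yields the desired bound $|X_{\eta;(c_\beta)}(u,\xi)-(\frj_u)_*\eta(u,\xi)|\le M\cdot|(c_\beta)_\beta|\cdot|\eta|$.

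For the cone containment statement, I would combine the two pieces: the projection of $X_{\eta;0}(u,\xi)$ to $T_u\cV\cong\ft$ lies in $C_\lhd$ (for the real part) plus $\ft_c$, and the correction term has norm at most $M|(c_\beta)||\eta|$. Choosing $\epsilon_{C_\lhd}>0$ small enough that $M\epsilon_{C_\lhd}$ times the unit vectors of $C_\lhd$ stay within the open cone $C_\lhd'$ — this is possible precisely because $\overline{C}_\lhd-\{0\}\subset C_\lhd'$ is an open condition and everything is scale-invariant (both sides homogeneous of degree $1$ in $\eta$) — forces the projection of $X_{\eta;(c_\beta)}(u,\xi)$ into $C_\lhd'+\ft_c$ for all $|(c_\beta)|<\epsilon_{C_\lhd}$. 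The main obstacle I anticipate is being careful about what ``the projection of $X_{\eta;(c_\beta)}(u,\xi)$ in $T_u\cV\cong\ft$'' means along the deformed fibration: when $(c_\beta)\ne 0$ the fibers of $\widetilde\mu_{\cK',(c_\beta)}$ are tilted relative to $\cV\times\{\xi\}$, so one must either use Proposition \ref{eq: tilde mu, embed} to realize the deformed system as (part of) an honest complete integrable system with well-defined action–angle-type coordinates in which the vertical/horizontal splitting is unambiguous, or argue directly that the tilt itself is $O(|(c_\beta)|)$ and hence does not affect the leading cone membership. I would use the former route — invoking the embedding into $\widetilde\chi_{S'}$ — since it makes the ``$(\frj_u)_*\eta$'' comparison geometrically transparent and keeps the estimate uniform over the cone of $\eta$'s.
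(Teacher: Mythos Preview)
Your approach is correct and essentially the same as the paper's: expand $\widetilde{\mu}_{\cK',(c_\beta)}$ analytically in $(c_\beta)$, identify the zeroth-order Hamiltonian vector field as the invariant field $(\frj_u)_*\eta$, and invoke pre-compactness of $\cW_{\cV,\cK}$ for the uniform bound. The ``obstacle'' you anticipate is not one: the projection in the statement is simply to the $T_u\cV$ factor of the fixed product splitting $T_{(u,\xi)}(\cV\times\cK)\cong T_u\cV\oplus T_\xi\cK$, independent of the deformed fibration, so no appeal to Proposition~\ref{eq: tilde mu, embed} is needed and the paper proceeds directly from the expansion.
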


\begin{proof}
It is clear from the definition (\ref{eq: mu_D,K, canonical})
\begin{align*}
\widetilde{\mu}_{\cK', (c_\beta)_\beta}(u,\xi)=\xi+\sum\limits_{\beta\in \Pi} c_\beta P_\beta(u,\xi)+\cdots
\end{align*}
has a convergent analytic expansion in $c_\beta$ with coefficients in analytic $\ft^*$-valued functions of $(u,\xi)$. Thus the holomorphic Hamiltonian vector field $X^{hol}_{\eta;(c_\beta)}$ has an analytic expansion 
\begin{align*}
X^{hol}_{\eta, (c_\beta)}(u,\xi)=(\frj_u)_*\eta+\sum\limits_{\beta\in \Pi} c_\beta X^{hol}_{\eta;\beta}(u,\xi)+\cdots
\end{align*}
where $\eta\in \ft$ is the invariant vector field on each fiber $\cV\times \{\kappa\}\subset T\times\{\kappa\}$ and $X^{hol}_{\eta;\beta}$ is the holomorphic Hamiltonian vector field of $\lng \eta, P_{\beta}(u,\xi)\rng$. Note that the corresponding real Hamiltonian vector field is $X_{\eta;(c_\beta)}=2\Re X^{hol}_{\eta;(c_\beta)}$. Since $\cW_{\cV, \cK}$ is pre-compact, the lemma follows.  
\end{proof}

Similarly as for $\widetilde{\varphi}_{s, \rho_0}$ (\ref{eq: tilde varphi_s}) in the proof of Proposition  \ref{lemma: empty, Ham isotopy}, we define for $|(c_\beta)_\beta|\ll 1$
\begin{align}\label{eq: tilde varphi_c}
\widetilde{\varphi}_{(c_\beta), \rho_0}: T\times \cK^\dagg\longrightarrow \widetilde{\chi}_{S'}^{-1}(\jmath(\cK^\dagg)) 
\end{align} 
to be the $T$-equivariant symplectomorphism over $\cK^\dagg$ that sends the Lagrangian section $\{u_0\}\times \cK^\dagg$ to the restriction of $\widetilde{\iota}_\emptyset^{S'}\circ \frj_{\rho_0}(\{u_0\}\times \cK)$, where $S'$ and $\rho_0$ depending on $(c_\beta)_\beta$ are as in Proposition \ref{eq: tilde mu, embed}. In particular, $\gamma_{-S'}(\rho_0)=(c_\beta)_{\beta\in S'}$.  

For any subset $C\subset \ft_\bR^+-\{0\}$, let $T_{C}$ denote for the preimage of $C$ through the real logarithmic map $\log_{\bR}: T\rightarrow \ft_\bR$. 

\begin{prop}\label{prop: C_lhd, j}
Fix any open cone $C_\lhd\dot{\subset} \ft_\bR^+-\{0\}$. 
Under the same setting as for Lemma \ref{lemma: L_t_gamma}, there exists $\epsilon_{C_\lhd}>0$ such that for all $|(c_\beta)_\beta|<\epsilon_{C_\lhd}$ and $\rho'\in T_{C_\lhd}$, 
\begin{align}\label{eq: cor rho}
\rho'\star (\widetilde{\iota}_\emptyset^{S'}\circ \frj_{\rho_0}(\overline{\widetilde{\mu}_{\cK', (c_\beta)_\beta}^{-1}(\cK^\dagg)}))\subset \widetilde{\iota}_\emptyset^{S'}\circ \frj_{\rho_0\rho'}(\cW_{\cV', \cK'}),
\end{align}
where $\rho_0$ is associated with $(c_\beta)_\beta$ as above, and the action on the left-hand-side is from the $T$-action on the right-hand-side of (\ref{eq: tilde varphi_c}) with respect to $\chi_\ft(\cK')\cong\cK'$ (cf. Remark \ref{remark: cK in fz_S}) . 
Moreover, for any chosen $\delta>0$, we can choose $\epsilon_{C_\lhd}>0$ so that there is a uniform bound 
\begin{align}\label{eq: cor dist}
dist ((\widetilde{\iota}_\emptyset^{S'}\circ \frj_{\rho_0\rho'})^{-1}(\rho'\star (\widetilde{\iota}_\emptyset^{S'}\circ \frj_{\rho_0}(u,\xi))), (u,\xi))<\delta 
\end{align}
for all $(u, \xi)\in \widetilde{\mu}_{\cK', (c_\beta)_\beta}^{-1}(\cK^\dagg)\subset \cV'\times \cK'$ and $\rho'\in T_{C_\lhd}$. Here the distance is taken with respect to the standard $T$-invariant metric on $\cB_{w_0}\cong T^*T$. 
\end{prop}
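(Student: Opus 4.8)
\textbf{Plan of proof for Proposition \ref{prop: C_lhd, j}.}
The statement packages two things: a containment \eqref{eq: cor rho} saying that flowing the slice $\widetilde{\iota}_\emptyset^{S'}\circ\frj_{\rho_0}(\overline{\widetilde{\mu}_{\cK',(c_\beta)}^{-1}(\cK^\dagg)})$ by an element $\rho'\in T_{C_\lhd}$ of the complete torus action keeps it inside the deformed chart $\widetilde{\iota}_\emptyset^{S'}\circ\frj_{\rho_0\rho'}(\cW_{\cV',\cK'})$, and a uniform distance estimate \eqref{eq: cor dist} comparing the result, pulled back through $\frj_{\rho_0\rho'}$, with the starting point. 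The strategy is to realize the $T$-action on the right-hand side of the model $\widetilde{\varphi}_{(c_\beta),\rho_0}$ in \eqref{eq: tilde varphi_c} as the time-$1$ flow of Hamiltonian vector fields of the pulled-back coordinate functions on $\cK'_{S^\perp}$, and then to estimate these flows using Lemma \ref{lemma: proj X_eta, c}. Concretely, an element $\rho'=\exp(\eta)$ with $\log_\bR\rho'=\eta\in C_\lhd$ acts by the time-$1$ flow of $X_{\eta;(c_\beta)}$, the Hamiltonian vector field of $\widetilde{\mu}_{\cK',(c_\beta)_\beta}^*(\eta)$ (viewing $\eta$ as a linear function on $\fz_S^*$ via the fixed embedding $\cK'_{S^\perp}\hookrightarrow\fz_S^\circ$, as stressed in Remark \ref{remark: cK in fz_S}). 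This is where the earlier analysis pays off: by Proposition \ref{eq: tilde mu, embed} the system $\widetilde{\mu}_{\cK',(c_\beta)_\beta}$ embeds into an honest integrable system $\widetilde{\chi}_{S'}$ with \emph{complete} $\cZ(L_S)_0$-orbits, so these flows exist for all time and the $T_{C_\lhd}$-translate is well defined.

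The main step is then a Gronwall-type comparison. By Lemma \ref{lemma: proj X_eta, c}, for $|(c_\beta)_\beta|<\epsilon_{C_\lhd}$ the projection of $X_{\eta;(c_\beta)}$ to the base torus direction lies in the slightly larger open cone $C'_\lhd+\ft_c$, and $X_{\eta;(c_\beta)}$ differs from the constant (invariant) vector field $(\frj_u)_*\eta$ by a term bounded by $M|(c_\beta)_\beta|\,|\eta|$ on the pre-compact region $\cW_{\cV,\cK}$. Integrating the constant field for unit time moves a point by exactly $\eta$ in the $T$-direction; since $\eta\in C_\lhd$ and $\cV$ was chosen so that $\bigcup_{|\gamma_{-\Pi\backslash S}(\rho')|\ \mathrm{small}}\frj_{\rho'}(\cV)$ absorbs translation by $C_\lhd$ (this is the role of the cone condition in Notations \ref{notations} and of passing to $\cV'\subsetneq\cV$), the translated point lands in the chart $\frj_{\rho_0\rho'}(\cW_{\cV',\cK'})$, giving \eqref{eq: cor rho}. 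For the distance bound \eqref{eq: cor dist}: the pullback $(\frj_{\rho_0\rho'})^{-1}$ of the $\rho'$-translate is, by the very construction of the coordinates $(q^j_c,q^j_\bR;p^j_c,p^j_\bR)$ on the fibers and by $T$-equivariance, obtained from $(u,\xi)$ by flowing the \emph{difference} field $X_{\eta;(c_\beta)}-(\frj_u)_*\eta$ along the fiber (the invariant part of the flow is exactly undone by composing with $\frj_{\rho_0\rho'}^{-1}$ rather than $\frj_{\rho_0}^{-1}$). Since this difference field has norm $\le M|(c_\beta)_\beta|\,|\eta|$ and the flow is for unit time, a standard ODE estimate on the pre-compact set $\cW_{\cV,\cK}$ gives that the net displacement in the standard $T$-invariant metric on $\cB_{w_0}\cong T^*T$ is $O(|(c_\beta)_\beta|)$ uniformly over $\rho'\in T_{C_\lhd}$; choosing $\epsilon_{C_\lhd}$ small enough makes this $<\delta$.

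Two technical points require care and form the main obstacle. First, one must check that the Hamiltonian flow of $\widetilde{\mu}_{\cK',(c_\beta)_\beta}^*(\eta)$ genuinely implements the $T$-translation appearing in \eqref{eq: cor rho}, including the bookkeeping of which chart $\frj_{\rho_0\rho'}$ the image sits in; this is forced by the commutative diagram \eqref{lemma: integrable S} together with the identity $\tilde\iota^\Pi_{S'}\circ\tilde\iota_S^{S'}=\tilde\iota_S^\Pi$ from Proposition \ref{prop: fU_S', sqcup}, but the dependence of $S'$ and $\rho_0$ on $(c_\beta)_\beta$ must be tracked so that the estimates are uniform. Second, the difference vector field $X_{\eta;(c_\beta)}-(\frj_u)_*\eta$ must be shown to be \emph{tangent to the fibers} of $\widetilde{\mu}_{\cK',(c_\beta)_\beta}$ (so that composing with $\frj_{\rho_0\rho'}^{-1}$ cancels exactly the base-translation part); this is immediate from the fact that both $X_{\eta;(c_\beta)}$ and $(\frj_u)_*\eta$ project to the same constant vector $\eta$ on the base $\cK'_{S^\perp}$, since $\widetilde{\mu}_{\cK',(c_\beta)_\beta}$ is a moment-type map and $\eta$ is linear. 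With these two points in hand the Gronwall estimate is routine, and the proposition follows by choosing $\epsilon_{C_\lhd}$ to simultaneously satisfy the cone-containment requirement from Lemma \ref{lemma: proj X_eta, c} and the displacement bound.
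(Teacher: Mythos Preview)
Your overall strategy—realize the $T$-action as the Hamiltonian flow of $X_{\eta;(c_\beta)}$, compare it with the invariant translation $(\frj_u)_*\eta$, and integrate the difference—is exactly the paper's approach. But the execution has a genuine gap in the uniformity claim, and this is the whole point of the proposition.

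You write $\rho'=\exp(\eta)$ and treat the flow as being for unit time, then assert that the displacement is $O(|(c_\beta)_\beta|)$ uniformly over $\rho'\in T_{C_\lhd}$. This does not follow from your own bound: the difference field has norm $\le M|(c_\beta)_\beta|\,|\eta|$, so integrating for time $1$ gives displacement $\le M|(c_\beta)_\beta|\,|\eta|$, which is \emph{unbounded} as $\rho'$ ranges over $T_{C_\lhd}$ (equivalently, as $|\eta|\to\infty$). A related problem is that Lemma~\ref{lemma: proj X_eta, c} only controls $X_{\eta;(c_\beta)}$ on the fixed pre-compact region $\widetilde{\mu}^{-1}_{\cK',(c_\beta)}(\cK^\dagg)\subset\cW_{\cV,\cK}$, whereas the flow $\Upsilon_\eta(s)\star\frj_{\rho_0}(u,\xi)$ immediately leaves this region; you cannot simply quote the lemma along the whole trajectory.

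The paper resolves both issues simultaneously by working in the \emph{moving} chart. One sets $\rho_\eta(s)=u_0^{-1}\proj_T(\Upsilon_\eta(s)\star\frj_{\rho_0}(u_0,\widetilde\xi))$ and pulls the flow back through $\frj_{\rho_\eta(s)}^{-1}$; this brings the point back into $\cW_{\cV,\cK}$, but now the effective deformation parameter is $\gamma_{-\Pi}(u_0^{-1}\rho_\eta(s))$, which \emph{decays} along the flow since the trajectory moves into the cone $C'_\lhd$. Concretely, the cone condition gives $|\gamma_{-\Pi}(u_0^{-1}\rho_\eta(s))|\le n|\beta(u_0^{-1}\rho_\eta(s))|^{-1/K}$ for a uniform $K>0$, and one derives an ODE for $F_\beta(s)=\log|\beta(\rho_\eta(s))e^{-\beta(\eta)s}|$ whose right-hand side carries the factor $e^{-\beta(\eta)s/K}|\eta|$. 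Since $\beta(\eta)/|\eta|$ is bounded below on $C_\lhd$, the integral $\int_0^\infty e^{-\beta(\eta)s/K}|\eta|\,ds$ is bounded by a constant independent of $\eta$, and this is what produces the uniform bound in \eqref{eq: cor dist}. The containment \eqref{eq: cor rho} is handled first, by a continuity argument: supposing the flow exits $(T_{C'_\lhd}\cdot\frj_{\rho_0}(\cV))\times\cK$ at some finite time $r$, one pulls back through $\frj_{\rho_\eta(r_1)}^{-1}$ for $r_1$ just below $r$, lands back near $(u_0,\xi)$ with a smaller deformation parameter, and obtains a contradiction from the lemma applied in that chart. You should replace the ``unit-time Gronwall'' sketch with this moving-chart analysis; without the exponential decay of the deformation parameter along the flow, the argument does not close.
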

\begin{proof}
First, choose $C_{\lhd}\dot{\subset} C_{\lhd}'\dot{\subset} \ft_{\bR}^+- \{0\}$,  $\epsilon_{C_\lhd}>0$ and $M>0$ satisfying the assumption and conclusion in Lemma \ref{lemma: proj X_eta, c}.  
By fixing the embedding $\cB_{w_0}$ into $J_{L_{S'}}$ through $\widetilde{\iota}_{\emptyset}^{S'}$, we can view everything inside $J_{L_{S'}}$, so we will omit $\widetilde{\iota}_{\emptyset}^{S'}$ in the proof. 
Since the embedding $\widetilde{\iota}_{\emptyset}^{S'}$ is $\cZ(L_{S'})$-equivariant for the obvious $\cZ(L_{S'})$-action on the source and target, the proposition can be reduced to the case when $S'=\Pi$ and $(c_\beta)_\beta=\gamma_{-\Pi}(\rho_0)$. 
It suffices to prove (\ref{eq: cor dist}) for the chosen Lagrangian section $\{u_0\}\times \cK$, and it is equivalent to saying
\begin{align}\label{eq: proof dist estimate}
\rho'\star \frj_{\rho_0}(u_0, \widetilde{\xi})\in \cB_{w_0}, \text{ and } dist(\rho'\star \frj_{\rho_0}(u_0, \widetilde{\xi}), \frj_{\rho_0\rho'}(u_0,\xi))<\delta, 
\end{align}
where $(u_0, \widetilde{\xi})=(\{u_0\}\times \cK)\cap \widetilde{\mu}_{\cK', (c_\beta)_\beta}^{-1}(\xi)$. 

For any $\eta\in C_{\lhd}$ and $\rho'_c\in T_c$, let $\Upsilon_{\eta}(s)=\rho'_c\cdot \exp(s\cdot \eta), s\geq 0$. With given $(c_\beta)$, we have 
\begin{align}\label{eq: Upsilon_eta}
\frac{d}{ds}\Upsilon_{\eta}(s)\star\frj_{\rho_0}(u_0,\widetilde{\xi})= X_{\eta, (c_\beta)}(\Upsilon_{\eta}(s)\star \frj_{\rho_0}(u_0,\widetilde{\xi})).
\end{align}
We claim that 
\begin{align}\label{eq: gamma_eta, s, cB}
\Upsilon_{\eta}(s)\star \frj_{\rho_0}(u_0,\widetilde{\xi})\subset (T_{C_\lhd'}\cdot \frj_{\rho_0}(\cV))\times \cK
\end{align}
for all $s\geq 0$. Suppose the contrary, there exists $r>0$ such that (\ref{eq: gamma_eta, s, cB}) holds for $s\in [0, r)$ but $\Upsilon_{\eta}(r)\star \frj_{\rho_0}(u_0,\widetilde{\xi})$ is outside $(T_{C_\lhd'}\cdot\frj_{\rho_0}(\cV))\times \cK$. Pick $r_1<r$ that is very close to $r$, and let $\rho_1=u_0^{-1}\cdot \proj_T (\Upsilon_{\eta}(r_1)\star \frj_{\rho_0}(u_0,\widetilde{\xi}))\in u_0^{-1}\cdot T_{C_\lhd'}\cdot \frj_{\rho_0}(\cV)$, then 
\begin{align*}
\frj_{\rho_1}^{-1}(\Upsilon_{\eta}(r_1)\star \frj_{\rho_0}(u_0,\widetilde{\xi}))\in \{u_0\}\times \cK. 
\end{align*}
Since 
\begin{align*}
\frj_{\rho_1}^{-1}(\Upsilon_{\eta}(r_1)\star \frj_{\rho_0}(u_0,\widetilde{\xi}))\in \widetilde{\mu}_{\cK', (\gamma_{-\Pi}(\rho_1))}^{-1}(\xi),
\end{align*}
and $|\gamma_{-\Pi}(\rho_1)|<\epsilon_{C_\lhd}$, we have $\frj_{\rho_1}^{-1}(\Upsilon_{\eta}(r_1)\star \frj_{\rho_0}(u_0,\widetilde{\xi}))$ very close to $(u_0, \xi)$. Hence by a similar argument as in Proposition \ref{lemma: empty, Ham isotopy}, there exists a fixed interval $[0, \nu], \nu>0$, depending only on $\eta$, such that for any $\epsilon\in [0,\nu]$, 
\begin{align*}
&\frj_{\rho_1}^{-1}(\Upsilon_{\eta}(r_1+\epsilon)\star \frj_{\rho_0}(u_0,\widetilde{\xi}))\subset T_{C'_{\lhd}}\cdot \frj_{\rho_1}^{-1}(\proj_T(\Upsilon_{\eta}(r_1)\star \frj_{\rho_0}(u_0,\widetilde{\xi})))\times \cK\\
\Rightarrow& \Upsilon_{\eta}(r_1+\epsilon)\star \frj_{\rho_0}(u_0,\widetilde{\xi})\subset T_{C_{\lhd}'}\cdot (\proj_T(\Upsilon_{\eta}(r_1)\star \frj_{\rho_0}(u_0,\widetilde{\xi})))\times \cK\subset (T_{C_\lhd'}\cdot \frj_{\rho_0}(\cV))\times \cK.
\end{align*}
Choosing $r_1>r-\nu$ gives a contradiction to the assumption that (\ref{eq: gamma_eta, s, cB}) does not hold at $r$. 

We show the estimate on distance in (\ref{eq: proof dist estimate}). Let $\rho_\eta(s)=\proj_T (\Upsilon_\eta(s)\star\frj_{\rho_0}(u_0, \widetilde{\xi}))$, then we have 
\begin{align}\label{eq: DE, rho_eta}
(\frj_{u_0\rho_\eta(s)^{-1}})_{*}\frac{d}{ds}\rho_\eta(s)=\proj_T X_{\eta, \gamma_{-\Pi}(u_0^{-1}\rho_\eta(s))}(\frj_{u_0^{-1}\rho_\eta(s)}^{-1}(\Upsilon_\eta(s)\star\frj_{\rho_0}(u_0, \widetilde{\xi})))
\end{align}
where both sides are contained in $T_{u_0}\cV$. Using the estimate from Lemma \ref{lemma: proj X_eta, c}, we get 
\begin{align*}
|u_0\rho_\eta(s)^{-1}\frac{d}{ds}\rho_\eta(s)-(\frj_{u_0})_*\eta|&\leq M\cdot |\gamma_{-\Pi}(u_0^{-1}\rho_\eta(s))|\cdot |\eta|.
\end{align*}
By the assumption on $C_\lhd'$, there exists $\varepsilon>0$ such that for all $\beta_i\in \Pi$,  
\begin{align*}
&\varepsilon\leq \frac{\beta_j}{\sum\limits_{i=1}^n\beta_i}\leq 1-\varepsilon \text{ on }C_\lhd'\\
\Rightarrow& |\gamma_{-\Pi}(u_0^{-1}\rho_\eta(s))|\leq n|\beta_j(u_0^{-1}\rho_\eta(s))|^{-\frac{1}{K}}
\end{align*}
for every $j$ and a uniform constant $K>0$ only depending on $\varepsilon$. Therefore, looking at each component $\beta(\rho_\eta(s))\in \bC^\times$ for (\ref{eq: DE, rho_eta}), we get 
\begin{align}\label{eq: beta, rho_eta, d}
&\beta(\rho_\eta(s))^{-1}\frac{d}{ds}\beta(\rho_\eta(s))=\beta(\eta)+O(|\beta(\rho_\eta(s))|^{-\frac{1}{K}}\cdot |\eta|). 
\end{align}
Let $F_\beta(s)=\log |\beta(\rho_\eta(s))e^{-\beta(\eta)s}|$, then the above on the real parts implies
\begin{align*}
&|\frac{d}{ds} F_\beta(s)|\leq \widetilde{M}\cdot e^{-\frac{F_\beta(s)}{K}-\frac{\beta(\eta)s}{K}}|\eta|,\ \beta\in \Pi\\
\Rightarrow & |\frac{d}{ds}e^{\frac{F_\beta(s)}{K}}|\leq \frac{\widetilde{M}}{K}e^{-\frac{\beta(\eta)s}{K}}|\eta|\\
\Rightarrow &|e^{\frac{F_\beta(s)}{K}}-|\beta(\rho_\eta(0))|^{1/K}|\leq \widetilde{M}' (1-e^{-\frac{\beta(\eta)s}{K}})\leq \widetilde{M'}\\
\Rightarrow&K\log( |\beta(\rho_\eta(0))|^{1/K}-\widetilde{M}')\leq F_\beta(s)\leq K\log(|\beta(\rho_\eta(0))|^{1/K}+ \widetilde{M}')\\
\Rightarrow & K\log(1- \frac{\widetilde{M}'}{|\beta(\rho_\eta(0))|^{1/K}})\leq F_\beta(s)-\log|\beta(\rho_\eta(0))|\leq K\log(1+ \frac{\widetilde{M}'}{|\beta(\rho_\eta(0))|^{1/K}}).
\end{align*}
Here $K, \widetilde{M}'$ only depend on $u_0, C_\lhd, C'_{\lhd}$. Assume that we have chosen $|\beta(\rho_\eta (0))|, \beta\in \Pi$, sufficiently large, equivalently $|(c_\beta)_\beta|$ sufficiently small, then 
\begin{align}\label{}
|\log |\beta(\rho_\eta(s)\rho_\eta(0)^{-1})e^{-\beta(\eta) s}||=|F_\beta(s)-\log|\beta(\rho_\eta(0))||<\delta', \forall \beta\in \Pi, s\geq 0
\end{align}
for arbitrarily small $\delta'>0$.

Lastly, taking the imaginary part of (\ref{eq: beta, rho_eta, d}) and using the above, we get 
\begin{align*}
&\frac{d}{ds}\arg \beta(\rho_\eta(s))=O(|\beta(\rho_\eta(0))|^{-\frac{1}{K}}e^{-\frac{\beta(\eta)s}{K}}\cdot |\eta|)\\
\Rightarrow& |\arg \beta(\rho_\eta(s))-\arg \beta(\rho_\eta(0))|\leq \widetilde{M}'|\beta(\rho_\eta(0))|^{-\frac{1}{K}}.
\end{align*}
By choosing $|(c_\beta)_\beta|$ sufficiently small, we can make the right-hand-side arbitrarily small, and also make $\rho_\eta(0)$ very close to $\rho_c'\rho_0u_0$. Thus we have proved the distance estimate in (\ref{eq: proof dist estimate}).  
\end{proof}

Now we are ready to give a refinement of Lemma \ref{lemma: hat chi_epsilon, gamma}. 

\begin{cor}\label{cor: dist star}
Under the same setting as in Proposition \ref{prop: C_lhd, j}, for any $\delta>0$, there exists $\epsilon_{C_\lhd}>0$ such that for any $(u,\xi)\in \cV\times \cK^\dagg$, $\rho_1\in T$ satisfying $|\gamma_{-\Pi}(\rho_1)|<\epsilon_{C_\lhd}$ and $\rho'\in T_{C_\lhd}$, we have 
\begin{align}
\label{eq: dist, rho'rho_1}&dist(\rho'\star \frj_{\rho_1}(u,\xi), \frj_{\rho'\rho_1}(u,\xi))<\delta
\end{align}
Moreover, 
\begin{align}
\label{eq: dist, W}&dist(w^{-1}(\rho')\star \frj_{\rho_1}(u,w(\xi))), \frj_{\rho'\rho_1}(u,w(\xi)))<\delta, \forall w\in W,
\end{align}
where both the $T$-action denoted by $\star$ are taken with respect to $\chi_\ft(\cK')\cong\cK'\subset \ft^\reg$ (cf. Remark \ref{remark: cK in fz_S}). The distance is taken with respect to the standard $T$-invariant metric on $T^*T$. 
\end{cor}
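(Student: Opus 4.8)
\textbf{Proof proposal for Corollary \ref{cor: dist star}.}

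The plan is to deduce the Corollary directly from Proposition \ref{prop: C_lhd, j} by absorbing the perturbation parameter $(c_\beta)_\beta$ into the statement. First I would take $(c_\beta)_\beta = \gamma_{-\Pi}(\rho_1)$ and $S' = \Pi$ in Proposition \ref{prop: C_lhd, j}; the element $\rho_0$ associated with $(c_\beta)_\beta$ in that proposition is then $\rho_1$ itself (up to the one-to-one correspondence between $\psi$ and $z\in\cZ(L_S)_0$ recorded after \eqref{eq: mu_D,K, canonical}, which for $S=\emptyset$, $S'=\Pi$ reads $\gamma_{-\Pi}(\rho_0) = (c_\beta)_\beta$). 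With this identification $\widetilde{\iota}_\emptyset^{\Pi}$ is the identity inclusion $\cB_{w_0}\hookrightarrow J_G$, and \eqref{eq: cor dist} becomes precisely
\[
\mathrm{dist}\big(\frj_{\rho'\rho_1}^{-1}(\rho'\star\frj_{\rho_1}(u,\xi)),\ (u,\xi)\big)<\delta,
\]
for all $(u,\xi)$ in the relevant Lagrangian fiber over $\cK^\dagg$ and all $\rho'\in T_{C_\lhd}$. Since $\frj_{\rho'\rho_1}$ is a symplectomorphism preserving the standard $T$-invariant metric on $T^*T$ (it acts by translation in the $T$-direction, as \eqref{eq: frj_S;rho} shows), applying $\frj_{\rho'\rho_1}$ to both entries turns this into \eqref{eq: dist, rho'rho_1}. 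The one small gap is that Proposition \ref{prop: C_lhd, j} states the distance bound only for $(u,\xi)$ lying in $\widetilde{\mu}_{\cK',(c_\beta)_\beta}^{-1}(\cK^\dagg)$, i.e.\ on the perturbed Lagrangian section, whereas the Corollary wants it for all $(u,\xi)\in\cV\times\cK^\dagg$. This is handled by noting that for $|\gamma_{-\Pi}(\rho_1)|$ small the map $(u,\xi)\mapsto(u,\widetilde\xi)$, where $\{u\}\times\cK\cap\widetilde{\mu}_{\cK',(c_\beta)_\beta}^{-1}(\xi)=(u,\widetilde\xi)$, is $C^0$-close to the identity (this is exactly the content of Lemma \ref{lemma: hat chi_epsilon, gamma}(i) and Lemma \ref{lemma: L_t_gamma}(i)); so after shrinking $\epsilon_{C_\lhd}$ we may replace $(u,\widetilde\xi)$ by $(u,\xi)$ at the cost of enlarging $\delta$, which is harmless since $\delta$ is arbitrary.

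Next I would establish the $W$-equivariant refinement \eqref{eq: dist, W}. Fix $w\in W$ and recall from Remark \ref{remark: cK in fz_S} (the $S=\emptyset$ specialization) that changing the embedded chamber $\cK'\subset\ft^\reg$ by $w$ changes the $T$-action $\star$ by the automorphism $w$ of $T$, and correspondingly replaces $\cK^\dagg$ by $w(\cK^\dagg)$ and $\frj_{\rho_1}$ by its conjugate. Concretely, $\frj_{\rho_1}(u,w(\xi))$ computed with respect to $w(\cK')$ is carried by the obvious identification to $\frj_{w^{-1}(\rho_1)}(u,\xi)$ computed with respect to $\cK'$, and $w^{-1}(\rho')\in T_{w^{-1}(C_\lhd)}$. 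Since $w^{-1}(C_\lhd)$ is again an open cone with $\overline{w^{-1}(C_\lhd)}-\{0\}\subset \mathring{\ft}_\bR^+$ is \emph{not} automatic — but the estimate in Proposition \ref{prop: C_lhd, j}, and its inputs Lemma \ref{lemma: proj X_eta, c} and Proposition \ref{lemma: empty, Ham isotopy}, work verbatim for any fixed open cone $C_\lhd$ with $\overline{C}_\lhd-\{0\}$ contained in an open set on which $\cK'$ can be placed; running the argument with the finite collection of cones $\{w^{-1}(C_\lhd): w\in W\}$ and taking $\epsilon_{C_\lhd}$ to be the minimum of the finitely many constants produced gives \eqref{eq: dist, W} uniformly in $w$. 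The invariance of the reference metric under $W$ (it is the standard flat metric on $T^*T$, and $W$ acts by the induced linear maps) ensures the distance bound is preserved under the identifications.

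I expect the main obstacle to be purely bookkeeping rather than analytic: keeping straight which copy of $T$ (and which embedded chamber $\cK'\hookrightarrow\ft^\reg$) the symbol $\star$ refers to at each stage, since $W$ does \emph{not} act on $J_G$ and the $T$-actions only exist after the choice described in Remark \ref{remark: cK in fz_S}. Once the conjugation dictionary between "$\star$ with respect to $\cK'$" and "$\star$ with respect to $w(\cK')$" is pinned down, both \eqref{eq: dist, rho'rho_1} and \eqref{eq: dist, W} are immediate consequences of \eqref{eq: cor dist}, with the only quantitative input being the passage from the perturbed section to $\cV\times\cK^\dagg$ via the $C^0$-smallness already recorded in Lemma \ref{lemma: hat chi_epsilon, gamma}(i). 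No new Floer-theoretic or hard PDE estimate is needed here; everything reduces to the Gronwall-type bounds already carried out inside the proof of Proposition \ref{prop: C_lhd, j}.
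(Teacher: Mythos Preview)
Your argument for \eqref{eq: dist, rho'rho_1} is correct and matches the paper's: specialize Proposition~\ref{prop: C_lhd, j} to $S'=\Pi$, $\rho_0=\rho_1$, and handle the domain discrepancy between $\widetilde{\mu}_{\cK',(c_\beta)_\beta}^{-1}(\cK^\dagg)$ and $\cV\times\cK^\dagg$ by slightly enlarging $\cK^\dagg$ (your $C^0$-closeness phrasing is an equivalent way to say this).

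Your argument for \eqref{eq: dist, W} has a genuine gap. You propose to rerun Proposition~\ref{prop: C_lhd, j} with the cone $C_\lhd$ replaced by $w^{-1}(C_\lhd)$, and you assert that the estimate ``works verbatim'' for an arbitrary open cone. It does not. The hypothesis $C_\lhd\dot\subset\ft_\bR^+-\{0\}$ is used essentially in the proof of Proposition~\ref{prop: C_lhd, j}: the inequality $\varepsilon\leq \beta_j/\sum_i\beta_i\leq 1-\varepsilon$ on $C_\lhd'$, valid for \emph{every} simple root $\beta_j$, is exactly what keeps $|\gamma_{-\Pi}(u_0^{-1}\rho_\eta(s))|$ small along the entire flow $\Upsilon_\eta(s)$, and this is the engine of the Gronwall-type bound. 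For $w\neq 1$ the cone $w^{-1}(C_\lhd)$ sits in a different Weyl chamber, some simple root $\beta$ becomes negative on it, and the control on $|\gamma_{-\Pi}|$ (hence on the perturbation term in Lemma~\ref{lemma: proj X_eta, c}) is lost.

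The paper's route for \eqref{eq: dist, W} is simpler and sidesteps this entirely: do not touch the cone. Instead apply the first part with the \emph{chamber} replaced, i.e.\ with $\cK^\dagg,\cK,\cK'$ replaced by $w(\cK^\dagg),w(\cK),w(\cK')$ --- these are equally admissible open subsets of $\ft^\reg$ --- while keeping the same $\rho'\in T_{C_\lhd}$ and the same $\rho_1$. This gives
\[
\mathrm{dist}\big(\rho'\star_{w(\cK')}\frj_{\rho_1}(u,w(\xi)),\ \frj_{\rho'\rho_1}(u,w(\xi))\big)<\delta,
\]
where $\star_{w(\cK')}$ is the $T$-action defined via the identification $\chi_\ft(w(\cK'))\cong w(\cK')$. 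Now Remark~\ref{remark: cK in fz_S} says the two $T$-actions differ by the automorphism $w$ of $T$, so $\rho'\star_{w(\cK')}=w^{-1}(\rho')\star_{\cK'}$, and \eqref{eq: dist, W} follows immediately. No new analytic estimate and no finite minimum over $W$ is needed.
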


\begin{proof}
First, (\ref{eq: dist, rho'rho_1}) is the special case of Proposition \ref{prop: C_lhd, j} (\ref{eq: cor dist}) for $S'=\Pi$. Although in the proposition, it is stated for $(u, \xi)\in \mu_{\cK', \rho_1}^{-1}(\cK^\dagg)\subset \cV'\times \cK'$, it also holds for $\cV\times \cK^\dagg$ by enlarging the former $\cK^\dagg$ slightly.  

Second, for any $w\in W$, using $(u,w(\xi))\in \cV\times w(\cK^\dagg)$, we have 
\begin{align*}
dist(\rho'\star \frj_{\rho_1}(u,w(\xi))), \frj_{\rho'\rho_1}(u,w(\xi)))<\delta,
\end{align*}
where the $T$-action denoted by $\star$ here is \emph{with respect to $\chi_\ft(\cK')\cong w(\cK')\subset \ft^\reg$}. By Remark \ref{remark: cK in fz_S}, this $T$-action differs from the $T$-action in (\ref{eq: dist, W}) by $w$, hence (\ref{eq: dist, W}) follows.  
\end{proof}

\subsubsection{Analysis inside $\fU_S, \emptyset\neq S\subsetneq \Pi$}\label{subsec: analysis fU_S}

In this section, we generalize several results from Subsection \ref{subsubsec: B_w_0} to $\emptyset\neq S\subsetneq \Pi$. We also give an answer to Question \ref{eq: D, K, dagger} (ii), which was trivial for $S=\emptyset$. Recall the notations from Question \ref{eq: D, K, dagger}. In particular, we are under the settings depicted in Figure \ref{figure: mu_D, cK', c}. 

\begin{figure}[htbp]
\begin{tikzpicture}
\draw (-2.5,-2)--(2.5,-2) node[pos=0.5, below] {$\ \ \ \ \ \kappa\in \cK'_{S^\perp}$};
\draw (-2.5,-1.6)--(-2.5,1.2) node[pos=0.5, left] {$\cV_{S^\perp}$};
\draw (-1, -1.6)--(1,-0.6) to [bend right](1, 1.4)--(-1, 0.4) to [bend left](-1, -1.6);
\draw[dashed] (0, -1.1)--(0, -2);
\draw[blue, thick] (-0.8, -1.2)--(1.2, -0.2); 
\draw[blue, thick] (-0.7, -0.4)--(1.3, 0.6); 
\draw[green] (-0.6, -1.4) to [bend right] (-0.6, 0.6);
\draw[green] (-0.2, -1.2) to [bend right] (-0.1, 0.8);
\draw[green] (0.2, -1) to [bend right] (0.5, 1);
\draw (2.6, 0) node {$\widetilde{\mu}_{D, \cK', (c_\beta)_\beta}^{-1}(\kappa)$};
\draw[blue] (-0.8, -1.2) node[left] {$\cY_S$};
\end{tikzpicture}
\caption{A picture of the fiber $\widetilde{\mu}_{D, \cK', (c_\beta)_{\beta\in \Pi\backslash S}}^{-1}(\kappa)$, where the blue multi-section (it is connected although we draw it disconnected in this low dimensional picture) indicates the intersection of $\cY_S\times \{u_0\}\times \cK'_{S^\perp}$ with the fiber, and the green curves represent the characteristic foliations. }\label{figure: mu_D, cK', c}
\end{figure}
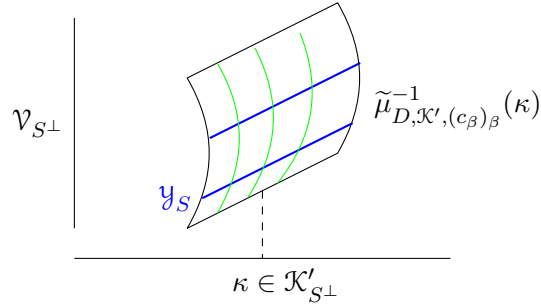

First, we state some direct generalizations of results from Subsection \ref{subsubsec: B_w_0}. For any $\kappa\in \cK^\dagg_{S^\perp}$, let $\cS_{\kappa, (g_S, \xi_S), (c_\beta)}$ denote for the characteristic leaf in $\widetilde{\mu}_{D, \cK', (c_\beta)_\beta}^{-1}(\kappa)\subset \cW_{\cY_S, \cV, \cK'}$ that passes through the point $(g_S, \xi_S; u_0, \widetilde{\kappa})$. Note that $\widetilde{\kappa}$ is uniquely determined for the restriction of $\widetilde{\mu}_{D, \cK', (c_\beta)}$ from $\{(g_S,\xi_S)\}\times\{u_0\}\times \cK_{S^\perp}$ to $\cK'_{S^\perp}$ is an open embedding. Let $\cD_{(g_S, \xi_S)}$ be a contractible neighborhood of $(g_S, \xi_S)$ in $\cY_S$ that is contained in a fundamental domain of the $\cZ(L_S^\der)_0$-action. 
Then Lemma \ref{lemma: L_t_gamma} immediately generalizes to the following form.

\begin{lemma}\label{lemma: cS_kappa, S}
Under the above settings, there exists $r_{\cV}>0$ such that for all $|(c_\beta)_{\beta\in \Pi\backslash S}|<r_{\cV}$, we have for each $\kappa\in \cK_{S^\perp}^\dagg$, the characteristic leaf $\cS_{\kappa, (g_S, \xi_S), (c_\beta)}$
satisfies
\begin{itemize}

\item[(i)] $\cS_{\kappa, (g_S, \xi_S), (c_\beta)}$ is a smooth section over $\cV_{S^\perp}$ that is Hamiltonian isotopic to $\{(g_S,\xi_S)\}\times \cV_{S^\perp}\times \{\kappa\}$ inside 
\begin{align}\label{eq: lemma cS_kappa, S}
\cD_{(g_S, \xi_S)}\times (\cV_{S^\perp}\times \cK')\subset \cW_{\cY_S, \cV, \cK'}.
\end{align}

\item[(ii)] The natural inclusion $\cS_{\kappa, (g_S, \xi_S), (c_\beta)}\overset{\widetilde{\iota}_S^{S'}\circ\frj_{S, \rho_0}}\longhookrightarrow \widetilde{\chi}_{S',\cK'}^{-1}(\kappa)$ from (\ref{eq: lemma tilde chi_S'}) induces a homotopy equivalence from the former to the $\cZ(L_S)_0$-orbit that contains it. Moreover, by reverting the first homotopy equivalence, the sequence
\begin{align*}
\cZ(L_S)_0\longhookleftarrow \cV_{S^\perp}\cong \cS_{\kappa, (g_S, \xi_S), (c_\beta)} \longhookrightarrow  \cZ(L_S)_0\star \widetilde{\iota}_S^{S'}(g_S, \xi_S; u_0\rho_0, \widetilde{\kappa})\cong \cZ(L_S)_0
\end{align*}
induces a homotopy equivalence from $\cZ(L_S)_0$ to itself that is isotopic to the identity. 

\end{itemize}
\end{lemma}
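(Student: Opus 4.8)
\textbf{Proof plan for Lemma \ref{lemma: cS_kappa, S}.}

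The plan is to deduce this lemma from Lemma \ref{lemma: L_t_gamma} essentially verbatim, with the torus $T$ replaced by $\cZ(L_S)_0$ and the roles of the ``base'' and ``extra'' directions played as in Proposition \ref{eq: tilde mu, embed}. First I would set up the analogue of the homotopy/homology computation: fix a basis of $H_1(\cV_{S^\perp},\bZ)$, which is canonically $H_1(\cS_{\kappa,(g_S,\xi_S),(c_\beta)},\bZ)$ via the projection to $\cV_{S^\perp}$, and denote the cycles $\Gamma_i$. The map $\widetilde{\iota}_S^{S'}\circ\frj_{S,\rho_0}$ restricted to a characteristic leaf lands in a single $\cZ(L_S)_0$-orbit of $\widetilde{\chi}_{S',\cK'}^{-1}(\kappa)$ and, since by (\ref{eq: hat vartheta_S}) the map $\frj_{S,\rho_0}$ preserves the relevant holomorphic Liouville form on $\fU_S$, the periods $\int_{\Gamma_i}\vartheta|_{\cS_{\kappa,(g_S,\xi_S),(c_\beta)}}$ equal the periods $\int_{\widetilde{\frj}(\Gamma_i)}\vartheta$ on the target, which do not depend on $(c_\beta)$ or on $\rho_0$. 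Taking the limit $|(c_\beta)_{\beta\in\Pi\backslash S}|\to 0$, the leaf converges to $\{(g_S,\xi_S)\}\times\cV_{S^\perp}\times\{\kappa\}$ and the periods become $\langle\kappa,\Gamma_i\rangle$; hence the periods are $\langle\kappa,\Gamma_i\rangle$ for all small $(c_\beta)$. This gives (i): the leaf is a smooth section over $\cV_{S^\perp}$ (it is a deformation of $\{(g_S,\xi_S)\}\times\cV_{S^\perp}\times\{\kappa\}$ through Lagrangian sections of the integrable system of Proposition \ref{eq: tilde mu, embed} restricted to the contractible piece $\cD_{(g_S,\xi_S)}$) and Hamiltonian isotopic to it inside the region (\ref{eq: lemma cS_kappa, S}), the exactness of the isotopy being guaranteed by equality of the period integrals exactly as in the proof of Lemma \ref{lemma: L_t_gamma} (i).

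For (ii) I would argue as in Lemma \ref{lemma: L_t_gamma} (ii): because (\ref{eq: lemma cS_kappa, S}) provides an isotopy class of embeddings over $\kappa$, it suffices to treat generic $\kappa\in\cK_{S^\perp}^\dagg$, for which $\langle\kappa,-\rangle$ on an integral basis of $H_1(\cV_{S^\perp};\bQ)$ is $\bQ$-linearly independent. Then, since the $\Gamma_i$ have period $\langle\kappa,\Gamma_i\rangle$ inside $\cS_{\kappa,(g_S,\xi_S),(c_\beta)}$, and the period of a $1$-cycle on a $\cZ(L_S)_0$-orbit in $\widetilde{\chi}_{S',\cK'}^{-1}(\kappa)$ is computed by pairing $\kappa$ with its image under the canonical identification of the orbit with $\cZ(L_S)_0$ (this identification coming from the integrable system structure, compatibly with Remark \ref{remark: cK in fz_S}), the classes $\Gamma_i$ form a full rank sublattice; hence the inclusion of the leaf into the orbit is a homotopy equivalence and, reverting the first homotopy equivalence $\cV_{S^\perp}\hookrightarrow\cZ(L_S)_0$, the composite self-map of $\cZ(L_S)_0$ induces the identity on $H_1$, so it is isotopic to the identity.

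The main obstacle, as in Subsection \ref{subsubsec: B_w_0}, is bookkeeping rather than a genuine difficulty: one must be careful that the ``base point'' choices (the multi-section $\cY_S\times\{u_0\}\times\cK_{S^\perp}$, the element $\rho_0\in\cZ(L_S)_0$ with $\gamma_{-\Pi\backslash S}(\rho_0)=(c_\beta)$, and the embedding of $\cK'_{S^\perp}$ into $\fz_S^\circ$ fixing the Hamiltonian $\cZ(L_S)_0$-action per Remark \ref{remark: cK in fz_S}) are used consistently, and that the integrable system into which $\widetilde{\mu}_{D,\cK',(c_\beta)}$ embeds (Proposition \ref{eq: tilde mu, embed}) does have complete $\cZ(L_S)_0$-orbits so that the canonical affine fiber coordinates exist. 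Once these are in place the proof is a direct transcription of the $S=\emptyset$ case, restricting all statements to the contractible neighborhood $\cD_{(g_S,\xi_S)}$ so that the $\cZ(L_S^\der)_0$-quotient causes no trouble.
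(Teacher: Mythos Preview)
Your proposal is correct and is exactly the approach the paper takes: the paper's proof consists of the single sentence ``It follows from the same proof for Lemma \ref{lemma: L_t_gamma},'' and what you have written is precisely a faithful transcription of that proof with $T$ replaced by $\cZ(L_S)_0$ and the integrable system of Proposition \ref{eq: tilde mu, embed} in place of the $S=\emptyset$ one. Your bookkeeping remarks about base points, $\rho_0$, and the contractible neighborhood $\cD_{(g_S,\xi_S)}$ are appropriate and do not indicate any gap.
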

\begin{proof}
It follows from the same proof for Lemma \ref{lemma: L_t_gamma}. 
\end{proof}

\begin{remark}\label{remark: modify multi-section}
For $(\kappa, (c_\beta)_\beta)$ ranging in the space 
\begin{align}\label{eq: space kappa, c_beta}
\cK_{S^\perp}\times \{(c_\beta)_\beta\in \bC^{\Pi\backslash S}:|(c_\beta)_\beta|<r_\cV\}, 
\end{align}
the intersection $(\cY_S\times \{u_0\}\times \cK'_{S^\perp})\cap \widetilde{\mu}_{D^\dagg, \cK', (c_\beta(s))_\beta}^{-1}(\kappa)$ gives a $|\cZ(L_S^\der)_0|$-multi-section over its image in the reduced space, i.e. the quotient of $\widetilde{\mu}_{D^\dagg, \cK', (c_\beta)_\beta}^{-1}(\kappa)$ by the characteristic leaves. In the following, we modify these multi-sections to be $\cZ(L_S^\der)_0$-equivariant with respect to the ``moment map" $\widetilde{\mu}_{D^\dagg, \cK', (c_\beta)_\beta}$. 
For $(c_\beta)_\beta=0$, the multi-section is $\cZ(L_S^\der)_0$-invariant with respect to $\widetilde{\mu}_{D^\dagg, \cK',0}$. For close by $(c_\beta)_\beta$, we can do an averaging process, to make the multi-section $\cZ(L_S^\der)_0$-invariant with respect to $\widetilde{\mu}_{D^\dagg, \cK', (c_\beta(s))_\beta}$ after applying Proposition \ref{eq: tilde mu, embed}. 
More precisely, since the multi-section is very close to be $\cZ(L_S^\der)_0$-invariant, for any characteristic leaf, we can use the respective $\cZ(L_S^\der)_0$-action to move the points in the original multi-section to a small neighborhood of any chosen one of the points (the result will be independent of the chosen point), then we do an average in that small neighborhood (using the $\cZ(L_S)_0$-action from group elements near the identity) which is well defined, and we turn its $\cZ(L_S^\der)_0$-orbit to be the new multi-section restricted to that leaf. This gives the modification, and we denote the resulting multi-section for $(c_\beta)_\beta$ as $\cY^\dagg_{S, \kappa, (c_\beta)}$. If $(c_\beta)_\beta=\gamma_{-\Pi\backslash S}(\rho)$ for some $\rho\in \cZ(L_S)_0$, we also denote $\cY^\dagg_{S, \kappa, (c_\beta)}$ by $\cY^\dagg_{S, \kappa, \rho}$. 
\end{remark}

\begin{lemma}\label{lemma: cY_S, R}
Fix $\kappa\in \cK_{S^\perp}^\dagg$. Assume that $\cY_{S,R}$ is defined by 
\begin{align}\label{eq: def cY_S,R}
\cY_{S,R}:=\{\sum\limits_{\beta\in S}|b^S_{\lambda_{\beta^\vee}}|^{\frac{1}{\lambda_{\beta^\vee}(\sfh_{0;S})}}<R\}
\end{align}
inside $\chi_S^{-1}(D_S/W_S)\subset J_{L_{S}^\der}$. Then
\begin{itemize}
\item[(i)] Fix any compact region in the Hamiltonian reduction of $\chi_{\cK'_{S^\perp}}$ at $\kappa$, which is also canonically identified with $\chi_S^{-1}(D'_S/W_S)/\cZ(L_S^\der)_0$. For any $R>0$ sufficiently large, there exists $\epsilon_R>0$ such that for all $\rho\in \cZ(L_S)_0$ satisfying $|\gamma_{-\Pi\backslash S}(\rho)|<\epsilon_R$, the image of some fixed tubular neighborhood of the ``horizontal boundary" of $\cY^\dagg_{S,R; \kappa, \rho}$ (i.e. the intersection of $\cY^\dagg_{S,R; \kappa, \rho}$ with a tubular neighborhood of 
\begin{align*}
\{\sum\limits_{\beta\in S}|b_{\lambda_{\beta^\vee}}^S(g_S)|^{\frac{1}{\lambda_{\beta^\vee}(\sfh_{0;S})}}=R\}, 
\end{align*}
in $\widetilde{\mu}_{D^\dagg, \cK', (c_\beta(s))_\beta}^{-1}(\kappa)$) under $\overline{\frj}_{S,\rho;\kappa}$ (\ref{eq: j_S,rho, reduction}), is outside the fixed compact region.  

\item[(ii)] Fixing $R>0$, the image of $\cY^\dagg_{S,R;\kappa, \rho}/\cZ(L_S^\der)_0$ under $\overline{\frj}_{S,\rho;\kappa}$ is contained in some fixed compact region, for all $\rho\in \cZ(L_S)_0$ with sufficiently small $|\gamma_{-\Pi\backslash S}(\rho)|$.   
\end{itemize}
The same claims hold with $\mu_{D', \cK', \rho}$ replaced by $\widetilde{\mu}_{D',\cK', (c_\beta)_\beta}$ for $|(c_\beta)_{\beta\in \Pi\backslash S}|$ sufficiently small.

\end{lemma}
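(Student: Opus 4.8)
\textbf{Proof plan for Lemma \ref{lemma: cY_S, R}.}

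The strategy is to reduce both statements to the asymptotic estimates already established for the torus case, namely Proposition \ref{prop: C_lhd, j} and Corollary \ref{cor: dist star}, combined with the factorization $\fU_S\cong T^*\cZ(L_S)\underset{\cZ(L_S^\der)}{\times}J_{L_S^\der}$ and the compatibility of the interpolating families under $\widetilde{\iota}_S^{S'}$ from Proposition \ref{eq: tilde mu, embed}. The key point is that the map $\overline{\frj}_{S,\rho;\kappa}$ of \eqref{eq: j_S,rho, reduction} is, up to the characteristic-leaf quotient, governed entirely by the $\cZ(L_S)_0$-action in the $T^*\cZ(L_S)_0$-direction, which is exactly the situation analyzed in Subsection \ref{subsubsec: B_w_0} with $T$ replaced by $\cZ(L_S)_0$; the $J_{L_S^\der}$-factor is carried along essentially unchanged because $\frj_{S;\rho}$ acts only on the $z$-coordinate (cf.\ \eqref{eq: frj_S;rho}) and preserves the Liouville form \eqref{eq: hat vartheta_S}.

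First I would fix $\kappa\in\cK^\dagg_{S^\perp}$ and, using Lemma \ref{lemma: cS_kappa, S}, identify $\cY^\dagg_{S,R;\kappa,\rho}/\cZ(L_S^\der)_0$ with (a modification of) the ``standard'' multi-section $\cY_{S,R}\times\{u_0\}\times\{\kappa\}$ inside the reduced space, so that $\overline{\frj}_{S,\rho;\kappa}$ becomes, after the averaging of Remark \ref{remark: modify multi-section}, the restriction of $\widetilde{\iota}_S^{S'}\circ\frj_{S,\rho_0}$ composed with the identification of the symplectic quotient with $\chi_S^{-1}(D'_S/W_S)/\cZ(L_S^\der)_0$. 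For part (ii), I would note that $\cY_{S,R}$ as defined in \eqref{eq: def cY_S,R} is, by Proposition \ref{prop: proper b map}, a pre-compact region in $\chi_S^{-1}(D_S/W_S)$; then the distance estimate \eqref{eq: cor dist} of Proposition \ref{prop: C_lhd, j} (applied in the $\cZ(L_S)_0$-direction) shows that for $|\gamma_{-\Pi\backslash S}(\rho)|$ small enough, $\overline{\frj}_{S,\rho;\kappa}$ moves each point of $\cY_{S,R}^\dagg$ within a uniformly bounded distance of its image under the ``limit'' inclusion, so the whole image stays in a fixed compact set. For part (i), I would run the same estimate in the opposite direction: the horizontal boundary $\{\sum_{\beta\in S}|b^S_{\lambda_{\beta^\vee}}|^{1/\lambda_{\beta^\vee}(\sfh_{0;S})}=R\}$ in the $J_{L_S^\der}$-factor is preserved (up to the small perturbation controlled by \eqref{eq: cor dist}) by $\overline{\frj}_{S,\rho;\kappa}$, so for $R$ large its image escapes any prescribed compact region inside $\chi_S^{-1}(D'_S/W_S)/\cZ(L_S^\der)_0$; here one uses that $b^S_{\lambda_{\beta^\vee}}$ extends to a coordinate function that is proper on fibers, again via Proposition \ref{prop: proper b map} applied to $L_S^\der$.

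The last sentence of the statement, replacing $\mu_{D',\cK',\rho}$ by $\widetilde{\mu}_{D',\cK',(c_\beta)_\beta}$ for general small $(c_\beta)_\beta$, follows because every such $(c_\beta)_\beta$ lies in the closure of $\gamma_{-\Pi\backslash S}(\cZ(L_S)_0)$ and all the estimates used (from Proposition \ref{prop: C_lhd, j} and Lemma \ref{lemma: proj X_eta, c}) are stated for arbitrary $(c_\beta)_\beta$ in a neighborhood of $0$ in $\bC^{\Pi\backslash S}$, not merely for those in the image of $\gamma_{-\Pi\backslash S}$; one simply repeats the argument with $\rho_0$ chosen as in Proposition \ref{eq: tilde mu, embed} so that $\Ad_{\rho_0^{-1}}(\sum_{\beta}c_\beta f_\beta)=f_{S'\backslash S}$. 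The main obstacle I expect is bookkeeping rather than conceptual: one has to check carefully that the averaging modification of Remark \ref{remark: modify multi-section} does not spoil the distance estimates — i.e.\ that the $\cZ(L_S^\der)_0$-equivariantization moves points only by an amount comparable to the original deviation from equivariance, which is itself $O(|(c_\beta)_\beta|)$ — and that the identification of the reduced space of $\widetilde{\mu}_{D',\cK',(c_\beta)_\beta}$ with $\chi_S^{-1}(D'_S/W_S)/\cZ(L_S^\der)_0$ is compatible across the family, so that ``fixed compact region'' makes sense uniformly in $\rho$. Both are handled by the local symplectic-isomorphism property of $\overline{\frj}_{S,\rho;\kappa}$ together with the uniform pre-compactness coming from Proposition \ref{prop: proper b map}.
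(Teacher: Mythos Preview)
Your plan has a genuine gap: the claim that ``the $J_{L_S^\der}$-factor is carried along essentially unchanged'' under $\overline{\frj}_{S,\rho;\kappa}$ is precisely the nontrivial content of the lemma, and Proposition~\ref{prop: C_lhd, j}/Corollary~\ref{cor: dist star} do not supply it. Those results control how the $\cZ(L_S)_0$-\emph{orbit} structure deforms, i.e.\ they compare $\rho'\star\frj_{S,\rho_1}(-)$ with $\frj_{S,\rho'\rho_1}(-)$ in the $T^*\cZ(L_S)_0$-direction. They say nothing about how the $(g_S,\xi_S)$-coordinate in the $\fU_S$-presentation (Whittaker side, via $\iota_S$) relates to the $(g_{S,\rho}^\natural,\xi_{S,\rho}^\natural)$-coordinate in the centralizer presentation \eqref{eq: chi_S, mathscr Z}, which is what $\overline{\frj}_{S,\rho;\kappa}$ computes after quotienting by orbits. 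Although $\frj_{S;\rho}$ indeed only shifts $z$, the subsequent passage to the reduced space on the right of \eqref{eq: j_S,rho, reduction} re-expresses the same point in a \emph{different} coordinate chart on $J_G$, and there is no a priori reason for $g_S$ and $g_{S,\rho}^\natural$ to be close. Your assertion that the horizontal boundary $\{\sum|b^S_{\lambda_{\beta^\vee}}|^{1/\lambda_{\beta^\vee}(\sfh_{0;S})}=R\}$ is ``preserved up to small perturbation'' assumes exactly this comparison.

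The paper's argument confronts this head-on: one writes the identity $\Ad_{Q_\rho}(u_\rho\phi_S g_S z\rho)=g_{S,\rho}^\natural z_\rho^\natural$ for an explicitly bounded element $Q_\rho=u_{1,\rho}^-u_{2,\rho}\in N_{P_S}^-\cdot N$ (constructed by moving $\Xi_\rho$ into $\cS_{\fl_S^\der}+\fz_S$), and then compares $|b_{\lambda_{\beta^\vee}}|$ on both sides for $\beta\in\Pi$. The case $\beta\notin S$ requires Lemma~\ref{lemma: nonzero lowest component} (a highest-weight-vector argument showing certain matrix coefficients are bounded away from zero) to pin down $z\rho(z_\rho^\natural)^{-1}$ in a compact set; only then can the case $\beta\in S$ yield the two-sided control between $|b^S_{\lambda_{\beta^\vee}}(g_S)|$ and $|b^S_{\lambda_{\beta^\vee}}(g_{S,\rho}^\natural)|$ needed for (i) and (ii). None of this is accessible from the distance estimate \eqref{eq: cor dist}. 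Note also that the $\cZ(L_S)_0$-analogue of Corollary~\ref{cor: dist star} is Corollary~\ref{cor: dist star, S}, whose proof already \emph{uses} the present lemma via Corollary~\ref{cor: cY_S, R, cover}, so invoking it here would be circular.
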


\begin{proof}
First, the statements about $\widetilde{\mu}_{D',\cK', (c_\beta)_\beta}$ can be deduced from those about $\mu_{D', \cK', \rho}$ by replacing the group $G$ with $L_{S'}$ and using Proposition \ref{eq: tilde mu, embed}. So it suffices to prove the statements for $\mu_{D', \cK', \rho}$.

For any $\cY_{S,R}$, we choose $\rho$ with $|\gamma_{-\Pi\backslash S}(\rho)|$ sufficiently small so that $\mu_{D', \cK', \rho}: \cW_{\cY_{S,R}, \cV, \cK}\rightarrow \cK'_{S^\perp}$ is well defined. Fix any point  $(g_S, \xi_S;z, t)$ in  $\mu_{D', \cK', \rho}^{-1}(\kappa)$. 
Without loss of generality, we may assume $\xi_S$ is from the Kostant slice $\cS_{\fl^\der_S}$ for the semisimple Lie algebra $\fl^\der_S$, and $g_S$ be the respective centralizing element. Recall the notation from (\ref{eq: prop U_S}) $(\phi_Sg_Sz, \Xi_S(g_S, \xi_S;z, t))$. For $\frj_{S,\rho}(g_S, \xi_S;z, t)$, there exists a (unique) $u_\rho\in N$ such that 
\begin{align}\label{eq: Xi, g_S, xi_S, z, t}
(u_\rho\phi_Sg_Sz\rho, \Xi_\rho:= \Xi_S(g_S, \xi_S; z\rho,t))
\end{align}
is a centralizing pair. As $|\gamma_{-\Pi\backslash S}(\rho)|\rightarrow 0$, $\Xi_\rho$ is approaching $\Xi_0:=(f-f_S)+\xi_S+t$. 

On the other hand, let $(g_{S,\rho}^\natural, \xi_{S,\rho}^\natural; z_\rho^\natural, t_\rho^\natural)$ be a representative of $\frj_{S,\rho}(g_S, \xi_S;z, t)$ under the isomorphism (\ref{eq: chi_S, mathscr Z}). Here we also assume that $\xi_{S,\rho}^\natural$ is in the Kostant slice $\cS_{\fl^\der_S}$, so then it is uniquely determined. It is clear from the above discussion that $\xi_{S,\rho}^\natural$ (resp. $t_\rho^\natural$) is arbitrarily close to $\xi_S$ (resp. $t$) as $|\gamma_{-\Pi\backslash S}(\rho)|\rightarrow 0$. In particular, there exists $\epsilon_{g_S}>0$ (the dependence is only on $g_S$ due to the boundedness of $\xi_S, z,t$) such that for $\rho$ satisfying $|\gamma_{-\Pi\backslash S}(\rho)|<\epsilon_{g_S}$, we can find $Q_\rho=u_{1,\rho}^-u_{2,\rho}\in N_{P_S}^-\cdot N$ with $u_{1,\rho}^-$ (resp. $u_{2,\rho}$) contained in a fixed compact region in (the opposite of the unipotent radical of $P_S$) $N_{P_S}^-$ (resp. arbitrarily close to $I\in N$), such that $\Ad_{Q_\rho}(\Xi_\rho)=\xi_{S,\rho}^\natural+t_\rho^\natural$. More explicitly, we first find $u_{1,\rho}^-\in N_{P_S}^-$ such that $\Ad_{(u_{1,\rho}^-)^{-1}}(\xi_{S,\rho}^\natural+t_\rho^\natural)=(f-f_S)+\xi_{S,\rho}^\natural+t_\rho^\natural$  (this follows from a similar argument as for \cite[Lemma 3.1.44]{CG}). Since $\Xi_\rho$ is arbitrarily close to $(f-f_S)+\xi_{S,\rho}^\natural+t_\rho^\natural$ (and both of them are in $f+\fb$) and they are in the same adjoint orbit, we can find $u_{2,\rho}\in N$ close to $I$ such that $\Ad_{u_{2,\rho}}\Xi_\rho=(f-f_S)+\xi_{S,\rho}^\natural+t_\rho^\natural$.

We must have an equality
\begin{align}
\nonumber&\Ad_{Q_\rho}(u_\rho\phi_Sg_Sz\rho)=g_{S,\rho}^\natural z_\rho^\natural\\
\label{eq: Ad_u2,epsilon}\Leftrightarrow & \Ad_{u_{2,\rho}}(u_\rho\phi_Sg_Sz\rho)(u_{1,\rho}^-)^{-1}=(u_{1,\rho}^-)^{-1}g_{S,\rho}^\natural z_\rho^\natural. 
\end{align}
Now we compare the value of $|b_{\lambda_{\beta^\vee}}|, \beta\in \Pi$ on both sides.

First, we consider the case when $\beta\not \in S$. Let us evaluate $|b_{\lambda_{\beta^\vee}}|$ on the right-hand-side of (\ref{eq: Ad_u2,epsilon}). Recall that
\begin{align}\label{eq: b, u_1,epsilon-}
|b_{\lambda_{\beta^\vee}}((u_{1,\rho}^-)^{-1}g_{S,\rho}^\natural z_\rho^\natural)|=|\lng (u_{1,\rho}^-)^{-1}g_{S,\rho}^\natural z_\rho^\natural (v_{\lambda_{\beta^\vee}}), v_{-w_0(\lambda_{\beta^\vee})}\rng|,
\end{align}
where $v_{\lambda_{\beta^\vee}}$ and $v_{-w_0(\lambda_{\beta^\vee})}$ are highest weight vectors in $V_{\lambda_{\beta^\vee}}$ and $V_{\lambda_{\beta^\vee}}^*\cong V_{-w_0(\lambda_{\beta^\vee})}$ and the right-hand-side is the absolute value of the pairing\footnote{More precisely, we need to take a lift of $(u_{1,\rho}^-)^{-1}g_{S,\rho}^\natural z_\rho^\natural$ in $G_{sc}$ to make $b_{\lambda_{\beta^\vee}}$ well defined. But the value of $|b_{\lambda_{\beta^\vee}}|$ does not depend on the choice of the lifting. Similarly, the line $\bC\cdot(u_{1,\rho}^-)^{-1}g_{S,\rho}^\natural z_\rho^\natural (v_{\lambda_{\beta^\vee}})$ does not depend on the choice of the lifting either. }. If $\beta\not\in S$, then 
\begin{align*}
\bC\cdot (u_{1,\rho}^-)^{-1}g_{S,\rho}^\natural z_\rho^\natural (v_{\lambda_{\beta^\vee}})=\bC\cdot (u_{1,\rho}^-)^{-1}(v_{\lambda_{\beta^\vee}})
\end{align*}
is an invariant line of $\Ad_{u_{2,\rho}}(\Xi_\rho)$. 
Indeed, we have 
\begin{align*}
&\Ad_{u_{2,\rho}}(\Xi_\rho)\cdot (u_{1,\rho}^-)^{-1}v_{\lambda_{\beta^\vee}}=\Ad_{(u_{1,\rho}^-)^{-1}}(\xi_{S,\rho}^\natural+t_\rho^\natural)\cdot (u_{1,\rho}^-)^{-1}v_{\lambda_{\beta^\vee}}\\
&=\lambda_{\beta^\vee}(t_\rho^\natural)(u_{1,\rho}^-)^{-1}v_{\lambda_{\beta^\vee}}.
\end{align*}

By Lemma \ref{lemma: nonzero lowest component} below, (\ref{eq: b, u_1,epsilon-}) is nonzero and we have 
\begin{align}\label{eq: b, RHS, not in S}
&c\cdot |\lambda_{\beta^\vee}(z_\rho^\natural)|\leq |b_{\lambda_{\beta^\vee}}((u_{1,\rho}^-)^{-1}g_{S,\rho}^\natural z_\rho^\natural)|\leq C\cdot |\lambda_{\beta^\vee}(z_\rho^\natural)|,\text{ for } |\gamma_{-\Pi\backslash S}(\rho)|\rightarrow 0, \\
\nonumber&(g_S, \xi_S;z, t\text{ fixed}), \beta\not\in S
\end{align}
for some fixed positive constants $c, C>0$. On the other hand, if we evaluate $|b_{\lambda_{\beta^\vee}}|$ on the left-hand-side of (\ref{eq: Ad_u2,epsilon}), we get 
\begin{align}\label{eq: b, LHS, not in S}
&k\cdot |\lambda_{\beta^\vee}(z\rho)| \leq |b_{\lambda_{\beta^\vee}}(\Ad_{u_{2,\rho}}(u_\rho\phi_Sg_Sz\rho) ( u_{1,\rho}^-)^{-1})|\leq K\cdot |\lambda_{\beta^\vee}(z\rho)|, \\
\nonumber&\text{ as } |\gamma_{-\Pi\backslash S}(\rho)|\rightarrow 0 \ (g_S, \xi_S;z,t) \text{ fixed}, \beta\not\in S
\end{align} 
for some fixed constants $k, K>0$. 
This uses that for a fixed basis $v^{(j)}_\mu$ in the $\mu$-weight space of $V_{\lambda_{\beta^\vee}}$, we have 
\begin{align}\label{eq: u_1,rho, inverse}
(u_{1,\rho}^-)^{-1}v_{\lambda_{\beta^\vee}}=v_{\lambda_{\beta^\vee}}+\sum\limits_{\varpi\in \Sigma(\Delta^+\backslash \Gamma(S))\backslash \{0\}, j}c^{(j)}_{\varpi,\rho} v^{(j)}_{\lambda_{\beta^\vee}-\varpi},
\end{align}
where (i) $\Sigma(\Delta^+\backslash \Gamma(S))\subset X^*(T_{sc})$ is the monoid spanned by $\Delta^+\backslash \Gamma(S)$ over $\bZ_{\geq 0}$; (ii) the summation has only finitely many (possibly) nonzero terms indexed by $\lambda_{\beta^\vee}-\varpi$ (belonging to the convex hull of $W\cdot \lambda_{\beta^\vee}$) and $j$; (iii)
$|c_{\varpi,\rho}^{(j)}|$ are uniformly bounded and $u_{2,\rho}\overset{\text{close}}{\sim}I$ (near the limit).  
Note that we can choose $c, C, k, K$ uniformly for all $(g_S, \xi_S; z,t)$, \emph{but} the range of $\rho$ so that (\ref{eq: b, RHS, not in S}) and (\ref{eq: b, LHS, not in S}) hold depends on $(g_S, \xi_S;z,t)$, which is very important\footnote{In fact, the range of valid $\rho$ only depends on $g_S$, because $\xi_S, z, t$ are bounded.}. 
Comparing (\ref{eq: b, RHS, not in S}) and (\ref{eq: b, LHS, not in S}), we see that there exist uniform constants $m, M>0$ such that 
\begin{align}\label{eq: beta in S, z}
\nonumber&m|\lambda_{\beta^\vee}(z\rho)|\leq |\lambda_{\beta^\vee}(z_\rho^\natural)|\leq  M|\lambda_{\beta^\vee}(z\rho)|, \beta\in \Pi, |\gamma_{-\Pi\backslash S}(\rho)|\rightarrow 0\ (\text{fixing }g_S, \xi_S; z, t)\\
\Leftrightarrow& z\rho (z_\rho^\natural)^{-1}\text{ is contained in a \emph{uniformly} bounded region in }\cZ(L_S)  \text{ near the limit}.
\end{align}
Presumably, the above only works for $\beta\not\in S$, but since $\lambda_{\beta^\vee}, \beta\not\in S$ gives a finite indexed sublattice of $X^*(\cZ(L_S))$ (also technically we should lift everything to $G_{sc}$), the same inequalities hold for all $\beta\in S$ as well.

Now we rewrite the relation (\ref{eq: Ad_u2,epsilon}) as 
\begin{align}\label{eq: Ad_u2,epsilon, 2}
\phi_Sg_Sz\rho Q_\rho^{-1} (z_\rho^\natural)^{-1}=u_\rho^{-1}Q_\rho^{-1}g_{S,\rho}^\natural. 
\end{align}
The left-hand-side can be rewritten as
\begin{align}\label{eq: Ad_u2,epsilon, left}
\phi_Sg_S(z\rho(z_\rho^\natural)^{-1}) \Ad_{z_\rho^\natural}(u_{2,\rho}^{-1}(u_{1,\rho}^-)^{-1})=\phi_Sg_S(z\rho(z_\rho^\natural)^{-1}) \Ad_{z_\rho^\natural}(u_{2,\rho})^{-1}\Ad_{z_\rho^\natural}(u_{1,\rho}^-)^{-1}.
\end{align}
By the assumption that $z, z_\rho^\natural\in \cZ(L_S)_0$ and $|\gamma_{-\Pi\backslash S}(\rho)|\rightarrow 0$, we have  $\Ad_{z_\rho^\natural}(u_{1,\rho}^-)^{-1}\rightarrow I$ and $\Ad_{z_\rho^\natural}(u_{2,\rho})^{-1}\in u_{2,\rho}^{-1}\cdot N_{P_S}$. For any $\beta\in S$, we compare $|b_{\lambda_\beta^\vee}|$ on both sides of (\ref{eq: Ad_u2,epsilon, left}) after multiplying $\Ad_{z_\rho^\natural}(u^-_{1,\rho})$ on the right to each side, and get 
\begin{align*}
|b_{\lambda_{\beta^\vee}}^S(g_S)|\cdot |\lambda_{\beta^\vee}(z\rho(z_\rho^\natural)^{-1})|=|b_{\lambda_{\beta^\vee}}((u_\rho^{-1}Q_\rho^{-1}g_{S,\rho}^\natural)\Ad_{z_\rho^\natural}(u^-_{1,\rho}))|.
\end{align*}
Suppose $g_{S,\rho}^\natural$ is contained in a fixed bounded (i.e. compact) domain $\fQ$ in $L_S^\der$, for $|\gamma_{-\Pi\backslash S}(\rho)|\rightarrow 0$ with $(g_S, \xi_S; z, t)$ fixed, then by (\ref{eq: beta in S, z}) and the uniform boundedness of the right-hand-side, we see that $|b_{\lambda_{\beta^\vee}}^S(g_S)|$ is uniformly bounded.  Hence by Proposition \ref{prop: proper b map}, $g_S$ is contained in a fixed bounded domain (that only depends on $\fQ$) in $L_S^\der$. This implies (i).

For (ii), we use (\ref{eq: Ad_u2,epsilon, 2}) and (\ref{eq: Ad_u2,epsilon, left}) again, and get
\begin{align*}
\phi_Sg_{S,\rho}^\natural \Ad_{z_\rho^\natural}(u_{1,\rho}^-)=\phi_SQ_\rho u_\rho\phi_S g_S(z\rho(z_\rho^\natural)^{-1})\Ad_{z_\rho^\natural}(u_{2,\rho})^{-1}.
\end{align*}
For any $\beta\in S$, we compare $|b_{\lambda_\beta^\vee}|$ on both sides. Using $ \Ad_{z_\rho^\natural}(u_{1,\rho}^-)\in N^-_{P_S}$, we have 
\begin{align}
\nonumber&|b_{\lambda_{\beta^\vee}}^S(g_{S,\rho}^\natural)|=|b_{\lambda_{\beta^\vee}}(\phi_Sg_{S,\rho}^\natural \Ad_{z_\rho^\natural}(u_{1,\rho}^-))|\\
\label{eq: RHS, Ad_u2,epsilon, 2}&=|b_{\lambda_{\beta^\vee}}(\phi_SQ_\rho u_\rho\phi_S g_S(z\rho(z_\rho^\natural)^{-1}))|, (\text{fixing }g_S, \xi_S; z, t).
\end{align}
Since (\ref{eq: RHS, Ad_u2,epsilon, 2}) above is uniformly bounded for $(g_S, \xi_S, z,t)$ in a fixed compact region $\fQ'$ in $\fU_S$  (near the limit of $\rho$), $|b_{\lambda_{\beta^\vee}}^S(g_{S,\rho}^\natural)|$ is uniformly bounded. Hence by Proposition \ref{prop: proper b map} again, $(g_{S,\rho}^\natural, \xi_{S, \rho}^\natural)$ is contained in a fixed compact region in $J_{L_S^\der}$ depending only on $\fQ'$. This proves (ii). 
\end{proof}

\begin{lemma}\label{lemma: nonzero lowest component}
\item[(i)] Let $V_{\lambda}$ be the irreducible highest weight representation of $G_{sc}$ corresponding to $\lambda\in X^*(T_{sc})^+$, and let $v_{\lambda}$ be a fixed highest weight vector. 
Then for any vector $v\in G_{sc}\cdot v_{\lambda}\subset V_{\lambda}$ that generates an invariant line of a Lie algebra element $f+\xi_1\in f+\fb$, it has a nonzero lowest weight component with weight $w_0(\lambda)$.  

\item[(ii)] Let $\fK\subset \fb$ (resp. $Q\subset G$) be a compact subset. Let $V_{\lambda}^{w_0(\lambda), \circ}$ the open subset of $V_\lambda$ consisting of vectors with \emph{nonzero} weight component in $w_0(\lambda)$. 
Then the subset in $V_\lambda$ defined by 
\begin{align*}
V_{\lambda}^{\fK, Q}:=\{v\in Q\cdot v_\lambda: \bC\cdot v\text{ is an invariant line of some element in }f+\fK\}
\end{align*}
is compact in $V_{\lambda}^{w_0(\lambda),\circ}$. In particular, the function $|(-, v_{-w_0(\lambda)})|: V_\lambda\rightarrow \bR_{\geq 0}$, for a fixed highest weight vector $v_{-w_0(\lambda)}$ in $V_\lambda^*\cong V_{-w_0(\lambda)}$, has a strictly positive minimum and a finite maximum on $V_\lambda^{\fK, Q}$, if $V_\lambda^{\fK, Q}\neq \emptyset$. 
\end{lemma}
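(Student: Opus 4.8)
\textbf{Proof strategy for Lemma \ref{lemma: nonzero lowest component}.}

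The plan is to prove (i) first, then deduce (ii) by a compactness/continuity argument. For (i), suppose $v\in G_{sc}\cdot v_\lambda$ spans an invariant line of $f+\xi_1$ with $\xi_1\in\fb$, say $(f+\xi_1)\cdot v=c\,v$ for some scalar $c$. Write $v=\sum_{\mu}v_\mu$ in weight spaces. Since $f=\sum_{\alpha\in\Pi}f_\alpha$ lowers weights by simple roots and $\xi_1\in\fb$ preserves or raises weights, the eigenvalue equation forces $c=0$ (comparing the $\mu$-component where $\mu$ is a \emph{maximal} weight appearing in $v$: the $f$-part contributes nothing to that top component, and the $\fb$-part contributes $\lambda(\text{torus part})$ type terms, but iterating downward shows the only possible eigenvalue is $0$ because $f$ strictly decreases the ``height'', so $v$ must lie in $\ker(f+\xi_1)$). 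Actually the cleanest route: the highest weight vector $v_\lambda$ spans an invariant line of the full Borel $\fb$, and the orbit $G_{sc}\cdot v_\lambda$ is the cone over the closed $G_{sc}$-orbit in $\bP(V_\lambda)$; the stabilizer in $\fg$ of the line $\bC v$ is a Borel subalgebra $\fb_v=\Ad_g\fb$ if $v\in \bC\cdot g v_\lambda$. Now $f+\xi_1\in f+\fb$ is a regular element (it is $\Ad_N$-conjugate to an element of the Kostant slice), hence its centralizer is a Cartan; but we only need that $f+\xi_1$ lies in $\fb_v$. The condition $f+\xi_1\in\Ad_g\fb$ with $f$ having nonzero component along every negative simple root space $\fg_{-\alpha}$ forces, after writing things in the $\fg=\fn^-\oplus\fb$ decomposition adapted to $g$, that $g v_\lambda$ has nonzero pairing with the lowest weight line; equivalently $\bC\cdot v$ is \emph{not} contained in the big cell's complement. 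I would make this precise using the following observation: the lowest weight component of $v$ is nonzero if and only if $v\notin \overline{N^-\cdot v_\lambda}$'s ``missing face'', and the presence of $f_\alpha$ for all $\alpha\in\Pi$ in $f+\xi_1$, together with $v$ spanning an invariant line, rules out $v$ lying in any proper $T$-invariant coordinate subspace that avoids the lowest weight. Concretely one argues by contradiction: if the $w_0(\lambda)$-component of $v$ vanishes, then $v$ lies in a $\fb_v$-invariant subspace not containing the lowest weight vector, but applying powers of $f$ (using that $f$ is a principal nilpotent, hence $f^{N}v_\lambda$ has nonzero lowest weight component for suitable $N$) to a highest weight vector inside $\bC v$'s $G$-orbit derives a contradiction with $\Ad_g\fb$-invariance. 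This is the step I expect to be the main obstacle — getting the invariant-line argument airtight without invoking more structure than needed; the key input is that $f$ is \emph{principal} (regular nilpotent), which guarantees $f^{n}v_\lambda\neq 0$ lands in the lowest weight space for $n=\langle\lambda,\sfh_0\rangle$ (or more precisely that the $\fsl_2$-string through $v_\lambda$ generated by $(e,f,\sfh_0)$ reaches the bottom).

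For (ii), I would argue by continuity and compactness. The set $V_\lambda^{\fK,Q}$ is the image of the compact set $\{(g,\eta)\in Q\times\fK : \bC\cdot gv_\lambda \text{ is invariant under } f+\eta\}$ — note this constraint set is closed in $Q\times\fK$, hence compact — under the continuous map $(g,\eta)\mapsto gv_\lambda$ (or rather one normalizes; since $Q$ is compact the orbit points $gv_\lambda$ already range in a compact set, no normalization needed). Wait — the invariance condition may be empty for a given $\eta$, but the \emph{constraint set} $\{(g,\eta): (f+\eta)\cdot gv_\lambda\in\bC\cdot gv_\lambda\}$ is still closed, possibly empty. So $V_\lambda^{\fK,Q}$ is compact. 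By part (i), every point of $V_\lambda^{\fK,Q}$ has nonzero $w_0(\lambda)$-component, i.e. $V_\lambda^{\fK,Q}\subset V_\lambda^{w_0(\lambda),\circ}$. A compact subset of the open set $V_\lambda^{w_0(\lambda),\circ}$ on which the continuous function $|(-,v_{-w_0(\lambda)})|$ is strictly positive therefore attains a strictly positive minimum (and a finite maximum, trivially, by compactness of $V_\lambda^{\fK,Q}$ inside the finite-dimensional $V_\lambda$). This gives the last assertion. I do not anticipate difficulty here beyond bookkeeping; the only subtlety is checking the constraint set is genuinely closed, which follows since the condition ``$(f+\eta)\cdot gv_\lambda$ is proportional to $gv_\lambda$'' is the vanishing of the wedge $(f+\eta)\cdot gv_\lambda \wedge gv_\lambda\in\Lambda^2 V_\lambda$, a closed condition, and $gv_\lambda\neq 0$ always.

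In summary: (i) is the heart of the matter and rests on the principality of $f$ via the $\fsl_2$-triple $(e,f,\sfh_0)$ reaching the lowest weight line; (ii) is then a routine closedness-plus-compactness argument combined with (i). I would also remark, for use in the proof of Lemma \ref{lemma: cY_S, R} where this is invoked, that all the constants can be taken uniform as $(g_S,\xi_S;z,t)$ ranges in a fixed compact region, because $\fK$ and $Q$ there are fixed compact sets depending only on that region.
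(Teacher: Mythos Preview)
Your treatment of (ii) is correct and essentially identical to the paper's: both intersect a closed incidence condition with a compact set and project. The paper phrases it via the closed subvariety $\cX_{V_\lambda,f+\fb}\subset\fb\times\bP(V_\lambda)$ and the proper projection to $\fb$; you phrase it via the closed constraint set in $Q\times\fK$. No issue there.

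Part (i), however, has a genuine gap. Your first concrete attempt examines the \emph{maximal} weight $\mu$ in the support of $v$ and tries to conclude $c=0$; this fails, since the $\mu$-component of $(f+\xi_1)v$ is just the torus part of $\xi_1$ acting on $v_\mu$, giving $c=\mu(t)$, which has no reason to vanish. Your subsequent ideas (the Borel stabilizer $\fb_v$, the $\fsl_2$-string through $v_\lambda$) do not connect to a proof: the $\fsl_2$-string from $v_\lambda$ reaches the lowest weight, but $v=gv_\lambda$ is not $v_\lambda$, and you give no mechanism to transfer this. The paper's argument is the mirror of your first attempt: look at the \emph{minimal} weight in the support of $v$. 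Using the Bruhat decomposition of $G/P_\lambda\hookrightarrow\bP(V_\lambda)$ by $N_{P_\lambda}$-orbits, one sees that $v=a v_\mu+\sum_{\mu'\succneq\mu}q_{\mu'}$ with $\mu\in W\cdot\lambda$ and $a\neq 0$. Now compare the $(\mu-\alpha)$-components of $(f+\xi_1)v=cv$ for each $\alpha\in\Pi$: since $\xi_1\in\fb$ does not lower weights and $\mu$ is minimal in the support, the left side is $a\,f_\alpha v_\mu$ and the right side is $0$. Hence $f_\alpha v_\mu=0$ for all $\alpha\in\Pi$, so $v_\mu$ is a lowest weight vector, forcing $\mu=w_0(\lambda)$. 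The principality of $f$ enters only as the statement that each $f_\alpha\neq 0$; no $\fsl_2$-string is needed.
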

\begin{proof}
(i) Let $P_{\lambda}$ be the standard parabolic that fixes the line generated by $v_{\lambda}$. First, we have the canonical embedding $\iota: G/P_{\lambda}\hookrightarrow \bP(V_{\lambda})$, that sends every $gP_{\lambda}$ to $\bC\cdot gv_{\lambda}$.  Let $N_{P_\lambda}$ be the unipotent radical of $P_{\lambda}$. The left $N_{P_{\lambda}}$ action on $G/P_{\lambda}$ gives the Bruhat decomposition, indexed by the $T$-fixed points $x_{w(\lambda)}, w(\lambda)\in W\cdot \lambda\cong W/W_{\lambda}$ which correspond to the lines generated by the weight vectors $v_{w(\lambda)}$ (defined unique up to scaling). Since the line generated by $v$ in question is in the image of $\iota$, the lemma is equivalent to saying that the corresponding point $\tilde{v}$ in $G/P_{\lambda}$ for $\bC\cdot v$ must lie in $N_{P_{\lambda}}\cdot x_{w_0(\lambda)}$.

Suppose the contrary that $\tilde{v}$ is not in $N_{P_{\lambda}}\cdot x_{w_0(\lambda)}$. Then 
\begin{align*}
\tilde{v}\in \bigsqcup\limits_{\mu\in W\cdot \lambda\backslash \{w_0(\lambda)\}}N_{P_{\lambda}}\cdot x_\mu. 
\end{align*}
In particular, $v=av_\mu+\sum\limits_{\mu\underset{\neq}{\prec}\mu'}q_{\mu'}$ for some $\mu\in W\cdot \lambda\backslash \{w_0(\lambda)\}$, $a\neq 0$ and some weight vectors $q_{\mu'}$ in the weight spaces of $\mu'$. Now apply $f+\xi_1\in f+\fb$ to $v$. The invariance of $\bC\cdot v$ implies that $v_\mu\in \ker f$, i.e. $f_\alpha\cdot v_\mu=0, \forall\alpha\in \Pi$. However, this contradicts to the assumption that $\mu$ is \emph{not} the lowest weight, so part (i) of the lemma follows.

(ii) First, we have the closed incidence subvariety in $\fb\times \bP(V_\lambda)$
\begin{align*}
\cX_{V_\lambda,f+\fb}:=\{(\xi, [v])\in \fb\times \bP(V_\lambda): [v] \text{ is an invariant line of }f+\xi\},
\end{align*}
Note that the condition that $[v]$ is an invariant line of $f+\xi$ is the same as saying that the vector field on $\bP(V_\lambda)$ corresponding to $f+\xi$ vanishes at $[v]$.
We have the projection (resp. proper projection) $p_{\bP(V_\lambda)}: \cX_{V_\lambda,f+\fb}\rightarrow \bP(V_\lambda)$ (resp. $p_\fb: \cX_{V_\lambda,f+\fb}\rightarrow \fb$). Let $\pi: V_\lambda-\{0\}\rightarrow \bP(V_\lambda)$ be the natural projection. Then for the given compact $\fK\subset \fb$ and $Q\subset G$, we have 
\begin{align*}
V_{\lambda}^{\fK, Q}=\pi^{-1}(p_{\bP(V_\lambda)}p_\fb^{-1}(\fK))\cap (Q\cdot v_\lambda). 
\end{align*}
Since $Q\cdot v_\lambda$ is compact inside $V_\lambda-\{0\}$ and $\pi^{-1}(p_{\bP(V_\lambda)}p_\fb^{-1}(\fK))\subset V_\lambda-\{0\}$ is closed, the intersection $V_{\lambda}^{\fK, Q}$ is compact. 

By part (i), $V_{\lambda}^{\fK, Q}\subset V_\lambda^{w_0(\lambda), \circ}$. The last sentence then follows immediately. 
\end{proof}

\begin{cor}\label{cor: cY_S, R, cover}
Fix the setting as in Question \ref{question: mu_D, K, rho}, and use $\cY_{S,R}$ from Lemma \ref{lemma: cY_S, R}. As we increase $R\uparrow\infty$ and for each $R$ choose $\rho\in \cZ(L_S)_0$ with $|\gamma_{-\Pi\backslash S}(\rho)|$ sufficiently small, the map on Hamiltonian reductions $\overline{\frj}_{S,\rho;\kappa}$ (\ref{eq: j_S,rho, reduction}) gives a symplectic covering map over every fixed pre-compact open region inside $\chi^{-1}(D_S^\dagg/W_S)/\cZ(L_S^\der)_0$ (after restriction to the preimage).
\end{cor}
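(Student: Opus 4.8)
\textbf{Proof proposal for Corollary \ref{cor: cY_S, R, cover}.}

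The plan is to combine the two halves of Lemma \ref{lemma: cY_S, R} to control both the image and the boundary behaviour of $\overline{\frj}_{S,\rho;\kappa}$, and then upgrade the resulting injective local symplectomorphism to a covering map onto a pre-compact open region by a properness argument. First I would fix a pre-compact open $\cO\subset \chi_S^{-1}(D_S^\dagg/W_S)/\cZ(L_S^\der)_0$ with $\overline{\cO}$ compact. By Lemma \ref{lemma: cY_S, R} (ii), for each fixed $R$ there is a bound $\epsilon_R>0$ such that whenever $|\gamma_{-\Pi\backslash S}(\rho)|<\epsilon_R$ the image $\overline{\frj}_{S,\rho;\kappa}(\cY^\dagg_{S,R;\kappa,\rho}/\cZ(L_S^\der)_0)$ lands in a fixed compact subset; conversely, applying Lemma \ref{lemma: cY_S, R} (i) with the compact region $\overline{\cO}$ (enlarged slightly to a compact neighborhood), we get that for $R$ sufficiently large and $|\gamma_{-\Pi\backslash S}(\rho)|<\epsilon_R$ the image of a tubular neighborhood of the ``horizontal boundary'' of $\cY^\dagg_{S,R;\kappa,\rho}$ is disjoint from $\overline{\cO}$. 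The vertical boundary is already handled because $\overline{\frj}_{S,\rho;\kappa}$ is built from $\widetilde{\iota}_S^{S'}\circ\frj_{S,\rho}$ which maps into $\chi_S^{-1}(D'_S/W_S)/\cZ(L_S^\der)_0$, and the preimage of $\cO$ under $\overline{\frj}_{S,\rho;\kappa}$ stays in the interior away from $\partial\cY_S$ (this is where I invoke the footnoted remark in Question \ref{question: mu_D, K, rho} (ii) that the embedding from $\cY^\dagg_{S,\kappa,(c_\beta)}/\cZ(L_S^\der)_0$ avoids the ``bad'' locus).

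Next I would show that $\overline{\frj}_{S,\rho;\kappa}^{-1}(\cO)$ is a relatively compact subset of $\cY^\dagg_{S,R;\kappa,\rho}/\cZ(L_S^\der)_0$ on which $\overline{\frj}_{S,\rho;\kappa}$ is proper onto $\cO$. Indeed, by the boundary control above, $\overline{\frj}_{S,\rho;\kappa}^{-1}(\overline{\cO})$ cannot approach the horizontal boundary of $\cY^\dagg_{S,R;\kappa,\rho}$ (for $R$ large) nor the vertical boundary, so its closure is a compact subset of the open manifold $\cY^\dagg_{S,R;\kappa,\rho}/\cZ(L_S^\der)_0$; restricting, $\overline{\frj}_{S,\rho;\kappa}$ is a proper local diffeomorphism onto $\cO$, hence a finite covering map. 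That it is symplectic is immediate from the fact that $\frj_{S,\rho}$ preserves the holomorphic (hence real) symplectic form (the formula (\ref{eq: frj_S;rho}), with $\vartheta|_{\fU_S}$ as in (\ref{eq: hat vartheta_S})) and that both source and target carry the reduced symplectic structure; the identification of $\mu_{D,\cK',\rho}^{-1}(\kappa)/(\text{characteristic leaves})$ with $\chi_S^{-1}(D'_S/W_S)/\cZ(L_S^\der)_0$ that appears in (\ref{eq: j_S,rho, reduction}) and (\ref{eq: chi_S, mathscr Z}) is already a symplectic isomorphism. Finally I would note that one can choose $R$ and then $\rho$ so that $\cO$ is exhausted by such images as one lets $R\uparrow\infty$, giving the statement ``over every fixed pre-compact open region'' uniformly. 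The $\widetilde{\mu}_{D',\cK',(c_\beta)_\beta}$ version follows verbatim by replacing $G$ with $L_{S'}$ and invoking Proposition \ref{eq: tilde mu, embed}, exactly as in the last sentence of Lemma \ref{lemma: cY_S, R}.

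I expect the main obstacle to be bookkeeping the dependence of the admissible range of $\rho$ (equivalently $|\gamma_{-\Pi\backslash S}(\rho)|$) on both $R$ \emph{and} the chosen compact region, and confirming that these two constraints from Lemma \ref{lemma: cY_S, R} (i) and (ii) are simultaneously satisfiable — i.e. that there is a nonempty window of $\rho$ for which the image contains $\cO$ while the boundary is pushed out of $\overline{\cO}$. This is really a compatibility-of-limits issue: as $|\gamma_{-\Pi\backslash S}(\rho)|\to 0$ the reduced map $\overline{\frj}_{S,\rho;\kappa}$ degenerates towards the identity on larger and larger pieces, so for any target compactum one first picks $R$ large (depending on that compactum, via (i)) and then $\rho$ close enough to the stratum (depending on $R$, via (ii)); the two requirements are nested rather than circular, so the window is nonempty. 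A secondary technical point is verifying that the modified multi-sections $\cY^\dagg_{S,R;\kappa,\rho}$ from Remark \ref{remark: modify multi-section} vary continuously enough in $\rho$ that the horizontal-boundary tubular neighborhood in Lemma \ref{lemma: cY_S, R} (i) can be taken of uniform size; this should follow from the smoothness of the averaging construction there, but it is the kind of estimate that needs to be stated carefully rather than waved through.
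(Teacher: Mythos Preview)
Your approach is the same as the paper's: both arguments pair Lemma~\ref{lemma: cY_S, R}~(i) (horizontal boundary pushed outside any fixed compactum for large $R$) with~(ii) (total image pre-compact for small $|\gamma_{-\Pi\backslash S}(\rho)|$), and then conclude by an open--and--closed argument for the local symplectomorphism. Two points deserve sharpening. First, your properness step only yields a covering onto an open-and-closed subset of $\cO$, so you must seed it with nonemptiness of the image; the paper does this by fixing a small $R_0$, noting via~(ii) that $\overline{\frj}_{S,\rho;\kappa}(\cY^\dagg_{S,R_0;\kappa,\rho}/\cZ(L_S^\der)_0)$ sits inside some connected $\cP_{S,K_1}$, so that the image of the larger $\cY^\dagg_{S,R}$ (which contains the former) already meets any connected target containing $\cP_{S,K_1}$. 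Second, your handling of the vertical boundary cites the wrong fact: the footnote in Question~\ref{question: mu_D, K, rho}~(ii) concerns the embedding of the domain into the leaf-quotient, not the behaviour of $\overline{\frj}_{S,\rho;\kappa}$ near that boundary. The correct reason, as in the paper (extracted from the proof of Lemma~\ref{lemma: cY_S, R}), is that $\xi_{S,\rho}^\natural\to\xi_S$ as $|\gamma_{-\Pi\backslash S}(\rho)|\to 0$, so a thin neighborhood of $\chi_S^{-1}(\partial D_S/W_S)$ in the domain has image outside $\chi_S^{-1}(D_S^\dagg/W_S)/\cZ(L_S^\der)_0$, and hence the preimage of $\overline{\cO}$ avoids it.
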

\begin{proof}
This is a direct consequence of Lemma \ref{lemma: cY_S, R}. Without loss of generality, by enlarging the original $D_S$ to be $\widetilde{D}_S$, we can replace $D_S^\dagg$ by $D_S$. 
It suffices to consider a sequence of pre-compact regions $\cP_{S,K^{(n)}}$ defined by the same equation as for $\cY_{S, K^{(n)}}$ (\ref{eq: def cY_S,R}), with $K^{(n)}\rightarrow\infty$, that are inside $\chi_S^{-1}(D^{(n)}_S/W_S)/\cZ(L_S^\der)_0$ through (\ref{eq: chi_S, mathscr Z}) (contained in the Hamiltonian reduction of $\chi_{\cK'_{S^\perp}}$ at $\kappa$). Here $D^{(n)}_S$ is an increasing sequence of $W_S$-invariant pre-compact open in $D_S$ with $\bigcup\limits_n D^{(n)}_S=D_S$.  Since the image of $\cY_{S,R_0;\kappa, \rho}/\cZ(L_S^\der)_0$ under the map $\overline{\frj}_{S,\rho;\kappa}$, for some fixed $R_0>0$, is contained in a compact region in the target, as $|\gamma_{-\Pi\backslash S}(\rho)|\rightarrow 0$, we can choose $K_1\gg R_0$, such that $\cP_{S,K_1}$ contains the same image (note that $\cP_{S,K_1}$ is connected for $K_1$ sufficiently large). For any $K^{(n)}>K_1$, as we increase $R$ towards $\infty$ and at the same time let $|\gamma_{-\Pi\backslash S}(\rho)|\rightarrow 0$, we have 
$\overline{\frj}_{S,\rho;\kappa}(\cY_{S,R; \kappa, \rho}/\cZ(L_S^\der)_0)\supset \cP_{S,K_1}$
and 
\begin{align*}
\overline{\frj}_{S,\rho;\kappa}(Nb(\partial^h(\cY_{S,R}/\cZ(L_S^\der)_0)))\cap Nb(\partial^h \cP_{S,\widetilde{K}})=\emptyset, \forall\widetilde{K}\in [K_1, K^{(n)}], 
\end{align*}
where $Nb(\partial^h--)$ stands for a fixed tubular neighborhood of the ``horizontal boundary"of $\cY_{S,R; \kappa, \rho}/\cZ(L_S^\der)_0$ and $\cP_{S,K_1}$ respectively, in the same sense as in Lemma \ref{lemma: cY_S, R} (i). On the other hand, a sufficiently thin tubular neighborhood of the ``vertical boundary" of $\cY_{S,R;\kappa, \rho}/\cZ(L_S^\der)_0$, given by the intersection of a thin neighborhood of $\chi_S^{-1}(\partial D_S/W_S)/\cZ(L_S^\der)_0$ with its closure, has image outside the closure of $\chi_S^{-1}(D^{(n)}_S/W_S)/\cZ(L_S^\der)_0$, for the reason that $(\xi_{S, \rho}^\natural, t_\rho^\natural)\rightarrow (\xi_S, t)$ when $|\gamma_{-\Pi\backslash S}(\rho)|\rightarrow 0$ as in the proof Lemma \ref{lemma: cY_S, R}. 
So these imply that $\overline{\frj}_{S,\rho;\kappa}(\cY_{S,R;\kappa, \rho}/\cZ(L_S^\der)_0)\supset \cP_{S,K^{(n)}}$, and it must be a covering map from the preimage of $\overline{\frj}_{S,\rho;\kappa}$ over $\cP_{S,K^{(n)}}$. 
\end{proof}

Using Lemma \ref{lemma: cY_S, R}, we also have direct analogue of Lemma \ref{lemma: proj X_eta, c}, Proposition \ref{prop: C_lhd, j} and 
Corollary \ref{cor: dist star}, for which we only state in the form of the corollary that will be applied later. In the following, we fix a $\cZ(L_S^\der)$-invariant complete metric on $J_{L_S^\der}$, e.g. the complete hyperKahler metric constructed in \cite{Bielawski}. Then it determines a complete $\cZ(L_S)_0$-invariant metric on $\fU_S$ by the Killing form restricted to $\fz_S$. 

\begin{cor}\label{cor: dist star, S}
Fix any open cone $C_{\lhd, S}\subset \mathring{\fz}_S\cap \ft_\bR^+$ such that $\overline{C}_{\lhd, S}-\{0\}\subset \mathring{\fz}_S\cap \ft_\bR^+$. For any $\delta>0$, there exists $\epsilon_{C_{\lhd, S}}>0$ such that for any $(g_S, \xi_S; z, t)\in \cW_{\cY_S, \cV, \cK^\dagg}$, $\rho_1\in \cZ(L_S)_0$ satisfying $|\gamma_{-\Pi\backslash S}(\rho_1)|<\epsilon_{C_{\lhd, S}}$ and $\rho'\in (\cZ(L_S)_0)_{C_{\lhd, S}}$, we have 
\begin{align*}
\rho'\star \frj_{S, \rho_1}(g_S, \xi_S; u_0, t)\in \fU_S, 
\end{align*}
\begin{align}
\label{eq: dist, rho'rho_1, S}&dist(\rho'\star \frj_{S,\rho_1}(g_S, \xi_S; z, t), \frj_{S, \rho'\rho_1}(g_S, \xi_S; z, t))<\delta, 
\end{align}
where the $\cZ(L_S)_0$-action $\star$ is the one on $\chi^{-1}(\chi_\ft(\cQ_{D, \cK'}))\subset J_G$ with respect to the projection $\chi_\ft(\cQ_{D, \cK'})\rightarrow \cK'_{S^\perp}\subset \mathring{\fz}_S$ (cf. Remark \ref{remark: cK in fz_S}).
Moreover, 
\begin{align}
\label{eq: dist, W, S}&dist(w^{-1}(\rho')\star \frj_{S,\rho_1}(u,w(t))), \frj_{S, \rho'\rho_1}(u,w(t)))<\delta, \forall w\in N_W(W_S). 
\end{align}
Here the distance is taken with respect to the fixed $\cZ(L_S)_0$-invariant metric on $\fU_S$. 
\end{cor}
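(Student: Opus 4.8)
\textbf{Proof proposal for Corollary \ref{cor: dist star, S}.}

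The plan is to reduce Corollary \ref{cor: dist star, S} to the already-established analysis for the universal $(\Pi\backslash S)$-deformations by exactly mimicking the argument for Corollary \ref{cor: dist star} in the $S=\emptyset$ case, with $T$ replaced by $\cZ(L_S)_0$ and with the extra structure coming from the factor $J_{L_S^\der}$ carried along passively. First I would invoke the analogue of Lemma \ref{lemma: proj X_eta, c} in the setting of Subsection \ref{subsec: analysis fU_S}: for cones $C_{\lhd, S}\dot{\subset}C'_{\lhd, S}$ inside $\mathring{\fz}_S\cap\ft_\bR^+$, and for $|(c_\beta)_{\beta\in\Pi\backslash S}|$ sufficiently small, the Hamiltonian vector field of $\widetilde{\mu}_{D,\cK',(c_\beta)_\beta}^*(\eta)$ (for $\eta\in C_{\lhd, S}$ viewed as a linear function on $\fz_S^*$) has its projection to $T\cZ(L_S)_0$ landing in $C'_{\lhd, S}+\fz_{S,c}$, and differs from the invariant vector field $(\frj_{S,u})_*\eta$ by a term of size $O(|(c_\beta)_\beta|\cdot|\eta|)$. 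This follows from the convergent analytic expansion of $\widetilde{\mu}_{D,\cK',(c_\beta)_\beta}$ in the $c_\beta$ (formula (\ref{eq: mu_D,K, canonical})) together with precompactness of $\cW_{\cY_S,\cV,\cK}$, just as in the proof of Lemma \ref{lemma: proj X_eta, c}; the new feature is only that the Hamiltonian flow now also moves the $J_{L_S^\der}$-coordinates, but Lemma \ref{lemma: cY_S, R} (ii) guarantees these stay in a fixed compact region, so all estimates remain uniform.

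Next I would run the flow/continuity argument that is the heart of the proof of Proposition \ref{prop: C_lhd, j}. Fixing the embedding $\widetilde{\iota}_S^{S'}\circ\frj_{S,\rho_0}$ into $J_{L_{S'}}$ (which is $\cZ(L_{S'})$-equivariant, hence reduces us to $S'=\Pi\backslash S$ is ``full'' and $(c_\beta)_\beta=\gamma_{-\Pi\backslash S}(\rho_0)$ exactly as in the earlier proof), I consider for $\eta\in C_{\lhd, S}$ and $\rho'_c\in\cZ(L_S)_{0,\cpt}$ the path $\Upsilon_\eta(s)=\rho'_c\cdot\exp(s\eta)$ acting on $\frj_{S,\rho_0}$ of the chosen multi-section, solve the ODE (the analogue of (\ref{eq: Upsilon_eta})), and show by the usual ``suppose the contrary / pick $r_1<r$ close to $r$'' bootstrap that the whole flow line stays in $(\cZ(L_S)_0)_{C'_{\lhd, S}}\star\frj_{S,\rho_0}(\cV_{S^\perp})$ times the bounded $J_{L_S^\der}$-region. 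Writing $\rho_\eta(s)$ for the $\cZ(L_S)_0$-component, the differential equation on each coordinate $\beta(\rho_\eta(s))$, $\beta\in\Pi\backslash S$, has the form $\beta(\rho_\eta)^{-1}\tfrac{d}{ds}\beta(\rho_\eta)=\beta(\eta)+O(|\beta(\rho_\eta)|^{-1/K}|\eta|)$ with $K$ depending only on the cone $C'_{\lhd, S}$; the Gr\"onwall-type integration carried out in the proof of Proposition \ref{prop: C_lhd, j} (passing to $F_\beta(s)=\log|\beta(\rho_\eta(s))e^{-\beta(\eta)s}|$, then to $e^{F_\beta/K}$) gives, once $|\gamma_{-\Pi\backslash S}(\rho_0)|$ is small enough, both the modulus estimate $|\log|\beta(\rho_\eta(s)\rho_\eta(0)^{-1})e^{-\beta(\eta)s}||<\delta'$ and the argument estimate $|\arg\beta(\rho_\eta(s))-\arg\beta(\rho_\eta(0))|\leq\widetilde{M}'|\beta(\rho_\eta(0))|^{-1/K}$, uniformly. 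Taking the remaining $\beta\in S$ along for the ride (they are controlled because $\lambda_{\beta^\vee}$, $\beta\not\in S$, already span a finite-index sublattice of $X^*(\cZ(L_S))$, as noted in the proof of Lemma \ref{lemma: cY_S, R}), this yields the distance bound (\ref{eq: dist, rho'rho_1, S}) after choosing $|\gamma_{-\Pi\backslash S}(\rho_1)|<\epsilon_{C_{\lhd, S}}$ small — here I use that $\rho_1$ plays the role of $\rho_0$ and $(g_S,\xi_S;u_0,t)$ the role of the reference point, and that the metric on $\fU_S$ is the fixed $\cZ(L_S)_0$-invariant one (built from a $\cZ(L_S^\der)$-invariant complete metric on $J_{L_S^\der}$, e.g. the hyperK\"ahler one of \cite{Bielawski}, plus the Killing form on $\fz_S$), so all comparisons are coordinate-free once the $J_{L_S^\der}$-part is pinned in a compact set.

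Finally, for the $W$-equivariant statement (\ref{eq: dist, W, S}), I would apply (\ref{eq: dist, rho'rho_1, S}) with $t$ replaced by $w(t)$ and $\cK_{S^\perp}$ replaced by $w(\cK_{S^\perp})$ for $w\in N_W(W_S)$ — legitimate since $w(\cK_{S^\perp})$ again satisfies (\ref{eq: condition K_S, perp}) and lies in $\mathring{\fz}_S$ — obtaining the estimate for the $\cZ(L_S)_0$-action with respect to $\chi_\ft(\cK')\cong w(\cK')$; then the commutative diagram of Remark \ref{remark: cK in fz_S} identifies that action with the intended one after pre-composing with the automorphism $w$ of $\cZ(L_S)_0$, which is precisely the substitution $\rho'\mapsto w^{-1}(\rho')$ in (\ref{eq: dist, W, S}). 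I expect the main obstacle to be bookkeeping rather than conceptual: one has to make sure, as in Lemma \ref{lemma: cY_S, R}, that the range of admissible $\rho$ (equivalently the smallness threshold $\epsilon_{C_{\lhd, S}}$) can be chosen uniformly over the precompact region $\cW_{\cY_S,\cV,\cK^\dagg}$ — the delicate point being that in Lemma \ref{lemma: cY_S, R} the valid range of $\rho$ a priori depends on $g_S$, so one must restrict to the fixed compact $J_{L_S^\der}$-region furnished by part (ii) of that lemma before extracting a single $\epsilon_{C_{\lhd, S}}$. Everything else is a line-by-line transcription of the $S=\emptyset$ arguments with the group $\cZ(L_S)_0$ in place of $T$.
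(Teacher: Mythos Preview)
Your overall strategy---transcribe Lemma \ref{lemma: proj X_eta, c}, Proposition \ref{prop: C_lhd, j}, and Corollary \ref{cor: dist star} with $\cZ(L_S)_0$ in place of $T$---is exactly the paper's plan, and your treatment of the ODE estimates for the $\cZ(L_S)_0$-direction and of the $W$-equivariant statement (\ref{eq: dist, W, S}) matches the paper's proof essentially verbatim.

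There is, however, a real gap in the step where you claim the bootstrap shows the flow line stays in ``$(\cZ(L_S)_0)_{C'_{\lhd,S}}\star\frj_{S,\rho_0}(\cV_{S^\perp})$ times the bounded $J_{L_S^\der}$-region''. The paper singles out precisely the containment $\rho'\star\frj_{S,\rho_1}(g_S,\xi_S;u_0,t)\in\fU_S$ as the only part requiring new work beyond the $S=\emptyset$ case. Your justification is Lemma \ref{lemma: cY_S, R} (ii), but that lemma only bounds the image in the \emph{Hamiltonian reduction} $\chi_S^{-1}(D'_S/W_S)/\cZ(L_S^\der)_0$; it does not by itself bound the $J_{L_S^\der}$-coordinate of $\frj_{S,\rho(s)}^{-1}$ applied to the flow point, because $\overline{\frj}_{S,\rho;\kappa}$ is a priori many-to-one. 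The ``usual bootstrap'' from Proposition \ref{prop: C_lhd, j} controls escape through the $\cV_{S^\perp}$- and $\cK_{S^\perp}$-boundaries via the directionality of the vector field, but gives no such directional control in the $J_{L_S^\der}$-factor: the $\star$-flow genuinely moves $(g_S,\xi_S)$, with drift rate $O(|\gamma_{-\Pi\backslash S}(\rho(s))|)$, and you have not argued why this drift cannot accumulate past $\partial\cY_{S,R}$.

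The paper closes this gap not by a drift estimate but by a trapping argument using Corollary \ref{cor: cY_S, R, cover}: one fixes nested regions $\cY_{S,R_1}\subset\cY_{S,R_2}$ with $R_1\ll R_2$ so that (by both parts of Lemma \ref{lemma: cY_S, R}) there is a compact $\cX$ in the reduction with $\overline{\frj}_{S,\rho;\kappa}(\cY^{\dagg\dagg}_{S,R_1;\kappa,\rho}/\cZ(L_S^\der)_0)\subset\cX$ and $\cX$ disjoint from a neighborhood of the image of $\partial\cY_{S,R_2}$, uniformly for small $|\gamma_{-\Pi\backslash S}(\rho)|$. Since the $\star$-flow preserves the reduction class, if the flow ever hit $\partial\cY_{S,R_2}$ its reduction image would be both inside $\cX$ (from the initial point in $\cY_{S,R_1}$) and outside $\cX$ (from Lemma \ref{lemma: cY_S, R} (i) applied at the exit time), a contradiction. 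You should insert this nested-regions step---or at minimum invoke Corollary \ref{cor: cY_S, R, cover} rather than only part (ii) of Lemma \ref{lemma: cY_S, R}---before running the ODE analysis; what you flag as ``bookkeeping'' in your last paragraph is in fact this missing mechanism.
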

\begin{proof}
This follows essentially from the same proof for Lemma \ref{lemma: proj X_eta, c}, Proposition \ref{prop: C_lhd, j} and 
Corollary \ref{cor: dist star}. Only the part on  
\begin{align*}
\rho'\star \frj_{S, \rho_1}(g_S, \xi_S; u_0, t)\in \fU_S
\end{align*}
needs additional clarification. To show this, we consider $\cY_{S, R_1}\subset \cY_{S, R_2}$ (cf. (\ref{eq: def cY_S,R})) for some $0<R_1\ll R_2$. Then by Corollary \ref{cor: cY_S, R, cover}, for $R_2/R_1$ sufficiently large, there exists $\epsilon_{R_1, R_2}>0$ and a fixed compact region $\cX$ in $\chi_S^{-1}(D^\dagg/W_S)/\cZ(L_S^\der)_0$ contained in the right-hand-side of (\ref{eq: j_S,rho, reduction}), such that for all $\rho\in \cZ(L_S)_0$ satisfying $|\gamma_{-\Pi\backslash S}(\rho)|<\epsilon_{R_1, R_2}$, we have for all $\kappa\in \cK_{S^\perp}^\dagg$ (using $\overline{\frj}_{S, \rho;\kappa}$ from (\ref{eq: j_S,rho, reduction}))
\begin{align}
\label{eq: daggdagg, X, dagg}&\overline{\frj}_{S, \rho;\kappa}(\cY^{\dagg\dagg}_{S, R_1;\kappa, \rho}/\cZ(L_S^\der)_0)\subset \cX\subset \overline{\frj}_{S, \rho;\kappa}(\cY^\dagg_{S, R_2;\kappa, \rho}/\cZ(L_S^\der)_0), 
\end{align}
where (i) $D^{\dagg\dagg}$ is defined in the same way as $D^\dagg$ and satisfies $\overline{D^{\dagg\dagg}}\subset D^\dagg$, $\cY_{S, R_2}^{\dagg}\subset \chi_S^{-1}(D^{\dagg}/W_S)$ (resp. $\cY_{S, R_1}^{\dagg\dagg}\subset \chi_S^{-1}(D^{\dagg\dagg}/W_S)$) is defined by (\ref{eq: def cY_S,R}) using $D^\dagg$ (resp. $D^{\dagg\dagg}$); (ii) in the second inclusion, $\cX$ is disjoint from a tubular neighborhood of the boundary of $\overline{\frj}_{S, \rho;\kappa}(\cY^\dagg_{S, R_2}/\cZ(L_S^\der)_0)$. 

Now by a direct analogue of Lemma \ref{lemma: proj X_eta, c} with $\overline{C}_{\lhd, S}\subset C'_{\lhd, S}$ given and $\epsilon_{C_{\lhd, S}}>0, M>0$ satisfying the corresponding conclusions,  
we claim that for any $(g_S, \xi_S; u_0, t)\in  \cW_{\cY^{\dagg\dagg}_{S, R_1}, \cV,\cK^\dagg}$, we have 
\begin{align}\label{eq: cW_cY_S_dagg, star}
\rho'\star \frj_{S, \rho_1}(g_S, \xi_S; u_0, t)\in \bigcup\limits_{\widetilde{\rho}\in (\cZ(L_S)_0)_{C'_{\lhd, S}}}\frj_{S, \widetilde{\rho}}(\cW_{\cY^{\dagg}_{S, R_2}, \cV,\cK})
\end{align}
for all $\rho_1$ satisfying $|\gamma_{-\Pi\backslash S}(\rho_1)|<\widetilde{\epsilon}_{C_{\lhd, S}}:=\min\{\epsilon_{R_1, R_2}, \epsilon_{C_{\lhd, S}}\}$ and $\rho'\in  (\cZ(L_S)_0)_{C_{\lhd, S}}$.

Suppose the contrary, for some $(g_S, \xi_S;u_0, t)$, $\eta\in C_{\lhd, S}$, $\rho_c'\in (\cZ(L_S)_0)_c$ and the corresponding curve $\Upsilon_{\eta}(s):= \rho_c'\cdot \exp(s\cdot \eta), s\geq 0$, there exists $r>0$ such that $\Upsilon_{\eta}(r)\star \frj_{S, \rho_1}(g_S, \xi_S;u_0, t)$ is \emph{not} in the right-hand-side of (\ref{eq: cW_cY_S_dagg, star}). 
Let 
\begin{align*}
&\overline{\rho}_\eta(s):=u_0^{-1}\proj_{\cZ(L_S)_0/\cZ(L^\der_S)_0}(\Upsilon_{\eta}(s)\star \frj_{S, \rho_1}(g_S, \xi_S;u_0, t))\in \cZ(L_S)_0/\cZ(L^\der_S)_0,\\
&\kappa=\mu_{D, \cK, \rho_1}((g_S, \xi_S;u_0, t)). 
\end{align*}
For any $s\geq 0$ in the (largest connected) interval when $\overline{\rho}_\eta(s)$ is well defined, i.e. when $\Upsilon_{\eta}(s)\star \frj_{S, \rho_1}(g_S, \xi_S;u_0, t))$ is contained in $\fU_S$, we fix a representative $\rho_\eta(s)$ of $\overline{\rho}_\eta(s)$ in $\cZ(L_S)_0$. 

Since 
\begin{align*}
|\gamma_{-\Pi\backslash S}(\rho_\eta(s))|\leq |\gamma_{-\Pi\backslash S}(\rho_1)|<\widetilde{\epsilon}_{C_{\lhd, S}},
\end{align*}
for all $s\geq 0$ in the defining interval of $\overline{\rho}_\eta(s)$,  
we have the minimum of such $r$ satisfies 
\begin{align}\label{eq: boundary cW, dagg}
\Upsilon_{\eta}(r)\star \frj_{S, \rho_1}(g_S, \xi_S;u_0, t)\in \partial \cY^{\dagg}_{S, R_2}\underset{\cZ(L_S^\der)_0}{\times}(\bigcup\limits_{\widetilde{\rho}\in (\cZ(L_S)_0)_{C'_{\lhd, S}}}\frj_{S, \widetilde{\rho}} (\cV_{S^\perp})\times \cK_{S^\perp}).
\end{align}
Here we use that 
\begin{align*}
\proj_{\cZ(L_S)_0/\cZ(L^\der_S)_0}\frj^{-1}_{S, \rho_\eta(r)}(\Upsilon_{\eta}(r)\star \frj_{S, \rho_1}(g_S, \xi_S;u_0, t))=u_0\text{ mod }\cZ(L_S^\der)_0,
\end{align*}
and that whenever 
\begin{align*}
\proj_{J_{L_S^\der}/\cZ(L_S^\der)_0}\Upsilon_{\eta}(s)\star \frj_{S, \rho_1}(g_S, \xi_S;u_0, t)\subset \overline{\cY^\dagg_{S, R_2}}/\cZ(L_S^\der)_0,
\end{align*}
we have 
\begin{align*}
&\proj_{\cK'_{S^\perp}}(\Upsilon_{\eta}(r)\star \frj_{S, \rho_1}(g_S, \xi_S;u_0, t))\overset{\text{close}}{\sim}\\
& \mu_{D,\cK, \Upsilon_{\eta}(r)}(\frj_{\Upsilon_{\eta}(r)}^{-1}(S, \Upsilon_{\eta}(r)\star \frj_{S, \rho_1}(g_S, \xi_S;u_0, t)))=\kappa
\end{align*}
(hence also close to $t$). So we can exclude the other boundaries of the right-hand-side of (\ref{eq: cW_cY_S_dagg, star}) for the minimum $r$. 

However, on one hand, we have 
\begin{align*}
&\overline{\frj}_{S, \rho_\eta(r);\kappa}(\frj^{-1}_{S, \rho_\eta(r)}(\Upsilon_{\eta}(r)\star \frj_{S, \rho_1}(g_S, \xi_S;u_0, t)))\\
&=\overline{\frj}_{S, \rho_1;\kappa}(g_S, \xi_S;u_0, t)\in \overline{\frj}_{S, \rho;\kappa}(\cY^{\dagg\dagg}_{S, R_1;\kappa, \rho}/\cZ(L_S^\der)_0)\subset \cX, 
\end{align*}
while on the other hand,  (\ref{eq: boundary cW, dagg}) and (\ref{eq: daggdagg, X, dagg}) imply that  
\begin{align*}
\overline{\frj}_{S, \rho_\eta(r);\kappa}(\frj^{-1}_{S, \rho_\eta(r)}(\Upsilon_{\eta}(r)\star \frj_{S, \rho_1}(g_S, \xi_S;u_0, t)))\not\in \cX,
\end{align*}
which gives a contradiction. 
\end{proof}

Now we can give an answer to Question \ref{eq: D, K, dagger} (ii). 

\begin{prop}\label{prop: Ham reduction emb}
The covering map in Corollary \ref{cor: cY_S, R, cover} is one-to-one. 
\end{prop}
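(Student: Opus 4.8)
\textbf{Proof proposal for Proposition \ref{prop: Ham reduction emb}.}

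The plan is to upgrade the surjective covering statement of Corollary \ref{cor: cY_S, R, cover} to injectivity by a degree/monodromy argument, exploiting that the base $\chi_S^{-1}(D_S^\dagg/W_S)/\cZ(L_S^\der)_0$ is simply connected on the relevant pre-compact pieces (being fibered over a contractible base $D_S^\dagg/W_S$ with $\cZ(L_S^\der)_0$-quotients of Kostant-type fibers, or more directly because the regions $\cP_{S,K}$ one exhausts with can be taken contractible for the purposes of the covering). First I would fix $\kappa\in \cK_{S^\perp}^\dagg$ and a pre-compact open region $\cP_{S,K}\subset \chi_S^{-1}(D_S^\dagg/W_S)/\cZ(L_S^\der)_0$ as in the proof of Corollary \ref{cor: cY_S, R, cover}, together with $R\gg K$ and $\rho\in \cZ(L_S)_0$ with $|\gamma_{-\Pi\backslash S}(\rho)|$ sufficiently small, so that $\overline{\frj}_{S,\rho;\kappa}$ restricted to the preimage of $\cP_{S,K}$ is a symplectic covering map onto $\cP_{S,K}$. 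The number of sheets is constant on each connected component of $\cP_{S,K}$; since $\cP_{S,K}$ is connected for $K$ large, it suffices to compute this number at a single well-chosen point.

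The key step is to locate a point of $\cP_{S,K}$ over which the fiber of $\overline{\frj}_{S,\rho;\kappa}$ is manifestly a singleton, and then promote this to all of $\cP_{S,K}$. The natural candidate is a point close to the center of the corresponding Kostant section in $\chi_S^{-1}(D_S^\dagg/W_S)$, i.e. the image of $(g_S,\xi_S)$ with $g_S$ near $I$ and $\xi_S$ near $f_S$: by the estimates in the proof of Lemma \ref{lemma: cY_S, R} (the relations (\ref{eq: beta in S, z}) bounding $z\rho(z_\rho^\natural)^{-1}$ and the identification (\ref{eq: chi_S, mathscr Z})), as $|\gamma_{-\Pi\backslash S}(\rho)|\to 0$ the preimage under $\overline{\frj}_{S,\rho;\kappa}$ of a small neighborhood of that center is a small neighborhood of the corresponding point of $\cY^\dagg_{S,R;\kappa,\rho}/\cZ(L_S^\der)_0$, with no extra sheets — precisely because $(g_{S,\rho}^\natural,\xi_{S,\rho}^\natural)\to(g_S,\xi_S)$ and the $N_{P_S}^-\cdot N$-factorization $Q_\rho=u_{1,\rho}^-u_{2,\rho}$ used there is unique near the limit. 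Thus over this base point the covering has exactly one sheet; since the covering degree is locally constant on the connected set $\cP_{S,K}$, it is $1$ everywhere, so $\overline{\frj}_{S,\rho;\kappa}$ is injective on the preimage of $\cP_{S,K}$. Finally, exhausting $\chi_S^{-1}(D_S^\dagg/W_S)/\cZ(L_S^\der)_0$ by such $\cP_{S,K}$ as $K\uparrow\infty$ (adjusting $R$ and $\rho$ compatibly, as in Corollary \ref{cor: cY_S, R, cover}) and using that the exhaustion is by nested connected open sets, injectivity passes to the limit, giving the claim for all $\kappa\in\cK_{S^\perp}^\dagg$.

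The main obstacle I expect is the second half of the key step: making rigorous that the preimage over the ``center'' point is a single sheet rather than merely that one sheet passes through it. This requires a quantitative non-degeneracy statement — essentially that near the zero locus of the Liouville vector field the interpolating family $\mu_{D,\cK',\rho}$ is, uniformly in small $\rho$, a submersion with fibers that are graphs over $\cV_{S^\perp}$ and meet $\cY^\dagg_S\times\{u_0\}\times\cK'_{S^\perp}$ transversally in exactly $|\cZ(L_S^\der)_0|$ points (which descend to one point after the modification in Remark \ref{remark: modify multi-section}). The inputs are all present in Lemma \ref{lemma: cS_kappa, S} (the characteristic leaves are sections over $\cV_{S^\perp}$, Hamiltonian isotopic to the standard ones inside a set contained in a fundamental domain $\cD_{(g_S,\xi_S)}$ of the $\cZ(L_S^\der)_0$-action) and in the uniqueness of the factorizations in Lemma \ref{lemma: cY_S, R}; the work is to combine them into the degree-one conclusion over a neighborhood, then invoke local constancy of the covering degree. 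A secondary, more bookkeeping-level obstacle is checking that the exhaustion by $\cP_{S,K}$ can be arranged with each $\cP_{S,K}$ connected and with $\overline{\frj}_{S,\rho;\kappa}^{-1}(\cP_{S,K})$ nested as $K$ grows, so that ``degree $=1$ on each $\cP_{S,K}$'' genuinely implies ``injective on the union''.
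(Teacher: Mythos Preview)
Your approach via a degree computation is different from the paper's, and the obstacle you flag in your second half is a genuine gap that the inputs you cite do not close.

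The paper argues by contradiction, using Corollary~\ref{cor: dist star, S} rather than any point-count. Suppose $(g_S^{(i)},\xi_S^{(i)};u_0,t^{(i)})$, $i=1,2$, lie in distinct characteristic leaves of $\mu_{D,\cK',\rho_1}^{-1}(\kappa)$ but have the same image under $\overline{\frj}_{S,\rho_1;\kappa}$. Then their $\frj_{S,\rho_1}$-images sit in a single $\cZ(L_S)_0$-orbit in $\chi^{-1}(\chi_\fg(\cQ_{D,\cK'}))$, differing by some fixed $\rho_{12}\in\cZ(L_S)_0$. The distance estimate (\ref{eq: dist, rho'rho_1, S}) shows that for every $\widetilde{\rho}\in(\cZ(L_S)_0)_{C_{\lhd,S}}$, the pulled-back points $\frj_{S,\widetilde{\rho}\rho_1}^{-1}(\widetilde{\rho}\star\frj_{S,\rho_1}(\cdots))$ stay $\delta$-close to the originals; an open--closed argument then shows the orbit still meets $\frj_{S,\widetilde{\rho}\rho_1}(\cW_{\cY_{S,R},\cV,\cK})$ in two disconnected pieces. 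Now \emph{enlarge} $\cV_{S^\perp}$ to some $\widetilde{\cV}_{S^\perp}\supset\{\rho_{12}\}$: for $\widetilde{\rho}$ with $|\gamma_{-\Pi\backslash S}(\widetilde{\rho})|$ small enough, Proposition~\ref{eq: tilde mu, embed} and Lemma~\ref{lemma: cS_kappa, S} force the two pulled-back points to lie in the \emph{same} characteristic leaf of $\mu_{D,\cK',\widetilde{\rho}\rho_1}$ on $\cW_{\cY_{S,R},\widetilde{\cV},\cK}$. But that leaf is a section over $\widetilde{\cV}_{S^\perp}$, so its intersection with the original $\cW_{\cY_{S,R},\cV,\cK}$ cannot split into two leaves---contradiction. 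No center point and no degree enter.

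Regarding your gap: the assertion ``$(g_{S,\rho}^\natural,\xi_{S,\rho}^\natural)\to(g_S,\xi_S)$'' is a statement about the \emph{forward} map on a fixed compact piece of the source; it does not prevent a second preimage living in the large annulus $\cY^\dagg_{S,R}\setminus\cY^\dagg_{S,R_0}$ from hitting the center. Lemma~\ref{lemma: cY_S, R}(i) controls only the image of a tubular neighborhood of the horizontal boundary $\{\sum|b^S_{\lambda_{\beta^\vee}}|^{1/\lambda_{\beta^\vee}(\sfh_{0;S})}=R\}$, not the interior of that annulus, and Lemma~\ref{lemma: cS_kappa, S} only says each characteristic leaf is a section over $\cV_{S^\perp}$---a statement about the source foliation, not about how many leaves land on a given target point. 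The uniqueness of the factorization $Q_\rho=u_{1,\rho}^-u_{2,\rho}$ is likewise a statement about a single group element and does not exclude a different $(g_S,\xi_S)$ from producing the same $(g_{S,\rho}^\natural,\xi_{S,\rho}^\natural)$. To rule out the extra sheet you would have to show that any second preimage is a $\cZ(L_S)_0$-translate of the first and then absorb that translate---which is exactly the paper's mechanism via Corollary~\ref{cor: dist star, S} and the enlargement of $\cV_{S^\perp}$.
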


\begin{proof}
Fix a pre-compact (connected) open region inside $\chi^{-1}(D_S^\dagg/W_S)/\cZ(L_S^\der)_0$. Also fix a sufficiently large $R$ and a sufficiently small $\epsilon_R>0$ so that the conclusion in Corollary \ref{cor: cY_S, R, cover} is satisfied for $\rho\in \cZ(L_S)_0$ with $|\gamma_{-\Pi\backslash S}(\rho)|<\epsilon_R$. 

We apply Corollary \ref{cor: dist star, S}, with $\cY_S=\cY_{S, R}$, a fixed open cone $C_{\lhd, S}$ and an arbitrarily small $\delta>0$ as in the assumption. Let $\epsilon'=\min\{\epsilon_R, \epsilon_{C_{\lhd, S}}\}$. Fixing any  $\rho_1\in \cZ(L_S)_0$ satisfying $|\gamma_{-\Pi\backslash S}(\rho_1)|<\epsilon'$, suppose we have two distinct points $(g_{S}^{(i)}, \xi_{S}^{(i)}; u_0,  t^{(i)})\in \cW_{\cY_{S, R}, \cV, \cK}, i=1, 2$ that are \emph{not} in the same characteristic leaf, but that map to the same point in the fixed pre-compact open region inside $\chi^{-1}(D^\dagg_S/W_S)/\cZ(L_S^\der)_0$ under $\overline{\frj}_{S, \rho_1;\kappa}$.  Then $\frj_{S, \rho_1}(g_{S}^{(i)}, \xi_{S}^{(i)}; u_0, t^{(i)}), i=1,2$ are in the same $\cZ(L_S)_0$-orbit in $\chi^{-1}(\chi_\ft(\cQ_{D,\cK'}))$ with respect to the projection $\chi_\ft(\cQ_{D, \cK'})\rightarrow \cK'_{S^\perp}$. 
Now for all $\widetilde{\rho}\in (\cZ(L_S)_0)_{C_{\lhd, S}}$, $\frj_{S, \widetilde{\rho}\rho_1}^{-1}( \widetilde{\rho}\star\frj_{S,\rho_1}(g_{S}^{(i)}, \xi_ {S}^{(i)}; u_0, t^{(i)}))$ is contained in a $\delta$-neighborhood of $(g_{S}^{(i)}, \xi_ {S}^{(i)};u_0, t^{(i)})$ 
in $\cW_{\cY_{S, R}, \cV,\cK}$, and 
the $\cZ(L_S)_0$-orbit containing both $\frj_{S, \rho_1}(g_{S}^{(i)}, \xi_{S}^{(i)}; u_0, t^{(i)})$, $i=1,2$ will intersect $\frj_{S, \widetilde{\rho}\rho_1}(\cW_{\cY_{S, R}, \cV,\cK})$ in at least two disconnected components containing $\widetilde{\rho}\star  \frj_{S,\rho_1}(g_{S}^{(i)}, \xi_ {S}^{(i)}; u_0, t^{(i)}), i=1,2$ respectively. This is because the set of such $\widetilde{\rho}$ in $(\cZ(L_S)_0)_{C_{\lhd, S}}$ is open, closed and nonempty, hence it is the entire space. 

On the other hand, we have 
\begin{align*}
\widetilde{\rho}\star  \frj_{S,\rho_1}(g_{S}^{(2)}, \xi_ {S}^{(2)}; u_0, t^{(2)})=\rho_{12}\star \widetilde{\rho}\star  \frj_{S,\rho_1}(g_{S}^{(1)}, \xi_ {S}^{(1)}; u_0, t^{(1)})
\end{align*}
for a fixed unique $\rho_{12}\in \cZ(L_S)_0$. 
Without loss of generality, we will assume $u_0=I\in \cV_{S^\perp}\subset \cZ(L_S)_0$. 
Choose a sufficiently large pre-compact open $\widetilde{\cV}_{S^\perp}\subset \cZ(L_S)_0$ (defined in the way described in Question \ref{question: mu_D, K, rho} (i)) that contains $\rho_{12}$. Then there exists $\epsilon_{\widetilde{\cV}}>0$ such that for all $\widetilde{\rho}$ satisfying $|\gamma_{-\Pi\backslash S}(\widetilde{\rho})|<\epsilon_{\widetilde{\cV}}$ (this will be contained in $C_{\lhd, S}$ for $\epsilon_{\widetilde{\cV}}$ sufficiently small), 
\begin{align*}
\mu_{D,\cK', \widetilde{\rho}\rho_1}: \cW_{\cY_{S, R}, \widetilde{\cV}, \cK}\longrightarrow \cK'_{S^\perp}
\end{align*}
is arbitrarily close to the projection map. By Proposition \ref{eq: tilde mu, embed} on the integrability of $\mu_{D,\cK', \widetilde{\rho}\rho_1}$ (on the larger domain $\cW_{\cY_{S, R}, \widetilde{\cV}, \cK}$), we must have 
$\frj_{S, \widetilde{\rho}\rho_1}^{-1}( \widetilde{\rho}\star\frj_{S,\rho_1}(g_{S}^{(i)}, \xi_ {S}^{(i)}; u_0, t^{(i)})$, $i=1,2$ lie in the \emph{same} characteristic leaf. However, since this characteristic leaf,  viewed in the product $\cD_{(g_S^{(1)}, \xi_S^{(1)})}\times \widetilde{\cV}_{S^\perp}\times\cK_{S^\perp}'$ as in (\ref{eq: lemma cS_kappa, S}), projects to $ \widetilde{\cV}_{S^\perp}$ isomorphically, 
its intersection with the original $\cW_{\cY_{S, R}, \cV, \cK}$ \emph{cannot} split into more than one leaves. Thus we reach at a contradiction. 
\end{proof}

We give a sketch of the proof for an analogue of Proposition \ref{lemma: empty, Ham isotopy}. 

\begin{prop}\label{lemma: varphi_s, mu_D,K}
Let $\cY_S, \cY^\dagg_S, \cV_{S^\perp}, \cV'_{S^\perp}, \cK^\dagg_{S^\perp}, \cK_{S^\perp}, \cK_{S^\perp}'$ be as above. For any smooth curve $(c_\beta(s))_\beta\in \bC^{\Pi\backslash S}, s\in (-\epsilon', \epsilon')$ with $(c_\beta(0))_\beta=0$, there exists $\epsilon>0$ and a compactly supported Hamiltonian isotopy $\varphi_s$, $0\leq s\leq \epsilon$, with $\varphi_0=id$, on 
$\cW_{\cY_S, \cV', \cK'}$
such that for every $\kappa\in \cK^\dagg_{S^\perp}$, we have 
\begin{align}\label{eq: varphi_s, mu_D,K}
\varphi_s(\mu_{D,\cK,0}^{-1}(\kappa)\cap \cW_{\cY_S^\dagg, \cV, \cK})\subset \widetilde{\mu}_{D',\cK',(c_\beta(s))_\beta}^{-1}(\kappa)\cap \cW_{\cY_S, \cV', \cK'}.
\end{align}
\end{prop}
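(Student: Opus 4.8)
The statement is the $\fU_S$-analogue of Proposition \ref{lemma: empty, Ham isotopy}, and the plan is to reduce it to exactly that $S=\emptyset$ argument applied fiberwise over the base $\chi_S^{-1}(D^\dagg/W_S)/\cZ(L_S^\der)_0$ of the Hamiltonian reduction. First I would fix a reference point $u_0\in \cV_{S^\perp}$ and consider, for each $s$, the map $\widetilde{\mu}_{D',\cK',(c_\beta(s))_\beta}$ restricted to $\cW_{\cY_S,\cV',\cK'}$; by Proposition \ref{eq: tilde mu, embed} this fits into the complete integrable system $\widetilde{\chi}_{S',\cK'}$ with complete $\cZ(L_S)_0$-orbits, where $S'=S\cup\{\beta:c_\beta(s)\neq 0\}$, via the embedding $\tilde{\iota}_S^{S'}\circ\frj_{S,\rho_0(s)}$. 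Taking $\epsilon'$ small enough that this holds for all $s\in(-\epsilon',\epsilon')$, the Lagrangian multi-section $\cY_S\times\{u_0\}\times\cK'_{S^\perp}$ intersects each fiber $\widetilde{\mu}_{D',\cK',(c_\beta(s))_\beta}^{-1}(\kappa)$, and after the averaging modification of Remark \ref{remark: modify multi-section} gives a $\cZ(L_S^\der)_0$-equivariant multi-section; this plays the role of $\{u_0\}\times\cK$ in Proposition \ref{lemma: empty, Ham isotopy}.

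Next I would run the construction of $\varphi_s$ exactly as in Proposition \ref{lemma: empty, Ham isotopy}, but one characteristic leaf at a time. For fixed $\kappa\in\cK^\dagg_{S^\perp}$ and fixed $(g_S,\xi_S)\in\cY_S^\dagg$ (say inside a fundamental domain $\cD_{(g_S,\xi_S)}$ for the $\cZ(L_S^\der)_0$-action, as in Lemma \ref{lemma: cS_kappa, S}), the torus orbit direction is governed by $\cZ(L_S)_0$ and the base data $(g_S,\xi_S,\kappa)$ varies smoothly; so the linear action–angle coordinates $(q_{c,s}^j,q_{\bR,s}^j;p_{c,s}^j,p_{\bR,s}^j)$ on the $\cZ(L_S)_0$-orbits, built from a fixed real coordinate system on $\fz_S^*\cong\fz_S$ together with the base point provided by the chosen multi-section, are well defined and vary smoothly in $s$ by Proposition \ref{eq: tilde mu, embed} and Lemma \ref{lemma: cS_kappa, S}. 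Setting $\varphi_s$ to be the identity in these coordinates (sending $q^j\mapsto q_{c,s}^j$ etc.) produces, for small $s>0$, a $\cZ(L_S^\der)_0$-equivariant symplectomorphism from $\cW_{\cY_S^\dagg,\cV,\cK}\cap\mu_{D,\cK,0}^{-1}(\kappa)$ onto $\widetilde{\mu}_{D',\cK',(c_\beta(s))_\beta}^{-1}(\kappa)\cap\cW_{\cY_S,\cV',\cK'}$; assembling over all $\kappa$ (i.e. over $\cK^\dagg_{S^\perp}$, parametrizing the reduced base together with the $\fz_S$-direction) and over the base $\chi_S^{-1}(D^\dagg/W_S)/\cZ(L_S^\der)_0$ gives a globally defined map $\varphi_s$ on $\cW_{\cY_S,\cV',\cK'}$ satisfying \eqref{eq: varphi_s, mu_D,K}, smoothly depending on $s$ with $\varphi_0=\mathrm{id}$.

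Finally, to upgrade $\varphi_s$ to a \emph{compactly supported Hamiltonian} isotopy I would compute $\varphi_s^*\vartheta|_{\fU_S}-\vartheta|_{\fU_S}$ and show it is exact, using the action-integral comparison of Lemma \ref{lemma: cS_kappa, S} (i): since each leaf $\cS_{\kappa,(g_S,\xi_S),(c_\beta(s))}$ is Hamiltonian isotopic inside $\cD_{(g_S,\xi_S)}\times(\cV_{S^\perp}\times\cK')$ to the model leaf $\{(g_S,\xi_S)\}\times\cV_{S^\perp}\times\{\kappa\}$, the periods of $\vartheta|_{\fU_S}$ over the 1-cycles in $H_1$ are unchanged, so $\varphi_s$ preserves these periods and $\varphi_s^*\vartheta-\vartheta$ is globally exact on the (pre-compact, hence with controlled behavior) region $\cW_{\cY_S,\cV',\cK'}$; cutting off the resulting primitive near the boundary yields the compactly supported Hamiltonian isotopy. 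The main obstacle I expect is the bookkeeping needed to make the fiberwise action-angle coordinates genuinely smooth across the base $\chi_S^{-1}(D^\dagg/W_S)/\cZ(L_S^\der)_0$ and compatible with the finite $\cZ(L_S^\der)_0$-quotient, i.e. verifying that the averaging modification of Remark \ref{remark: modify multi-section} produces a section depending smoothly enough on $(\kappa,s)$ that the construction of $\varphi_s$ patches — this is where the $S\neq\emptyset$ case genuinely differs from Proposition \ref{lemma: empty, Ham isotopy}, though it is routine given Proposition \ref{eq: tilde mu, embed} and Lemma \ref{lemma: cS_kappa, S}. I would therefore state the proof as ``entirely parallel to Proposition \ref{lemma: empty, Ham isotopy}, carried out fiberwise over the reduced base using Proposition \ref{eq: tilde mu, embed}, with the multi-section normalization of Remark \ref{remark: modify multi-section} in place of the single Lagrangian section,'' and only spell out the smoothness/equivariance points that are new.
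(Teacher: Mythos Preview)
Your outline follows the paper's structure closely, but there is a genuine gap at the point where you claim that ``setting $\varphi_s$ to be the identity in these coordinates produces a symplectomorphism.'' In the $S=\emptyset$ case this is true because the reference section $\{u_0\}\times\cK$ is a \emph{Lagrangian} section and the action--angle coordinates are honest Darboux coordinates. For $S\neq\emptyset$ the reference is the multi-section $\cY^\dagg_{S,\kappa,(c_\beta(s))_\beta}$, which is a \emph{symplectic} submanifold varying with $(\kappa,s)$; the fiberwise action--angle coordinates you describe are only Darboux along each characteristic leaf, not on the total space. Concretely, the paper computes
\[
\widetilde{\varphi}_s^*\omega-\omega=\sum_j \alpha_{c,s,j}(p)\wedge dp_c^j+\alpha_{\bR,s,j}(p)\wedge dp_\bR^j,
\]
where $\alpha_{c,s,j},\alpha_{\bR,s,j}$ are $1$-forms on $\cY_S^{\dagg\dagg}$ depending on $p\in\cK^\dagg_{S^\perp}$, and these are \emph{not} zero in general. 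To kill this discrepancy one needs two additional ingredients you do not mention: first, a preliminary step (the paper's Step~1) that \emph{symplectically} identifies the multi-sections $\cY^\dagg_{S,\kappa,(c_\beta(s))_\beta}$ across $s$ (by parallel transport plus a Moser argument); second, and more substantially, the vanishing $H^1(\cY_S^{\dagg\dagg};\bR)=0$, which is Lemma~\ref{lemma: pi_1, J_G} ($\pi_1(J_G)\cong\pi_1(G)$), to obtain primitives $f_{c,s,j},f_{\bR,s,j}$ for the closed forms $\alpha_{c,s,j},\alpha_{\bR,s,j}$ and then run a second Moser argument correcting $\widetilde{\varphi}_s$ to an actual symplectomorphism $\varphi_s$.

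So your strategy of ``carry out Proposition~\ref{lemma: empty, Ham isotopy} fiberwise over the reduced base'' is the right idea, but it does not go through as stated: the new content in the $S\neq\emptyset$ case is precisely that the naive $\widetilde{\varphi}_s$ fails to be symplectic, and repairing this requires the topological input $H^1(\cY_S^{\dagg\dagg};\bR)=0$. Once $\varphi_s$ is made symplectic, your exactness argument for $\varphi_s^*\vartheta-\vartheta$ via the period computation of Lemma~\ref{lemma: cS_kappa, S} is correct and matches the paper.
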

\begin{proof}

We assume $|(c_\beta(s))_{\beta}|$ are all sufficient small, so that the conclusions in Lemma \ref{lemma: cS_kappa, S} all hold. 

\noindent\emph{Step 1. Identify the multi-sections $\cY^\dagg_{S, \kappa, (c_\beta(s))_\beta}$ over $s\in (-\epsilon, \epsilon)$ symplectically and $\cZ(L_S^\der)_0$-equivariantly. }

We make the identification between the symplectic quotient spaces\\
$\cY^\dagg_{S, \kappa, (c_\beta(s))_\beta}/\cZ(L_S^\der)_0$ at $\kappa$ for different $s\in (-\epsilon, \epsilon)$ as follows (up to restricting to a slightly smaller domain in $\cY^\dagg_{S, \kappa, (c_\beta(s))_\beta}/\cZ(L_S^\der)_0$ and for $s$ in a smaller interval $(-\epsilon^\dagg,\epsilon^\dagg)$). We put $\cY^\dagg_{S, \kappa, (c_\beta(s))_\beta}$ into a smooth family of symplectic manifold over $(-\epsilon, \epsilon)$ naturally contained inside $\cW_{\cY_S, \cV, \cK}\times (-\epsilon, \epsilon)$, and denote it by $\mathfrak{Y}^\dagg_{S, \kappa, (-\epsilon, \epsilon)}\rightarrow (-\epsilon, \epsilon)$. There is a natural smooth (not necessarily symplectic) identification, a.k.a ``parallel transport", between different fibers (after restricting to a slightly smaller domain), by sending each point in the original multi-section $(\cY_S\times \{u_0\}\times \cK'_{S^\perp})\cap \widetilde{\mu}_{D^\dagg, \cK', (c_\beta(s))_\beta}^{-1}(\kappa)$ to the corresponding point (after averaging) in the modified multi-section.

The $\cZ(L_S^\der)_0$-action on each fiber over $s$ above gives a $\cZ(L_S^\der)_0$-action on $\mathfrak{Y}^\dagg_{S,\kappa, (-\epsilon, \epsilon)}$ that preserves each fiber. Taking the quotient space assembles the symplectic quotient spaces $\cY^\dagg_{S, \kappa, (c_\beta)_\beta}/\cZ(L_S^\der)_0$ into a smooth family over $(-\epsilon, \epsilon)$. Now the ``parallel transport" on the family $\mathfrak{Y}^\dagg_{S, \kappa(-\epsilon, \epsilon)}$ gives a lifting of the unit positive vector field on $(-\epsilon, \epsilon)$ to a smooth vector field $\mathbf{v}$ on it, the average of the projection of $\bv$ to the quotient $\mathfrak{Y}^\dagg_{S, \kappa, (-\epsilon, \epsilon)}/\cZ(L_S^\der)_0$ is a smooth vector field that is a lifting of the unit positive vector field on the base $(-\epsilon, \epsilon)$. Integrating the vector field gives a smooth identification $\widetilde{\varphi}_{s,\cY,\kappa}$ between the fiber at $0$ and that at $s$. Since  the symplectic manifolds are exact and the diffeomorphisms are close to be symplectic (in fact, $\widetilde{\varphi}_{s,\cY,\kappa}^*\vartheta-\vartheta$ is close to zero), using Moser's argument, we can modify the smooth identifications to be symplectic, after restricting to a slightly smaller subdomain on each fiber.

Lastly, we lift the identification on the quotient spaces uniquely to a $\cZ(L_S^\der)_0$-equivariant symplectic identification $\varphi_{s,\cY,\kappa}: \cY^{\dagg\dagg}_{S, \kappa,0}\rightarrow \cY^{\dagg\dagg}_{S, \kappa, (c_\beta(s))_\beta}$, subject to the condition that the distance between $\varphi_{s,\cY,\kappa}(g_S, \xi_S, u_0, \kappa)$ and $(g_S, \xi_S, u_0, \kappa)$ is small. Here the double $\dagg$ superscript means we are taking some slightly smaller subdomain. Note that the identifications $\varphi_{s,\cY,\kappa}$ are smoothly depending on $\kappa$.

\noindent \emph{Step 2. Construction of the Hamiltonian isotopy $\varphi_s$ on $\cW_{\cY_S^\dagg, \cV', \cK'}$.}

We fix some real linear coordinates $(p_c^j; p_\bR^j)$ on the base $\fz_S^*\cong \fz_S\cong \fz_{S, c}\oplus \fz_{S, \bR}$. For each $\kappa, s$, applying Proposition \ref{eq: tilde mu, embed} for $(c_\beta(s))_{\beta\in \Pi\backslash S}$, 
the $\cZ(L_S^\der)_0$-equivariant multi-section $\cY_{S, \kappa, (c_\beta(s))_\beta}^\dagg$ determines an embedding
\begin{align*}
\widetilde{\mu}_{D^{\dagg}, \cK, (c_\beta(s))_\beta}^{-1}(\kappa)\longhookrightarrow \cY^{\dagg}_{S, \kappa, (c_\beta(s))_\beta}\underset{\cZ(L_S^\der)_0}{\times}\cZ(L_S)_0
\end{align*}

Similarly to the proof of Lemma \ref{lemma: empty, Ham isotopy}, using $\varphi_{s,\cY,\kappa}: \cY^{\dagg\dagg}_{S, \kappa,0}\rightarrow \cY^{\dagg\dagg}_{S, \kappa, (c_\beta(s))_\beta}$ from the previous step and Proposition \ref{eq: tilde mu, embed}, we have a uniquely defined map 
\begin{align*}
\widetilde{\varphi}_s: &\cY^{\dagg\dagg}_S\underset{\cZ(L_S^\der)_0}{\times} (\cV^\dagg_{S^\perp}\times \cK^\dagg_{S^\perp})\longrightarrow \widetilde{\mu}_{D',\cK',(c_\beta(s))}^{-1}(\cK'_{S^\perp})\cap \cW_{\cY_S, \cV', \cK'}
\end{align*}
which sends $\cY^{\dagg\dagg}_S\times \{(u_0, \kappa)\}$ to $\cY^{\dagg\dagg}_{S,\kappa, (c_{\beta}(s))_\beta}$ through $\varphi_{s, \cY,\kappa}$, and which respects the canonical (locally defined) real affine coordinates $(q_{c, s, (g_S, \xi_S)}^j; q_{\bR, s, (g_S, \xi_S)}^j), j=1,\cdots, n-|S|$ and $(q_{c, s, \varphi_{s, \cY,\kappa}(g_S, \xi_S;u_0, \kappa)}^j; q_{\bR, s, \varphi_{s, \cY,\kappa}(g_S, \xi_S;u_0, \kappa)}^j), j=1,\cdots, n-|S|$ on each characteristic leaf that are dual to $(p_c^j;p_\bR^j)$ and that are relative to the respective $\cZ(L_S^\der)_0$-equivariant multi-sections.

It is clear that 
\begin{align*}
\widetilde{\varphi}_s^*\omega-\omega=\sum\limits_j \alpha_{c,s,j}(p)\wedge dp_{c}^j+\alpha_{\bR,s,j}(p)\wedge dp_{\bR}^j,
\end{align*}
where $\alpha_{c,s, j}(p)$ and $\alpha_{\bR,s,  j}(p)$ are $\cZ(L_S^\der)_0$-equivariant $1$-forms on $\cY^{\dagg\dagg}_S$ depending smoothly on $p\in \cK^\dagg_{S^\perp}$. Since $\widetilde{\varphi}_s^*\omega-\omega$ is closed, we get both $\alpha_{c,s,j}(p)$ and $\alpha_{\bR,s, j}(p)$ are closed 1-forms on $\cY^{\dagg\dagg}_S$ (with $p$ fixed). By Lemma \ref{lemma: pi_1, J_G} below, $H^1(\cY^{\dagg\dagg}_S;\bR)=0$, so we can choose $f_{c,s,j}(p)$ and $f_{\bR,s, j}(p)$ to be primitives of $\alpha_{c,s,j}(p)$ and $\alpha_{\bR,s, j}(p)$ on $\cY_S^{\dagg\dagg}$, respectively, such that they all vanish at  a fixed point in $\cY_S^{\dagg\dagg}$.  Then we have 
\begin{align*}
d(\sum\limits_jf_{c,s,j}(p)dp_{c}^j+f_{\bR,s,j}(p)dp_{\bR}^j)-\sum\limits_j (\alpha_{c,s,j}(p)\wedge dp_{c}^j+\alpha_{\bR,s,j}(p)\wedge dp_{\bR}^j)
\end{align*}
a \emph{closed} 2-form that is a combination of wedges of $dp_c^j, dp_\bR^k, j,k=1, \cdots, n-|S|$, which by the assumptions on $f_{c,s,j}(p)$ and $f_{\bR, s,j}(p)$ must be $0$.

Now we can apply Moser's argument in the specific form of \cite[Section 3.2]{McSa} with 
\begin{align*}
\sigma_s=\frac{d}{ds}(\sum\limits_jf_{c,s,j}(p)dp_c^j+f_{\bR,s,j}(p)dp_\bR^j),
\end{align*}
in (3.2.1) of \emph{loc. cit.}
Then for $\epsilon>0$ small, we can compose $\widetilde{\varphi}_s$ with the isotopy to define $\varphi_s$ that  preserves the symplectic form. Moreover, $\varphi_s^*\vartheta-\vartheta$ must be exact, because its integral along the 1-cycles in $\cV_{S^\perp}$ are all zero (cf. Lemma \ref{lemma: cS_kappa, S}), which implies that $\varphi_s$ is a Hamiltonian isotopy satisfying (\ref{eq: varphi_s, mu_D,K}). It is then easy to extend $\varphi_s$ to be a compactly supported Hamiltonian isotopy on $\cW_{\cY, \cV', \cK'}$. 
\end{proof}

\begin{lemma}\label{lemma: pi_1, J_G}
For any complex connected semisimple Lie group $G$, we have a natural isomorphism $\pi_1(J_G)\cong \pi_1(G)$. 
\end{lemma}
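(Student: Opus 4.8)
The plan is to compute $\pi_1(J_G)$ directly from the Whittaker Hamiltonian reduction description (Definition \ref{def: second J_G}), since that presentation exhibits $J_G$ as a quotient of a subvariety of $T^*G$ by a free action of a contractible group. First I would recall that the $N \times N$-action on $\mu^{-1}(f,f)$ is free, so $J_G \simeq \mu^{-1}(f,f)/(N\times N)$ is a fiber bundle with contractible fiber $N \times N$ over $J_G$; hence $\pi_1(J_G) \cong \pi_1(\mu^{-1}(f,f))$. It therefore suffices to show $\mu^{-1}(f,f) \hookrightarrow T^*G \cong G \times \fg^{\reg}$ has the homotopy type of $G$. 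The key geometric input is that $\mu^{-1}(f,f) = \{(g,\xi): \xi \in f+\fb,\ \Ad_g\xi \in f+\fb\}$, and using the Kostant slice isomorphism $N \times \cS \xrightarrow{\sim} f+\fb$ one can parametrize $\mu^{-1}(f,f)$ by a triple consisting of $g\in G$ together with two unipotent elements and a slice element, subject to the centralizing constraint. Concretely, via \eqref{eq: Ham N reduction} one has $\mu^{-1}(f,f)/(N\times N) \cong \{(g,\xi)\in G\times \cS: \Ad_g\xi = \xi\}$, i.e. the total space of the group scheme $\scrZ_G \to \fg^{\reg}$ modulo $G$, which fibers over $\cS \cong \fc$.

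Next I would run a deformation/retraction argument along the $\bC^\times$-action. Recall from \eqref{eq: C star action} the canonical $\bC^\times$-action on $J_G$ scaling $\xi$ with positive weight; the induced $\bR_{>0}$-flow (Liouville flow) contracts $J_G$ onto the compact core, but more useful here is that it degenerates the integrable system $\chi: J_G \to \fc$ so that, combined with Proposition \ref{prop: proper b map}, the fiber over the generic locus is a maximal torus $T$ and the central fiber $\chi^{-1}([0])$ is (a disjoint union of) affine spaces (the Peterson-type variety). An alternative and cleaner route: use that $J_G = \scrZ_G \sslash G$ and that $\scrZ_G \to \fg^{\reg}$ is an abelian group scheme whose generic fiber is $T$; over the regular semisimple locus $\fg^{\rs}$, which is $G$-equivariantly a bundle with fiber $T$ over $\fg^{\rs}/G = \fc^{\rs}$, one gets $\scrZ_G|_{\fg^{\rs}}$. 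The cleanest classical statement is that $J_G$ retracts onto the fiber of a point, or more precisely that the inclusion of any smooth fiber is a homotopy equivalence after accounting for monodromy, but the sharpest tool is: $J_G$ is homotopy equivalent to $G$ itself via the map remembering the first coordinate $(g,\xi)\mapsto g$, because the $\fg^{\reg}$-direction (and the slice direction $\cS\cong\bA^n$) is contractible and the centralizer condition cuts out something that deformation retracts onto $\{\xi\} \times C_G(\xi)$ for $\xi$ regular nilpotent — but $C_G(f)$ is connected with $\pi_1(C_G(f))\cong\pi_1(G)$. I would instead argue that $\mu^{-1}(f,f)$ deformation retracts, using the $\bC^\times$-scaling on $\xi$, onto the locus where $\xi = f$, which is $\{g\in G: \Ad_g f \in f+\fb\}$; this latter locus is the total space studied in \cite{Balibanu} (the "Peterson variety" preimage), and it fibers over the affine Peterson variety with fibers isomorphic to $N$-orbits, leaving $\pi_1$ equal to that of $G$ once one tracks the $\cZ(G)$-worth of connected components / the central fiber's $|\cZ(G)|$ affine pieces matching $\pi_1(G)$ under Langlands-type duality of weight/coweight lattices.

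The cleanest implementation, which I expect to be what the author does, is: (1) $J_G \simeq \mu^{-1}(f,f)$ up to the contractible $N\times N$; (2) project $\mu^{-1}(f,f) \to G$, $(g,\xi)\mapsto g$; this is onto the open subset $\{g: \exists\, \xi\in f+\fb,\ \Ad_g\xi\in f+\fb\}$, but actually using the slice it is surjective onto $G$ with fibers that are affine spaces (the set of valid $\xi$ for fixed $g$ is cut out by affine-linear equations inside $f+\fb$, hence a nonempty affine subspace by a dimension count using regularity); (3) a fibration with affine-space fibers over $G$ is a homotopy equivalence, giving $\pi_1(J_G)\cong \pi_1(G)$. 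The main obstacle will be step (2): showing that the projection to $G$ is a surjective affine-space-bundle, i.e. that for every $g\in G$ the fiber $\{\xi\in f+\fb: \Ad_g\xi\in f+\fb\}$ is a nonempty affine subspace of constant dimension $n$ — this requires a careful linear-algebra argument with the adjoint action and the regularity of elements of $f+\fb$ (every element of $f+\fb$ is regular, a standard consequence of Kostant's theory), possibly handled stratum-by-stratum using the Bruhat decomposition of Proposition \ref{prop: B_w0w} to verify nonemptiness and dimension on each cell $\cB_w$. If that uniform description fails, the fallback is to prove $\pi_1$ is locally constant along $\chi$ over $\fc^{\rs}$ (fibers are tori $T$, with $W$-monodromy acting trivially on $\pi_1(T) = X_*(T)$ up to the identification, giving $\pi_1 = X_*(T)/(\text{coroot lattice}) \cong \pi_1(G)$) and that the degenerate fibers do not kill this, using van Kampen with the $|\cZ(G)|$ affine components of $\chi^{-1}([0])$ and the explicit Weinstein handle attachment of Proposition \ref{prop: partial compactify}.
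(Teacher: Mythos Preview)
Your main proposed route---projecting $\mu^{-1}(f,f)\to G$ via $(g,\xi)\mapsto g$ and arguing this is an affine-space bundle---fails at the most basic step: the projection is not surjective. Take $g=t\in T$ with $t\notin\cZ(G)$. For $\xi=f+\eta\in f+\fb$ we have $\Ad_t\xi=\Ad_t f+\Ad_t\eta$; since $\Ad_t\eta\in\fb$ and $\Ad_t f=\sum_{\alpha\in\Pi}\alpha(t)^{-1}f_\alpha\in\fn^-$, the condition $\Ad_t\xi\in f+\fb$ forces $\Ad_t f=f$, i.e.\ $\alpha(t)=1$ for all simple $\alpha$, i.e.\ $t\in\cZ(G)$. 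So the fiber over any non-central torus element is empty. In fact the Bruhat decomposition of $J_G$ (Proposition \ref{prop: B_w0w}) shows that the image in $N\backslash G/N$ only meets the cells indexed by $w_0w_S$, $S\subset\Pi$, which is far from all of $G$. Even on the image the fibers are not equidimensional: over $g\in N$ the fiber is all of $f+\fb$ (dimension $\dim\fb$), whereas over the open cell the generic fiber has dimension $n$. So there is no affine-bundle structure to exploit.

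Your fallback sketch is in the right neighborhood but also contains an error: the $W$-monodromy on the generic fiber $T$ acts on $\pi_1(T)\cong X_*(T)$ by the standard Weyl action, which is highly nontrivial, not trivial. The paper's actual argument runs as follows. First, the map $(g,\xi)\mapsto g$ does give a well-defined $p:\pi_1(J_G)\to\pi_1(G)$, and the Cartesian square $J_{G_{sc}}\to J_G$ over $G_{sc}\to G$ shows $p$ is surjective (both verticals are regular $\pi_1(G)$-covers). This reduces to showing $\pi_1(J_{G_{sc}})=0$. The Weinstein handle picture (Proposition \ref{prop: split generation}) together with Lemma \ref{lemma: L_t_gamma} shows $\pi_1(J_{G_{sc}})$ is a finite quotient of $\pi_1(T)$. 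Passing to the universal cover $\widetilde{J}_{G_{sc}}\to\fc$, the generic fiber is a torus $\widetilde{T}$ which, after analyzing the $Br_W$-monodromy, is a $W$-equivariant finite cover of $T$; the punchline is that if $\widetilde{T}\neq T$, some simple coroot $\alpha^\vee$ lies outside $X_*(\widetilde{T})$, and then the Kostant section for $\lambda_\alpha^\vee\in\cZ(G_{sc})$ cannot lift to a disjoint union of sections of $\widetilde{\chi}$ (since $s_\alpha(\lambda_\alpha^\vee)\not\equiv\lambda_\alpha^\vee\bmod X_*(\widetilde{T})$), contradicting contractibility of $\fc$. This last lattice-and-sections argument is the genuine content, and nothing in your proposal approximates it.
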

\begin{proof}
First, from the centralizer description of $J_{G}$ (\ref{eq: centralizer cS}), we have a natural morphism $p: \pi_1(J_G)\rightarrow \pi_1(G)$. By the Cartesian square with vertical morphisms regular connected coverings (of deck transformations by $\pi_1(G)$),
\begin{align*}
\xymatrix{J_{G_{sc}}\ar[r]\ar[d]&G_{sc}\ar[d]\\
J_{G}\ar[r]&G
}
\end{align*}
we see that $p$ is surjective. Now we need to show that $\ker p$ is trivial. For this it suffices to work with $G_{sc}$ for which $\ker p\cong \pi_1(J_{G_{sc}})$, and let $\widetilde{J}_{G_{sc}}$ be the universal cover. By the Weinstein handle attachment structure of $J_{G_{sc}}$ from Proposition \ref{prop: split generation} (i), especially its inductive pattern, and the isomorphism between $\pi_1(T\times\{\xi\})\cong \pi_1(\chi^{-1}([\xi]))$ for a torus fiber in $\cB_{w_0}$ over $\xi\in \ft^\reg$ by Lemma \ref{eq: lemma L_t, epsilon}, it is clear that $\pi_1(J_{G_{sc}})$ is finite, and the natural map $\pi_1(\chi^{-1}([\xi]))\rightarrow \pi_1(J_{G_{sc}})$ is surjective.  Then $\widetilde{\chi}: \widetilde{J}_{G_{sc}}\rightarrow \fc$
is an abelian group scheme with generic fibers connected complex tori (of the same rank). 

Now we look at the commutative diagram
\begin{align*}
\xymatrix{
\widetilde{J}_{G_{sc}}\underset{J_{G_{sc}}}{\times}(T\times \ft^{\reg})/W\ar[r]\ar[dr]& (T\times \ft^{\reg})/W\ar[d]&\\
&\fc^{\reg}\ar@{^{(}->}[r]&\fc
}
\end{align*}
where the fiber of the right-downward arrow (on the left) is a $\pi_1(J_{G_{sc}})$-cover of $T$, denoted by $\widetilde{T}$. This induces a $\pi_1(\fc^\reg)=Br_W$-action on $\pi_1(\widetilde{T})$, and a $Br_W$-equivariant embedding $\pi_1(\widetilde{T})\hookrightarrow \pi_1(T)$. Since the pure braid group acts trivially on $\pi_1(T)$, the embedding $\pi_1(\widetilde{T})\hookrightarrow \pi_1(T)$ is $W$-equivariant. In particular, the image of $\pi_1(\widetilde{T})$ in $\pi_1(T)\cong X_{*}(T)$ is a finite indexed $W$-invariant sublattice, and we have a $W$-action on $\widetilde{T}$ together with the isomorphism 
\begin{align*}
\widetilde{J}_{G_{sc}}\underset{J_{G_{sc}}}{\times}(T\times \ft^{\reg})/W\cong (\widetilde{T}\times \ft^{\reg})/W.
\end{align*} 

The Kostant sections over the contractible base $\fc$ are lifted to $|\cZ(G_{sc})|\times |\pi_1(J_{G_{sc}})|$ many disjoint sections of $\widetilde{\chi}$. On the other hand, if $\widetilde{T}$ is a non-trivial $W$-equivariant covering of $T$, then there exists a simple coroot $\alpha^\vee$ that is not in $X_*(\widetilde{T})$. 
Then $\lambda_{\alpha}^{\vee}\in (\Lambda^\vee/X_{*}(T))\cong \cZ(G_{sc})$ has 
\begin{align*}
s_{\alpha^\vee}(\lambda_{\alpha}^{\vee})=\lambda_{\alpha}^{\vee}-\alpha^\vee\neq \lambda_\alpha^\vee\text{ mod }X_*(\widetilde{T})
\end{align*}
This means the lifting of the Kostant section corresponding to $\lambda_{\alpha}^{\vee}$ to $\widetilde{J}_{G_{sc}}$ cannot be a collection of disjoint sections, for the lifting of its restriction inside $(T\times \ft^{\reg})/W$ to $(\widetilde{T}\times \ft^{\reg})/W$ already has a connected component that is a multi-section over $\fc^{\reg}$.  The proof is complete. 
\end{proof}

\subsection{Discussions around walls beyond $S^\perp, \emptyset\neq S\subsetneq \Pi$}\label{subsec: walls}
In this subsection, we develop some analysis around an arbitrary ``wall" $w(S^\perp), w\in W/W_S, \emptyset\neq S\subsetneq \Pi$ in $\ft$, that is needed for the proof of Proposition \ref{prop: L_0, part 2} and \ref{prop: L_0, W}. The main result is Proposition \ref{prop: g_S, natural, w}. We remark that there is no direct generalization of the analysis done in Subsection \ref{subsec: analysis fU_S} to the current setting for an arbitrary $w(\cZ(L_S)_0)$ (here all the subtori $w(\cZ(L_S)_0), w\in W/W_S$ are  \emph{relative to the same Borel $B$}). 

For any $\emptyset\neq S\subsetneq \Pi$, let $W^S_{\min}\cong W/W_S$ be the set of elements in $W$ consisting of the unique shortest representative of each coset. Recall that $w\in W^S_{\min}$ if and only if $w(S)\subset \Delta^+$. 
In $\cB_{w_0}\cong T^*T$, we look at $\cU_{\cQ', \cV}^{w(S)}:=
\cV\times w(D_S'+\cK_{S^\perp}'), w\in W_{\min}^S$, where $\cQ'_{D', \cK'}=D_S'+\cK_{S^\perp}'$ 
is a tubular neighborhood of $\cK_{S^\perp}'\subset \mathring{\fz}_{S}$ and $\cV\subset T$ is as in the setting of Section \ref{subsec: analysis cB_w0}. We fix a representative $\overline{w}\in N_G(T)$ for any $w\in W_{\min}^S$. Let $D_S\subset\overline{D}_S\subset D_S'$, $\cK_{S^\perp}\subset \overline{\cK}_{S^\perp}\subset\cK_{S^\perp}'$ be slightly smaller open subsets. Define $\cQ_{D, \cK}=D_S+\cK_{S^\perp}$ and $\cU_{\cQ, \cV}^{w(S)}$ similarly as for $\cQ'_{D', \cK'}$ and $\cU_{\cQ', \cV}^{w(S)}$. 
For any $(u\overline{w}_0^{-1}h, \xi=f+w(t)+\Ad_{(\overline{w}_0^{-1}h)^{-1}}f)\in \cB_{w_0}$ with $(h, w(t))\in \cU_{\cQ, \cV}^{w(S)}$ and $u\in N$ uniquely determined making the pair in $\mathscr{Z}_G$, and for any $\rho\in T$ with $|\gamma_{-\Pi}(\rho)|<\epsilon\ll 1$, we have 
\begin{align*}
&\frj_\rho(u\overline{w}_0^{-1}h, \xi)=(u_\rho\overline{w}_0^{-1}h\rho, f+w(t)+\Ad_{(\overline{w}_0^{-1}h\rho)^{-1}}f)\\
&=:(u_\rho\overline{w}_0^{-1}h\rho, \xi_\rho)\in \mathscr{Z}_G\cap G\times (f+\fb)
\end{align*}
for some (unique) $u_\rho\in N$ (contained in a bounded region, i.e. in a compact region, from Lemma \ref{lemma: u_rho bounded} below) and the commutative diagram (where the items with $\{\}$ are one-point sets) 
\begin{align}\label{diagram: xi_rho, w, cS, S}
\xymatrix{\{\xi_\rho\}\ar[r]^{\Ad_{\nu_\rho}\ \ \ }& \{f+w(t'_\rho)\}\ar[d]_{\Ad_{\widetilde{u}_\rho}}&&\{f_S+t_{\rho}'\}\ar[ll]_{\Ad_{b_{1,\rho}^-\overline{w}}}\ar[d]_{\Ad_{u_{S,\rho}}}\\
&\cS&&(\cS_{\fl_S^\der}+\mathring\fz_S)\cap\fg^{\reg}\ar[ll]^{\Ad{u''(\widetilde{\varsigma})u(\varsigma)^-}\ \ \ \ }
}
\end{align}
where (i) $t_\rho'\in \cQ'_{D', \cK'}$, $N\ni\nu_\rho\overset{\text{close to}}{\sim} I$,  $\widetilde{u}_\rho\in N$ and  $u_{S,\rho}\in N_S$ are uniquely determined elements, $\widetilde{u}_\rho$ and $u_{S,\rho}$ are clearly uniformly bounded; 
(ii) $u(\varsigma)^-\in N_{P_S}^-$ and $u''(\widetilde{\varsigma})\in N$ are uniquely associated to each $\varsigma\in (\cS_{\fl_S^\der}+\mathring\fz_S)\cap \fg^{\reg}$ (note that in general $\cS_{\fl_S^\der}+\mathring\fz_S\not\subset \fg^\reg$) so that $\widetilde{\varsigma}=\Ad_{u(\varsigma)^-}(\varsigma)\in f+\fb$; (iii) one can assign a unique $b^-_{1,\rho}\in B^-$ so that the product of elements inducing the adjoint action following the two different paths from $\{f_S+t_\rho'\}$ to $\cS$ coincide (see Lemma \ref{lemma: u_rho bounded} (b) below); in particular, such a $b^-_{1,\rho}\in B^-$ is uniformly bounded. 
If we use $(g_{S,\rho}^\natural, \xi_{S,\rho}^\natural; z_\rho^\natural, t_\rho^\natural)$ as in the proof of Lemma \ref{lemma: cY_S, R} to present the equivalent point $(u_\rho\overline{w}_0^{-1}h\rho, \xi_\rho)$, through the isomorphism in (\ref{eq: chi_S, mathscr Z}), then we have 
\begin{align}
\label{eq: u_rho, natural, gz}&\Ad_{\widetilde{u}_\rho b_{1, \rho}^-\overline{w}u_{S,\rho}^{-1}}(g_{S,\rho}^\natural z_\rho^\natural, \xi_{S,\rho}^\natural+t_{\rho}^\natural)= \Ad_{\widetilde{u}_\rho\nu_\rho}(u_\rho\overline{w}_0^{-1}h\rho, \xi_\rho)\in G\times \cS
\end{align}

\begin{lemma}\label{lemma: u_rho bounded}
\item[(a)] Under the above setting, $u_\rho$ is contained in a bounded region in $N$. Moreover, for a fixed $h,t$, $\lim\limits_{|\gamma_{-\Pi}(\rho)|\rightarrow 0}u_\rho$ exists and it is the unique element in $N$ that sends $f+w_0(t)$ to $f+t$ through the adjoint action (so in fact only depends on $t$). 

\item[(b)] There exists a unique $b_{1,\rho}^-\in B^-$ making 
\begin{align*}
\widetilde{u}_\rho b_{1,\rho}^-\overline{w}=u''(\widetilde{\varsigma})u(\varsigma)^-u_{S,\rho},
\end{align*}
where $\varsigma=\Ad_{u_{S,\rho}}(f_S+t_\rho')$ and $\widetilde{\varsigma}=\Ad_{u(\varsigma)^-}(\varsigma)$. 
In particular, the elements $b^-_{1,\rho}\in B^-$ can be chosen to be uniformly bounded (i.e. contained in a fixed compact region) for $t\in \overline{D'}_S+\overline{\cK'}_{S^\perp}$. 
\end{lemma}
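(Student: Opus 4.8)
\textbf{Proof proposal for Lemma \ref{lemma: u_rho bounded}.}

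The plan is to prove both parts by unwinding the chains of adjoint actions in diagram (\ref{diagram: xi_rho, w, cS, S}) and invoking the properness results already established, principally Proposition \ref{prop: proper b map} and Lemma \ref{lemma: cY_S, R}, together with the uniqueness features of the various factorizations (Kostant slice, parabolic factorizations, Whittaker reduction). For part (a), the key point is that $\xi_\rho = f + w(t) + \Ad_{(\overline{w}_0^{-1}h\rho)^{-1}}f$ lies in $f+\fb$, and $u_\rho$ is characterized uniquely by the requirement that $\Ad_{u_\rho^{-1}}(\xi_\rho)$ lie in the Kostant slice $\cS$ (this is the standard uniqueness of the isomorphism $N\times\cS\overset{\sim}{\to}f+\fb$). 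As $|\gamma_{-\Pi}(\rho)|\to 0$ with $h,t$ fixed, the term $\Ad_{(\overline{w}_0^{-1}h\rho)^{-1}}f$ is controlled: writing $\rho = \rho(\epsilon)$ with the relevant $\lambda_{\beta^\vee}(\rho)$ growing, the conjugation $\Ad_{\rho^{-1}}f$ decays coordinatewise (each $f_\beta$ is scaled by $\beta(\rho^{-1})$, which tends to $0$ by the assumption that we are moving into the cone where $|\gamma_{-\Pi}(\rho)|\to 0$), so $\xi_\rho \to f + w(t)$. Since $w(t)$ and $w_0(t)$ differ by an element fixed under the residual symmetry, I would then identify the limit of $\xi_\rho$ as conjugate, via a unique element of $N$, to $f+t$: this uses that $f + w_0(t)$ and $f+t$ are both in $f+\fb$ and have the same image in $\fc$, hence are conjugate by a unique $u_\infty\in N$, and the limiting value of $u_\rho$ (up to the intermediate factor coming from $\overline{w}_0$) is precisely this $u_\infty$. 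The boundedness of $\{u_\rho\}$ for $|\gamma_{-\Pi}(\rho)|<\epsilon$ then follows from continuity plus the fact that $\xi_\rho$ ranges over a precompact set (the base being precompact and the decay of $\Ad_{\rho^{-1}}f$ being uniform), combined with continuity of the inverse of the slice isomorphism on a compact set.

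For part (b), I would argue as follows. Set $\varsigma = \Ad_{u_{S,\rho}}(f_S+t'_\rho)$ and $\widetilde{\varsigma}=\Ad_{u(\varsigma)^-}(\varsigma)$, so that by construction (the analogue of \cite[Lemma 3.1.44]{CG} applied inside $\fg$, as already invoked in the proof of Lemma \ref{lemma: cY_S, R}) there is a unique $u(\varsigma)^-\in N^-_{P_S}$ conjugating $\varsigma$ into $f+\fb$, and a unique $u''(\widetilde\varsigma)\in N$ conjugating $\widetilde\varsigma$ into the Kostant slice. Both sides of the desired identity $\widetilde{u}_\rho b_{1,\rho}^-\overline{w} = u''(\widetilde\varsigma)u(\varsigma)^- u_{S,\rho}$ induce, by the commutativity of (\ref{diagram: xi_rho, w, cS, S}), the same adjoint action taking $f_S+t'_\rho$ to the common element of $\cS$; hence the two group elements differ by an element of the stabilizer of that regular element. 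But a regular element of $\cS$ has stabilizer a maximal torus intersected appropriately — more precisely its centralizer in $G$ meets $N\cdot\overline{w}\cdot N_S$ in at most a controlled set — so solving for $b_{1,\rho}^-$ gives $b_{1,\rho}^- = \widetilde{u}_\rho^{-1}\, u''(\widetilde\varsigma)\, u(\varsigma)^-\, u_{S,\rho}\, \overline{w}^{-1}$, and one checks this lands in $B^-$ by a weight/triangularity computation (the right-hand side, conjugated back by $\overline{w}^{-1}$, has all its "positive'' parts cancel because $w\in W^S_{\min}$ sends $S$ into $\Delta^+$). Uniqueness of $b_{1,\rho}^-$ is immediate from uniqueness of all the other factors. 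For the uniform boundedness over $t\in\overline{D'}_S+\overline{\cK'}_{S^\perp}$: $u_{S,\rho}$ and $\widetilde{u}_\rho$ are uniformly bounded by their defining uniqueness properties on a compact set; $u(\varsigma)^-$ and $u''(\widetilde\varsigma)$ are uniformly bounded because $\varsigma$ ranges over the precompact set $\Ad_{N_S,\text{bdd}}((f_S+\overline{D'}_S)+\overline{\cK'}_{S^\perp})$, which by Lemma \ref{lemma: xi, t, regular} consists of regular elements, so the relevant factorization maps are continuous (hence bounded) there; and $\overline{w}$ is fixed. Multiplying bounded elements gives the claim.

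The main obstacle I anticipate is part (a)'s claim about the \emph{existence} and identification of the limit $\lim_{|\gamma_{-\Pi}(\rho)|\to0}u_\rho$: one has to be careful that the limit is taken along a genuine approach to the wall (i.e. all $\lambda_{\beta^\vee}(\rho)\to\infty$, not just some), so that $\Ad_{\rho^{-1}}f \to 0$ and not merely to some partial-nilpotent term; this is exactly the hypothesis $|\gamma_{-\Pi}(\rho)|\to 0$, and I would make the decay estimate explicit by expanding $\Ad_{\rho^{-1}}f = \sum_\beta \beta(\rho^{-1})\, f_\beta$ and bounding $|\beta(\rho^{-1})|$ by a power of $|\gamma_{-\Pi}(\rho)|$ as in the proof of Proposition \ref{prop: proper b map}. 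The secondary subtlety is in part (b), verifying the triangularity of the solved expression for $b_{1,\rho}^-$; this is a finite root-combinatorial check (using $w(S)\subset\Delta^+$ and $w\in W^S_{\min}$) that I expect to be routine but slightly tedious, and I would organize it by tracking the Bruhat-cell membership $\overline{w}_0^{-1}h\rho \in N\overline{w}_0^{-1}N$ through the conjugations rather than by brute-force weight bookkeeping.
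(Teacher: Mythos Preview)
Your approach to part (b) is essentially sound and matches the paper's in spirit: solve for $b_{1,\rho}^-$ as a product of already-identified bounded elements, then verify it lies in $B^-$. The paper's verification that $b_{1,\rho}^-\in B^-$ is cleaner than your proposed triangularity check: rather than a root-combinatorial computation, one observes that (unwinding the diagram) $b_{1,\rho}^-$ must conjugate $\Ad_{\overline{w}}(f_S+t_\rho')$ to $f+w(t_\rho')$, and both of these lie in $(\fb^-)^{\reg}$ with the same image in $\fb^-/[\fb^-,\fb^-]\cong\ft$; any element conjugating one to the other is therefore in $B^-$. Your boundedness argument (product of bounded factors over a compact parameter set) coincides with the paper's alternative proof of the second claim.

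Your treatment of part (a), however, has a genuine gap: you have mischaracterized $u_\rho$. It is \emph{not} defined by the Kostant-slice condition $\Ad_{u_\rho^{-1}}(\xi_\rho)\in\cS$; that role is played by $\widetilde{u}_\rho\nu_\rho$ in diagram~(\ref{diagram: xi_rho, w, cS, S}). Rather, $u_\rho$ is the unique element of $N$ making $(u_\rho\overline{w}_0^{-1}h\rho,\xi_\rho)$ a \emph{centralizing pair} in $\mathscr{Z}_G$. Writing $t$ for the Cartan component of $\xi_\rho$, this unwinds to
\[
\Ad_{u_\rho}\bigl(f+w_0(t)+\Ad_{\overline{w}_0^{-1}h\rho}f\bigr)=f+t+\Ad_{(\overline{w}_0^{-1}h\rho)^{-1}}f.
\]
As $|\gamma_{-\Pi}(\rho)|\to 0$, \emph{both} sides lie in $f+\fb$ and converge --- the source to $f+w_0(t)$ and the target to $f+t$ --- and it is this two-sided convergence that pins down $u_\rho$ and identifies its limit as the unique $N$-element conjugating $f+w_0(t)$ to $f+t$. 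Your argument only tracked the target $\xi_\rho\to f+w(t)$; under your incorrect Kostant-slice characterization this would make the limit of $u_\rho$ the element sending $f+w(t)$ into $\cS$, which is not the claim. Your hedged phrase ``up to the intermediate factor coming from $\overline{w}_0$'' does not repair this --- the $w_0$ appears precisely because the centralizing relation involves conjugating by $\overline{w}_0^{-1}h\rho$, which acts on $t$ by $w_0$. The fix is simply to use the correct centralizing characterization and track both the source and target of $\Ad_{u_\rho}$.
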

\begin{proof}
(a) By assumption, $u_\rho$ is determined by the property
\begin{align*}
&\Ad_{u_\rho\overline{w}_0^{-1}h\rho}(f+t+\Ad_{(\overline{w}_0^{-1}h\rho)^{-1}}f)=f+t+\Ad_{(\overline{w}_0^{-1}h\rho)^{-1}}f\\
\Leftrightarrow&\Ad_{u_\rho}(f+w_0(t)+\Ad_{\overline{w}_0^{-1}h\rho}f)=f+t+\Ad_{(\overline{w}_0^{-1}h\rho)^{-1}}f
\end{align*}
Since as $|\gamma_{-\Pi}(\rho)|\rightarrow 0$, 
\begin{align*}
&f+w_0(t)+\Ad_{\overline{w}_0^{-1}h\rho}f\rightarrow f+w_0(t)\in f+\ft,\\
&\text{ and } f+t+\Ad_{(\overline{w}_0^{-1}h\rho)^{-1}}f\rightarrow f+t\in f+\ft,
\end{align*} 
we have $u_\rho$ is bounded and $\lim\limits_{|\gamma_{-\Pi}(\rho)|\rightarrow 0}u_\rho$ is the unique element in $N$ that sends $f+w_0(t)$ to $f+t$ through the adjoint action. 

(b) The uniqueness of $b_{1,\rho}^-$ is clear. For existence, we observe that $b_{1,\rho}^-$ is an element in $G$ that takes $\Ad_{\overline{w}}(f_S+t_\rho')$ to $f+w(t_\rho')$. Since both elements are in $(\fb^-)^{\reg}$ and have the same image in $\fb^-/[\fb^-,\fb^-]$, any conjugation between them must be induced from an element in $B^-$. Then the claim follows. 

We remark that we don't really need the first claim in (b) to deduce the second claim. Here we include a slightly different proof of the second claim independent of the first, which is more natural. 
First, we have $\ft+f\subset (\fb^-)^{\reg}\rightarrow \ft$ a transverse slice to the $B^-$-orbits in $(\fb^-)^{\reg}$. Second, 
$\Ad_{\overline{w}}(f_S+t_\rho')$ gives a local transverse slice of the $B^-$-orbits over $w(\overline{D}_S'+\overline{\cK}_{S^\perp}')$. Now for each $\widetilde{t}\in w(\overline{D}_S'+\overline{\cK}_{S^\perp}')$, choose any $\widetilde{b}^-$ such that $\Ad_{\widetilde{b}^-}(\widetilde{t}+\Ad_{\overline{w}}(f_S))=\widetilde{t}+f$. Then there exists a small neighborhood $\cU_{\widetilde{t}}$ around $\widetilde{t}$ and a neighborhood $\cV_{\widetilde{b}^-}$ of $\widetilde{b}^-$ in $B^-$ such that 
\begin{align*}
\Ad_{\cV_{\widetilde{b}^-}}(\cU_{\widetilde{t}}+\Ad_{\overline{w}}f_S)\supset \cU_{\widetilde{t}}+f.
\end{align*}
Lastly, by the compactness of $\overline{D}_S'+\overline{\cK}'_{S^\perp}$, the claim follows. 
\end{proof}

\begin{lemma}\label{lemma: gh_2varphi}
Let $\cK\subset G$ be a fixed compact region. For any two $h_1, h_2\in T$ with $\log_\bR (h_j)\in \ft_{\bR}^+$, if $h_1=g h_2 \varphi$ for some $g, \varphi\in \cK$, then 
\begin{align*}
c|\lambda_{\beta^\vee}(h_2)|\leq |\lambda_{\beta^\vee}(h_1)|\leq C|\lambda_{\beta^\vee}(h_2)|, \beta\in \Pi
\end{align*}
for some constants $c,C>0$ that only depend on $\cK$. 
\end{lemma}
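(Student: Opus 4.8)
\textbf{Proof proposal for Lemma \ref{lemma: gh_2varphi}.}

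The plan is to reduce the statement to a pairing computation against the fundamental coweights $\lambda_{\beta^\vee}$ and control the ``error'' contributed by $g$ and $\varphi$ using the compactness hypothesis together with the condition $\log_\bR(h_j)\in \ft_\bR^+$. First I would recall, exactly as in the proof of Lemma \ref{lemma: cY_S, R} (the analysis around equation \eqref{eq: b, LHS, not in S}), that for each $\beta\in\Pi$ one has $|\lambda_{\beta^\vee}(h_j)| = |b_{\lambda_{\beta^\vee}}(h_j)|$ up to passing to a lift in $G_{sc}$, and that $b_{\lambda_{\beta^\vee}}(x) = \langle x v_{\lambda_{\beta^\vee}}, v_{-w_0(\lambda_{\beta^\vee})}\rangle$ for fixed highest weight vectors. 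Writing $h_1 = g h_2 \varphi$, I would expand $|b_{\lambda_{\beta^\vee}}(gh_2\varphi)| = |\langle g h_2 \varphi v_{\lambda_{\beta^\vee}}, v_{-w_0(\lambda_{\beta^\vee})}\rangle|$. Since $\varphi\in\cK$ is bounded, $\varphi v_{\lambda_{\beta^\vee}}$ lies in a compact subset of $V_{\lambda_{\beta^\vee}}$, so it is a bounded linear combination $\sum_\mu c_\mu(\varphi)\, v^{(j)}_\mu$ over weights $\mu$ in the convex hull of $W\cdot\lambda_{\beta^\vee}$, with $|c_\mu(\varphi)|$ uniformly bounded. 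Applying $h_2\in T$ multiplies the $\mu$-component by $\mu(h_2)$, and then $g\in\cK$ again only introduces bounded factors and bounded mixing of weight spaces.

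The key point — and the place where the hypothesis $\log_\bR(h_j)\in\ft_\bR^+$ is essential — is that among all weights $\mu$ appearing, the dominant one $\mu=\lambda_{\beta^\vee}$ has the largest modulus $|\mu(h_2)|$: indeed for $h_2$ with $\log_\bR(h_2)$ in the closed positive Weyl chamber and any $\mu = \lambda_{\beta^\vee} - \varpi$ with $\varpi$ a nonnegative integer combination of simple roots, one has $|\mu(h_2)| \le |\lambda_{\beta^\vee}(h_2)|$, because $\operatorname{Re}\langle \log_\bR(h_2), \varpi\rangle \ge 0$. Therefore $|b_{\lambda_{\beta^\vee}}(g h_2\varphi)|$ is bounded above by a constant (depending only on $\cK$, the finitely many weight multiplicities, and the fixed vectors) times $|\lambda_{\beta^\vee}(h_2)|$, which gives the upper bound $|\lambda_{\beta^\vee}(h_1)|\le C|\lambda_{\beta^\vee}(h_2)|$. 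The lower bound then follows by symmetry: inverting the relation gives $h_2 = g^{-1} h_1 \varphi^{-1}$ with $g^{-1},\varphi^{-1}$ ranging in the compact set $\cK^{-1}$, and since $\log_\bR(h_1)$ is also assumed in $\ft_\bR^+$ the same argument yields $|\lambda_{\beta^\vee}(h_2)|\le C'|\lambda_{\beta^\vee}(h_1)|$, i.e. $|\lambda_{\beta^\vee}(h_1)|\ge c\,|\lambda_{\beta^\vee}(h_2)|$ with $c = 1/C'$.

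The main obstacle I anticipate is making the lower bound genuinely clean: a priori $b_{\lambda_{\beta^\vee}}$ could be small or even vanish on $g h_2\varphi$ if there were cancellation, so one must be a little careful that the dominant weight component of $g h_2 \varphi v_{\lambda_{\beta^\vee}}$ paired against $v_{-w_0(\lambda_{\beta^\vee})}$ does not degenerate. This is handled exactly by the symmetry trick above — running the upper bound estimate for the relation $h_2 = g^{-1}h_1\varphi^{-1}$ — rather than by a direct lower estimate, so no nonvanishing lemma in the spirit of Lemma \ref{lemma: nonzero lowest component} is actually needed here; the two-sided chamber hypothesis on $h_1$ and $h_2$ is precisely what makes the symmetric argument available. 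The only remaining bookkeeping is to note that all constants can be taken uniform in $\beta\in\Pi$ since $\Pi$ is finite, and uniform over $g,\varphi\in\cK$ by compactness, which completes the proof.
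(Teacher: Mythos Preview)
Your approach is exactly the paper's: reduce to the upper bound by the symmetry $h_2=g^{-1}h_1\varphi^{-1}$, express $|\lambda_{\beta^\vee}(h_1)|$ as a matrix coefficient in $V_{\lambda_{\beta^\vee}}$, expand $\varphi v_{\lambda_{\beta^\vee}}$ in weight components, and use $\log_\bR(h_2)\in\ft_\bR^+$ to see that $|\mu(h_2)|\le |\lambda_{\beta^\vee}(h_2)|$ for every weight $\mu$ of $V_{\lambda_{\beta^\vee}}$.

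There is one slip to fix. The identity $|\lambda_{\beta^\vee}(h_j)|=|b_{\lambda_{\beta^\vee}}(h_j)|$ is false: for $h\in T$ one has $b_{\lambda_{\beta^\vee}}(h)=\lambda_{\beta^\vee}(h)\,\lng v_{\lambda_{\beta^\vee}},v_{-w_0(\lambda_{\beta^\vee})}\rng=0$, since the highest weight vector of $V_{\lambda_{\beta^\vee}}$ pairs trivially with the highest (not lowest) weight vector of the dual. The correct identity, and what the paper actually writes, is $|\lambda_{\beta^\vee}(h)|=|b_{\lambda_{\beta^\vee}}(\overline{w}_0 h)|$ (cf.\ the normalization $\lng \overline{w}_0^{-1}v_\lambda,v_{-w_0(\lambda)}\rng=1$); in the reference you cite around \eqref{eq: b, LHS, not in S} the requisite $\overline{w}_0^{-1}$ is hidden in $\phi_S=\overline{w}_0^{-1}\overline{w}_S$. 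Once you insert this factor and absorb the fixed element $\overline{w}_0$ into the bounded element $g$, your expansion and estimate go through verbatim and coincide with the paper's proof.
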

\begin{proof}
By symmetry, it suffices to prove $|\lambda_{\beta^\vee}(h_1)|\leq C|\lambda_{\beta^\vee}(h_2)|$. Using
$\log_\bR (h_2)\in \ft_{\bR}^+$ and $g, \varphi$ are bounded, we see that 
\begin{align*}
&|\lambda_{\beta^\vee}(h_1)|=|b_{\lambda_{\beta^\vee}}(\overline{w}_0h_1)|=|b_{\lambda_{\beta^\vee}}(\overline{w}_0gh_2\varphi)|=|\lng h_2\varphi (v_{\lambda_{\beta^\vee}}), g^{-1}v_{(-\lambda_{\beta^\vee})}\rng|\\
&\leq C|\lambda_{\beta^\vee}(h_2)|. 
\end{align*}
for some uniform constant $C>0$. 
\end{proof}

\begin{prop}\label{prop: g_S, natural, w}
Under the above settings, given any fixed compact region in $J_{L_S^\der}$, there exists $\epsilon>0$ such that for all $(h,t)\in \overline{\cU}_{\cQ, \cV}^{w(S)}$ and $|\gamma_{-\Pi}(\rho)|<\epsilon$, the corresponding $(g_{S,\rho}^\natural, \xi_{S,\rho}^\natural)$ for $\frj_\rho(u\overline{w}_0^{-1}h, \xi)$ from (\ref{eq: u_rho, natural, gz})  is outside the compact region. 
\end{prop}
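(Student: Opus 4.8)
\textbf{Proof strategy for Proposition \ref{prop: g_S, natural, w}.}

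The plan is to track the element $g_{S,\rho}^\natural$ through the two chains of adjoint identifications recorded in diagram (\ref{diagram: xi_rho, w, cS, S}) and equation (\ref{eq: u_rho, natural, gz}), then use the properness statement of Proposition \ref{prop: proper b map} (via the functions $b^S_{\lambda_{\beta^\vee}}$) to convert a lower bound on $|b^S_{\lambda_{\beta^\vee}}(g_S)|$ into the claim. Concretely, I would start from (\ref{eq: u_rho, natural, gz}), rewrite it as
\begin{align*}
(g_{S,\rho}^\natural z_\rho^\natural, \xi_{S,\rho}^\natural + t_\rho^\natural) = \Ad_{u_{S,\rho}\overline{w}^{-1}(b_{1,\rho}^-)^{-1}\nu_\rho}(u_\rho\overline{w}_0^{-1}h\rho,\ \xi_\rho),
\end{align*}
and compare the value of $|b_{\lambda_{\beta^\vee}}|$ on both group-factor sides for each $\beta\in\Pi$. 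By Lemma \ref{lemma: u_rho bounded}, the elements $u_\rho$, $\nu_\rho$, $b_{1,\rho}^-$, $u_{S,\rho}$ are all uniformly bounded (independently of $\rho$ as $|\gamma_{-\Pi}(\rho)|\to 0$, for $(h,t)$ in the precompact set $\overline{\cU}^{w(S)}_{\cQ,\cV}$), and $\overline{w}$ is fixed. Hence Lemma \ref{lemma: gh_2varphi} applies: since $\log_\bR(h)\in w(\ft_S^+ \times\ \fz_{S,\bR})$ — more precisely $h=h_S\cdot\rho'$ with $h_S$ in a compact set and $\log_\bR(\rho')$ in a cone pointing into $\ft_\bR^+$ after the Weyl translation by $\overline{w}$ — one gets two-sided bounds $c|\lambda_{\beta^\vee}(h\rho)|\le |b_{\lambda_{\beta^\vee}}(\text{RHS})|\le C|\lambda_{\beta^\vee}(h\rho)|$ with $c,C$ depending only on the chosen compact data. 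As in the proof of Lemma \ref{lemma: cY_S, R}, this forces $z\rho(z_\rho^\natural)^{-1}$ to stay in a uniformly bounded region of $\cZ(L_S)_0$ near the limit, so the contributions of $z_\rho^\natural$ can be separated off.

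The heart of the argument is then the analogue of the computation in Lemma \ref{lemma: cY_S, R}: using Lemma \ref{lemma: nonzero lowest component} (applied to $\varsigma = \Ad_{u_{S,\rho}}(f_S+t_\rho')$, which is regular in $\fl_S + \mathring\fz_S$ and whose invariant line in $V_{\lambda_{\beta^\vee}}$ for $\beta\notin S$ has nonzero lowest-weight component), I would obtain for $\beta\in S$ a comparison of the form $|b^S_{\lambda_{\beta^\vee}}(g_{S,\rho}^\natural)| \cdot |\lambda_{\beta^\vee}(z_\rho^\natural)| \sim |b_{\lambda_{\beta^\vee}}(\text{bounded})|$, where the right-hand side is \emph{uniformly bounded below by a positive constant} because the bounded group element lies in a fixed compact region and the relevant matrix coefficient is nonvanishing there (again Lemma \ref{lemma: nonzero lowest component}(ii)). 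Since $|\lambda_{\beta^\vee}(z_\rho^\natural)|$ is comparable to $|\lambda_{\beta^\vee}(h\rho)|\to 0$ as $|\gamma_{-\Pi}(\rho)|\to 0$, this gives $|b^S_{\lambda_{\beta^\vee}}(g_{S,\rho}^\natural)| \to \infty$ for at least one $\beta\in S$ (indeed for all, since the $\lambda_{\beta^\vee}$, $\beta\in S$, span a finite-index sublattice of $X^*(T_{S,\ad})$ after lifting to $L^\der_{S,sc}$). By Proposition \ref{prop: proper b map} applied to $L_S^\der$, the preimage of any compact subset of $\fc_S$ under $\chi_S$ intersected with $\{|b^S_{\lambda_{\beta^\vee}}|\le R\}$ is precompact, so $(g_{S,\rho}^\natural, \xi_{S,\rho}^\natural)$ must eventually leave any prescribed compact region of $J_{L_S^\der}$; a uniform $\epsilon>0$ works by the compactness of $\overline{\cU}^{w(S)}_{\cQ,\cV}$.

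The main obstacle I anticipate is bookkeeping the ``$z_\rho^\natural$ versus $h\rho$'' discrepancy cleanly across the \emph{two} adjoint chains — the one going from $\xi_\rho$ down to the Kostant slice $\cS$ and the one coming from $f_S+t_\rho'$ — since unlike in Subsection \ref{subsec: analysis fU_S} the subtorus here is $w(\cZ(L_S)_0)$ relative to a \emph{shifted} Borel and there is no a priori integrable-system description; one must instead argue purely representation-theoretically via the matrix coefficients $b_{\lambda_{\beta^\vee}}$ and the explicit boundedness from Lemma \ref{lemma: u_rho bounded}. A secondary point requiring care is verifying that $\log_\bR(h)$ genuinely lies in (a cone translate of) $\ft_\bR^+$ so that Lemma \ref{lemma: gh_2varphi} is applicable; this should follow from the definition of $\cU^{w(S)}_{\cQ,\cV}$ and the hypothesis $w\in W^S_{\min}$ (so $w(S)\subset\Delta^+$), but the orientation of the relevant cone relative to the dominant chamber needs to be pinned down precisely before the matrix-coefficient estimates go through.
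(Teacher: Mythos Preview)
Your proposal has a genuine gap: the direct approach you outline cannot get off the ground, because the application of Lemma~\ref{lemma: gh_2varphi} you need is circular. In the group-factor identity coming from (\ref{eq: u_rho, natural, gz}), the torus element on the left-hand side is $z_\rho^\natural$, and the surrounding factors you must absorb into the ``$g,\varphi\in\cK$'' of Lemma~\ref{lemma: gh_2varphi} include $g_{S,\rho}^\natural$ itself. So you cannot invoke that lemma to compare $|\lambda_{\beta^\vee}(z_\rho^\natural)|$ with $|\lambda_{\beta^\vee}(h\rho)|$ unless you already know $g_{S,\rho}^\natural$ is bounded---which is precisely what you are trying to disprove. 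This is why the paper argues by \emph{contradiction}: one assumes a sequence with $g_{S,\rho_n}^\natural$ trapped in a compact set, chooses (after passing to a subsequence) a fixed $\widetilde w\in W$ with $\widetilde w(\log_\bR z_{\rho_n}^\natural)\in\ft_\bR^+$, and only then are all non-torus factors bounded so that Lemma~\ref{lemma: gh_2varphi} yields $|\lambda_{\beta^\vee}(\widetilde w(z_{\rho_n}^\natural)\rho_n^{-1})|$ bounded for all $\beta\in\Pi$. Since $z_{\rho_n}^\natural\in\cZ(L_S)_0$ and $S\neq\emptyset$, the subspace $\widetilde w(\fz_{S,\bR})$ lies in root hyperplanes and hence misses the open dominant chamber, while $\log_\bR\rho_n$ escapes to infinity inside that chamber; this is the contradiction. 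No use of $|b^S_{\lambda_{\beta^\vee}}|$ or Proposition~\ref{prop: proper b map} is needed.

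There is also a sign error in your sketch: $|\gamma_{-\Pi}(\rho)|\to 0$ means $|\beta(\rho)|\to\infty$ for every $\beta\in\Pi$, so $|\lambda_{\beta^\vee}(h\rho)|\to\infty$, not $\to 0$. Even if your Step~3 held, the conclusion would then be $|b^S_{\lambda_{\beta^\vee}}(g_{S,\rho}^\natural)|\to 0$, which does \emph{not} force $(g_{S,\rho}^\natural,\xi_{S,\rho}^\natural)$ to leave compact sets in $J_{L_S^\der}$ (the Kostant sections sit exactly where $b^S_{\lambda_{\beta^\vee}}=0$). Your instinct to mimic Lemma~\ref{lemma: cY_S, R} is natural, but the paper explicitly notes at the start of Subsection~\ref{subsec: walls} that the $\fU_S$-analysis does not generalize directly to walls $w(S^\perp)$ for arbitrary $w$; the short contradiction argument above is what replaces it.
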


\begin{proof}
It is clear from (\ref{eq: u_rho, natural, gz}) that $\xi_{S,\rho}^\natural$ is bounded, for $\xi_\rho$ and the group elements after $\Ad$ are all bounded. We only need to prove that $g_{S,\rho}^\natural\in L_S^\der$ is outside any bounded region in $L_S^\der$ near the limit of $\rho$.

Suppose the contrary, there exists a sequence $(h_n, w(t_n))\in \overline{\cU}_{\cQ, \cV}^{w(S)}$, and $\rho_n$ with $|\gamma_{-\Pi}(\rho_n)|\rightarrow 0$, such that the corresponding $g_{S,\rho_n}^\natural$ is contained in some fixed compact region $\cD^\natural\subset L_S^\der$ for all $n$. Then for each $n$, choose $w_n\in W$ such that $w_n(\log_\bR z_{\rho_n}^\natural)\in \ft_\bR^+$. Since $|W|$ is finite, by restricting to a subsequence, we may assume that $w_n=\widetilde{w}$ for a fixed $\widetilde{w}$. Fix a representative of $\widetilde{w}\in N_G(T)$ and denote it by the same notation. Then we apply Lemma \ref{lemma: gh_2varphi} to the identity on the $G$-factors in (\ref{eq: u_rho, natural, gz}), where aside from $z_{\rho_n}^\natural=\Ad_{\widetilde{w}^{-1}}(\widetilde{w}(z^\natural_{\rho_n}))$ and $\rho_n$, every other element in the products is uniformly bounded. Hence we get there are uniform constants $c, C>0$ such that 
\begin{align*}
c\leq |\lambda_{\beta^\vee}(\widetilde{w}(z^\natural_{\rho_n})\rho_n^{-1})|\leq C,\forall \beta\in \Pi
\end{align*}
which means $\widetilde{w}(z^\natural_{\rho_n})\rho_n^{-1}$ is contained in a fixed compact region in $T$. However, this is impossible for $n\gg 1$ under the assumption that $S\neq \emptyset$. 
\end{proof}

\subsection{A construction of $L_0$ and $\cL_\zeta$ for any $\zeta\in \ft_c^\reg$.}\label{subsec: construct L_zeta}
In this subsection, we give a construction of $L_0$ and $\cL_\zeta$ for any $\zeta\in \ft_c^\reg$ that are used in the main propositions in Section \ref{subsec: key propositions}. For any $R\gg 1$, consider the conormal bundle 
\begin{align}\label{eq: def Lambda_R}
\Lambda_{R}:= \Lambda_{T_{\cpt, R}}
\end{align}
of the compact torus (more precisely, an orbit of it)
\begin{align}\label{eq: T_cpt, R def}
T_{\cpt, R}=
:\{|b_{\lambda_{\beta^\vee}}|^{1/\lambda_{\beta^\vee}(\sfh_0)}=R^2/n: \beta\in \Pi\}\subset T
\end{align}
 in $\cB_{w_0}\cong T^*T$. For any $\zeta\in \ft_c^\reg$, we can form the closed \emph{non-exact} Lagrangian $\zeta+\Lambda_{R}$. We will perform two modifications for the shifted conormal bundle $\zeta+\Lambda_R$: 
 \begin{itemize}
 \item[(1)] In Subsection \ref{subsubsec: conic}, we will do a cylindrical modification of $\Lambda_R$  and get a cylindrical Lagrangian $L_0$ contained in a Liouville subsector $\cB_{w_0}^\dagg\subset J_G$ define in Subsection \ref{subsubsec: subsector}. The upshot is $L_0^\zeta:= \zeta+L_0$ will be tautologically unobstructed, so that 
 $(L_0^\zeta,\check{\rho}), \check{\rho}\in \Hom(\pi_1(T), \bC^\times)$ is a well defined object in the  wrapped Fukaya category $\cW(J_G;\sfLambda)$ over the Novikov field $\sfLambda$.
 
 \item[(2)] In Subsection \ref{subsubsec: def Lambda_R}, we will perform a compactly supported Hamiltonian deformation of $L_0^\zeta$ based on the analysis in Subsection \ref{subsubsec: B_w_0}. The resulting Lagrangian is the desired $\cL_\zeta$.

 \end{itemize}
 
 \subsubsection{A Liouville subsector}\label{subsubsec: subsector}

 Using the Weinstein handle decomposition in Proposition \ref{prop: hypersurface F} (2) and its proof for model (A), let  $\fF_0\subset \fF$ be the portion of Liouville hypersurface defined by $I=0, \widetilde{\norm}=1$, whose projection to $H^{sm}$ is contained in the stratum corresponding to $S=\emptyset$, and we will denote the projection by $\Omega_\emptyset$ (cf. Figure \ref{figure: proj L_xi, R}). Let $c_\emptyset$ denote the point that radially projects to the barycenter of the interior of $\fC^{n-1}$. 
 We assume the functions $I$ and $\widetilde{\norm}$ are of the form (\ref{eq: tildeN, B_w0}) and (\ref{eq: I, B_w0}), respectively, in a sufficiently large conic open subset (with respect to the Euler vector field) in $\cB_{w_0}$. Since the projection of $Z_{\fF_0}$ is zero in $\Omega_\emptyset$ (cf. Figure \ref{figure: proj L_xi, R}), $\fF_0$ is a itself a Liouville sector. Using the Darboux coordinates listed in (\ref{eq: q, p}), we have a natural sector splitting 
 \begin{align}\label{eq: fF_0, splitting}
 &\fF_0\cong T^*\Omega_\emptyset\times T^*T_{\cpt, 1}, \text{ where } T^*\Omega_\emptyset\cong\Omega_\emptyset\times \{(\Re p_{\beta^\vee}): \sum\limits_{\beta\in \Pi}\Re p_{\beta^\vee}=0\}, 
 \end{align}
 where $T^*\Omega_\emptyset$ and $T^*T_{\cpt, 1}$ are equipped with the standard Liouville sector structure.

Let $\cP\subset \bC_{\Re z\leq 0}$ be a Liouville subsector constructed as follows. Pick any real codimension $1$ sphere $S$ in $\chi^{-1}([0])$ surrounding $(g=I, \xi=f)$. The projection $S\subset \fF\times \bC_{\Re z<0}\rightarrow \bC_{\Re z<0}$ is contained in some proper open cone 
  \begin{align}\label{eq: Q, theta+-}
 Q=\{z=r e^{i\theta}: \theta\in (\theta_-,\theta_+)\}, \text{ for some }[\theta_-,\theta_+]\subset (\frac{\pi}{2},\frac{3\pi}{2}) 
 \end{align}
then the same holds for $\chi^{-1}([0])\cap \fF\times \bC_{\Re z<0}$. We assume that the subsector $\cP$ is of the form
 \begin{align}\label{eq: cP def}
 \cP=\{z=r e^{i\theta}: \theta\in [\frac{\pi}{2}, \frac{3\pi}{2}]\backslash (\theta_-,\theta_+), r\geq 0\}\cup \{\Re z\geq -A\},
 \end{align} 
for some fixed sufficiently large positive number $A$, shown in Figure \ref{figure: proj L_xi, R}. 

Let $\widetilde{\cH}_{K_0}\subset T^*(\Omega_\emptyset\times T_{\cpt, 1})$ (resp. $\widetilde{\cH}_{\leq K_0}$) be the contact hypersurface (resp. Liouville domain) defined by 
\begin{align}\label{eq: cH_K_0}
\sum\limits_{\beta\in \Pi}(\Re p_{\beta^\vee})^2+(\Im p_{\beta^\vee})^2= K_0^2 (\text{resp. }\leq K_0^2)\ (\text{recall }\sum\limits_{\beta\in \Pi}\Re p_{\beta^\vee}=0\text{ in }T^*\Omega_\emptyset)
\end{align}
for a sufficiently large $K_0>1$.  

\begin{lemma}\label{lemma: cH_K x P}
For sufficiently large $K_0>1$, $\chi^{-1}([0])\cap (\widetilde{\cH}_{K_0}\times \cP)=\emptyset$.
\end{lemma}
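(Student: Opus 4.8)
The statement to prove is Lemma \ref{lemma: cH_K x P}: for sufficiently large $K_0 > 1$, one has $\chi^{-1}([0]) \cap (\widetilde{\cH}_{K_0} \times \cP) = \emptyset$. The plan is to combine two facts. First, the central fiber $\chi^{-1}([0])$ intersects $\fF \times \bC_{\Re z < 0}$ only inside the proper cone $Q$ of (\ref{eq: Q, theta+-}), by construction of $\cP$ and of $Q$ (this is exactly the hypothesis recorded just before (\ref{eq: cP def})). Second, the subsector $\cP$ of (\ref{eq: cP def}) is the union of the complementary cone $\{\theta \in [\tfrac{\pi}{2}, \tfrac{3\pi}{2}] \setminus (\theta_-, \theta_+)\}$ and the bounded-real-part region $\{\Re z \geq -A\}$. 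So a point of $\chi^{-1}([0])$ lying in $\fF \times \cP$ must lie in $\fF \times \{\Re z \geq -A\}$, i.e. it has bounded $\bC_{\Re z \le 0}$-coordinate. I would then observe that, under the identification $J_G - \cB_1 \cong \fF \times T^{*,>0}\bR \hookrightarrow \fF \times \bC_{\Re z \le 0}$ from Subsection \ref{subsec: compactify, J_G}, the region $\fF \times \{\Re z \ge -A\}$, when intersected with $\chi^{-1}([0])$, projects to a bounded (hence precompact) subset of $\fF$ — here using that $\chi^{-1}([0])$ is a closed subvariety and the $\bC^\times$-homogeneity of $\widetilde{\norm}$ and $I$ together with Lemma \ref{lemma: pi_|b|, epsilon}(ii)/Proposition \ref{prop: proper b map} restricted to the central fiber, so that bounded $1/\widetilde{\norm} = |\Im z|$ and bounded $\Re z = I$ force boundedness in $\fF$.

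The next step is to control the $T^*(\Omega_\emptyset \times T_{\cpt,1})$-coordinates. The point is that the precompact set $\chi^{-1}([0]) \cap \fF \times \{\Re z \ge -A\}$, being bounded in $\fF$, has its $T^*\Omega_\emptyset \times T^*T_{\cpt,1}$-fiber coordinates $(\Re p_{\beta^\vee}, \Im p_{\beta^\vee})_{\beta \in \Pi}$ bounded by some constant depending only on $A$ and on the (fixed) choices in the Weinstein structure on $\fF$. Therefore, for $K_0$ larger than that bound, the contact hypersurface $\widetilde{\cH}_{K_0}$ of (\ref{eq: cH_K_0}) — which is precisely the locus where $\sum_\beta (\Re p_{\beta^\vee})^2 + (\Im p_{\beta^\vee})^2 = K_0^2$ — is disjoint from $\chi^{-1}([0]) \cap (\fF \times \{\Re z \ge -A\})$. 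Combining with the first step, $\widetilde{\cH}_{K_0} \times \cP$ misses $\chi^{-1}([0])$ entirely: any point in the intersection would have to lie in $\fF \times \{\Re z \ge -A\}$ (from the $\cP$ side) and simultaneously on $\widetilde{\cH}_{K_0}$, contradicting the bound.

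The main obstacle I anticipate is making rigorous the claim that $\chi^{-1}([0]) \cap (\fF \times \{\Re z \ge -A\})$ is precompact in $\widetilde{\cH}_{\le K_0} \times (\text{compact part of } \cP)$, equivalently that the central fiber has no ``escape to infinity'' inside the region of bounded $\bC_{\Re z \le 0}$-coordinate. This is where the earlier properness results must be invoked carefully: Proposition \ref{prop: proper b map} (properness of $\pi_b$ on $\chi^{-1}(\fK)$) and Lemma \ref{lemma: pi_|b|, epsilon}(i)–(ii) give exactly the needed compactness of $\pi_{|b|}^{-1}([0,\epsilon]) \cap \chi^{-1}(\fK)$, and one translates ``bounded $1/\widetilde{\norm}$'' into ``bounded $\pi_{|b|}$'' (since $\widetilde{\norm}$ is a power of a function comparable to $\pi_{|b|}^{1/2}$ along the central fiber) and ``bounded $I = \Re z$'' into a bound on the conjugate fiber variables via the explicit formula (\ref{eq: I, B_w0}). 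Once that precompactness is in hand, the rest is the elementary set-theoretic argument above, and $K_0$ is chosen after the precompact set is fixed.
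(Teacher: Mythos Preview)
Your overall strategy is sound and matches the paper's first reduction: since $\chi^{-1}([0])\cap(\fF\times\bC_{\Re z<0})$ projects to $Q$ and $\cP\cap Q\subset Q\cap\{\Re z\geq -A\}$, the problem reduces to bounding the $\fF_0$-fiber coordinates of $\chi^{-1}([0])$ over the bounded region $Q\cap\{\Re z\geq -A\}$. But your justification of that bound has a gap.

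You claim that ``bounded $1/\widetilde{\sfN}$'' translates into ``bounded $\pi_{|b|}$'' and then invoke Proposition~\ref{prop: proper b map}. The direction is wrong: on $Q\cap\{\Re z\geq -A\}$ one has $|z|$ bounded \emph{above}, hence $1/\widetilde{\sfN}$ bounded above, hence $\widetilde{\sfN}$ bounded \emph{below}; since $r_\beta=\widetilde{\sfN}^{2}\cdot(r_\beta|_{H^{sm}})$, this bounds $\pi_{|b|}$ from \emph{below}, not above. Indeed $z\to 0$ is allowed in $Q\cap\{\Re z\geq -A\}$, and there $\widetilde{\sfN}\to\infty$, $\pi_{|b|}\to\infty$, so Proposition~\ref{prop: proper b map} gives no compactness on this set. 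Likewise, bounded $I=\Re z$ together with $\widetilde{\sfN}\geq c$ and formula~(\ref{eq: I, B_w0}) only bounds $\sum_\beta\Re p_{\beta^\vee}$, not the quantity $\sum_\beta(\Re p_{\beta^\vee})^2+(\Im p_{\beta^\vee})^2$ defining $\widetilde{\cH}_{K_0}$.

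The paper closes this gap by exploiting the Liouville invariance of $\chi^{-1}([0])$ directly: applying $\varphi_Z^{-M_\epsilon}$ (with $K_0=e^{M_\epsilon}$) sends $\widetilde{\cH}_{K_0}\to\widetilde{\cH}_1$ while shrinking $\overline{Q}\cap\{\Re z\geq -A\}$ into $\{|z|\leq\epsilon\}$; on that region the $T$-coordinate has large $|b_\lambda|$, and Lemma~\ref{lemma: hat chi_epsilon, gamma}(i) (with $\fK=\{[0]\}$) forces $|t|<\delta$, contradicting membership in $\widetilde{\cH}_1$. Your argument can be repaired in the same spirit: by Liouville invariance of $\chi^{-1}([0])$, it suffices to bound the $\fF_0$-fiber coordinates on $\chi^{-1}([0])\cap(\fF_0\times(Q\cap\{|z|=1\}))$; there $\widetilde{\sfN}$ is bounded \emph{both} above and below (using that $Q$ has angle bounded away from $\pi/2,3\pi/2$), so $\pi_b$ is genuinely bounded and Proposition~\ref{prop: proper b map} applies.
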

\begin{proof}
By assumption, the projection of $\chi^{-1}([0])\cap \fF\times \cP$ to $\bC_{\Re z<0}$ is contained in the pre-compact region $Q\cap \{\Re z\geq -A\}$, so it suffices to prove that the intersection $\chi^{-1}([0])\cap (\widetilde{\cH}_{K_0}\times (Q\cap \{\Re z\geq -A\}))$ is empty. Since $\overline{Q}\cap \{\Re z\geq -A\}$ is compact, for any small $\epsilon>0$, there exists $M_\epsilon>1$ such that $\varphi_{Z_\bC}^{-M_\epsilon}(\overline{Q}\cap \{\Re z\geq -A\})\subset \bC_{\Re z<0, |z|^2\leq \epsilon^2}$. 
Now apply Lemma \ref{lemma: hat chi_epsilon, gamma} (i) with $\fK=\{[0]\}$, $I\in \cV$ and any $0<\delta\ll 1$. Choose $\epsilon>0$ such that $\{\|b_\lambda(\rho)\|\leq \epsilon\}\subset \{|\gamma_{-\Pi}(\rho)|<r_{\cV, \fK, \delta}\}$. Let $K_0=e^{M_\epsilon}$, then $\chi^{-1}([0])\cap (\widetilde{\cH}_{K_0}\times (\overline{Q}\cap \{\Re z\geq -A\}))=\emptyset$ as desired. 
\end{proof}

Using Lemma \ref{lemma: cH_K x P}, we can form the ``cylindricalization" of $\fF_0\times \cP$ as 
\begin{align*}
(\widetilde{\cH}_{\leq K_0}\times \cP)\cup \bigcup\limits_{s\geq 0}\varphi_Z^{s}(\widetilde{\cH}_{K_0}\times \cP).
\end{align*}
After a standard smoothing of the corners as in \cite{GPS1}, we get a Liouville subsector of $J_G$, denoted by $\fF_0\overset{\triangle}{\times}\cP$ or $\cB_{w_0}^\dagg$ (similarly, we can also define the subsector $\fF\overset{\triangle}{\times}\cP$).  To simplify notations, to represent a Liouville sector, we will usually just write its interior. The boundary of such a Liouville sector either has been introduced or is clear from the context.

  \subsubsection{A cylindrical modification of the conormal bundle $\Lambda_R$} \label{subsubsec: conic}
 
 Let  $\Lambda_{T_{\cpt}, 1}^0$ denote for the zero-section of $T^*T_{\cpt, 1}$. 
With respect to the splitting (\ref{eq: fF_0, splitting}), we have a splitting for the conormal bundle $\Lambda_R$ as 
 \begin{align}\label{eq: Lambda_R, split}
 \Lambda_R&=T^*_{c_\emptyset}\Omega_\emptyset\times\Lambda_{T_{\cpt}, 1}^0 \times \{\Re z=-\frac{1}{R}\}. 
 \end{align}

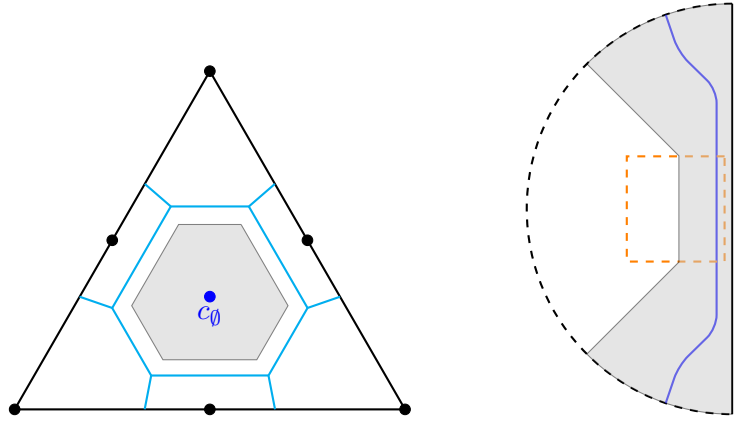
\begin{figure}[htbp]
\centering
\begin{tikzpicture}
\draw[thick] (1.73*1.5,-1.5) to (0,3);
\draw[thick] (-1.73*1.5, -1.5) to (0,3);
\draw[thick] (-1.73*1.5,-1.5) to  (1.73*1.5, -1.5);
\draw[thick, cyan] (-1.73*0.5, 1.5)--(-1.73*0.3, 1.2) coordinate (P)-- (1.73*0.3, 1.2) coordinate(Q)--(1.73*0.5, 1.5);
\draw[thick, cyan, rotate=120] (-1.73*0.5, 1.5)--(-1.73*0.3, 1.2) coordinate(PP) -- (1.73*0.3, 1.2) coordinate(QQ)--(1.73*0.5, 1.5);
\draw[thick, cyan, rotate=240] (-1.73*0.5, 1.5)--(-1.73*0.3, 1.2)coordinate(PPP) -- (1.73*0.3, 1.2) coordinate(QQQ)--(1.73*0.5, 1.5);
\draw[thick, cyan] (-1.73*0.3, 1.2)--(-1.73*0.75, 1.2-1.35);
\draw[thick, cyan, rotate=120] (-1.73*0.3, 1.2)--(-1.73*0.75, 1.2-1.35);
\draw[thick, cyan, rotate=240] (-1.73*0.3, 1.2)--(-1.73*0.75, 1.2-1.35);
\fill[fill=gray!40, draw=black, opacity=0.5] ($0.8*(Q)$)--($0.8*(P)$)--($0.8*(QQ)$)--($0.8*(PP)$)--($0.8*(QQQ)$)--($0.8*(PPP)$)--($0.8*(Q)$);
\filldraw (1.73*1.5,-1.5) circle (2pt);
\filldraw (0,3) circle (2pt);
\filldraw (-1.73*1.5,-1.5) circle (2pt);
\filldraw (1.73*0.75, 0.75) circle (2pt);
\filldraw (-1.73*0.75, 0.75) circle (2pt);
\filldraw (0, -1.5) circle (2pt);
\filldraw[blue] (0,0) circle (2pt) node[below] {$c_\emptyset$};
\end{tikzpicture}
\hspace{0.5in}
\begin{tikzpicture}
\draw[thick] (8,-1)--(8,1.73*2+1);
\draw[thick, dashed] (8,1.73*2+1) arc (90:270:1.73+1);
\draw[thick, blue, rounded corners=5pt] ({8+ cos (1.9 r)*2.73 }, {1.73+ sin (1.9 r)*2.73})--({8+ cos (1.9 r)*2.2 }, {1.73+ sin (1.9 r)*2.2})--({8+ cos (1.7 r)*1.6}, {1.73+ sin (1.7 r)*1.6})-- ({8+ cos (-1.7 r)*1.6 }, {1.73+ sin (-1.7 r)*1.6})--({8+ cos (-1.9 r)*2.2 }, {1.73+ sin (-1.9 r)*2.2})--({8+ cos (1.9 r)*2.73 }, {1.73+ sin (-1.9 r)*2.73}); 
\draw[thick, dashed, orange] (8-0.1, 1.73+0.7)--(8-1.4, 1.73+0.7)--(8-1.4, 1.73-0.7)--(8-0.1, 1.73-0.7)--(8-0.1, 1.73+0.7);
\filldraw[fill=gray!40, opacity=0.5, draw=black] ({8+cos (135)*2.73}, {1.73 +sin (135)*2.73})--({8+cos (135)}, {1.73 +sin (135)})--({8+cos (135)}, {-1.73 -sin (135)+1.73*2})-- ({8+cos (135)*2.73}, {-1.73 -sin (135)*2.73+1.73*2}) arc (225: 270: 1.73+1)--(8,1.73*2+1) arc (90: 135: 1.73+1);
\end{tikzpicture}
\caption{(1) The gray region inside $\fC^{n-1}$ (introduced in Subsection \ref{subsec: tilde N, I}) on the left represents the projection of $\fF_0$ to $H^{sm}$ composed with the radial projection $H^{sm}\rightarrow \fC^{n-1}$  (to be more precise, one needs to smooth the corners in the picture, but this is not essential as explained in \cite{GPS1}). The projections of $Z_{\fF_0}$ are all zero. The blue dot in the center of $\fC^{n-1}$, denoted by $c_\emptyset$, represents the projection of $\Lambda_R$ to $H^{sm}$ (composed with the radial projection $H^{sm}\rightarrow \fC^{n-1}$). 
(2) The gray region in the right half-disc is the Liouville subsector $\cP$. The blue curve in the right half-disc shows a cylindrical modification of the projection of $\Lambda_R$ to $\bC_{\Re z\leq 0}$. 
}
\label{figure: proj L_xi, R}
\end{figure}

Let $C_1$ be a cylindrical modification of the projection of $\Lambda_R$ in $\bC_{\Re z<0}$, which is initially $\{\Re z=-1/R\}$. Let $q=\Re z$ and $p=\Im z$. We assume that $C_1$ is contained in $\{\Re z\leq -1/R\}$ inside the region $\{|\Im z|\leq R\}$, and it is conic outside the region $\{|\Im z|\leq R\}$. Without loss of generality, we may assume that $C_1$ has compactly supported primitive $f_{C_1}$, and we can choose a compactly supported extension of $f_{C_1}$ on $\bC_{\Re z<0}$.

We make the following additional assumptions on $C_1$ and $f_{C_1}: \bC_{\Re z<0}\rightarrow \bR$:
\begin{assumption}\label{assumption: f_C1}
The curve $C_1$ is defined as the graph of a function $\varphi: \bR_p\rightarrow \bR_{q}$, where $p=\Im z$ and $q=\Re z$, which is symmetric about $p=0$ and satisfies
\begin{align*}
&\varphi(p)=\begin{cases}&-\frac{1}{R},\ -R\leq p\leq R\\
&-\frac{1}{R^2}p, \ 3R\leq p<\infty\\
\end{cases},\\
&\varphi'(p)\in (-\frac{2}{R^2},0], \text{ for } R<p<3R,\\
&\varphi(p)-p\varphi'(p)\geq -\frac{1}{R}. 
\end{align*}
Let $f_{C_1}: \bC_{\Re z<0}\rightarrow\bR$ be a compactly supported extension of $\int_0^p\frac{1}{2}(s(-\varphi'(s))+\varphi(s))ds$ on $C_1$ with support contained in $\{p^2+q^2\leq 12R^2, q\leq -\frac{\epsilon}{R}\}$ for some $\epsilon>0$. We assume that $R$ and $A$ are sufficiently large so that $\cP\supset C_1\cup \{p^2+q^2\leq 24 R^2\}\supset C_1\cup \supp(f_{C_1})$. 
\end{assumption}

Now via a similar construction as in \cite[6.2]{GPS2}, we can deform the product Lagrangian $F_{\bR, c_\emptyset}\times\Lambda_{T_{\cpt, 1}}^0\times C_1$ into a cylindrical Lagrangian. 
Since the factor $\Lambda_{T_{\cpt, 1}}^0$ is compact and conic, we only need to do a cylindrical modification of the other two factors $F_{\bR, c_\emptyset}\times C_1$ inside $T^*\Omega_\emptyset\times \bC_{\Re z<0}$. 

First, intersecting the Lagrangian $F_{\bR, c_\emptyset}\times C_1$ with a fixed contact hypersurface $\cH_K\times \bC_{\Re z<0}\subset T^*\Omega_\emptyset\times \bC_{\Re z<0}, K\gg R$ given by
\begin{align*}
&\cH_K=\{\|(\Re p_{\beta^\vee})\|=(\sum\limits_{\beta\in\Pi}(\Re p_{\beta^\vee})^2)^{\frac{1}{2}}=K\}\subset T^*\Omega_\emptyset\ (\text{recall }\sum\limits_{\beta\in\Pi}\Re p_{\beta^\vee}=0\text{ in }T^*\Omega_\emptyset), \\
&(\text{similarly, set }\cH_I=\{\|(\Re p_{\beta^\vee})\|\in I\}\text{ for any connected interval }I\subset [0,\infty))
\end{align*} 
we get a submanifold of (real) dimension $n-1$, over which 
\begin{align}\label{eq: lambda_R cap Sigma_K}
\alpha_{T^*\Omega_\emptyset}+\alpha_{\bC_{\Re z}<0}|_{(F_{\bR, c_\emptyset}\cap \cH_K)\times C_1}=df_{C_1}. 
\end{align}
In the following, we use $\alpha_{\cH_K}$ to denote for $\alpha_{T^*\Omega_\emptyset}|_{\cH_K}$, and use $\alpha_{\bC}$ to denote for $\alpha_{\bC_{\Re z}<0}$. 

Consider the 1-parameter family of contact 1-forms on $\cH_K\times \bC_{\Re z<0}$, 
\begin{align*}
\alpha_t=\alpha_{\cH_K}+\alpha_{\bC}-tdf_{C_1}, 0\leq t\leq 1. 
\end{align*}
Let $V_{\alpha}$ be the direct sum of the Reeb vector field on $(\cH_K, \alpha_{\cH_K})$ and the zero vector field on $\bC_{\Re z<0}$, and let $\varphi_{-f_{C_1}V_{\alpha}}^t$ be the flow of $-f_{C_1}V_{\alpha}$, then we have 
\begin{align}\label{eq: varphi_fV}
(\varphi_{-f_{C_1}V_{\alpha}}^t)^* \alpha_{1-t}=\alpha_1=\alpha_{\cH_K}+\alpha_{\bC}-df_{C_1}.
\end{align}
By (\ref{eq: lambda_R cap Sigma_K}), the intersection
\begin{align*}
\Gamma_{R,K}:=(F_{\bR, c_\emptyset}\cap \cH_K)\times C_1
\end{align*}
is a Legendrian submanifold in $\cH_K\times \bC_{\Re z<0}$ with respect to $\alpha_1$. By (\ref{eq: varphi_fV}), for any $0\leq t\leq 1$, $\varphi_{-f_{C_1}V_{\alpha}}^t(\Gamma_{R,K})$ is a Legendrian submanifold with respect to $\alpha_{1-t}$.

Second, since $f_{C_1}$ is compactly supported, by choosing $K$ sufficiently large, the flow $\varphi_{-f_{C_1}V_\alpha}^t(\Gamma_{R,K})$ is defined for all $0\leq t\leq 1$ inside $T^*\Omega_\emptyset\times \cP$.  
Now take the union of flow lines of $\varphi_{-f_{C_1}V_\alpha}^t(\Gamma_{R,K})$ under the Liouville vector field 
\begin{align*}
Z_{1-t}=Z_{T^*\Omega_\emptyset}+Z_{\bC_{\Re z<0}}+(1-t)X_{f_{C_1}}
\end{align*}
 of $\alpha_{1-t}$ (on the symplectization), i.e. 
 \begin{align}\label{eq: L_1-t, cyl}
 L_{1-t}^{cyl, \alpha_{1-t}}:=\bigcup\limits_{s\geq 0}\varphi_{Z_{1-t}}^s(\varphi_{-f_{C_1}V_\alpha}^t(\Gamma_{R,K})).
 \end{align}

Lastly, let 
\begin{align*}
&\phi_{t}: \cH_{[K,\infty)}\times \bC_{\Re z<0}\longrightarrow  \cH_{[K,\infty)}\times \bC_{\Re z<0}
\end{align*}
be the diffeomorphism defined by 
\begin{align*}
&\phi_{t}|_{\cH_K\times \bC_{\Re z<0}}=\varphi_{-f_{C_1}V_\alpha}^t\\
&\phi_{t}\circ\varphi^s_{Z_1}=\varphi^s_{Z_{1-t}}\circ \phi_{t}, \forall s\geq 0.
\end{align*}
Since by definition $\phi_{t}^*\alpha_{1-t}=\alpha_1$, $\phi_t$ is the Hamiltonian flow of a time-dependent family of Hamiltonian functions 
\begin{align*}
H_t=\iota_{X_t}\alpha_{1-t}+f_{C_1}, 0\leq t\leq 1.
\end{align*} 
The Hamiltonian vector field is $X_t=-f_{C_1}V_\alpha+Y_t$, where $V_\alpha$ is Hamiltonian vector field of $\frac{\|(\Re p_{\beta^\vee})\|}{K}$, and $Y_t$ is the component tangent to the factor $\bC_{\Re z<0}$ (depending on the level $\|(\Re p_{\beta^\vee})\|$). Then 
\begin{align*}
H_t=(1-\frac{\|(\Re p_{\beta^\vee})\|}{K})f_{C_1}+\iota_{Y_t}(\alpha_{\bC}-(1-t)df_{C_1}). 
\end{align*}
In particular, for any $K'>K$, on $\cH_{[K, K']}\times \bC_{\Re z<0}$, 
we have 
\begin{align*}
\supp H_t,\ \supp Y_t\subset \cH_{[K,K']}\times(\bigcup\limits_{0\leq s\leq\log (K'/K)}\varphi^s_{Z_{\bC_{\Re z<0}}+(1-t)X_{f_{C_1}}}(\supp f_{C_1}))
\end{align*}
and $|Y_t|$ is bounded from above. 

To simplify the notations, we will denote $Z_{\bC_{\Re z<0}}$ (resp. $X_{f_{C_1}}$) simply as $Z_{\bC}$ (resp. $X$), when there is no cause of confusion. Note that on any level $K\cdot e^s$, i.e. $\cH_{K\cdot e^s}\times \bC_{\Re z<0}$, 
\begin{align}\label{eq: Y_t def}
Y_t=\frac{d}{dt}\varphi_{Z_{\bC}+(1-t)X}^s\circ \varphi^{-s}_{Z_\bC+X}. 
\end{align}
Since $f_{C_1}$ has compact support, we directly see that on $\cH_{[K,K']}\times \bC_{\Re z<0}$,
\begin{align}\label{eq: bound Y_t}
|Y_t|, |H_t|\leq Q_{R, f_{C_1}}(K'/K)
\end{align}
for some constant $Q_{R, f_{C_1}}>0$ (depending only on $R$ and $f_{C_1}$).

Choose a smooth cut-off function 
\begin{align*}
&b_K: [K,\infty)\rightarrow [0,1],\\
&b_K|_{[K+2,\infty)}=1,\ b_K|_{[K, K+(1/R)]}=0,\ 0\leq b_K'(x)\leq 1\text{ for all }x.  
\end{align*}
Consider the Hamiltonian function 
\begin{align*}
\widetilde{H}_t:=b_K(\|(\Re p_{\beta^\vee})\|)H_t,
\end{align*}
where as before $\|(\Re p_{\beta^\vee})\|$ is considered as a function on $T^*\Omega_\emptyset$. 
We can extend $\widetilde{H}$ to be homogeneous near infinity and to have support contained in $\cH'_{[K,\infty)}\overset{\triangle}{\times} \cP\subset T^*\Omega'_\emptyset\overset{\triangle}{\times} \cP$, for a slight larger $\Omega'_\emptyset\supset \overline{\Omega}_\emptyset$, where $\cH'_{[K,\infty)}$ is defined similarly as $\cH_{[K,\infty)}$. 

We have  
\begin{align*}
X_{\widetilde{H}_t}=&b_K(\|(\Re p_{\beta^\vee})\|)\cdot X_t+b'_K(\|(\Re p_{\beta^\vee})\|)H_t\cdot V_\alpha\\
=& (b'_K(\|(\Re p_{\beta^\vee})\|)H_t-b_K(\|(\Re p_{\beta^\vee})\|)f_{C_1})V_\alpha+b_K(\|(\Re p_{\beta^\vee})\|)\cdot Y_t.
\end{align*}
Now set 
\begin{align}\label{eq: def L_1-t}
L_{1-t}:=\varphi_{X_{\widetilde{H}_t}}^t(F_{\bR, c_\emptyset}\times C_1)\times \Lambda_{T_{\cpt,1}}^0. 
\end{align}
Using Assumption \ref{assumption: f_C1}, it is clear from the estimate (\ref{eq: bound Y_t}) and the description of the ``cylindrical" part (\ref{eq: L_1-t, cyl}) that by choosing $K\gg R\gg K_0\gg1$ (where $K_0$ is from (\ref{eq: cH_K_0})), we have $L_{1-t}\subset \fF_0\overset{\triangle}{\times} \cP$.

The Lagrangian $L_0$, i.e. for $t=1$, gives a cylindrical modification of $\Lambda_R$ that is used in  Proposition \ref{prop: L_0, part 2} and \ref{prop: L_0, W}. Note that  
\begin{align*}
L_0\cap (\cH_{[0,K]}\times \Lambda_{T_{\cpt, 1}}^0\times \bC_{\Re z<0})= (F_{\bR, c_\emptyset}\cap \cH_{[0,K]})\times \Lambda_{T_{\cpt, 1}}^0\times C_1.
\end{align*}
For any $\zeta\in \ft_c$, let $L_0^\zeta=\zeta+L_0$. By choosing $K\gg K_0\gg R\gg |\zeta|$, we have $L_0^\zeta\subset \fF_0\overset{\triangle}{\times} \cP$. 
Let
\begin{align*}
L_0^{\zeta; 1}=(L_0^\zeta\cap (\cH_{[0,1]}\times T^*T_{\cpt, 1}\times \{\Re z\geq -\frac{3}{R}\})). 
\end{align*}
Fix sufficiently small $0<\delta<\delta'$. For $R\gg 1$, we can choose a constant $C_0\geq 1$ so that 
\begin{align}\label{eq: K_zeta, delta}
\proj_\ft(L_0^{\zeta;1})\subset \cK^{\delta,C_0}_\zeta:=\{\|\proj_{\ft_c} t-\zeta\|\leq \delta\}\cap \{\sum\limits_{\beta\in \Pi}(\Re p_{\beta^\vee})^2\leq C_0\}\subset \ft^{\reg}.
\end{align} 
In this case, we have 
\begin{itemize}
\item[(1)] the composition
\begin{align*}
L_0^{\zeta; 1}\hookrightarrow J_G\overset{\chi}{\longrightarrow}\fc
\end{align*}
is $C^1$-close to the composition
\begin{align*}
L_0^{\zeta; 1}\hookrightarrow \cB_{w_0}\cong T^*T\rightarrow \ft \overset{\chi_\ft}{\longrightarrow}\fc. 
\end{align*}
In particular, the images of both maps lie in a compact region in $\fc^\reg$. 

\item[(2)] There is a canonical $W$-equivariant identification (with respect to the standard Borel $B$ determined by the principal $\fsl_2$-triple $(e, f, \sfh_0)$)
\begin{align}\label{eq: trivial W zeta}
L_0^{\zeta; 1}\underset{\fc}{\times}\ft\overset{\sim}{\longrightarrow} L_0^{\zeta;1}\times W
\end{align}
given by the trivialization of the left-hand-side principal $W$-bundle 
\begin{align}\label{eq: trivial W, section}
\{(g,\xi,B_1): \overline{\xi}\in \fb_1/[\fb_1, \fb_1]\overset{\text{canonical}}{\cong}\fb/[\fb,\fb]\cong \ft\text{ satisfies }\overline{\xi}\in \cK^{\delta', C_0'}_\zeta\}\subset L_0^{\zeta; 1}\underset{\fc}{\times}\ft.
\end{align}
for some slightly larger $\delta'>\delta$ and $C_0'>C_0$. 
\end{itemize}

For any $\xi\in \ft$, let $\xi_\bR+\xi_c$ be the decomposition of $\xi$ with respect to $\ft\cong \ft_\bR\oplus \ft_c$. 

\begin{lemma}\label{lemma: Omega}
For $K\gg R\gg 1$, consider the projection to the second factor in the fiber product 
\begin{align*}
\varpi_\zeta: L_0^{\zeta}\underset{\fc}{\times}\ft\longrightarrow \ft.
\end{align*}
\item[(a)] If $\zeta\in \ft_c^{\reg}$, then the map $\varpi_\zeta|_{L_0^{\zeta;1}\underset{\fc}{\times}\ft}$ is arbitrarily $C^1$-close to the composition
\begin{align*}
L_0^{\zeta;1}\times W\hookrightarrow \cB_{w_0}\times W\cong T^*T\times W&\longrightarrow \ft\\
(h,t;w)&\mapsto w(t),
\end{align*}
under the canonical identification (\ref{eq: trivial W zeta}), as $R\rightarrow\infty$. 

\item[(b)] For general $\zeta\in \ft_c$, there exist $\delta_0, \widetilde{\delta}_0>0$, which are independent of (sufficiently large) $R$ and $K$, such that for $x\in (L_0^\zeta\backslash L_0^{\zeta;1})\underset{\fc}{\times}\ft$, 
\begin{align}
\label{eq: lemma: Omega (b)}&\|(\varpi_\zeta(x))_\bR\|^2\geq  \delta_0^2 \max\{1, \sum\limits_{\beta\in \Pi} (\Re p_{\beta^\vee}(x))^2\},\\
\label{eq: lemma: Omega (b),1}&\|(\varpi_\zeta(x))_\bR\|^2\geq  \widetilde{\delta}_0^2 \max\{1, \|(\varpi_\zeta(x))\|^2\}. 
\end{align}

\item[(c)] For $\zeta=0$, fix any standard ball $\bD_\bR$ centered at $0$ in $\ft_\bR$ of radius $r_0>0$. Given any $\delta>0$, for all $K\gg R\gg r_0$, we have 
\begin{align}\label{eq: lemma: Omega (c)}
\varpi_0(L_0\underset{\fc}{\times}\ft)\subset (\bD_\bR\times D_{c,\delta})\cup \bR_{\geq 1}\cdot (\partial\bD_\bR\times D_{c,\delta}),
\end{align}
where $D_{c,\delta}$ is the standard ball in $\ft_c$ centered at $0$ of radius $\delta$. 
\end{lemma}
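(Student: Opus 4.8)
\textbf{Proof plan for Lemma \ref{lemma: Omega}.}

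The plan is to analyze the fiber product $L_0^{\zeta}\underset{\fc}{\times}\ft$ by comparing the map $\chi\colon J_G\to\fc$ with its model $\chi_\ft\circ\proj_\ft$ on $\cB_{w_0}\cong T^*T$, using the fact established in Subsection \ref{subsec: construct L_zeta} that $L_0^\zeta$ was built by cylindrical modification from the shifted conormal bundle $\zeta+\Lambda_R$ of the compact torus $T_{\cpt,R}$. The central input is that $T_{\cpt,R}$ has all its defining coordinates $|b_{\lambda_{\beta^\vee}}|^{1/\lambda_{\beta^\vee}(\sfh_0)}$ equal to $R^2/n$, which is large, so along $L_0^\zeta$ the group element $h\in T$ is ``far from the walls'' in the sense of Subsection \ref{subsubsec: B_w_0}; this is precisely the regime in which Lemma \ref{lemma: hat chi_epsilon, gamma} and its refinements apply, giving that $\chi^{-1}([\xi])$ splits into $|W|$ near-constant sections over the relevant region. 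First I would treat part (a): on the compact piece $L_0^{\zeta;1}$ the coordinate $h$ ranges over $T_{\cpt,R}$ times a bounded perturbation, and by Lemma \ref{lemma: hat chi_epsilon, gamma}(ii) (with $r\to 0$ as $R\to\infty$, using the homogeneity relation $\mu_\rho(h,t)=\epsilon^{-2}\mu_{\rho\epsilon^{-\sfh_0}}(h,\epsilon^2 t)$ from the proof there) the fiber product over the compact base $\cK_\zeta^{\delta,C_0}$ trivializes $W$-equivariantly with $\varpi_\zeta$ becoming $C^1$-close to $(h,t;w)\mapsto w(t)$; here I would invoke the canonical trivialization (\ref{eq: trivial W zeta}) of the $W$-bundle, which is exactly what makes the statement $W$-equivariant, and the regularity of $\chi$ near $\fc^\reg$ from point (1) after (\ref{eq: K_zeta, delta}).

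For part (b) the point is to control the non-compact ``cylindrical'' part $L_0^\zeta\setminus L_0^{\zeta;1}$, where by the explicit construction (\ref{eq: def L_1-t}) and the description of the cylindrical end (\ref{eq: L_1-t, cyl}) of $\Gamma_{R,K}$, the real cotangent coordinates $\Re p_{\beta^\vee}$ grow and the point is pushed out along the Liouville flow. I would use the homogeneity of the construction: outside $L_0^{\zeta;1}$ one is essentially on the cone over $(F_{\bR,c_\emptyset}\cap \cH_K)\times C_1$, on which the ratio of $\|(\varpi_\zeta(x))_\bR\|^2$ to $\sum_\beta(\Re p_{\beta^\vee})^2$ and to $\|\varpi_\zeta(x)\|^2$ is bounded below by a constant determined only by the fixed profile $\varphi$ of Assumption \ref{assumption: f_C1} and the fixed hypersurface $\fF_0$ — these data do not scale with $R$ or $K$. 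The mechanism is that the ``real direction'' of the cotangent fiber of $T$ is forced to be comparable to the full cotangent fiber once one leaves the region $\{|\Im z|\le R\}\cap\{\|(\Re p_{\beta^\vee})\|\le C_0\}$, because of the inequality $\varphi(p)-p\varphi'(p)\ge -\tfrac1R$ and the star-shapedness of $\cH_K$; translating this through $\chi^{-1}([\xi])\mapsto \ft$ (again using that $\chi$ matches $\chi_\ft\circ\proj_\ft$ up to a bounded perturbation in this regime, Lemma \ref{lemma: hat chi_epsilon, gamma}(i)) yields (\ref{eq: lemma: Omega (b)}) and then (\ref{eq: lemma: Omega (b),1}) follows since $\|\varpi_\zeta(x)\|^2\le \|(\varpi_\zeta(x))_\bR\|^2+\|(\varpi_\zeta(x))_c\|^2$ and $\|(\varpi_\zeta(x))_c\|$ is controlled (it is close to $\zeta$ shifted by the bounded $c$-part, hence bounded by a multiple of $\|(\varpi_\zeta(x))_\bR\|$ on the cylindrical end).

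For part (c), with $\zeta=0$ I would apply Lemma \ref{lemma: hat chi_epsilon, gamma}(i) with $\fK=\{[0]\}$, $\cV$ a neighborhood of the relevant portion of $T$, and the prescribed $\delta$: this gives that $\chi^{-1}([\xi])\cap(\frj_\rho(\cV)\times\ft)$ lies in $\frj_\rho(\cV)\times\bigcup_{\chi_\ft(\xi')=[\xi]}\{|t-\xi'|<\delta\}$ once $|\gamma_{-\Pi}(\rho)|$ is small, which holds on all of $T_{\cpt,R}$ once $R\gg r_0$; combined with Lemma \ref{lemma: Omega}(b) for $\zeta=0$ to handle the cylindrical end (forcing the real part into the cone over $\partial\bD_\bR$) this yields the containment (\ref{eq: lemma: Omega (c)}) into $(\bD_\bR\times D_{c,\delta})\cup\bR_{\ge1}\cdot(\partial\bD_\bR\times D_{c,\delta})$. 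The main obstacle I anticipate is part (b): making the lower bounds genuinely \emph{uniform} in $R$ and $K$ requires carefully separating the scales — the compact core scales like $R$ and the Weinstein-handle data (the hypersurface $H^{sm}$, the functions $f_S$, the primitive $f_{C_1}$) are fixed — and then tracking how the nonlinear map $\chi$, rather than its linear model $\chi_\ft\circ\proj_\ft$, distorts the cone; one has to check that the error terms in Lemma \ref{lemma: hat chi_epsilon, gamma} and in the $C^1$-closeness statement degrade only the constants $\delta_0,\widetilde\delta_0$ by a controlled amount and never to zero, which is where the bulk of the (routine but delicate) estimation lies.
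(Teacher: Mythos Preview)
Your outline for part (a) matches the paper, and your overall strategy for (b) and (c) --- exploit homogeneity to reduce to a bound on a fixed compact model --- is the right one. However, your execution of (b) misidentifies the mechanism for uniformity and contains an error. You write that the constants are ``determined only by the fixed profile $\varphi$ of Assumption \ref{assumption: f_C1} and the fixed hypersurface $\fF_0$ --- these data do not scale with $R$ or $K$''; but $\varphi$ is explicitly built from $R$ (it equals $-1/R$ on $[-R,R]$ and has slope $-1/R^2$ on the ends), so this cannot be the source of uniformity. The inequality $\varphi(p)-p\varphi'(p)\ge -1/R$ you cite governs the compact support of the primitive $f_{C_1}$, not the lower bound you need.

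The paper's actual mechanism is cleaner and does not go through $\varphi$ or Lemma \ref{lemma: hat chi_epsilon, gamma}. It decomposes the tail $L_0^\zeta\setminus L_0^{\zeta;1}$ into three pieces and observes that every point $x$ in any piece can be flowed by the inverse Liouville flow $\varphi_{-Z}^{t_x}$ into a fixed product region whose $\ft_\bR$-projection is the set $\cT_\bR$ in (\ref{eq: cT_bR}); this set depends only on $\Omega_\emptyset\subset\fC^{n-1}$, which is genuinely independent of $R$ and $K$, and since each barycentric weight $\nu_\beta$ on $\Omega_\emptyset$ is bounded away from zero one gets $\cT_\bR\subset\{\sum_\beta (p_{\beta^\vee})^2\ge 2\delta_1^2\}$ for a fixed $\delta_1$. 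Because both $\varpi_\zeta$ and $\Re p_{\beta^\vee}$ scale with weight $-1$ under $\varphi_{-Z}$, the ratio is preserved and the inequality propagates to all of the tail. For (c) the paper reuses this same flow-back argument (the compact region specializes to $\cT_\bR$ alone when $\zeta=0$) rather than invoking Lemma \ref{lemma: hat chi_epsilon, gamma} directly. Your anticipated obstacle --- controlling the nonlinear distortion of $\chi$ on the tail --- does not actually arise, since the homogeneity of $\varpi_\zeta$ under the Liouville flow is exact.
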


\begin{proof}
(a) is straightforward from the above comments. 

(b) We write (the closure of) the complement of $L_0^{\zeta;1}$ in $L_0^\zeta$ as the union of three parts:
\begin{align}
\label{eq: proof L part 1}&L_0^{\zeta; [1, K+2]; 1}:= L_0^\zeta\cap (\cH_{[1,K+2]}\times T^*T_{\cpt, 1}\times \{\Re z\geq -\frac{3}{R}\})\\
\label{eq: proof L part 2}&L_0^{\zeta;[0, K+2]; \text{cone}}:= L_0^\zeta\cap (\cH_{[0,K+2]}\times T^*T_{\cpt, 1}\times \{\Re z\leq -\frac{3}{R}\})\\
\label{eq: proof L part 3}&L_0^{\zeta; [K+2,\infty)}:=L_0^\zeta\cap (\cH_{[K+2,\infty)}\times T^*T_{\cpt, 1}\times \bC_{\Re z<0}\}). 
\end{align}
Let 
\begin{align}\label{eq: cT_bR}
\cT_\bR:=\bigcup\limits_{(\nu_\beta)_\beta\in \Omega_\emptyset}\partial \{t\in \ft_\bR: \sum\limits_{\beta\in \Pi}(p_{\beta^\vee}-\sum\limits_{\beta\in \Pi}\nu_\beta\cdot p_{\beta^\vee})^2\leq 1, |\sum\limits_{\beta\in \Pi}p_{\beta^\vee}|\leq 9\},
\end{align}
where $\Omega_\emptyset\subset \{\sum\limits_{\beta\in \Pi}r_\beta=1\}\subset \bR^\Pi_{\geq 0}$, and $(\nu_\beta)_\beta$ are viewed as weights contained in $\Omega_\emptyset$. Since each $\nu_\beta$ has a strictly positive lower bound, there exists $\delta_1>0$, such that $\cT_\bR\subset \{\sum\limits_{\beta\in \Pi}(p_{\beta^\vee})^2\geq 2\delta_1^2\}$.  

Let $K\gg R$. For any point $x$ in any of the three parts (\ref{eq: proof L part 1})-(\ref{eq: proof L part 3}), there exists $t_x\geq 0$ such that $\varphi_{-Z}^{t_x}(x)$ is contained in the product 
\begin{align}\label{eq: L_1-t, 1, K+2, 1}
(\Omega_\emptyset\times T_{\cpt, 1}\times \bR_{q\in [-\frac{3}{R}, -\epsilon]}) \times\{t\in \ft: \proj_{\ft_\bR} t\in \cT_\bR, |\proj_{\ft_c} t|\leq |\zeta|\},
\end{align}
for some uniform $0<\epsilon<\frac{3}{R}$ independent of $x$, where $q=\Re z=-1/\widetilde{\norm}$ as in Subsection \ref{subsec: tilde N, I}. 
Since $\varphi_{-Z}^{t}, t\geq 0$ scales $p_{\beta^\vee}$ with weight $-1$, we have $t_x\geq  \log (\sum\limits_{\beta\in \Pi}(\Re p_{\beta^\vee}(x))^2)^{\frac{1}{2}}-C$ for some uniform constant $C\geq 0$.

It is clear that for $x$ in (\ref{eq: L_1-t, 1, K+2, 1}), $\|(\varpi_\zeta(x))_\bR\|^2\geq 1.5\delta_1^2$. Since $\varphi_{-Z}^{t}, t\geq 0$ scales $\varpi_\zeta$ with weight $-1$, we get (\ref{eq: lemma: Omega (b)}) with $\delta_0=\delta_1e^{-C}$as desired. (\ref{eq: lemma: Omega (b),1}) can be obtained similarly.  

(c) We follow essentially the same argument as for (b). In the current case, (\ref{eq: L_1-t, 1, K+2, 1}) is the same as  
\begin{align}\label{eq: L_0, 1, K+2, 1}
(\Omega_\emptyset\times T_{\cpt, 1}\times \bR_{q\in [-\frac{3}{R}, 0)}) \times \cT_\bR.
\end{align}
Then for any $x$ in (\ref{eq: L_0, 1, K+2, 1}), we have $\varpi_0(x)$ contained in the right-hand-side of (\ref{eq: lemma: Omega (c)}), then so is  $\varpi_0((L_0\backslash L_{0}^{0;1})\underset{\fc}{\times}\ft)$. 
Clearly $\varpi_0(L_{0}^{0;1}\underset{\fc}{\times}\ft)$ is contained in there too. So the proof is complete. 
\end{proof}

Let $\cJ$ be any (regular) compatible cylindrical almost complex structure on $J_G$.

\begin{prop}\label{prop: L_t, zeta, no disc}
Let $\zeta\in \ft_c^{\reg}$. For any homology class $\ell\in H_1(T,\bZ)$ with $\lng -i\zeta, \ell\rng>0$, the moduli space of $\cJ$-holomorphic discs $f: (\cD, \partial \cD)\rightarrow (J_G, L_0^\zeta)$ satifying $[f(\partial \cD)]=\ell$ is compact, and it is cobordant to $\emptyset$.
\end{prop}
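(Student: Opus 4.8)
The plan is to establish both assertions (compactness of the moduli space and the vanishing of its oriented cobordism class) by a single mechanism: showing that the nonexact Lagrangian $L_0^\zeta = \zeta + L_0$ admits no $\cJ$-holomorphic discs with the prescribed boundary class for energy reasons, using the interpolation constructed in Subsection~\ref{subsubsec: conic} between $L_0^\zeta$ (which is $L_{1-t}$ at $t=1$) and the product $F_{\bR, c_\emptyset}\times C_1\times \Lambda^0_{T_{\cpt,1}}$ (which is $L_{1-t}$ at $t=0$). First I would fix a primitive for the Liouville form restricted to $L_0^\zeta$ on the portion where one has control, noting that away from the compactly supported correction the Lagrangian is genuinely of the form ``shifted conormal plus cylindrical end,'' and the failure of exactness is governed precisely by the class $\zeta\in H^1(T,\bC)$; this is where the hypothesis $\langle -i\zeta,\ell\rangle > 0$ enters as a positivity-of-symplectic-area statement for the boundary class $\ell$.

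The key steps, in order, would be: (1) reduce to showing that any such disc $f$ must have image contained in a fixed compact region of $J_G$ — here I would invoke the structural results of Subsection~\ref{subsubsec: conic}, in particular the description of $L_0^\zeta$ inside the Liouville subsector $\fF_0\overset{\triangle}{\times}\cP\subset J_G$, together with the maximum-principle/convexity of $\cJ$ near the cylindrical ends of both $J_G$ and the Lagrangian, to rule out escape to infinity; Lemma~\ref{lemma: Omega}(b) gives the needed quantitative control on the cylindrical and conical parts $(L_0^\zeta\backslash L_0^{\zeta;1})$, forcing discs with boundary near infinity to have large area in directions incompatible with a fixed energy bound. (2) On the compact part, use the symplectic-action identity: for a $\cJ$-holomorphic disc the energy equals $\int_{\partial\cD} f^*\alpha_{J_G}$ plus a topological term, and the $\alpha_{J_G}$-period over $\ell$ is determined, up to the compactly supported exact correction $df_{C_1}$ and the cylindrical modification (which contribute nothing to periods over cycles in $T$), by $\langle\zeta,\ell\rangle$; thus the energy is $\mathrm{Re}$ of something proportional to $\langle -i\zeta,\ell\rangle$ up to a bounded error, so choosing $R$ large enough (as permitted throughout Subsection~\ref{subsubsec: conic}) makes the energy of any such disc \emph{negative or bounded away from $0$ in a way contradicting the class}, hence the moduli space is empty for the relevant $\ell$, or at worst compact. (3) For the cobordism statement, once compactness is in hand I would either observe the moduli space is already empty (when the energy estimate is strict), or deform $\cJ$ and the shift parameter $\zeta$ along the family $L_{1-t}$ to a configuration where the count manifestly vanishes, using that the family $L_{1-t}$ is a compactly supported Hamiltonian isotopy (Subsection~\ref{subsubsec: conic}) so that the moduli spaces fit into an oriented cobordism; the endpoint $t=0$ is a product Lagrangian with a conormal factor $\Lambda^0_{T_{\cpt,1}}$, and a standard splitting/degeneration argument shows no discs in the product class.

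The main obstacle I anticipate is Step~(1): controlling holomorphic discs near the ``finite'' boundary $\partial(\fF_0\overset{\triangle}{\times}\cP)$ and near the singular strata of the corners introduced in the smoothing (cf. the cautionary remarks in Subsection~\ref{subsubsec: H, sm}), since the usual maximum principle for wrapped Fukaya categories is tailored to the contact-type boundary $\partial^\infty$, not to the finite boundary of a Liouville sector. The resolution should come from the explicit product structure $\fF_0\cong T^*\Omega_\emptyset\times T^*T_{\cpt,1}$ and the fact that $L_0^\zeta$ lies over a fixed compact region of $\fc^{\mathrm{reg}}$ on its ``core'' part $L_0^{\zeta;1}$ (comment~(1) after~(\ref{eq: trivial W, section})), combined with the integrable-system geometry near the relevant torus fibers analyzed in Subsection~\ref{subsubsec: B_w_0}; together these confine discs of bounded energy to a region where $\cJ$ is tame and the classical convexity estimates apply. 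A secondary technical point is to make the energy bookkeeping genuinely rigorous across the compactly supported correction $f_{C_1}$ and the cylindricalization — but since $f_{C_1}$ has compact support and the cylindrical modifications only alter the Lagrangian near infinity in directions transverse to $H_1(T,\bZ)$, the period $\langle\alpha_{J_G}|_{L_0^\zeta},\ell\rangle$ is unchanged and the estimate is clean.
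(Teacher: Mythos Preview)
Your proposal is substantially more involved than necessary, and Step~(2) as written does not work. The hypothesis $\langle -i\zeta,\ell\rangle>0$ is precisely the condition that the symplectic area of a disc in class $\ell$ is \emph{positive}, so you cannot hope to make the energy negative by taking $R$ large; the energy is fixed by the class and equals this positive number. The vague alternative ``bounded away from $0$ in a way contradicting the class'' has no content: a positive lower bound on energy gives compactness, not emptiness.

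The paper's proof avoids energy estimates entirely and is essentially two lines. First, compactness follows from the bounded geometry of $(\cB_{w_0}^\dagg,\omega,\cJ,L_0^\zeta)$, citing \cite[Lemma~2.42]{GPS1}. Second, and this is the point you are missing: the obstacle you flag in your last paragraph---the maximum principle at the \emph{finite} boundary of a Liouville sector---is exactly what \cite[Lemma~2.41]{GPS1} provides. Choosing $\cJ$ compatibly with the sector structure forces $f(\cD)\subset\cB_{w_0}^\dagg$. But $\cB_{w_0}^\dagg\simeq T^*\overline{T}$ has $H_1(\cB_{w_0}^\dagg,\bZ)\cong H_1(T,\bZ)$, and the boundary of any disc is nullhomologous in the ambient space; since $\ell\neq 0$, the moduli space is already empty. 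No cobordism argument or deformation along $L_{1-t}$ is needed. Your Step~(3), if carried out, would in the end reduce to this same topological observation at the product endpoint, but the detour through the isotopy is unnecessary once you know the discs are confined to the subsector.
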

\begin{proof}
First, using the same argument as in \cite[Lemma 2.42]{GPS1}, $(\cB_{w_0}^\dagg, \omega, \cJ, L_0^\zeta)$ has bounded geometry, which yields the compactness of moduli space. 
Second, by choosing $\cJ$ so that the assumption in Lemma 2.41 \emph{loc. cit.} holds for $X=\cB_{w_0}^\dagg$, we have $f(\cD)\subset \cB_{w_0}^\dagg$. Since $H_1(\cB_{w_0}^\dagg,\bZ)\cong H_1(T,\bZ)$, and $[f(\partial \cD)]=\ell\neq 0$, we conclude that the space of such $\cJ$-holomorphic discs is empty. 
\end{proof}

\subsubsection{Hamiltonian deformation of $L_0^\zeta, \zeta\in \ft_c^\reg$}\label{subsubsec: def Lambda_R}

Applying Proposition \ref{lemma: empty, Ham isotopy} for $\cK=\cK_{\zeta,C_0}^\delta$ from (\ref{eq: K_zeta, delta}), $T_{\cpt, 1}\subset \cV^\dagg$, $(c_\beta(s))_\beta=\gamma_{-\Pi}(s^{-\sfh_0})$, we get a compactly supported Hamiltonian isotopy $\varphi_s$ on $\cW_{\cV, \cK}$. 
For $L_0^\zeta=\zeta+L_0$ with $K\gg R\gg 1$ in the definition, let 
\begin{align}\label{eq: cL_zeta, def}
\cL_\zeta:=\frj_{R^{\sfh_0}}\circ\varphi_{\frac{1}{R}}\circ\frj_{R^{-\sfh_0}}(L_0^\zeta). 
\end{align}
It follows from Proposition \ref{prop: L_t, zeta, no disc} that $\cL_\zeta$ is tautologically unobstructed. 

Recall that for any Lagrangian $L\subset J_G$, we use $\widehat{L}\subset T^*T$ to denote for its transformation under the canonical Lagrangian correspondence (\ref{eq: Lag corresp}). 
Since by construction, $\varphi_s$ is the restriction of a $T$-equivariant symplectomorphism (\ref{eq: tilde varphi_s}), combining with Lemma \ref{lemma: Omega} (a), (b), we directly get the following:
\begin{lemma}\label{lemma: L_xi, R, clean}
For any $\zeta\in \ft_c^\reg$, there exists $\delta_0>0$, such that the transformed Lagrangian $\widehat{\cL}_{\zeta}\subset T^*T$ satisfies that 
\begin{align*}
\widehat{\cL}_\zeta\cap (T\times \{\|\xi_\bR\|\leq \delta_0\})=\coprod\limits_{w\in W}(T_{\cpt}\times\{w(\zeta)\})\times w(\Gamma)\subset T^*T_{\cpt}\times T^*\bR_{>0}^n 
\end{align*}
where $\Gamma\subset T^*\bR_{>0}^n$ is a Lagrangian graph over $\{\|\xi_\bR\|\leq \delta_0\}\subset \ft^*_\bR\cong \ft_\bR$.
\end{lemma}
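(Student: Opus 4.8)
The statement to be proved identifies the ``clean intersection'' picture of the Hamiltonian-deformed shifted conormal $\widehat{\cL}_\zeta$ near the region $\{\|\xi_\bR\|\leq\delta_0\}$. The plan is to combine three ingredients that are already in place: (1) the explicit description of $L_0^\zeta$ near the slice $L_0^{\zeta;1}$ together with the canonical $W$-equivariant splitting $L_0^{\zeta;1}\underset{\fc}{\times}\ft\xrightarrow{\sim}L_0^{\zeta;1}\times W$ from (\ref{eq: trivial W zeta}); (2) the fact that $\varphi_s$ (hence the conjugated map $\frj_{R^{\sfh_0}}\circ\varphi_{1/R}\circ\frj_{R^{-\sfh_0}}$ used to define $\cL_\zeta$ in (\ref{eq: cL_zeta, def})) is the restriction of a $T$-equivariant symplectomorphism of the form $\widetilde\varphi_{s,\rho_0}$ built in the proof of Proposition \ref{lemma: empty, Ham isotopy}, so it interacts well with the Lagrangian correspondence; (3) Lemma \ref{lemma: Omega}(a),(b), which controls where $L_0^\zeta\underset{\fc}{\times}\ft$ sits in $\ft$ under the projection $\varpi_\zeta$.

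First I would fix $\delta_0>0$ from Lemma \ref{lemma: Omega}(b): this is the threshold below which only the ``central slice'' part $L_0^{\zeta;1}$ of $L_0^\zeta$ can contribute to $\widehat{\cL}_\zeta\cap(T\times\{\|\xi_\bR\|\leq\delta_0\})$, because every point of $(L_0^\zeta\setminus L_0^{\zeta;1})\underset{\fc}{\times}\ft$ has $\|(\varpi_\zeta)_\bR\|^2$ bounded below. One must check that this persists after applying the compactly supported Hamiltonian isotopy $\varphi_{1/R}$ and the conjugation by $\frj_{R^{\pm\sfh_0}}$; since these are supported in $\cW_{\cV,\cK_{\zeta,C_0}^\delta}$ and $\cK_{\zeta,C_0}^\delta\subset\ft^{\reg}$ is pre-compact, shrinking $\delta_0$ slightly (uniformly in large $R$) keeps the conclusion. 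Then on the remaining piece, I would transform $L_0^{\zeta;1}$ through the Lagrangian correspondence (\ref{eq: Lag corresp}): by construction $L_0^{\zeta;1}\subset\cB_{w_0}\cong T^*T$ already, and the correspondence restricted over the compact region of $\fc^{\reg}$ containing $\chi(L_0^{\zeta;1})$ is the $|W|$-fold covering that on $\cB_{w_0}$ is the identity onto each of the $|W|$ branches (features (1)--(4) of the correspondence in Subsection \ref{subsec: def of J_G}). Using the trivialization (\ref{eq: trivial W zeta})--(\ref{eq: trivial W, section}) and the $W$-equivariance of $\pi_\chi$, the transformed locus is exactly $\coprod_{w\in W} w(L_0^{\zeta;1}\text{-piece})$ sitting inside $T^*T_{\cpt}\times T^*\bR_{>0}^n$, and the shift by $\zeta$ becomes the shift by $w(\zeta)$ on the $w$-th branch.

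Next I would identify the $w$-th branch explicitly. By the construction of $L_0$ in Subsection \ref{subsubsec: conic}, $L_0\cap(\cH_{[0,1]}\times T^*T_{\cpt,1}\times\{\Re z\geq -3/R\})$ is a product $(F_{\bR,c_\emptyset}\cap\cH_{[0,1]})\times\Lambda^0_{T_{\cpt,1}}\times C_1$; under the correspondence the $T^*\Omega_\emptyset$-factor and the $\bC_{\Re z}$-factor assemble (after the $C^1$-small comparison of Lemma \ref{lemma: Omega}(a), valid for $K\gg R\gg1$) into a Lagrangian graph $\Gamma\subset T^*\bR_{>0}^n$ over $\{\|\xi_\bR\|\leq\delta_0\}$, while the $\Lambda^0_{T_{\cpt,1}}$-factor together with the $\zeta$-shift becomes $T_{\cpt}\times\{w(\zeta)\}$. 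Applying $\varphi_{1/R}$ (the restriction of a $T$-equivariant symplectomorphism over the base) preserves this product shape: it acts trivially on the $T_{\cpt}$-fibers it fibers over and only reparametrizes the $\Gamma$-factor, so the clean-intersection form is unchanged, possibly with a new graph $\Gamma$. Finally one records that the branches for distinct $w$ are disjoint inside $\{\|\xi_\bR\|\leq\delta_0\}$ because the $w(\zeta)$, $w\in W$, are distinct (as $\zeta\in\ft_c^{\reg}$) and $\delta_0$ can be taken smaller than half the minimal gap, which is also used to localize the Hamiltonian support.

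The main obstacle I anticipate is bookkeeping the compatibility of the two ``coordinate systems'': the product splitting coming from the Weinstein/subsector picture ($T^*\Omega_\emptyset\times T^*T_{\cpt,1}\times\bC_{\Re z}$, used to build $L_0$) versus the splitting $T^*T_{\cpt}\times T^*\bR_{>0}^n$ natural on the correspondence side, and verifying that the $T$-equivariant isotopy $\varphi_{1/R}$ really does restrict to the clean neighborhood $\{\|\xi_\bR\|\leq\delta_0\}$ without dragging in any of the ``cone'' or ``cylindrical'' parts (\ref{eq: proof L part 2})--(\ref{eq: proof L part 3}) of $L_0^\zeta$ — this is exactly where the uniform lower bounds $\delta_0,\widetilde\delta_0$ of Lemma \ref{lemma: Omega}(b), independent of large $R,K$, do the work, and one must be careful that the conjugation by $\frj_{R^{\pm\sfh_0}}$ (which is not compactly supported) does not spoil this, using instead that $\varphi_{1/R}$ itself is compactly supported in a fixed pre-compact region. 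None of this requires new Floer input; it is a matter of threading the already-established geometric estimates through the definition (\ref{eq: cL_zeta, def}).
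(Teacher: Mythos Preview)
Your proposal is correct and takes essentially the same approach as the paper: the paper's proof is a single sentence invoking exactly the two ingredients you isolate --- that $\varphi_s$ is the restriction of a $T$-equivariant symplectomorphism (\ref{eq: tilde varphi_s}), and Lemma \ref{lemma: Omega}(a),(b) --- and your write-up simply unpacks what those two facts deliver. Your care about the compactly supported nature of $\varphi_{1/R}$ versus the conjugation by $\frj_{R^{\pm\sfh_0}}$, and about separating the contribution of $L_0^{\zeta;1}$ from the cylindrical parts via the threshold $\delta_0$, is exactly the content behind the paper's phrase ``we directly get.''
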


\section{Proof of the main propositions}\label{subsec: proof prop}

Let $L_0$ and $\cL_\zeta,\zeta\in \ft_c^\reg$ be the Lagrangians defined in Section \ref{subsec: construct L_zeta}. We fix the grading on $L_0$ and $\cL_{\zeta}$ induced from the constant grading $\frac{1}{2}\dim_\bC T=\frac{1}{2}n$ on $\Lambda_{R}$ (with respect to the Sasaki almost complex structure and the canonical trivialization of $\kappa^{\otimes 2}$ as in \cite{NaZa}). Fix the trivial Pin-structure on the base $T$, i.e. the one induced from an open embedding $T\hookrightarrow \bC^n$. Then using the homotopy equivalence $L_0\rightarrow T$ (resp. $\cL_\zeta\rightarrow T$), the projection to the base of $T^*T\cong \cB_{w_0}$, $\cL_{\zeta}$ is equipped with the trivial relative Pin-structure.

\subsection{Proof of Proposition \ref{prop: L_xi, S_e, part 1}}\label{subsec: proof non-exact}

Proposition \ref{prop: L_xi, S_e, part 1} (i) is well known and (ii) is a direct consequence of the following lemma. 

\begin{lemma}\label{lemma: step 1,2,3}
By appropriate cofinal sequence of positive (resp. negative) wrappings of $\Sigma_I\rightarrow \Sigma_I^{+, (j)}$ (resp. $\Sigma_I\rightarrow \Sigma_I^{-, (j)}$), we can make $\cL_\zeta$ intersect $\Sigma_I^{+,(j)}$ (resp. $\Sigma_I^{-,(j)}$) transversely at exactly one point for all $j\gg 1$ with grading $0$ (resp. grading $n$).
\end{lemma}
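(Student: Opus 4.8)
\textbf{Proof strategy for Lemma \ref{lemma: step 1,2,3}.}

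The plan is to reduce the Floer computation between $\cL_\zeta$ and (wrappings of) $\Sigma_I$ to an explicit count in $T^*T$ via the canonical Lagrangian correspondence (\ref{eq: Lag corresp}), using that $\widehat{\Sigma}_I = T^*_{\{I\}}T$ is a cotangent fiber and that, by Lemma \ref{lemma: L_xi, R, clean}, the transformed Lagrangian $\widehat{\cL}_\zeta$ agrees near the zero-section $\ft_\bR^*$-direction with the disjoint union $\coprod_{w\in W}(T_\cpt\times\{w(\zeta)\})\times w(\Gamma)$. First I would choose the cofinal sequences $\Sigma_I^{+,(j)}$ and $\Sigma_I^{-,(j)}$ to be $\varphi^1_{H_{R_j}}(\Sigma_I)$ and $\varphi^{-1}_{H_{R_j}}(\Sigma_I)$ for $R_j\to\infty$, exactly as in Section \ref{sec: wrapping Ham}; by the equivariance (\ref{eq: equivariant flow H_R}) these transform under the correspondence to $\varphi^{\pm1}_{\widetilde H_{R_j}}(T^*_{\{I\}}T)$. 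The key point is that the wrapping Hamiltonians $\widetilde H_R$ are built (via $y_R\circ\sqrt{\widetilde F}$) from a $W$-invariant radial-type function on $\ft$, so the positively wrapped cotangent fiber $\varphi^1_{\widetilde H_R}(T^*_{\{I\}}T)$, intersected with a neighborhood of $T_\cpt\times\{\text{generic }\xi\}$, becomes (for $R$ large) a union of graphs concentrated near the integral points of $X_*(T)$ translated appropriately, and near each $w(\zeta)$ it meets $T_\cpt\times\{w(\zeta)\}\times w(\Gamma)$ in a single transverse point.

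The core of the argument then breaks into three steps. \emph{Step 1:} show that all intersection points of $\varphi^{\pm1}_{\widetilde H_{R_j}}(\widehat\Sigma_I)$ with $\widehat{\cL}_\zeta$ lie in the region $T\times\{\|\xi_\bR\|\le \delta_0\}$ covered by Lemma \ref{lemma: L_xi, R, clean}; this uses the estimate (\ref{eq: lemma: Omega (b)})--(\ref{eq: lemma: Omega (b),1}) from Lemma \ref{lemma: Omega}, which forces the non-cylindrical-looking part of $L_0^\zeta$ to have large real part, together with the fact that the wrapped fiber stays in a controlled compact region over $\fc^\reg$ when composed with $\chi$ (item (1) after (\ref{eq: trivial W zeta})), so no extra intersections can escape to infinity or to the other Bruhat strata. \emph{Step 2:} in the good region, the correspondence identifies the picture with $\coprod_{w\in W}$ of the standard $T^*T$ model, and in each sheet the count of $\varphi^1_{\widetilde H_R}(T^*_{\{I\}}T)\cap \big((T_\cpt\times\{w(\zeta)\})\times w(\Gamma)\big)$ is a single transverse point because $w(\Gamma)$ is a graph over a ball and the wrapped cotangent fiber restricted to that ball is, for $R$ large, also a graph meeting it once — this is the same elementary computation underlying Proposition \ref{prop: A_G, vector}, localized. \emph{Step 3:} transport the unique intersection point back to $J_G$ via the opposite correspondence; here one must check that the $|W|$ sheets downstairs glue to a \emph{single} point upstairs, which is exactly the $W$-invariance encoded in the trivialization (\ref{eq: trivial W zeta}) of the $W$-cover $L_0^{\zeta;1}\times_\fc\ft$, i.e. all the $w$-sheets map to the same fiber of $\chi^{-1}([\zeta])\cap\cL_\zeta$. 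Finally, the grading: since $\Sigma_I$ carries the constant grading $n$ and $\cL_\zeta$ the constant grading $\tfrac{n}{2}$ (both holomorphic up to the compactly supported Hamiltonian isotopy, which does not change gradings), the unique generator of $\Hom(\cL_\zeta,\Sigma_I^{+,(j)})$ sits in degree $0$ and, by Poincaré duality for the transverse clean intersection (or directly from the local model near a graph intersection), the unique generator of $\Hom(\Sigma_I^{-,(j)},\cL_\zeta)$, equivalently $\Hom(\cL_\zeta,\Sigma_I)$ read in the other direction, sits in degree $n$.

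The main obstacle I expect is Step 1: ruling out spurious intersection points outside the model region of Lemma \ref{lemma: L_xi, R, clean}. The wrapped cotangent fiber $\varphi^1_{\widetilde H_{R_j}}(\widehat\Sigma_I)$ spreads out over all of $\ft$ as $R_j\to\infty$, while $\widehat{\cL}_\zeta$ is only understood explicitly near $\|\xi_\bR\|\le\delta_0$; one must combine the quantitative lower bound $\|(\varpi_\zeta(x))_\bR\|^2\ge \delta_0^2\max\{1,\sum_\beta(\Re p_{\beta^\vee})^2\}$ with the fact that on $L_0^\zeta\setminus L_0^{\zeta;1}$ the Liouville-rescaling structure (the union-of-flow-lines description in (\ref{eq: L_1-t, cyl})) keeps the image under $\chi$ inside a fixed compact subset of $\fc$, so that the large values of $\widetilde H_{R_j}$ push the wrapped fiber entirely off that part of $\cL_\zeta$ for $j$ large. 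Making this "escape to infinity in the $\xi_\bR$ direction vs. boundedness under $\chi$" dichotomy rigorous — and checking it is genuinely cofinal, not just eventually — is where the real work lies; everything after that is the standard graph-intersection count plus bookkeeping of the $W$-cover.
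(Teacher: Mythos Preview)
Your three-step architecture and the use of the correspondence and Lemma~\ref{lemma: L_xi, R, clean} are exactly right, but there is a genuine gap in your choice of cofinal sequence, and it breaks Step~2.

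You take $\Sigma_I^{\pm,(j)}=\varphi^{\pm1}_{H_{R_j}}(\Sigma_I)$ with $R_j\to\infty$, as in Section~\ref{sec: wrapping Ham}. The problem is that near the fixed regular level $\zeta$ one has $\widetilde H_{R_j}=\tfrac12\|\xi\|^2$ for all $R_j\ge\|\zeta\|$, so the wrapped cotangent fiber over $\{\xi_c\text{ near }w(\zeta),\ \|\xi_\bR\|\le\delta_0\}$ is \emph{independent of $j$}. There is no asymptotic simplification as $R_j\to\infty$, and your assertion that in the $T^*\ft_\bR$ factor the wrapped fiber ``for $R$ large is also a graph meeting $w(\Gamma)$ once'' has no justification: the fixed diagonal $\{(\xi_\bR,\xi_\bR)\}$ need not meet the particular graph $\Gamma$ coming from the construction of $\cL_\zeta$ transversely in a single point. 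The appeal to Proposition~\ref{prop: A_G, vector} does not help either --- there the intersection is with another cotangent fiber and the count \emph{grows} with $R$, which is the opposite of what you want.

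The paper instead fixes a single Hamiltonian $H_1$, \emph{modifies} it $W$-invariantly so that on a neighborhood $\cU_{\zeta,\eta_0}$ it has the split form $\widetilde H_1=\tfrac12\|\xi_c\|+\tfrac12\|\xi_\bR\|^2$ and so that on the conical region $\cQ_{\eta_0}=\{\|\xi_\bR\|\ge\eta_0\max(1,\|\xi\|)\}$ one has the uniform bound $\|D_{\xi_\bR}\widetilde H_1\|\ge\tfrac12\eta_0$, and then takes $\Sigma_I^{\pm,(j)}=\varphi^{\pm s_j}_{H_1}(\Sigma_I)$ with $s_j\to\infty$. This handles both steps at once. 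For Step~1, the gradient bound forces $\varphi^{s}_{\widetilde H_1}(\widehat\Sigma_I)\cap\cQ_{\eta_0}$ to escape any compact subset of $T$ as $|s|\to\infty$, hence to miss $\widehat{\cL}_\zeta$ (whose projection to $T$ is compact); this is cleaner than trying to push the estimate through Lemma~\ref{lemma: Omega}(b). For Step~2, the split form makes the computation a genuine product: in $T^*T_\cpt$ the wrapped fiber meets $T_\cpt\times\{w(\zeta)\}$ in one point, and in $T^*\ft_\bR$ the flow of $\tfrac12\|\xi_\bR\|^2$ at time $s$ sends the fiber to $\{(s\xi_\bR,\xi_\bR)\}$, which for $|s|$ large is nearly horizontal over $\{\|\xi_\bR\|\le\delta_0\}$ and therefore meets the graph $w(\Gamma)$ transversely once. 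With this corrected cofinal sequence, the remainder of your outline (collapsing the $|W|$ sheets to a single point in $J_G$ via the $W$-cover, and the grading computation) goes through as you describe.
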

\begin{proof}

Using the Lagrangian correspondence (\ref{eq: Lag corresp}), it suffices to show that $\widehat{\cL}_{\zeta}$ intersects $\widehat{\Sigma}_I^{+, (j)}$ (resp. $\widehat{\Sigma}_I^{+- (j)}$) transversely at exactly $|W|$ many points (that constitute a $W$-orbit), where $\widehat{\Sigma}_I$ is just the cotangent fiber at $I\in T$. We will define a positive linear Hamiltonian $H_1: J_G\rightarrow\bR_{\geq 0}$ (which will be a modification of (\ref{eq: tilde H_R def})), the image of $\Sigma_I$ under whose positive/negative Hamiltonian flow at time $\pm s_j, s_j\rightarrow\infty$ will give $\Sigma_I^{+, (j)}$ and $\Sigma_I^{-, (j)}$, respectively.

\emph{Step 1.} Some key features about $\widehat{\cL}_\zeta$

First, it is clear from the construction of $\cL_{\zeta}$ that $\widehat{\cL}_\zeta$ is asymptotically conic (note that $\widehat{\cL}_\zeta$ could be singular). By the proof of Lemma \ref{lemma: Omega} (b), specifically the fact that every point in $L_0^\zeta\backslash L_0^{\zeta;1}$ can be flowed into the compact region \ref{eq: L_1-t, 1, K+2, 1} under $\varphi_{-Z}^t$, we see the projection of $\widehat{\cL}_\zeta$ to $T$ is compact. Second, we recall the property of $\widehat{\cL}_\zeta$ from Lemma \ref{lemma: L_xi, R, clean}.

\emph{Step 2}. Definition of $H_1$

Without loss of generality, we may assume $\|\zeta\|>3$. We start with the Hamiltonian function (more precisely, the pullback function to $J_G$) $H_1:\fc\rightarrow \bR_{\geq 0}$ from (\ref{eq: H_R def}), whose pullback to $\ft^*$ is $\widetilde{H}_1: \ft^*\rightarrow\bR_{\geq 0}$ (\ref{eq: tilde H_R def}). 

Let 
\begin{align}\label{eq: cotangent T, splitting}
&T^*T\cong T^*T_\cpt\times T^*\ft_\bR\\
\nonumber&(h,\xi)\mapsto (\vec{\theta}, \xi_c), (\log_\bR h, \xi_\bR)
\end{align}
be the canonical splitting of $T^*T$. 
For any $\eta_0>0$, let 
\begin{align}\label{eq: cQ_eta_0}
\cQ_{\eta_0}=\{\xi=\xi_c+\xi_\bR: \|\xi_\bR\|\geq \eta_0\cdot \max\{1,\|\xi\|\}\}\subset T^*T. 
\end{align}
By Lemma \ref{lemma: Omega}, for $R, K\gg 1$ and sufficiently small $\eta_0>0$, there exists a small neighborhood $\cU_{\zeta}$ of $\zeta$ inside $\ft^*_c$, such that
\begin{align}\label{eq: proj t L_zeta}
\proj_{\ft^*}(\widehat{\cL}_\zeta)\subset \coprod\limits_{w_1\in W}w_1(\cU_{\zeta}+\{\xi_\bR\in \ft_\bR^*: \|\xi_\bR\|\leq \eta_0\})\cup \cQ_{\eta_0}.
\end{align}
Since 
\begin{align}\label{eq: cU_zeta, eta_0}
\cU_{\zeta,\eta_0}:=\cU_\zeta+\{\xi_\bR\in \ft^*_\bR: \|\xi_\bR\|\leq \eta_0\}
\end{align}
is pre-compact and its closure is away from $\ft^\sing$, we can modify the Hamiltonian $\widetilde{H}_1$ (in a $W$-invariant way) so that 
\begin{align}\label{eq: tilde H_1 split 1}
\widetilde{H}_1|_{\cU_{\zeta,\eta_0}}=\frac{1}{2}\|\xi_c\|+\frac{1}{2}\|\xi_\bR\|^2,
\end{align}
and by choosing the sequence $(\epsilon_j^{(i)})_{1\leq j\leq n}$ in the induction steps in Subsection \ref{subsec: Hamiltonians} to be much smaller than $\eta_0$, we can make sure that 
\begin{align}\label{eq: D_xi_R tilde H}
\|D_{\xi_\bR}\widetilde{H}_1|_{\cQ_{\eta_0}}\|\geq \frac{1}{2}\eta_0.
\end{align}
Note that (\ref{eq: D_xi_R tilde H}) implies that 
\begin{align}\label{eq: compact escape}
\text{for any compact region } \cK\subset T,\ \varphi_{\widetilde{H}_1}^{s}(\cQ_{\eta_0})\cap (\cK\times \ft^*)=\emptyset, \text{ for }|s|\gg 1. 
\end{align}  
Let 
\begin{align*}
\widetilde{H}_{1,c}:=y_1(\|\xi_c\|),\ \widetilde{H}_{1,\bR}:= \frac{1}{2}\|\xi_\bR\|^2,
\end{align*}
be functions on $\ft^*$, then (\ref{eq: tilde H_1 split 1}) becomes 
\begin{align*}
\widetilde{H}_1|_{\cU_{\zeta,\eta_0}}=(\widetilde{H}_{1,c}+\widetilde{H}_{1,\bR})|_{\cU_{\zeta,\eta_0}}. 
\end{align*}

{\it Step 3.} The intersection $\widehat{\cL}_\zeta\cap \varphi_{\widetilde{H}_1}^{s}(\widehat{\Sigma}_I)$ for $|s|\gg 1$.

 First, by (\ref{eq: compact escape}) and (\ref{eq: proj t L_zeta}), we must have 
 \begin{align}\label{eq: proj L_zeta cap Sigma}
 \proj_{\ft^*}(\widehat{\cL}_\zeta\cap \varphi_{\widetilde{H}_1}^{s}(\widehat{\Sigma}_I))\subset  \coprod\limits_{w_1\in W}w_1(\cU_{\zeta}+\{\xi_\bR\in \ft_\bR^*: \|\xi_\bR\|\leq \eta_0\}), |s|\gg 1.
 \end{align}
 Applying Lemma \ref{lemma: L_xi, R, clean} with $\eta_0\leq \delta_0$, we have 
\begin{align}\label{eq: L_zeta, H, H_1}
\widehat{\cL}_\zeta\cap \varphi_{\widetilde{H}_1}^{s}(\widehat{\Sigma}_I)=\widehat{\cL}_\zeta\cap \varphi_{\widetilde{H}_{1, c}+\widetilde{H}_{1, \bR}}^{s}(\widehat{\Sigma}_I)\subset \{\|\xi_\bR\|\leq \eta_0\}, |s|\gg 1.
\end{align}
Since the wrapping is under a product Hamiltonian function, it is clear that the intersection (\ref{eq: L_zeta, H, H_1}) is transverse and consists of exactly one point (cf. Figure \ref{figure: wrapping} for the negative wrapping).

Transferring back the geometry to $J_G$, it is straightforward to identify the grading for the (only) intersection point $\cL_\zeta\cap \varphi_{H_1}^{-s}(\Sigma_I), s\gg 1$ (resp. $\varphi_{H_1}^{s}(\Sigma_I)\cap \cL_{\zeta}, s\gg 1$) as $\dim_\bC T=\dim_\bC T^\vee=n$ (resp. $0$). So for any sequence $0\leq s_j\uparrow \infty$, the sequence $\Sigma_I^{-, (j)}:=\varphi_{H_1}^{-s_j}(\Sigma_I)$ (resp. $\Sigma_I^{+, (j)}:=\varphi_{H_1}^{s_j}(\Sigma_I)$) gives a desired cofinal sequence of negative (resp. positive) wrappings of $\Sigma_I$.

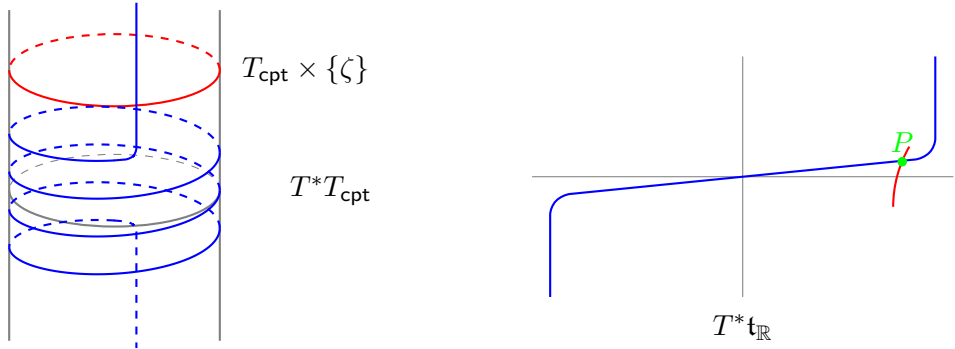
\begin{figure}[!tbp]
\centering
\begin{minipage}[b]{0.45 \textwidth}
\begin{tikzpicture}[scale=0.8]
\draw[dashed, gray] (0,0) arc (0:180:1.75 and 0.6);
\draw[gray, thick](0,0) arc (0:-180:1.75 and 0.6); 
\draw[gray, thick] (0, 3)--(0,-2.5);
\draw[gray, thick] (-3.5,3)--(-3.5, -2.5);
\draw[dashed, thick, red] (0,2) arc (0:180:1.75 and 0.6);
\draw[thick, red] (0,2) arc (0:-180:1.75 and 0.6); 
\node[right](zeta) at (0.2, 2){$T_{\cpt}\times \{\zeta\}$};
\draw[blue, thick, rounded corners=2pt] ({1.75*cos (3.5*3.14 r)-1.75}, {0.6*sin (3.5*3.14 r)+0.1*3.5*3.14})--({1.75*cos (3.55*3.14 r)-1.75}, {0.6*sin (3.55*3.14 r)+0.1*3.55*3.14})--({1.75*cos (3.55*3.14 r)-1.75+0.1}, {0.6*sin (3.55*3.14 r)+0.1*3.55*3.14+0.1})--({1.75*cos (3.55*3.14 r)-1.75+0.1}, {0.6*sin (3.55*3.14 r)+0.1*3.55*3.14+0.1+2.5)});
\draw[blue, thick, domain=3*pi: 3.5*pi, samples=50] plot ({1.75*cos (\x r)-1.75},{0.6*sin(\x r)+0.1*\x});
\draw[blue, thick, dashed, domain=2*pi: 3*pi, samples=50] plot ({1.75*cos (\x r)-1.75},{0.6*sin(\x r)+0.1*\x});
\draw[blue, thick, domain=pi:2*pi, samples=50] plot ({1.75*cos (\x r)-1.75},{0.6*sin(\x r)+0.1*\x});
\draw[blue, thick, dashed, domain=-0:pi, samples=50] plot ({1.75*cos (\x r)-1.75},{0.6*sin(\x r)+0.1*\x});
\draw[blue, thick, domain=-pi:0, samples=50] plot ({1.75*cos (\x r)-1.75},{0.6*sin(\x r)+0.1*\x});
\draw[blue, thick, dashed, domain=-2*pi:-pi, samples=50] plot ({1.75*cos (\x r)-1.75},{0.6*sin(\x r)+0.1*\x});
\draw[blue, thick, domain=-3*pi:-2*pi, samples=50] plot ({1.75*cos (\x r)-1.75},{0.6*sin(\x r)+0.1*\x});
\draw[blue, thick, dashed, domain=-3.5*pi:-3*pi, samples=50] plot ({1.75*cos (\x r)-1.75},{0.6*sin(\x r)+0.1*\x});
\draw[blue, thick, dashed, rounded corners=2pt] ({1.75*cos (-3.5*3.14 r)-1.75}, {0.6*sin (-3.5*3.14 r)-0.1*3.5*3.14})--({1.75*cos (-3.55*3.14 r)-1.75}, {0.6*sin (-3.55*3.14 r)-0.1*3.55*3.14})--({1.75*cos (-3.55*3.14 r)-1.75+0.1}, {0.6*sin (-3.55*3.14 r)-0.1*3.55*3.14-0.1})--({1.75*cos (-3.55*3.14 r)-1.75+0.1}, {0.6*sin (-3.55*3.14 r)-0.1*3.55*3.14-0.1-2)});
\node[right](cotangent) at (1, 0){$T^*T_\cpt$};
\end{tikzpicture}
\end{minipage}
\hfill
\begin{minipage}[b]{0.45\textwidth}
\begin{tikzpicture}[scale=0.8]
\draw[gray] (-3.5,0)--(3.5,0);
\draw[gray] (0,-2)--(0,2);
\draw[thick, red] (2.5,-0.5) arc (180: 150: 2 and 2);
\draw[thick, blue,  rounded corners=3pt] (-3.2, -2)--(-3.2,-0.5)--(-3,-0.3)--(3, 0.3)--(3.2, 0.5)--(3.2, 2);
\filldraw[green] 
(2.65, 0.25) circle (2pt) node[above] {$P$};
\draw (0,-2.5) node {$T^*\ft_\bR$};
\end{tikzpicture}
\end{minipage}
\caption{The intersection of the transformed Lagrangians $\widehat{\cL}_\zeta$ (red) and $\varphi_{\widetilde{H}_{1,c}+\widetilde{H}_{1, \bR}}^{-s}(\widehat{\Sigma}_I)$ (blue) in $T^*T$, which has the same intersection as $\widehat{\cL}_\zeta\cap \varphi_{\widetilde{H}_1}^{-s}(\widehat{\Sigma}_I), s\gg 1$. }\label{figure: wrapping}
\end{figure}
\end{proof}

\begin{proof}[Proof of Proposition \ref{prop: L_xi, S_e, part 1} (iii)]

We use the same $\widetilde{H}_1$ as in the proof of Lemma \ref{lemma: step 1,2,3} \emph{Step 2}. 
We look at $\widehat{\cL}_\zeta$ and $\widehat{\cL}_{w(\zeta)}$ in $T^*T$, and relate the intersections and discs for $\widehat{\cL}_\zeta$ and $\varphi_{\widetilde{H}_1}^{-s}(\widehat{\cL}_{w(\zeta)})$ in $T^*T$ to those of $\cL_{\zeta}$ and the negative wrapping of $\cL_{w(\zeta)}$ in $J_G$.

Following a similar argument as in the proof of Lemma \ref{lemma: step 1,2,3} \emph{Step 3}, we have 
\begin{align*}
\widehat{\cL}_\zeta\cap \varphi_{\widetilde{H}_1}^{-s}(\widehat{\cL}_{w(\zeta)})\subset \{\|\xi_\bR\|\leq \eta_0\}, s\gg 1.
\end{align*}
The intersection is clean along the $W$-orbit of $T_{\cpt}\times \{\zeta\}\times \{Q\}$ for some $Q\in T^*\ft_\bR$ (cf. Figure \ref{figure: wrapping, w_1}).

Transferring the geometry back to $J_G$,  
we get $\cL_\zeta\cap \varphi_{H_1}^{-s}(\cL_{w(\zeta)})$ intersects cleanly along a single $T_\cpt$-orbit in $\chi^{-1}([\zeta])\cong C_G(\zeta)\cong T$, where the identifications are using $\xi=\zeta, B_1=B$ in (\ref{eq: J_G, fc, ft}).
The restriction of the rank 1 local system $\check{\rho}_1$ on $\cL_{\zeta}$ (resp. $w(\check{\rho}_2)$ on $\cL_{w(\zeta)}$) to the $T_\cpt$-orbit is $\check{\rho}_1$ (resp. $\check{\rho}_2$) under the above identification. Now (iii) in the proposition follows.

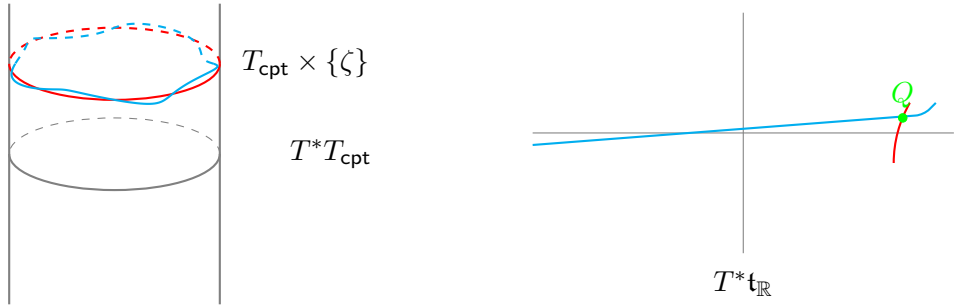
\begin{figure}[!tbp]
\centering
\begin{minipage}[b]{0.45 \textwidth}
\begin{tikzpicture}[scale=0.8]
\draw[dashed, gray] (0,0) arc (0:180:1.75 and 0.6);
\draw[gray, thick](0,0) arc (0:-180:1.75 and 0.6); 
\draw[gray, thick] (0, 2.5)--(0,-2.5);
\draw[gray, thick] (-3.5,2.5)--(-3.5, -2.5);
\draw[dashed, thick, red] (0,1.5) arc (0:180:1.75 and 0.6);
\draw[thick, red] (0,1.5) arc (0:-180:1.75 and 0.6); 
\node[right](zeta) at (0.2, 1.5){$T_{\cpt}\times \{\zeta\}$};
\draw[cyan, thick, domain=pi: 2*pi, samples=100] plot  ({1.65*cos (\x r)-0.1*sin (6*\x r)-1.75},{0.6*sin(\x r)-0.1*sin (4*\x r)+1.5}); 
\draw[cyan, thick, dashed, domain=0: pi, samples=100] plot  ({1.65*cos (\x r)-0.1*sin (6*\x r)-1.75},{0.6*sin(\x r)-0.1*sin (4*\x r)+1.5}); 
\node[right](cotangent) at (1, 0){$T^*T_\cpt$};
\end{tikzpicture}
\end{minipage}
\hfill
\begin{minipage}[b]{0.45\textwidth}
\begin{tikzpicture}[scale=0.8]
\draw[gray] (-3.5,0)--(3.5,0);
\draw[gray] (0,-2)--(0,2);
\draw[thick, red] (2.5,-0.5) arc (180: 150: 2 and 2);
\draw[thick, cyan,  rounded corners=3pt] (-3.5,-0.2)--(3, 0.3)--(3.2, 0.5);
\filldraw[green] 
(2.65, 0.25) circle (2pt) node[align=left, above] {$Q$};
\draw (0,-2.5) node {$T^*\ft_\bR$};
\end{tikzpicture}
\end{minipage}
\caption{One portion of the intersections of the transformed Lagrangians $\widehat{\cL}_\zeta$ (red) and Hamiltonian perturbed $\varphi^{-s}_{\widetilde{H}_1}(\widehat{\cL}_{w(\zeta)})$ (cyan) in $T^*T$}\label{figure: wrapping, w_1}
\end{figure}
\end{proof}

\subsection{Proof of Proposition \ref{prop: L_0, part 2} and Proposition \ref{prop: L_0, W}}\label{subsec: proof 5.6, 5.7}

Before giving the actual proofs, we give an overview of the main ideas and fix some notations. Let $L_0$ be constructed as in Subsection \ref{subsubsec: conic}. We will use the notations and results from Subsections \ref{subsec: analysis cB_w0} and \ref{subsec: walls}. 

Fix some standard open balls $\bD\subsetneq \bD'$ in $\ft$ centered at $0$, whose closures are contained in the open region $\{|\sum\limits_{\beta\in \Pi} \Re p_{\beta^\vee}|<1\}$. For sufficiently large $K\gg R\gg 1$ in the construction of $L_0$, the projection
\begin{align}\label{eq: p_D}
L_0\cap (T\times \bD')=\Lambda_R\cap (T\times \bD')\overset{p_{\bD'}}{\longrightarrow} \bD'\subset \ft\overset{\chi_\ft}{\longrightarrow} \fc
\end{align}
is very close to the restriction of $\chi$, and outside the region $T\times \bD'$, $\proj_{\ft_\bR}((L_0\cap (T\times (\ft-\bD)))\underset{\fc}{\times}\ft))$ is outside $\bD'\cap \ft_\bR$. Since the image of $p_{\bD'}$ in (\ref{eq: p_D}) is contained in $\ft_\bR$, we get $\chi(L_0\cap (T\times \bD'))$ is contained in a thin neighborhood $Nb(\fc_\bR)$ of the real locus $\fc_\bR:=\chi_\ft(\ft_\bR)$ in $\fc$. Denote the preimage of $Nb(\fc_\bR)$ in $\ft$ by $Nb(\ft_\bR)$. Without loss of generality, we may assume that $Nb(\ft_\bR)=\ft_\bR\times D_{c, \delta}$, where $D_{c,\delta}\subset \ft_c$ is a small $W$-invariant ball centered at $0$ of radius $0<\delta\ll 1$. We set $\bD'_{\bR}=\bD'\cap \ft_\bR$ and \emph{reset} $\bD'=\bD'_{\bR}\times D_{c,\delta}$ (and similarly for $\bD_{\bR}$ and $\bD$ respectively). 

Second, let $\bD^\circ_{\bR}$ be the complement of a $W$-invariant tubular neighborhood of $\ft^\sing_\bR$ in $\bD'\cap \ft_\bR$, and let $\bD^\circ=\bD^\circ_{\bR}\times D_{c, \delta}\subset Nb(\ft_\bR)$. By Proposition \ref{lemma: empty, Ham isotopy} and Corollary \ref{cor: dist star}, we have a good understanding of $L_0\cap (T\times \bD^\circ)$ inside the integrable system picture $J_G\rightarrow \fc$. 
Namely, if we do the identification 
\begin{align}\label{eq: bD, circ, T_cpt}
\chi^{-1}(\bD^\circ/W)\cong (T_{\cpt}\times D_{c,\delta})\times (\bR^n_{>0}\times (\bD^\circ_\bR\cap \ft_\bR^+))\cong T\times (\bD^\circ\cap (\ft_\bR^+\times D_{c,\delta}))
\end{align}
using $\bD^\circ/W\cong \bD^\circ\cap (\ft_\bR^+\times D_{c,\delta})$, then  
after a small Hamiltonian isotopy, there exists a pre-compact open region $\Omega\subset \bR_{>0}^n$, a Lagrangian $\Gamma_w\subset \Omega\times (\bD^\circ_{\bR}\cap \ft_\bR^+)$ that is a graph over $\bD^\circ_{\bR}\cap \ft_\bR^+$ for each $w\in W$, and some $\epsilon>0$ very small, 
such that $L_0\cap (T\times \bD^\circ)$ is identified with
\begin{align}\label{eq: L_w'}
\coprod\limits_{w\in W}(T_{\cpt}\times\{0\})\times w^{-1}(\epsilon^{-\sfh_0})\cdot \Gamma_w. 
\end{align}
So over this region, the intersections of the wrapping of $\Sigma_I$ and $L_0\cap (T\times \bD^\circ)$ can be well understood. For the portion of $L_0$ outside $T\times \bD$, we can do similar things as in the proof of Lemma \ref{lemma: step 1,2,3}, so that the wrapping of $\Sigma_I$ after sufficiently long time will have no intersection with $L_0$ over there. 

The subtle part is about $L_0\cap (T\times (\bD'-\bD^\circ))$, for which we do not have a concrete description inside the integrable system $J_G\rightarrow\fc$. Note that it is not helpful to transform the Lagrangians to $T^*T$ using the correspondence (\ref{eq: Lag corresp}), exactly by the remarks in the end of Subsection \ref{subsec: def of J_G}. However, by appropriately defining the wrapping Hamiltonian near the ``walls" in $\fc_{\bR}^\sing$, and using results from Section \ref{subsec: walls} (in particular Proposition \ref{prop: g_S, natural, w}), we can show that if the $R$ in (\ref{eq: p_D}) is sufficiently large, then the wrapping of $\Sigma_I$ will never intersect $L_0$ inside $T\times (\bD-\bD^\circ)$.

\begin{proof}[Proof of Proposition \ref{prop: L_0, part 2}]
As explained above, we are going to define an appropriate positive wrapping Hamiltonian $H$, and choose $L_0$ with $R$ sufficiently large so that the intersections $\varphi_H^{s}(\Sigma_I)\cap L_0$ are contained in $L_0\cap (T\times \bD^\circ)$ as above, for all $|s|\gg 1$. 

\emph{Step 1. Definition of a positive linear Hamiltonian $H$ on $\fc$.}

The space $\ft_\bR$ is stratified by open cones $w(\mathring{\fz}_{S,\bR}^+)$, indexed by $(S, w)$ with $S\subset \Pi$ and $w\in W^S_{\min}$, which can be viewed as a fan. Let $\bfP\subset \ft_\bR$ be the $W$-invariant dual convex polytope defined by 
\begin{align*}
\{t\in \ft_\bR^*\cong \ft_\bR: \lng w(\lambda_{\beta^\vee}), t\rng\leq 1, \beta\in \Pi, w\in W/W_{\Pi-\{\beta\}}\}.
\end{align*}
Note that on the dominant cone $\ft_\bR^+$, the polytope is cut out by $ \lng \lambda_{\beta^\vee}, t\rng\leq 1, \beta\in \Pi$. We do a $W$-invariant smoothing of $\partial\bfP$, denoted by $\partial\bfP_{sm}$, in a similar way as we did in Subsection \ref{subsubsec: H, sm}, such that (1) for any $(S,w), S\subsetneq \Pi$, there is an open neighborhood $\cU_{(S,w)}$ of $\partial\bfP_{sm}\cap w(\mathring{\fz}_{S,\bR}^+)$ in $\ft_\bR$ for which 
\begin{align*}
\partial\bfP_{sm}\cap \cU_{(S,w)}\subset (\partial\bfP_{sm}\cap w(\mathring{\fz}_{S,\bR}^+))+\ft_{S,\bR}; 
\end{align*}
in other words, $\partial\bfP_{sm}\cap \cU_{(S,w)}$ is contained in the union of normal slices of $w(\mathring{\fz}_{S,\bR}^+)$ along the intersection $\partial\bfP_{sm}\cap w(\mathring{\fz}_{S,\bR}^+)$; 
(2) in an open neighborhood of $\partial \bfP_{sm}\cap \bR_{>0}\cdot \sfh_0$, $\partial \bfP_{sm}$ is defined by $\|\xi\|=c$ for some constant $c>0$; (3) the domain $\bfP_{sm}$ enclosed by $\partial\bfP_{sm}$ is convex. 
Since the smoothing process (by induction) is very similar to that in Subsection \ref{subsubsec: H, sm}, we omit the details. Up to radial scaling we may assume that $\bfP_{sm}$ is contained in $\bD_\bR$ (cf. Figure \ref{figure: bD_R, ft_R,+}). Let $\widetilde{\cU}_{S,w}=\bR_{>0}\cdot (\partial \bfP_{sm}\cap \cU_{S,w})$ for $S\subsetneq \Pi$ and let $\widetilde{\cU}_{\Pi,1}=[0,\frac{1}{4})\cdot \partial \bfP_{sm}$. Let $\bD^\circ_\bR$ be a $W$-invariant open neighborhood (not too large so that it avoids a tubular neighborhood of $\ft_\bR^\sing$) of the complement of the union of $\widetilde{\cU}_{S,w}$ over all $(S,w)$ with $\emptyset\neq S\subset \Pi$ 
in $\bD'_\bR$, and let $\bD^\circ=\bD^\circ_\bR\times D_{c,\delta}$. 

First, we define a $W$-invariant function $\widetilde{H}$ on (part of) $\ft$ as follows. First, choose a smooth function $\sfb: \bR_{\geq 0}\rightarrow \bR_{\geq 0}$, such that 
\begin{align*}
\sfb(r)=0, r\in [0, \frac{1}{4}];\ \sfb(r)=r, r\geq\frac{3}{4};\ \sfb''(r)>0, r\in (\frac{1}{4},\frac{3}{4}).
\end{align*}
Second, define $\widetilde{H}|_{\bfP_{sm}}(r\cdot \xi)=\sfb(r)$, for $\xi\in \partial \bfP_{sm}, r\in[0,1]$, and extend it to $\bfP_{sm}\times D_{c,\delta}$ by pulling back under the obvious projection to $\bfP_{sm}$. Then extend $\widetilde{H}|_{\bfP_{sm}\times D_{c,\delta}}$ homogeneously to $(\bfP_{sm}\times D_{c,\delta})\cup \bR_{\geq 1}\cdot (\partial \bfP_{sm}\times D_{c,\delta})$. 

Now it is clear that $\widetilde{H}$ descends to a smooth function on the quotient of its defining domain in $\fc$. Then extend this to a nonnegative $H$ on $\fc$ that is homogeneous (and strictly positive) outside a compact region. We will also use $H$ to denote its pullback to $J_G$.

\emph{Step 2. Some key facts}.

If we choose $L_0$ with $K\gg R$ both sufficiently large, then we have the following: 
\begin{itemize}
\item[(i)] Let $\bD^\circ_1\subsetneq \bD^\circ_2$ be slight enlargements of $\bD^\circ$. By Proposition \ref{lemma: empty, Ham isotopy}, after a small compactly supported Hamiltonian isotopy inside $T\times\bD^\circ_2$, we can make 
\begin{align*}
&L_0\cap (T\times \bD^\circ)\subset L_0\cap \chi^{-1}(\chi_\ft(\bD^\circ_1))\overset{(\ref{eq: bD, circ, T_cpt})}{\cong} (\ref{eq: L_w'}),\\
&\chi(L_0\cap (T\times (\bD'-\bD^\circ)))\subset \chi_\ft(\bigcup\limits_{\emptyset\neq S\subset \Pi}\widetilde{\cU}_{(S,w)}). 
\end{align*}

\item[(ii)] $\chi(L_0)\subset ((\bfP_{sm}\times D_{c,\delta})\cup \bR_{\geq 1}\cdot (\partial \bfP_{sm}\times D_{c,\delta}))/W\subset \fc$ (cf. Lemma \ref{lemma: Omega} (c)). In particular, to calculate $\varphi_H^s(\Sigma_I)\cap L_0$ for any $s\in \bR$, we only use the portion of $H$ descended from $\widetilde{H}$. 

\item[(iii)] By the definition of $H$, for any $(S, w)$
\begin{align*}
\varphi_H^s(\Sigma_I)\cap \chi^{-1}(\chi_\ft(\widetilde{\cU}_{S, w}\times D_{c,\delta}\cap \bD')), s\in \bR
\end{align*}
is contained in the $\cZ(L_S)_0$-orbit of the portion of $\Sigma_I$ under the isomorphism (\ref{eq: chi_S, mathscr Z}). Therefore, by Proposition \ref{prop: g_S, natural, w}, 
\begin{align*}
\varphi_H^s(\Sigma_I)\cap (L_0\cap T\times (\bD'-\bD^\circ))=\emptyset, s\in \bR. 
\end{align*}

\item[(iv)] Using a similar argument as in the proof of Lemma \ref{lemma: step 1,2,3}, we have 
\begin{align*}
\varphi_H^s(\Sigma_I)\cap (L_0\cap T\times (\ft-\bD))\subset \varphi_H^s(\Sigma_I)\cap L_0\cap \chi^{-1}(\chi_\ft(\bR_{\geq 1}\cdot (\partial \bfP_{sm}\times D_{c,\delta}))=\emptyset,
\end{align*}
for $|s|\gg 1.$ This is due to the fact that the transformed Lagrangian $\widehat{L}_0\subset T^*T$ projects to a compact domain in $T$, while the projection of $\varphi_{\widetilde{H}}^s(\widehat{\Sigma}_I)\cap (T\times \bR_{\geq 1}\cdot (\partial \bfP_{sm}\times D_{c,\delta}))$ to $T$ is disjoint from the compact region for $|s|\gg 1$. 
\end{itemize}
 
\emph{Step 3. Calculation of wrapped Floer complexes}

By \emph{Step 2}, using the identification (\ref{eq: bD, circ, T_cpt}), the intersection(s) $\varphi_H^{s}(\Sigma_I)\cap L_0$ for $|s|\gg 1$ can be calculated in a standard way inside (\ref{eq: bD, circ, T_cpt}) with $\bD^\circ$ replaced by $\bD^\circ_1$,  as 
\begin{align}\label{eq: intersection, Gamma_w}
\varphi_H^{s}(\{I\}\times (\bD_{1}^\circ\cap (\ft_{\bR}^+\times D_{c,\delta})))\cap \coprod\limits_{w\in W}(T_{\cpt}\times\{0\})\times w^{-1}(\epsilon^{-\sfh_0})\cdot \Gamma_w,
\end{align}
where $\{I\}\times (\bD_{1}^\circ\cap (\ft_{\bR}^+\times D_{c,\delta}))$ is just the portion of the cotangent fiber at $I$ contained in (\ref{eq: bD, circ, T_cpt}). It is clear from Figure \ref{figure: bD_R, ft_R,+} that for $s\gg 1$ (resp. $s\ll -1$), 
\begin{align*}
\varphi_{\widetilde{H}|_{\bD_{\bR}^\circ\cap \ft_\bR^+}}^s(\{I\}\times (\bD_{1, \bR}^\circ\cap \ft_\bR^+))\text{ intersects } \coprod\limits_{w\in W}w^{-1}(\epsilon^{-\sfh_0})\cdot \Gamma_w
\end{align*}
transversely at exactly one point in $w_0(\epsilon^{-\sfh_0})\cdot \Gamma_{w_0}$ (resp. $\epsilon^{-\sfh_0}\cdot \Gamma_1$), for the former covers the ``strip" $\bigcup\limits_{0<\epsilon<\epsilon_0}w_0(\epsilon^{-\sfh_0})\cdot \overline{\Omega}\subset \bR^n_{>0}$ (resp. $\bigcup\limits_{0<\epsilon< \epsilon_0}\epsilon^{-\sfh_0}\cdot \overline{\Omega}\subset \bR^n_{>0}$), for some fixed $0<\epsilon_0\ll 1$, in a one-to-one manner, and approaches to the zero-section over any compact region in the ``strip" as $s\rightarrow \infty$ (resp. $s\rightarrow -\infty$). 
Now the isomorphisms (\ref{eq: skyscraper 1 no q}) and (\ref{eq: skyscraper 2 no q}) in the proposition follow directly. 

\begin{figure}[h]
\begin{tikzpicture}
\draw (0,2.5)--(0,-2.5);
\draw ({2.5*cos(30)},{2.5*sin(30)})--({-2.5*cos(30)},{-2.5*sin(30)});
\draw ({2.5*cos(150)},{2.5*sin(150)})--({-2.5*cos(150)},{-2.5*sin(150)});
\draw[thick, rounded corners=10pt] ({2.3*(1+cos (60))/2},{2.3*sin(60)/2})--({2.3*cos(60)}, {2.3*sin(60)})--({2.3*cos(120)}, {2.3*sin(120)})--({2.3*cos(180)}, {2.3*sin(180)})--({2.3*cos(240)}, {2.3*sin(240)})--({2.3*cos(300)}, {2.3*sin(300)})--(2.3,0)--({1.99*cos(30)}, {1.99*sin(30)});
\fill[cyan, draw, opacity=0.5] ({2.3*(0.6*1+0.4*cos (60))+0.3*cos(30)},{2.3*0.4*sin(60)+0.3*sin(30)})--
({2.3*(0.6*1+0.4*cos (60))-1.3*cos(30)},{2.3*0.4*sin(60)-1.3*sin(30)})--
({2.3*(0.6*1+0.4*cos (60))-1.3*cos(30)},{-(2.3*0.4*sin(60)-1.3*sin(30))})--
({2.3*(0.6*1+0.4*cos (60))+0.5*cos(30)},{-2.3*0.4*sin(60)-0.5*sin(30)}) arc (-24:24: 2.5) -- ({2.3*(0.6*1+0.4*cos (60))+0.5*cos(30)},{2.3*0.4*sin(60)+0.5*sin(30)});
\draw[thick, dashed, rounded corners=6pt, orange, scale=0.4] ({2.3*(1+cos (60))/2},{2.3*sin(60)/2})--({2.3*cos(60)}, {2.3*sin(60)})--({2.3*cos(120)}, {2.3*sin(120)})--({2.3*cos(180)}, {2.3*sin(180)})--({2.3*cos(240)}, {2.3*sin(240)})--({2.3*cos(300)}, {2.3*sin(300)})--(2.3,0)--({1.99*cos(30)}, {1.99*sin(30)});
\draw[cyan] (2.4,0) node[right] {$\bD_{1,\bR}^\circ\cap \ft_\bR^+$};
\end{tikzpicture}
\caption{The $W$-invariant region enclosed by the outer thick black curve represents $\bfP_{sm}$. The cyan region represents $\bD^\circ_{1,\bR}\cap \ft_{\bR}^+$. The middle orange dashed curve encloses $\widetilde{\cU}_{\Pi, 1}$.}\label{figure: bD_R, ft_R,+}
\end{figure}
\end{proof}

\begin{proof}[Proof of Proposition \ref{prop: L_0, W}]
The proof goes similarly as the previous one for Proposition \ref{prop: L_0, part 2}, and we will use the same notations from there. To show (\ref{eq: prop L_0, W}), we pick $L_0^{(1)}$ and $L_0^{(2)}$ with respective $K^{(j)}\gg R^{(j)}\gg 1$ satisfying $R^{(2)}\gg R^{(1)}$. Then by the same reasoning in \emph{Step 2} of the previous proof, we have for $s\gg 1$, 
\begin{align*}
&\varphi_{H}^s(L_0^{(1)})\cap L_0^{(2)}\\
=&\coprod\limits_{w\in W}(T_{\cpt}\times\{0\})\times \varphi_{\widetilde{H}|_{\bD_{\bR}^\circ\cap \ft_\bR^+}}^s(w^{-1}(\epsilon_1^{-\sfh_0})\cdot \Gamma_w^{(1)})\cap \coprod\limits_{w\in W}(T_{\cpt}\times\{0\})\times w^{-1}(\epsilon_2^{-\sfh_0})\cdot \Gamma_w^{(2)}\\
=&\coprod\limits_{w\in W}(T_{\cpt}\times\{0\})\times \varphi_{\widetilde{H}|_{\bD_{\bR}^\circ\cap \ft_\bR^+}}^s(w^{-1}(\epsilon_1^{-\sfh_0})\cdot \Gamma_w^{(1)})\cap ((T_{\cpt}\times\{0\})\times w_0(\epsilon_2^{-\sfh_0})\cdot \Gamma_{w_0}^{(2)})
\end{align*}
for some $0<\epsilon_2\ll \epsilon_1\ll 1$. The intersection is clean and has $|W|$ many connected components $C_w$, each isomorphic to $T_{\cpt}$. For $(\varphi_H^s(L_0^{(1)}), \check{\rho})$ and $(L_0^{(2)}, w_1(\check{\rho}))$, $s\gg 1$, there is an indexing $p: W\rightarrow \{1,\cdots, |W|\}$ and a spectral sequence converging to their Floer cohomology (cf.  \cite{Seidel2}, \cite{Pozniak}, \cite{Schmaschke}) whose $E_1$-page is given by 
\begin{align}\label{eq: E_1, p, q}
E_1^{p(w),q}=H^{p(w)+q+i'(C_{p(w)})-n}(C_{p(w)};  w^{-1}(\check{\rho}^{-1})\otimes w_0w_1(\check{\rho})), 
\end{align}
for some coherent index $i'(C_{p(w)})\in \bZ$. If $\rho\in (T^\vee)^\reg$, then (\ref{eq: E_1, p, q}) is zero unless $w=w_1^{-1}w_0$, in which case, (\ref{eq: E_1, p, q}) is the cohomology $H^*(T,\bC)$ up to some grading shift. So the $E_1$-page converges to $\bigoplus_q E_1^{p(w_1^{-1}w_0), q}[-p(w_1^{-1}w_0)-q]=H^*(T,\bC)[d]$, for some $d\in \bZ$. On the other hand, it is clear that by local Hamiltonian perturbation of $L_0^{(2)}$ near $C_w, w\in W$, we can achieve transverse intersections with gradings ranging between $0$ and $n$, therefore $d$ must be $0$ and we obtain (\ref{eq: prop L_0, W}) as desired.    

Lastly, by Proposition \ref{prop: L_0, part 2}, both $(L_0,\check{\rho})$ and $(L_0, w_1(\check{\rho}))$ correspond to simple left $\cA_G$-modules in the abelian category of (finitely generated) $\cA_G$-modules. Hence 
\begin{align*}
H^0\Hom_{\cW(J_G)}((L_0,\check{\rho}), (L_0, w_1(\check{\rho})))\cong \bC
\end{align*}
implies that they are isomorphic. 
\end{proof}

\begin{remark}
It will follow from Proposition \ref{prop: A_G, W-inv} that for non-regular $\check{\rho}$, we also have 
\begin{align*}
\Hom_{\cW(J_G)}((L_0,\check{\rho}), (L_0, w_1(\check{\rho})))\cong H^*(T,\bC).
\end{align*} 
The above proof gives the $E_1$-page of the spectral sequence (\ref{eq: E_1, p, q}) to compute the Floer cohomology. However, there are multiple columns having nonzero entries $H^{p(w)+q}(T,\bC), 0\leq p(w)+q\leq n$. Hence the differentials $d_r^{p,q}, r\geq 1$ are not all zero, which means there are non-trivial counts of holomorphic discs entering into the calculation. 
\end{remark}

\subsection{Proof of Proposition \ref{prop: A_G commutative} and Proposition \ref{prop: A_G, W-inv}}\label{sec: proof_prop5.2.3}

Before giving the actual proof, we prove a couple of lemmas in algebra. 
Let $U\subset \bA^n=\Spec\bC[z_1,\cdots, z_n]$ be any (nonempty) affine Zariski open subvariety. 

\begin{lemma}\label{lemma: countable, 0}
Let $M^\bullet$ be any $\bC[U]$-module whose cohomology modules are countably generated over $\bC[U]$. Assume  
\begin{itemize}
\item[] $M^\bullet\otimes_{\bC[U]} (\bC[U]/\fM)\cong 0$ (here as always the tensor product and all functors have been derived unless otherwise specified) for every maximal ideal $\fM\subset \bC[U]$.
\end{itemize}
Then $M^\bullet\cong0$. 
\end{lemma}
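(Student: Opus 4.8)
\textbf{Proof plan for Lemma \ref{lemma: countable, 0}.}

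The plan is to reduce the statement to a more familiar vanishing criterion over $\bC[U]$ by handling the cohomology modules one at a time, using the hypothesis that they are countably generated. First I would observe that it suffices to show that a single finitely generated (or countably generated) $\bC[U]$-module $N$ which satisfies $N \otimes_{\bC[U]}^{\mathbf{L}} (\bC[U]/\fM) \cong 0$ for all maximal ideals $\fM$ must itself vanish; indeed, $M^\bullet \otimes_{\bC[U]}^{\mathbf{L}} (\bC[U]/\fM)$ has a spectral sequence (or, working from the top nonzero cohomology downward, a truncation argument) relating its cohomology to $\mathrm{Tor}^{\bC[U]}_*(H^i(M^\bullet), \bC[U]/\fM)$. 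Since $\bC[U]$ is a regular ring of finite Krull dimension $n$, these Tor-groups are concentrated in a bounded range of degrees, so the top nonzero $H^i(M^\bullet)$ would contribute a nonzero term in the extreme degree of $M^\bullet \otimes^{\mathbf{L}} (\bC[U]/\fM)$ with nothing to cancel it. This forces each $H^i(M^\bullet)$ to satisfy the vanishing hypothesis individually, and then induction on $i$ from the top kills all of them.

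Next I would prove the key module-theoretic statement: if $N$ is a countably generated $\bC[U]$-module with $\mathrm{Tor}^{\bC[U]}_*(N, \bC[U]/\fM) = 0$ for every maximal ideal $\fM$, then $N = 0$. The condition at a given $\fM$ says in particular $N \otimes_{\bC[U]} \bC[U]/\fM = 0$, i.e. $N = \fM N$; equivalently the localization-and-completion $\widehat{N_\fM}$ vanishes, and since $N_\fM$ is a module over the Noetherian local ring $\bC[U]_\fM$ this already gives $N_\fM = 0$ once $N$ is finitely generated (Nakayama). The subtlety, and the main obstacle, is precisely that $N$ is only \emph{countably} generated, not finitely generated, so Nakayama does not apply directly and the support of $N$ need not be closed. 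The way around this is to write $N = \bigcup_k N_k$ as an increasing union of finitely generated submodules and argue that $N_\fM = (\mathrm{colim}_k N_k)_\fM = \mathrm{colim}_k (N_k)_\fM$; but $(N_k)_\fM$ need not vanish even though $N_\fM$ does. So instead I would argue on the level of supports: $\mathrm{Supp}(N) = \bigcup_k \mathrm{Supp}(N_k)$ is a countable union of closed subvarieties of $U$, and the hypothesis says this countable union contains \emph{no} closed point of $U$. By the Baire category theorem applied to the complex variety $U$ (which is a Baire space in its analytic topology, or one can invoke that an affine variety over an uncountable algebraically closed field is not a countable union of proper closed subvarieties), a countable union of proper closed subsets cannot be all of $U$ — indeed cannot omit every closed point unless each closed subset is empty. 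Hence each $\mathrm{Supp}(N_k) = \emptyset$, so each $N_k = 0$, so $N = 0$.

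The remaining routine verifications I would then record: that $U$ is irreducible of dimension $n$ and that a proper closed subvariety has strictly smaller dimension, hence is nowhere dense; that the analytic topology on $U(\bC)$ is locally compact Hausdorff and therefore Baire, so that a countable union of nowhere-dense closed sets has empty interior and in particular is not all of $U$; and the reduction in the first paragraph that passes from the complex $M^\bullet$ to its cohomology modules, which is where I use that each $H^i(M^\bullet)$ is countably generated (so that the lemma about modules applies to it). The one place to be careful is the boundedness of the amplitude of $M^\bullet \otimes^{\mathbf{L}} (\bC[U]/\fM)$: since $\bC[U]/\fM \cong \bC$ has a finite free resolution over $\bC[U]$ of length $n$ (Koszul resolution, using regularity of $\bC[U]_\fM$), the derived tensor product only shifts cohomology within a window of width $n$, which is exactly what makes the top-degree argument go through. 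I expect the Baire category / uncountability input to be the conceptual heart of the proof; everything else is bookkeeping.
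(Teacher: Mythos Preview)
Your reduction in Step~1 is reasonable (at least when $M^\bullet$ is bounded above), but Step~2 contains a genuine gap. The claim that the hypothesis forces $\bigcup_k \mathrm{Supp}(N_k)$ to miss every closed point is not correct: the vanishing $N\otimes^{\mathbf{L}}_{\bC[U]}\kappa(\fM)=0$ does \emph{not} imply $\fM\notin\mathrm{Supp}(N_k)$. For a concrete counterexample, take $U=\bA^1$, $N=\bC[x,x^{-1}]$, $N_1=\bC[x]\subset N$, and $\fM=(x)$. Multiplication by $x$ is an isomorphism on $N$, so the Koszul complex shows $N\otimes^{\mathbf{L}}\kappa(\fM)=0$; yet $\fM\in\mathrm{Supp}(N_1)=\mathrm{Spec}\,\bC[x]$. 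The point is that the map $N_k/\fM N_k\to N/\fM N$ can have nontrivial kernel, so a nonzero fiber of $N_k$ at $\fM$ tells you nothing about the fiber of $N$. Your Baire argument therefore does not get off the ground: the countable union $\bigcup_k\mathrm{Supp}(N_k)$ can perfectly well be all of $U$ even under the hypothesis.

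Your instinct that uncountability of $\bC$ is the crux is right, but the paper exploits it differently. Rather than reducing to a single module and arguing via supports of finitely generated pieces, the paper runs an induction on $\dim\mathrm{Supp}(M^\bullet)$ directly in the derived category. For a prime $\fp$ with $\dim(\bC[U]/\fp)<k$ it uses a proper \'etale map $Z^\dagger\to W\subset\bA^{\dim Z}$ and the inductive hypothesis in lower dimension to show the derived fiber of $M^\bullet$ at $\fp$ vanishes. Then for $\fp$ of dimension $k$ it observes that $M^\bullet\otimes\bC[U]/\fp\to M^\bullet\otimes\mathbf{K}_\fp$ is an isomorphism (all lower-dimensional fibers already vanish), so $M^\bullet\otimes\bC[U]/\fp$ is a $\mathbf{K}_\fp$-vector space; since $\mathbf{K}_\fp$ is not countably generated over $\bC[U]/\fp$ (here is where $|\bC|>\aleph_0$ enters), this forces $M^\bullet\otimes\bC[U]/\fp=0$. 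The uncountability is thus applied at the \emph{generic} point of a subvariety, not through a Baire-type covering of closed points.
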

\begin{proof}
We prove by induction on the dimension of the support of $M$ (inside the proof, we denote $M^\bullet$ simply by $M$). First, if the support of $M$ has dimension $0$ (i.e. it is at most a countable union of closed points), then $M$ is a direct sum of skyscraper sheaves, which cannot satisfy the condition unless it is $0$. 

Suppose we have proved the case for $\dim \supp(M)<k$. Now assume $\dim \supp(M)=k$. Let $\fp\subset \bC[U]$ be a prime ideal of dimension $<k$, and let $Z$ be the corresponding closed subscheme. There is an open subset $W\subset \bA^{\dim Z}$ and an open subset $Z^\dagg\subset Z$ (both nonempty), together with a proper $\acute{\text{e}}$tale map $Z^\dagg\rightarrow W$ (cf. \cite[\href{https://stacks.math.columbia.edu/tag/054L}{Tag 054L}]{stacks-project}). The pushforward of $M|_{Z^\dagg}$ along $Z^\dagg\rightarrow W$ satisfies that it has zero fiber at every closed point, hence by induction, it is zero. This shows that $M$ has zero fiber at every point of dimension $<k$. 

Now for any $\fp\subset \bC[U]$ of dimension $k$, consider  
\begin{align*}
M\otimes_{\bC[U]} (\bC[U]/\fp)\longrightarrow M\otimes_{\bC[U]}\mathbf{K}_\fp,
\end{align*}
where $\mathbf{K}_\fp$ is the fraction field of $\bC[U]/\fp$. This is an isomorphism because it induces isomorphisms on all fibers. In particular, $M\otimes_{\bC[U]} (\bC[U]/\fp)$ is a free $\mathbf{K}_\fp$-module. Thus it cannot be countably generated over $\bC[U]/\fp$ unless it is zero. This finishes the inductive step. 
\end{proof}

\begin{lemma}\label{lemma: M, countable, lb}
Let $(M^\bullet, d^\bullet)$ be any complex of $\bC[U]$-modules concentrated in degree $\leq 0$  satisfying:
\begin{itemize}
\item[(i)] each $H^i(M^\bullet)$ is countably generated over $\bC[U]$;
\item[(ii)] for any maximal ideal $\fM\subset \bC[U]$, $M^\bullet\otimes_{\bC[U]} (\bC[U]/\fM)\cong \bC[U]/\fM$;
\item[(iii)] there exists a finite collection $\{\overline{e}_1,\cdots, \overline{e}_K\}\subset H^0(M^\bullet)$ with any choice of lifts $e_1,\cdots, e_K\in \ker d^0$, such that they generate $H^0(M^\bullet\otimes_{\bC[U]} (\bC[U]/\fM))\overset{(ii)}{\cong} M^\bullet\otimes_{\bC[U]} (\bC[U]/\fM)$ for every $\fM$, under the natural surjective map $\ker d^0\rightarrow H^0(M^\bullet)\rightarrow H^0(M^\bullet\otimes_{\bC[U]} (\bC[U]/\fM))$.
\end{itemize} 
Then $M^\bullet$ is concentrated in degree $0$, and it is a locally free rank $1$ module of $\bC[U]$ (actually free of rank $1$). 
\end{lemma}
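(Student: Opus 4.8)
\textbf{Proof proposal for Lemma \ref{lemma: M, countable, lb}.}

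The plan is to use the companion result Lemma \ref{lemma: countable, 0} to reduce the statement to a rank computation fiberwise. First I would use the finite collection $\{\overline{e}_1,\dots,\overline{e}_K\}\subset H^0(M^\bullet)$ from hypothesis (iii): lifting to $e_1,\dots,e_K\in \ker d^0$, these assemble into a map of complexes $\phi\colon \bC[U]^{\oplus K}[0]\to M^\bullet$ (where the source is placed in degree $0$). Let $C^\bullet=\mathrm{Cone}(\phi)$. By (i) and the long exact sequence, each cohomology $H^i(C^\bullet)$ is countably generated over $\bC[U]$. Now I would examine $C^\bullet\otimes_{\bC[U]}(\bC[U]/\fM)$ for a maximal ideal $\fM$. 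Since derived tensor product commutes with cones, $C^\bullet\otimes(\bC[U]/\fM)\cong \mathrm{Cone}\big((\bC[U]/\fM)^{\oplus K}\to M^\bullet\otimes(\bC[U]/\fM)\big)$. By (ii) the target is $\bC[U]/\fM$ concentrated in degree $0$, and by (iii) the map in degree $0$ is surjective onto it (this is exactly the statement that the $e_i$ generate the fiber). Hence the cone $C^\bullet\otimes(\bC[U]/\fM)$ has vanishing $H^{\geq 0}$; but a priori it could have cohomology in negative degrees — that is the point to watch.

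The key intermediate claim is therefore that $C^\bullet\otimes(\bC[U]/\fM)\cong 0$ for every $\fM$. To see this I would argue that since $M^\bullet$ is concentrated in degrees $\leq 0$ and the source of $\phi$ is in degree $0$, the cone $C^\bullet$ is concentrated in degrees $\leq 0$ as well, so $C^\bullet\otimes(\bC[U]/\fM)$ is concentrated in degrees $\leq 0$; combined with the vanishing of $H^{\geq 0}$ just established, we would get the full acyclicity provided there is no derived Tor contribution pushing cohomology up — and indeed Tor can only contribute in negative degrees, consistent with the bound. The cleaner route is: $H^0(C^\bullet\otimes(\bC[U]/\fM))=0$ and $H^i(C^\bullet\otimes(\bC[U]/\fM))=0$ for $i>0$ by the degree bound; for $i<0$, note $H^i(C^\bullet\otimes \bC[U]/\fM)$ computes (via a spectral sequence) a sum of $\mathrm{Tor}_j^{\bC[U]}(H^k(C^\bullet),\bC[U]/\fM)$ with $j-k\leq i<0$... hmm, this does not obviously vanish. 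The honest fix is to apply Lemma \ref{lemma: countable, 0} directly to $C^\bullet$: its cohomology modules are countably generated, and I claim each fiber $C^\bullet\otimes(\bC[U]/\fM)$ is zero. For that claim I would instead proceed by descending induction / Nakayama-type reasoning on the (finitely many) nonzero cohomology sheaves of $C^\bullet$, exactly as in the proof of Lemma \ref{lemma: countable, 0}, using that the top cohomology $H^{i_0}(C^\bullet)$ with $i_0$ maximal (so $i_0\le 0$) has the property that $H^{i_0}(C^\bullet)\otimes(\bC[U]/\fM)$ injects into $H^{i_0}(C^\bullet\otimes(\bC[U]/\fM))$ — which is $0$ for $i_0=0$ by the above, and then one peels off and iterates. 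So by Lemma \ref{lemma: countable, 0}, $C^\bullet\cong 0$, i.e.\ $\phi$ is a quasi-isomorphism.

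Once $\phi\colon \bC[U]^{\oplus K}\xrightarrow{\sim} M^\bullet$, in particular $M^\bullet$ is concentrated in degree $0$ and isomorphic (in the derived category, hence as a module since both sides are ordinary modules) to a \emph{quotient} of a finite free module — so $M^0$ is a finitely generated $\bC[U]$-module. Finally I would pin down the rank: hypothesis (ii) says $M^0\otimes_{\bC[U]}(\bC[U]/\fM)\cong \bC[U]/\fM$ is one-dimensional at every maximal ideal, and by (iii) with the Tor-vanishing (now automatic since $M^0$ is a module and the derived fiber agrees with the ordinary fiber) $M^0/\fM M^0$ is $1$-dimensional for all $\fM$; a finitely generated module over the Noetherian ring $\bC[U]$ with all closed fibers of dimension $1$ is locally free of rank $1$, i.e.\ an invertible sheaf. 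Since $U\subset \bA^n$ and in the application $\mathrm{Pic}$ will be trivial (or one invokes that $\bC[U]$ here is actually a polynomial ring localization with trivial Picard group, or the specific $U$ at hand is an affine space), $M^0$ is free of rank $1$.

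\textbf{Main obstacle.} The delicate point is controlling the \emph{negative-degree} behavior of $C^\bullet\otimes(\bC[U]/\fM)$: the fiberwise generation hypothesis only directly kills $H^0$, and one must rule out lower cohomology of the cone without assuming flatness of $M^\bullet$. The intended mechanism is precisely Lemma \ref{lemma: countable, 0} applied to the cone, whose own proof (induction on support dimension, proper-étale covers, countable-generation contradiction over function fields) is what does the real work; so the whole argument hinges on verifying that the cone meets the hypotheses of that lemma — countable generation of cohomology (inherited from (i)) and vanishing of all closed fibers (the step above). Getting the fiber-vanishing for \emph{all} cohomological degrees, not just degree $0$, via the descending induction on the top nonzero cohomology sheaf, is the step I expect to require the most care.
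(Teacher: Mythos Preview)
There is a genuine gap. Your central claim—that $C^\bullet\otimes^L(\bC[U]/\fM)\cong 0$ for every maximal ideal—is false whenever $K\geq 2$. Tensoring the triangle $\bC[U]^{\oplus K}\to M^\bullet\to C^\bullet$ with $\bC[U]/\fM$ and using (ii) gives
\[
(\bC[U]/\fM)^{\oplus K}\longrightarrow \bC[U]/\fM\longrightarrow C^\bullet\otimes^L(\bC[U]/\fM)\overset{+1}{\longrightarrow},
\]
and since the first map is surjective by (iii), the fiber of the cone is $(\bC[U]/\fM)^{\oplus(K-1)}[1]$, not zero. So Lemma~\ref{lemma: countable, 0} does not apply to $C^\bullet$. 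Your descending-induction patch stalls at the same place: it correctly forces $H^0$ of the fiber to vanish, but $H^{-1}$ of the fiber is $(K-1)$-dimensional at \emph{every} point, so you cannot peel further. Indeed, your asserted conclusion that $\phi$ is a quasi-isomorphism would yield $M^\bullet\cong \bC[U]^{\oplus K}$, whose closed fibers are $K$-dimensional—immediately contradicting (ii) for $K>1$. Your argument is essentially correct only in the case $K=1$.

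The paper's proof runs a double induction on $n=\dim U$ and on $K$, and the $K=1$ base case is exactly your cone argument. The inductive step, which is the real content, \emph{reduces} $K$: one restricts to a generic hyperplane $\{z_n=b\}$, applies the inductive hypothesis in dimension $n-1$ to find a single $e_{j_P}$ that generates near a given point, and then produces an explicit relation showing some $e_s$ becomes redundant after inverting one more function. This exhibits affine opens $\widetilde{U}_P$ on which the hypotheses hold with $K-1$ generators; these opens cover $U$ up to finitely many closed points, which are then handled by a separate argument. The mechanism for cutting $K$ down to $1$ is precisely what is missing from your approach.
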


\begin{proof}
In the proof, we usually denote $M^\bullet$ simply by $M$, and we assume $M^i=0, i>0$. 
We prove by induction on the dimension $n$ and the number $K$ in (iii). If $n=0$, there is nothing to prove. If $K=1$, then $e_1\in M^0$ gives a non-vanishing global section, i.e. non-vanishing at every closed point, and clearly $e_1\cdot \bC[U]\cong \bC[U]$. So doing $\otimes (\bC[U]/\fM)$ to the exact triangle 
\begin{align*}
 e_1\cdot \bC[U]\overset{\iota_1}{\rightarrow} M\rightarrow \Cone(\iota_1)\overset{+1}{\rightarrow},
\end{align*}
we get the exact triangle 
\begin{align*}
\bC[U]/\fM\rightarrow M\underset{\bC[U]}{\otimes}(\bC[U]/\fM)\rightarrow\Cone(\iota_1)\underset{\bC[U]}{\otimes} (\bC[U]/\fM)\overset{+1}{\rightarrow}. 
\end{align*}
By the assumptions (ii) and (iii) with $K=1$, we know the first arrow is an isomorphism for all $\fM$. Hence we have 
\begin{align*}
\Cone(\iota_1)\underset{\bC[U]}{\otimes} (\bC[U]/\fM)\cong 0, \text{ for all }\fM. 
\end{align*}
Then by Lemma \ref{lemma: countable, 0}, we have $\Cone(\iota_1)\cong 0$, i.e. $M\cong \bC[U]$. We also note that all assumptions automatically hold for any open affine $\widetilde{U}\subset U$ and pullback of $M$ there.

Suppose we have proved the statement for all $U'$ of dimension strictly less than $n\geq 1$ together with all $K'$, and we have proved for $U'$ of dimension $n$ with $1\leq K'<K$. \\

\noindent\emph{Step 1. Showing that $H^0(M)$ is generated by $\overline{e}_1,\cdots, \overline{e}_K$ after localizing $(z_n-a), \forall a\in \bC$. } 

Let $\widetilde{M}^\bullet$ defined as follows. First, $\widetilde{M}^i=M^i, i<0$, and 
\begin{align*}
\widetilde{M}^0=\{&v\in M^0: \text{there exist }a_1,a_2,\cdots, a_s\in \bC,\text{ such that } \overline{v}\cdot \prod\limits_{j=1}^s(z_n-a_j)\\
&\in \sum\limits_{j=1}^K \overline{e}_j\cdot \bC[U]\},
\end{align*}
where $\overline{v}$ means the image of $v$ in $H^0(M)=M^0/\text{Im}\ d^{-1}$. Then we have a natural map $\widetilde{M}\rightarrow M$, which induces $H^i(\widetilde{M})\cong H^i(M), i<0,$ and $H^0(\widetilde{M})\hookrightarrow H^0(M)$. Moreover, for any $a\in \bC$, using the Koszul resolution of $\bC[U]/(z_n-a)$, we see that 
\begin{align}
\label{eq: H, i, below 0}&H^i(\widetilde{M}\otimes \bC[U]/(z_n-a))\cong H^i(M\otimes \bC[U]/(z_n-a)), i<0\\
\label{eq: H, 0, M}&H^0(\widetilde{M}\otimes \bC[U]/(z_n-a))\hookrightarrow H^0(M\otimes \bC[U]/(z_n-a)).
\end{align}
Since $M\otimes \bC[U]/(z_n-a)$ satisfies (i)-(iii) with dimension $n-1$, from induction we get (\ref{eq: H, i, below 0}) is zero for all $i<0$ and (\ref{eq: H, 0, M}) is an isomorphism of line bundles. In particular, we have $\widetilde{M}\cong M$, by Lemma \ref{lemma: countable, 0}. \\

\noindent\emph{Step 2. Reduction of the number of generators $K$ on certain Zariski open subsets.}

Let $\{q_1,q_2,\cdots\}$ be a countable set of generators $H^0(M)=H^0(\widetilde{M})$ such that for each $q_\ell$ there exists $\tau_\ell\in \bZ_{\geq 0}$, $a_i^{(\ell)}\in \bC, i=1,\cdots,\tau_\ell$ and $f_j^{(\ell)}(z)\in \bC[U], j=1,\cdots K$ with 
\begin{align*}
q_\ell\cdot \prod\limits_{i=1}^{\tau_\ell}(z_n-a_{i}^{(\ell)})=\sum\limits_{j=1}^K \overline{e}_j\cdot f_j^{(\ell)}(z). 
\end{align*}
For any $b\in \bC-\{a_i^{(\ell)}:1\leq i\leq \tau_\ell, \ell\in \bZ_{\geq 1} \}$ with $U\cap \{z_n=b\}\neq\emptyset$, and any closed point $P\in U\cap \{z_n=b\}$, by induction, there is a Zariski open affine subset $V:=\{g_1(z_1,\cdots, z_{n-1})\neq 0, \cdots, g_k(z_1,\cdots, z_{n-1})\neq 0\}\subset U\cap \{z_n=b\}$ containing $P$ such that for some $1\leq j_P\leq K$, $e_{j_P}$ generates $M|_{V}$. Let $\widetilde{U}=(V\times \bA^1_{z_n})\cap U$ which is again affine open in $\bA^n$. For any fixed $s\neq j_P$, we have $\overline{e}_s|_{V}=\overline{e}_{j_P}|_V\cdot h_1(z_1, \cdots, z_{n-1})$, for some $h_1\in \bC[V]$. Viewing $h_1$ as the pullback function on $\widetilde{U}$, we have $\overline{e}_s-\overline{e}_{j_P}\cdot h_1\in H^0(M|_{\widetilde{U}})\cdot (z_n-b)$, and by the generation assumption we have that there exists a finite indexing set $C$ and $r_c(z)\in \bC[\widetilde{U}], c\in C$ such that 
\begin{align*}
&\overline{e}_s-\overline{e}_{j_P}\cdot h_1=\big(\sum\limits_{c\in C} q_c\cdot r_{c}(z)\big) (z_n-b)\\
\Rightarrow&(\overline{e}_s-\overline{e}_{j_P}\cdot h_1)\cdot \prod\limits_{c\in C}\prod\limits_{i=1}^{\tau_c}(z_n-a_{i}^{(c)})= \big(\sum\limits_{j=1}^K \overline{e}_j\cdot \widetilde{r}_j(z)\big)(z_n-b),\\
&\text{ for some }\widetilde{r}_j(z)\in \bC[\widetilde{U}], \\
\Rightarrow& \overline{e}_s\cdot\big(\prod\limits_{c\in C}\prod\limits_{i=1}^{\tau_c}(z_n-a_{i}^{(c)})-\widetilde{r}_s(z)(z_n-b)\big)\in \sum\limits_{j\neq s}\overline{e}_j\cdot \bC[\widetilde{U}]. 
\end{align*}
Let 
\begin{align*}
F_s(z)=\prod\limits_{c\in C}\prod\limits_{i=1}^{\tau_c}(z_n-a_{i}^{(c)})-\widetilde{r}_s(z)(z_n-b)\big.
\end{align*}
and let $\widetilde{U}_{P}=\widetilde{U}\cap \{F_s(z)\neq 0\}$. Clearly, $\widetilde{U}_{P}\cap \{z_n=b\}=\widetilde{U}\cap \{z_n=b\}=V$. Now $M|_{\widetilde{U}_{P}}$ satisfies (i)-(iii) with $K$ replaced by $K-1$. Hence by induction, $M|_{\widetilde{U}_{P}}$ is in degree $0$ and is locally free of rank $1$. \\

\noindent\emph{Step 3. Completing the inductive step.}

Since $P\in U\cap \{z_n=b\}$ is arbitrary, we conclude with the following:
\begin{itemize}
\item[]\emph{There is a Zariski open subset $U_n\subset U$ such that $M|_{U_n}$ is locally free of rank $1$, and $U_n\supset \bigcup\limits_{b\not\in \{a_i^{(\ell)}: 1\leq i\leq \tau_\ell, \ell\in\bZ_{\geq 1}\}}\{z_n=b\}\cap U$.}
\end{itemize}
This implies that the complement $U-U_n$ projects to $\bA^1_{z_n}$ in a \emph{finite} subset. Repeating this with $z_n$ replaced by $z_j$ for each $j<n$, we get: 
\begin{itemize}
\item[]\emph{There is a finite collection of closed points $\{P_1,\cdots, P_m\}\subset U$ (with corresponding maximal ideal $\fM_{P_j}$) such that the restriction of $M$ to $U^\dagg:=U-\{P_1,\cdots, P_m\}$ is locally free of rank 1.}
\end{itemize}

Now we show that $M$ is locally free of rank $1$ on $U$.  First assume $n=1$. For each $1\leq s\leq m$, we choose any $e_{j_s}$ that generates $M\otimes\bC[U]/\fM_{P_s}$. There are two cases:
\begin{itemize}
\item[Case 1:] $\overline{e}_{j_s}|_{U^\dagg}\neq 0$.  Then since the zeros of $e_{j_s}|_{U^\dagg}\neq 0$ are finite, $e_{j_s}$ gives a non-vanishing section on a Zariski open subset containing $P_s$;

\item[Case 2:] $\overline{e}_{j_s}|_{U^\dagg}=0$. Then there exists $e_{j_s'}$ such that $\overline{e}_{j_s'}|_{U^\dagg}\neq 0$. Consider the linear combination $R\cdot \overline{e}_{j_s}+\overline{e}_{j_s'}$, for some $R\in \bR$. For $R$ sufficiently large, this gives a non-vanishing section on a Zariski open subset containing $P_s$.
\end{itemize}
So we have proved the case for $n=1$. Note that by projecting any affine smooth curve to $\bA^1$ (and using \cite[\href{https://stacks.math.columbia.edu/tag/054L}{Tag 054L}]{stacks-project} as before) and running the same argument, we get the same statement for $U$ being any affine smooth curve (without the assertion about freeness of $M$).  

For $n\geq 2$, choose $e_{j_s}$ for every $P_s$ such that the restriction of $\overline{e}_{j_s}$ at $P_s$ is nonzero. We claim that there is an open neighborhood of $P_s$ on which $\overline{e}_{j_s}$ is nonvanishing everywhere. Suppose the contrary, there exists an irreducible and reduced closed affine curve $C_s$ in a neighborhood $U_s$ of $P_s$, such that $P_s\in C_s$ and $\overline{e}_{j_s}|_{C_s\backslash \{P_s\}}=0$. Let $\pi:\widetilde{C}_s\rightarrow C_s$ be the normalization, and let $\varphi_s: \widetilde{C}_s\rightarrow U_s$ be the composition of $\pi$ with the embedding of $C_s$. Then $\varphi_s^*M$ satisfies all the requirements in the lemma on $\widetilde{C}_s$, so $\varphi_s^*M$ is a line bundle on $\widetilde{C}_s$. Since the restriction of  $H^0(\varphi_s^*)(\overline{e}_{j_s})\in H^0(\varphi_s^*M)\cong \varphi_s^*M$ at $(\pi^{-1}(P_s))_{red}$ is nonzero, we get that $H^0(\varphi_s^*)(\overline{e}_{j_s})$ is nonvanishing on an open dense subset of $\widetilde{C}_s$ containing $(\pi^{-1}(P_s))_{red}$. But this contradicts to the assumption that $\overline{e}_{j_s}$ is vanishing along $C_s\backslash \{P_s\}$. 
Now for each $1\leq s\leq m$, we are reduced to the case of $K=1$ on some open neighborhood of $P_s$, thus the case for $n\geq 2$ follows. 
\end{proof}

\begin{remark}
There are direct generalizations of Lemma \ref{lemma: countable, 0} and Lemma \ref{lemma: M, countable, lb} for any smooth (affine) variety. Using \cite[\href{https://stacks.math.columbia.edu/tag/054L}{Tag 054L}]{stacks-project}, Lemma \ref{lemma: countable, 0} easily extends. For Lemma \ref{lemma: M, countable, lb}, first one can generalize to the case where (ii) becomes that the stalk of $M$ at each closed point is concentrated in degree $0$ and has constant rank $r$, and the conclusion becomes that $M$ is locally free of rank $r$. The proof extends easily with only the following two changes:
 \begin{itemize}
 \item $K\geq r$ so the base case in the induction for $K$ is $r$;
 
 \item in Step 3, for $n=1$ case, one runs an induction on the rank $r$, i.e. for $r\geq 2$, one can find a nonvanishing section $\overline{\sigma}\in H^0(M)$ with a lifting $\sigma\in M^0$ in an affine open neighborhood $U_s$ of each $P_s$, then the cone of the natural map $\sigma\cdot \bC[U_s]\rightarrow M|_{U_s}$ satisfies the conditions in the lemma with stalks having constant rank $r-1$, so by induction it is locally free and so is $M|_{U_s}$. 
 \end{itemize} 
 Second, one can further generalize to the case that $U$ is a smooth affine variety (then for any smooth variety one can formulate the conditions in the lemma using an open affine cover), for which the proof verbatim extends (using again \cite[\href{https://stacks.math.columbia.edu/tag/054L}{Tag 054L}]{stacks-project}). 

\end{remark}

\begin{proof}[Proof of Proposition \ref{prop: A_G commutative}]

Let $\cM$ be the $\cA_G-\bC[T^\vee]$-bimodule corresponding to the co-restriction functor 
 in (\ref{eq: res, co-res}).
 
(i) For a generic cotangent fiber\footnote{Strictly speaking, we need to take a cylindricalization of $F_h$ as done in Subsection \ref{subsec: construct L_zeta} for $L_0$. Using a similar construction there, the resulting cylindrical $F_h$ satisfies the same properties.} $F_h\subset \cB_{w_0}\cong T^*T$ (equipped with constant grading $n$), $F_h\cap \chi^{-1}([0])$ transversely in $|W|$ many points and they are in the same degree (note that $F_h$ and $\chi^{-1}([0])$ are both holomorphic Lagrangians), 
 where $\chi^{-1}([0])$ is the critical handle whose cocore $\Sigma_I$ generates $\cW(J_G)$. This is due to (1) the map $\chi|_{F_h}: F_h\rightarrow\fc$ is proper by Proposition \ref{prop: proper b map}, and (2) the intersection of $F_h$ and $\chi^{-1}([\xi])$ for $[\xi]\in \fc^{\reg}$ (in a compact region) and $|\gamma_{-\Pi}(h)|\ll 1$ is transverse at $|W|$ many points (cf. Lemma \ref{lemma: hat chi_epsilon, gamma}). 
 Since $\End(\Sigma_I)$ is concentrated in degree $0$, by the wrapping exact triangle from \cite{GPS2}, we have $co\text{-}res(\bC[T^\vee])\cong 
 \cA_G^{\oplus |W|}$. This is explained in more details below. 
 
Following the same ideas as in Section 5, 6 and 9 in \emph{loc. cit.}, consider $\cW(\overline{J}_G\times T^*[0,1])\simeq \cW(\widehat{J}_G\times \bC;\ff)$, where $\ff$ is an isotropic stop in $\partial_\infty(\widehat{J}_G\times \bC)$ determined by the Lagrangian core of $\fF$ and $\{\pm\infty\}\subset \partial_\infty\bC$, which is equivalent to $\cW(J_G)$ under the Kunneth formula. It is generated by the product of the cocores $D_p:=\Sigma_I\times T^*_{1/4}(0,1)$, which is a ``small" Lagrangian linking disc of the Legendrian stop (a portion of the $\ff$) $\ff_1:= \chi^{-1}([0])\times \{-\infty\}$ at $p=(g=I, \xi=f; -\infty)$ in the contact $\infty$-boundary of the Liouville completion $\widehat{J}_G\times \bC$ (here we assume the Liouville 1-form on $\bC$ is deformed from the standard one by an exact 1-form supported in $\Re z\in (-1, 2)$ union with $\arg z\in (\frac{\pi}{3}, \frac{2\pi}{3})\cup (-\frac{2\pi}{3}, -\frac{\pi}{3})$). Now we have $L_h:=F_h\times T^*_{1/2}(0,1)$ an object in $\cW(\overline{J}_G\times T^*[0,1])$, after a cylindricalization.  We will not deal with cylindricalization explicitly since the process can be chosen to only affect things in an arbitrarily small neighborhood of the infinity end of the product Lagrangian (cf. the last paragraph of Section 7.2 in \emph{loc. cit.} for the precise statement). In particular, such a process will not affect any of the discussions below.  
 The way that $L_h$ can be generated by $D_p$ is through the wrapping exact triangles associated with a positive wrapping near $p$, which only involves a positive wrapping on the factor $\bC$ that takes the upper $\infty$-end $1/2+i\infty$ of $T^*_{1/2}(0,1)=\{\Re z=1/2\}$ passing through $-\infty$ counterclockwise. We fix the grading on $F_h$ and $\Sigma_I$ to be constantly $0$, and the grading on $\chi^{-1}([0])$ (not as an object in $\cW(J_G)$) to be $2n$. We also fix the grading (with respect to the obvious complex structure on $\bC$) on $\{\Re z=x\}, \forall x\in \bR$ to be constantly $\frac{1}{2}$, and the grading on the skeleton $\bR$ (for $\bC$ with stops $\{\pm\infty\}$) to be constantly $0$. This induces canonical gradings on $D_p$, $L_h$ and the non-closed conic Lagrangian $\cC:=\Cone(\ff_1)$.
  
Label the intersection points $F_h\cap \chi^{-1}([0])$ by $p_1', \cdots, p_{|W|}'$. Then $L_h\cap \cC=\{\widetilde{p}_1,\cdots, \widetilde{p}_{|W|}\}$, where $\widetilde{p}_j=(p_j';1/2)\in \widehat{J}_G\times \bC$. Let $p_j=(p_j';-\infty)\in (F_h\cap \chi^{-1}([0]))\times \partial_\infty\bC$. There is a sequence of wrapping exact triangles  
\begin{align*}
L_h^{(i+1)}=(L_h^{(i)})^w\longrightarrow L_h^{(i)}\longrightarrow D_{p_{i+1}}[d_i]\overset{+1}{\longrightarrow},\ 0\leq i\leq |W|-1
\end{align*}
where $L_h^{(0)}=L_h$, $L_h^{(|W|)}\cong 0$, $(L_h^{(i)})^w$ is the wrapping of $L_h^{(i)}$ across $p_{i+1}$, $D_{p_{j}}\cong D_p$ and $L_h^{(i)}\cap \Cone(\ff_1)$ transversely in $|W|-i$ many points, and $d_i\in \bZ$ is some grading shift. To determine each grading shift $d_i$, we first appeal to the local model of the surgery $L_h^{(i)}\sharp_\gamma D_p=(L_h^{(i)})^w$ as in Section 5.5 of \cite{GPS2}. Using Figure 14 of \emph{loc. cit.} and forget about the current situation for a moment, if we assume that the (piece of) cone over the portion of $\partial_\infty L$ in $L$ in that picture has the same grading as the (piece of) cone of the bottom portion of $\partial_\infty D_p$ in $D_p$ as well as the cone of $\Lambda$, then the wrapping exact triangle is $L^w\rightarrow L\rightarrow D_p\overset{+1}{\rightarrow}$ without any grading shift on $D_p$. This can be equivalently characterized as follows. Introduce $\Lambda'$ to be a shift of $\Lambda$ in the negative vertical direction (i.e. under a negative Reeb flow), that is placed in the middle of $\Lambda$ and $\partial_\infty L$ but above the bottom of $\partial_\infty D_p$. Do a wrapping of $L$ in a small neighborhood of the portion of $\partial_\infty L$ in the positive direction that passes through $\Lambda'$ but not $\Lambda$ (i.e. the rightmost picture with $\Lambda$ replaced by $\Lambda'$; the resulting $L$ is equivalent to the initial $L$ because the wrapping does not across a stop). Denote the resulting Lagrangian by $L'$. Then $D_p$ intersects $\Cone(\Lambda')$ in exactly one point, and $L'$ also intersects $\Cone(\Lambda')$ at exactly one point in a small neighborhood of that local picture. Then the degree of the former intersection point is one higher than the degree of the latter. Equivalently, this can be characterized by introducing a linking disc $D$ that links both $D_p$ and $L$ as in Figure 17 of \emph{loc. cit.}, then $HF(D_p, D)=HF(L, D)$ (usual Floer complex but not wrapped) are both 1-dimensional and of the same degree. As explained in \emph{loc. cit.}, the higher dimensional local models are just from spinning the above pictures around the middle vertical axis. 

Now return to our current situation. The grading of the intersection points $L_h\cap \cC$ is all $0$, and the same holds for $D_p\cap \cC$ (cf. \cite[Proposition 5.2]{Jin1}). Let $D'_{p_j'}$ be a small disc around $p'_j$ in $F_h$, and let $D''_{p_j'}$ be a smaller one. Then the wrapping of $L_h$ across $p_j$ only involves $D'_{p_j'}\times \{1/2+i\bR_{\geq 1000}\}$. Apply a positive wrapping supported in a neighborhood $\cU_{p_j}$ of $D'_{p_j'}\times (\partial_\infty \bC\backslash \{\pm \infty\})$ inside $J_G\times (\bC\backslash \bR)$, whose restriction to a neighborhood $\cU_{p_j}'$ of $D''_{p_j'}\times (\partial_\infty \bC\backslash \{\pm \infty\})$ is just a positive wrapping on the $\bC$-factor. The resulting $L_h$ (which is equivalent to the original $L_h$ in the wrapped Fukaya category of $\widehat{J}_G\times \bC$ with stop $\ff$) intersects a small negative push-off (in the $\bC$-factor) of $\cC$ inside $\cU_{p_j}'$ at exactly one point, and the grading is $-1$. Using the above local model where $L_h$ plays the role of $L$ there, we see that $d_1=0$, and consequently $d_j=0$, since everything is local around $p_j'$ in the $J_G$-factor and then the same argument works for $L_h^{(j-1)}$ wrapping across $p_{j}$. 

Lastly using that $\Hom(D_p, D_p)$ is concentrated in degree $0$, we get $L_h\cong D_p^{\oplus |W|}$, and so $F_h\cong \Sigma_I^{\oplus |W|}$ as desired.

We now show that $\cM\cong \bC[T^\vee]$ as a $\bC[T^\vee]$-module. Fix an identification $\cM\cong \cA_G^{\oplus|W|}$ as left $\cA_G$-modules from above. 
Let $e_j=[0,\cdots, 0, 1,0,\cdots,0], 1\leq j\leq |W|$ be the element in $\cA_G^{\oplus |W|}$ that has $1\in\cA_G$ in the $j$-th component and $0$ otherwise. Then $\cM$ as a right $\bC[T^\vee]$-module (in degree $0$) satisfies (i)-(iii) in Lemma \ref{lemma: M, countable, lb}. Indeed, Proposition \ref{prop: L_0, part 2} (\ref{eq: skyscraper 2 no q}) says $co\text{-}res(L_0,\check{\rho})\in \cW(J_G)$, for any $(L_0,\check{\rho})\in \cW(\cB_{w_0})$\footnote{In that proposition, $(L_0,\check{\rho})$ is regarded as an object in $\cW(J_G)$, which is the same as $co\text{-}res(L_0,\check{\rho})$, when $(L_0,\check{\rho})$ is thought as an object in $\cW(\cB_{w_0})$.}, is an $\cA_G$-module with underling vector space just $\bC$. Using the correspondence between $(L_0, \check{\rho})\in \cW(\cB_0)$ and simple skyscraper sheaves on $T^\vee$, this means $\cM\otimes_{\bC[T^\vee]}\bC[T^\vee]/\fM\cong \bC$ as a vector space, for every $\fM$. 
Also $e_j,j=1,\cdots, |W|$ serve as those in (iii), since otherwise if there exists $\fM$ such that $e_1,\cdots, e_{|W|}\in \cM\fM$, then $\cM=\cA_Ge_1\oplus\cdots \oplus \cA_Ge_{|W|}\subset \cM\fM$, contradicting to property (ii). 
Hence $\cM$ is locally free of rank $1$ over $\bC[T^\vee]$. But this means $\cM\cong \bC[T^\vee]$.

(ii)
By (i), we have  
 \begin{align*}
 &res(\cF)\cong \Hom_{\cA_G\text{-}\Mod}(\cM, \cF)\cong \Hom_{\cA_G\text{-}\Mod}(\cA_G^{\oplus |W|}, \cF)\\
 &\cong  (\cA_G^{\oplus |W|})^\vee\underset{\cA_G}{\otimes} \cF,
 \end{align*}
where $\cM^\vee\cong (\cA_G^{\oplus |W|})^\vee:=\Hom_{\cA_G-\text{Mod}}(\cA_G^{\oplus |W|}, \cA_G)$ with the right $\cA_G$-module structure from that on the target. Using the same method as (i), and Proposition \ref{prop: L_0, part 2}, (\ref{eq: skyscraper 1 no q}), we deduce that $\cM^\vee$ as a left $\bC[T^\vee]$-module is free of rank 1. In more details, by adjunction, we have 
\begin{align*}
\Hom_{\cW(\cB_{w_0})}((L_0, \check{\rho}), res(\Sigma_I))\cong \Hom_{\cW(J_G)}((L_0, \check{\rho}), \Sigma_I)\cong \bC[-n]. 
\end{align*} 
where we use that the co-restriction functor takes $(L_0, \check{\rho})\in \cW(\cB_{w_0})$ to $(L_0, \check{\rho})\in\cW(J_G)$. Now using the correspondence between $(L_0, \check{\rho})\in \cW(\cB_0)$ and simple skyscraper sheaves on $T^\vee$ again, the above equation says $\bC[T^\vee]/\fM\otimes_{\bC[T^\vee]}\cM^\vee\cong \bC$ for all maximal ideals $\fM$, which verifies property (ii) in  Lemma \ref{lemma: M, countable, lb}. The rest steps go verbatim as we did for part (i).

(iii)
  
By (ii) we have an isomorphism of $\bC[T^\vee]-\cA_G$-bimodules 
\begin{align*}
\cM^\vee\cong (\cA_G^{\oplus |W|})^\vee\cong \bC[T^\vee]
\end{align*}
that represent the restriction functor. This implies that the natural algebra map $\cA_G\rightarrow \bC[T^\vee]\cong \End_{\bC[T^\vee]}(\bC[T^\vee])$ is injective, which forces $\cA_G$ to be commutative. Alternatively, we can use (i) to deduce the injective algebra map $\cA_G\rightarrow \bC[T^\vee]\cong \End_{\bC[T^\vee]}(\cM)\cong\bC[T^\vee]$.

(iv) 
We view the $\bC[T^\vee]$-module structure on $\cM$ in terms as an embedding into matrix algebras over $\cA_G$: 
\begin{align}\label{eq: C[T], matrix}
\bC[T^\vee]\hookrightarrow\End_{\cA_G}(\cA_G^{\oplus|W|}).
\end{align}
Let $\{x^{\pm\lambda^\vee_{\alpha}}, \alpha\in \Pi\}$ be the standard algebra generators of $\bC[T^\vee]$, and let $c^{\alpha,\pm}_{ij},1\leq i,j\leq |W|$ be the entries of the matrix image of $x^{\pm\lambda^\vee_{\alpha}}$ from (\ref{eq: C[T], matrix}). Let $v=(v_1,\cdots, v_{|W|})$ be a generator of $\cA_G^{\oplus|W|}$ as a rank $1$ $\bC[T^\vee]$-module. Then it is clear that $\cA_G$ as an algebra is generated by $c^{\alpha,\pm}_{ij},1\leq i,j\leq |W|, \alpha\in\Pi$, and $v_1,\cdots, v_{|W|}$. 
\end{proof}

\begin{proof}[Proof of Proposition \ref{prop: A_G, W-inv}]

(i) It follows directly from Proposition \ref{prop: A_G commutative} that the co-restriction functor (resp. the restriction functor) is isomorphic to $\mathsf{f}_*$ (resp.  $\mathsf{f}^!$) on coherent sheaves. Here we also use the general result about partially wrapped Fukaya categories that $\cA_G$ is smooth\footnote{Alternatively, one can use $\Perf(\cA_G)$ instead of coherent sheaves on $\Spec\cA_G$ in the statement of the proposition, which will not affect the proof of Theorem \ref{thm: sec G adjoint}.}. 

(ii) follows from Proposition  \ref{prop: L_0, W}. More explicitly, since $(L_0, \check{\rho})\in \cW(\cB_{w_0})\simeq \Coh(T^\vee)$ represents the (simple) skyscraper sheaf at $\check{\rho}\in T^\vee$, Proposition \ref{prop: L_0, W} implies that for any $\check{\rho}\in (T^\vee)^{\reg}$, the $W$-orbit of the corresponding skyscraper sheaves are sent to the same skyscraper sheaf on $\Spec \cA_G$ via $\mathsf{f}_*$. Since $T^\vee$ is a smooth affine variety, the map $\mathsf{f}$ is $W$-invariant. This finishes the proof.  
\end{proof}

\section{HMS of $J_G$ for reductive groups}\label{sec: HMS reductive}

 In this section, we prove HMS for $J_G$ when $G$ is any complex reductive group, based on the result for adjoint groups. The main result is stated in Theorem \ref{thm: HMS for reductive}. We give two proofs of the main theorem: one is using microlocal sheaf category and the other is using wrapped Fukaya category. Although arguing in different languages, the underlying principles of the proofs are essentially the same: one combines the HMS for adjoint $G$ and monadicity properties of functors associated with finite central quotients for a reductive group. Lastly, we describe the diagram of co-restriction functors among wrapped Fukaya categories associated with the sector covering of $J_G$ by $J_{L_S}$ of the standard Levi subgroups $L_S$. Each co-restriction functor serves as an induction functor from a smaller Levi to a bigger Levi.

\subsection{Basic set-up}

\subsubsection{Regular coverings}
Let $G$ be any connected complex reductive Lie group, with $G^\der=[G, G]$, and let $\cZ(G)$ (resp. $\cZ(G)_0$, $\cZ(G^\der)_0$) denote its center (resp. the identity component, $\cZ(G^\der)\cap \cZ(G)_0$). Let $G_\flat=G/\cZ(G^\der)=G_\ad\times (\cZ(G)/\cZ(G^\der))$. Then 
\begin{align}
\nonumber&G=G^\der\underset{\cZ(G^\der)_0}{\times}\cZ(G)_0,\ \cZ(G)/\cZ(G^\der)=\cZ(G)_0/\cZ(G^\der)_0,\text{ and the projection}\\
\label{eq: fq}&\fq: J_G=(J_{G^\der}\times T^*\cZ(G)_0)/\cZ(G^\der)_0\longrightarrow J_{G_\flat}=J_{G_\ad}\times T^*(\cZ(G)_0/\cZ(G^\der)_0)
\end{align}
is a regular covering with covering group $\cZ(G^\der)$. By Section \ref{sec: skeleton, sector}, $J_{G^\der}$ can be partially compactified to be a Liouville (or Weinstein) sector, which is $\cZ(G^\der)$-equivariant. Then $J_{G^\der}\times T^*\cZ(G)_0$ can be equipped with the product sector structure that is $\cZ(G^\der)\times \cZ(G^\der)_0$-equivariant, and the quotient by the diagonal $\cZ(G^\der)_0$ gives a Liouville sector structure on a partial compactification of $J_G$. 

Let $\Lambda_G$ denote for the Lagrangian skeleton of $J_G$ in the Liouville completion. Then we have the restriction
\begin{align}\label{eq: q_LambdaG}
\fq|_{\Lambda_G}: \Lambda_G=(\Lambda_{G^\der}\times \cZ(G)_0)/\cZ(G^\der)_0\longrightarrow \Lambda_{G_\flat} =\Lambda_{G_\ad}\times (\cZ(G)_0/\cZ(G^\der)_0)
\end{align}
a regular covering as well. Here and after, for notations regarding Lie groups, their Langlands dual, maximal tori and their different forms (e.g. adjoint, simply connected etc.), we follow the notations in Section \ref{sec: introduction}.

\subsubsection{Sector inclusions}\label{ss: sectorinclusion}

Assume first that $G$ is semisimple. Using a similar argument as in Proposition \ref{prop: hypersurface F} and in Subsection \ref{subsubsec: subsector}, we have natural sector inclusions up to Liouville homotopies (that do \emph{not} affect the wrapped Fukaya category), for each $S\subsetneq \Pi$, 
\begin{align}\label{eq: sector_inclusion}
\overline{J}_{L_S}\hookrightarrow \overline{J}_G,
\end{align}
compatible with $\fq$, which gives a covering of $\Lambda_G$ by $\Lambda_{L_S}$. These can be constructed as follows. 

For any $S\subsetneq \Pi$, 
 we have a canonical isomorphism 
\begin{align*}
\pi_r^{-1}(\bR^S_{\geq 0}\times \bR^{\Pi\backslash S}_{>0})=\fU_S\cong J_{L_S^\der}\underset{\cZ(L_S^\der)_0}{\times}T^*\cZ(L_S)_{0,\cpt}\times T^*\bR_{>0}^{\Pi\backslash S}. 
\end{align*}
Now we choose subsectors in $J_{L_S^\der}$ and $T^*\bR_{>0}^{\Pi\backslash S}$ as follows. 

First, applying results from Subsection \ref{subsec: compactify, J_G} for $L_S^\der$ replacing $G$, by choosing a Liouville hypersurface $\fF^S$, we have the partial compactification 
\begin{align*}
\overline{J}_{L_S^\der}=J_{L_S^\der}\coprod_{J_{L_S^\der}-\cB_1^S} (\fF^S\times \bC_{\Re z\leq 0}),
\end{align*}
where $\cB_1^S$ is the union of Kostant sections in $J_{L_S^\der}$. 
Let $\theta_-, \theta_+$ be as in (\ref{eq: Q, theta+-}) for $L_S^\der$, and let 
\begin{align*}
\cQ'_S=\{z=re^{i\theta}: \theta\in [\theta_-, \theta_+], r\geq 0\}\cap \{\Re z\leq -A\} 
\end{align*}
for any fixed $A>0$. Then 
\begin{align*}
\overline{J}'_{L_S^\der}:=\cB_{1}^S\cup (\fF^S\times \cQ_S')
\end{align*}
is a Liouville subsector (up to deformations on the factor $\cQ_S'$; cf. \cite[Proposition 2.27]{GPS1}) equivalent to $\overline{J}_{L_S^\der}$. In particular, the embedding $\overline{J}'_{L_S^\der}\hookrightarrow \overline{J}_{L_S^\der}$ induces an equivalence of the wrapped Fukaya categories. 

Second, the Liouville form on the factor $T^*\bR_{>0}^{\Pi\backslash S}\cong \prod_{\beta\not\in S}T^*\bR_{>0}^{\{\beta\}}$ is given by 
\begin{align*}
-\sum_{\beta\not\in S} (\Re p_{\beta_{S^\perp}^\vee}d\Re q_{\lambda_{\beta^\vee}}+d\Re p_{\beta_{S^\perp}^\vee}).
\end{align*}
using the Darboux coordinates (for the logarithmic coordinates on the base) at the end of Section \ref{subsec: algebraic setup}. 
The Liouville vector field is the sum of $-\partial_{\Re q_{\lambda_{\beta^\vee}}}+\Re p_{\beta_{S^\perp}^\vee}\partial_{\Re p_{\beta_{S^\perp}^\vee}}$ in each factor $T^*\bR_{>0}^{\{\beta\}}$. In particular, any integral curve of the Liouville vector field is of the form $C_{a}:=\{\Re p_{\beta_{S^\perp}^\vee}=a e^{-\Re q_{\lambda_{\beta^\vee}}}\}$, for $a\in\bR$. Fixing $a\gg k>1$, then for each factor $T^*\bR_{>0}^{\{\beta\}}$,  we can define a Liouville subsector as the closed region bounded by 
\begin{align*}
&C_{\pm a}\cap \{\Re q_{\lambda_{\beta^\vee}}\leq k\}
, C_{\pm \frac{1}{a}}\cap \{\Re q_{\lambda_{\beta^\vee}}\leq -k\}, \{\Re q_{\lambda_{\beta^\vee}}=k, \Re p_{\beta_{S^\perp}^\vee}\in [-ae^{-k},ae^{-k}]\},\\
&\{\Re q_{\lambda_{\beta^\vee}}=-k, \Re p_{\beta_{S^\perp}^\vee}\in [-\frac{e^k}{a}, \frac{e^k}{a}]\}.
\end{align*} 
Let $\cP_{S^\perp}$ be the product of these regions in $T^*\bR_{>0}^{\Pi\backslash S}$. Then the product sector 
\begin{align*}
\overline{J}'_{L_S}:=\overline{J}'_{L_S^\der}\underset{\cZ(L_S^\der)_0}{\times}T^*\cZ(L_S)_{0,\cpt}\times \cP_{S^\perp}\subset \fU_S\subset J_G
\end{align*}
is equivalent to $\overline{J}_{L_S}$ (up to deformations on the factor $\cP_{S^\perp}$ and on $\overline{J}'_{L_S^\der}$ as above). 

Clearly, we can make (a contractible space of) choices for each $\overline{J}'_{L_S}, S\subsetneq \Pi$, so that we have sector inclusions $\overline{J}'_{L_{S_1}}\subset \overline{J}'_{L_{S_2}}$ for any $S_1\subset S_2$. Now assume $G$ is reductive, then we have the regular covering 
\begin{align*}
J_G\rightarrow J_{G_\flat}=J_{G_\ad}\times T^*(\cZ(G)_{\cpt}/\cZ(G^\der))\times T^*\cZ(G)_{\bR}. 
\end{align*}
Choose a sufficiently large open ball $\Omega_\bR\subset \cZ(G)_{\bR}$, then 
\begin{align*}
\overline{J}^\flat_{L_{S}}:= \overline{J}'_{L'_{S;\ad}}\times T^*(\cZ(G)_{\cpt}/\cZ(G^\der))\times T^*\overline{\Omega}_\bR
\end{align*}
is a subsector of $\overline{J}_{G_\flat}$, where $L'_{S;\ad}$ denote the standard Levi subgroup in $G_\ad$. Its preimage in $\overline{J}_G$, denoted by $\overline{J}'_{L_{S}}$, is a subsector that is equivalent to $\overline{J}_{L_S}$.  We can endow the set of sector inclusions $\overline{J}'_{L_{S}}\subset \overline{J}_G, S\subset \Pi$, which depends on a contractible space of choices, with an obvious filtered category structure:   
$\{\overline{J}'_{L_{S}}\subset \overline{J}_G, S\subset \Pi\}\rightarrow\{\overline{J}''_{L_{S}}\subset \overline{J}_G, S\subset\Pi\} $ if and only if $\overline{J}'_{L_{S}}\subset \overline{J}''_{L_{S}}$ for all $S$. Let $\OneCat_\bC^L$ (resp. $\OneCat_\bC^R$) is the $\infty$-category of presentable stable $\bC$-linear categories with right adjointable (resp. left adjointable) functors, i.e. those admit a right adjoint (resp. left adjoint).  
Then we have the following:

\begin{prop}\label{prop: sector_cover}
For any complex reductive group $G$, there is a filtered category of sector inclusions $\overline{J}'_{L_S}\subset \overline{J}_G$, for any $S\subset\Pi$, so that 
\begin{itemize}
\item each $\overline{J}'_{L_S}$ is canonically a subsector of $\overline{J}_{L_S}$ that induces an equivalence of wrapped Fukaya categories; \\

\item for any $S$, we have $\overline{J}'_{L_{S^\dagg}}\subset \overline{J}'_{L_{S}}, \forall S^\dagg\subset S$, are sector inclusions, whose composition with the inclusion $\overline{J}'_{L_{S}}\subset \overline{J}_{L_{S}}$ give an object in the filtered category of sector inclusions associated with $ \overline{J}_{L_{S}}$. 

\end{itemize}
In particular, there is a canonically defined functor 
\begin{align}\label{eq: functor, sector-cover}
(\{S\subset \Pi\}, \subset)&\longrightarrow \OneCat_\bC^L \\
\nonumber S&\mapsto \cW(\overline{J}_{L_S}).
\end{align}
\end{prop}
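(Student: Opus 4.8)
The plan is to first construct, for each fixed choice of the auxiliary data (a Liouville hypersurface $\fF^S$ for each $L_S^\der$, the numbers $A, a, k$, the radial scalings for $\cP_{S^\perp}$, and the balls $\Omega_\bR$), the explicit nested family of subsectors $\overline{J}'_{L_S}\subset \overline{J}_G$ along the lines already sketched in Subsection \ref{ss: sectorinclusion}, and then to verify the three bulleted assertions. For the first bullet, I would observe that $\overline{J}'_{L_S}$ is by construction a product (over the finite group $\cZ(L_S^\der)_0$, respectively over $\cZ(G^\der)$ in the reductive case) of a subsector of $\overline{J}_{L_S^\der}$ (namely $\overline{J}'_{L_S^\der}$, equivalent to $\overline{J}_{L_S^\der}$ via \cite[Proposition 2.27]{GPS1} since it differs from $\overline{J}_{L_S^\der}$ only by a deformation on the $\cQ_S'$-factor and a forward-wrapped subsector in the $\bC_{\Re z\le 0}$-direction) with $T^*\cZ(L_S)_{0,\cpt}$ and with the subsector $\cP_{S^\perp}\subset T^*\bR_{>0}^{\Pi\backslash S}$. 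For the last factor the key point is that each $C_a$-bounded region in $T^*\bR_{>0}^{\{\beta\}}$ is, after a Liouville homotopy, just $T^*\overline{[0,1]}$: the Liouville vector field is transverse inward along the ``finite'' walls $\{\Re q_{\lambda_{\beta^\vee}}=\pm k\}$ and outward along the $C_{\pm a}, C_{\pm 1/a}$ pieces, so this is a Weinstein sector of the expected form, and $\cP_{S^\perp}\simeq T^*\overline{\bR_{>0}^{\Pi\backslash S}}$. Hence $\overline{J}'_{L_S}\simeq \overline{J}_{L_S^\der}\times_{\cZ(L_S^\der)_0} T^*\cZ(L_S)_{0,\cpt}\times T^*\overline{\bR_{>0}^{\Pi\backslash S}}=\overline{J}_{L_S}$ as sectors, and the inclusion into $\overline{J}_{L_S}$ is an equivalence on wrapped categories by \cite{GPS1} (invariance under Liouville homotopy and forward-stopped subsector inclusions).

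For the second bullet I would use that the defining data were chosen compatibly: if $S^\dagg\subset S$ then the splitting $\fU_{S^\dagg}\hookrightarrow \fU_S$ of Proposition \ref{prop: fU_S', sqcup} (the maps $\tilde\iota_{S^\dagg}^{S}$) identifies the factors, and one can arrange the radial scalings and the constants $A,a,k$ for $S^\dagg$ to be small enough (relative to those for $S$) that $\overline{J}'_{L_{S^\dagg}}$ literally sits inside $\overline{J}'_{L_S}$; transversality of the Liouville field along the relevant boundary hypersurfaces is then immediate from the product structure and from the one-variable analysis above, so this is an honest sector inclusion. Composing with $\overline{J}'_{L_S}\subset \overline{J}_{L_S}$ reproduces, for the group $L_S$ in place of $G$, exactly the construction of Subsection \ref{ss: sectorinclusion}, so it is an object of the filtered category of sector inclusions for $\overline{J}_{L_S}$. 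Compatibility with $\fq$ (the covering \eqref{eq: fq}) is built in since every subsector in sight is the preimage of its image in $\overline{J}_{G_\flat}$. The filtered structure on the space of all such choices is clear: the data range over contractible sets (choices of $\fF^S$, of the scaling constants, of $\Omega_\bR$) and ``$\overline{J}'_{L_S}\subset \overline{J}''_{L_S}$ for all $S$'' is a directed partial order; filteredness follows because any two choices can be jointly shrunk.

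Finally, to extract the functor \eqref{eq: functor, sector-cover} I would take a colimit over this filtered category. For each object of the filtered category, the inclusions $\overline{J}'_{L_{S^\dagg}}\hookrightarrow \overline{J}'_{L_S}$ give co-restriction (left adjoint) functors $\cW(\overline{J}'_{L_{S^\dagg}})\to \cW(\overline{J}'_{L_S})$ between ind-completions, functorial in $(S^\dagg\subset S)$ by \cite{GPS1, GPS2} (co-restriction is canonically compatible with composition of sector inclusions), hence a functor $(\{S\subset\Pi\},\subset)\to \OneCat_\bC^L$ landing in $\{\cW(\overline{J}'_{L_S})\}$; the equivalences of the first bullet identify this with $S\mapsto \cW(\overline{J}_{L_S})$, canonically up to coherent homotopy. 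Passing to the filtered colimit over the space of choices kills the ambiguity and yields the asserted canonical functor. The main obstacle I anticipate is purely bookkeeping: making the constants $(A,a,k)$ and radial rescalings for the various $S$ \emph{simultaneously} compatible with all the inclusions $S^\dagg\subset S$ while keeping each $\overline{J}'_{L_S}$ genuinely a Weinstein (or at least Liouville) sector with the correct equivalence type — i.e. checking that the smoothing of corners and the one-dimensional ``$T^*[0,1]$'' models glue coherently across the poset — and then assembling the resulting $2$-categorical diagram of co-restriction functors into a single $\infty$-functor with all higher coherences, which is where one must lean on the functoriality package of \cite{GPS1,GPS2} rather than re-prove it.
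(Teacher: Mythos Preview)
Your proposal is essentially correct and follows the same route as the paper: the proposition is stated immediately after the explicit construction in Subsection~\ref{ss: sectorinclusion}, and that construction \emph{is} the proof---you have accurately reconstructed it (product-of-subsectors description of $\overline{J}'_{L_S}$, the one-variable model $\cP_{S^\perp}$, nesting via the embeddings $\tilde\iota_{S^\dagg}^S$ of Proposition~\ref{prop: fU_S', sqcup}, passage to reductive $G$ via the covering $\fq$, and the filtered structure on choices). One small correction: the curves $C_{\pm a}, C_{\pm 1/a}$ are \emph{integral curves} of the Liouville vector field $-\partial_{\Re q_{\lambda_{\beta^\vee}}}+\Re p_{\beta_{S^\perp}^\vee}\partial_{\Re p_{\beta_{S^\perp}^\vee}}$, so the field is tangent (not outward-transverse) along them, and at the wall $\{\Re q_{\lambda_{\beta^\vee}}=-k\}$ the $-\partial_q$ component points outward rather than inward; this is why the paper says ``up to deformations on the factor $\cP_{S^\perp}$'' and invokes \cite[Proposition~2.27]{GPS1} rather than claiming $\cP_{S^\perp}$ is literally a sector on the nose.
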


\subsection{Proof of HMS using microlocal sheaf categories}

By Lemma \ref{lemma: pi_1, J_G}, we have 
\begin{align*}
\pi_1(\Lambda_{G_\flat})=\pi_1(J_{G_{\ad}})\times \pi_1(\cZ(G)/\cZ(G^\der))\cong \pi_1(G_\ad)\times \pi_1(\cZ(G)/\cZ(G^\der)).
\end{align*}
The covering $\fq_\Lambda:=\fq|_{\Lambda_G}$ corresponds to 
\begin{align}\label{eq: fqcorresppi_1}
\pi_1(G_\ad)\times \pi_1(\cZ(G)/\cZ(G^\der)) \cong \cZ(G^\der_{sc})\times \pi_1(\cZ(G)/\cZ(G^\der))\twoheadrightarrow \cZ(G^\der),
\end{align}
which is the multiplication of the two natural projections from the two factors on the LHS. Let $\cL_{\Lambda_{G_\flat}}$ be the local system $(\fq_{\Lambda})_*\underline{\bC}$ on $\Lambda_{G_\flat}$, which is corresponding to the representation of $\pi_1(\Lambda_{G_\flat})$ on the space of functions $\bC[\cZ(G^\der)]$ through (\ref{eq: fqcorresppi_1}). Let 
\begin{align*}
T_{\flat}=T/\cZ(G^\der)=T_\ad\times (\cZ(G)/\cZ(G^\der)),
\end{align*}
and let $q_T: T\rightarrow T_\flat$ to the covering map. 

In the following, for a Lagrangian skeleton $\Lambda$ of a Weinstein sector $X$, let $\mu\Shv_\Lambda$ be the sheaf of microlocal categories (over $\bk=\bC$) on $\Lambda$, in the sense of \cite{NaSh}. The definition of $\mu\Shv_\Lambda$ requires the same data
for defining the wrapped Fukaya category, and they are equivalent for a fixed choice of data (as remarked in  \emph{loc. cit.}). For $X=\overline{J}_G$, $\mu\Shv_{\Lambda_G}$ can be defined over $\bZ$, for which the data amount to a trivialization of $\overline{J}_G\rightarrow B^2\bZ\times B^3(\bZ/2\bZ)$. Since $J_G$ is hyperKahler, there is a canonical trivialization of $\overline{J}_G\rightarrow B^2\bZ$ (cf. beginning of Section \ref{sec: wrapping Ham}). The map $\overline{J}_G\rightarrow B^3(\bZ/2\bZ)$ is trivial (this is always the case), and different choices of trivializations give equivalent categories.

\begin{prop}\label{prop: res,qstar}
We have a natural diagram of adjoint functors
\begin{equation}\label{diagram: muShvLambdaG}
\begin{tikzcd}[arrow style=tikz,>=stealth,row sep=2em, column sep=2em]
&\mu\Shv_{\Lambda_G}(\Lambda_G)\ar[r, shift left=.4ex, "(\fq_\Lambda)_*\simeq (\fq_\Lambda)_!"]\ar[d, shift left=.4ex,  "res"]&\mu\Shv_{\Lambda_{G_\flat}}(\Lambda_{G_\flat})\ar[l, shift left=.4ex, "\fq_\Lambda^*\simeq \fq_\Lambda^!"]\ar[d, shift left=.4ex, "res_\flat"]\ar[r, "\simeq"]&\Ind\Coh((T_{\flat})^\vee\sslash W)\\
\Ind\Coh(T^\vee)&\Loc(T)\ar[l,"\simeq"']\ar[r, shift left=.4ex, "(\fq_T)_*\simeq (\fq_T)_!"]\ar[u,shift left=.4ex, "co\text{-}res"]&\Loc(T_\flat)\ar[l, shift left=.4ex, "\fq_T^*\simeq \fq_T^!"]\ar[u, shift left=.4ex, "co\text{-}res_\flat"]\ar[r,"\simeq"]&\Ind\Coh((T_\flat)^\vee) 
\end{tikzcd}
\end{equation}
satisfying the following natural relations: 
\begin{align*}
&res\circ\fq_\Lambda^*\simeq (\fq_T)^*\circ res_\flat: \mu\Shv_{\Lambda_{G_\flat}}(\Lambda_{G_\flat})\rightarrow \Loc(T),\\
&(\fq_\Lambda)_*\circ co\text{-}res\simeq co\text{-}res_\flat\circ (\fq_T)_*: \Loc(T)\rightarrow \mu\Shv_{\Lambda_{G_\flat}}(\Lambda_{G_\flat})\\
&(\fq_T)_*\circ res\simeq res_\flat\circ (\fq_\Lambda)_*: \mu\Shv_{\Lambda_G}(\Lambda_G)\rightarrow \Loc(T_\flat)\\
&\fq_\Lambda^*\circ co\text{-}res_\flat\simeq co\text{-}res\circ\fq_T^*: \Loc(T_\flat)\rightarrow \mu\Shv_{\Lambda_G}(\Lambda_G)\\
&(\fq_\Lambda)_*\fq_\Lambda^*\simeq (-)\otimes \cL_{\Lambda_{G_\flat}}, 
\ \fq_\Lambda^*(\fq_\Lambda)_*\simeq (-)\otimes  \underline{\bC[\cZ(G^\der)]}.
\end{align*}
\end{prop}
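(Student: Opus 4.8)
The plan is to build the diagram from three kinds of pieces and then verify commutativity by base change. First, the two right-hand equivalences in \eqref{diagram: muShvLambdaG} are essentially in hand: one has $\mu\Shv_{\Lambda_{G_\flat}}(\Lambda_{G_\flat})\simeq\cW(\overline{J}_{G_\flat})$ by \cite{NaSh}, and since $G_\flat=G_\ad\times(\cZ(G)/\cZ(G^\der))$, the K\"unneth formula of \cite{GPS2} together with Theorem \ref{thm: sec G adjoint} identify this with $\Coh(T_\ad^\vee\sslash W)\otimes\Coh((\cZ(G)/\cZ(G^\der))^\vee)\simeq\Coh((T_\flat)^\vee\sslash W)$, whose ind-completion is $\Ind\Coh((T_\flat)^\vee\sslash W)$; likewise $\Loc(T_\flat)\simeq\QCoh((T_\flat)^\vee)=\Ind\Coh((T_\flat)^\vee)$ and $\Loc(T)\simeq\Ind\Coh(T^\vee)$, the last two being the (ind-completed) affine case of torus mirror symmetry. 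Second, the vertical adjoint pairs $co\text{-}res\dashv res$ and $co\text{-}res_\flat\dashv res_\flat$ are the microlocal incarnations of the restriction/co-restriction functors of \eqref{eq: res, co-res}: the open sub-skeleton of $\Lambda_G$ cut out by the Liouville subsector $\cB_{w_0}^\dagg\cong T^*\overline{T}$ of Subsection \ref{subsubsec: subsector} has microlocal sheaf category $\Loc(T)$, with $res$ the restriction of the sheaf of categories $\mu\Shv_{\Lambda_G}$ to this open and $co\text{-}res$ its left adjoint (extension by zero); similarly for $G_\flat$.

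For the horizontal arrows, the key input is that $\fq\colon J_G\to J_{G_\flat}$ is a finite regular cover with group $\cZ(G^\der)$ preserving the Weinstein structure (cf.\ \eqref{eq: fq}, \eqref{eq: q_LambdaG}), hence so is $\fq_\Lambda\colon\Lambda_G\to\Lambda_{G_\flat}$. Since $\mu\Shv$ is a sheaf of categories and $\fq_\Lambda$ is a local homeomorphism, one obtains an equivalence $\mu\Shv_{\Lambda_G}\simeq\fq_\Lambda^{*}\mu\Shv_{\Lambda_{G_\flat}}$ of sheaves of categories on $\Lambda_G$ (using that the canonical hyperK\"ahler trivialization of $\overline{J}_G\to B^2\bZ$ is pulled back from that of $\overline{J}_{G_\flat}$, and that the $B^3(\bZ/2\bZ)$-obstruction is trivial compatibly). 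Taking global sections yields the pull/push adjunction $\fq_\Lambda^{*}\dashv(\fq_\Lambda)_{*}$, and finiteness of the cover gives in addition $(\fq_\Lambda)_{*}=(\fq_\Lambda)_{!}\dashv\fq_\Lambda^{!}=\fq_\Lambda^{*}$, i.e.\ the ambidextrous adjunctions in the diagram; the same holds verbatim for $\fq_T\colon T\to T_\flat$, where the functors are the classical ones on local systems (and under $\Loc(T)\simeq\Ind\Coh(T^\vee)$ they become push/pull along $\Spec$ of $\bC[(T_\flat)^\vee]\hookrightarrow\bC[T^\vee]$). The projection formula for the finite cover gives $(\fq_\Lambda)_{*}\fq_\Lambda^{*}(-)\simeq(-)\otimes(\fq_\Lambda)_{*}\underline{\bC}=(-)\otimes\cL_{\Lambda_{G_\flat}}$, and the Galois descent identity $\fq_\Lambda^{*}(\fq_\Lambda)_{*}(-)\simeq\bigoplus_{\gamma\in\cZ(G^\der)}\gamma^{*}(-)$, upon recording only the underlying $\cZ(G^\der)$-module of constant rank, gives $\fq_\Lambda^{*}(\fq_\Lambda)_{*}(-)\simeq(-)\otimes\underline{\bC[\cZ(G^\der)]}$.

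It then remains to check the four compatibility squares, all of which I would deduce from base change. The restriction of $\fq$ to the subsector $\cB_{w_0}^\dagg$ is exactly the torus cover $\fq_T\colon T\to T_\flat$, so the square formed by the open sub-skeleton of $\Lambda_G$, $\Lambda_G$, the open sub-skeleton of $\Lambda_{G_\flat}$, and $\Lambda_{G_\flat}$ is Cartesian; base change of sheaves of categories along this square against the finite cover gives $res\circ\fq_\Lambda^{*}\simeq\fq_T^{*}\circ res_\flat$ and, on the push side, $(\fq_T)_{*}\circ res\simeq res_\flat\circ(\fq_\Lambda)_{*}$. Passing to left adjoints in these two identities (using $co\text{-}res\dashv res$, $co\text{-}res_\flat\dashv res_\flat$, $(\fq_\Lambda)_{*}=(\fq_\Lambda)_{!}$, $(\fq_T)_{*}=(\fq_T)_{!}$) yields the remaining two relations $(\fq_\Lambda)_{*}\circ co\text{-}res\simeq co\text{-}res_\flat\circ(\fq_T)_{*}$ and $\fq_\Lambda^{*}\circ co\text{-}res_\flat\simeq co\text{-}res\circ\fq_T^{*}$; compatibility of $(\fq_\Lambda)_{*},\fq_\Lambda^{*}$ with the right-hand mirror equivalences is built into the K\"unneth construction, $\fq$ being split along the $T^*(\cZ(G)/\cZ(G^\der))$-factor.

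The step I expect to be the main obstacle is the assertion $\mu\Shv_{\Lambda_G}\simeq\fq_\Lambda^{*}\mu\Shv_{\Lambda_{G_\flat}}$ together with the attendant base-change statement for restriction of sheaves of categories along finite covers. Although morally immediate from the locality of $\mu\Shv$ and from $\fq$ being a covering, making it precise in the framework of \cite{NaSh} requires carefully tracking the chosen gradings and brane data through the cover --- in particular checking that the canonical trivializations coming from the hyperK\"ahler structures on $J_G$ and $J_{G_\flat}$ are $\fq$-compatible --- and checking that restricting to an open and then pulling back along a finite cover agrees with pulling back and then restricting, not merely as functors but with coherent unit/counit $2$-morphisms, so that the adjoint relations above hold as stated rather than only levelwise.
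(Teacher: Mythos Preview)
Your proposal is correct and follows essentially the same route as the paper's own proof: the right-hand equivalences come from Theorem \ref{thm: sec G adjoint} plus K\"unneth, the first and third compatibility relations (those involving $res$) are immediate from the covering/base-change picture, and the second and fourth are obtained by passing to left adjoints; the projection/Galois identities for $(\fq_\Lambda)_*\fq_\Lambda^*$ and $\fq_\Lambda^*(\fq_\Lambda)_*$ are exactly what the paper labels ``obvious.'' The only difference is expository: the paper's proof is three sentences and does not pause over the compatibility of brane data through $\fq$ or the coherence of the base-change $2$-morphisms that you flag as the main obstacle, treating these as straightforward consequences of locality of $\mu\Shv$ and the fact that $\fq$ is a finite regular cover preserving the Weinstein data.
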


\begin{proof}
The top right equivalence of the diagram (\ref{diagram: muShvLambdaG})  follows from the HMS of $J_{G_\ad}$ and the Kunneth formula for microlocal sheaves or wrapped Fukaya category (cf. \cite{GPS2}). The bottom two equivalences in the diagram are well known. 
The first and the third identification of functors (involving $res$) are obvious, and the second and the fourth follow from taking left adjoint of the former. 
The last line on the two compositions of $\fq_\Lambda^*$ and $(\fq_\Lambda)_*$ are obvious. 
\end{proof}

\begin{thm}\label{thm: mushvLambda_G}
For any connected complex reductive Lie group $G$, we have 
\begin{align*}
\mu\Shv_{\Lambda_G}(\Lambda_G)\simeq \Ind\Coh((T_\flat)^\vee\sslash W)^{\cZ(G^\der)^*},
\end{align*}
where $\cZ(G^\der)^*$ is the Pontryagin dual of $\cZ(G^\der)$. 
If $\cZ(G)$ is connected, then $\cZ(G^\der)^*$ acts freely on $(T/\cZ(G^\der))^\vee\sslash W$, so we have 
\begin{align*}
\mu\Shv_{\Lambda_G}(\Lambda_G)\simeq \Ind\Coh(T^\vee\sslash W).
\end{align*}
\end{thm}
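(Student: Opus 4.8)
The plan is to deduce the reductive statement from the adjoint-type HMS (Theorem~\ref{thm: sec G adjoint}) by a descent/monadicity argument along the finite covering $\fq_\Lambda:\Lambda_G\to\Lambda_{G_\flat}$, using the framework already assembled in Proposition~\ref{prop: res,qstar}. First I would observe that $\fq_\Lambda$ is a $\cZ(G^\der)$-torsor, so $\mu\Shv_{\Lambda_G}(\Lambda_G)$ carries a natural $\cZ(G^\der)$-action by deck transformations, and the pushforward $(\fq_\Lambda)_*$ is the functor ``take the underlying object and remember the equivariant structure.'' The adjoint pair $\big((\fq_\Lambda)^*,(\fq_\Lambda)_*\big)$ then exhibits $\mu\Shv_{\Lambda_{G_\flat}}(\Lambda_{G_\flat})$ as the category of $\cZ(G^\der)$-equivariant objects in $\mu\Shv_{\Lambda_G}(\Lambda_G)$; equivalently, by taking Pontryagin duals, $\mu\Shv_{\Lambda_G}(\Lambda_G)$ is the category of $\cZ(G^\der)^*$-equivariant objects in $\mu\Shv_{\Lambda_{G_\flat}}(\Lambda_{G_\flat})$, where the $\cZ(G^\der)^*$-action on the latter comes from the decomposition $(\fq_\Lambda)_*(\fq_\Lambda)^*\mathbf 1\simeq \underline{\bC[\cZ(G^\der)]}$ into characters. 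The standard tool here is the Barr--Beck--Lurie monadicity theorem: one checks that $(\fq_\Lambda)_*$ is conservative (clear, since $\fq_\Lambda$ is a finite covering and a microlocal sheaf that vanishes after pushforward vanishes) and preserves the relevant colimits (automatic for a left adjoint), so $\mu\Shv_{\Lambda_G}(\Lambda_G)$ is equivalent to modules over the monad $(\fq_\Lambda)_*(\fq_\Lambda)^*$, which by the last line of Proposition~\ref{prop: res,qstar} is $(-)\otimes\underline{\bC[\cZ(G^\der)]}$, i.e. the group algebra monad; modules over it are exactly $\cZ(G^\der)^*$-equivariant objects of $\mu\Shv_{\Lambda_{G_\flat}}(\Lambda_{G_\flat})$.

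Next I would identify the B-side. By the top right equivalence in the diagram~(\ref{diagram: muShvLambdaG}), $\mu\Shv_{\Lambda_{G_\flat}}(\Lambda_{G_\flat})\simeq \Ind\Coh((T_\flat)^\vee\sslash W)$; the Kunneth splitting $\Lambda_{G_\flat}=\Lambda_{G_\ad}\times(\cZ(G)/\cZ(G^\der))$ matches the decomposition $(T_\flat)^\vee\sslash W = (T_\ad^\vee\sslash W)\times (\cZ(G)/\cZ(G^\der))^\vee$ on the coherent side (Theorem~\ref{thm: sec G adjoint} for the first factor, and the elementary HMS for $T^*$ of a torus for the second). Then one must check that the $\cZ(G^\der)^*$-action produced by the monad on $\mu\Shv_{\Lambda_{G_\flat}}(\Lambda_{G_\flat})$ corresponds, under this equivalence, to the standard action of $\cZ(G^\der)^*$ on $(T_\flat)^\vee\sslash W$ described before Theorem~\ref{thm: J_G, general}. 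This is where I would have to be careful: the action on the A-side is ``tensor by characters of $\cZ(G^\der)$,'' i.e. modify local systems on the skeleton by the composite $\pi_1(\Lambda_{G_\flat})\twoheadrightarrow \cZ(G^\der)\xrightarrow{\chi}\bC^\times$; on the B-side one needs to see this as a geometric automorphism of $(T_\flat)^\vee\sslash W$. The key point is that for the basic mirror symmetry $\cW(T^*T)\simeq\Coh(T^\vee)$, tensoring a Lagrangian brane by a rank-$1$ local system $\rho\in T^\vee$ corresponds to translation by $\rho$ on $T^\vee$; hence tensoring by a character of $\cZ(G^\der)$ corresponds to translation by the corresponding point of $\cZ(G^\der)^*\subset T_\flat^\vee$, and this translation descends to $(T_\flat)^\vee\sslash W$ because $\cZ(G^\der)^*$ is central (normalized by $W$). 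I would extract the precise compatibility from Proposition~\ref{prop: res,qstar}: the square relating $res$, $res_\flat$, $(\fq_T)^*$ and $(\fq_\Lambda)^*$ together with the bottom-row identification $\Loc(T)\simeq\Ind\Coh(T^\vee)$ pins the $\cZ(G^\der)^*$-action on the open torus, and since $(T_\flat)^\vee\sslash W$ is determined by $(T_\flat)^\vee$ with its $W$-action together with the translation action, the two actions agree.

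Having matched both sides, the theorem follows: $\mu\Shv_{\Lambda_G}(\Lambda_G)\simeq \Ind\Coh((T_\flat)^\vee\sslash W)^{\cZ(G^\der)^*}$, i.e. the ind-completion of the category of $\cZ(G^\der)^*$-equivariant coherent sheaves, matching the statement. For the special case $\cZ(G)$ connected, $\cZ(G^\der)\hookrightarrow \cZ(G)_0$ is contained in a torus, so the dual $\cZ(G^\der)^*$ acts by translations with \emph{free} orbits on $(T_\flat)^\vee = (T/\cZ(G^\der))^\vee$, hence freely on $(T_\flat)^\vee\sslash W$ as well (freeness is preserved since $W$ commutes with the translation); equivariant descent along a free finite group action is just pullback along the quotient, and $\big((T/\cZ(G^\der))^\vee\big)/\cZ(G^\der)^* = T^\vee$, so $(T_\flat)^\vee\sslash W$ modulo $\cZ(G^\der)^*$ equals $T^\vee\sslash W$ and we get $\mu\Shv_{\Lambda_G}(\Lambda_G)\simeq \Ind\Coh(T^\vee\sslash W)$. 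The main obstacle I anticipate is the identification of the two $\cZ(G^\der)^*$-actions in the second paragraph: tracking the equivalences through the adjoint-type HMS functor and the Kunneth decomposition so that the monad-induced action is literally the geometric translation action requires unwinding how $co\text{-}res$ transports the generating Kostant section and its tensor twists, but the compatibility relations already recorded in Proposition~\ref{prop: res,qstar} reduce this to the well-understood torus case and should suffice.
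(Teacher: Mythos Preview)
Your approach is correct and close in spirit to the paper's, but the paper organizes the monadicity argument differently in a way that sidesteps exactly the obstacle you flag. Rather than applying Barr--Beck--Lurie to the single adjunction $\big(\fq_\Lambda^*,(\fq_\Lambda)_*\big)$ and then having to identify the resulting monad on $\mu\Shv_{\Lambda_{G_\flat}}(\Lambda_{G_\flat})\simeq\Ind\Coh((T_\flat)^\vee\sslash W)$ (note: that monad is $(-)\otimes\cL_{\Lambda_{G_\flat}}$ with the \emph{non-constant} local system, not $(-)\otimes\underline{\bC[\cZ(G^\der)]}$ as you wrote---the latter is $\fq_\Lambda^*(\fq_\Lambda)_*$), the paper applies Barr--Beck--Lurie to the \emph{composite} $res_\flat\circ(\fq_\Lambda)_*:\mu\Shv_{\Lambda_G}(\Lambda_G)\to\Loc(T_\flat)$. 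The commutation relations in Proposition~\ref{prop: res,qstar} then rewrite the resulting monad on $\Loc(T_\flat)\simeq\Ind\Coh(T_\flat^\vee)$ as the composition $(res_\flat\circ co\text{-}res_\flat)\circ(\fq_T)_*\fq_T^*$, and each factor is already identified on the B-side as pull--push along $p_\ell:T_\flat^\vee\to T_\flat^\vee\sslash W$ and $\pi_u:T_\flat^\vee\to T_\flat^\vee/\cZ(G^\der)^*=T^\vee$ respectively. The Cartesian square with corners $T_\flat^\vee$, $T_\flat^\vee\sslash W$, $T^\vee$, and $(T_\flat^\vee\sslash W)/\cZ(G^\der)^*$ then gives the composite monad as $(\pi_d p_\ell)^*(\pi_d p_\ell)_*$, yielding the answer directly. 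What this buys is that the $\cZ(G^\der)^*$-action never has to be matched \emph{through} the adjoint-type HMS equivalence---everything is read off at the dual-torus level, where the translation action is transparent. Your route works too, and your proposed fix (pinning the action on the open torus via the bottom row of~(\ref{diagram: muShvLambdaG})) is essentially the same reduction, just packaged less cleanly.
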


\begin{proof}
By Proposition \ref{prop: res,qstar} and the Barr--Beck--Lurie theorem \cite[Theorem 4.7.4.5]{Lurie}, we have 
\begin{align*}
&\mu\Shv_{\Lambda_G}(\Lambda_G)
\simeq  res_\flat\circ (\fq_\Lambda)_*\fq_\Lambda^*\circ co\text{-}res_\flat-\Mod(\Loc(T_\flat))\\
&\simeq (res_\flat\circ co\text{-}res_\flat)\circ (\fq_T)_*\fq_T^*-\Mod(\Loc(T_\flat)).
\end{align*}
Consider the Cartesian diagram in which all the maps are the natural ones
\begin{align*}
\xymatrix{T^\vee_{\flat}\ar[r]^{\pi_u\ \ \ \ \ \ }\ar[d]_{p_\ell}&T^\vee_{\flat}/\cZ(G^\der)^*=T^\vee\ar[d]^{p_r}\\
T_{\flat}^\vee\sslash W\ar[r]_{\pi_d\ \ \ \ \ \ } &(T^\vee_{\flat}\sslash W)/\cZ(G^\der)^*
}. 
\end{align*}
Then we have 
\begin{align*}
(res_\flat\circ co\text{-}res_\flat)\circ (\fq_T)_*\fq_T^*\simeq (\pi_d p_\ell)^*(\pi_d p_\ell)_*: \Ind\Coh(T_\flat^\vee)\rightarrow \Ind\Coh(T_\flat^\vee). 
\end{align*}
Therefore, 
\begin{align*}
\mu\Shv_{\Lambda_G}(\Lambda_G)&\simeq  (\pi_d p_\ell)^*(\pi_d p_\ell)_*-\Mod(\Ind\Coh(T_\flat^\vee))\\
&\simeq \Ind\Coh(T^\vee_{\flat}\sslash W)^{\cZ(G^\der)^*}.
\end{align*}

Since 
\begin{align*}
T_\flat^\vee\sslash W= (T_\ad)^\vee\sslash W\times (\cZ(G)/\cZ(G^\der))^\vee=(T_\ad)^\vee\sslash W\times (\cZ(G)_0/\cZ(G^\der)_0)^\vee,
\end{align*}
if $\cZ(G)$ is connected, then $\cZ(G^\der)^*$ acts freely on it. 
\end{proof}

We remark that throughout the paper, we can replace all $\Ind\Coh$ by $\text{QCoh}$, because they are only taken on smooth (Deligne-Mumford) stacks.

\subsection{Proof of HMS using wrapped Fukaya categories}\label{subsec: Pf of HMS}

Choose a wrapping Hamiltonian $H_1$ (resp. $H_2$) on $J_{G_\ad}$ as in Section \ref{subsec: Hamiltonians} (resp. on $T^*(\cZ(G)_0/\cZ(G^\der)_0)$), and let $H$ be the sum of $H_1$ and $H_2$ on the product, then $\fq^*H$ is a well defined $\cZ(G^\der)$-invariant wrapping Hamiltonian on $J_G$. For any cylindrical Lagrangian $L\subset J_{G_\flat}$, we have 
\begin{align}\label{eq: fq, H, varphi}
\fq^{-1}\varphi_H^t(L)=\varphi_{\fq^*H}^t(\fq^{-1}(L)),
\end{align}
and $\varphi_H^t(L), t\in \bR_{\geq 0}$ is cofinal in the wrapping category $(L\rightarrow -)^+$ if and only if (every connected component of) $\varphi_{\fq^*H}^t(\fq^{-1}(L))$ is cofinal in the the wrapping category $(\fq^{-1}(L)\rightarrow -)^+$.

\begin{lemma}\label{lemma: F_L, F_R}
The quotient map $\mathsf{q}$ (\ref{eq: fq}) induces an adjoint pair of functors (with $F^L$ viewed as the left adjoint)
on wrapped Fukaya categories:
\begin{equation}\label{eq: F_L, F_R}
\begin{tikzcd}[arrow style=tikz,>=stealth,row sep=4em]
\cW(J_G)\arrow[rr, shift left=.4ex, "F_R"]
  && \cW(J_{G_\flat})\ar[ll, shift left=.4ex, "F^L"]
\end{tikzcd}. 
\end{equation}

\end{lemma}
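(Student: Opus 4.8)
The plan is to construct $F^L$ as pullback of Lagrangian branes along the finite regular covering $\fq\colon \overline{J}_G\to \overline{J}_{G_\flat}$ (deck group $\cZ(G^\der)$, see \eqref{eq: fq}), to construct $F_R$ as the corresponding finite pushforward, and to obtain the adjunction from a Frobenius-reciprocity matching of intersection points. Given a cylindrical Lagrangian brane $L\subset J_{G_\flat}$, its preimage $\fq^{-1}(L)\subset J_G$ is again cylindrical: the Liouville form of $J_G$ is $\fq$-pulled back from that of $J_{G_\flat}$, so the Liouville flow lifts, and a cofinal family of positive wrappings of $\fq^{-1}(L)$ is obtained from one for $L$ under a $\cZ(G^\der)$-invariant linear Hamiltonian $\fq^{*}H$ via \eqref{eq: fq, H, varphi} and the remark following it. The grading---with respect to the hyperK\"ahler trivialization of $\kappa^{\otimes 2}$, which is $\cZ(G^\der)$-invariant since it is pulled back from $J_{G_\flat}$---and the Pin structure both pull back, so $\fq^{-1}(L)$ acquires a canonical brane structure.

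For the $A_\infty$-structure of $F^L$ I would fix a generic compatible almost complex structure $J_\flat$ on $J_{G_\flat}$ and take $J=\fq^{*}J_\flat$ on $J_G$; as $\fq$ is a local biholomorphism, $J$ is compatible and the relevant moduli remain regular, and every $J_\flat$-holomorphic polygon in $J_{G_\flat}$ with boundary on (wrappings of) the $L_i$ lifts---uniquely, once a lift of one boundary vertex is fixed---to a $J$-holomorphic polygon in $J_G$ with boundary on the $\fq^{-1}(L_i)$. Organizing these lifts yields the $A_\infty$-functor $F^L=\fq^{*}\colon\cW(J_{G_\flat})\to\cW(J_G)$, with $\Hom_{\cW(J_G)}(\fq^{*}L_0,\fq^{*}L_1)\cong \Hom_{\cW(J_{G_\flat})}(L_0,L_1)\otimes_{\bC}\bC[\cZ(G^\der)]$ on morphism complexes (on a generating set, where no monodromy intervenes).

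I would then set $F_R=\fq_{*}$, finite pushforward. The key geometric input is that the skeleta form a finite covering $\fq|_{\Lambda_G}\colon\Lambda_G\to\Lambda_{G_\flat}$ compatible with the Weinstein sector structures (cf. \eqref{eq: q_LambdaG}), so the critical handles of $\overline{J}_G$ are the connected components of the $\fq$-preimages of those of $\overline{J}_{G_\flat}$; since a cocore is contractible, $\fq$ restricts to a homeomorphism from each cocore of $\overline{J}_G$ onto a cocore of $\overline{J}_{G_\flat}$, and these cocores generate $\cW(J_G)$ by Proposition~\ref{prop: split generation}. Thus $F_R$ is pinned down on a generating set by $\fq_{*}\tilde L=\fq(\tilde L)$, and the adjunction $F^L\dashv F_R$ follows from the bijection $\fq^{-1}(L)\cap \tilde L \ \longleftrightarrow\ L\cap \fq(\tilde L)$ (for $\tilde L$ in this set, where $\fq|_{\tilde L}$ is an embedding), the unique lifting of holomorphic polygons along $\fq$, and the matching of cofinal wrappings from \eqref{eq: fq, H, varphi}, which together produce a natural quasi-isomorphism $\Hom_{\cW(J_G)}(F^L L,\tilde L)\simeq \Hom_{\cW(J_{G_\flat})}(L,F_R\tilde L)$. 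Alternatively one may obtain $F_R$ abstractly: $F^L$ sends a compact generating set to compact objects, hence its ind-extension is cocontinuous and preserves compacts, so it has a cocontinuous right adjoint, which is then identified with $\fq_{*}$.

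The part I expect to be most delicate is the $A_\infty$-level bookkeeping for pullback and pushforward along the covering and for the adjunction: checking that the unique lifts of holomorphic polygons assemble into a homotopy-coherent functor, that brane signs and the orientations of moduli spaces match under these lifts, and that the units and counits are compatible with the higher $A_\infty$-operations and with the wrapping continuation maps. A cleaner but more structural alternative would be to regard the graph of $\fq$ as a Lagrangian correspondence between $J_{G_\flat}$ and $J_G$ and invoke the sectorial functoriality of \cite{GPS1,GPS2}; the work then shifts to verifying that this correspondence is suitably cylindrical near the finite boundaries, so that the induced functors are well defined. Either route should also deliver the Fukaya analogues of the two isomorphisms in Proposition~\ref{prop: res,qstar}, namely $F_R\circ F^L\simeq (-)\otimes_{\bC}\cL_{\Lambda_{G_\flat}}$ and $F^L\circ F_R\simeq (-)\otimes_{\bC}\underline{\bC[\cZ(G^\der)]}$, which are what the monadic reconstruction of $\cW(J_G)$ in the remainder of this subsection will use.
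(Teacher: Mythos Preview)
Your proposal is correct and follows essentially the same route as the paper: both define $F^L$ by pulling back branes along the finite regular covering $\fq$ and lifting holomorphic polygons uniquely (once one boundary marked point is lifted), and both define $F_R$ as pushforward on a generating set of simply connected Lagrangians on which $\fq$ restricts to an embedding (the paper uses the Kostant sections $\Sigma_{z_j}$, which are exactly the critical cocores you invoke via Proposition~\ref{prop: split generation}). The only noticeable difference is in how the adjunction is certified: you argue via the Frobenius-reciprocity bijection $\fq^{-1}(L)\cap\tilde L\longleftrightarrow L\cap\fq(\tilde L)$ plus unique lifting, whereas the paper writes down the unit and counit explicitly on generators (diagonal embedding $\Sigma_I\to\Sigma_I^{\oplus\cZ(G^\der)}$ and projection $\bigoplus_i\Sigma_{z_i}\to\Sigma_{z_j}$) and then checks the triangle identities directly, exploiting that all the relevant morphism complexes are concentrated in degree $0$ (Proposition~\ref{prop: A_G, center}) so that the verification is a computation in ordinary algebra rather than a homotopy-coherent one---this is precisely how the paper sidesteps the ``delicate $A_\infty$-level bookkeeping'' you flag.
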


\begin{proof}
First, the functor $F^L$ can be defined as follows. 
Since $\fq$ is a regular covering map, for any  cylindrical Lagrangians $L, L'\in \cW(J_{G_\flat}) $, define $F^L$ as an $A_\infty$-functor following the notations in \cite[Chapter I. 1 (1b)]{Seidel1}
\begin{align}
\nonumber (F^L)^1: CF_{J_{G_\flat}}(\varphi_H^t(L), L')&\longrightarrow CF_{J_G}(\varphi_{\fq^*H}^t(\fq^{-1}(L)), \fq^{-1}(L'))\\
\label{eq: FL1def} x&\mapsto \sum\limits_{y\in \fq^{-1}(x)} y
\end{align}
for every generator $x\in \varphi_H^t(L)\cap L', t\geq 0$.  Let $(F^L)^d=0, d>1$. This is clearly compatibile with continuation maps.   For any collection of cylindrical Lagrangians $L_1, \cdots, L_k\subset J_{G_\ad}$ and $t_1\geq t_2\geq \cdots \geq t_{k-1}\geq 0$, we have a canonical commutative diagram
\begin{align*}
\xymatrix{CF(L^{t_{k-1}}_{k-1}, L_k)\otimes \cdots\otimes CF(L_1^{t_1}, L^{t_2}_2)\ar[r]^{\ \ \ \ \ \mu^{k-1}}\ar[d]&CF(L_1^{t_1}, L_k)\ar[d]\\
CF(\fq^{-1}(L_{k-1}^{t_{k-1}}), \fq^{-1}(L_k))\otimes \cdots\otimes CF(\fq^{-1}(L_1^{t_1}), \fq^{-1}(L_2^{t_2}))\ar[r]^{\ \ \ \ \ \ \ \ \ \ \ \ \ \ \ \ \ \ \ \ \ \ \  \mu^{k-1}}&CF(\fq^{-1}(L_1^{t_1}), \fq^{-1}(L_k))
}
\end{align*}
where $L_j^{t_j}:=\varphi_H^{t_j}(L_j)$ and $\fq^{-1}(L_{j})^{t_{j}}$ for $\varphi_{\fq^*H}^{t_j}(\fq^{-1}(L_j))$. This follows from the unique lifting property of a $\cJ$-holomorphic disc with the only incoming vertex at $y\in L_1^{t_1}\cap L_k$ to a $\fq^*\cJ$-holomorphic\footnote{Here $\fq^*\cJ$ is well defined for $\fq$ is a local isomorphism.} disc with the incoming vertex at each given point in $\fq^{-1}(y)$.

Second, we give a definition of $F_R$ on the full subcategory of (connected and) simply connected Lagrangian branes $L\subset J_G$ satisfying $\fq(L)$ is embedded. Equivalently, this means $\fq: L\rightarrow \fq(L)$ is an isomorphism. 
Such Lagrangians include the Kostant sections $\Sigma_z, z\in \cZ(G)$ (as a product cylindrical Lagrangian), which are generators of $\cW(J_G)$, so the definition of $F_R$ extends to $\cW(J_G)$ in a unique way (up to equivalences). Fix an indexing of $\cZ(G^\der)$ and denote the elements in $\cZ(G^\der)$ by $z_j$.  
The Kostant section for $z_j\in \cZ(G^\der)$ is just the product of the Kostant section for $z_j$ in $J_{G^\der}$, denoted by $\Sigma_{z_j}'$, and the cotangent fiber at identity in $T^*\cZ(G)_0$. Then $\Sigma_{z_j}, j\in  \cZ(G^\der)$, generate $\cW(J_G)$\footnote{It actually suffices to choose just one $z_j$ for each connected component of $\cZ(G)$ to generate $\cW(J_G)$.}. 
On the object level, $F_R(L)=\fq(L)$. 
\begin{align*}
(F_R)^1: CF_{J_{G}}(L_1^{t_1}, L_2)&\longrightarrow CF_{J_{G_\flat}}(\fq(L_1^{t_1}), \fq(L_2))\\
x&\mapsto \fq(x), 
\end{align*}
for every generator $x\in L_1^{t_1}\cap L_2$. 
Let $(F_R)^d=0, d>1$.  For any collection  $L_1, \cdots, L_k\subset J_{G}$ and $t_1\geq t_2\geq \cdots \geq t_{k-1}\geq 0$, we have an obvious commutative diagram
\begin{align}\label{diagram: }
\xymatrix{CF(L^{t_{k-1}}_{k-1}, L_k)\otimes \cdots\otimes CF(L_1^{t_1}, L^{t_2}_2)\ar[r]^{\ \ \ \ \ \mu^{k-1}}\ar[d]&CF(L_1^{t_1}, L_k)\ar[d]\\
CF(\fq(L_{k-1}^{t_{k-1}}), \fq(L_k))\otimes \cdots\otimes CF(\fq(L_1^{t_1}), \fq(L_2^{t_2}))\ar[r]^{\ \ \ \ \ \ \ \ \ \ \ \ \ \ \ \ \ \ \ \ \ \ \  \mu^{k-1}}&CF(\fq(L_1^{t_1}), \fq(L_k)). 
}
\end{align}
due to the isomorphism $\fq: L\overset{\sim}{\rightarrow} \fq(L)$

Lastly, we verify the adjunction property about $F^L, F_R$. The co-unit map on each generator $\Sigma_{z_j}\in \cW(J_G)$,  
\begin{align*}
F^LF_R(\Sigma_{z_j})=\bigoplus\limits_{z_i\in \cZ(G^\der)}\Sigma_{z_i}\longrightarrow \Sigma_{z_j}
\end{align*}
is given by the projection $\proj_{z_j}$ to the $z_j$-component. For each $x\in \Sigma_{z_j}^{t}\cap\Sigma_{z_k}$ corresponding to a generator of 
\begin{align*}
CF_{J_{G_\flat}}(\Sigma_{z_j}^t, \Sigma_{z_k})\cong CF_{J_{G^\der}}((\Sigma'_{z_j})^t, \Sigma'_{z_k})\times CF_{T^*(\cZ(G))_0}((T^*_I\cZ(G)_0)^t, T^*_I\cZ(G)_0)
\end{align*}
(recall the cochain complex is concentrated in degree $0$ for a sequence of $t\rightarrow\infty$; cf. Proposition \ref{prop: A_G, center}), we have a strictly commutative diagram (after taking $t\rightarrow \infty$)
\begin{align*}
\xymatrix{
F^LF_R(\Sigma_{z_j})=\bigoplus\limits_{z_i\in \cZ(G^\der)}\Sigma_{z_i}\ar[r]^{\ \ \ \ \ \ \ \ \ \ \ \ \ \ \proj_{z_j}}\ar[d]_{\sum\limits_{u\in \cZ(G^\der)}u\cdot x}&\Sigma_{z_j}\ar[d]^{x}\\
F^LF_R(\Sigma_{z_k})\cong \bigoplus\limits_{z_i\in \cZ(G^\der)}\Sigma_{z_i}\ar[r]^{\ \ \ \ \ \ \ \ \ \ \ \ \ \  \proj_{z_k}} &\Sigma_{z_k}
}. 
\end{align*}
where $u\cdot x\in CF(\Sigma_{u\cdot z_j}, \Sigma_{u\cdot z_k})$ under the action of $u\in \cZ(G^\der)$. 
Such data completely determine the co-unit map. The unit map on the generator $\Sigma_{I}=\Sigma_{I}''\times T^*_I(\cZ(G)/\cZ(G^\der))\in \cW(J_{G_\ad}\times T^*(\cZ(G)/\cZ(G^\der)))$, where $\Sigma_{I}''$ denotes the Kostant section in $J_{G_\ad}$, 
\begin{align*}
\Sigma_{I}\longrightarrow F_RF^L(\Sigma_{I})=\Sigma_I^{\oplus \cZ(G^\der)}
\end{align*}
is given by the diagonal embedding. For any $x\in \Sigma_I^t\cap \Sigma_I$ corresponding to a generator of $CF(\Sigma_{I}^t, \Sigma_{I})$, which projects to $x''\in (\Sigma_I'')^t\cap \Sigma_I''\subset J_{G_\ad}$, we have a strictly commutative diagram
\begin{align}\label{diagram: Sigma_I, x, FRL}
\xymatrix{\Sigma_I\ar[r]\ar[d]_{x}&F_RF^L(\Sigma_{I})=\Sigma_I^{\oplus \cZ(G^\der)}\ar[d]^{(x^{\oplus \cZ(G^\der)})\circ\sigma_x}\\
\Sigma_I\ar[r]&F_RF^L(\Sigma_{I})=\Sigma_I^{\oplus \cZ(G^\der)}
},
\end{align}
where $\sigma_x$ is the permutation on the indexed set $\cZ(G^\der)$ given by multiplying the inverse of the element in $\cZ(G^\der)$ corresponding to $x''\in X_*(T_\ad)^+$ (cf. the same proposition just mentioned),  and $x^{\oplus \cZ(G^\der)}$ means the morphism given by a $\cZ(G^\der)\times \cZ(G^\der)$-matrix with diagonal entries all equal to $x$. 

The identities 
\begin{align*}
&(F_R\overset{(unit)\circ F_R}{\longrightarrow} F_RF^LF_R\overset{F_R\circ (co\text{-}unit)}{\longrightarrow} F_R)\simeq id_{F_R}\\
&(F^L\overset{F^L\circ (unit)}{\longrightarrow} F^LF_RF^L\overset{(co\text{-}unit)\circ F^L}{\longrightarrow} F^L)\simeq id_{F^L}
\end{align*}
can be directly checked on the generators. We leave the details to the interested reader.
\end{proof}

We can also easily deduce that 
\begin{lemma}\label{lemma: F_R, F_L}
There is another adjoint pair
\begin{equation}\label{eq: F_R, F_L}
\begin{tikzcd}[arrow style=tikz,>=stealth,row sep=4em]
\cW(J_{G_\flat})\arrow[rr, shift left=.4ex, "F^L"]
  && \cW(J_{G})\ar[ll, shift left=.4ex, "F_R"]
\end{tikzcd},
\end{equation}
where $F_R$ now serves as the left adjoint. 
\end{lemma}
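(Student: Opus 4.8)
The plan is to obtain the adjoint pair in Lemma~\ref{lemma: F_R, F_L} from the already-constructed pair in Lemma~\ref{lemma: F_L, F_R} by a general categorical argument together with the finiteness/Calabi--Yau features of the situation. The key observation is that $\fq\colon J_G\to J_{G_\flat}$ is a \emph{finite} regular covering with deck group $\cZ(G^\der)$, so the functor $F_R=\fq_*$ (pushforward on microlocal sheaves, equivalently the restriction-type functor on wrapped Fukaya categories) is, up to the action of the finite group, its own adjoint on \emph{both} sides. Concretely, first I would recall from the proof of Lemma~\ref{lemma: F_L, F_R} the explicit chain-level formulas: $F^L$ sends $x\mapsto\sum_{y\in\fq^{-1}(x)}y$ and $F_R$ sends $x\mapsto \fq(x)$ on generators, and note that these make sense verbatim read in the other direction.

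Next I would verify the unit and counit of the new adjunction $(F^L\dashv F_R)$ directly on the generators $\Sigma_z$ of $\cW(J_G)$ and $\Sigma_I$-type objects of $\cW(J_{G_\flat})$, exactly as in the proof of Lemma~\ref{lemma: F_L, F_R}. The point is that $F_RF^L(\Sigma_I)\cong\Sigma_I^{\oplus\cZ(G^\der)}$ and $F^LF_R(\Sigma_z)\cong\bigoplus_{u\in\cZ(G^\der)}\Sigma_{u\cdot z}$, and the two relevant natural transformations are the diagonal inclusion $\Sigma_I\to\Sigma_I^{\oplus\cZ(G^\der)}$ (now as a \emph{unit} for $F^L\dashv F_R$ from the $J_{G_\flat}$ side) and the summation/projection map $\bigoplus_u\Sigma_{u\cdot z}\to\Sigma_z$ (as the corresponding \emph{counit}). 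The triangle identities are checked on generators by the same strictly commutative squares as in diagram~(\ref{diagram: Sigma_I, x, FRL}), using that the wrapped Floer complexes $\Hom(\Sigma_{z_1},\Sigma_{z_2})$ and $\Hom(\Sigma_I,\Sigma_I)$ are concentrated in degree $0$ for a cofinal sequence of wrappings (Proposition~\ref{prop: A_G, vector}, Proposition~\ref{prop: A_G, center}), together with the $\cZ(G^\der)$-equivariance of the wrapping Hamiltonian $\fq^*H$ recorded in~(\ref{eq: fq, H, varphi}). Since the Kostant sections generate, this determines the adjunction up to equivalence.

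Alternatively — and this is probably the cleanest phrasing to include — one can invoke the abstract fact that for a finite regular cover, $F^L$ and $F_R$ differ by the finite-group action, so that $F^L$ being both a left adjoint of $F_R$ (Lemma~\ref{lemma: F_L, F_R}) forces $F_R$ to also be a left adjoint of $F^L$: indeed $F^L$ preserves compact objects (it sends generators to finite direct sums of generators) and $F_R$ preserves colimits, and in the ind-completed/presentable setting of $\OneCat_\bC^L$ one has, for any $M\in\cW(J_G)$, $N\in\cW(J_{G_\flat})$, a natural isomorphism $\Hom(F^L N, M)\simeq\Hom(N, F_R M)$ from Lemma~\ref{lemma: F_L, F_R} and a second natural isomorphism $\Hom(F_R M, N)\simeq\Hom(M, F^L N)$ obtained from the first by the self-duality $F^L\simeq \sigma\circ F_R$ (twisting by the deck action $\sigma$ of $\cZ(G^\der)$, which is an equivalence), using that $\cZ(G^\der)$ is finite so the twist is ambidextrous. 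This gives the adjoint pair~(\ref{eq: F_R, F_L}) with $F_R$ as the left adjoint.

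The main obstacle I anticipate is purely bookkeeping rather than conceptual: making the chain-level $A_\infty$-functor structure of $F^L$ and $F_R$ genuinely compatible — i.e.\ checking that the candidate unit/counit for the new adjunction are honest closed morphisms satisfying the triangle identities \emph{up to coherent homotopy}, not just on cohomology — requires care with the continuation maps and with the choice of cofinal wrapping sequence. Because all the relevant Floer complexes between Kostant sections sit in degree $0$ (so there is no room for higher homotopies among the generators), this reduces to a finite check on generators, which is why I would present it as a short verification referencing the already-worked-out diagrams in the proof of Lemma~\ref{lemma: F_L, F_R} rather than redoing the analysis. No genuinely new geometric input is needed.
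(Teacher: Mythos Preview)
Your overall strategy---verify the new adjunction on the Kostant-section generators by writing down explicit unit and counit maps, exactly parallel to the proof of Lemma~\ref{lemma: F_L, F_R}---is the same as the paper's. However, the specific unit and counit you write down are wrong: they are the data for the \emph{old} adjunction $F^L\dashv F_R$, not the new one $F_R\dashv F^L$. For $F_R$ to be the left adjoint, the unit must live on the $\cW(J_G)$ side as $\Sigma_{z_j}\to F^LF_R(\Sigma_{z_j})\cong\bigoplus_{z_i}\Sigma_{z_i}$ (the paper takes the inclusion into the $z_j$-summand, not the projection out of it), and the counit must live on the $\cW(J_{G_\flat})$ side as $F_RF^L(\Sigma_I)\cong\Sigma_I^{\oplus\cZ(G^\der)}\to\Sigma_I$ (the paper takes the fold map $(\mathrm{id},\ldots,\mathrm{id})$, not the diagonal). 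Your ``diagonal inclusion $\Sigma_I\to\Sigma_I^{\oplus\cZ(G^\der)}$ as unit'' and ``summation/projection $\bigoplus_u\Sigma_{u\cdot z}\to\Sigma_z$ as counit'' reproduce exactly the maps already used in Lemma~\ref{lemma: F_L, F_R}; they go in the wrong direction for the adjunction being claimed here.

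Your alternative abstract argument also has a type error: the relation ``$F^L\simeq\sigma\circ F_R$'' does not make sense, since $F^L$ and $F_R$ go in opposite directions between the two categories and no endofunctor twist can convert one into the other. The correct ambidexterity statement for a finite regular cover is that $\fq_!\simeq\fq_*$ (equivalently $\fq^!\simeq\fq^*$), so that $F^L=\fq^*$ admits $F_R=\fq_*$ as both a left adjoint (via $\fq_!$) and a right adjoint (via $\fq_*$). If you want to phrase the proof abstractly, that is the statement to invoke; but the paper instead carries out the direct verification on generators with the corrected unit and counit above.
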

\begin{proof}
The co-unit map on $\Sigma_I\in \cW(J_{G_\flat})$ 
\begin{align*}
\Sigma_I^{\oplus \cZ(G^\der)}\cong F_RF^L(\Sigma_I)\longrightarrow \Sigma_I
\end{align*}
is given by $(id_{\Sigma_I}, \cdots, id_{\Sigma_I})$. For any $x\in \Sigma_I^t\cap \Sigma_I$ corresponding to a generator of $CF(\Sigma_{I}^t, \Sigma_{I})$, we have a strictly commutative diagram
\begin{align*}
\xymatrix{F_RF^L(\Sigma_I)\cong \Sigma_I^{\oplus \cZ(G^\der)}\ar[r]\ar[d]_{(x^{\oplus \cZ(G^\der)})\circ\sigma_x}&\Sigma_I\ar[d]^{x}\\
F_RF^L(\Sigma_I)\cong \Sigma_I^{\oplus \cZ(G^\der)}\ar[r]&\Sigma_I
},
\end{align*}
where $\sigma_x$ and $x^{\oplus \cZ(G^\der)}$ are as in the proof of Lemma \ref{lemma: F_L, F_R}. 

The unit map on $\Sigma_{z_j}\in \cW(J_G)$ 
\begin{align*}
\Sigma_{z_j}\longrightarrow F^LF_R(\Sigma_{z_j})\cong \bigoplus\limits_{z_i\in \cZ(G^\der)}\Sigma_{z_i}
\end{align*}
is the embedding into the component of $\Sigma_{z_j}$. For any generator $x\in CF(\Sigma_{z_j}^t, \Sigma_{z_k})$, we have a strictly commutative diagram
\begin{align*}
\xymatrix{\Sigma_{z_j}\ar[r]\ar[d]_{x}& F^LF_R(\Sigma_{z_j})\cong \bigoplus\limits_{z_i\in \cZ(G^\der)}\Sigma_{z_i}\ar[d]^{\sum\limits_{u\in \cZ(G^\der)}u\cdot x}\\
\Sigma_{z_k}\ar[r]&F^LF_R(\Sigma_{z_k})\cong \bigoplus\limits_{z_i\in \cZ(G^\der)}\Sigma_{z_i}
}. 
\end{align*}

The identities 
\begin{align*}
&(F^L\overset{(unit)\circ F^L}{\longrightarrow} F^LF_RF^L\overset{F^L\circ (co\text{-}unit)}{\longrightarrow} F^L)\simeq id_{F^L}\\
&(F_R\overset{F_R\circ (unit)}{\longrightarrow} F_RF^LF_R\overset{(co\text{-}unit)\circ F_R}{\longrightarrow} F_R)\simeq id_{F_R}
\end{align*}
can be directly checked on the generators. We leave the details to the interested reader.

\end{proof}

Consider the diagram
\begin{equation}\label{diagram: F_L, co-res, w_0}
\begin{tikzcd}[arrow style=tikz,>=stealth,row sep=4em]
\cW(J_G) \arrow[rr, shift left=.4ex, "res"]\arrow[d, shift left=.4ex, "F_R"]
  && \cW(\cB_{w_0})\simeq \cW(T^*T)\ar[ll, shift left=.4ex, "co\text{-}res"]\ar[d, shift left=.4ex, "F_{R,w_0}"]\\
\cW(J_{G_\flat})\arrow[rr, shift left=.4ex, "res_\flat"]\arrow[u, shift left=.4ex, "F^L"]&& \cW(\cB_{w_0,\flat})\simeq \cW(T^*T_{\flat})\ar[ll, shift left=.4ex, "co\text{-}res_\flat"]\ar[u, shift left=.4ex, "F^L_{w_0}"]
\end{tikzcd}, 
\end{equation}
where $F_{w_0}^L$ and $F_{R,w_0}$ are defined in the same way as $F^L$ and $F_R$,  $\cB_{w_0}$ and $\cB_{w_0,\flat}$ are the open Bruhat ``cells" in $J_G$ and $J_{G_\flat}$, respectively, viewed as subsectors, and $res_\flat$ and $co\text{-}res_\flat$ are the restriction and co-restriction functors for the subsector inclusion in $J_{G_\flat}$. 

\begin{lemma}\label{lemma: comm, functors}
We have canonical isomorphisms of functors
\begin{itemize}
\item[(i)]
\begin{align}
\label{eq: co-res, F_L}&co\text{-}res\circ F_{w_0}^L\simeq  F^L\circ co\text{-}res_\flat\\
\label{eq: res, F_R}&F_{R,w_0}\circ res\simeq res_\flat\circ F_R.
\end{align}
\item[(ii)]
\begin{align}
\label{eq: co-res, F_R} &F_R\circ co\text{-}res\simeq  co\text{-}res_\flat \circ F_{R, w_0}\\
\label{eq: res, F_L} & res\circ F^L\simeq  F_{w_0}^L\circ res_\flat.
\end{align}
\end{itemize}
In other words, the four squares in (\ref{diagram: F_L, co-res, w_0}) with initial and terminal vertices lying on any diagonal are all commutative. 
\end{lemma}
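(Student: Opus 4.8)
The plan is to verify each of the four isomorphisms by unwinding the definitions of the functors involved, in particular using that all of $F^L$, $F_R$, $F^L_{w_0}$, $F_{R,w_0}$, $co\text{-}res$, $res$, $co\text{-}res_\flat$, $res_\flat$ are induced by honest geometric operations (covering-map pullback/pushforward of Lagrangian branes, restriction/corestriction for sector inclusions), and then invoking the adjunctions established in Lemmas \ref{lemma: F_L, F_R} and \ref{lemma: F_R, F_L} to pass between the different diagonals. The key observation is that the subsector inclusion $\cB_{w_0}\hookrightarrow \overline{J}_G$ is compatible with the regular covering $\fq$: pulling back the subsector $\cB_{w_0,\flat}^\dagg$ of $J_{G_\flat}$ along $\fq$ recovers (up to Liouville homotopy, which does not affect the wrapped category) the subsector $\cB_{w_0}^\dagg$ of $J_G$, and the covering restricts to the covering $q_T\colon T\to T_\flat$ of base tori under the identifications $\cB_{w_0}\cong T^*T$, $\cB_{w_0,\flat}\cong T^*T_\flat$ from Example \ref{example: B_w0}.

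First I would establish the two ``co-restriction'' commutativities. For \eqref{eq: co-res, F_L}, since $co\text{-}res$ and $F^L_{w_0}$ are both defined on the chain level by sending a Lagrangian brane to its preimage under a covering map (respectively the sector-inclusion corestriction, which on the generating cotangent fibers is the obvious inclusion of cylindrical Lagrangians, and the covering $q_T$), the composite $co\text{-}res\circ F^L_{w_0}$ sends a cotangent fiber $F_h\subset T^*T_\flat$ to the union of its lifts in $T^*T$ and then includes these into $\overline{J}_G$; this is visibly the same Lagrangian brane as $F^L\circ co\text{-}res_\flat$, which first includes $F_h$ into $\overline{J}_{G_\flat}$ and then takes the preimage under $\fq$, because preimage-under-$\fq$ of the subsector $\cB_{w_0,\flat}^\dagg$ is $\cB_{w_0}^\dagg$ and the preimage of $F_h$ therein is exactly $q_T^{-1}(F_h)$. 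One checks that the chain-level formulas \eqref{eq: FL1def} and the analogue for $co\text{-}res$ (counting holomorphic strips, with the unique-lifting property for $\fq$-covers as in Lemma \ref{lemma: F_L, F_R}) compose to the same $A_\infty$-functor; since both functors preserve compact objects and $\cW(\cB_{w_0,\flat})$ is generated by cotangent fibers, this determines the isomorphism. Then \eqref{eq: co-res, F_R} follows formally by passing to left adjoints in \eqref{eq: co-res, F_L}: in diagram \eqref{diagram: F_L, co-res, w_0}, $co\text{-}res$ is left adjoint to $res$, $co\text{-}res_\flat$ is left adjoint to $res_\flat$ (functoriality for sector inclusions, \cite{GPS1}), while $F_R$ is left adjoint to $F^L$ and $F_{R,w_0}$ is left adjoint to $F^L_{w_0}$ by Lemma \ref{lemma: F_R, F_L}; the left adjoint of $co\text{-}res\circ F^L_{w_0}$ is $F_{R,w_0}\circ res$ and the left adjoint of $F^L\circ co\text{-}res_\flat$ is $res_\flat\circ F_R$, wait --- more carefully, taking right adjoints of \eqref{eq: co-res, F_L} gives $F_{w_0}^L{}^{R}\circ res \simeq res_\flat\circ (F^L)^R$, i.e. \eqref{eq: res, F_R}, and taking left adjoints gives \eqref{eq: co-res, F_R} directly. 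Similarly \eqref{eq: res, F_L} is obtained by taking the appropriate adjoint of \eqref{eq: res, F_R} (using the adjunction of Lemma \ref{lemma: F_L, F_R} this time).

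So the real content is one geometric identity --- say \eqref{eq: co-res, F_L} --- proven directly, and the remaining three follow by adjunction bookkeeping, being careful about which of the two adjoint pairs (Lemma \ref{lemma: F_L, F_R} with $F^L$ left adjoint, or Lemma \ref{lemma: F_R, F_L} with $F_R$ left adjoint) is being used and about the fact that $co\text{-}res$ is left adjoint to $res$ while the composite of two left adjoints is a left adjoint. The main obstacle I anticipate is making the direct chain-level comparison in \eqref{eq: co-res, F_L} genuinely rigorous: one must check that the Liouville homotopy identifying $\fq^{-1}(\cB_{w_0,\flat}^\dagg)$ with $\cB_{w_0}^\dagg$ is $\cZ(G^\der)$-equivariant and compatible with the chosen compactifications of Section \ref{sec: skeleton, sector}, and that the corestriction functor for a sector inclusion --- which a priori is defined somewhat abstractly via wrapping and Ind-completion in \cite{GPS1} --- can be computed geometrically on cotangent-fiber generators as ``include the cylindrical Lagrangian'', so that it literally commutes with the covering-space preimage operation. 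Once that compatibility is in hand, the equality of $A_\infty$-functors is a routine matching of moduli spaces of holomorphic strips using the unique disc-lifting property for the unramified cover $\fq$ (exactly as in the proof of Lemma \ref{lemma: F_L, F_R}), and all four assertions of the lemma drop out.
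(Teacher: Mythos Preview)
Your approach to part (i) is correct and matches the paper: one proves \eqref{eq: co-res, F_L} by a direct chain-level comparison (the paper phrases this via the localization formalism of \cite{GPS1}, setting up $A_\infty$-categories $\scrO\hookrightarrow\scrO'$ and their $\fq^{-1}$-pullbacks before inverting continuation elements, but the content is the same), and then \eqref{eq: res, F_R} follows by taking right adjoints.

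There is a genuine gap in your derivation of part (ii). You claim \eqref{eq: co-res, F_R} follows from \eqref{eq: co-res, F_L} ``by taking left adjoints'', but this would require $co\text{-}res$ and $co\text{-}res_\flat$ to have left adjoints, which they do not (they are themselves left adjoints, and the sector-inclusion adjunction is not ambidextrous). The ambidexterity you have is only for the vertical functors $F^L, F_R$. What one actually obtains from \eqref{eq: co-res, F_L} and the adjunction $(F^L,F_R)$ is only the \emph{mate} (Beck--Chevalley) natural transformation
\[
co\text{-}res_\flat\circ F_{R,w_0}\longrightarrow F_R\circ co\text{-}res,
\]
built from the unit of $(F^L,F_R)$ and the counit of $(F^L_{w_0},F_{R,w_0})$; this is precisely what the paper writes down. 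That this map is an \emph{isomorphism} is not formal and must be checked separately---the paper does so by evaluating on the generator $T^*_I T\in \cW(\cB_{w_0})$. Once \eqref{eq: co-res, F_R} is established, \eqref{eq: res, F_L} does follow by taking right adjoints via Lemma \ref{lemma: F_R, F_L}, as you say. Alternatively you could prove \eqref{eq: co-res, F_R} directly by the same geometric argument you give for \eqref{eq: co-res, F_L} (now using $F_R$ on simply connected branes as $L\mapsto \fq(L)$), but you did not propose that route.
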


\begin{proof}
(i) Here the first one (\ref{eq: co-res, F_L}) follows from (1) the compatibility of cofinality of wrapping in $J_G$ and $J_{G_\flat}$ through $\fq^{-1}$; (2) the definition of $co\text{-}res$ and $co\text{-}res_{\flat}$ as ``wrapping more". To be more precise, following the notations in \cite[Section 3.5, 3.6]{GPS1}, let $\cI=\{T^*_IT_{\flat}\}$ and $\cI'=\{\Sigma_I, T^*_IT_\flat\}$ be collections of Lagrangian branes in $J_{G_\flat}$.  
let $\scrO$ (resp. $\scrO'$) be the $A_\infty$-category of Lagrangians for $\cB_{w_0,\flat}$ (resp. $J_{G_\flat}$) associated to $\cI$ (resp. $\cI'$). 
Then $\scrO\hookrightarrow \scrO'$ is an inclusion of a full subcategory\footnote{This is because by Definition 3.35 in \emph{loc. cit.}, the objects in $\scrO$ is a subset of $\scrO'$, and the morphisms between objects in $\scrO$ calculated in the subsector $\cB_{w_0,\flat}$ is the same as those calculated in $J_{G_\flat}$, which is due to that holomorphic discs with boundaries in Lagrangians in $\scrO$ do not cross the sector boundary of $\cB_{w_0,\flat}$.}. Let $\fq^{-1}(\scrO)$ (resp. $\fq^{-1}(\scrO')$) be the $A_\infty$-category from taking the inverse image of every element (together with each of their connected components) in $\scrO$ (resp. $\scrO'$) through $\fq$. Let $C$ (resp. $C'$) be the set of all continuation elements in $\scrO$ (resp. $\scrO'$), then $\fq^{-1}(C)$ and $\fq^{-1}(C')$ give the 
set of all continuation elements in $\fq^{-1}(\scrO)$ and $\fq^{-1}(\scrO')$, respectively. There are the natural inclusions $C\hookrightarrow C'$ and $\fq^{-1}(C)\hookrightarrow \fq^{-1}(C')$. The commutative diagram of $A_\infty$-categories
\begin{align*}
\xymatrix{\fq^{-1}(\scrO')&\fq^{-1}(\scrO)\ar@{_{(}->}[l]\\
\scrO'\ar[u]^{F^L} & \scrO\ar@{_{(}->}[l]\ar[u]_{F^L_{w_0}=F^L|_{\scrO}}
}
\end{align*}
induces the commutative diagram on localizations
\begin{align*}
\xymatrix{\fq^{-1}(\scrO')_{(\fq^{-1}(C'))^{-1}}&\fq^{-1}(\scrO)_{(\fq^{-1}(C))^{-1}}\ar[l]\\
\scrO'_{(C')^{-1}}\ar[u]^{F^L} & \scrO_{C^{-1}}\ar[l]\ar[u]_{F^L_{w_0}}
}
\end{align*}
which gives  (\ref{eq: co-res, F_L}). 

The second isomorphism of functors (\ref{eq: res, F_R}) is from taking the right adjoint on both sides of  the first one. 

(ii) From (\ref{eq: co-res, F_L}) and adjunction, we get a morphism of functors
\begin{align}\label{eq: functor, adjuntion}
&co\text{-}res_\flat\circ F_{R, w_0}\longrightarrow F_R\circ F^L\circ co\text{-}res_\flat\circ F_{R, w_0}\simeq F_R\circ co\text{-}res\circ F^L_{w_0}\circ F_{R,w_0}\\
\nonumber&\longrightarrow F_R\circ co\text{-}res,
\end{align}
where the first map is from the unit map for $(F^L, F_R)$ and the last map is from the co-unit map for $(F^L_{w_0}, F_{R, w_0})$. To confirm (\ref{eq: co-res, F_R}), we just need to show that (\ref{eq: functor, adjuntion}) on the generator $T^*_IT\in \cW(\cB_{w_0})$ is an isomorphism. This is straightforward, and we leave the details to the interested reader. 

The last isomorphism of functors (\ref{eq: res, F_L}) follows from taking the right adjoint on both sides of (\ref{eq: co-res, F_R}), using Lemma \ref{lemma: F_R, F_L}. 

\end{proof}

\begin{thm}\label{thm: HMS for reductive}
For any reductive $G\cong G^\der\underset{\cZ(G^\der)}{\times} \cZ(G)$, we have 
\begin{align}\label{eq: thm HMS reductive}
\cW(J_G)\simeq \Coh((T/\cZ(G^\der))^\vee\sslash W)^{\cZ(G^\der)^*}.
\end{align}
If $\cZ(G)$ is connected, then $\cZ(G^\der)^*$ acts freely on $(T/\cZ(G^\der))^\vee\sslash W$, and we have equivalently 
\begin{align*}
\cW(J_G)\simeq \Coh(T^\vee\sslash W).
\end{align*}
\end{thm}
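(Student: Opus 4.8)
\textbf{Proof plan for Theorem \ref{thm: HMS for reductive}.}
The plan is to deduce the reductive case from the adjoint case (Theorem \ref{thm: sec G adjoint}) by the same monadicity argument used on the microlocal side in Theorem \ref{thm: mushvLambda_G}, but now phrased entirely in terms of wrapped Fukaya categories so as to keep track of compact objects. First I would record the structural input already assembled in Subsection \ref{subsec: Pf of HMS}: the adjoint pairs $(F^L, F_R)$ of Lemma \ref{lemma: F_L, F_R} and Lemma \ref{lemma: F_R, F_L} attached to the regular covering $\fq\colon J_G\to J_{G_\flat}$ with deck group $\cZ(G^\der)$, the computation of the monad $F_R F^L$ and comonad $F^L F_R$ on generators (from the diagrams in the proofs of those lemmas, the monad $F_R F^L$ acts on $\Sigma_I$ as $\Sigma_I^{\oplus \cZ(G^\der)}$ with the $\cZ(G^\der)$-twisted multiplication structure coming from the indexing permutation $\sigma_x$), and the Künneth decomposition $\cW(J_{G_\flat})\simeq \cW(J_{G_\ad})\otimes\cW(T^*(\cZ(G)_0/\cZ(G^\der)_0))$ from \cite{GPS2}. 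Combining the latter with Theorem \ref{thm: sec G adjoint} and the standard HMS for $T^*T$ gives $\cW(J_{G_\flat})\simeq \Coh(T_\ad^\vee\sslash W)\otimes\Coh((\cZ(G)_0/\cZ(G^\der)_0)^\vee)\simeq \Coh(T_\flat^\vee\sslash W)$, where $T_\flat=T/\cZ(G^\der)$.

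Next I would argue that $F_R$ is comonadic (equivalently, by \cite[Theorem 4.7.3.5]{Lurie}, $F^L$ is monadic). The key points: $F^L$ has a right adjoint $F_R$ by Lemma \ref{lemma: F_L, F_R} and also a left adjoint (it is itself the left adjoint in Lemma \ref{lemma: F_R, F_L}, so $F_R$ is both a left and a right adjoint of $F^L$), hence $F^L$ preserves all limits and colimits; it is conservative because the generators $\Sigma_{z_j}$, $z_j\in\cZ(G^\der)$, of $\cW(J_G)$ appear as summands of $F^L F_R(\Sigma_{z_j})$ and the unit map splits off the relevant summand, so $F^L$ detects zero objects on a generating set; and the Beck--Chevalley condition holds because $\fq$ is a finite covering and $F^L$ is base change along $\fq$, which is why $F^L F_R \simeq (-)\otimes \underline{\bC[\cZ(G^\der)]}$ exhibits $F^L F_R$ as tensoring with a finite free module. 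Monadicity then gives
\begin{align*}
\cW(J_G)\simeq (F_R F^L)\text{-}\mathrm{Mod}\,\big(\cW(J_{G_\flat})\big).
\end{align*}
The content of Lemma \ref{lemma: F_R, F_L}, together with the $\cZ(G^\der)$-equivariance of the wrapping Hamiltonian $\fq^*H$, is that the monad $F_R F^L$ is canonically the monad $(-)\otimes_{\bC}\bC[\cZ(G^\der)^*]$ with the algebra structure coming from the $\cZ(G^\der)^*$-action on $\cW(J_{G_\flat})$ mirror to the $\cZ(G^\der)^*$-action on $T_\flat^\vee\sslash W$ (the deck group $\cZ(G^\der)$ on $T$ is dual to the $\cZ(G^\der)^*$-multiplication on $T^\vee$, which descends through $W$). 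Under the equivalence $\cW(J_{G_\flat})\simeq\Coh(T_\flat^\vee\sslash W)$ this identifies $F_R F^L$-modules with $\cZ(G^\der)^*$-equivariant coherent sheaves, yielding (\ref{eq: thm HMS reductive}); when $\cZ(G)$ is connected the $\cZ(G^\der)^*$-action on $T_\flat^\vee\sslash W=(T_\ad^\vee\sslash W)\times(\cZ(G)_0/\cZ(G^\der)_0)^\vee$ is free on the second factor, so the equivariant category is $\Coh$ of the quotient, which is $T^\vee\sslash W$.

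The main obstacle I expect is the precise identification of the monad $F_R F^L$ as an \emph{algebra object} in endofunctors of $\cW(J_{G_\flat})$ with its $\cZ(G^\der)^*$-equivariant-sheaf interpretation, rather than merely knowing its underlying functor is $(-)\otimes\bC[\cZ(G^\der)]$. One must check that the multiplication of the monad, which on the Floer side is read off from the commuting squares involving the permutations $\sigma_x$ in the proofs of Lemmas \ref{lemma: F_L, F_R} and \ref{lemma: F_R, F_L} and from Proposition \ref{prop: A_G, center} (the grading--degree bookkeeping for morphisms between Kostant sections), matches the group-algebra multiplication of $\cZ(G^\der)^*$ under mirror symmetry; this is a compatibility of two monoidal/group structures and is exactly the point where one has to be careful that the functor $F_R$ of Lemma \ref{lemma: F_L, F_R} and the functor $F_R$ of Lemma \ref{lemma: F_R, F_L} are the same up to the canonical identifications, so that the bimodule presenting the monad carries the expected $\cZ(G^\der)^*$-grading. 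I would carry this out by testing on the generators $\Sigma_{z_j}$ and $\Sigma_I$ using the explicit strictly commutative diagrams already displayed in Subsection \ref{subsec: Pf of HMS}, reducing the claim to the elementary statement that the $\cZ(G^\der)$-action on $\pi_0(\Sigma_I^t\cap\Sigma_I)$ by the classes $x''\in X_*(T_\ad)^+/X_*(T)\cong\cZ(G)$ is the regular action, which is exactly what Proposition \ref{prop: A_G, center} provides. Everything else (preservation of compact objects, the passage from the large categories of modules to the small ones, and the $\cZ(G)$-factor splitting) is formal given Proposition \ref{prop: sector_cover} and the Künneth formula.
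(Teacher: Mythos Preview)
Your overall strategy---deduce the reductive case from the adjoint case via Barr--Beck--Lurie applied to the covering adjunction $(F^L,F_R)$---is the same as the paper's, but you diverge from it at exactly the point you flag as the main obstacle, and the paper's route around that obstacle is worth knowing.

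You apply Barr--Beck--Lurie to $F_R:\Ind\cW(J_G)\to\Ind\cW(J_{G_\flat})$ and then must identify the monad $F_RF^L$ on $\Ind\cW(J_{G_\flat})\simeq\Ind\Coh(T_\flat^\vee\sslash W)$ \emph{as an algebra object}, matching it with the $\cZ(G^\der)^*$-equivariance monad. You are right that this is the nontrivial step, and your proposed verification on generators via the $\sigma_x$-diagrams is plausible but not carried out. The paper instead applies Barr--Beck--Lurie to the \emph{composite} $res_\flat\circ F_R:\Ind\cW(J_G)\to\Ind\cW(\cB_{w_0,\flat})\simeq\Ind\Coh(T_\flat^\vee)$, using Lemma~\ref{lemma: comm, functors} to factor the resulting monad as the commuting pair $(res_\flat\, co\text{-}res_\flat)\circ(F_{R,w_0}F_{w_0}^L)$. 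Each factor is then identified separately and transparently: $res_\flat\, co\text{-}res_\flat\simeq\mathsf f^*\mathsf f_*$ for $\mathsf f:T_\flat^\vee\to T_\flat^\vee\sslash W$ is already the content of the proof of Theorem~\ref{thm: sec G adjoint}, and $F_{R,w_0}F_{w_0}^L\simeq\varpi^*\varpi_*$ for $\varpi:T_\flat^\vee\to T_\flat^\vee/\cZ(G^\der)^*$ is immediate because on $T^*T_\flat$ the covering functor is just the usual finite \'etale pushforward/pullback. The Cartesian square of quotients then assembles the two monads into $\Ind\Coh((T_\flat^\vee\sslash W)/\cZ(G^\der)^*)$. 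This sidesteps entirely the delicate algebra-object identification you worry about: by descending one level further to the open cell, where mirror symmetry is elementary, the monad becomes a product of two pieces each of which is already known.

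A minor point: your conservativity argument has the direction slightly tangled. To get $\Ind\cW(J_G)$ as $(F_RF^L)$-modules you need $F_R$ (not $F^L$) conservative; the paper checks this directly: $F_R(L)\simeq 0$ implies $\Hom(F^L(\Sigma_I),L)\simeq\Hom(\bigoplus_{z\in\cZ(G^\der)}\Sigma_z,L)\simeq 0$, hence $L\simeq 0$ since the $\Sigma_z$ generate.
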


\begin{proof}
We again use the Barr--Beck--Lurie theorem. For this, we take the cocompletion of each wrapped Fukaya category $\cW(M)$, and denote them by $\Ind \cW(M)$ using the standard notation. The functors $F^L, F_R$ (resp. $F^L_{w_0}, F_{R,w_0}$) between $\cW(J_G)$ and $\cW(J_{G_\flat})$ (resp. $\cW(\cB_{w_0})$ and $\cW(\cB_{w_0, \flat})$) uniquely extend, and Lemma \ref{lemma: F_L, F_R}, Lemma \ref{lemma: F_R, F_L} and Lemma \ref{lemma: comm, functors}
remain unchanged after the extension. 

First, we check that $F_R$ is conservative. Suppose an object $L\in \Ind\cW(J_G)$ is sent to the zero object in $\Ind\cW(J_{G_\flat})$ through $F_R$, then by adjunction 
\begin{align*}
0&\simeq \Hom_{\Ind\cW(J_{G_\flat})}(\Sigma_I, F_R(L))\simeq \Hom_{\Ind\cW(J_G)}(F^L(\Sigma_I), L)\\
&\simeq \Hom_{\Ind\cW(J_G)}(\bigoplus\limits_{z\in \cZ(G^\der)}\Sigma_z, L),
\end{align*}
so $L\simeq 0$. Similarly, one gets that $F_{R, w_0}, res_{\flat}$ and $res$ are also conservative.   
 Second, by definition, all functors mentioned above preserve filtered colimits, so in particular preserve geometric realizations.

Using Lemma \ref{lemma: comm, functors}, we get a commuting pair of monads $res_\flat co\text{-}res_\flat$ and $F_{R,w_0}F_{w_0}^L$ on $\cW(\cB_{w_0,\flat})$. 
Now we apply the Barr--Beck--Lurie theorem and get an equivalence
\begin{align}\label{eq: Barr-Beck-Lurie}
res_\flat\circ F_R: \Ind\cW(J_G)&\overset{\sim}{\longrightarrow} (res_\flat F_RF^L co\text{-}res_\flat)-\Mod(\Ind\cW(\cB_{w_0,\flat}))\\
\nonumber&\overset{\sim}{\longrightarrow} (F_{R,w_0}F_{w_0}^L res_\flat co\text{-}res_\flat)-\Mod(\Ind\cW(\cB_{w_0,\flat})). 
\end{align}

By Theorem \ref{thm: sec G adjoint} and its proof, the monad $res_\flat co\text{-}res_\flat$ is isomorphic to $\mathsf{f}^*\mathsf{f}_*$ on $\Ind\Coh((T_\flat)^\vee)$ for 
\begin{align*}
\mathsf{f}: (T_\flat)^\vee\longrightarrow (T_\flat)^\vee\sslash W. 
\end{align*}
It is clear that the monad $F_{R,w_0}F_{w_0}^L$ is isomorphic to $\varpi^*\varpi_*$ on $\Ind\Coh((T_\flat)^\vee)$ for the quotient map  
\begin{align*}
\varpi: (T_\flat)^\vee\longrightarrow T^\vee\cong (T_\flat)^\vee/\cZ(G^\der)^*.
\end{align*}
Therefore, the last line of (\ref{eq: Barr-Beck-Lurie}) is equivalent to $\Ind\Coh(((T_\flat)^\vee\sslash W)/\cZ(G^\der)^*)= \Ind\Coh((T_\flat)^\vee\sslash W))^{\cZ(G^\der)^*}$ through the Cartesian diagram
\begin{align*}
\xymatrix{(T_\flat)^\vee\ar[r]^{\varpi}\ar[d]_{\mathsf{f}}&(T_\flat)^\vee/\cZ(G^\der)^*\ar[d]\\
(T_\flat)^\vee\sslash W\ar[r] &((T_\flat)^\vee\sslash W)/\cZ(G^\der)^*
}. 
\end{align*}
Lastly, taking compact objects, we get (\ref{eq: thm HMS reductive}) as desired. 
\end{proof}

\subsection{Induction pattern}\label{subsec: induction}

For any $S\subset \Pi$,  Theorem \ref{thm: HMS for reductive} tells us that  
\begin{align*}
\cW(J_{L_S})\simeq \Coh((T/\cZ(L_S^\der))^\vee\sslash W_S)^{\cZ(L_S^\der)^*}.
\end{align*}
In the following, let $T_{\flat, S}:=T/\cZ(L_S^\der)$. 
The restriction and co-restriction functors for the inclusion of Liouville sectors $\overline{J}_{L_\emptyset}\subset \overline{J}_{L_S}$ (cf. Subsection \ref{ss: sectorinclusion}) correspond to 
\begin{equation}\label{eq: res, co-res, L_S}
\begin{tikzcd}[arrow style=tikz,>=stealth,row sep=4em]
\Coh(T_{\flat, S}^\vee\sslash W_S)^{\cZ(L_S^\der)^*}\arrow[rr, shift left=.4ex, "res\simeq p_{\emptyset, S}^*"]
  &&\Coh(T^\vee)\ar[ll, shift left=.4ex, "co\text{-}res\simeq (p_{\emptyset, S})_*"]
\end{tikzcd},
\end{equation}
where 
\begin{align*}
p_{\emptyset, S}: T^\vee=T^\vee_{\flat, S}/\cZ(L_S^\der)^*\longrightarrow (T^\vee_{\flat, S}\sslash W_S)/\cZ(L_S^\der)^*
\end{align*}
is the natural projection. 

\begin{prop}\label{prop: JLS'JLS}
Let $G$ be any complex reductive group. For any $S\subset S'\subset \Pi$, we have the restriction and co-restriction functors between $\cW(J_{L_{S'}})$ and $\cW(J_{L_S})$ given by 
\begin{equation}\label{eq: res, co-res, S, S'}
\begin{tikzcd}[arrow style=tikz,>=stealth,row sep=4em]
\cW(J_{L_{S'}})\simeq \Coh(T^\vee_{\flat, S'}\sslash W_{S'})^{\cZ(L_{S'}^\der)^*}\arrow[rr, shift left=.4ex, "res\simeq p_{S, S'}^*"]
  &&\cW(J_{L_S})\simeq \Coh(T_{\flat, S}^\vee\sslash W_{S})^{\cZ(L_S^\der)^*}\ar[ll, shift left=.4ex, "co\text{-}res\simeq (p_{S, S'})_*"]
\end{tikzcd},
\end{equation}
where 
\begin{align*}
p_{S, S'}: (T_{\flat, S}^\vee\sslash W_{S})/\cZ(L_S^\der)^*   \longrightarrow (T_{\flat, S'}^\vee\sslash W_{S'})/\cZ(L_{S'}^\der)^* 
\end{align*}
is the natural projection. 

\end{prop}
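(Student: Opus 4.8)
The strategy is to reduce the statement to the case $S=\emptyset$ already recorded in \eqref{eq: res, co-res, L_S}, using the compatible system of sector inclusions from Proposition \ref{prop: sector_cover}. Concretely, fix $S\subset S'\subset \Pi$. By the construction in Subsection \ref{ss: sectorinclusion} we have a nested chain of Liouville subsectors $\overline{J}'_{L_\emptyset}\subset \overline{J}'_{L_S}\subset \overline{J}'_{L_{S'}}\subset \overline{J}_G$, each inclusion inducing (via \cite{GPS1, GPS2}) an adjoint pair $(co\text{-}res, res)$ of functors on wrapped Fukaya categories, and the composition of inclusions induces the composition of restriction (resp. co-restriction) functors. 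Thus it suffices to identify, under the HMS equivalences of Theorem \ref{thm: HMS for reductive} applied to $L_S$ and $L_{S'}$, the restriction functor $\cW(J_{L_{S'}})\to\cW(J_{L_S})$ with $p_{S,S'}^*$ and its left adjoint with $(p_{S,S'})_*$.

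First I would set up the commutative square of functors relating the four categories $\cW(J_{L_{S'}})$, $\cW(J_{L_S})$, $\cW(\cB_{w_0, L_{S'}})\simeq\Coh(T^\vee)$, $\cW(\cB_{w_0, L_S})\simeq\Coh(T^\vee)$, exactly as in Lemma \ref{lemma: comm, functors} but now for the inclusion of the $S=\emptyset$ stratum into both $L_S$ and $L_{S'}$. The key point is the compatibility $res_{L_\emptyset\subset L_S}\circ res_{L_S\subset L_{S'}}\simeq res_{L_\emptyset\subset L_{S'}}$ and dually for $co\text{-}res$, which follows from $\tilde\iota_S^{S'}$-compatibility (Proposition \ref{prop: fU_S', sqcup}) together with the functoriality of wrapped Fukaya categories under composable sector inclusions. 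Then, using that $res_{L_\emptyset\subset L_S}$ corresponds to $p_{\emptyset,S}^*$ and $res_{L_\emptyset\subset L_{S'}}$ corresponds to $p_{\emptyset,S'}^*$ under the respective HMS equivalences (this is \eqref{eq: res, co-res, L_S}), and that $p_{\emptyset,S'}=p_{S,S'}\circ p_{\emptyset,S}$ as maps of stacks, one deduces that $res_{L_S\subset L_{S'}}$ becomes a functor $F:\Coh(T_{\flat,S}^\vee\sslash W_S)^{\cZ(L_S^\der)^*}\to\Coh(T_{\flat,S'}^\vee\sslash W_{S'})^{\cZ(L_{S'}^\der)^*}$ — wait, the direction is $\cW(J_{L_{S'}})\to\cW(J_{L_S})$ — satisfying $p_{\emptyset,S}^*\circ F\simeq p_{\emptyset,S'}^*$. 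Since $p_{\emptyset,S}^*$ is conservative (pullback along an fppf/\'etale-locally-split cover of stacks, as in the proof of Theorem \ref{thm: HMS for reductive} where conservativity of $res$ was used), this characterizes $F$ uniquely up to equivalence, and $p_{S,S'}^*$ manifestly satisfies the same relation; hence $F\simeq p_{S,S'}^*$. The statement for $co\text{-}res$ then follows by passing to left adjoints, using that $(p_{\emptyset,S'})_*\simeq(p_{S,S'})_*\circ(p_{\emptyset,S})_*$.

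To make the conservativity/monadicity argument rigorous I would invoke the Barr--Beck--Lurie theorem exactly as in the proof of Theorem \ref{thm: HMS for reductive}: the functor $p_{\emptyset,S}^*$ (equivalently, $res_{L_\emptyset\subset L_S}$ composed with the forgetful functor from $\cZ(L_S^\der)^*$-equivariant sheaves, or more directly the co-restriction-restriction monad on $\cW(\cB_{w_0, L_S})$) is conservative and preserves colimits, so $\cW(J_{L_S})$ is recovered as modules over the monad $res_{L_\emptyset\subset L_S}\,co\text{-}res_{L_\emptyset\subset L_S}$, and any functor out of $\cW(J_{L_S})$ compatible with this monadic description and with the analogous one for $L_{S'}$ is determined. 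Alternatively, and perhaps more cleanly, one can just use that the Kostant sections $\Sigma_z$, $z\in\cZ(L_S)$, generate $\cW(J_{L_S})$ (Proposition \ref{prop: split generation}) and that the restriction functor sends the Kostant sections of $J_{L_{S'}}$ to those of $J_{L_S}$ (visible from the explicit sector inclusion $\overline{J}'_{L_S}\subset\overline{J}'_{L_{S'}}$, since the Kostant section $\Sigma_I\subset J_{L_S}$ sits inside $\Sigma_I\subset J_{L_{S'}}$), reducing the identification to a computation of morphism complexes between generators, which is governed by Proposition \ref{prop: A_G, center} and matches the graded vector space underlying $\Hom$ in $\Coh(T_{\flat,S'}^\vee\sslash W_{S'})^{\cZ(L_{S'}^\der)^*}$.

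\textbf{The main obstacle} I expect is verifying cleanly that the functoriality isomorphism $res_{L_\emptyset\subset L_S}\circ res_{L_S\subset L_{S'}}\simeq res_{L_\emptyset\subset L_{S'}}$ holds at the level of the actual geometric sector inclusions chosen in Subsection \ref{ss: sectorinclusion} — i.e., that the contractible space of choices can be arranged so that all the relevant subsectors nest compatibly (this is what Proposition \ref{prop: sector_cover} asserts, and I would simply cite it), and then that under the HMS identifications of Theorem \ref{thm: HMS for reductive} the equivariance data (the $\cZ(L_S^\der)^*$- versus $\cZ(L_{S'}^\der)^*$-actions) match up correctly with the map $p_{S,S'}$, which factors the quotient $T^\vee\to(T_{\flat,S'}^\vee\sslash W_{S'})/\cZ(L_{S'}^\der)^*$ through $(T_{\flat,S}^\vee\sslash W_S)/\cZ(L_S^\der)^*$. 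Tracking these finite-group quotients through the monadic reconstruction of Theorem \ref{thm: HMS for reductive} is bookkeeping-heavy but not conceptually hard; the geometric input is entirely contained in Propositions \ref{prop: fU_S', sqcup}, \ref{prop: split generation}, \ref{prop: A_G, center} and \ref{prop: sector_cover}.
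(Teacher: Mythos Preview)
Your overall strategy---using the commutative triangle of sector inclusions $\overline{J}'_{L_\emptyset}\subset\overline{J}'_{L_S}\subset\overline{J}'_{L_{S'}}$ from Proposition~\ref{prop: sector_cover} together with the already-established $S=\emptyset$ case \eqref{eq: res, co-res, L_S}---is exactly the paper's. The reduction to the general reductive case via the monad $F_RF^L$ and Lemma~\ref{lemma: comm, functors} is also what the paper does.

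However, your uniqueness step has a gap. You write that from $p_{\emptyset,S}^*\circ F\simeq p_{\emptyset,S'}^*=p_{\emptyset,S}^*\circ p_{S,S'}^*$ and conservativity of $p_{\emptyset,S}^*$ one deduces $F\simeq p_{S,S'}^*$. Conservativity only says that $p_{\emptyset,S}^*$ reflects the property of a \emph{given morphism} being an isomorphism; it does not let you conclude that two functors with isomorphic composites through $p_{\emptyset,S}^*$ are themselves isomorphic, absent a comparison map $F\to p_{S,S'}^*$ to test. Your Barr--Beck--Lurie fallback would repair this, but then you must identify the induced morphism of monads, which is not automatic from the composition relation alone.

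The paper's argument is more direct and avoids this issue. It works on the \emph{co-restriction} side: for $G$ adjoint the commutative triangle of co-restriction functors gives
\[
co\text{-}res_{S,S'}\circ (p_{\emptyset,S})_*\simeq (p_{\emptyset,S'})_*,
\]
so $co\text{-}res_{S,S'}$ sends every skyscraper sheaf on $T^\vee\sslash W_S$ (each arises as $(p_{\emptyset,S})_*$ of a skyscraper on $T^\vee$) to a skyscraper sheaf on $T^\vee\sslash W_{S'}$. This forces $co\text{-}res_{S,S'}$ to be the pushforward along a morphism of schemes $f_{S,S'}:T^\vee\sslash W_S\to T^\vee\sslash W_{S'}$, and the same skyscraper computation shows $f_{S,S'}$ sends the $W_S$-orbit of any closed point to its $W_{S'}$-orbit, i.e.\ $f_{S,S'}=p_{S,S'}$. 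The restriction statement then follows by adjunction. This replaces your conservativity/monadicity bookkeeping with a pointwise check, which is both shorter and self-contained.
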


\begin{proof}
First, we assume $G$ is of adjoint type. For any $S\subset S'$, we have the diagram 
\begin{equation}\label{eq: res, co-res, S, S', empty}
\begin{tikzcd}[arrow style=tikz,>=stealth,row sep=4em]
&\cW(J_{L_\emptyset})\simeq \Coh(T^\vee)\arrow[dl, shift left=.4ex, "co\text{-}res\simeq (p_{\emptyset, S'})_*"]\arrow[dr, shift left=.4ex, "co\text{-}res\simeq (p_{\emptyset, S})_*"]&\\
\cW(J_{L_{S'}})\simeq \Coh(T^\vee\sslash W_{S'})\arrow[rr, shift left=.4ex, "res_{S,S'}"]\arrow[ur, shift left=.4ex, "res\cong p_{\emptyset, S'}^*"]
  &&\cW(J_{L_S})\simeq \Coh(T^\vee\sslash W_{S})\ar[ll, shift left=.4ex, "co\text{-}res_{S,S'}"]\arrow[ul, shift left=.4ex, "res\cong p_{\emptyset, S}^*"]
\end{tikzcd},
\end{equation}
with the co-restriction functors forming a commutative triangle (cf. Proposition \ref{prop: sector_cover}), and the restriction functors forming another commutative triangle from adjunction. It implies that $co\text{-}res_{S, S'}$ takes any skyscraper sheaf on $T^\vee\sslash W_S$ to a skyscraper sheaf on $T^\vee\sslash W_{S'}$. In particular, it can be identified with the pushforward functor for a morphism of schemes 
\begin{align*}
f_{S,S'}: T^\vee\sslash W_S\longrightarrow T^\vee\sslash W_{S'}.
\end{align*}
It is clear that  $f_{S, S'}=p_{S,S'}$ since the $W_S$-orbit of any closed point in $T^\vee$ is sent to the $W_{S'}$-orbit of the same point.

For a general reductive $G$, it follows from first geting the result for $G_\flat$ by the Kunneth formula on wrapped Fukaya categories, and then using the monad $F_RF^L$ on $\cW(J_{L_S}/\cZ(G^\der))$ for each $S\subset \Pi$ and 
its compatibility with restriction and co-restriction functors for inclusions of subsectors for different $S$, as in Lemma \ref{lemma: comm, functors} (whose argument directly generalizes to the case that $\cB_{w_0}$ is replaced by any subsector associated with $S\subset \Pi$). 
\end{proof}

Let $\DMStk_\bC^{\text{prop}}$ be the ordinary (2,1)-category of Deligne-Mumford stacks over $\bC$ with proper morphisms. Then we have the functor $\Coh_*: \DMStk_\bC^{\text{prop}}\rightarrow \OneCat_\bC^L$ that takes each stack $X$ to $\Coh(X)$ and a proper morphism $X\rightarrow Y$ to the pushforward functor on coherent sheaves. 

\begin{cor}
Under the canonical equivalences $\cW(J_{L_S})\simeq \Coh(T_{\flat, S}^\vee\sslash W_S)^{\cZ(L_S^\der)^*}$, $S\subset\Pi$, the functor (\ref{eq: functor, sector-cover}) is canonically equivalent to first taking the functor
\begin{align*}
(\{S\subset \Pi\}, \subset)&\longrightarrow \DMStk_\bC^{\mathrm{prop}}\\
 S&\mapsto (T_{\flat, S}^\vee\sslash W_S)/\cZ(L_S^\der)^*,
\end{align*}
that sends each inclusion to the natural projection on stacks, and then taking $\Coh_*$. 
\end{cor}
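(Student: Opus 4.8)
The statement is a compatibility between the "abstract" functor of Proposition~\ref{prop: sector_cover}, built from sector inclusions, and the concrete functor on Deligne--Mumford stacks sending $S$ to $(T_{\flat, S}^\vee\sslash W_S)/\cZ(L_S^\der)^*$ with the natural projections. Both functors are indexed by the poset $(\{S\subset\Pi\},\subseteq)$, so it suffices to produce a natural equivalence that (a) on each object $S$ recovers the canonical equivalence $\cW(J_{L_S})\simeq \Coh((T_{\flat, S}^\vee\sslash W_S)/\cZ(L_S^\der)^*)$ of Theorem~\ref{thm: HMS for reductive}, and (b) on each arrow $S^\dagg\subseteq S$ intertwines the co-restriction functor $\cW(J_{L_{S^\dagg}})\to\cW(J_{L_S})$ with the pushforward $(p_{S^\dagg, S})_*$. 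The first I would get for free from Theorem~\ref{thm: HMS for reductive} applied to each Levi $L_S$; the content is entirely (b), and Proposition~\ref{prop: JLS'JLS} is exactly the statement that on arrows the co-restriction functor is identified with $(p_{S^\dagg, S})_*$. So at the level of objects and morphisms the corollary is essentially a restatement of the two propositions; the only genuine work is to organize this into an equivalence of \emph{functors} out of the poset category, i.e.\ to supply the coherent higher data.

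The plan is therefore as follows. First I would recall that Proposition~\ref{prop: sector_cover} already packages the sector inclusions into a single functor $S\mapsto \cW(\overline{J}_{L_S})$ into $\OneCat_\bC^L$, using a contractible space of compatible choices of subsectors $\overline{J}'_{L_{S^\dagg}}\subset\overline{J}'_{L_S}\subset\overline{J}_{L_S}$; the functoriality of co-restriction under nested sector inclusions (as in \cite{GPS1}, used already in Proposition~\ref{prop: fU_S', sqcup} and in Lemma~\ref{lemma: comm, functors}) gives the required coherences $(\text{co-res})_{S',S}\circ(\text{co-res})_{S^\dagg,S'}\simeq(\text{co-res})_{S^\dagg,S}$ for $S^\dagg\subseteq S'\subseteq S$. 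Second, on the algebro-geometric side, the assignment $S\mapsto (T_{\flat, S}^\vee\sslash W_S)/\cZ(L_S^\der)^*$ is manifestly a functor to $\DMStk_\bC^{\mathrm{prop}}$: each $p_{S^\dagg, S}$ is a finite, hence proper, morphism of smooth DM stacks (finiteness because $T^\vee_{\flat, S^\dagg}\sslash W_{S^\dagg}\to T^\vee_{\flat, S}\sslash W_S$ is the $W_S/W_{S^\dagg}$-quotient up to the central twists, and the central groups are finite), and the triangles $p_{S',S}\circ p_{S^\dagg, S'}=p_{S^\dagg, S}$ are evident on the underlying schemes and lift to the stacky quotients. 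Composing with $\Coh_*$ gives a functor to $\OneCat_\bC^L$. Third, I would compare the two: Theorem~\ref{thm: HMS for reductive} gives the equivalence on objects, and Proposition~\ref{prop: JLS'JLS} gives, for each arrow, a commuting square identifying $\text{co-res}$ with $(p_{S^\dagg, S})_*$ under these equivalences. One then checks that these square-fillers are compatible with composition of arrows --- this reduces, by Proposition~\ref{prop: JLS'JLS} again (applied to the three morphisms $S^\dagg\subseteq S'$, $S'\subseteq S$, $S^\dagg\subseteq S$), to the commutativity of the corresponding triangle of pushforwards, which holds because the triangle of $p$'s commutes on the nose.

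The main obstacle, and the only place requiring care, is the \emph{coherence} bookkeeping: strictly speaking one must exhibit a natural transformation of $\infty$-functors $N(\{S\subset\Pi\})\to\OneCat_\bC^L$, not just objectwise and arrowwise equivalences, so one needs the higher simplices to match up. The clean way to handle this is to note that the poset $(\{S\subset\Pi\},\subseteq)$ has finite (in fact cubical) nerve, and that the equivalences of Theorem~\ref{thm: HMS for reductive} and the square-fillers of Proposition~\ref{prop: JLS'JLS} were themselves constructed compatibly --- the former via the monadic descent along $\fq$ (Lemma~\ref{lemma: comm, functors}, which already records the needed compatibilities of $F^L, F_R$ with restriction/co-restriction for all $S$ simultaneously), the latter via the commuting triangles of co-restriction functors in the adjoint-type case (diagram~(\ref{eq: res, co-res, S, S', empty})) transported through that same descent. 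So I would assemble the argument as: build the natural transformation on the $1$-skeleton from Theorem~\ref{thm: HMS for reductive} and Proposition~\ref{prop: JLS'JLS}, observe that on the (at most $2$-dimensional, since strict commutativity kills the rest) higher cells the required coherences are forced by the strict commutativity of the $p$-triangles together with the already-established compatibilities in Lemma~\ref{lemma: comm, functors}, and conclude by the fact that a pointwise equivalence of functors which is compatible with the generating arrows and their composites is an equivalence. I expect no surprises beyond this organizational step; everything substantive has been proved in Sections~\ref{sec: HMS adjoint} and~\ref{sec: HMS reductive}.
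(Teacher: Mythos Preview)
Your proposal is correct and follows essentially the same route as the paper: both reduce the corollary to the object-level equivalences of Theorem~\ref{thm: HMS for reductive} together with the arrow-level identification of Proposition~\ref{prop: JLS'JLS}. The paper's proof is considerably shorter; it highlights one point you take for granted, namely that the equivalences $\cW(J_{L_S})\simeq \Coh((T_{\flat, S}^\vee\sslash W_S)/\cZ(L_S^\der)^*)$ are \emph{canonical} (not merely existent), invoking the uniqueness of the morphism $\hat{\mathsf{f}}$ from the proof of Theorem~\ref{thm: sec G adjoint} and the canonical identification of $T$ with the abstract maximal torus via $T\subset B$. Once that canonicity is in hand, the paper simply cites Proposition~\ref{prop: JLS'JLS} and stops, whereas you spell out the higher-coherence bookkeeping over the poset in more detail. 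Both emphases are valid; yours is more explicit about the $\infty$-categorical hygiene, the paper's about why the word ``canonically'' in the statement is earned.
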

\begin{proof}
The canonical equivalences follow from 
\begin{itemize}
\item the proof of  Theorem \ref{thm: sec G adjoint} in which the morphism $\hat{\mathsf{f}}$ is uniquely determined; 

\item the maximal torus $T$ is canonically identified with the abstract maximal torus, through the inclusion $T\subset B$ to the fixed Borel $B$. 
\end{itemize}
The rest is an immediate consequence of Proposition \ref{prop: JLS'JLS}.
\end{proof}

\end{document}